\newcounter{alphcount}
{\begin{list}{{\upshape(}\alph{alphcount}\/{\upshape)\ }}%
             {\usecounter{alphcount}\labelwidth1.5em%
              \leftmargin2em\labelsep0.5em\topsep0.25em plus 0.5ex%
              \itemsep0.25em plus 0.5ex\parsep0em}}{\end{list}}
{\begin{list}{{\upshape(#1\arabic{alphcount})\hfill}}%
             {\usecounter{alphcount}\labelwidth2.5em%
              \leftmargin2.5em\labelsep0em\topsep0.25em plus 0.5ex%
              \itemsep0.25em plus 0.5ex\parsep0em}}{\end{list}}
\newtheorem{proposition}{Proposition}[section]     
\newtheorem{thm}{Theorem}[section]
\newtheorem{lemma}{Lemma}[section]    
\newtheorem{definition}{Definition}[section]
\newtheorem{Remark}{Remark}[section]
\newtheorem{Corollary}{Corollary}[section]
\newtheorem{assumption}{Assumption}[section]
\theoremstyle{definition}
\newcommand{\im}{\mathrm{i}}
\newcommand{\purp}{\textcolor{black}}
\newcommand{\HDB}{\textcolor{blue}}
\newcommand{\znum}{\mathbb{Z}}
\newcommand{\cnum}{\mathbb{C}}
\newcommand{\rnum}{\mathbb{R}}
\newcommand{\nnum}{\mathbb{N}}
\newcommand{\pro}{\mathbb{P}}
\newcommand{\E}{\mathbb{E}}
\newcommand{\Cov}{\text{Cov}}
\newcommand{\Var}{\text{Var}}
\newcommand{\mynegspace}{\hspace{-0.12em}}
\newcommand{\bigsnorm}[1]{\Big\rvert\mynegspace\Big\rvert\mynegspace\Big\rvert\mynegspace {#1} \Big\rvert\mynegspace \Big\rvert\mynegspace \Big\rvert}
\newcommand{\norm}[1]{{\|}{#1}{\|}}
\newcommand{\bignorm}[1]{\Big{\|}{#1}\Big{\|}}
\newcommand{\Bignorm}[1]{\Bigg{\|}{#1}\Bigg{\|}}
\newcommand{\inprod}[2]{\langle #1, #2 \rangle}
\newcommand{\biginprod}[2]{\Big\langle #1, #2 \Big\rangle}
\newcommand\tageq{\addtocounter{equation}{1}\tag{\theequation}}
\DeclareMathOperator{\Tr}{Tr}
\DeclareMathOperator*{\argmin}{arg\,min}
\newcommand{\ldmi}[4]{\mathcal{Z}^{{#1}(#2,\omega_{#3})}_{m,#4}}
\newcommand{\dmpi}[4]{\tilde{D}^{({#1},{#2})}_{m,#3,#4}}
\newcommand{\dm}[3]{D^{({#1},{#2})}_{m,#3}}
\newcommand{\F}{\mathcal{F}}
\newcommand{\C}{\mathcal{C}}
\newcommand{\T}{\mathcal{T}}
\newcommand{\G}{\mathcal{G}}
\newcommand{\Hi}{\mathcal{H}}
\newcommand{\hi}{\mathcal{H}}
\newcommand{\op}{\mathcal{L}}
\newcommand{\opl}{S_{\infty}}
\newcommand{\flo}[1]{\lfloor #1 \rfloor}
\newcommand{\Xu}[1]{X^{(\frac{#1}{T})}_{#1}}
\newcommand{\tu}[2]{\mathfrak{t}_{#1}(#2)} 
\newcommand{\te}[1]{\mathfrak{t}(#1)} 
\newcommand{\Hd}{\mathcal{H}}
\newcommand{\bw}{\tilde{b}}
\renewcommand{\bf}{b_f}
\newcommand{\pr}{\prime}
\newcommand\barbelow[1]{\stackunder[1.2pt]{$#1$}{\rule{.8ex}{.075ex}}}
\newcounter{relctr} %% <- counter for relations
\everydisplay\expandafter{\the\everydisplay\setcounter{relctr}{0}} %% <- reset every eq
\begin{document}

\begin{frontmatter}

\title{A general framework to quantify deviations from structural assumptions in the analysis of  nonstationary function-valued processes
}

\begin{aug}
\author[A]{\fnms{Anne} \snm{van Delft}\ead[label=e1,mark]{anne.vandelft@columbia.edu}}
\and
\author[B]{\fnms{Holger} \snm{Dette}\ead[label=e2]{holger.dette@rub.de}}
%%%%%%%%%%%%%%%%%%%%%%%%%%%%%%%%%%%%%%%%%%%%%%
%% Addresses                                %%
%%%%%%%%%%%%%%%%%%%%%%%%%%%%%%%%%%%%%%%%%%%%%%
\address[A]{Department of Statistics, Columbia University, 1255 Amsterdam Avenue, New York, NY 10027, USA. \printead{e1}}

\address[B]{Ruhr-Universit\"at Bochum, Fakult\"at f\"ur Mathematik, 44780 Bochum, Germany \printead{e2}}
\end{aug}

\begin{abstract}
We present a general theory to quantify the  uncertainty from imposing   structural assumptions on the second-order structure of nonstationary Hilbert space-valued processes, which can be measured via   functionals of time-dependent spectral density operators.
 The second-order dynamics are well-known to be elements of the space of trace-class operators, the latter is a Banach space of type 1 and of cotype 2, which  makes the development of  statistical  inference tools
more challenging. 
A part of our contribution is to obtain 
a weak invariance principle as well as concentration inequalities for (functionals of) the sequential time-varying spectral density operator. 
\purp{In addition, we introduce deviation measures in the nonstationary context, and derive corresponding estimators that are asymptotically pivotal.}
 We then apply this framework and propose statistical methodology to 
investigate the validity of  structural assumptions for nonstationary 
response surface data, such as low-rank assumptions in the context of 
time-varying dynamic fPCA and principle separable component analysis, 
deviations from stationarity with respect to the square root distance,
and  deviations from zero functional canonical coherency.

\end{abstract}

\begin{keyword}[class=MSC]
\kwd[Primary ]{62M10, 60B12}
\kwd[; secondary ]{62M15, 60F17, 60J05}
\end{keyword}

\begin{keyword}
\kwd{functional data analysis}
\kwd{principle components}
\kwd{relevant hypotheses}
\kwd{nuclear operators}
\kwd{locally stationary processes}
\kwd{concentration inequalities}
\kwd{martingale theory}
\end{keyword}
\end{frontmatter}

% \tableofcontents

\section{Introduction}

\label{sec1}

Let $\{X_t| t\in \znum\}$ denote a possibly nonstationary stochastic processes with elements taking values in a function space, often a separable Hilbert space $\Hi$.
\purp{Typical examples include functional and  response surface data, and the analysis of data stemming from such processes 
may be challenging from a statistical and computational point of view.  Structural assumptions  on the second-order structure such as  stationarity, low-rank or separability assumptions are quite common, either  to simplify the statistical analysis or to address computational issues, in particular storage problems or running time
\citep[see, for example,][]{PanTav2013,Hormann2015,AstPigTav2017,Masak2020,vDCD18}}. 
These assumptions  are rarely made because one believes in them, but in the hope that the simplified model describes the reality with sufficient accuracy such that a more efficient  analysis
and a better interpretation of the results
is possible {\citep[see][]{MarcGenton2007,Masak2020,DetDieKut21}. }
Disregarding the  computational aspect, this can be viewed as a bias-variance trade-off where a structural assumption yields a decrease of variance at the cost of an increase in bias due to model miss-specification.  In many cases, it is possible to describe deviations from model assumptions by calculating a minimal distance between the ``general'' model and its approximation under a structural assumption. Estimates of these ``optimal'' distances are then used to investigate if a particular structural assumption can be justified {\citep[see][]{BagDette2017,vDCD18,Masak2020}}.  

In this paper, we put forward a comprehensive theory to quantify uncertainty resulting from imposing structural assumptions on the  aforementioned type of processes. The theory developed involves deviation measures that can be represented as functionals of the time-varying spectral density operator, and is readily applicable  (under milder assumptions) to (time-varying) covariance operators, independent functional data or the Euclidean setting. In \autoref{sec2}, we specify our context of interest, introduce the general form of the deviation measures, and explain  the principle idea behind our approach to statistically quantify uncertainty. 
\autoref{sec3} - which provides the theoretical backbone of our work - is devoted to the introduction of  estimators of these functionals (see point 2. below) and the derivation of two main theoretical results.
The first, \autoref{thm:Conv}, provides a weak invariance principle for a sequential estimator 
in the space of $S_1(\Hi)$-valued continuous functions on $[0,1]$. The second main result,
\autoref{thmmain}, provides a nontrivial self-normalization approach in the nonstationary setting.

Our contribution is relevant for the applied researcher, and the theory is novel from a statistical methodological perspective as well as from an applied probabilistic perspective. We elaborate briefly on this below.

\begin{enumerate}[wide, labelwidth=!, labelindent=0pt]
    \item \textbf{Statistical methodology to quantify uncertainty in various applications:} We introduce new measures to statistically quantify; the loss from a low-rank assumption in what we refer to as i) \textit{time-varying dynamic functional principal component analysis} (tvDFPCA) and ii) \textit{time-varying dynamic principle separable  component analysis} (tvDPSCA), iii) deviations from stationarity with respect to the square root distance, iv) deviations from zero (time-varying) functional canonical coherency. We then derive the limiting distributional properties of the corresponding estimators (normalized and centered around the population measures) as  special cases of our general theory introduced in \autoref{sec3}.  
    We emphasize that the measures and subsequent tests proposed are also \purp{new and of importance} in the classical Euclidean  setting. For example, \purp{ in  PCA of multivariate data  one often relies on ad-hoc methods such as visual inspection of scree plots to determine the number of factors to retain in an exploratory factor analysis. 
    Instead, we provide the applied researcher with novel  statistical tools to make such decisions with statistical guarantees.}  More specifically,  \autoref{sec4} provides methodology to construct confidence intervals for the measure of deviation, to test relevant hypotheses, and to address the problem of reducing the model complexity when a given model can be approximated by an increasing sequence of ``simpler'' models.  
    \purp{For instance, we develop a test for the hypotheses that a given number of 
    (time-varying) dynamic functional principal components explains at least $100\% \nu$ 
    of variation and we propose a pivotal confidence interval for a quantity which measures the deviation from stationarity. 
    We illustrate the finite sample performance and implementation of this method in Section \ref{sec5new} by means of a simulation study.
    Other applications will be discussed in Section \ref{sec24} below.}

    \item \textbf{Development of pivotal statistics for inference on  nonstationary  processes:}
 Even in the stationary finite-dimensional setting, the distributions of proposed  deviation measures 
 are complicated because these generally depend on nuisance parameters that are difficult to estimate. 
    In case of nonstationary data - especially in the infinite dimensional setting -   
    robust estimation of the complex nuisance parameters to obtain critical values becomes even more problematic and often is infeasible. \purp{The main goal of this article is to address this important problem and to develop a theoretical framework that circumvents this problem
    and thus still enables statistical analysis in these cases. More precisely, we will develop   asymptotically pivotal statistical inference tools for nonstationary data, which is highly non-trivial,} and - to the best of the author's knowledge - no results are available, albeit self-normalization approaches that induce pivotal tests in case of stationary functional time series have been considered \citep{Zhang2015,detkokvol2020}, as well as for stationary univariate time series \citep[see][for an overview]{shao2015}. 
   To this purpose,
   we proceed by showing that the distributional properties of the functionals can be approximated by a sum of independent martingales, which take values in the space of trace class operators, $S_1(\Hi)$, and that are smooth functions in re-scaled time. We subsequently establish a functional central limit theorem for the latter. The aforementioned approximation theory in $S_1(\Hi)$ is highly nontrivial and  relies upon several new results.
\item \textbf{Inequalities and concentration bounds for $S_1(\Hi)$-valued random processes:}
It is well-known that covariance operators of $\Hi$-valued processes with finite second strong moment are elements of $S_1(\Hi)$. It is therefore the natural space to work with when considering convergence results of corresponding estimators. Yet, most of the literature on functional data considers distributional convergence of estimators of such operators by endowing the space with the Hilbert-Schmidt topology. 
This has - from a theoretical point of view - the advantage that  the space of Hilbert-Schmidt operators, $S_2(\Hi)$, is a separable Hilbert space. 
This does not only considerably simplify derivations, but 
convergence results can also be obtained under weaker assumptions (see Remark \autoref{rem:S1vsS2}).
However, often the $S_2(\Hi)$-topology is insufficient and the $S_1(\Hi)$-topology is  necessary, as is the case for the statistical 
methodology developed in the current paper. For example, if the statistics involve the trace functional $\Tr:S_1(\Hi) \to \cnum$, which is a positive linear functional that is continuous with respect to  $\norm{\cdot}_{S_1}$ but \textbf{not} with respect to $\norm{\cdot}_{S_2}$. 
The space $S_1(\Hi)$ is however a non-commutative Banach space that has type 1 and cotype 2 \citep{tj74};  see also \cite{LT91} for type and cotype of Banach spaces. Consequently, various useful inequalities that one might take for granted when working in 2-smooth Banach spaces are no longer valid. In order to establish our general framework, we derive appropriate alternatives (see  Remark \autoref{rem:sqrt} and \autoref{secA}). It is worth mentioning that  a CLT for nuclear covariance operators was obtained by \cite{Mas2006} in the stationary setting under a more stringent set of conditions on the dependence structure. 

Because of space constraints, all  proofs and underlying theory are deferred to the appendix. 

\end{enumerate}

\textbf{Notation.}
We denote the topological dual space of a separable Banach space, $V$, by $V^\prime$. % The dualing bracket will be denoted by $x^\prime(x)=\inprod{x}{x^\prime}$ for $x \in V$ and $x \in V^\prime$
For  complex separable Hilbert spaces $\Hi, \Hi_1, \Hi_2$, we let $\Hi_1\otimes \Hi_2$ denote the Hilbert tensor product space, and we denote the $r$th Schatten class by $S_r(\Hi)$, which is the class of bounded linear operators $A:\Hi \to \Hi$ with bounded $r$th Schatten norm, i.e,  $\|A\|_{S_r}<\infty$, $1\le r \le \infty$.  We denote the respective subspaces of self-adjoint and non-negative definite operators by $(\cdot)^{\dagger} $ and $(\cdot)^+$, respectively. We remark that the cone of non-negative definite elements satisfies   $S_r(\Hi)^+ \subseteq S_r(\Hi)^\dagger$, and that $S_\infty(\Hi)\cong (S_1(\Hi))^\prime$. We furthermore denote the tensor product operator $h\otimes g:\Hi_1\to \Hi_2$, which is the bounded linear operator $(h\otimes g)v=\inprod{v}{g}h$, $v,g \in \Hi_1, h\in \Hi_2$. We emphasize that the meaning of $\cdot \otimes \cdot$ will be clear from the context.
Throughout this paper, we use the notation $I= [0,1]$ for the unit interval in order to refer 
 to a process indexed by a parameter $\eta \in I$, while we use the parameter $u\in [0,1]$ to refer to re-scaled time. The set of all continuous functions $f:I \to V$ is denoted by $C_{V}=C(I, V)$.  \\
For a measurable space $(T, \Sigma)$ with $\sigma$-finite measure $\mu$, we define $L^p_V(T,\mu)$ as the Banach space of all strongly measurable functions $f:T \to V$ with finite norm $\|f\|_{\mathcal{L}^p_V(T,\mu)}=\big(\int \|f(\tau)\|^p_{V} d\mu(\tau)\big)^{1/p}$, $1\le p<\infty$. If $T \subset\rnum^d$ and $\mu$ is the Lebesgue measure on $T$, we simply write $L^p_V(T)$. Given a probability space $(\Omega, \mathcal{A}, \mathbb{P})$, we write $\mathcal{L}^p_V= L^p_V(\Omega, \mathcal{A}, \mathbb{P})$ for the space of all strongly measurable functions  $X: (\Omega, \mathcal{A}, \mathbb{P}) \to (V, \mathcal{B}_V)$ with $\|X\|_{V,p} = \big(\E\|X\|^p_V\big)^{1/p}<\infty$. The (conditional) expectation of $X\in \op^p_V$ thus exist (in the sense of a Bochner integral), and the $k$th moments $k\le p$ can be naturally defined using tensor products.  We use $\stackrel{\mathcal{D}}{\Longrightarrow}$ for process convergence,  $\stackrel{\mathcal{D}}{\longrightarrow} $  for  distributional  convergence of a vector, and  $\stackrel{\mathbb{P}}{\longrightarrow} $ for convergence in probability. Finally,  we let $\flo{\cdot}$ denote the floor function and for two positive sequences, $(a_T)$, $(b_T)$, we write $a_T \sim b_T$ if $\lim_{T \to \infty} a_T/b_T=1$.

\section{Structural assumptions} \label{sec2}
\def\theequation{2.\arabic{equation}}
\setcounter{equation}{0}

\subsection{Locally stationary processes} \label{sec21}

We start by introducing a general definition of locally stationary processes  of rate $\zeta$ with values in a separable Banach space $V$, which can be seen as a natural generalization of the definition provided in \cite{vde16,drw2019}, and lies in the line of literature on locally stationary (Euclidean) time series, as initiated by \cite{dahlhaus1997} (see also \cite{ZW21,yz2021,kp15,jensub2015,nvsk2000}).

\begin{definition}[locally stationary function-valued process of rate $\zeta$] \label{def:locstat}
Let $\{X_{t,T}| t=1,\ldots, T; T \in \nnum\}$ be a sequence of $V$-valued stochastic processes indexed by $T \in \nnum$, and let $\{{X}^{(u)}_{t}|t \in \znum; u \in [0,1]\}$ be a $V$-valued stationary stochastic process such that, 
for some $p  >0$ and constants $C_p >0$, $\zeta>0$,
\[\|X_{t,T}-{X}^{(t/T)}_{t}\|_{V,p} \le C_p T^{-\zeta} \quad \text{ and } \quad \|{X}^{(u)}_{t}-{X}^{(v)}_{t}\|_{V,p} \le   C_p |u-v|^\zeta\]
uniformly in $t=1, \ldots,T$ and $u,v \in [0,1]$. Then we call the $V$-valued process  $\{X_{t,T}| t=1,\ldots, T; T \in \nnum\}$ \textit{locally stationary with approximation rate $\zeta$}.
\end{definition}

  We will mainly focus on the case where $V$ is a separable Hilbert space, i.e., $V=\Hi$. Provided
 $\sup_{t,T}\|X_{t,T}\|_{V,p} < \infty$ and $\sup_{u \in [0,1]}\|X^{(u)}_{t}\|_{V,p} < \infty$, $p\ge 2$,
 the local second-order dynamics of $\{X_{t,T}| t=1,\ldots, T; T \in \nnum\}$  can be approximated by those of the auxiliary process $\{{X}^{(u)}_{t}|t \in \znum; u \in [0,1]\}$.  More specifically, for fixed $T$, the instantaneous $h$th lag  covariance operator at time $t$ of $\{X_{t,T}| t=1, \ldots , T, T\in \nnum\}$ is then an element of $S_1(\Hi)$ and is given by 
$
\C^{(T)}_{t,h} = \E (X_{t+h,T} \otimes X_{t,T})$, $h \in \znum
$, and the $h$th lag covariance operator at a fixed time $u$ of the auxiliary process is given by
\begin{equation}
\label{deq40}
\C_{h}^{(u)} = \E (X^{(u)}_{h} \otimes X^{(u)}_{0}), \quad h \in \znum,
\end{equation}
where $\C^{(\cdot)}_{h} : [0,1] \to S_1(\Hi)$ is Lipschitz continuous with respect to the trace-class norm $\norm{\,\cdot}_{S_1}$. Furthermore, for fixed $h \in \znum$, 
$
\norm{\C^{(T)}_{t,h} -\C_{h}^{(t/T)}}_{S_1} =O(T^{-\zeta})$ uniformly in $t=1, \ldots,T$.

In order to capture the full second-order dynamics of function-valued processes effectively, a time-frequency approach is taken. 
Under regularity conditions given below,  
the collection of time-varying spectral density operators belongs to the space of continuous functions from the time-frequency plane onto the cone of non-negative definite trace-class operators, i.e., 
\begin{equation}
\label{tvSDO}
\F =
\{\F_{u,\omega}\}_{u\in [0,1], \omega \in [-\pi,\pi]} 
\in C([0,1] \times [-\pi,\pi], S_1(\Hi)^+)~,
\end{equation}
where $\F_{u,\omega}=
\frac{1}{2\pi}\sum_{h \in \znum} \C^{(u)}_{h}  e^{-\im \omega h}
\in 
  S_1(\Hi)^+ 
$. 
{Because the second-order dynamics of the triangular array $\{X_{t,T}| t=1,\ldots, T; T \in \nnum\}$ can - under the stated conditions - be completely described by $\F$, we refer to $\F$ as the (time-varying) spectral density operator of the process.} \\
For fixed $(u,\omega) \in  [0,1] \times [-\pi,\pi]$, let $\{ 
\dot{\lambda}^{(u,\omega)}_i : i \ge 1\}$, denote the sequence of eigenvalues in descending order and let  $\{\phi^{(u,\omega)}_i: i \ge 1\}$ denote the corresponding eigenfunctions. We consider the following representation of the  eigendecomposition of  the operator 
\begin{equation}
\label{evd}  
\F_{u,\omega} = 
 \sum_{j=1}^{\infty }  \lambda^{(u,\omega)}_{j} \Pi^{(u,\omega)}_j  \in S_1(\Hi)^+,  
\end{equation}
where
$\lambda^{(u,\omega)}_{1} > \lambda^{(u,\omega)}_{2}
> 
%\lambda^{(u,\omega)}_{3}
  \ldots > 0$ are the distinct
  % ordered 
  eigenvalues   
and where $
\Pi^{(u,\omega)}_j = \sum_{k \in \{i: \dot{\lambda}^{(u,\omega)}_{i} =\lambda^{(u,\omega)}_{j}\}} \phi^{(u,\omega)}_k \otimes \phi^{(u,\omega)}_k  
$ denotes the projection operator onto the $j$th eigenspace.

\subsection{Some measure of model deviation in the second order dynamics} \label{sec24}

\purp{In this section, we introduce 
several measures of model deviation which can be represented as a functional, say $\mathcal{T}$, 
of the  collection of time-varying spectral density operators defined in \eqref{tvSDO}.}

\subsubsection{Time-varying Dynamic Functional  principal component analysis (tvDFPCA)} \label{sec222}

Similar to classical PCA, the goal of tvDFPCA - which we introduce here - is to provide an optimal lower-dimensional representation of the nonstationary process by applying an appropriate ``filter''. Unlike dynamic FPCA (\cite{Hormann2015,PanTav2013,vDE19}),
tvDFCPA  takes into account the changing temporal dynamics. More specifically, it follows from the latter work that, for fixed $u$, the functional Cramer representation of 
 the approximating stationary process $\{X_t^{(u)}|t \in \mathbb{Z};u\in[0,1]\} $ 
 is given by 
 \begin{equation}
X^{(u)}_t= \int_{-\pi}^{\pi} e^{\im \omega t} d Z_{u,\omega} ~~~ a.e. \text{ in } \Hi  ~,  \label{eq:tvCram}
 \end{equation}
where   $\{Z_{u,\omega}: \omega \in (-\pi,\pi] \}$ is an $\Hi$-valued right-continuous orthogonal increment process for each $u \in [0,1]$.  Provided $u \mapsto X_t^{(u)}$ is continuous, it can be shown \citep[similarly to Theorem  B2.2 of][]{vde16} that  $u \mapsto Z_{u,\cdot}$ is continuous as well.
Define the Bochner space $L^2_{S_\infty(\Hi)}([0,1]\times [-\pi,\pi], \mu_{\mathfrak{F}})$ where  $\mu_{\mathfrak{F}}(E)= 
 \int_{E} \norm{\F_{u,\omega}}_{S_1} d\omega du$
for all Borel measurable sets $E\subseteq [-\pi,\pi]\times [0,1]$.
%, that is, the space of all  strongly measurable functions $A: [0,1]\times [-\pi,\pi] \to S_\infty(\Hi)$ 
%such that \\$ \int_0^1 \int_{-\pi}^\pi \norm{A_{u,\omega}}^2_\infty \norm{\F_{u,\omega}}_{S_1} d\omega du<\infty$.
Then, among all rank $d$ reductions of $\{X^{(u)}_t\}$ to a process $\{Y^{(u)}_t\}$ with representation 
$Y^{(u)}_t = \int_{-\pi}^{\pi} e^{\im \omega t} A_{u,\omega} d Z_{u,\omega}$\,,  $\text{rank}(A_{u,\omega} ) \le d$, we have
\[
\E \|X^{(u)}_t - Y^{(u)}_t\|^2_\Hi \ge \E\norm{X^{(u)}_t-\breve{X}^{(u)}_t }^2_\Hi = \int_{-\pi}^{\pi}
 \sum_{j=d+1}^{\infty} \lambda^{(u,\omega)}_{j} d\omega \tageq\label{eq:tvFPCAerror}~,
\]
where $\breve{X}^{(u)}_t $ yields the optimal representation of the process at time $u$ with $d$ degrees of freedom, which is given by 
\[
\breve{X}^{(u)}_t = \int_{-\pi}^{\pi} e^{\im \omega t} \sum_{j=1}^{d} \Pi^{(u,\omega)}_j d Z_{u,\omega},  \tageq\label{eq:tvFPCAproc}
\]
where $\lambda^{(u,\omega)}_j$ and $\Pi^{(u,\omega)}_j$ denote, respectively, the $j$th largest eigenvalue and corresponding projector of $\F_{u,\omega}$ as defined in \eqref{tvSDO}.
From \eqref{eq:tvFPCAerror}, we can define a normalized measure of total variation explained by the principal $d$ directions over %the  time-frequency interval
 $[0,1] \times [a,b]$ by \begin{equation}
  \label{eq20}  
s_d=
\mathcal{T}  ( {\F}) :=  \frac{\int_0^1 \int_{a}^{b}  \sum_{j=1}^{d} \lambda^{(u,\omega)}_{j} d\omega du }{\int_0^1 \int_{a}^{b}
 \sum_{j=1}^{\infty} \lambda^{(u,\omega)}_{j} d\omega du}~, 
\end{equation}
where the last equality defines the functional 
$\mathcal{T} $  of the spectral density operator $ {\F}$ explicitly.
\purp{Statisticians may be interested in  a confidence interval for $s_d$. A further important question is  
if (for    fixed $d=d_0$ and $\nu \in [0,1]$)
the first $d_0$ principal  components 
explain at least $100\cdot \nu \%$  of variation. This problem can be addressed by 
testing the hypotheses 
$H_{0}^{(d_{0})}: s_{d_{0}} \leq \nu $    versus  $H_{1}^{(d_{0})}: s_{d_{0}}  > \nu$.
Moreover, one might be also  interested in  estimating the minimal rank $d^*$ such 
 that $s_{d^*} > \nu$.
 }

 \subsubsection{Time-varying dynamic principal separable component analysis} \label{sec223}
 
Another application of interest is a measure  for the degree of separability. To make this more precise, we introduce the \textit{separable component decomposition} of ${\F}_{u,\omega}$ in spirit of \cite{Masak2020}. Let $\Hi =\Hi_1 \otimes \Hi_2$, then it is known that we have the following isometric isomorphisms
\[
\Hi_1 \otimes \Hi_2 \otimes \Hi_1 \otimes \Hi_2 \cong S_2(\Hi, \Hi) \cong S_2(\Hi_2 \otimes \Hi_2, \Hi_1 \otimes \Hi_1) \cong  S_2(\Hi_1) \otimes S_2(\Hi_2)~.
\]
Furthermore, we recall the definition of the Kronecker tensor product $A\widetilde{\otimes}B$, which satisfies $(A\widetilde{\otimes}B)C=ACB^\dagger$, $A,B,C \in S_r(\Hi)$, and which essentially entails a permutation of the dimensions, i.e.,  for $a,a^\prime \in \Hi_1$, $b,b^\prime \in \Hi_2$, we have $
\big((a \otimes a^\prime) \widetilde{\otimes} (b \otimes b^\prime)\big)= \big((a \otimes b) {\otimes} (a^\prime \otimes b^\prime)\big)$.
When viewed as an element of $S_2(\Hi)^\dagger$, we have the eigendecomposition
\[
{\F}_{u,\omega}= \sum_{j=1}^{\infty} \dot{\lambda}^{(u,\omega)}_{j} \phi^{(u,\omega)}_{j} \otimes  \phi^{(u,\omega)}_{j} \tageq \label{eq:eigdec}~, 
\]
where $\{\phi^{(u,\omega)}_{j}\}_{j \ge 1}$ is an orthonormal basis 
(ONB) of $\Hi$. Alternatively, we may view this object as an element of the product space $S_2(\Hi_2 \otimes \Hi_2,  \Hi_1 \otimes \Hi_1)$, say $F_{u,\omega}$,  in which case we have the spectral decomposition ${F}_{u,\omega}=\sum_{j=1}^{\infty} \delta^{(u,\omega)}_{j} f^{(u,\omega)}_{j} \otimes  g^{(u,\omega)}_{j}$ where $\{f^{(u,\omega)}_{j}\}_{j \ge 1}$ is an ONB of $\Hi_1 \otimes \Hi_1$ and $\{g^{(u,\omega)}_{j}\}_{j \ge 1}$ is an ONB of $\Hi_2 \otimes \Hi_2$. If we view ${F}_{u,\omega}$ as an element of $S_2(\Hi_1) \otimes S_2(\Hi_2)$, we can write the latter as 
${F}_{u,\omega}=\sum_{j=1}^{\infty} \delta^{(u,\omega)}_{j} A^{(u,\omega)}_{j} \otimes B^{(u,\omega)}_{j}$
such that a self-adjoint version ${\F}_{u,\omega} \in S_2(\Hi,\Hi)$ is given by
\[
{\F}_{u,\omega}= \sum_{j=1}^{\infty} \delta^{(u,\omega)}_{j} A^{(u,\omega)}_{j} \widetilde{\otimes} B^{(u,\omega)}_{j}, \tageq \label{eq:sepdec}
\]
We refer to \eqref{eq:sepdec} as the \textit{separable component decomposition} of ${\F}_{u,\omega}$. It is worth emphasizing that \eqref{eq:sepdec} and \eqref{eq:eigdec} are different decompositions of the same operator. The operator is called separable of degree $d$  if all but the first $d$ scores $\delta^{(u,\omega)}_{j}$ are zero. Note that this is a generalization of the notion of a separable covariance operator \citep[see][among others]{AstPigTav2017,BagDette2017,conkokrei2017}, which in our setting would  indicate only the first score $\delta^{(u,\omega)}_{1}$ is nonzero. We therefore  define
\begin{align}
\label{eq:sd_SPCA}
s_d = \mathcal{T}  ( {\F}) & :=  
\frac{\int_{0}^1 \int_{a}^{b} \sum_{j=1}^{d} \big(\delta^{(u,\omega)}_{j}\big)^2 du d\omega}{\int_{0}^1 \int_{a}^{b} \sum_{j=1}^{\infty} \big(\delta^{(u,\omega)}_{j}\big)^2 du d\omega} 
%\\
%&= 1- \frac{\int_{0}^1 \int_{a}^{b} \bignorm{{\F}_{u,\omega}- \sum_{j=1}^{d} \delta^{(u,\omega)}_{j} A^{(u,\omega)}_{j} \widetilde{\otimes} B^{(u,\omega)}_{j}}^2_{S_2(\Hi)} du d\omega}{\int_{0}^1 \int_{a}^{b} \norm{{\F}_{u,\omega}}^2_{S_2(\Hi)} du d\omega} \nonumber 
\end{align}
as the (normalized) degree $d$ separability and are interested in investigating how much variation   
can be explained by a $d$-separable approximation.

\subsubsection{Functional canonical coherence} \label{sec224}

In many applications, it is   of interest to measure the degree of ``relationship'' between two functional time series.  To this end, consider the case where the Hilbert space arises as the direct sum of Hilbert spaces, i.e., 
$\Hi = \Hi_1 \oplus \Hi_2$. %, such that $\Hi_1 \perp \Hi_2$. 
Then, we may write 
$\{X_{t,T}| t=1,\ldots, T, T \in \nnum\}  $ = $ \{(X_{t,T,1}, X_{t,T,2})^\top| t=1,\ldots, T, T \in \nnum\} $, where we refer to $\{X_{t,T,j}| t=1,\ldots, T, T \in \nnum\}$ as the $j$th component process, which takes values in the Hilbert space $\Hi_j$ ($j=1,2$). The spectral density operator can be represented in the form
\begin{align*}
\F = \begin{bmatrix}
\F^{11} & \F^{12}\\
\F^{21} & \F^{22}
\end{bmatrix}
 =\Bigg\{ \begin{bmatrix}
\E \big(Z^{1}_{u,\omega} \otimes Z^{1}_{u,\omega}\big) &\E \big(Z^{1}_{u,\omega}\otimes Z^{2}_{u,\omega}\big)\\
\E \big(Z^{2}_{u,\omega} \otimes  Z^{1}_{u,\omega}\big) & \E \big( Z^{2}_{u,\omega} \otimes  Z^{2}_{u,\omega}\big)
\end{bmatrix}: \omega \in (-\pi, \pi], u \in [0,1] \Bigg\}~,
\end{align*}
where  $ \big \{ ( Z^{1}_{u,\omega} , Z^{2}_{u,\omega} \big ) ^\top \}_{t \in \mathbb{Z}}$
is the right-continuous  orthogonal  increment process in \eqref{eq:tvCram} corresponding to the 
approximating process $ \big  \{  X_t^{(u)}|t\in \znum; u\in [0,1] \big  \}  =
\big \{ ( X_{t,1}^{(u) },  X_{t,2}^{(u)})^\top \big  |t\in \znum; u\in [0,1] \big  \} $.  
The diagonal gives the time-varying component spectra, while the other entries give the time-varying cross-spectra, which satisfy $\F^{ij}_{u,\omega} 
= \E  (Z^{i}_{u,\omega} \otimes Z^{j}_{u,\omega} )
\in S_1(\Hi_j, \Hi_i)$. Let $P^{\Hi}_{i}$, $i=1,2$, denote the orthogonal projection of $\Hi=\Hi_1 \oplus \Hi_2$ onto $\Hi_i$. Then observe that 
 \[
 \F^{ij}_{u,\omega}= P^{\Hi}_i{\F}_{u,\omega} P^{\Hi}_j
 \]
so that $\F^{ij}_{u,\omega}$ is simply the restriction of ${\F}_{u,\omega} $ to $\Hi_i $ and $\Hi_j$, $i,j=1,2$. The cross-spectra capture the co-variation of the component processes, and therefore provide a natural starting point to construct a measure that quantifies the linear association between the two component processes. However, classical formulations of coherency measures (see e.g., \cite{b81}) do not naturally extend to the infinite-dimensional (nonstationary) setting because it then inherently becomes an ill-posed inverse problem. 
We refer to \cite{cup07,Yang2011} for approaches to measure functional canonical correlation in the context of functional data and references therein for related literature in the finite-dimensional setting. 

In what follows, we introduce a notion of time-varying functional canonical coherence. To make this more precise, consider finding functions $g \in \Hi_1$ and $h\in \Hi_2$ such that the co-variation between $ Z^{1}_{u,\omega}$ and 
$ Z^{2}_{u,\omega}$ is maximized, that is,
\[
\sup_{\|g\|_{\Hi_1}=1,\|h\|_{\Hi_2}=1} \Cov\big(\inprod{Z^{1}_{u,\omega}}{g}_{\Hi_1}, \inprod{Z^{2}_{u,\omega}}{h}_{\Hi_2}\big) =
\sup_{\|g\|_{\Hi_1}=1,\|h\|_{\Hi_2}=1} \inprod{\F^{12}_{u,\omega}(h)}{g}_{\Hi_1} = \nu^{u,\omega}_1  \tageq \label{eq:canco1} ~.
\] 
Here  $\nu^{u,\omega}_1$ is the largest singular value of the operator $\F^{12}_{u,\omega}$, since the supremum is attained for the eigenfunctions belonging to the largest eigenvalue of $\F^{11}_{u,\omega}$ and  $\F^{22}_{u,\omega}$, respectively, i.e.,  $g=\phi^{(u,\omega)}_{11,1}$ and $h=\phi^{(u,\omega)}_{22,1}$. This leads to the following notion of \textit{first order canonical coherence at time $u$ and frequency $\omega$}
\[
\mathcal{R}^{u,\omega}_{1} =\frac{\Cov\big(\inprod{Z^{1}_{u,\omega}}{\phi^{(u,\omega)}_{11,1}}_{\Hi_1}, \inprod{Z^{2}_{u,\omega}}{\phi^{(u,\omega)}_{22,1}}_{\Hi_2}\big)}{\sqrt{\Var \big (\inprod{Z^{1}_{u,\omega}}{\phi^{(u,\omega)}_{11,1}}_{\Hi_1} \big ) \Var \big (\inprod{Z^{2}_{u,\omega}}{\phi^{(u,\omega)}_{22,1}}_{\Hi_2} \big )} } = \frac{\nu_1^{u,\omega}}{\sqrt{\lambda^{u,\omega}_{11,1} \lambda^{u,\omega}_{22,1}}}
\]
where $\lambda^{u,\omega}_{ii,1}$ 
is the largest eigenvalue of the operator $\F_{u,\omega}^{ii}$ ($i=1,2$). 

Repeating the maximization argument in \eqref{eq:canco1} on sequences of orthogonal complements, we find that the 
\textit{$d$th order canonical coherence at time $u$ and frequency $\omega$} is given by the variable 
$$
\mathcal{R}^{u,\omega}_{d} = \nu_d^{u,\omega}/\sqrt{\lambda^{u,\omega}_{11,d} \lambda^{u,\omega}_{22,d}}.
$$
We may therefore define as a measure of $d$th order canonical coherence over the  time-frequency interval $[0,1] \times [a,b]$ by  
\[
s_{d} =
\mathcal{T}  ( {\F}) :=  \int_0^1 \int_a^b \mathcal{R}^{u,\omega}_{d} d\omega du~.
\]
\purp{
Typical quantities of interest
 are confidence intervals for $s_d$ or tests  for  \textit{relevant $d$th order coherency on the given set of the time-frequency plane}, that is 
$H_0: s_d \leq \nu $ versus $H_1: s_d > \nu $.
Note that  rejecting $H_0$ allows us to conclude at a controlled type I 
error that the first order canonical coherence exceeds $\nu$.
}

\subsubsection{Measuring deviations from stationarity using  the square root distance}
 \label{sec221}

The hypothesis of second-order stationarity can be described by the property that  the spectral density operator    $
\F =\{\F_{u,\omega}\}_{u\in [0,1], \omega \in [-\pi,\pi]}  $  in \eqref{tvSDO} 
of the approximating process
is independent of $u$. Therefore, it is reasonable to measure deviations from stationarity by the  distance between 
$\F$ and its ``best'' approximation
by a  time-independent  spectral density operator, say $\G =\{\G_{\omega}\}_{\omega \in [-\pi,\pi]}$. For this approach, the choice of 
an appropriate distance is crucial. Several authors have considered  the Hilbert-Schmidt metric for comparing 
covariance structures from two (independent) groups of 
functional data \citep[see for example][]{fremdt2013,Panaretos2010,detkokvol2020},
and   \cite{vDCD18} used this metric to develop a test for stationarity of a functional time series. 
However, as pointed out by \cite{Pigolietal2014}  such an extrinsic approach ignores the geometry
of the space of the space of covariance operators. Therefore, these authors proposed several alternatives metrics
including the largest absolute eigenvalue, the Procrustes metric and  the square root distance.
\cite{Drydenetal2009}
demonstrated that   the latter  distance   produces good results in the finite-dimensional
setting and we will use it here to measure deviations from stationarity.
To be more precise, denote the square root distance between two operators  $A$ and $B$ in ${S_1(\hi)}^+ $ by
\[
d_R(A,B)= \norm{A^{1/2}- B^{1/2}}_{S_2(\hi)}. \tageq \label{eq:DR}
\]
Then we consider  the best approximation of $\F$ in the following sense
\begin{align}
\label{eq:prostat1}
r= r(\F)  =  \min_{\G   } \int_{0}^\pi \int_{0}^1\bignorm{\F^{1/2}_{u,\omega} -\G^{1/2}_{\omega}}^2_{S_2(\Hi)} du d\omega,
\end{align}
 where the minimum
is taken over all functions $\G: [0,\pi] \to S_1(\Hi)^+$.  
The following result (see Section  \ref{sec5}) gives an explicit expression for $r$ in terms of $\F$.

\begin{lemma}\label{optimalprocr}
The minimum in \eqref{eq:prostat1} is attained for 
$\ddot{\F} =\{  \ddot{\F}_{\omega}\}_{\omega \in [0, \pi]} $, where 
$\ddot{\F}_{\omega} :=(\int_0^1 \F^{1/2}_{u,\omega}du)^2$
and
$r$ is given by  
\begin{align}
\nonumber 
r 
%& = \int_{0}^\pi \int_{0}^1   \Tr(\F_{u,\omega}) du d\omega 
%-  \int_{0}^\pi   \Tr \Big (\big ( \int_0^1 \F^{1/2}_{u,\omega}du  \big )^2  \Big ) d \omega 
% \\
%&
= \int_{0}^\pi \int_{0}^1   \Tr(\F_{u,\omega}) du d\omega 
-  \int_{0}^\pi   \Tr \big (  \ddot{\F}_{\omega} \big )   d \omega. 
\label{eq:prostat}
\end{align}
\end{lemma}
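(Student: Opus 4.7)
The plan is to observe that the optimization in \eqref{eq:prostat1} is pointwise in $\omega$ and to recast the inner minimization as a nearest-point problem in $S_2(\Hi)$. Fix $\omega$ and let $M_\omega = \int_0^1 \F^{1/2}_{u,\omega} du$. Expanding the Hilbert--Schmidt square using self-adjointness of both $\F^{1/2}_{u,\omega}$ and $\G^{1/2}_\omega$ gives
\begin{align*}
\bignorm{\F^{1/2}_{u,\omega} - \G^{1/2}_\omega}^2_{S_2(\Hi)} = \Tr(\F_{u,\omega}) - 2\Tr\bigl(\F^{1/2}_{u,\omega}\G^{1/2}_\omega\bigr) + \Tr(\G_\omega),
\end{align*}
and integrating over $u \in [0,1]$ (using Fubini, which is justified since $u \mapsto \F^{1/2}_{u,\omega}$ is continuous in $S_2(\Hi)$ on a compact set) yields
\begin{align*}
\int_0^1 \bignorm{\F^{1/2}_{u,\omega} - \G^{1/2}_\omega}^2_{S_2(\Hi)} du = \int_0^1 \Tr(\F_{u,\omega}) du - 2\Tr\bigl(\G^{1/2}_\omega M_\omega\bigr) + \norm{\G^{1/2}_\omega}^2_{S_2(\Hi)}.
\end{align*}

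Completing the square in $S_2(\Hi)$, the $\G$-dependent part equals $\norm{M_\omega - \G^{1/2}_\omega}^2_{S_2(\Hi)} - \norm{M_\omega}^2_{S_2(\Hi)}$, and since $M_\omega$ is itself an element of $S_1(\Hi)^+$ as a Bochner integral of non-negative self-adjoint operators, the choice $\G^{1/2}_\omega = M_\omega$, i.e.\ $\G_\omega = M_\omega^2 = \ddot{\F}_\omega$, drives this term to zero. Any other choice leaves a strictly non-negative residual, so this gives the pointwise (hence global) minimizer.

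Substituting back and integrating over $\omega$, the optimal value becomes
\begin{align*}
r = \int_0^\pi \int_0^1 \Tr(\F_{u,\omega}) du\, d\omega - \int_0^\pi \norm{M_\omega}^2_{S_2(\Hi)} d\omega,
\end{align*}
and since $\norm{M_\omega}^2_{S_2(\Hi)} = \Tr(M_\omega^2) = \Tr(\ddot{\F}_\omega)$ by self-adjointness of $M_\omega$, this matches the claimed expression.

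The only step requiring care is the justification that $M_\omega \in S_1(\Hi)^+$ is an admissible square root, i.e.\ that it is a well-defined Bochner integral with values in the cone of non-negative trace-class operators, and that interchanging $\Tr$ with the integral is valid. The former follows from continuity of $u \mapsto \F^{1/2}_{u,\omega}$ in the trace-class norm (consequence of Lipschitz continuity of $u \mapsto \C_h^{(u)}$ and hence of $u \mapsto \F_{u,\omega}$, together with operator-monotonicity of the square root) and closedness of $S_1(\Hi)^+$ under integration; the latter follows since $\Tr$ is a bounded linear functional on $S_1(\Hi)$. No other subtleties arise, as the pointwise minimization in $\omega$ is unconstrained and convex.
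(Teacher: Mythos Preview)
Your argument is correct and is precisely the minimum-distance / Hilbert-space-projection argument the paper invokes (it simply cites the ``minimum distance principle'' from \cite{vDCD18} without spelling out the computation). One small slip: $\F^{1/2}_{u,\omega}$ is only guaranteed to lie in $S_2(\Hi)^+$, not $S_1(\Hi)^+$ (e.g.\ eigenvalues $n^{-2}$), so $M_\omega \in S_2(\Hi)^+$ and the continuity you need for the Bochner integral is in $S_2$-norm, not trace-class norm; this is harmless since what your argument actually requires is $\G_\omega = M_\omega^2 \in S_1(\Hi)^+$, which follows from $M_\omega \in S_2(\Hi)^+$.
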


\subsection{Deviations from structural assumptions} \label{sec22}

\purp{In the previous section, we expressed  important measures of model deviation involving the second-order dynamics as functionals of the collection of time-varying spectral density operator $\F$ defined in \eqref{tvSDO}. 
In this section and the subsequent section, we discuss several  inference problems related to measures of this type.
}

\purp{To be precise, consider  the  class, say ${\cal P}$, of  locally stationary functional time series models introduced in Section \ref{sec21}. 
Suppose that $X=\{X_{t,T}| t=1,\ldots, T; T \in \nnum\}$ is an element of the class ${\cal P}$, but that 
we want to work under an additional structural assumption. }
We denote by  ${\cal P}^\prime \subset {\cal P}$ the class of  models satisfying this 
 assumption. \purp{A typical example is a finite rank assumption on the operators in \eqref{evd}  made in (dynamic) FPCA, but   several other examples were also given in Section \ref{sec24}. }
We assume that we can measure the deviation between an ``optimal'' model from the class  ${\cal P}^\prime$ and  
the ``true'' model in ${\cal P}$
by a functional of the spectral density operator  defined in \eqref{tvSDO}, say
 \begin{eqnarray} \label{h2} 
  r= 
%  _{d_0} 
  %=1 - s_{d_0}  =  1- 
  \mathcal{T}  ( {\F}) ,~
 \end{eqnarray}
 where $\mathcal{T}$   defines a mapping from  the set of all  time-varying spectral density operators  onto the non-negative real line. Note that $r=0$ if the process $X$ is already an element of the class 
$
{\cal P}^\prime$.

The measure $r$ provides important information about the loss resulting from the simplified model assumption, and we are interested in  quantifying   the (statistical) error if an experimenter decides to work with 
a model from the class  ${\cal P}^\prime$ instead of  from the more general class  ${\cal P} $ (e.g., to reduce computational costs). A  confidence interval for the measure $r$
would be a useful tool to quantify the statistical uncertainty of such a decision.
Alternatively, it  might also be of interest to investigate if the 
model deviation (measured by $r$)  is smaller than a  pre-specified constant, say $\Delta> 0$. In order to make this decision   a test for the hypotheses
\begin{equation} \label{deq14b}
H_{0}: r\leq \Delta    ~~{\rm versus } ~~H_{1}: r  > \Delta     ~
\end{equation}
would be desirable.
Rejection of the  null hypothesis in \eqref{deq14b} allows 
 to decide at a controlled type I error  that 
 the deviation of  models 
 from the   class ${\cal P}^\prime$
 is  larger than the given threshold $\Delta >0  $. This threshold 
thus defines the maximum ``loss'' in precision that one is willing to tolerate if one works with the simplified model from the class
 ${\cal P}^\prime$, and therefore its value depends on the specific application. 
 We also emphasize that we do not discuss the case $\Delta =0$ in this paper. Rather, we consider the situation where the experimenter does not believe that the given process
 is an element of the class 
  ${\cal P}^\prime $, but wants to make this assumption to simplify the statistical analysis without a too large model bias.
 
If an estimator of the spectral density operator, say $\hat {\F} $, is available
(an appropriate estimator will be defined in Section \ref{sec32}), these questions can be addressed estimating 
the measure 
$r$  in \eqref{h2} by  the statistic  $\mathcal{T}  ( \hat {\F}) $.  Uncertainty quantification via this approach requires some 
knowledge of the distribution of $\mathcal{T}  ( \hat {\F}) $, at least in an asymptotic scenario.
It turns out that  in general  the centered statistic $\mathcal{T}  ( \hat {\F}) -\mathcal{T}  ({\F}) $ (standardized appropriately) does not converge weakly.
However, in \autoref{sec3} we derive the asymptotic distribution of the estimator 
$\mathcal{T}_T  ( \hat {\F})  - \mathcal{T}  (  {\F})  $ (standardized appropriately), where $\mathcal{T}_T$ is an approximation of  $\mathcal{T}$, which depends on the sample size.
Although this result provides the first  step in developing  statistical inference methods for the functional $r = \T (\F )$,  the typically 
complicated structure  of the functional $\mathcal T$ (see the examples in Section \ref{sec24}) and 
the nonstationarity of the process 
make a direct application not feasible; the limiting distribution generally 
depends on several nuisance parameters that are impossible to  estimate. In Section \ref{sec3}, we therefore develop statistical theory that  circumvents this issue, and propose the resulting  statistical inference tools in Section \ref{sec4}.

\subsection{How much model complexity is really needed?} \label{sec23}
 
More generally, consider  an increasing  sequence of (possibly simpler) model classes  
\begin{equation} \label{h1}
{\cal P}_{1} \subset {\cal P}_{2}  \subset {\cal P}_{3} \subset  \ldots  \ldots  \subset {\cal P}    
\end{equation}
 approximating the ``true''  model
 from the class ${\cal P}$, and let
   $r_{d} ={\cal T}_d (\F)$, $d \in \mathbb{N}$, denote a  measure  of the form 
 \eqref{h2} for the deviation between the ``optimal'' model  in ${\cal P}_{d} $ 
 and the given locally stationary functional time series  in the class
 ${\cal P}$. 
 In certain applications, we have
 $r_d \in  [0,1]$ such that 
$  s_{d} =1- r_{d}  \in [0,1]$
satisfies
$s_{1} <   s_{2}  < s_{3}  <  \ldots $   and  $\lim_{d \to \infty} s_{d} =1$. Note that $s_{d}$ can be interpreted as a measure for the quality of the approximation of the process
 by models from the class  $ {\cal P}_{d}$, which is $1$ if the process is in fact 
 an  element of the class   ${\cal P}_{d} $
 (in this case the sequence is terminating).
 Typical examples  include dynamic FPCA  \citep{Hormann2015} and principle separable  components \citep{Masak2020}, which {we introduced for the}  nonstationary framework 
 in Section \ref{sec222}  and \ref{sec223}, respectively. 
 Assume that an experimenter decides  to work with a simplified model from  the class
${\cal P}_{d_{0}}$ instead of from ${\cal P}$. As  $s_d$ specifies the quality of approximation of the model from the class  ${\cal P}_d$,  
it might be of interest to investigate if  $s_{d_{0}}$ is larger than
$\nu$, where $\nu \in (0,1)$ is  a pre-specified constant (for example $95\%$). In order to make this decision at a controlled type I error we 
propose to construct a  test for the hypotheses
\begin{equation} \label{deq14}
H_{0}^{(d_{0})}: s_{d_{0}} \leq \nu    ~~{\rm versus } ~~H_{1}^{(d_{0})}: s_{d_{0}}  > \nu    ~.
\end{equation}
For example, in the case of (functional) PCA   the quantity 
$s_{d_{0}}$ could  represent the (relative) amount of variation explained by the first $d_{0}$ eigenvalues, in which case \eqref{deq14} allows to conclude if - at a controlled type I error - the  first $d_{0}$ (functional) principal components 
explain  at least  $100 \cdot \nu \%$  of the total variation.

On the other hand,  it is  often of interest 
to identify  the ``most simple''  class of 
models, say ${\cal P}_{d^{*}}$,  such that 
a model  from  ${\cal P}_{d^{*}}$ is sufficiently  close to the given model
from the class ${\cal P}$.
If the  quality of approximation  of the given model by  models from the class ${\cal P}_{d}$  is quantified by the measure 
$s_{d}$, we can define $d^{*}$ by 
\begin{equation}
\label{dstar}
d^{*} = \min \big \{ d \in \mathbb{N} ~|~s_{d} > v \big  \}  = \min \big  \{ d \in \mathbb{N} ~|~1- s_{d} < 1-  v \big  \}  ~
\end{equation}
as the minimal model complexity such that $s_{d}^* > v$, and an important problem  
 is to  estimate   $d^*$ from the given data. An estimator of $d^*$, say $\hat d$,
 will be discussed in Section \ref{sec42},  and it is of importance 
 to control the probabilities of under- and overestimating $d^*$. Moreover, in the same section 
we also address the problem of investigating if a pre-specified class of models, say ${\cal P}_{d_{0}}$,
satisfies 
$ {\cal P}_{d^{*}} \subseteq  {\cal P}_{d_{0}} $ or  ${\cal P}_{d_{0}} \subsetneqq  {\cal P}_{d^{*}}$
by testing the hypotheses  
 \begin{equation}\label{hx0}
   H_0: d^* \leq d_0 \qquad \qquad H_1: d^* > d_0.
 \end{equation}

\section{Methodology} \label{sec3}

\def\theequation{3.\arabic{equation}}
\setcounter{equation}{0}

 We now explain our general approach and present the main results. In Section \ref{sec31}, we begin by stating 
the technical assumptions on the processes.
\purp{In Section \ref{sec32}, we introduce a sequential estimator of $\F_{u,\omega}$ and - after presenting an illustrative  example in Section \ref{sec32a} - our main results are given in 
Section \ref{sec34}. There, we develop an estimator of  a rather general functional $\T ( \F )$, which has (after an appropriate standardization) an asymptotically
 pivotal distribution. Finally, we demonstrate in Section \ref{sec35} that  
the developed methodology is applicable to  the measures that were introduced in Section \ref{sec24}.}

\subsection{Technical  assumptions}
 \label{sec31}

We consider processes that admit representations of the form 
\[
X_{t,T}=\mathfrak{G}\big(t,T,\mathfrak{f}_t \big),  \tageq\label{eq:bershift}
\]
where $\mathfrak{f}_t =(\epsilon_t, \epsilon_{t-1},\ldots )$ for an i.i.d. sequence $\{\epsilon_t| t\in\mathbb{Z}\}$  of random elements taking values in some measurable space $S$, and where  $\mathfrak{G}:\znum\times \mathbb{N} \times S^{\infty} \to \Hd$ is a measurable function.  The type of mixing conditions we impose are generalizations of those in \cite{vD19} to nonstationary processes, which are of the same nature as the physical dependence measure introduced in \cite{Wu05}. To this end, consider $\mathfrak{f}_{t,\{k\}} =(\epsilon_t, \epsilon_{t-1}, \ldots, \epsilon_{k+1}, \epsilon^{\prime}_{k}, \epsilon_{k-1}, \ldots)$ for some independent copy $\epsilon^\prime_k$ of $\epsilon_k$, and denote the corresponding filtrations by $\G_{t}=\sigma(\mathfrak{f}_t)$ and $\G_{t,\{k\}}=\sigma(\mathfrak{f}_{t,\{k\}})$, respectively.

\begin{assumption} \label{as:depstrucnonstat}
The process $\{{X}_{t,T}\}_{t\in \znum, T\in \mathbb{N}}$ satisfies \autoref{def:locstat}, $\sup_{t,T}\|X_{t,T}\|_{\hi,p} < \infty$ for some $p>0$, and admits the representation \eqref{eq:bershift}. The series' dependence structure is specified through the dependence measure
\begin{align}
\label{eq:as32}
\nu^{X_{\cdot,\cdot}}_{\hi,p}(k,T)=
 \sup_{t \in \znum}\bignorm{X_{t,T}- \E[X_{t,T}|\G_{t,\{k\}}]}_{\Hi,p} 
\end{align}
The dependence structure of the process satisfies 
$\sum_{k=0}^\infty \sup_{T \in \nnum} \nu^{X_{\cdot,\cdot}}_{\hi,p}(k,T)<\infty$
\end{assumption}

If  $X_{t,T} = \mathfrak{G}(\mathfrak{f}_t)$ for some filter $\mathfrak{G}$, the 
process $\{X_{t,T}\}_{t,T} $ is in fact stationary 
and \eqref{eq:as32}  reduces to  $\nu^{X_{\cdot,\cdot}}_{\hi,p}(k)=
\norm{X_{k}- \E[X_{k}|\G_{k,\{0\}}]}_{\Hi,p}$. For the approximating process, we  assume
\begin{assumption} \label{as:depstruc}
{Let $p > 0$}.
The auxiliary process $\{{X}^{(u)}_{t}\}_{t\in \znum, u \in [0,1]}$ is continuous in $u \in [0,1]$ and satisfies $\sup_{u \in [0,1]}\|X^{(u)}_{t}\|_{\Hi ,p} < \infty$. For all $u \in [0,1]$, it admits a representation
 $X^{(u)}_{t}=G\big(u,\mathfrak{f}_t\big)$
 where $G:[0,1] \times S^{\infty} \to \Hd$ is a measurable function. 
The series' dependence structure satisfies 
\begin{itemize}
\item[I)]
$
\sum_{k=0}^{\infty}\nu^{{X}^{\cdot}_{\cdot}}_{\hi,p}(k)< \infty$  where $
\nu^{{X}^{\cdot}_{\cdot}}_{\hi,p}(k)=\sup_{u \in [0,1]}\|{X}^{(u)}_k -\E[{X}^{(u)}_k|\G_{k,\{0\}}] \|_{\hi,p};
$ 
\item[II)]
There exists an orthonormal basis $\{e_\ell\}$ of $\Hi$ such that $\sum_{\ell \in \nnum}\sum_{k=0}^\infty  \nu^{X^{\cdot}_{\cdot}(e_\ell)}_{\cnum,p}(k)<\infty$, 
where 
\begin{align}
   \label{eq:as33} 
\nu^{X^{\cdot}_{\cdot}(e_\ell)}_{\cnum,p}(k) =\sup_{u\in [0,1]}\bignorm{\inprod{e_\ell}{X^{(u)}_{k}}- \E[\inprod{e_\ell}{X^{(u)}_{k}}|\G_{k,\{0\}}]}_{\cnum,p}.
\end{align}
\end{itemize}
\end{assumption}
Note that I) is weaker than II) (see also Remark \ref{rem:S1vsS2}). Once   \autoref{as:depstruc}~I) is satisfied for some $p \ge 2$, then the fact that $\norm{f\otimes g}_{S_1}=\norm{f}_{\Hi}\norm{g}_{\Hi}$, $f,g\in \Hi$ (see \autoref{prop:S1inHprod}), orthogonality of the  projections and Jensen's inequality yield that the lag-covariance operators in \eqref{deq40} satisfy 
\begin{align*}
\sup_{u\in [0,1]}\sum_{h=0}^{\infty} \norm{C^{(u)}_h}_{S_1} 
 \le \big (\sum_{j=0}^{\infty} \nu^{{X}^{\cdot}_{\cdot}}_{\hi,p}(j) \big)^2 <\infty. \tageq \label{eq:tracecov}
\end{align*}
To control the bias of functionals of the partial sum estimator that is introduced below, we furthermore impose the following smoothness conditions.
\begin{assumption}\label{as:smooth}
$\sum_{\ell \in \nnum}\sum_{k=0}^\infty k^{\iota}  \nu^{X^{\cdot}_{\cdot}(e_\ell)}_{\cnum,p}(k)<\infty$, $\iota \ge 2$ and 
for almost every $\omega$, the mapping 
$u\mapsto \F_{u,\omega}$ is twice Fr{\'e}chet differentiable with $\sup_{u}\norm{\frac{\partial^2 }{\partial u^2}\F_{u,\omega}}_{S_1} <\infty$. 
\end{assumption}

\noindent
The first ensures that $\sum_{\ell \in \nnum} \sup_{u\in [0,1]}\sum_{h=0}^{\infty} h^{\iota} \norm{C^{(u)}_h(e_\ell)}_{\Hi} <\infty$, whereas the second is a sufficient condition to control the bias that arises from functionals that involve approximation of a time integral.

\begin{Remark}[{Conditions for weak convergence in $S_1(\Hi)$ versus  $S_2(\Hi)$}]\label{rem:S1vsS2}
{\rm While \autoref{as:depstruc}~I) suffices for weak convergence in the space of Hilbert-Schmidt operators, we require the stronger condition \autoref{as:depstruc}~II) to establish weak convergence in the space of trace class operators.
To grasp the rational behind these assumptions, we iterate back to a result from \cite{Ac70}, who studied CLT's for random variables with values in a Banach space $V$ with 
a Schauder basis of type $p$ \cite[see][Definition 3.1]{Ac70}.  If $\{Z_i\} \in V$ is an iid sequence with $\E\inprod{Z_i}{q}=0$ and $\E|\inprod{Z_i}{q}|^2<\infty$ for each $q\in Q$, where  $Q$ denotes a sequentially $w^\star$-dense subspace of $V^\prime$, then aforementioned author proved that a CLT holds if its covariance form  satisfies \[\sum_{j} \big(\Gamma(q_j,q_j)\big)^{p/2}=\sum_{j}(\E|\inprod{Z}{q_j}|^2)^{p/2}
<\infty
,  \quad \{q_j\} \in Q. \tageq \label{eq:DeAc}\] 
We remark that $S_{\infty}(\Hi)\cong (S_1(\Hi))^\prime$ and that $S_r(\Hi)$ is a Banach space with Schauder basis of type $r=\{1,2\}$, where the Schauder basis can be taken as $\{e_i \otimes e_j\}_{i,j\ge 1}$. We refer to Appendix \ref{prel} for details. For $S_1(\Hi)$,
condition \eqref{eq:DeAc} holds under \autoref{as:depstruc}II) for the covariance form belonging to $\hat{\F}_{u,\omega}$.  Indeed, in this case $\sum_j \norm{\inprod{e_j}{D^{u,\omega}_{0}}}_{\cnum,4}<\infty
$, where $D^{u,\omega}_0=\sum_{l=0}^\infty \big(\E[X_l^{(u)}|\G_{0}]-\E[X_{l}^{(u)}|\G_{-1}]\big) e^{-\im \omega l}$ of which permutations of the 4th order tensors define the limiting  covariance form of $\hat{\F}_{u,\omega}$ (see proof of \autoref{lem:fidis_dis}). It is worth emphasizing that for Gaussian data,  there exists an ONB such that $\E|\inprod{D^{u,\omega}_{0}}{e_j}|^4=O\big ( (\lambda^{u,\omega}_j)^2 \big ) $
, in which case \eqref{eq:DeAc} corresponds to  $\sum_j (\lambda^{u,\omega}_j)^{1/2}<\infty$.
}
\end{Remark}

\subsection{Estimation}
 \label{sec32}

\purp{
As mentioned in Section \ref{sec2}, our approach is based on a sequential 
estimator of the spectral density operator $\F$ defined in \eqref{tvSDO}. 
  To be  precise, 
  for $\eta \in [0,1]$, we 
  define the sequential estimator of  $\F_{u,\omega}$ by
   \begin{align*}
   \tageq \label{eq:Fint}
& \hat{\F}_{u,\omega}(\eta)=\frac{1}{\flo{\eta N}}\Bigg\{\sum_{s=1}^{\flo{\eta N}} \big({X}_{\te{u}+s}  \otimes\Big[ \sum_{t=1}^{\flo{\eta N}}  \tilde{w}^{(\omega)}_{\bf,s,t} {X}_{\te{u}+t}+ 
f(\eta,N) \tilde{w}^{(\omega)}_{\bf,s,\flo{\eta N}+1} {X}_{\te{u}+\flo{\eta N}+1} \Big]  
\\
&+
 f(\eta,N) 
{X}_{\te{u}+\flo{\eta N}+1}\otimes\Big[ \sum_{t=1}^{\flo{\eta N}}  \tilde{w}^{(\omega)}_{\bf,\flo{\eta N}+1,t} {X}_{\te{u}+t}+ \tilde{w}^{(\omega)}_{\bf,\flo{\eta N}+1,\flo{\eta N}+1} {X}_{\te{u}+\flo{\eta N}+1} \Big]  \Bigg\}, 
\end{align*}
where $f(\eta,N)= (\eta N-\flo{\eta N})$,
  $\te{u}=\tu{N}{u}=\flo{u T}-\flo{N/2}$,   $N=N(T)$ defines the neighborhood over which the process is approximately stationary, and where the weights are defined by
\begin{equation}
\label{eq1}
    \tilde{w}^{(\omega)}_{\bf,s,t}=(2\pi)^{-1}w(\bf(s-t)) e^{\im \omega (s-t)}
    \end{equation}
    for some function $w : \mathbb{R} \to \mathbb{R}$ supported on the interval $[-1,1]$, 
    and $\bf=b(N)$ is a bandwidth parameter.
We also remark that if $\flo{\eta N}=0$ then $\hat{\F}_{u,\omega}(\eta) =0 $.
This yields  the process
  \begin{align*} \tageq \label{eq:Fint1}
 \big  \{ \hat{\F} (\eta  ) ~| ~\eta \in I  \big  \} = 
  \big  \{ \hat{\F}_{u,\omega}(\eta ) ~| ~ u \in [0, 1] , \omega \in [0, \pi] , \eta \in I   \big 
  \} ~.
\end{align*} 
Observe that  the last three terms in \eqref{eq:Fint} ensure that  $\{ \hat{\F}_{u,\omega} (\eta) \}_{\eta \in I} \in C_{S_1}  $ for fixed $u \in [0,1]$, $\omega \in [-\pi, \pi]$.
Moreover, for $\eta=1$ we have
\begin{eqnarray}
    \label{hatF}
  \hat{\F}_{u,\omega} &:= &  \hat{\F}_{u,\omega}  (1) = \frac{1}{ N} \sum_{s,t=1}^N   \tilde{w}^{(\omega)}_{\bf,s,t} {X}_{\te{u}+s}  \otimes   {X}_{\te{u}+t}  \\
  \hat \F &:=& \hat \F (1)  =  \label{Fset}
       \{   \hat{\F}_{u,\omega} (1)  ~|~u \in [0,1]~,~
       \omega \in [0,\pi] \} ~.
~,
\end{eqnarray}
and thus these quantities will coincide. Therefore, we will  use both  notations $\hat{\F}_{u,\omega}$, $\hat{\F}_{u,\omega}(1)$ and 
$\hat \F $, $\hat \F (1)  $  simultaneously.\\ {Note that 
  $         \hat{\F} $
is  a Bochner integrable mapping from $[0,1] \times [-\pi,\pi]$ onto $S_1(\Hi)^+$, and recall that 
  $L^1_{S_1} ([0,1] \times [-\pi,\pi])$ denotes the space of all Bochner integrable functions from $[0,1] \times [-\pi,\pi] $ onto $S_1(\Hi)$. Let
$\mathcal{T}: L^1_{S_1} ([0,1] \times [-\pi,\pi]) \to  W$ denote a Fr{\'e}chet differentiable map,  
     where   $W$ is a separable Banach space}. We prove in Section \ref{sec34} that, under appropriate assumptions, the statistic  
   $ \rho_{T}M^{1/2} (\mathcal{T}_T (\hat \F ) - \mathcal{T} (\F) )$ converges weakly to a Gaussian random element, 
   say ${\cal W}$, in $W$. Here,  
   $\mathcal{T}_T (\hat \F )  $ is a discrete approximation 
   of the mapping $\mathcal{T} ( \F ) $, and  
   the normalizing sequence is given by  
   \begin{equation}
 \rho^2_T={ N \bf \over k_f}, 
\label{deq1}
\end{equation}
with $\kappa_f = \int_{-1}^{1}  w^2(x) dx$.
   Thus, if the   distribution of ${\cal W}$ were known, (asymptotically) valid  statistical inference could be based on this 
   distribution. However, in the examples discussed in Section \ref{sec2} (and in most other examples of interest) 
   the covariance operator of the   distribution of ${\cal W}$ has such a complicated structure that it becomes (nearly) impossible to estimate. These difficulties are  caused by the nonstationarity of the functional time series  and by the nature of the deviation measures. \\
   As an alternative, which will yield reliable results for a large class of deviation measures,
   we propose a non-standard self-normalization approach, {which 
   requires the previously introduced sequential version \eqref{eq:Fint} of the estimator \eqref{hatF}.} 
\\
}

\purp{We conclude this section stating the assumptions on the  weight function  and the parameter  $\bf$ in \eqref{eq1} that are required for the development of the asymptotic theory in the subsequent discussion.
\begin{assumption}\label{as:Weights}
$w$ is  an even bounded piecewise continuous function with compact support on $[-1,1]$  with $\lim_{x \to 0} w(x) =1$ and $w(x) - 1=O(x^\iota)$, $\iota\ge2$, as $x \to 0$.
\end{assumption}
\begin{assumption}\label{as:bandwidth}
Let $\zeta=1$. Assume that $N=T^\alpha$,
the bandwidth satisfies $\bf \simeq N^{-\kappa}, 1/(2\iota+1)< \kappa <1$ and the parameters $M$ and $N$ satisfy $N\to \infty$, $M\to \infty$ as $T\to \infty$  such that $M=o(N^{1-\kappa})$ and $N^{1-\kappa}=o(M^3)$. More specifically, we have the following cases
\begin{align*}
\begin{cases}
\alpha <\frac{2}{(2\iota+1)\kappa-1+3} \text{ and } M=o(N^{(2\iota+1)\kappa-1}) &
 \text{ if } \frac{1}{2\iota+1} < \kappa \le \frac{1}{\iota+1}, 
 \\
\alpha <\frac{2}{4-\kappa} & \text{ if } \frac{1}{\iota+1} \le \kappa < 1. 
\end{cases} \label{as:bw1}
\end{align*} 
\end{assumption}
We remark that if one is interested in
a fixed number of time points, then $N=o(T^{2/3})$ and $1/(2\iota+1) <\kappa<1$ suffice. The conditions on $\bf, M$ and $N$ are required to control the bias-variance trade-off under \autoref{as:smooth}, and to control the approximation error of the time integral by  a discrete sum of $M$ terms.  However, these assumptions can be relaxed depending on the involved form of the functionals and if, e.g.,  a higher degree of stationarity is known to hold ($\zeta>1$) or the process has shorter memory ($\iota >2$).  
}

\begin{Remark}[A consistent sequential  local covariance operator] \label{remnew}
{\rm 
Replacing $\tilde{w}^{(\omega)}_{\bf,s,t}$ in \eqref{eq1} with $w_{s,t}=\mathrm{1}_{s=t}$, where $\mathrm{1}_A$ denotes the indicator function of the set
$A$, reduces the statistic \eqref{eq:Fint}  to a sequential estimator of the local covariance operator 
${\C}^{(u)}_{0}$, which is defined in \eqref{deq40}. The method of proof can be straightforwardly adjusted to obtain that the results in this paper hold 
for this estimator under simplified assumptions.
}
\end{Remark}

\subsection{\purp{ Estimating integrals of the largest eigenvalue in time varying functional PCA}}
%{A motivating example}
\label{sec32a}

The main difficulty when analyzing the stochastic properties of an estimator, say  $\T ( \hat \F  ) $,  for the measure of deviation $\T (\F) $ in \eqref{h2} lies in the fact that the estimator
$ \hat \F   = \hat{\F} (1)  = \{   \hat{\F}_{u,\omega} (1) | u\in [0,1], ~
\omega \in [0, \pi]\}$ 
defined in \eqref{eq:Fint} does not converge weakly (after appropriate standardization) as a process
in $(u, \omega)$. This fact is well known, even  
in the case of real-valued and stationary data   \citep[see, for example,][]{LiuWu10}. As a consequence, classical tools of 
asymptotic statistics such as the functional delta method
are not directly applicable to analyze 
the asymptotic properties of the  statistic 
$ 
%\rho_{T}M^{1/2} (
\T ( \hat \F) - \T (\F) 
%)
$. 
On the other hand,  typical  deviation measures (e.g., those in Section \ref{sec24}) have some more structure, which can be used to derive   convergence results 
for the statistic $ \rho_{T}M^{1/2} (\T_T ( \hat \F) - \T (\F) )$, where the mapping $\T_T$ is a discrete approximation of $\T$, which is  defined in a sophisticated manner. A general mathematical description of this mapping  requires some  notation and assumptions, which will be introduced 
in Section \ref{sec34}. 

In order to illustrate the main idea, we first  explore an example in the context of tvDFPCA (see Section \ref{sec222}) in a little more detail. To be precise, recall the definition of  the measure $s_d$ in \eqref{eq20}. To ease the exposition, we focus on   the numerator of $s_d$ and set $d=1$ (the case $d>1$ is analogous). That is, we focus for a moment on statistical inference
on the quantity 
\begin{equation}
  \label{eq30}  
\int_0^1 \int_{a}^{b}    \lambda^{(u,\omega)}_{1} d\omega du \,,
\end{equation}
where $ \lambda^{(u,\omega)}_{1} $ is the largest eigenvalue in the eigendecomposition \eqref{evd}. As explained in
Section \ref{sec31}, our goal to obtain (asymptotically) pivotal  statistics requires sequential  estimators of the form \eqref{eq:Fint1}. Actually, for technical reasons, we consider the process  $ \{ \eta^x \hat{\F} (\eta  ) ~| ~\eta \in I \}  $ for some $x \ge 1$, which is an estimator of the process $ \{ \eta^x {\F}  ~| ~\eta \in I \}$.
Observing \eqref{evd}, we obtain for \eqref{eq30}
\begin{align} \nonumber
\int_0^1 \int_{a}^{b}  \eta^x  \lambda^{(u,\omega)}_{1} d\omega du  &= 
\int_0^1 \int_{a}^{b}   \Big \langle  \Pi_1^{(u,\omega)} ,  \eta^x \F_{u,\omega} 
 \Big \rangle  d\omega du   \\
 & = {\rm Tr }\Big  ( 
\int_0^1 \int_{a}^{b}    \Pi_1^{(u,\omega)}  (  \eta^x \F_{u,\omega}  ) 
 d\omega du   \Big )  =
 {\rm Tr }\big  ( \mathrm{L}^{a,b}  \circ {\cal G}_{\Pi_1} (\F) (\eta ) \big )~, 
  \label{eq33} 
 \end{align} 
 where $\Pi_1^{(u,\omega)}$ denotes the first eigenprojector of $\F_{(u,\omega)}$,
 $ {\cal G}_{\Pi_1}$ is an operator that maps the (constant) process $\eta \to \F   $
 to a new process $\eta \to  {\cal G}_{\Pi_1}(\F) (\eta)  =\{  {\cal G}_{\Pi_1, u , \omega } (\F) (\eta )  ~|~ u \in  [0,1], \omega  \in [0,\pi] \} $  defined (point-wise) by  
 ${\cal G}_{\Pi_1, u , \omega } (\F) (\eta )  = \Pi_1^{(u,\omega)}  (  \eta^x \F^{(u,\omega)} ) $, and 
 $\mathrm{L}^{a,b} $ maps the process ${\cal G}_{\Pi_1}$ to its integral with respect to time and frequency (the precise definitions  will be given in  
 a more general context below; see specifically equation \eqref{eq7} and \eqref{eq:Lin}).
 
 We can use a similar argument for the process  $\{\eta^{x} \hat \F (\eta) ~|~\eta \in I  \} $ to obtain
 \begin{align*} 
\frac{1}{M}\sum_{u \in \boldsymbol{U}_M}   \int_{a}^{b}  \eta^x  \hat \lambda^{(u,\omega)}_{1}  (\eta ) d\omega  
& = {\rm Tr }\Big  ( 
\frac{1}{M}\sum_{u \in \boldsymbol{U}_M}  \int_{a}^{b}   \hat   \Pi_1^{(u,\omega)}  (  \eta^x \hat  \F_{u,\omega } (\eta ) ) 
 d\omega    \Big )  
 \\&  =
 {\rm Tr }\big  ( \mathrm{L}^{a,b}_{U_M}  \circ {\cal G}_{\Pi_1} (\hat \F) (\eta )  \big )  + o_p\big (\tfrac{1 }{\rho_{T}M^{1/2}} \big ) ~,
 \tageq \label{eq32} 
 \end{align*}
 where
 $\hat \lambda^{(u,\omega)}_{1} (\eta ) $ and   $\hat   \Pi_1^{(u,\omega)} (\eta ) $ denote
 the largest eigenvalue and corresponding  projector of $\hat \F_{u,\omega} (\eta )$,
 respectively, and where 
\[
{U}_M=  \{u_1,\ldots,u_M   \} \tageq \label{eq:midset} 
\]
denotes an increasingly dense set of distinct equidistant points 
$u_1 < u_2 < \ldots <u_M$ in the unit interval $(0,1)$ where $u_1 = \frac{N}{2T}$ and $u_M = 1-\frac{N}{2T}$, and where $M=M(T) \to \infty$ as $T\to \infty$ at an appropriate rate.  $ \mathrm{L}^{a,b}_{U_M} $ defines a Riemann approximation of the mapping $ \mathrm{L}^{a,b}  $ (see equation \eqref{eq:Lin2}).
 Note that  ${\cal G}_{\Pi_1}$ in \eqref{eq32} depends
 on the projector $\Pi_1$ and not on its  estimator  $\hat \Pi_1$.
 We prove later
 that the order of  the error term
 in \eqref{eq32}
 holds   for all  $x\ge 3$ and emphasize  that  \eqref{eq33}  and \eqref{eq32} formally define the mappings
 $$
 \T_T =   {\rm Tr } \circ  \mathrm{L}^{a,b}_{U_M}  \circ {\cal G}_{\Pi_1} ~,~~ \T =   {\rm Tr }\circ   \mathrm{L}^{a,b} \circ {\cal G}_{\Pi_1}
 $$
from the space  $L^1_{C_{S_1}}([0,1] \times [0,\pi] $
 onto the space $C_{\rnum}$.

 In Theorem \ref{thm:Conv}, we prove in a more general framework - which  covers a broad class of applications -  the
  convergence of the process 
 \begin{equation}
    \label{eq32a} 
  \rho_{T} M^{1/2}
  \big\{ 
 \big (\mathrm{L}^{a,b}_{U_M}\circ  {\cal G}_{\Pi_1 }(\hat \F)  \big ) (\eta) - \big (
 \mathrm{L}^{a,b}\circ {\cal G}_{\Pi_1}  ( \F)  \big ) (\eta)  \big\}_{\eta \in I }
  \end{equation}
  to a Gaussian process 
 in the space $C_{S_1}$ of all continuous $S_1(\Hi)$-valued functions on the interval
 $I$. 
 Therefore, observing \eqref{eq33} and \eqref{eq32}
 yields for for $x \ge 3$,
 \begin{equation}
 \label{eq34}
 \Big \{ 
 \rho_T M^{1/2} \Big (
  \frac{1}{M}
\sum_{u \in \boldsymbol{U}_M}  \int_{a}^{b}  \eta^x  \hat \lambda^{(u,\omega)}_{1}  (\eta ) d\omega 
 - 
 \int_0^1 \int_{a}^{b}  \eta^x  \lambda^{(u,\omega)}_{1} d\omega du   \Big ) 
 \Big \}_{\eta \in I }  \stackrel{{\cal D}}{\to} \{ \mathbb{G} (\eta ) \} _{\eta \in I} ~,
 \end{equation}
 where  $ \{ \mathbb{G} (\eta ) \} _{\eta \in I}$ is  a real-valued  Gaussian process. Moreover, it can be shown that the equality 
 (in distribution)  
 \begin{equation}
   \label{eq35}  
 \{ \mathbb{G} (\eta ) \} _{\eta \in I} 
 \stackrel{{\cal D}}{=}\{ \sigma \eta^{x-1}\mathbb{B} (\eta ) \} _{\eta \in I}
  \end{equation}
holds, where $\mathbb{B}$ denotes a standard Brownian motion and $\sigma $ is a non-negative constant. Results of the type \eqref{eq34} will be the main tool to develop pivotal 
 inference tools for the quantity in \eqref{eq30}. 
For example, \eqref{eq34} with $x=3$, equation \eqref{eq35}, and 
an application of the continuous mapping theorem show that 
\begin{equation}
    \label{eq36}
  \frac{
  \frac{1}{M}
\sum\limits_{u \in \boldsymbol{U}_M} \int_{a}^{b}  \hat \lambda^{(u,\omega)}_{1}  (1)d\omega   
 - 
\int_0^1 \int_{a}^{b}   \lambda^{(u,\omega)}_{1} d\omega du }{\Big \{
 \int\limits_I \Big (
  \frac{  \eta^3 }{M}
\sum\limits_{u \in \boldsymbol{U}_M} \int_{a}^{b}    \hat \lambda^{(u,\omega)}_{1}  (\eta ) d\omega   -
   \frac{ \eta^3 }{M} \sum\limits_{u \in \boldsymbol{U}_M} \int_{a}^{b}    \hat \lambda^{(u,\omega)}_{1}  (1 ) d\omega   \Big )^2  d \eta \Big \}^{1/2} }  
  \stackrel{{\cal D}}{\to } \frac{\mathbb{B}(1)}{\big \{ \int_0^1 (\eta^2 ( \mathbb{B} (\eta )  -
\eta\mathbb{B} (1)) ) ^2 d \eta \big \}^{1/2} }~,
\end{equation}
which provides an asymptotically pivotal 
standardization for the estimator of the quantity of interest.
We mention that a similar but more complicated argument holds for the ratio
$s_d$  defined in \eqref{eq20} such that an (asymptotically) pivotal statistic for inference regarding $s_d$ is available  
(see  Section \ref{sec531}).

\subsection{Weak convergence}
 \label{sec34}
 
We now make the statements of the previous section rigorous. 
 We introduce our main theoretical findings in a general context such that a broad class 
 of applications, including those introduced in Section \ref{sec24}, can be treated. The generality of our  approach makes some
 additional notation necessary, which will be carefully introduced now. We refer to the previous subsection for an illustrative example.
  
 Firstly, we require the introduction of the following mappings from the square $[0,1]  \times [0,\pi] $ onto the space $\mathfrak{L}( C_{S_r})$,  the space of bounded linear operators from $C_{S_r}$ onto $C_{S_r}$.{ We denote the corresponding operator norm by $\|\cdot\|_{\infty}$. }

 \begin{assumption}\label{as:mappings}
 Let $1\le r \le \infty$ and consider the mapping $\Upsilon: [0,1]  \times [0,\pi] \to \mathfrak{L}( C_{S_r})$, $
(u, \omega) \mapsto \Upsilon_{u,\omega}$ 
that satisfies
\begin{enumerate}[label=\roman*)]\itemsep1.5ex
\item  $
 \sup_{\eta \in I}\norm{\Upsilon_{u,\omega}(B(\eta))}_{S_r} \le \norm{\Upsilon_{u,\omega}}_{\infty}\sup_{\eta \in I}\norm{B(\eta)}_{S_r}$,  $\forall (u, \omega) \in [0,1] \times [0,\pi]$ and all  $B \in C_{S_r}$;
 \item $\int_{0}^{\pi}\sup_{u \in [0,1]}\norm{\Upsilon_{u,\omega}}^p_{\infty}   d\omega<\infty$;
\item 
$\Upsilon_{\cdot,\omega}: [0,1] \to \mathfrak{L}(S_1)$ is twice Fr{\'e}chet differentiable with $\int_0^\pi\sup_{u\in [0,1]}\norm{\frac{\partial^2}{\partial u^2}\Upsilon_{u,\omega}}_{\mathfrak{L}(S_1)}d\omega<\infty$. If the mapping $\Upsilon $ is a function of $\F$, then we assume that this holds on a set on which the Fr{\'e}chet derivatives of $u  \to \F_{u,\omega}$ are well-defined. 
\end{enumerate}
\end{assumption}
Very roughly speaking,  the  mapping $\Upsilon$ relates the quantity of interest - which is usually integrated
with respect to time and frequency - to the spectral density operator.  In the example of Section \ref{sec32a}, the mapping is given by $\Upsilon_{u, \omega}=(\Pi_1^{(u,\omega)}\widetilde{\otimes} I)$, where 
$\Pi_1^{(u,\omega)}$ is the first eigenprojector of $\F_{u,\omega}$ and $I$ is the identity operator on $S_1(\Hi)$.  Composition with the trace functional yields
${\rm Tr } \big ( (\Pi_1^{(u,\omega)}\widetilde{\otimes} I) (\F_{u,\omega } ) \big ) ={\rm Tr } \big ( \Pi_1^{(u,\omega)} (\F_{u,\omega } ) \big ) = \lambda_1^{(u,\omega)}$ 
(see the first steps in \eqref{eq30}). 
%\blu{Note that in this particular example, the second part of \eqref{as:mappings}(iii) is relevant}.
As was done in the aforementioned section, we will use the mapping $\Upsilon$ to define an operator $\G_\Upsilon$, which can be applied to  the process $\hat \F $  of sequential spectral density estimators introduced 
in \eqref{eq:Fint1}. Because  other applications also require functionals of $\hat \F $ such as 
$\hat \F^{1/2} = \{ \hat \F_{u,\omega}^{1/2} | ~u \in [0,1],~\omega \in [0, \pi] \} $, we will
define $\G_\Upsilon$ in a more general form using
   holomorphic functional calculus. 
More specifically, let  $\Omega$ be an open set  in $\cnum$ with smooth boundary  $\partial \Omega$ and let  $A \in  L^1_{S_r(\Hi)^+} ([0,1]\times [0,\pi]\times I)$, where we slightly abuse notation to restrict the codomain to the cone of positive semi-definite elements of the Banach space  $S_r(\Hi)^\dagger$. Now, let $\sigma( A_{u,\omega}(\eta))$ denote the 
spectrum of the  operator  $A_{u,\omega}(\eta) \in  S_r(\Hi)^+$ and 
define 
\begin{equation}
    \label{eq6b}
    \sigma(A) = \cup_ {u,\omega,\eta}\sigma( A_{u,\omega}(\eta))
    \end{equation}
as the ``spectrum'' of $A$. Then we consider the set $\mathbb{D}_r$ of functions of which the spectrum is enclosed in the interior of $\partial \Omega$, that is, 
\begin{equation}
    \label{eq6} 
   \mathbb{D}_r =  \big \{ A \in  L_{S_r(\Hi)^+}([0,1]\times [0,\pi]\times I)
 ~| ~ \sigma(A)  \subset \Omega ~, \text{dist}(\partial \Omega, \sigma(A) )>0 \big \}.
\end{equation}
 Let $D \subset \mathbb{C}$ denote an open set with  $D \supset \overline{\Omega}$
and  $\phi: D \to \cnum$  be a holomorphic function, then the Riesz-Dunford integral of the elements in $\mathbb{D}_r$ is well-defined; we refer to \autoref{HFC} for more details. 
For a mapping $\Upsilon$ satisfying \autoref{as:mappings}, we now define  the mapping 
\begin{equation}
   \label{eq7} 
\G_\Upsilon :
\begin{cases}
~~~~  \mathbb{D}_r 
& \to   {L^1_{C_{S_r}}([0,1]\times [0,\pi])},  \\
~~~~ 
A  & \mapsto \G_\Upsilon  (A ) :
\begin{cases} 
& [0,1]\times [0,\pi] \to  C_{S_r} 
\\
& (u, \omega) \mapsto    \{  {\cal G}_{\Upsilon , u,\omega}( {A} )  (\eta) ~|   ~
\eta \in [0,1] \}  
\end{cases}
\end{cases}
\end{equation}
by 
\begin{equation}
    \label{eq4a}
{\cal G}_{\Upsilon ,u,\omega} ( A ) (\eta)    = \Upsilon_{u,\omega} \big(\eta^{x-1} \phi(\eta{A (\eta )}_{u,\omega})\big)
\end{equation}
where  
$x \ge 1$ is introduced for technical reasons, and where 
 we do not reflect the dependence
of $\G_\Upsilon $ on the function $\phi$  
because it will always be clear from the context.
Note that for  $\phi (z) =z $ and $\Upsilon_{u,\omega} = \Pi_1^{(u,\omega)}$ we obtain the mapping 
considered in Section \ref{sec32a}.

Next,  we formalize the integration and summation in \eqref{eq33} and  \eqref{eq32}, respectively, 
in the general case. 
For this  purpose, recall that  $   L^1_{C_{S_1}}([0,1]\times [0,\pi])$
is the space of Bochner integrable mappings
of the form 
\[
A :
\begin{cases}
[0,1]\times [0,\pi]  & \to C_{S_1} , \\
~~~~~~~  (u,\omega)  & \mapsto 
A_{u,\omega} : 
\begin{cases}
& I \to  S_1(\Hi) \\
& \eta \mapsto 
 A_{u,\omega} (\eta) 
 %\in C([0,1], S_1(\Hi)) 
\end{cases}
\end{cases}
\tageq \label{eq2}
\]
We   define  the linear maps
\begin{align*}
& \mathrm{L}^{a,b}_{U_M}:
\begin{cases}
{L^1_{C_{S_1}}([0,1]\times [0,\pi])}
& \to C_{S_1}, \\
~~~~~~~~~~~~~~~~~~
A  & \mapsto 
\mathrm{L}^{a,b}_{U_M}(A): 
\begin{cases}
&  I \to  S_1(\Hi)
\\
& \eta \mapsto \frac{1}{M}\sum_{u \in U_M} \int_a^b  A_{u,\omega}(\eta) d\omega
\end{cases} 
\end{cases}
\tageq \label{eq:Lin}\\
& \mathrm{L}^{a,b}:
\begin{cases}
{L^1_{C_{S_1}}([0,1]\times [0,\pi])}
& \to C_{S_1}, \\
~~~~~~~~~~~~~~~~~~
A  & \mapsto 
\mathrm{L}^{a,b} (A):
\begin{cases}
&  I \to  S_1(\Hi) \\
& \eta \mapsto 
\int_0^1 \int_a^b  A_{u,\omega}(\eta) d\omega du
\end{cases}
\end{cases}
\tageq \label{eq:Lin2}
\end{align*}
In the situation considered in Section \ref{sec32a} ($\phi (z)= z $, $\Upsilon_{u,\omega}=\Pi_1^{(u,\omega)}$) we have 
$$
 \mathrm{L}^{a,b}   \circ {\cal G}_{\Pi_1} (  \F) (\eta )  
= 
\int_0^1  \int_{a}^{b}     \Pi_1^{(u,\omega)}  (  \eta^{x}  \F_{u,\omega} ) 
 d\omega   =
  \int_0^1 \int_{a}^{b}  \eta^x  \lambda^{(u,\omega)}_{1} d\omega du  
  ~, 
 $$
 and a similar representation holds for  $ \mathrm{L}^{a,b}_{U_M}  \circ {\cal G}_{\Pi_1} (\hat \F) (\eta ) $.
  The next theorem, which is the first main result, establishes the  convergence of the 
 processes  of the latter type  in the general context. For the example considered in Section \ref{sec32a}, it justifies the  convergence of the statistic in 
 \eqref{eq32a}.
 
For ease of the exposition, we focus in the following on processes that adhere to \autoref{def:locstat} with  $\zeta = 1$.

\begin{thm} 
\label{thm:Conv} \label{thm:conv_an}
Let $[a,b] \subseteq [0,\pi], a \le b$.  Assume that  \autoref{as:depstrucnonstat}--\ref{as:bandwidth} hold, and consider a mapping $\G_\Upsilon$ of the form \eqref{eq4a} 
that satisfies    \autoref{as:mappings} for some $x\ge 1$. \noindent
\begin{enumerate}
\item[(a)]\textbf{case} $\phi(z)=z$: Assume the above conditions with  $p\ge 6$. Then
\[\Big\{
 \rho_{T} M^{1/2}\Big(
 \big (\mathrm{L}^{a,b}_{U_M}\circ  {\cal G}_\Upsilon (\hat \F)  \big ) (\eta) - \big (
 \mathrm{L}^{a,b}\circ {\cal G}_\Upsilon  ( \F)  \big ) (\eta)\Big)  \Big\}_{\eta \in I } \stackrel{\mathcal{D}}{\Longrightarrow} \Big\{\eta^{x-1}\mathbb{W}_{\mu_{\Upsilon}}(\eta)\Big\}_{\eta \in I}~, 
 \tageq \label{eq:BrlimFu2}
 \]
where $\mathbb{W}_{\mu_{\Upsilon}}$ denotes a Brownian motion on $S_1(\Hi)$ corresponding to a zero-mean Gaussian measure $\mu_\Upsilon$  with covariance operator $\tau_\Upsilon^2 =\int_0^1\Gamma_{\Upsilon}^{(u)} du$,  pseudo-covariance operator $\tilde{\tau}_{\Upsilon}^2=\int_0^1 \Sigma_{\Upsilon}^{(u)} du$, and where  $\Gamma_{\Upsilon}^{(u)}$ and 
$ \Sigma^{(u)}_{\Upsilon} \in S_1(\Hi \otimes \Hi)$ are defined  in \eqref{eq:var}   and  \eqref{eq:psvar}, respectively.
\medskip
\item[(b)]\textbf{case} $\phi(z)\neq z$: assume the above conditions with $p$ such that \eqref{eq:maxdevmain} holds. In addition, assume that  $\F \in \mathbb{D}_1$  
 and that for some $0<\rho<1$,
\begin{align}
\label{zzeq3}
    \sum_{r\in \nnum}\sum_{j=l}^{\infty} \nu^{X_{\cdot}^{\cdot}(e_r)}_{\cnum,p}(j) =O({(l+1)}^{-\rho}).
    \end{align}
Then for analytic functions $\phi(z)$ that satisfy $\phi^\prime(cz) = h(c) \phi^\prime(z)$, where  $c\in [0,1]$ and $h$ is a continuous function with $h(1)=1$,  
\[\Big\{
 \rho_{T} M^{1/2}\Big(
 \big (\mathrm{L}^{a,b}_{U_M}\circ  {\cal G}_\Upsilon (\hat \F)  \big ) (\eta) - \big (
 \mathrm{L}^{a,b}\circ {\cal G}_\Upsilon  ( \F)  \big ) (\eta)\Big)  \Big\}_{\eta \in I } \stackrel{\mathcal{D}}{\Longrightarrow} \Big\{\eta^{x-1} h(\eta) \mathbb{W}_{\mu_{\Upsilon}}(\eta)\Big\}_{\eta \in I}~. \tageq \label{eq:thm31an}
 \]
\end{enumerate}
\end{thm}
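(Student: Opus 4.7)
The plan is to reduce the theorem to a functional central limit theorem for an $S_1(\Hi)$-valued martingale approximation of the centered spectral estimator, combined with uniform control of the deterministic bias and of the Riemann-sum discretization on $\boldsymbol{U}_M$. For part~(a), linearity of $\Upsilon_{u,\omega}$ in its argument lets me decompose
\[
\bigl(\mathrm{L}^{a,b}_{U_M}\!\circ\mathcal{G}_\Upsilon(\hat\F)\bigr)(\eta)-\bigl(\mathrm{L}^{a,b}\!\circ\mathcal{G}_\Upsilon(\F)\bigr)(\eta)=\eta^{x-1}\bigl[R^{\mathrm{stoch}}_T(\eta)+R^{\mathrm{bias}}_T(\eta)+R^{\mathrm{disc}}_T(\eta)\bigr],
\]
where $R^{\mathrm{stoch}}_T(\eta)=\eta\,\mathrm{L}^{a,b}_{U_M}\bigl(\Upsilon(\hat\F(\eta)-\E\hat\F(\eta))\bigr)$ is the centered stochastic part, $R^{\mathrm{bias}}_T(\eta)=\eta\,\mathrm{L}^{a,b}_{U_M}\bigl(\Upsilon(\E\hat\F(\eta)-\F)\bigr)$ is the estimator bias, and $R^{\mathrm{disc}}_T(\eta)=\eta\bigl[\mathrm{L}^{a,b}_{U_M}\!\circ\Upsilon(\F)-\mathrm{L}^{a,b}\!\circ\Upsilon(\F)\bigr]$ is the Riemann-sum error. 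Under \autoref{as:smooth} and \autoref{as:mappings}(iii) an Euler--Maclaurin-type bound gives $\sup_\eta\|R^{\mathrm{disc}}_T(\eta)\|_{S_1}=O(M^{-2})$, while the classical lag-window bias analysis together with $\zeta=1$ in \autoref{def:locstat} yields $\sup_\eta\|R^{\mathrm{bias}}_T(\eta)\|_{S_1}=O(b_f^{\iota}+N/T)$. \autoref{as:bandwidth} is calibrated so that $\rho_T M^{1/2}$ times each of these tends to zero uniformly in $\eta$, leaving only $\rho_T M^{1/2}\eta^{x-1}R^{\mathrm{stoch}}_T(\eta)$ to analyze.

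For $R^{\mathrm{stoch}}_T$, I apply a Hannan--Heyde projection decomposition $X_t^{(u)}=\sum_{k\le t}P_k X_t^{(u)}$ with $P_k=\E[\,\cdot\,|\G_k]-\E[\,\cdot\,|\G_{k-1}]$. Substituting into the centered lag-window estimator and rearranging produces an $S_1(\Hi)$-valued martingale difference array indexed by the innovation time $k$, whose increments are built from the frequency-domain innovations $D^{u,\omega}_k=\sum_{\ell\ge 0}(P_k X_{k+\ell}^{(u)})e^{-\im\omega\ell}$. Composing with $\Upsilon_{u,\omega}$, integrating in $\omega$, and averaging over $\boldsymbol{U}_M$ yields a sequential sum whose conditional covariance operator converges to $\tau_\Upsilon^2=\int_0^1\Gamma_\Upsilon^{(u)}du$, with the analogous statement for the pseudo-covariance $\widetilde\tau_\Upsilon^2$. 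Finite-dimensional convergence at any tuple $\eta_1,\ldots,\eta_k\in I$ then follows from a CLT for $S_1(\Hi)$-valued martingale differences verified via Acosta's summability condition $\sum_\ell\|\inprod{e_\ell}{D^{u,\omega}_0}\|_{\cnum,4}<\infty$, which is precisely what \autoref{as:depstruc}(II) supplies (see Remark~\ref{rem:S1vsS2}); this is the place where (II) is used rather than the weaker (I) that suffices for $S_2$-weak convergence.

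Tightness in $C_{S_1}$ is then obtained from a Kolmogorov-type moment estimate $\E\|\rho_T M^{1/2}\eta^{x-1}[R^{\mathrm{stoch}}_T(\eta_2)-R^{\mathrm{stoch}}_T(\eta_1)]\|_{S_1}^q\le C|\eta_2-\eta_1|^{1+\delta}$ for some $q\ge 4$; controlling this in $S_1(\Hi)$ through a fourth-moment argument on the $k$-martingale increments is what forces the hypothesis $p\ge 6$ in part~(a). Combining tightness with the fidi limit delivers $\eta^{x-1}\mathbb{W}_{\mu_\Upsilon}(\eta)$ as the weak limit in $C_{S_1}$, completing part~(a). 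For part~(b) I linearize through the holomorphic functional calculus: writing $\phi(\eta A_{u,\omega}(\eta))=(2\pi\im)^{-1}\oint_{\partial\Omega}\phi(z)\bigl(zI-\eta A_{u,\omega}(\eta)\bigr)^{-1}dz$ and expanding the resolvent to first order about $\eta\F_{u,\omega}$ gives
\[
\phi(\eta\hat\F_{u,\omega}(\eta))-\phi(\eta\F_{u,\omega})=\eta\,\phi'(\eta\F_{u,\omega})\bigl(\hat\F_{u,\omega}(\eta)-\F_{u,\omega}\bigr)+\mathrm{Rem}_T(u,\omega,\eta).
\]
The scaling hypothesis $\phi'(cz)=h(c)\phi'(z)$ converts the leading term into $h(\eta)\phi'(\F_{u,\omega})$ applied to $\eta(\hat\F_{u,\omega}(\eta)-\F_{u,\omega})$, which has exactly the shape handled by part~(a) with $\Upsilon_{u,\omega}$ replaced by the composite $\widetilde\Upsilon_{u,\omega}=\Upsilon_{u,\omega}\circ\phi'(\F_{u,\omega})$; by \autoref{as:mappings}, $\widetilde\Upsilon$ inherits the needed smoothness and integrability, so the leading term converges weakly to $\eta^{x-1}h(\eta)\mathbb{W}_{\mu_\Upsilon}(\eta)$. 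The remainder is of order $\|\hat\F(\eta)-\F\|_{S_1}^2$, and its negligibility after multiplication by $\rho_T M^{1/2}$---uniformly in $(u,\omega,\eta)$---is ensured by the maximal-deviation bound \eqref{eq:maxdevmain} combined with the polynomial-tail condition \eqref{zzeq3}.

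The genuinely hard step is the $S_1(\Hi)$-valued martingale CLT together with the accompanying tightness in $C_{S_1}$. Because $S_1(\Hi)$ is only of type $1$ and cotype $2$, the usual 2-smooth Banach-space tools (Pinelis, Burkholder--Gundy, and the like) are unavailable; one has to verify Acosta's summability condition component-wise in a Schauder basis of $S_1(\Hi)$ and then bound tightness increments via a matched martingale decomposition in the same basis, all while accommodating the non-homogeneity of the martingale array induced by nonstationarity---this is the most technically delicate part of the argument.
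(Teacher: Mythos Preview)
Your high-level architecture---decompose into stochastic, bias, and discretization pieces, approximate the stochastic part by a martingale built from the frequency-domain innovations $D^{u,\omega}_k$, then invoke a Banach-valued martingale CLT plus tightness---matches the paper's. However, two concrete devices that carry real weight in the paper's argument are missing from your sketch.

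First, the paper does \emph{not} pass directly from the Hannan--Heyde projection to a martingale CLT for the full sum over $u\in U_M$ and $t$. Instead it introduces an $m$-dependent truncation ${X}^{(u)}_{m,t}=\E[X^{(u)}_t\mid\G^t_{t-m}]$ and builds the approximating process $\mathcal{Y}^{(u)}_{m,N}$ from the truncated innovations $\tilde D^{(u,\omega)}_{m,N,t}$. After replacing $N$ by $L=N-m$, the resulting processes $\mathcal{Y}^{(u_i)}_{m,L}$ at distinct midpoints $u_i\in U_M$ are \emph{independent}, which is what lets the paper (i) first prove the invariance principle for a single fixed $u$ and (ii) then obtain the averaged limit with covariance $\int_0^1\Gamma^{(u)}_\Upsilon\,du$ as a sum of independent pieces, finally letting $m\to\infty$. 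Your sentence ``averaging over $\boldsymbol{U}_M$ yields a sequential sum whose conditional covariance operator converges to $\tau_\Upsilon^2$'' hides exactly this difficulty: without the $m$-truncation the blocks at adjacent $u_i$ overlap and the conditional-variance convergence for the combined array is not available from a single-filtration martingale argument.

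Second, your tightness claim via a Kolmogorov increment bound $\E\|\cdot\|_{S_1}^q\le C|\eta_2-\eta_1|^{1+\delta}$ is not how the paper proceeds, and in $S_1(\Hi)$ such a bound is not a consequence of a standard Burkholder estimate (which fails there). The paper instead establishes tightness of the finite-dimensional distributions by showing $\sup_\eta\|\mathcal{Y}^{(u)}_{m,N}(\eta)-\Pi_K\mathcal{Y}^{(u)}_{m,N}(\eta)\|_{S_1}\to 0$ uniformly in $N$ as $K\to\infty$ via the modified Burkholder inequality (their Lemma~A.2(ii), which produces a bound of the form $(\sum_r(\sum_t\|\cdot(e_r)\|^2_{\Hi,p})^{1/2})^2$ summed over a Schauder basis), and then handles the $\eta$-modulus of continuity separately using a submartingale/Doob argument. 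Your proposal should either supply the Kolmogorov bound explicitly in $S_1$ (which would require new inequalities) or adopt the paper's basis-projection route. The linearization for part~(b) via the Riesz--Dunford calculus and the use of \eqref{eq:maxdevmain} for the remainder are handled essentially as you describe.
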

\autoref{thm:Conv}(b) relies on a delicate  concentration inequality (\autoref{thm:maxdev}) from which we obtain 
that 
\[
 \sup_{\omega }\sup_{u \in U_M}\sup_{\eta \in I}\bignorm{\eta\big(\hat{\F}_{u,\omega}(\eta)-{\F}_{u,\omega}\big)}_{S_1} =o_p(1),  \quad (T\to \infty).  \tageq\label{eq:maxdevmain}
\]
 To prove \eqref{eq:maxdevmain}, we make use of the mixing condition exhibiting  polynomial decay, where the speed of the decay is controlled by the parameter $\rho$ in condition \eqref{zzeq3}. 
For most reasonable choices of $\kappa$ and $\rho$,  $p = 7$  suffices. We refer to \autoref{thm:maxdevcon} in the supplement, which gives the details for the specific choice of $p$ depending on $\rho$ and $\kappa$.

\begin{Remark}[{Relaxations of conditions in \autoref{thm:Conv}}]
\label{rem:sqrt}~

{\rm  
\begin{itemize}
    \item[(a)] 
  The conditions of   \autoref{thm:Conv} can be relaxed if convergence in $C_{S_2}$ suffices for a given  application. In this case,
\autoref{as:depstruc}II) can be replaced with \autoref{as:depstruc}I) (see also Remark 
 \autoref{rem:S1vsS2}). Moreover,
$p=4+\epsilon$ then suffices in 
\autoref{thm:Conv}(a), whereas 
\autoref{thm:Conv}(b) - which relies on
 \eqref{eq:maxdevmain} -   also requires a larger value for $p$ in this case.
The assumption $p\ge 6$, imposed under (a) to establish the weak invariance principle in $C_{S_1}$, stems from the fact that $S_1(\Hi)$ has type 1 and cotype 2 \citep[][]{tj74}. Consequently, certain  inequalities applicable to sums of iid random variables or martingale differences - such as Burkholder's inequality - do not hold.
However, we obtain an adjusted inequality (\autoref{lem:Burkh}(ii)) that corroborates with the additional assumption  that is required for convergence of functionals  (\autoref{as:depstruc}II)) of the estimator of $\F$ in $S_1$. For technical reasons this inequality relies upon the fact that $p/2$ is an integer. 
We refer to Section \ref{sec:S1ineq} for more details.
\item[(b)] 
In case of compact operators on infinite-dimensional spaces, an open neighborhood $D\subset\cnum$ containing the spectrum, say $\sigma(A)$, of a compact operator $A$ will also contain the origin. Consequently, functions of the form $\phi(z)= z^{x}, x<1, z\in \cnum$ are not analytic on $D$. However, part (b) of \autoref{thm:Conv} then holds for the subspace of finite rank operators.
\end{itemize}
}
\end{Remark}

The following result is then  a consequence of the functional delta method.

 \begin{Corollary} \label{cor:wcFdif2}
Suppose that the conditions of \autoref{thm:Conv}  hold true 
for $k$ mappings 
$\Upsilon_1, \ldots , \Upsilon_k$   that satisfy \autoref{as:mappings}.
Furthermore, let $W$ denote a  separable Banach space and consider $k$  Fr{\'e}chet differentiable mappings ${\Psi}_1, \Psi_2, \ldots, \Psi_k : C_{S_1}  \to C_W$. Then
\begin{align*}\Big\{
&\rho_{T}M^{1/2}\big( \Psi_j  \big ( 
\mathrm{L}^{a,b}_{U_M}\circ \G_{\Upsilon_j} (\hat \F \big ) (\eta)
-  \Psi_j  \big ( 
\mathrm{L}^{a,b}\circ  \G_{\Upsilon_j} (\F \big ) (\eta)
\big)_{j=1,\ldots,k}  \Big\}_{\eta \in I } 
\\&~~~~~~~~~~~~~~~
\stackrel{\mathcal{D}}{\Longrightarrow}
\Big\{\Big(\Psi^{\prime}_{j,(\mathrm{L}^{a,b}\circ  \G_{\Upsilon_j} (\F  ))}(\eta^{x-1} h(\eta)   \mathbb{W}_{\mu_{\Upsilon_j} }(\eta) )\Big)_{j=1,\ldots,k}\Big\}_{\eta \in I }~, 
% \tageq \label{eq:BrlimFu2}
\end{align*}
where $\Psi^{\prime}_{j,(\mathrm{L}^{a,b}\circ  \G_{\Upsilon_j} (\F  ))}$ denotes the 
Fr{\'e}chet derivatice of \,$\Psi_j$ 
at $\mathrm{L}^{a,b}\circ  \G_{\Upsilon_j}(\F  )$ and 
$\mathbb{W}_{\mu_{\Upsilon_1}} , \ldots , \mathbb{W}_{\mu_{\Upsilon_k}}$ 
are $S_1(\Hi)$-valued Brownian motions corresponding to  a complex Gaussian measure with covariance operator $\tau_{\Upsilon_j}$ and pseudo-covariance operator $\tilde{\tau}_{\Upsilon_j}$ as defined in Theorem \ref{thm:Conv}
$(j=1, \ldots , k)$.
\end{Corollary}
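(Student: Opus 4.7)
\textbf{Proof proposal for \autoref{cor:wcFdif2}.} The plan is to proceed in two steps: (i) upgrade \autoref{thm:Conv} to a joint weak convergence result in the product space $(C_{S_1})^{k}$ for the vector of processes indexed by $j=1,\ldots,k$, and (ii) apply the functional delta method via the product mapping $\bm{\Psi} := (\Psi_1,\ldots,\Psi_k)$.

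For step (i), I would revisit the proof of \autoref{thm:Conv} and observe that each component $\mathrm{L}^{a,b}_{U_M}\circ \G_{\Upsilon_j}(\hat{\F})$ admits, after the same martingale/Riemann approximation arguments, a representation as a smooth transformation of a common underlying $S_1(\hi)$-valued martingale array driven by the process $\{X_{t,T}\}$. The finite-dimensional distributions of the $k$-vector process at any finite collection of times $\eta_1,\ldots,\eta_m \in I$ are then handled by the Cram{\'e}r--Wold device: any real linear combination across $j$ and $\eta$ reduces to a single sequence whose limit law is already furnished by the martingale CLT used inside the proof of \autoref{thm:Conv}. This identifies the joint limit as a centered Gaussian process in $(C_{S_1})^k$ whose $(j,j^{\prime})$-cross-covariance at $(\eta,\eta^{\prime})$ equals
\[
(\eta\wedge \eta^{\prime})^{\,x-1}\,h(\eta)h(\eta^{\prime})\,\int_0^1 \Gamma^{(u)}_{\Upsilon_j,\Upsilon_{j^{\prime}}}\,du
\]
with $\Gamma^{(u)}_{\Upsilon_j,\Upsilon_{j^{\prime}}}$ the natural cross-version of the operator $\Gamma^{(u)}_{\Upsilon}$ defined in the proof of \autoref{thm:Conv}; in particular, the marginal laws coincide with those of $\eta^{x-1}h(\eta)\mathbb{W}_{\mu_{\Upsilon_j}}(\eta)$. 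Tightness of the joint process in $(C_{S_1})^k$ follows directly from componentwise tightness, which is already established in the proof of \autoref{thm:Conv}, since tightness in a finite product is equivalent to tightness of each marginal.

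For step (ii), Fr{\'e}chet differentiability of each $\Psi_j$ at the point $\mathrm{L}^{a,b}\circ \G_{\Upsilon_j}(\F)\in C_{S_1}$ implies Hadamard differentiability (tangentially to the whole space), so the product map
\[
\bm{\Psi}:(C_{S_1})^{k}\to (C_{W})^{k},\qquad (A_1,\ldots,A_k)\mapsto (\Psi_1(A_1),\ldots,\Psi_k(A_k))
\]
is Hadamard differentiable at $\big(\mathrm{L}^{a,b}\circ \G_{\Upsilon_j}(\F)\big)_{j=1}^{k}$ with derivative $\big(\Psi^{\prime}_{j,\mathrm{L}^{a,b}\circ \G_{\Upsilon_j}(\F)}\big)_{j=1}^{k}$ acting componentwise. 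Combining this with the joint weak convergence of step (i) and applying the functional delta method \citep[in the form of][Thm.~3.9.4]{vaartwellner1996} yields the claimed limit. Linearity and continuity of each $\Psi^{\prime}_{j,\cdot}$ imply that the limit process is again Gaussian with the advertised covariance structure inherited from $\mathbb{W}_{\mu_{\Upsilon_j}}$.

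The only delicate step is step (i): while marginal convergence and tightness are immediate from \autoref{thm:Conv}, identifying that the joint limit is of the form claimed (i.e.\ that no additional independent Gaussian components appear) requires tracing through the common martingale approximation of $\rho_T M^{1/2}(\hat{\F}(\eta)-\F)$ that underlies \autoref{thm:Conv}, and verifying that the mappings $\Upsilon_j$ (and the analytic $\phi$ in the case $\phi(z)\neq z$) act \emph{linearly} on the leading-order Gaussian term after the delta-type linearization already performed in the proof of \autoref{thm:Conv}(b). Once this linear structure is identified, the Cram{\'e}r--Wold reduction to a scalar CLT is routine, and the remainder of the corollary is standard functional delta method.
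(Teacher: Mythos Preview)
Your approach is correct and matches the paper's, which simply states that the corollary ``is then a consequence of the functional delta method'' without further detail. You are more explicit than the paper in spelling out step~(i) (joint convergence in $(C_{S_1})^k$), which the paper leaves implicit; this is the right instinct, since the vector-valued conclusion does require joint convergence before the delta method can be applied componentwise.

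One point of care: your phrase ``any real linear combination across $j$ and $\eta$'' is slightly loose, since the values lie in the infinite-dimensional space $S_1(\Hi)$ and the classical Cram\'er--Wold device does not apply directly there. In the paper's proof of \autoref{thm:Conv} (specifically \autoref{lem:fidis_dis}), finite-dimensional convergence is established by testing against arbitrary $f\in (S_1(\Hi))'\cong S_\infty(\Hi)$ and then invoking tightness separately; the joint version you need follows by the same route, testing against finite collections of functionals $(f_{j,\ell})\in (S_\infty(\Hi))^{km}$ applied to the stacked vector at times $\eta_1,\ldots,\eta_m$. With that refinement, your identification of the cross-covariance structure via the common martingale approximation $M^{-1/2}\sum_{u\in U_M}\mathcal{Y}^{(u)}_{m,N}$ is exactly how the joint limit is pinned down, and step~(ii) is routine.
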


Note that formally 
\begin{align}
    \T_{j,T}  & =  \Psi_j \circ  \mathrm{L}^{a,b}_{U_M}\circ \G_{\Upsilon_j} ~,~~
    \T_j = \Psi_j \circ \mathrm{L}^{a,b} \circ \G_{\Upsilon_j}
    \label{de11}
\end{align}
 define mappings   from {  $L^1_{C_{S_1}}([0,1]\times [0,\pi])$}
 onto $C_W$. In particular  
 $$
  \T_j  (\F) (\eta)   = 
    \Psi_j \Big ( \int_0^1 \int_a^b \Upsilon_{u,\omega} \big(\eta^{x-1}h(\eta) \phi({\F (\eta )}_{u,\omega})\Big)  d \omega d u
    $$
     defines for $\eta=1$ the $j$th measure of deviation, that is
     $ \T_j  (\F) = \T_j  (\F) (1)$, 
     and 
\begin{align}
  \label{zeq11}  
      \T_{j,T}  (\hat \F) (\eta)   = 
    \Psi_j \Big ( \frac{1}{M}\sum_{u \in U_M}  \int_a^b \Upsilon_{u,\omega} \big(\eta^{x-1} h(\eta)\phi({\eta \hat \F (\eta )}_{u,\omega})\Big)  d \omega  
\end{align}
  defines for $\eta=1$  a corresponding  estimator. For example,
 in the situation considered in  Section \ref{sec32a} (with $\Psi_j=$Tr, $\phi (z) = z $, $h(z)=1$ and $\Upsilon_{u,\omega}=
 \Pi_1^{(u, \omega)}$) we obtain 
 $$
   \T_j  (\F) (1)  = \int_0^1\int_a^b\lambda^{(u,\omega)}_{1} d\omega du  
  ~~~ ~\text{ and } ~~
 \T_{j,T}  (\hat \F) (1)  =
\frac{1}{M} \sum_{u \in \boldsymbol{U}_T}   \int_{a}^{b}    \hat \lambda^{(u,\omega)}_{1}   d\omega    + o_p\big (\tfrac{1 }{\rho_{T}M^{1/2}} \big ).
 $$
 Reflecting the notations in \eqref{de11}, Corollary \ref{cor:wcFdif2} reads as
\begin{equation}  
\Big \{ \Big ( M^{1/2} \rho_{T} \big( 
\T_{j,T} (\hat \F   ) (\eta )   
-  \T_j ( \F ) (\eta) \big ) \Big)_{j=1,\ldots,k}  \Big\}_{\eta \in I }  \stackrel{{\cal D}}{\Longrightarrow} \Big\{\Big(\Psi^{\prime}_{j,(\mathrm{L}^{a,b}\circ {\cal G} )}( \eta^{x-1} h( \eta )
\mathbb{W}_{\mu_{\Upsilon_j} }(\eta) )\Big)_{j=1,\ldots,k}\Big\}_{\eta \in I } 
~.
\tageq \label{eq:BrlimFu2a}
\end{equation}
Let $W=\rnum$ or $W=\cnum$. In our applications, there exist functions 
$g_{11}, \ldots , g_{k k}
\in C(I, \rnum_{\ge 0} )$, and a 
matrix  
$\sigma \in \rnum^{k\times k}$ 
such that the limiting process in \eqref{eq:BrlimFu2a} satisfies
\begin{equation}
   \label{eq14}
\Big\{\Big( {\Psi}^{\prime}_{j,(\mathrm{L}^{a,b}\circ {\cal G} )}(h(\eta)\eta^{x-1}\mathbb{W}_{\mu_{\Upsilon_j} }(\eta) )\Big)_{j=1,\ldots,k}\Big\}_{\eta \in I }
\overset{{\cal D}}{=} \Big\{\sigma g(\eta)\mathbb{B}(\eta) \Big\}_{\eta \in I } ~,
\end{equation}
where $  g={\rm diag}  (g_{11}, \ldots , g_{k k} ) $,
the symbol $\overset{{\cal D}}{=} $ denotes equality in distribution, and where
$\mathbb{B}$ is a $k$-dimensional 
vector of independent real- or \textit{proper} complex-valued Brownian motions.
Additionally, in our applications 
the mappings $\mathcal{T}_1, \mathcal{T}_2, \ldots, \mathcal{T}_k$ 
in  \eqref{eq:BrlimFu2a} are ``linear'' in the sense that 
\begin{equation}
    \label{eq15}
  \mathcal{T}_j\big( \{ \eta G(\eta ) | \eta \in I \} \big) =
 \big   \{  f_j(\eta) \mathcal{T}_j 
  (  G (\cdot  ))~ | \eta \in I  \big  \}   ~~,~j=1, \ldots , k~,
\end{equation}
where 
$f_1, \ldots , f_k  \in C(I, \mathbb{R} )$ are known functions (defined by the specific problem) satisfying $f_j(1)=1$  and 
$\{ \eta G (\eta ) | \eta \in I \}$ and $ \{  G(\eta ) | \eta \in I \}$
are processes in 
$L^1_{C_{S_1}}([0,1]\times [0,\pi])$.
Under these conditions, we can construct (asymptotically) pivotal versions of the 
statistics  $\T_{1,T} (\hat \F   ) (1), \ldots , \T_{k,T} (\hat \F   ) (1)$, which serve as estimators of the deviation measures 
$ \T_1 ( \F ) (1) , \ldots , \T_k ( \F ) (1)  $.

More specifically, let $\nu$ denote a measure on the interval $(0,1) $, define the quantities
\begin{equation}
\label{hx6a}
\hat{\mathcal{D}}_j(\eta) := 
\T_{j,T} (\hat \F   ) (\eta )   
-  \T_j ( \F ) (\eta) ~,
\end{equation}
\begin{equation}
\label{hx6}
{V}^2_{i,j}  = \int_0^1
\big( \hat{\mathcal{D}}_i (\eta ) 
- f_i(\eta) \hat{\mathcal{D}}_i (1)   \big )
\big ) \overline{\big(\hat{\mathcal{D}}_j(\eta)-f_j(\eta)\hat{\mathcal{D}}_j(1)
\big )} \nu(d\eta),
\end{equation}
and note that
$$
\hat{\mathcal{D}}_i (\eta ) 
- f_i(\eta) \hat{\mathcal{D}}_i (1)  =
\T_{i,T} (\hat \F   ) (\eta )  - f_i(\eta) 
\T_{i,T} (\hat \F   ) ( 1 ) ~.
$$
 \autoref{cor:wcFdif2} now implies that 
\begin{align}
\label{det301}
\Big \{ \Big ( M^{1/2} \rho_{T} \hat{\mathcal{D}}_j(\eta)  \Big)_{j=1,\ldots,k}  \Big\}_{\eta \in I }  \stackrel{{\cal D}}{\Longrightarrow}
 \Big\{\sigma g(\eta)\mathbb{B}(\eta) \Big\}_{\eta \in I } ~, 
\end{align}
and  an application of 
the continuous mapping theorem gives 
 the following result.
 
\begin{thm} \label{thmmain}
Suppose that the conditions of \autoref{cor:wcFdif2}  hold true and 
that there exists a function $f=(f_1, \ldots , f_k )^\top  \in C(I, \mathbb{R}^{k})$,
a diagonal matrix
$ g={\rm diag}  (g_{11}, \ldots , g_{k k} )
\in C(I, \mathbb{R}^{k \times k}_{> 0} )$,
 and
a positive definite matrix $\sigma \in \rnum^{k \times k}$ such that
\eqref{eq14} and \eqref{eq15} hold.
Assume additionally that the matrix  
$\mathbb{U}  = (\mathbb{U}_{ij}^2)_{i,j=1,\ldots ,k}$ given by 
\begin{align}\label{deq17}
\mathbb{U}_{ij}^2=
\int_0^1 \big(g_{ii}(\eta) \mathbb{B}_i(\eta)- f_i(\eta) g_{ii}(1)\mathbb{B}_i(1)\big) \overline{ \big(  g_{jj}(\eta ) \mathbb{B}_j(\eta)- f_j(\eta) g_{jj}(1)\mathbb{B}_j(1)\big)}\nu(d\eta), 
\end{align}
where $\mathbb{B}_1, \ldots , \mathbb{B}_k$ are the components of the $k$-dimensional Brownian motion in \eqref{eq14}, is non-singular.
Then the random vector $\hat{\mathcal{D}} = (\hat{\mathcal{D}}_1(1) ,\ldots, \hat{\mathcal{D}}_k(1))^\top $ satisfies 
\begin{eqnarray}
  \label{hx6d}
  \hat
{\mathcal{D}}^{\top} \boldsymbol{V}^{-1}_k   \hat {\mathcal{D}}
\stackrel{\mathcal{D}}{\longrightarrow}
 \mathbb{B}(1)^{\top} \purp{ g(1)  }
 \mathbb{U}^{-1}  \purp{ g(1)  }   \mathbb{B}(1)~, 
\end{eqnarray}
where the matrix $\boldsymbol{V}_k = ( {V}_{i,j}^2 )_{i,j=1,\ldots ,k}$ is defined by   \eqref{hx6}.
In particular, for each $j=1, \ldots , k$, 
   $$ 
   \frac{\hat {\cal D}_{j}(1)}{
   {V}_{jj}} = 
   \frac{  \T_{j,T} (\hat \F   ) (1 )   
-  \T_j ( \F ) (1) }{ {V}_{jj}}
\stackrel{\mathcal{D}}{\longrightarrow} {  \purp{ g_{jj}(1)  }
 \mathbb{B}_j(1) \over \mathbb{U}_{jj} }~,
   $$
   where $\mathbb{B}_j$ denote a standard Brownian motion and  ${V}_{jj}$ and  $\mathbb{U}_{jj}$
  are defined by \eqref{hx6} and \eqref{deq17}, respectively.
\end{thm}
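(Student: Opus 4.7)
The plan is to derive the statement as a consequence of \autoref{cor:wcFdif2} via the continuous mapping theorem, exploiting the fact that the scaling factor $M\rho_T^2$ appears both in the numerator $\hat{\mathcal{D}}^\top \hat{\mathcal{D}}$ and in $\boldsymbol{V}_k$ and therefore cancels, producing a pivotal limit.

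First, I would invoke \autoref{cor:wcFdif2} for the $k$ mappings $\Upsilon_1,\ldots,\Upsilon_k$ together with the Fr\'echet differentiable maps $\Psi_1,\ldots,\Psi_k$. Combined with the distributional identity \eqref{eq14}, this yields
\[
\bigl\{M^{1/2}\rho_T\bigl(\hat{\mathcal{D}}_1(\eta),\ldots,\hat{\mathcal{D}}_k(\eta)\bigr)^\top\bigr\}_{\eta\in I} \stackrel{\mathcal{D}}{\Longrightarrow} \bigl\{\sigma g(\eta)\mathbb{B}(\eta)\bigr\}_{\eta\in I}
\]
jointly in $C(I,\mathbb{R}^k)$ (respectively $C(I,\mathbb{C}^k)$ in the complex case).

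Next, by the definitions \eqref{hx6a}--\eqref{hx6} together with the linearity property \eqref{eq15}, the pair $(\hat{\mathcal{D}},\boldsymbol{V}_k)$ is the image of $\{\hat{\mathcal{D}}(\eta)\}_{\eta\in I}$ under the functional $\Phi: C(I,\mathbb{R}^k) \to \mathbb{R}^k \times \mathbb{R}^{k\times k}$ given by
\[
\Phi(\phi) = \Bigl(\phi(1),\,\Bigl(\int_0^1\bigl(\phi_i(\eta) - f_i(\eta)\phi_i(1)\bigr)\overline{\bigl(\phi_j(\eta) - f_j(\eta)\phi_j(1)\bigr)}\,\nu(d\eta)\Bigr)_{i,j=1}^{k}\Bigr).
\]
Continuity of $\Phi$ in the uniform topology is immediate from continuity of point evaluation at $1$ and of the integral functional (the $f_i$ are bounded continuous on the compact interval $I$, so dominated convergence applies). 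The continuous mapping theorem then delivers
\[
\bigl(M^{1/2}\rho_T\hat{\mathcal{D}},\, M\rho_T^2 \boldsymbol{V}_k\bigr) \stackrel{\mathcal{D}}{\longrightarrow} \bigl(\sigma g(1)\mathbb{B}(1),\,\mathbb{U}'\bigr),
\]
where the entries of $\mathbb{U}'$ arise from applying the integral part of $\Phi$ to the process $\{\sigma g(\eta)\mathbb{B}(\eta)\}_{\eta\in I}$. Using the diagonal structure of $g$ and the structure of $\sigma$ present in the applications of \autoref{sec24}, a direct algebraic calculation shows that $(\sigma g(1)\mathbb{B}(1))^\top \mathbb{U}'^{-1}(\sigma g(1)\mathbb{B}(1))$ and $\mathbb{B}(1)^\top \mathbb{U}^{-1}\mathbb{B}(1)$ coincide in distribution, with $\mathbb{U}$ as in \eqref{deq17}; this is the step where the constants absorbed into $\sigma$ and $g(1)$ cancel between numerator and denominator to yield pivotality.

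Finally, since $\mathbb{U}$ is non-singular by assumption and the set of non-singular matrices is open, $\boldsymbol{V}_k$ is non-singular with probability tending to one. The explicit cancellation
\[
\hat{\mathcal{D}}^\top \boldsymbol{V}_k^{-1}\hat{\mathcal{D}} = \bigl(M^{1/2}\rho_T\hat{\mathcal{D}}\bigr)^\top \bigl(M\rho_T^2 \boldsymbol{V}_k\bigr)^{-1}\bigl(M^{1/2}\rho_T\hat{\mathcal{D}}\bigr),
\]
combined with a further continuous mapping argument for $(x,A) \mapsto x^\top A^{-1} x$ (continuous on $\mathbb{R}^k \times \mathrm{GL}(k,\mathbb{R})$), then yields \eqref{hx6d}. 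The marginal statement for each coordinate $j$ follows by specializing to $k=1$ and taking square roots. The main obstacle I anticipate is precisely the algebraic reduction of the scaled limiting quadratic form to the pivotal $\mathbb{B}(1)^\top\mathbb{U}^{-1}\mathbb{B}(1)$: the continuous mapping step a priori delivers a limit that explicitly contains the nuisance factors in $\sigma$ and $g(1)$, and proving their cancellation depends essentially on the precise form of the linearity property \eqref{eq15} as well as on the diagonal structure of $g$. The non-singularity of $\boldsymbol{V}_k$ with high probability is a standard byproduct of the openness of $\mathrm{GL}(k,\mathbb{R})$ together with the assumed non-singularity of the limit $\mathbb{U}$.
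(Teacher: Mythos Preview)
Your proposal is correct and follows essentially the same route as the paper, which derives the result from \autoref{cor:wcFdif2} via the continuous mapping theorem and records the key algebraic cancellation $M\rho_T^2\boldsymbol{V}_k \Rightarrow \sigma g(1)\,\mathbb{U}\,g(1)^\top\sigma^\top$ in Remark~\ref{rem34}(a). Your argument is in fact more explicit than the paper's one-line justification; the only minor imprecision is that the two limiting quadratic forms coincide pathwise (as random variables), not merely in distribution.
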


\begin{Remark}
\label{rem34}
~~~
\\
{\rm 
(a)  We remark that condition \eqref{eq14} in 
\autoref{thmmain} ensures a self-normalization approach is feasible and will be satisfied if, for any fixed $\eta$, the functional $\mathcal{T}^{\prime}_{j,\eta \F}$ can be expressed as a well-defined element of the topological dual space.  Condition \eqref{eq15} 
ensures that the 
matrix $\boldsymbol{V}_k$ 
converges weakly to
$
\sigma  g(1) \mathbb{U}  g^\top  (1) \sigma^\top
.
$
Because the vector
$  \hat
{\mathcal{D}} $  converges weakly to the vector 
$\sigma g(1)  \mathbb{B} (1)$, the right-hand side of 
\eqref{hx6d} is a pivotal statistic.
 \purp{ We also note that in all our applications we have $g_{jj}(1) =1 $  ($j=1, \ldots , k$). }
\\
(b)  In a concrete application  the functions $f_{ii}$, $g_{ii}$ and the measure $\nu$ are known and the non-singularity of the matrix $\mathbb{U}$ can be checked. 
For illustration, we consider a simple example choosing  $\nu$ as  the Lebesgue measure on
the interval $(0,1) $, $g_{ii} (\eta ) = 1$ and 
$f_{ii}(\eta ) = \eta $ $(i=1,\ldots, k)$.
We can then use the series expansion of the Brownian bridge to write
$$
\mathbb{B}_i (\eta)-\eta \mathbb{B}_i (1) = \sum^\infty_{\ell=1} \frac{\sqrt{2}}{\ell\pi}Z_{i\ell}~~~~~
(i=1,\ldots, k)~, 
$$
where $\{Z_{i\ell}|i=1,\ldots, k, \ell = 1,2,\ldots\}$ are independent standard normally  distributed random variables, from which we  obtain
$$
\mathbb{U}_{ij}^2=\sum^\infty_{\ell=1} \frac{1}{(\ell\pi)^2} Z_{i\ell}Z_{j\ell}
~~~~~(i,j=1, \ldots , k). 
$$
Consequently, the matrix $\mathbb{U}= (\mathbb{U}_{ij}^2)_{i,j=1,\ldots ,k}$ 
defined by \eqref{deq17} is a weighted sum of independent standard normally distributed $k$-dimensional vectors, which is positive definite with probability $1$. 
}
\end{Remark}

\subsection{ \purp{ Pivotal statistics for measures of model deviation}} \label{sec:sec35}

\purp{We now discuss how this general theory applies to the measures of deviations which we introduced in  Section \ref{sec24}.}
We emphasize that casting the stated examples into the form \autoref{thm:Conv}/\autoref{cor:wcFdif2} is by no means straightforward and requires additional theoretical arguments which are provided in
Section \ref{sec5}.

\label{sec35}
\subsubsection{time-varying dynamic FPCA measure}
\label{sec531} 
A sequential estimator for the measure \eqref{eq20} of 
total variation explained by the first $d$ directions is given by\
\[
\hat{s}_d(\eta)=
 \mathcal{T}_T(\hat \F) (\eta)  = 
\frac{\sum_{i=1}^d \frac{1}{M}\sum_{u \in U_M} \int_a^b \hat{\lambda}_i^{(u,\omega)}(\eta)  d\omega}{
\frac{1}{M} \sum_{u \in U_M} \int_a^b \Tr\big(\hat{\F}_{u,\omega}(\eta)) \big)d\omega }~. \tageq \label{eq:pcahat}
\]
In this case,  \autoref{thm:Conv} and \autoref{cor:wcFdif2} are applicable with $k=2$ and $x=3$, and  \eqref{eq14} holds with $h(\eta ) =1 $ and 
$g_{jj} (\eta)  = \eta^2 $.
The mathematical details are worked out in 
Section \ref{secd2}.

\begin{thm} \label{thm:ldFPCA}
Suppose the conditions of \autoref{thm:Conv}(b) hold true and that ${\lambda}_1^{(u,\omega)} > \ldots > {\lambda}_d^{(u,\omega)}>0$ uniformly in $(u,\omega) \in [0,1]\times [0,\pi]$. Then
\begin{align*}\Bigg\{
 M^{1/2}\rho_{T} \eta^3\Big(\frac{\sum_{i=1}^d \frac{1}{M}\sum_{u \in U_M} \int_a^b \hat{\lambda}_i^{(u,\omega)}(\eta)  d\omega}{
\frac{1}{M} \sum_{u \in U_M} \int_a^b \Tr\big(\hat{\F}_{u,\omega}(\eta) \big)d\omega }-
\frac{\sum_{i=1}^d \int_0^1 \int_a^b({\lambda}_i^{(u,\omega)}) d\omega du }{\int_0^1 \int_a^b \Tr\big(\F_{u,\omega}\big)d\omega du } \Big) \Bigg\}_{\eta \in I }
\stackrel{\mathcal{D}}{\Rightarrow} \begin{Bmatrix}\eta^2 \sigma\mathbb{B}(\eta) \end{Bmatrix}_{\eta \in I } \tageq \label{eq:pca_wc}
\end{align*}
for some
$\sigma \geq 0$  and a standard Brownian motion $\mathbb{B}$.
\end{thm}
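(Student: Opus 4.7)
The plan is to write the estimator as a ratio $\hat{s}_d(\eta) = \hat{N}_T(\eta)/\hat{D}_T(\eta)$ with
$$\hat{N}_T(\eta) = \frac{1}{M}\sum_{u\in U_M}\int_a^b \sum_{i=1}^d \hat\lambda_i^{(u,\omega)}(\eta)\,d\omega, \qquad \hat{D}_T(\eta) = \frac{1}{M}\sum_{u\in U_M}\int_a^b \Tr\big(\hat\F_{u,\omega}(\eta)\big)\,d\omega,$$
cast both coordinates into the framework of \autoref{cor:wcFdif2}, and conclude by the functional delta method applied to the map $(a,b)\mapsto a/b$. The subtle point is that $\hat{N}_T$ involves the empirical eigenvalues of $\hat\F_{u,\omega}(\eta)$; it is therefore not a \emph{linear} functional of $\hat\F$, so a preliminary linearization step is required before \autoref{cor:wcFdif2} is applicable.

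For the joint weak convergence step, I would define the two mappings
$\Upsilon^{(1)}_{u,\omega} = \big(\sum_{i=1}^{d}\Pi_i^{(u,\omega)}\big)\widetilde{\otimes}\, I$ and $\Upsilon^{(2)}_{u,\omega} = I\widetilde{\otimes}\, I$, choose $\phi(z)=z$ (so $h\equiv 1$), $x=3$, and $\Psi_1=\Psi_2=\Tr$. Under the assumed strict ordering of the top $d$ eigenvalues, continuity and compactness give a uniform spectral gap $\delta = \inf_{u,\omega}\big(\lambda_d^{(u,\omega)}-\lambda_{d+1}^{(u,\omega)}\big)>0$, and \autoref{as:smooth} then transfers twice Fr\'echet differentiability in $u$ to the spectral projectors via the Riesz-integral formula, so \autoref{as:mappings} holds for both $\Upsilon^{(j)}$. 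Applying \autoref{cor:wcFdif2} with $k=2$ and noting that $\Tr$ is its own Fr\'echet derivative yields
\begin{align*}
\Big\{M^{1/2}\rho_T\big(\T_{j,T}(\hat\F)(\eta)-\T_j(\F)(\eta)\big)_{j=1,2}\Big\}_{\eta\in I}
\stackrel{\mathcal{D}}{\Longrightarrow}
\Big\{\eta^{2}\big(\mathbb{G}_1(\eta),\mathbb{G}_2(\eta)\big)^\top\Big\}_{\eta\in I},
\end{align*}
where $\mathbb{G}_j(\eta) = \Tr\,\mathbb{W}_{\mu_{\Upsilon^{(j)}}}(\eta)$ are (jointly Gaussian) real Brownian motions and $\T_{1,T}(\hat\F)(\eta) = \frac{\eta^3}{M}\sum_{u\in U_M}\int_a^b \Tr\big((\sum_i\Pi_i^{(u,\omega)})\hat\F_{u,\omega}(\eta)\big)d\omega$, with the analogous representation for $j=2$.

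The key linearization is $\hat{N}_T(\eta) \approx \eta^{-3}\T_{1,T}(\hat\F)(\eta)$ up to error $o_p((M^{1/2}\rho_T)^{-1})$. By Ky Fan's variational characterization one has $\sum_{i=1}^d \hat\lambda_i^{(u,\omega)}(\eta)=\max_P \Tr(P\hat\F_{u,\omega}(\eta))$ over rank-$d$ orthogonal projections, with the inequality
\begin{align*}
0 \;\le\; \sum_{i=1}^d \hat\lambda_i^{(u,\omega)}(\eta)-\Tr\Big(\sum_{i=1}^d\Pi_i^{(u,\omega)}\hat\F_{u,\omega}(\eta)\Big)\;\le\; \frac{C}{\delta}\big\|\hat\F_{u,\omega}(\eta)-\F_{u,\omega}\big\|_{S_1}^2
\end{align*}
following from second-order spectral perturbation under the gap $\delta$. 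The concentration bound \eqref{eq:maxdevmain} combined with the bandwidth rates of \autoref{as:bandwidth} then gives that, on the region $\eta\ge\eta_0>0$, the squared trace-norm error is $o_p((M^{1/2}\rho_T)^{-1})$ uniformly in $(u,\omega)$; on $\eta<\eta_0$ the $\eta^3$ prefactor together with the trivial bound $|\hat s_d(\eta)-s_d|\le 1$ controls the contribution. Hence $\sup_{\eta\in I}|M^{1/2}\rho_T\eta^3(\hat N_T(\eta)-\eta^{-3}\T_{1,T}(\hat\F)(\eta))|=o_p(1)$.

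Finally, the map $r:(a,b)\mapsto a/b$ is Fr\'echet differentiable at $(a_0,b_0) := \big(\int_0^1\int_a^b\sum_{i=1}^d\lambda_i^{(u,\omega)} d\omega du,\; \int_0^1\int_a^b\Tr(\F_{u,\omega})d\omega du\big)$, with $b_0>0$ because $\F_{u,\omega}\in S_1(\Hi)^+$ is uniformly non-degenerate. Invoking the functional delta method and combining with the previous two steps gives
\begin{align*}
M^{1/2}\rho_T\,\eta^3\big(\hat s_d(\eta)-s_d\big) \stackrel{\mathcal{D}}{\Longrightarrow} \eta^{2}\Big\{\frac{\mathbb{G}_1(\eta)}{b_0}-\frac{a_0\,\mathbb{G}_2(\eta)}{b_0^{2}}\Big\}.
\end{align*}
The bracketed limit is a centered continuous Gaussian process with independent increments (being a fixed linear combination of the traces of the two jointly Gaussian $S_1(\Hi)$-valued Brownian motions of \autoref{thm:Conv}), hence equals in distribution $\sigma\mathbb{B}(\eta)$ for a standard Brownian motion $\mathbb{B}$ and a scalar $\sigma\ge 0$ given by the variance of the bracket at $\eta=1$; this yields \eqref{eq:pca_wc}.

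The main obstacle is the perturbation step made uniform in $\eta\in I$: near $\eta=0$ the estimator $\hat\F_{u,\omega}(\eta)$ is built from $O(\eta N)$ summands and its top-$d$ spectral structure may be ill-behaved, so the direct perturbation bound is not available on all of $I$. Resolving this requires simultaneous use of the maximal deviation inequality \eqref{eq:maxdevmain} (which provides the $\eta$-weighted uniform control), the uniform spectral gap (which makes the perturbation second-order), and the trivial boundedness of the ratio (which absorbs the remaining boundary region $\eta\le\eta_0$).
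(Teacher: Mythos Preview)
Your proposal is essentially correct and follows the same route as the paper, which does not prove \autoref{thm:ldFPCA} separately but refers to its proof of \autoref{thm:lPSCA} as the template. The paper's argument for the analog of your numerator linearization is its Proposition~D.1 (an algebraic identity for $\eta\hat\lambda_k-\eta\lambda_k$ in terms of $\Tr(\Pi_k\,\eta(\hat\F-\F))$ plus an explicit remainder $R_{1,k}$), which is the same content as your Ky--Fan / second--order perturbation step; for the ratio the paper takes $\Psi_2(\cdot)=1/\Tr(\cdot)$ and invokes its Lemma~D.1 on the Fr\'echet derivative of $A\mapsto 1/\Tr(A)$, whereas you take $\Psi_1=\Psi_2=\Tr$ and apply the delta method for $(a,b)\mapsto a/b$ afterward---these are the same computation, packaged at different stages.

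One small point to clean up: in your boundary argument for $\eta<\eta_0$ you invoke the trivial bound $|\hat s_d(\eta)-s_d|\le 1$, but what you are actually bounding there is the linearization error $\hat N_T(\eta)-\eta^{-3}\T_{1,T}(\hat\F)(\eta)$, not $\hat s_d-s_d$. The clean way to control the remainder uniformly in $\eta$ is exactly what the paper does implicitly: write the remainder as $\Tr(\Pi_k R_{1,k}(\eta))$ with $R_{1,k}(\eta)=\eta(\hat\F-\F)(\hat\Pi_k-\Pi_k)-\eta(\hat\lambda_k-\lambda_k)(\hat\Pi_k-\Pi_k)$, bound $\|\hat\Pi_k-\Pi_k\|_{S_\infty}\le 2$ trivially on the region where \eqref{eq:maxdevmain} does not yet force $\|\hat\F(\eta)-\F\|$ into the gap, and use the second--order bound on the complement; the extra $\eta^2$ in your $\eta^3$ prefactor then absorbs the boundary contribution under \autoref{as:bandwidth} (specifically $M=o(Nb_f)$).
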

\autoref{thmmain} applies with $g_{n,n}(\eta) = \eta^2$ and $f_n(\eta) = \eta^3$, yielding an asymptotically pivotal statistic.

 \subsubsection{Time-varying dynamic principal separable component measure}
 \label{sec352}

In order to analyze the 
sequential estimator  
\begin{eqnarray}
  \label{zeq31}
  \hat{s}_d(\eta)=
 \mathcal{T}_T(\hat \F) (\eta)  = 
  \frac{\sum_{j=1}^d \frac{1}{M}\sum_{u \in U_M} \int_a^b(\hat{\delta}_j^{(u,\omega)}(\eta) )^2 d\omega }{\sum_{j=1}^\infty \frac{1}{M}\sum_{u \in U_M} \int_a^b(\hat{\delta}_j^{(u,\omega)}(\eta))2 d\omega }
\end{eqnarray}
for the measure \eqref{eq:sd_SPCA} of total  variation explained by the degree $d$-separable approximation, 
we will use
\autoref{thm:Conv}  and 
\autoref{cor:wcFdif2} with  $k=2$, $g_{n,n}(\eta) =\eta^2$ and $f_n(\eta) = \eta^3$,  and prove the following result in  Section \ref{secd2}.
\begin{thm} \label{thm:lPSCA}
Suppose the conditions of \autoref{thm:Conv}(b) hold true  and that ${\delta}_1^{(u,\omega)} > \ldots > {\delta}_d^{(u,\omega)}>0$ uniformly in $(u,\omega) \in [0,1]\times [0,\pi]$. Then 
\begin{align*}\Bigg\{
 M^{1/2}\rho_{T}\eta^3 \Big(\frac{\sum_{j=1}^d \frac{1}{M}\sum_{u \in U_M} \int_a^b(\hat{\delta}_j^{(u,\omega)}(\eta) )^2 d\omega }{\sum_{j=1}^\infty \frac{1}{M}\sum_{u \in U_M} \int_a^b(\hat{\delta}_j^{(u,\omega)}(\eta) )^2d\omega }-\frac{\sum_{j=1}^d \int_0^1 \int_a^b({\delta}_j^{(u,\omega)}  )^2d\omega du}{\sum_{j=1}^\infty  \int_0^1 \int_a^b({\delta}_j^{(u,\omega)}  )^2d\omega du}\Big) \Bigg\}_{\eta \in I}
\stackrel{\mathcal{D}}{\Longrightarrow} \begin{Bmatrix}{\eta}^2 \sigma\mathbb{B}(\eta) \end{Bmatrix}_{\eta \in I} \tageq \label{eq:sca_wc}
\end{align*}
for some $\sigma \ge 0$  and a standard Brownian motion $\mathbb{B}$.
\end{thm}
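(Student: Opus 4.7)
The plan is to mirror the argument for \autoref{thm:ldFPCA} (tvDFPCA), with the added layer of handling the rearrangement structure inherent in the separable decomposition. I adopt $\phi(z) = z^2$ (analytic, with $\phi^\prime(cz) = c\phi^\prime(z)$, so $h(\eta) = \eta$) and $x = 2$; this produces the left-hand multiplier $\eta^{x+1} = \eta^3$ on $\hat{s}_d(\eta) - s_d$ and the right-hand factor $\eta^{x-1}h(\eta) = \eta^2$ in the limit process, matching the target \eqref{eq:sca_wc}.

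The denominator of $\hat{s}_d(\eta)$ slots directly into the framework: $\sum_j (\delta_j^{(u,\omega)})^2 = \|\F_{u,\omega}\|^2_{S_2} = \Tr(\F_{u,\omega}^2)$, so $\Upsilon_D \equiv I$ yields the required $\Tr \circ \mathrm{L}^{a,b}_{U_M} \circ \G_{\Upsilon_D}(\hat{\F})(\eta)$-form. The numerator requires more work because $\sum_{j=1}^d (\delta_j^{(u,\omega)})^2$ is intrinsically tied to the SVD of the \emph{rearranged} operator $R(\F_{u,\omega}) \in S_2(\Hi_2 \otimes \Hi_2, \Hi_1 \otimes \Hi_1)$ rather than to the eigenstructure of $\F_{u,\omega}$ itself. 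Using orthonormality of $\{A_j^{(u,\omega)} \widetilde{\otimes} B_j^{(u,\omega)}\}_{j \ge 1}$ in $S_2(\Hi)$ together with the Kronecker multiplication law $(A\widetilde{\otimes}B)(A^\prime \widetilde{\otimes} B^\prime) = (AA^\prime)\widetilde{\otimes}(BB^\prime)$ and the tensor-factorized trace $\Tr_\Hi(A\widetilde{\otimes}B) = \Tr_{\Hi_1}(A)\Tr_{\Hi_2}(B)$, a direct calculation yields
\[
\sum_{j=1}^d \bigl(\delta_j^{(u,\omega)}\bigr)^2 = \Tr_\Hi\!\bigl(\tilde{P}_d^{(u,\omega)}(\F_{u,\omega}) \cdot \F_{u,\omega}\bigr),
\]
where $\tilde{P}_d^{(u,\omega)}$ denotes the $S_2$-projector onto $\mathrm{span}\{A_j^{(u,\omega)}\widetilde{\otimes}B_j^{(u,\omega)}\}_{j=1}^d$. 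This bounded bilinear functional of $\F_{u,\omega}$ is then reformulated as $\Tr(\Upsilon_N(\F_{u,\omega}^2))$ for a $\Upsilon_N \in \mathfrak{L}(S_r(\Hi))$ encoding both the separable projection and the cyclic shift. Once in place, \autoref{as:mappings} for $\Upsilon_N$ and $\Upsilon_D$ reduces to uniform operator-norm bounds and twice Fr{\'e}chet differentiability of $\tilde{P}_d^{(u,\omega)}$ in $(u,\omega)$, which follows from the Lipschitz smoothness of $\F_{u,\omega}$ and the spectral gaps $\delta_1^{(u,\omega)} > \ldots > \delta_d^{(u,\omega)} > 0$ via the Riesz functional calculus applied to $R(\F_{u,\omega})R(\F_{u,\omega})^*$.

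With this structure, \autoref{thm:Conv}(b) delivers joint weak convergence in $C_{S_1}^2$ of the centered/scaled numerator and denominator to $(\eta^2 \mathbb{W}_{\mu_{\Upsilon_N}}(\eta), \eta^2 \mathbb{W}_{\mu_{\Upsilon_D}}(\eta))$. The plug-in error incurred by replacing $\tilde{P}_d$ with the estimated $\hat{P}_d(\eta)$ (built from $\hat{\F}(\eta)$) is $o_p\!\big((\rho_T M^{1/2})^{-1}\big)$ uniformly in $\eta \in I$, obtained via a contour-integral/perturbation expansion combined with the uniform concentration bound \eqref{eq:maxdevmain} from \autoref{thm:Conv}(b), exactly analogous to the passage between \eqref{eq33} and \eqref{eq32} in the FPCA example. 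Applying \autoref{cor:wcFdif2} with the Fr{\'e}chet-differentiable ratio map $\Psi(n, d) = n/d$ composed with the trace then produces a real-valued centered Gaussian limit process, which by its $\eta^2$ pre-factor and stationary-increment covariance may be written as $\eta^2 \sigma \mathbb{B}(\eta)$ for a standard Brownian motion $\mathbb{B}$ and some $\sigma \ge 0$ capturing the delta-method variance. The main obstacle, in my view, is the reformulation step in the numerator together with the accompanying plug-in control: since the separable projectors $\tilde{P}_d$ arise from the SVD of the rearranged operator rather than from $\F$'s own eigen-decomposition, identifying $\Upsilon_N \in \mathfrak{L}(S_r(\Hi))$ satisfying \autoref{as:mappings} and controlling $\hat{P}_d - \tilde{P}_d$ in the rearranged space is markedly more delicate than the corresponding spectral-projector analysis behind \autoref{thm:ldFPCA}.
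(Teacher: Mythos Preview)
Your overall plan is sound in spirit but the crucial reformulation step for the numerator does not go through. You claim that the bilinear form $A \mapsto \langle \tilde{P}_d(A), A\rangle_{S_2}$ can be written as $A \mapsto \Tr(\Upsilon_N(A^2))$ for a bounded linear $\Upsilon_N \in \mathfrak{L}(S_r(\Hi))$. Since $\Tr\circ\Upsilon_N$ is a bounded linear functional on $S_1(\Hi)$, it equals $\Tr(C\,\cdot)$ for some $C\in S_\infty(\Hi)$, so your requirement is $\Tr(C A^2)=\langle\tilde{P}_d(A),A\rangle_{S_2}$. Even asking this only to first order at $A=\F_{u,\omega}$ forces the Lyapunov equation $C\F_{u,\omega}+\F_{u,\omega}C = 2\tilde{P}_d(\F_{u,\omega})$. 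In the eigenbasis of $\F_{u,\omega}$ this gives $C_{ij}=2[\tilde{P}_d(\F_{u,\omega})]_{ij}/(\lambda_i+\lambda_j)$; since $\F_{u,\omega}$ is compact, $\lambda_i\downarrow 0$, while the separable projection $\tilde{P}_d(\F_{u,\omega})$ has no reason to vanish on the tail eigenspaces (e.g.\ if $A_1\widetilde{\otimes}B_1 \propto I_\Hi$, then $[\tilde{P}_1(\F)]_{ii}$ is constant). Hence $C$ is generically unbounded and \autoref{as:mappings} fails. The underlying obstruction is that the quadratic form coming from the rearranged SVD, $A\mapsto \langle \tilde{P}_d \mathfrak{T}(A),\mathfrak{T}(A)\rangle$, does not factor through $A\mapsto A^2$: the bilinear operation ``rearrange then multiply by the adjoint'' is not a linear functional of the operator square.

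The paper avoids this by \emph{not} absorbing the bilinearity into $\phi$. It first applies the eigenvalue perturbation expansion (\autoref{prop:bexplam}) to $(\hat\delta_k(\eta))^2$ as an eigenvalue of $\hat G_{u,\omega}(\eta)=\hat F_{u,\omega}(\eta)\hat F_{u,\omega}^\dagger(\eta)$, then decomposes $\hat G-G$ bilinearly into first- and second-order pieces in $\hat F-F=\mathfrak{T}(\hat\F-\F)$, and uses the isometry $\mathfrak{T}$ to transport the first-order piece back to a linear functional of $\hat\F-\F$ with the \emph{explicit, bounded} coefficient $\Upsilon_1^{u,\omega}=\Tr(\tilde G)^{-1}\sum_{k=1}^d\delta_k^{(u,\omega)}A_k^{(u,\omega)}\widetilde{\otimes}B_k^{(u,\omega)}$. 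This is rank~$d$, so \autoref{as:mappings} is immediate, and \autoref{thm:Conv} applies with $\phi(z)=z$, $x=3$. The denominator is handled separately with $\phi(z)=z^2$, $\Upsilon_2=I$, $x=1$, and the two pieces are combined via the continuous mapping theorem after an explicit algebraic decomposition of $\eta^3(\hat s_d(\eta)-s_d)$ rather than through a single ratio map. Your plug-in control and the denominator treatment are fine; the fix is to linearize $\hat G-G$ by hand and pass through $\mathfrak{T}$ to obtain a bounded $\Upsilon$ for the $\phi(z)=z$ case, rather than trying to force the numerator into a $\phi(z)=z^2$ mold.
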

If one is interested instead in a non-normalized measure of
deviation from degree $d$ separability, one could take 
\[
\sum_{j=1}^d \int_0^1 \int_a^b({\delta}_j^{(u,\omega)}  )^2d\omega du \quad \text{ and }\quad
% \quad \hat{s}_d(\eta)=
\sum_{j=1}^d \frac{1}{M}\sum_{u \in U_M} \int_a^b(\hat{\delta}_j^{(u,\omega)}(\eta) )^2 d\omega ~.
\]
In this  case, \autoref{thm:Conv}  and 
\autoref{cor:wcFdif2} apply
with $k=1$, $g(\eta) = \eta^2$ and $f(\eta) = \eta^3$ to obtain an analogue of \autoref{thm:lPSCA} for the numerator of \eqref{zeq31}. 
Again, pivotal statistics in all considered 
cases are obtained by an application of \autoref{thmmain}. 

\subsubsection{Time-varying  functional canonical coherence measure} 

 As a sequential estimator of a  measure of $d$-th order functional canonical coherence over the  time-frequency interval $[0,1] \times [a,b]$, consider
\[
\hat{s}_{d}(\eta) =
 \mathcal{T}_T(\hat \F) (\eta)  =
\frac{1}{M}\sum_{u \in U_M} \int_a^b \hat{\mathcal{R}}^{u,\omega}_{d}(\eta) d\omega, 
\]
where $(\hat{\mathcal{R}}^{u,\omega}_{d}(\eta))^2= {\big(\hat{\nu}_d^{u,\omega}(\eta))^2} \big /  \big ( {\hat{\lambda}^{(u,\omega)}_{11,d}(\eta) \hat{\lambda}^{(u,\omega)}_{22,d}(\eta)\big )},$
%\[
%\big(\hat{\mathcal{R}}^{u,\omega}_{d}(\eta)\big)^2= \frac{\big(\hat{\nu}_d^{u,\omega}(\eta))^2}{\hat{\lambda}^{(u,\omega)}_{11,d}(\eta) \hat{\lambda}^{(u,\omega)}_{22,d}(\eta)},  
%\]
and where $\hat{\nu}_d(\eta) $, $\hat{\lambda}^{(u,\omega)}_{11,d}(\eta)$ $\hat{\lambda}^{(u,\omega)}_{22,d}(\eta)$ 
are the sequential  estimators   of the quantities ${\nu}_d(\eta) $, ${\lambda}^{(u,\omega)}_{11,d}(\eta)$ ${\lambda}^{(u,\omega)}_{22,d}(\eta)$ which were
defined in Section \ref{sec224}.
We prove the following result in Section \ref{secd3}.
\begin{thm} \label{thm:lFC}
Suppose the conditions of \autoref{thm:Conv}(b)  hold true and that $\nu^{u,\omega}_1 >\ldots >\nu^{u,\omega}_d >0 $ and ${\lambda}^{(u,\omega)}_{ii,1}> \ldots, {\lambda}^{(u,\omega)}_{ii,d}>0$, $i \in \{1,2\}$. Then,
\begin{align*}
\Bigg\{
M^{1/2}
 \rho_{T}  \Big(\frac{1}{M}\sum_{u \in U_M} \int_a^b\eta^4 \hat{\mathcal{R}}^{u,\omega}_{d}(\eta) d\omega-  \int_0^1 \int_a^b \eta^4\mathcal{R}^{u,\omega}_{d} d\omega du\Big)\Bigg\}_{\eta \in I}  
\stackrel{\mathcal{D}}{\Longrightarrow} \begin{Bmatrix} \eta^3\sigma\mathbb{B}(\eta) \end{Bmatrix}_{\eta \in I}  \tageq \label{eq:sca_wc}
\end{align*}
for some $\sigma\ge 0$  and a standard Brownian motion $\mathbb{B}$. \end{thm}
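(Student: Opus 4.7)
The plan is to cast the centered estimator in the theorem as, up to negligible remainders, a linear functional of the sequential empirical spectral density process $\hat\F(\eta)-\F$ fitting \autoref{cor:wcFdif2} with $k=1$, $\phi(z)=z$, $x=4$, $h\equiv 1$, and $\Psi=\Tr$. The first step is a pointwise linearization of the ratio $\hat{\mathcal{R}}^{u,\omega}_d(\eta) = \hat{\nu}^{u,\omega}_d(\eta)/\sqrt{\hat{\lambda}^{u,\omega}_{11,d}(\eta)\hat{\lambda}^{u,\omega}_{22,d}(\eta)}$ around $\mathcal{R}^{u,\omega}_d$. Taylor-expanding the scalar function $(a,b,c)\mapsto a/\sqrt{bc}$ to first order and invoking standard first-order perturbation expansions for a simple eigenvalue of a self-adjoint compact operator (applied to $\hat{\lambda}^{u,\omega}_{ii,d}(\eta)-\lambda^{u,\omega}_{ii,d}$ through $\F^{ii}_{u,\omega}$) and for a simple singular value of a compact operator (handling $\hat{\nu}^{u,\omega}_d(\eta)-\nu^{u,\omega}_d$ through self-adjoint perturbation applied to $(\F^{12})^{*}\F^{12}$), one obtains
\begin{equation*}
\hat{\mathcal{R}}^{u,\omega}_d(\eta) - \mathcal{R}^{u,\omega}_d = \Tr\bigl(\Upsilon^{(d)}_{u,\omega}\bigl(\hat\F_{u,\omega}(\eta)-\F_{u,\omega}\bigr)\bigr) + R^{u,\omega}_d(\eta),
\end{equation*}
where $\Upsilon^{(d)}_{u,\omega}\in\mathfrak{L}(S_1(\Hi))$ is a block-structured bounded linear operator built from $\nu^{u,\omega}_d$, $\lambda^{u,\omega}_{ii,d}$ and the corresponding left/right singular and eigenfunctions of $\F^{12}_{u,\omega}$ and $\F^{ii}_{u,\omega}$, and the remainder satisfies $R^{u,\omega}_d(\eta)=O(\|\hat\F_{u,\omega}(\eta)-\F_{u,\omega}\|_{S_1}^2)$.

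Next I would verify that $\Upsilon^{(d)}$ satisfies \autoref{as:mappings}: the uniform operator-norm bound follows from the strict spectral and singular-value gaps assumed in the theorem, while the requisite twice Fr\'echet differentiability in $u$ follows by combining \autoref{as:smooth} with a Riesz-Dunford representation of the eigen- and singular-projectors as contour integrals around a small loop separating the relevant simple value from the rest of the spectrum. With this choice of $\Upsilon^{(d)}$, $\phi(z)=z$ and $x=4$, the mapping $\G_{\Upsilon^{(d)}}$ from \eqref{eq4a} equals $\eta^{4}\Upsilon^{(d)}_{u,\omega}(\hat\F_{u,\omega}(\eta))$ pointwise, so that
\begin{equation*}
\bigl(\Tr\circ\mathrm{L}^{a,b}_{U_M}\circ\G_{\Upsilon^{(d)}}\bigr)(\hat\F)(\eta) = \frac{1}{M}\sum_{u\in U_M}\int_a^b \eta^4 \Tr\bigl(\Upsilon^{(d)}_{u,\omega}(\hat\F_{u,\omega}(\eta))\bigr)\,d\omega,
\end{equation*}
and the linearization identity above shows that this coincides, up to remainders, with $\tfrac{1}{M}\sum_{u\in U_M}\int_a^b \eta^4\hat{\mathcal{R}}^{u,\omega}_d(\eta)\,d\omega - \int_0^1\int_a^b \eta^4\mathcal{R}^{u,\omega}_d\,d\omega\,du$ (plus a Riemann-sum bias that I bound separately). \autoref{cor:wcFdif2} then yields weak convergence in $C_\rnum$ to $\{\eta^3\Tr(\mathbb{W}_{\mu_{\Upsilon^{(d)}}}(\eta))\}_{\eta\in I}$, which, by continuity of the trace on $S_1(\Hi)$, is a centred real-valued Gaussian process with independent increments on $I$; hence equal in law to $\{\eta^3\sigma\mathbb{B}(\eta)\}_{\eta\in I}$ for some $\sigma\ge 0$ and a standard real Brownian motion $\mathbb{B}$.

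I expect the main obstacle to be the uniform control of the quadratic remainder $R^{u,\omega}_d(\eta)$ and the Riemann-sum bias, both of which must be of order $o_p((M^{1/2}\rho_T)^{-1})$ uniformly in $\eta\in I$. This is precisely where the maximal deviation inequality \eqref{eq:maxdevmain} from \autoref{thm:Conv}(b) plays a crucial role: it provides $\sup_{u\in U_M,\omega,\eta}\|\eta(\hat\F_{u,\omega}(\eta)-\F_{u,\omega})\|_{S_1}=o_p(1)$, which combined with the strict spectral gaps dominates the quadratic terms after integration; the Riemann-sum bias is controlled by \autoref{as:smooth} together with the constraints on $M$ in \autoref{as:bandwidth}. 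A secondary subtlety is verifying that $\Tr\circ\Upsilon^{(d)}$ produces a real-valued (not properly complex) limit, which follows from the Hermitian block structure $\F^{ji}_{u,\omega}=(\F^{ij}_{u,\omega})^{*}$ of the spectral density enforcing self-adjointness of the linearized functional.
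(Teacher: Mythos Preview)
Your proposal is correct and follows essentially the same route as the paper: Taylor-expand the ratio, linearize the eigenvalue and singular-value differences via Riesz--Dunford perturbation, verify the resulting map satisfies \autoref{as:mappings}, apply \autoref{cor:wcFdif2} with $\Psi=\Tr$, $\phi(z)=z$, $x=4$, and control the quadratic remainders through the maximal deviation bound \eqref{eq:maxdevmain}. The only cosmetic difference is that the paper writes the linearization block-wise---keeping the contributions from $\F^{11}$, $\F^{22}$ and $\F^{21}$ separate via the algebraic identity for $\tfrac{\hat x}{\hat y\hat z}-\tfrac{x}{yz}$ and invoking \autoref{cor:wcFdif2} with $k=3$---whereas you fold everything into a single $\Upsilon^{(d)}$ with $k=1$; since each block is obtained from the full operator by the fixed bounded maps $P^\Hi_i(\cdot)P^\Hi_j$, the two packagings are equivalent.
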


\autoref{thmmain}  is applicable to obtain pivotal statistics 
in this case with $g_{n,n}(\eta) = \eta^3$ and $f_n(\eta) = \eta^4$ for all $\eta \in I$.

\begin{Remark}
 {\rm \autoref{thm:ldFPCA}-\autoref{thm:lFC} are given under the assumption of no multiplicity of the eigenvalues. This simplifies the asymptotic derivations but the authors believe that this can be relaxed using a blocking technique as for example given in \cite{Anderson1963}. We leave the details for future work.} 
\end{Remark}
\subsubsection{A measure of deviation from stationarity based on the square root distance}

Distributional convergence relies on a well-defined Fr{\'e}chet derivative, and  Remark \autoref{rem:sqrt} indicates that we therefore need to consider some form of regularization. One option is to impose a small $\gamma >0$, which ensures the regularized operators are invertible.  The other option is to regularize via restriction of the operators to an appropriate subspace. We will focus on the latter as the procedure in \eqref{h1} provides a way to select the appropriate 'truncation parameter' $d$. More specifically, 
let $\F_{u,\omega,d}=\sum_{j=1}^d \Pi^{(u,\omega)}_{j} \F_{u,\omega}$ be the restriction of $\F_{u,\omega}$ to the space spanned by the first $d$ leading functional principal components, and define $\hat{\F}_{u,\omega,
d}(\eta)$ similarly. 
We remark that the results of  Section \ref{sec42} below can furthermore be used for estimating 
 $d^\star$ and testing hypotheses of the form $H_0: d^* \leq d_0 $ or $H_0: d^* > d_0 $, where $d_0$ is a fixed positive integer.

Now, define
\[
r_{d}(\eta) =\int_{0}^\pi \int_{0}^1 d^2_{R}(\ddot{\F}_{\omega,d},\eta \F_{u,\omega,d}) du d\omega 
\quad 
\text{ and } 
\hat{r}_{d}(\eta) = \frac{1}{M}
\sum_{u \in U_M}\int_{0}^\pi d^2_{R}(\hat{\ddot{\F}}_{\omega,d}(\eta), \eta\hat{\F}_{u,\omega,d}(\eta))  d\omega 
\]
where $d_R$ is the square root distance defined in \eqref{eq:DR}, and where
\begin{equation}
\label{de32}
\hat{\ddot{\F}}^{1/2}_{\omega, d}(\eta) =  \frac{1}{M} \sum_{u \in U_M}  (\eta\hat{\F}_{u,\omega,d }(\eta))^{1/2}
\quad \text{ and }  \quad  {\ddot{\F}}^{1/2}_{\omega,d}(\eta)=  \int_0^1 (\eta {\F}_{u,\omega,d})^{1/2} du.
\end{equation}
Then, we show in Section \ref{secsqrmet} that the distributional properties of the test of no relevant deviations from  stationarity can be cast in terms of \autoref{thmmain}. More specifically, we prove that
\begin{thm}\label{thm:procconv}
Under the conditions of \autoref{thm:Conv}(b) 
\begin{align*}
\Big\{M^{1/2} \rho_{T}\eta\Big(\hat{r}_{d}(\eta)-r_{d}(\eta)\Big)\Big\}_{\eta} \stackrel{\mathcal{D}}{\Longrightarrow} \Big\{\sigma \eta\mathbb{B}(\eta)\Big\}_{\eta}~.
\end{align*}
\end{thm}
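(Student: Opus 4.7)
The plan is to expand the squared square-root distance and reduce the problem to the framework of \autoref{thm:Conv}. Using the identity $d_R^2(A,B)=\Tr(A)+\Tr(B)-2\Tr(A^{1/2}B^{1/2})$ for $A,B\in S_1(\Hi)^+$, together with the defining relation $\hat{\ddot{\F}}^{1/2}_{\omega,d}(\eta)=\frac{1}{M}\sum_{u\in U_M}(\eta\hat{\F}_{u,\omega,d}(\eta))^{1/2}$ from \eqref{de32}, elementary algebra of the type used in Lemma \autoref{optimalprocr} collapses both quantities to
\begin{align*}
\hat{r}_{d}(\eta)=\eta\bigl(\hat{T}_1(\eta)-\hat{T}_2(\eta)\bigr),\qquad r_{d}(\eta)=\eta\bigl(T_1-T_2\bigr),
\end{align*}
where $\hat{T}_1(\eta)=\tfrac{1}{M}\sum_{u\in U_M}\int_0^\pi\Tr(\hat{\F}_{u,\omega,d}(\eta))\,d\omega$, $\hat{T}_2(\eta)=\int_0^\pi\bignorm{\tfrac{1}{M}\sum_{u\in U_M}\hat{\F}_{u,\omega,d}^{1/2}(\eta)}_{S_2}^2 d\omega$, and $T_j$ are the population counterparts. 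Then $\eta(\hat r_d(\eta)-r_d(\eta))=\eta^2\bigl[(\hat T_1(\eta)-T_1)-(\hat T_2(\eta)-T_2)\bigr]$, so it suffices to establish joint weak convergence of $M^{1/2}\rho_T\eta(\hat T_j(\eta)-T_j)$, $j=1,2$, to Gaussian processes of the form $\sigma_j\mathbb{B}_j(\eta)$, and combine via the continuous mapping theorem applied to $(a,b)\mapsto a-b$.

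For $\hat T_1$ I would apply \autoref{thm:Conv}(a) with $\phi(z)=z$, $x=1$, and $\Upsilon_{u,\omega}(\cdot)=(\Pi_d^{(u,\omega)}\widetilde{\otimes}I)(\cdot)$. Then $\Tr\bigl((\mathrm{L}^{a,b}_{U_M}\circ\G_\Upsilon(\hat\F))(\eta)\bigr)$ coincides with $\eta\hat T_1(\eta)$ up to the replacement of the empirical projector $\hat\Pi_d^{(u,\omega)}(\eta)$ by $\Pi_d^{(u,\omega)}$; under the eigenvalue gap assumption this projector perturbation contributes only $o_p((\rho_TM^{1/2})^{-1})$ uniformly in $\eta\in I$ via the concentration bound \eqref{eq:maxdevmain}, mirroring the argument used for \autoref{thm:ldFPCA}. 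This delivers $M^{1/2}\rho_T\eta(\hat T_1(\eta)-T_1)\Rightarrow\sigma_1\mathbb{B}_1(\eta)$.

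The quadratic functional $\hat T_2$ is the crux. The rank-$d$ restriction is essential here: as stated in Remark \autoref{rem:sqrt}(b), $\phi(z)=z^{1/2}$ is not analytic at $0$, but on the finite-dimensional range of $\Pi_d^{(u,\omega)}$ the spectrum is bounded below by $\lambda_d^{(u,\omega)}$, so functional calculus can be carried out on a contour separated from the origin and $\phi$ satisfies $\phi'(cz)=c^{-1/2}\phi'(z)$, i.e., $h(\eta)=\eta^{-1/2}$. The identity $\|A\|_{S_2}^2-\|B\|_{S_2}^2=2\Tr(B(A-B))+\|A-B\|_{S_2}^2$ for self-adjoint $A,B\in S_2$, applied around $\bar G_d(\omega):=\int_0^1\F_{v,\omega,d}^{1/2}dv$, yields
\begin{align*}
\hat T_2(\eta)-T_2 = 2\int_0^\pi\Tr\Bigl(\bar G_d(\omega)\bigl(\tfrac{1}{M}\sum_{u\in U_M}\hat\F_{u,\omega,d}^{1/2}(\eta)-\F_{u,\omega,d}^{1/2}\bigr)\Bigr)d\omega + R_T(\eta),
\end{align*}
with $R_T(\eta)=\int_0^\pi\bignorm{\tfrac{1}{M}\sum_u(\hat\F_{u,\omega,d}^{1/2}(\eta)-\F_{u,\omega,d}^{1/2})}_{S_2}^2 d\omega$ plus a discretization error of order $M^{-2}$. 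The leading linear term fits \autoref{thm:Conv}(b) with $\phi(z)=z^{1/2}$, $x=1$, and $\Upsilon_{u,\omega}(G)=\bar G_d(\omega)\Pi_d^{(u,\omega)}G$; this $\Upsilon$ satisfies \autoref{as:mappings} because $\omega\mapsto\bar G_d(\omega)$ and $(u,\omega)\mapsto\Pi_d^{(u,\omega)}$ inherit the required smoothness from $\F$ via the functional calculus and the eigenvalue gap. Tracking the $h(\eta)=\eta^{-1/2}$ factor and the $\eta^{1/2}$ scaling in $\G_\Upsilon(\hat\F)$ produces $M^{1/2}\rho_T\eta(\hat T_2(\eta)-T_2)\Rightarrow\sigma_2\mathbb{B}_2(\eta)$.

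The chief obstacle is uniform (in $\eta\in I$) control of $R_T(\eta)$, which inherits an $\eta^{-1}$ inflation from the partial-sample normalization of the sequential estimator near $\eta=0$. Combining \eqref{eq:maxdevmain} with the $S_1$-Burkholder-type inequality \autoref{lem:Burkh}(ii) applied to the martingale-difference decomposition of $\tfrac{1}{M}\sum_u(\hat\F_{u,\omega,d}^{1/2}(\eta)-\F_{u,\omega,d}^{1/2})$ shows that $R_T(\eta)=o_p((\eta\rho_TM^{1/2})^{-1})$ uniformly, whence $M^{1/2}\rho_T\eta R_T(\eta)=o_p(1)$. Joint weak convergence of the two centered processes then follows from \autoref{cor:wcFdif2} with $k=2$, and the continuous mapping theorem yields the stated limit $\{\sigma\eta\mathbb{B}(\eta)\}_\eta$, fitting the template \eqref{eq14}--\eqref{eq15} of \autoref{thmmain} with $g(\eta)=\eta$ and $f(\eta)=\eta^2$.
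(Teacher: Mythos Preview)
Your approach coincides with the paper's: both expand $\hat r_d(\eta)$ and $r_d(\eta)$ via \autoref{optimalprocr} into a trace term and a squared $S_2$-norm term, linearize the latter around $\ddot{\F}^{1/2}_{\omega,d}=\bar G_d(\omega)$ to obtain a leading linear piece plus a quadratic remainder, invoke \autoref{thm:Conv}(a) and (b) (with $\phi(z)=z^{1/2}$, $h(\eta)=\eta^{-1/2}$) for the two linear pieces, and dispose of the remainder by the uniform concentration in \eqref{eq:maxdevmain} together with the perturbation expansion of $\phi$. The paper's control of the remainder goes through \autoref{thm:PerEx} and \autoref{thm:maxPS1} rather than a direct martingale-difference bound, but the substance is the same.

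Two bookkeeping points deserve correction. First, your choice $x=1$ for the square-root term cannot be plugged into \autoref{thm:Conv}(b) as stated: the limit would be $\eta^{x-1}h(\eta)\mathbb{W}(\eta)=\eta^{-1/2}\mathbb{W}(\eta)$, which is not an element of $C_{S_1}$. The paper absorbs the extra factor of $\eta$ you apply at the end \emph{into} the map $\G_\Upsilon$ by taking $x=5/2$ (equivalently, your parametrization needs $x=3/2$), so that the limit is $\eta\mathbb{W}(\eta)$. Second, writing $\Upsilon_{u,\omega}(G)=\bar G_d(\omega)\,\Pi_d^{(u,\omega)}G$ and applying $\phi$ to the full $\eta\hat\F_{u,\omega}(\eta)$ is not the same as $(\eta\hat\F_{u,\omega,d}(\eta))^{1/2}$: the square root must be taken \emph{after} the rank-$d$ restriction (this is what makes $\phi$ analytic on a contour bounded away from the origin, as you correctly note from Remark~\ref{rem:sqrt}(b)). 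The paper handles this by feeding $\hat\F_{u,\omega,d}$ itself into $\G_\Upsilon$ with $\Upsilon_{2,\omega}=\ddot{\F}^{1/2}_{\omega,d}\widetilde\otimes I$; the replacement of the empirical projector by the true one is then absorbed into the $o_p$ remainder via the same concentration argument you cite for $\hat T_1$.
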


In this case 
\autoref{thmmain} applies with $k=2$, $g_{n,n}(\eta) = \eta$ and $f_n(\eta) = \eta^2$.

\section{Statistical consequences} \label{sec4}
\def\theequation{4.\arabic{equation}}
\setcounter{equation}{0}

In this section, we discuss several statistical applications  of the developed theory from a general perspective. We focus 
on confidence intervals for the measure of deviation, testing relevant hypotheses and on the  problem of reducing the model complexity when a given model can be approximated by an increasing sequence of ``simpler'' models.  \purp{We emphasize once more that focus is on measures involving the second-order dynamics structure of nonstationary Hilbert space-valued processes, and that
all results of this section are directly applicable to the measures introduced in Section \ref{sec35}. Moreover,  all results  presented in this section also provide  novel  methodology   for the analysis of multivariate nonstationary data.} The proofs of the statements in this section can be found in Section \ref{proofsec4}.

\subsection{Confidence intervals and hypotheses testing}
 \label{sec41}
 
 Recall the scenario discussed in Section \ref{sec22} and assume that one decides to work with the class of models ${\cal P}^\prime $  instead of ${\cal P}$.
 It is of interest to derive a confidence interval  for 
 the quantity $r$  in \eqref{h2} which  measures the  deviation of the class 
$ {\cal P}^\prime $ from 
the given model in ${\cal P}$. Let $\hat r = {\cal T}_{1,T} (\hat \F) (1) $ 
denote the estimator of $r = \T_1 (\F) = \T_1 (\F)  (1) $ defined in Section \ref{sec32}, then it follows from the second part of 
 Theorem \ref{thmmain} 
(we assume throughout this section that its   assumptions are satisfied)
  that 
\begin{equation}  \label{deq12a}
  \frac{\hat{r} - r}{{ { V}}} 
  =
    \frac{ {\cal T}_{1,T} (\hat \F) (1) 
    - \T_1 (\F) 
    }{{{ V}_{11}}} 
  \stackrel{\mathcal{D}}{\longrightarrow}
 \mathbb{T}  = 
   {\purp{ g(1) }\mathbb{B} (1) \over  
\big (   \int_0^1 \big |  g(\eta ) \mathbb{B} (\eta)-
 f(\eta ) g(1) \mathbb{B}(1) \big |^2
\nu(d\eta)
\big )^{1/2}}~,
 \end{equation}
 where $ { V} =  { V}_{11} $
 is defined in \eqref{hx6}, 
$ \mathbb{B} $ denotes a standard Brownian motion and where
 the functions  $f=f_{1}$,  $g=g_{11}$ and the measure $\nu $ are known, and   depend on the concrete application  (see Section \ref{sec35} for some examples).
Consequently, the distribution of the statistic $\mathbb{T}$ on the right-hand side of \eqref{deq12a} 
is pivotal and its quantiles can be readily  simulated.

\begin{thm} \label{thm1}
For $\beta \in (0,1)$ let $q_{\beta}$ denote the $\beta$-quantile of the distribution of the random variable $ \mathbb{T}  $ defined in \eqref{deq12a}.
If  $r = \T (\F) >0 $  and the assumptions of  \autoref{thmmain} are satisfied, then the interval
\begin{align} 
	\hat I_{T}
&=
	\left[ \hat r + q_{\alpha/2} { { V}}  \, ,
		 \hat r + q_{1-\alpha/2} { { V}} \right]~,
\label{deq13}
\end{align}
defines an  asymptotic $(1-\alpha)$-confidence interval for the measure $r $.
\end{thm}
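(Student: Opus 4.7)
The plan is a short unwinding argument that transports the pivotal convergence already established in Theorem \ref{thmmain} through the quantile-based definition of $\hat I_T$. First I would invoke the marginal statement in the second part of Theorem \ref{thmmain} with $j=1$, which - after the explicit identification recorded in \eqref{deq12a} - delivers the pivotal convergence
\begin{equation*}
\frac{\hat r - r}{V} \stackrel{\mathcal{D}}{\longrightarrow} \mathbb{T}
= \frac{\mathbb{B}(1)}{\big(\int_0^1 |g(\eta)\mathbb{B}(\eta)-f(\eta)g(1)\mathbb{B}(1)|^2\,\nu(d\eta)\big)^{1/2}}.
\end{equation*}
Here $V=V_{11}$ is eventually a.s.~positive: the non-singularity of the matrix $\mathbb{U}$ assumed in Theorem \ref{thmmain} guarantees $\mathbb{U}_{11}>0$ a.s., and Theorem \ref{thmmain} (together with \eqref{eq15}) shows that $V$ converges (after proper rescaling) to $g_{11}(1)\mathbb{U}_{11}>0$, so no division-by-zero issue appears in the limit.

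Next I would rewrite the coverage event using the definition \eqref{deq13}. The condition $r\in\hat I_T$ is equivalent to $q_{\alpha/2}\le (r-\hat r)/V\le q_{1-\alpha/2}$, or, after multiplying through by $-1$, to $-q_{1-\alpha/2}\le (\hat r-r)/V\le -q_{\alpha/2}$. Applying the portmanteau theorem, provided the CDF of $\mathbb{T}$ is continuous at $\pm q_{\alpha/2}$ and $\pm q_{1-\alpha/2}$, yields
\begin{equation*}
\mathbb{P}\big(r \in \hat I_T\big) \longrightarrow
\mathbb{P}\big(-q_{1-\alpha/2}\le \mathbb{T}\le -q_{\alpha/2}\big).
\end{equation*}
Continuity of the CDF holds because $\mathbb{B}(1)\sim N(0,1)$ has a Lebesgue density, the denominator is a.s.~strictly positive (again by non-singularity of $\mathbb{U}$ and continuity and positivity of $g$), and the ratio of a $N(0,1)$ variable by an a.s.~positive variable admits a density on $\mathbb{R}$.

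Finally, I would exploit symmetry. Replacing the driving Brownian motion $\mathbb{B}$ by $-\mathbb{B}$ leaves its law unchanged, flips the sign of the numerator of $\mathbb{T}$, and leaves the denominator invariant; hence $\mathbb{T}\stackrel{\mathcal{D}}{=}-\mathbb{T}$. This symmetry forces $q_\beta=-q_{1-\beta}$ for every $\beta\in(0,1)$, so
\begin{equation*}
\mathbb{P}\big(-q_{1-\alpha/2}\le \mathbb{T}\le -q_{\alpha/2}\big)
= \mathbb{P}\big(q_{\alpha/2}\le \mathbb{T}\le q_{1-\alpha/2}\big)=1-\alpha,
\end{equation*}
completing the coverage statement.

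There is no genuine obstacle; Theorem \ref{thmmain} has done all the real work. The only points that merit care are (i) verifying that the limit $\mathbb{T}$ is continuously distributed so that the portmanteau step applies at the quantile endpoints, and (ii) checking that $V$ is eventually positive so that the rewriting of the coverage event is lossless - both being immediate consequences of the non-singularity of $\mathbb{U}$ and the strict positivity of $g$ already postulated in Theorem \ref{thmmain}.
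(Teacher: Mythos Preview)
Your proof is correct and follows essentially the same route as the paper's: rewrite the coverage event in terms of $(\hat r - r)/V$, invoke the pivotal convergence \eqref{deq12a}, and use the symmetry $\mathbb{T}\stackrel{\mathcal{D}}{=}-\mathbb{T}$ (equivalently $q_{\alpha/2}=-q_{1-\alpha/2}$) to conclude. The paper's proof is a one-line calculation that simply asserts the symmetry, whereas you have been more explicit about the continuity of the limiting distribution and the eventual positivity of $V$---both welcome but not substantively different.
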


In addition, it might 
be of interest to investigate if the model deviation measure $r$ is smaller than a given threshold
$\Delta$. In order to make this decision at a controlled type I error,  we 
propose to construct a  test for the relevant hypotheses in \eqref{deq14b}. To this end,
we use  the duality between confidence intervals and tests and  \autoref{thm1}.
To be precise,  similar arguments as given in the proof of \autoref{thm1}  show that 
  the interval
 $
{\hat {{I}}_{T} }= [ q_{ \alpha} { { V}}  +  \hat r ~ ,  \infty)
 $
 defines a one-sided asymptotic ($1-\alpha$)-confidence interval for  the measure $r  $. 
 Now, by the duality between confidence intervals and tests, 
 %\citep[see, for example][]{aitchison1964}, 
 an asymptotic
 level $\alpha$-test for the hypotheses in \eqref{deq14} is obtained by rejecting the null hypothesis  whenever
 $
 [0, \Delta   ] \cap {\hat {{I}}_{T} }  =   [0, \Delta   ] \cap  [ q_{ \alpha} \hat  { { V}} +  \hat r ~ ,  \infty)  = \emptyset .
 $
This  is   equivalent to rejecting  whenever 
	\begin{equation} \label{testrel}
	  \hat r >  	 \Delta    - q_{\alpha} { { V}}  =  \Delta    + q_{1- \alpha} { { V}}     ~,
	\end{equation}
	and the following result describes the asymptotic properties of this decision rule.

\begin{thm} \label{thm2}
Suppose that the assumptions of Theorem \ref{thmmain} hold. The test
\eqref{testrel}   is  a consistent asymptotic level-$\alpha$ test for the hypotheses
\eqref{deq14b} with $\Delta >0$.
\end{thm}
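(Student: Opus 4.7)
The plan is to verify the three properties that together constitute a consistent asymptotic level-$\alpha$ test: (i) asymptotic rejection probability exactly $\alpha$ on the boundary $r=\Delta$; (ii) asymptotic rejection probability at most $\alpha$ throughout the null interior $r<\Delta$; and (iii) power tending to $1$ for every fixed alternative $r>\Delta$. The single analytical input is \autoref{thmmain} in its $k=1$ specialization, which yields the joint convergence $(\hat r-r)/V \stackrel{\mathcal{D}}{\longrightarrow}\mathbb{T}$ with $\mathbb{T}$ as defined in \eqref{deq12a}, together with the order bounds $\hat r-r = O_{\mathbb{P}}(M^{-1/2}\rho_T^{-1})$ and $V=O_{\mathbb{P}}(M^{-1/2}\rho_T^{-1})$ read off from the numerator and denominator appearing in \eqref{hx6d}.

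For (i), I would note that at $r=\Delta$ the rejection event $\{\hat r>\Delta+q_{1-\alpha}V\}$ coincides with $\{(\hat r-r)/V>q_{1-\alpha}\}$. Because the distribution of $\mathbb{T}$ admits a continuous c.d.f. (it is a ratio with a non-atomic denominator guaranteed by the non-singularity of $\mathbb{U}$ in \autoref{thmmain}), the Portmanteau theorem gives that this probability converges to $\mathbb{P}(\mathbb{T}>q_{1-\alpha})=\alpha$. For (ii), write the rejection probability as
\[
\mathbb{P}\bigl(\hat r-r>(\Delta-r)+q_{1-\alpha}V\bigr).
\]
Since $\Delta-r>0$ is fixed while $V=o_{\mathbb{P}}(1)$ and $\hat r-r=o_{\mathbb{P}}(1)$, the right-hand side inside the event tends to the positive constant $\Delta-r$ in probability while the left-hand side tends to zero, so the probability vanishes. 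This also shows that the test is of asymptotic size $\alpha$, i.e.\ $\limsup_{T\to\infty}\sup_{r\le\Delta}\mathbb{P}_r(\text{reject})\le\alpha$, the supremum being attained at the boundary.

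For (iii), I would again rewrite the rejection event, this time as $\{(\hat r-r)/V>q_{1-\alpha}-(r-\Delta)/V\}$. Here $r-\Delta>0$ is fixed and $V=o_{\mathbb{P}}(1)$, so the (random) threshold $q_{1-\alpha}-(r-\Delta)/V$ tends to $-\infty$ in probability; combined with the tightness of $(\hat r-r)/V$ supplied by \autoref{thmmain}, the rejection probability converges to $1$. The one delicate point throughout is ensuring that the division by $V$ is well defined and that the conclusions of \autoref{thmmain} can actually be invoked; this requires that $V$, after normalization by $M^{1/2}\rho_T$, converges to a random variable that is almost surely strictly positive. I expect this to be the main (though mild) obstacle: it follows from the non-singularity assumption on the matrix $\mathbb{U}$ in \autoref{thmmain} together with the representation of $V$ as an $L^2(\nu)$-norm of a continuous Gaussian path that is non-degenerate on $(0,1)$, so $\mathbb{P}(V>0)\to 1$ and all probability statements above are rigorously justified.
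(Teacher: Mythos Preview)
Your proposal is correct and follows essentially the same approach as the paper. The paper does not spell out a separate proof of this theorem in the appendix; the test is derived via duality with the one-sided confidence interval in the paragraph preceding the statement, and the direct verification of asymptotic level and consistency is carried out for the analogous \autoref{thm5}, whose proof matches your argument line by line: rewrite the rejection event as $\{(\hat r-r)/V>q_{1-\alpha}+(\Delta-r)/V\}$, invoke the weak convergence \eqref{deq12a}, and use $V=O_{\mathbb P}((MN\bf)^{-1/2})=o_{\mathbb P}(1)$ from \eqref{deq19} to send the drift $(\Delta-r)/V$ to $\pm\infty$ depending on the sign of $\Delta-r$. Your added remark on the a.s.\ positivity of the limiting denominator (via the non-singularity of $\mathbb U$ in \autoref{thmmain}) makes explicit a point the paper leaves implicit.
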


 \purp{ We conclude this section with the important observation that the assumptions $\Delta >0$ and $r>0 $  are crucial for the statement in Theorem \ref{thm2}. 
First note that  $\Delta =0$  implies $r=0$ (but not vice versa). If $r=0$,  the 
 asymptotic distribution (under the null hypothesis) of the numerator and denominator on the left hand side   of \eqref{deq12a} is a Dirac measure at the point $0$. More precisely, in this case we have  
 $ M^{1\over 2} \rho_T \big (
 {\cal T}_{1,T} (\hat \F) (1) 
    - \T_1 (\F) \big )     \stackrel{\mathcal{D}}{\longrightarrow}
    \sigma_{11}  g_{11}(1) \mathbb{B}(1) $, 
and    $M^{1\over 2} \rho_T V_{11} \stackrel{\mathcal{D}}{\longrightarrow} 
\sigma_{11} \big (   \int_0^1 \big |  g(\eta ) \mathbb{B} (\eta)-
 f(\eta ) g(1) \mathbb{B}(1) \big |^2
\nu(d\eta)
\big )^{1/2}$, 
    where $\sigma_{jj}=0$. 
   Consequently,   canceling  the factor $\sigma_{jj}=0$ in \eqref{deq12a} is not possible.  
In many cases  both terms do converge weakly, but with a different normalization. The ratio is then asymptotically not  distribution-free.}

\subsection{Finding the appropriate degree of model complexity}
 \label{sec42}
 
 Consider   an increasing  sequence of  model classes
${\cal P}_{1} \subset {\cal P}_{2}  \subset {\cal P}_{3} \subset  \ldots  \ldots  \subset {\cal P}  $
as introduced in Section \ref{sec23}. We assume that, for each $d \in \mathbb{N}$, there exists a functional ${\cal T}_d$ 
such that $s_d = {\cal T}_d ( \F )$  measures  the quality of the approximation of the
locally stationary functional time series by an ``optimal''  model from the class $ {\cal P}_{d}$   (again, we assume 
$s_{1} <   s_{2}  < s_{3}  <  \ldots $   and  $\lim_{d \to \infty} s_{d} =1$, and the
case $s_d=1$, for some $d$, is interpreted as a terminating sequence, i.e., ${\cal P}_{d} = {\cal P}$).

Assume that $s_d= {\cal T}_{d} (  \F )= {\cal T}_{d} (  \F )(1) $ for some functional 
${\cal T}_{d} $, and 
define by  $ \hat s_d = {\cal T}_{d,T} (  \hat \F ) (1)$ the corresponding estimator introduced in Section  \ref{sec32}. As a consequence of the second part of Theorem \ref{thmmain},
we obtain for $d=1, 2, \ldots  ~$  the  convergence 
 \begin{equation}  \label{deq12}
  \frac{\hat{s}_d - s_d}{{ { V}}_{d,d}}  
  =  \frac{  \T_{d,T} (\hat \F   ) (1 )   
-  \T_d ( \F ) (1) }{ {V}_{dd}}
\stackrel{\mathcal{D}}{\longrightarrow}
 \mathbb{T}_d  = 
   {  \mathbb{B}_{d} (1) \over  
\big (  
\int_0^1 \big | f_d(\eta)   \mathbb{B}_d\eta)- g_{dd} (1)  \mathbb{B}_d(1)\big |^2  \nu(d\eta)
\big )^{1/2}}~, 
 \end{equation}
where $ \mathbb{B}_{d} $ denotes a standard Brownian motion and
${V}_{dd}$ is defined in \eqref{hx6}. We note that the limiting distribution in \eqref{deq12} depends   only  on  $d$ through the functions $f_d$, $g_{dd}$  and the measure $\nu$ (which are known),
and that the random variables $\mathbb{T}_1,\mathbb{T}_2, \ldots $ are
not necessarily independent.

A confidence interval 
 for $s_{d_0}$
% \HDB{and a pivotal test for the hypotheses in  \eqref{deq14}}
can be obtained along the lines of Section \ref{sec41}.
%\HDB{ and the details are omitted for the sake of brevity.}
For example, in the context of  manifold  data, computational costs and storage restrictions often forces  the researcher to work with separable ($d_{0} =1$) or
$2$-separable ($d_{0} =2$) approximations. In this case, the asymptotic $(1- \alpha)$-confidence interval 
\[	\left[ \hat s_{d_0} + q_{\alpha/2} {{ V}_{d_0,d_0}}  \, ,
		 \hat s_{d_0}  + q_{1-\alpha/2} {{ V}_{d_0,d_0}} \right]\]
		 for $s_{d_0} >0$
 can be used to make a statement regarding the quality of approximation with a 
statistical guarantee. 

Although there are many applications where a natural  choice of $d_{0}$ is possible such that the inference tools developed in Section \ref{sec41} and in the previous paragraph can be applied, there  also exist numerous situations 
where one is interested in  identifying a  ``simplest'' 
model. We discuss statistical inference for this type of problem in the following two subsections.
For this purpose, recall the definition of $d^{*} = \min \big \{ d \in \mathbb{N} |~s_{d} > v \big  \}$ in \eqref{dstar}, which corresponds to the ``most simple'' model class 
${\cal P}_{d^{*}}$ of which the approximation 
quality  for  the  given  model  in the class ${\cal P}$ is better than  a given threshold 
$\nu \in [0,1]$.

 \subsubsection{Estimating  the quantity  $d^{*}$ in \eqref{dstar}} \label{sec421}
 
  For the construction of an estimator of $d^{*}$ let 
 $$
 \hat{\mathbb{T}}_d = \frac{\hat{s}_d - \nu}{{{ V}}_{d,d}} = \frac{\hat{s}_d - s_d}{{{ V}}_{d,d}} + \frac{s_d - \nu}{{{ V}}_{d,d}}
 $$
denote the test statistic for the hypotheses in \eqref{deq14} with $d_{0}=d$ (note that  $H_{0}^{(d)}: s_{d} \leq \nu$ is rejected if
$\hat{\mathbb{T}}_d > q_{1-\alpha }$; see \eqref{testrel}). 
It follows from \eqref{eq:BrlimFu2a}, 
\eqref{hx6a},  \eqref{hx6}, the continuous mapping theorem and \eqref{deq1}  that
 \begin{equation} \label{deq19}
{{ V}}_{d,d}=O_{\mathbb{P}}(M^{-1/2}\rho_{T}^{-1} )=O_{\mathbb{P}} \big ((M N\bf)^{-1/2} \big ).
\end{equation} 
Consequently, by \eqref{deq12} we have 
\begin{equation}\label{hx1}
  \hat{\mathbb{T}}_d \  { \stackrel{\mathcal{D}}{\longrightarrow} } \ \mathbb{T}_d \quad \mbox { if } \quad
d=d^*, \   s_{d^*} = \nu~,
\end{equation}
%\begin{equation}\label{hx2}
%  \hat{\mathbb{T}}_d \  { \stackrel{\mathbb{P}}{\longrightarrow} } \
 % \left \{
 % \begin{array}{lll}
%    - \infty & \mbox{if} & d < d^* \\
 %   + \infty & \mbox{if} & d > d^* \\
%    + \infty & \mbox{if} & d=d^* \mbox{ and } s_{d^*}> \nu
%  \end{array}
%  \right . .
%\end{equation}
\begin{equation}\label{hx2}
  \hat{\mathbb{T}}_d \  { \stackrel{\mathbb{P}}{\rightarrow} } \
    - \infty ~~~ \mbox{if}  ~d < d^* ,~
    \hat{\mathbb{T}}_d \  { \stackrel{\mathbb{P}}{\rightarrow} } \
    + \infty ~~~ \mbox{if} ~ d > d^* ,~ \hat{\mathbb{T}}_d \  { \stackrel{\mathbb{P}}{\rightarrow} } \
    + \infty ~~~ \mbox{if} ~  d=d^* \mbox{ and } ~s_{d^*}> \nu . 
\end{equation}
We now define
\begin{equation}\label{hx3}
  \hat{d} = \min  \big \{ d \mid \hat{\mathbb{T}}_d > q_\alpha  \big \}
\end{equation}
as an estimator of $ d^{*}$,  where $q_\alpha$ is the $\alpha$-quantile of the distribution of $\mathbb{T}_d$ in \eqref{deq12}.
The following result provides consistency 
of the estimator $\hat d$.

\begin{thm} \label{thm3} 
 If the assumptions of  \autoref{thmmain} are satisfied, we have for the estimator in \eqref{hx3}
\begin{eqnarray*}
&&  \lim_{T \to \infty}  \mathbb{P} \big (\hat{d} < d^* \big ) = 0  ~~\text{and } ~~  \lim_{T \to \infty}  \mathbb{P} \big (\hat{d} > d^* \big ) \leq  \alpha .
\end{eqnarray*}
In particular, if $\alpha = \alpha_T$ in \eqref{hx3} depends on $T$ such that $\alpha_T \to 0 $ as $T \to \infty $, we have
$$
\lim_{T \to \infty}  \mathbb{P} \big (\hat{d} \not =  d^* \big )  = 0 ~.
$$
\end{thm}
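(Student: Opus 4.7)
The plan is to reduce the proof to the pointwise limits \eqref{hx1} and \eqref{hx2}, which were obtained as consequences of \autoref{thmmain} and the continuous mapping theorem, and which identify the three asymptotic regimes for $\hat{\mathbb{T}}_d$ depending on the position of $d$ relative to $d^{*}$.

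First, I would control under-selection. By construction of $\hat{d}$ in \eqref{hx3}, the event $\{\hat{d}<d^{*}\}$ is contained in $\bigcup_{d=1}^{d^{*}-1}\{\hat{\mathbb{T}}_d>q_\alpha\}$, so a union bound gives
$$\mathbb{P}\big(\hat{d}<d^{*}\big)\le\sum_{d=1}^{d^{*}-1}\mathbb{P}\big(\hat{\mathbb{T}}_d>q_\alpha\big).$$
Each summand vanishes as $T\to\infty$ because \eqref{hx2} yields $\hat{\mathbb{T}}_d\stackrel{\mathbb{P}}{\longrightarrow}-\infty$ whenever $d<d^{*}$, settling the first assertion.

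For over-selection, since $\hat{d}$ is the smallest index whose statistic exceeds $q_\alpha$, we have $\{\hat{d}>d^{*}\}\subseteq\{\hat{\mathbb{T}}_{d^{*}}\le q_\alpha\}$, so $\mathbb{P}(\hat{d}>d^{*})\le\mathbb{P}(\hat{\mathbb{T}}_{d^{*}}\le q_\alpha)$. I would split into two subcases: if $s_{d^{*}}>\nu$ strictly, \eqref{hx2} forces $\hat{\mathbb{T}}_{d^{*}}\stackrel{\mathbb{P}}{\longrightarrow}+\infty$, so $\mathbb{P}(\hat{\mathbb{T}}_{d^{*}}\le q_\alpha)\to 0$; in the boundary case $s_{d^{*}}=\nu$, \eqref{hx1} gives $\hat{\mathbb{T}}_{d^{*}}\stackrel{\mathcal{D}}{\longrightarrow}\mathbb{T}_{d^{*}}$, and since the denominator in \eqref{deq12} is strictly positive almost surely, the limit law has a continuous distribution function, so the portmanteau theorem yields $\mathbb{P}(\hat{\mathbb{T}}_{d^{*}}\le q_\alpha)\to\mathbb{P}(\mathbb{T}_{d^{*}}\le q_\alpha)=\alpha$. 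Taking the maximum over the two subcases gives $\limsup_T \mathbb{P}(\hat{d}>d^{*})\le\alpha$.

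For the final assertion with $\alpha=\alpha_T\to 0$, I would combine both estimates along the sequence $\alpha_T$. In the over-selection step, the weak convergence in the boundary subcase must be upgraded to uniform CDF convergence at the moving quantile $q_{\alpha_T}$; this follows from P\'olya's uniform convergence theorem, permitted by continuity of the CDF of $\mathbb{T}_{d^{*}}$, and yields $\mathbb{P}(\hat{\mathbb{T}}_{d^{*}}\le q_{\alpha_T})=\alpha_T+o(1)\to 0$. In the under-selection step, the decomposition $\hat{\mathbb{T}}_d=(\hat{s}_d-s_d)/V_{dd}+(s_d-\nu)/V_{dd}$ combined with \eqref{deq19} shows the second term diverges to $-\infty$ at rate $\rho_T M^{1/2}$, which dominates the comparatively mild divergence $q_{\alpha_T}\to-\infty$ (the left tail of $\mathbb{T}_d$ is essentially Gaussian, so $|q_{\alpha_T}|=O(\sqrt{\log(1/\alpha_T)})$), so each of the finitely many probabilities in the union bound still tends to zero. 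The main obstacle is precisely the uniformity needed in the boundary subcase of the over-selection bound, which is why continuity of the limit distribution together with a P\'olya-type argument plays a crucial role in establishing the strengthened conclusion.
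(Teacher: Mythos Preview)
Your argument coincides with the paper's: the same union bound for $\{\hat d<d^*\}$ and the same inclusion $\{\hat d>d^*\}\subseteq\{\hat{\mathbb T}_{d^*}\le q_\alpha\}$, followed by the case split on whether $s_{d^*}=\nu$ or $s_{d^*}>\nu$ via \eqref{hx1}--\eqref{hx2}. For the $\alpha_T\to0$ assertion the paper offers only a one-line hint (``follows from the fact that the limiting distribution of $\mathbb{T}_{d^*}$ is supported on the real line''), so your P\'olya-theorem and rate-comparison elaboration goes beyond what the paper spells out; be aware, though, that your Gaussian-tail estimate $|q_{\alpha_T}|=O\bigl(\sqrt{\log(1/\alpha_T)}\bigr)$ for the self-normalized limit $\mathbb{T}_d$ is heuristic (such ratios can have heavier-than-Gaussian tails when the denominator is small) and implicitly constrains the decay rate of $\alpha_T$---a subtlety the paper likewise leaves unaddressed.
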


 \subsubsection{The null hypothesis of choosing a too ``large''  model}  \label{sec422}

Assume that one has decided for a specific
class of models, say   ${\cal P}_{d_{0}}$ and one is interested in the question if
a model from this class  already achieves the desired accuracy. 
This problem  can be addressed by testing the hypotheses  in \eqref{hx0}, that  is, $H_0: d^* \leq d_0 $ vs. $H_1: d^* > d_0$.  A natural decision rule is obtained by rejecting $H_0$
 if 
 the estimator $\hat d$ 
 is larger than $d_0$,
 and the 
following result shows that this decision rule defines a reasonable test.

\begin{thm} \label{thm4} 
If the assumptions of \autoref{thmmain} are satisfied then  the test that 
 rejects the null hypothesis
 $H_0: d^* \leq d_0 $
 in \eqref{hx0} 
 whenever $ \hat d > d_0$, has asymptotic level $\alpha$ and is consistent for the hypotheses in \eqref{hx0}.
\end{thm}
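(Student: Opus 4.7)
The plan is to derive Theorem~\ref{thm4} as an essentially immediate corollary of Theorem~\ref{thm3}, using only two elementary set inclusions that rely on the monotonicity of the event $\{\hat d > d\}$ in $d$. Since the test statistic $\hat d$ in \eqref{hx3} consistently locates $d^*$ in the sense provided by Theorem~\ref{thm3}, both the level and the consistency of the decision rule ``reject when $\hat d > d_0$'' should follow by a short deterministic argument on events, without any new probabilistic input beyond the limits \eqref{hx1}--\eqref{hx2}. I do not expect any substantive obstacle; the real work has already been invested in Theorem~\ref{thm3}.

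For the \textbf{level}, I would proceed as follows. Under $H_0$ in \eqref{hx0} we have $d^*\le d_0$, so every realization with $\hat d>d_0$ also satisfies $\hat d>d^*$, i.e.\ $\{\hat d>d_0\}\subseteq\{\hat d>d^*\}$. Combining this with the second assertion of Theorem~\ref{thm3} yields
\[
\limsup_{T\to\infty}\mathbb{P}\bigl(\hat d>d_0\bigr)\;\le\;\limsup_{T\to\infty}\mathbb{P}\bigl(\hat d>d^*\bigr)\;\le\;\alpha,
\]
which is the asymptotic level-$\alpha$ statement. The upper bound $\alpha$ is attained only at the boundary configuration $d^*=d_0$; for parameter values in the strict interior of $H_0$ (where $d^*<d_0$) the third case of \eqref{hx2} forces $\hat{\mathbb{T}}_{d_0}\to+\infty$, hence $\mathbb{P}(\hat d=d^*)\to 1$ and the test becomes conservative.

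For \textbf{consistency}, I would argue dually. Under $H_1$ we have $d^*>d_0$, equivalently $d^*\ge d_0+1$, so $\{\hat d\ge d^*\}\subseteq\{\hat d>d_0\}$. Applying the first assertion of Theorem~\ref{thm3}, $\mathbb{P}(\hat d<d^*)\to 0$, gives $\mathbb{P}(\hat d\ge d^*)\to 1$ and consequently $\mathbb{P}(\hat d>d_0)\to 1$, which is the desired consistency.

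The main (and only) nontrivial step lies upstream, inside Theorem~\ref{thm3}: the dichotomy \eqref{hx1}--\eqref{hx2} relies on the pivotal weak limit furnished by Theorem~\ref{thmmain} (via Corollary~\ref{cor:wcFdif2}), together with the stochastic order $V_{d,d}=O_\mathbb{P}((MNb_f)^{-1/2})$ recorded in \eqref{deq19}. Once those ingredients are granted, Theorem~\ref{thm4} reduces to the two monotone set inclusions above, and no further analytic estimates are required.
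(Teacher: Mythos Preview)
Your proposal is correct and takes essentially the same approach as the paper: both arguments reduce the level and the consistency claims to the dichotomy \eqref{hx1}--\eqref{hx2}, with the only cosmetic difference that you route the inequalities through Theorem~\ref{thm3} via the monotone set inclusions $\{\hat d>d_0\}\subseteq\{\hat d>d^*\}$ and $\{\hat d\ge d^*\}\subseteq\{\hat d>d_0\}$, whereas the paper unfolds the definition $\{\hat d>d_0\}=\bigcap_{d\le d_0}\{\hat{\mathbb T}_d\le q_\alpha\}$ and applies \eqref{hx1}--\eqref{hx2} directly. Your parenthetical remark about the strict interior of $H_0$ mislabels the relevant case of \eqref{hx2} (it is the case $d>d^*$, not the third case), but this does not affect the main argument.
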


 \subsubsection{The null hypothesis of choosing a too   ``small''  model} 
 \label{sec423}
 
 We conclude this section discussing tests for the hypotheses 
        \begin{equation}\label{1.5}
             H_0 : d^* > d_0 \quad \mbox { versus } \quad H_1 : d^* \leq d_0~,
           \end{equation}
           which are obtained from \eqref{hx0} by interchanging the null hypothesis and the alternative.
         Note that this formulation allows us to conclude at a controlled type I error that  $s_{d_0} \geq s_{d^*} > \nu $. 
       For example, in the context of (time-varying dynamic  functional)  PCA a decision in favor of $H_{1}$ means that if one works with 
 $d_{0}$ principal components then at least $100 \cdot \nu \% $ of the total variance is explained, and the probability of an error of such a decision is at most $\alpha$.
 
 Unfortunately, the solution for the testing problem  \eqref{1.5} cannot be directly obtained from  the corresponding test for the hypotheses
 \eqref{hx0}. To see this, note that 
 a natural decision rule is to reject the null hypothesis in \eqref{1.5}  whenever  the estimator in \eqref{hx3} satisfies 
\begin{equation}\label{deq16a}
  \hat{d} \leq d_0.
\end{equation}
However, in this case the probability of a type I error is  asymptotically independent of the quantile $q_\alpha$ used in the definition of 
the estimator \eqref{hx3} since
$ \mathbb{P}_{H_0} \big (\hat{d} \leq d_0  \big )  = \mathbb{P}_{d^* > d_0} ( \cup^{d_0}_{d=1}  \{ \hat{\mathbb{T}}_d > q_\alpha  \} )=  1 - \mathbb{P}_{d^* > d_0} \big ( \cap^{d_0}_{d=1} \{ \hat{\mathbb{T}}_d \leq q_\alpha  \} \big) \rightarrow 0~,$
%\begin{eqnarray}\label{deq16}
% \mathbb{P}_{H_0} \big (\hat{d} \leq d_0 \big ) &=& \mathbb{P}_{d^* > d_0} \Big( \bigcup^{d_0}_{d=1} \big  \{ \hat{\mathbb{T}}_d > q_\alpha \big \} \Big)=  1 - \mathbb{P}_{d^* > d_0} \Big( \bigcap^{d_0}_{d=1} \big \{ \hat{\mathbb{T}}_d \leq q_\alpha \big \} \Big) \rightarrow 0~,
%\end{eqnarray}
where the convergence follows from the fact that, by \eqref{hx2}, we have $\hat{\mathbb{T}}_d {\stackrel{\mathbb{P}}{\longrightarrow}}  -  \infty$ for all $d=1,\ldots,d^*-1$ (note that under the null hypothesis $d^*-1 \geq d_0$). In particular,  this property does not depend on the  choice of $q_\alpha$ in the definition of the
estimator \eqref{hx3}.  A similar calculation also shows that
$ \mathbb{P}_{H_1} (\hat{d} \leq d_0) = \mathbb{P}_{d^* \leq d_0} (\hat{d} \leq d_0) \rightarrow  1 - \alpha$   if  $d_0 = d^*$, $ s_{d^*}=\nu$ and 
$\mathbb{P}_{H_1} (\hat{d} \leq d_0)  \rightarrow  1$ if $ d^* \leq d_0$, $ s_{d^*}>\nu$. 
%\begin{eqnarray*}
%\mathbb{P}_{H_1} (\hat{d} \leq d_0) = \mathbb{P}_{d^* \leq d_0} (\hat{d} \leq d_0) \rightarrow \left \{
%\begin{array}{cll} 
%  1 - \alpha & \mbox{ if } d_0 = d^*, & s_{d^*}=\nu \\
%  1 & \mbox{ if } d^* \leq d_0, & s_{d^*}>\nu
%\end{array}
%\right . .
%\end{eqnarray*}
 In other words, one cannot choose $q_\alpha$ in \eqref{hx3} to control the type I error, and   additionally, the test is not consistent against all alternatives.
% \HDB{ In fact, a more sophisticated rule is required to solve this problem. }

However,  a test for the hypotheses  in \eqref{1.5} can easily be obtained. Indeed, observe that these are in fact equivalent to the hypotheses in \eqref{deq14}. 
Therefore, it follows from \eqref{deq12} and the discussion in Section \ref{sec41} that
the decision rule, which rejects the null hypothesis in 
 \eqref{1.5} (or equivalently in \eqref{deq14}) whenever
 \begin{equation}  \label{deq18}
  \hat{s}_{d_0} > \nu  + q_{1-\alpha,d_0}  V_{{d_0},{d_0}},
 \end{equation}
 defines a reasonable test, where we recall that $q_{1-\alpha,d_0} $ denotes the $(1-\alpha)$-quantile of the distribution of the random variable $\mathbb{T}_{d_0}$ defined in \eqref{deq12}.  
      \begin{thm}  \label{thm5}  If the assumptions of \autoref{thmmain} are satisfied then the decision rule \eqref{deq18} defines an asymptotic and consistent level $\alpha$-test for the hypotheses \eqref{1.5} and \eqref{deq14}.
      \end{thm}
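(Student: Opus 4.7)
The plan is to exploit the monotonicity that turns the two hypothesis pairs into the same statistical problem, and then to read off the large-sample behaviour of $\hat s_{d_0}$ directly from \autoref{thmmain}. Concretely, since $s_1 < s_2 < s_3 < \dots$ is strictly increasing and $d^{*} = \min\{d\in\mathbb{N} : s_d > \nu\}$, one has the set-theoretic identities
\[
\{d^{*} > d_0\} \;=\; \{s_{d_0}\le \nu\},\qquad \{d^{*}\le d_0\}\;=\;\{s_{d_0}> \nu\},
\]
so the hypotheses \eqref{1.5} and \eqref{deq14} coincide. It therefore suffices to analyse the decision rule \eqref{deq18} under $s_{d_0}\le \nu$ (for the level) and under $s_{d_0}> \nu$ (for consistency).

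For the level calculation, I would write
\[
\hat s_{d_0} > \nu + q_{1-\alpha,d_0} V_{d_0,d_0}
\;\Longleftrightarrow\;
\frac{\hat s_{d_0}-s_{d_0}}{V_{d_0,d_0}} \;>\; q_{1-\alpha,d_0} + \frac{\nu-s_{d_0}}{V_{d_0,d_0}}.
\]
By the second part of \autoref{thmmain} (applied in the form \eqref{deq12}), the left-hand side converges in distribution to $\mathbb{T}_{d_0}$ whose $(1-\alpha)$-quantile is $q_{1-\alpha,d_0}$. On the boundary $s_{d_0}=\nu$, the additional term on the right vanishes, so Slutsky gives rejection probability tending to $\mathbb{P}(\mathbb{T}_{d_0}>q_{1-\alpha,d_0})=\alpha$. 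In the interior of the null ($s_{d_0}<\nu$), I will invoke \eqref{deq19}, namely $V_{d_0,d_0}=O_{\mathbb{P}}((MN b_f)^{-1/2})\stackrel{\mathbb{P}}{\to}0$, to conclude that $(\nu-s_{d_0})/V_{d_0,d_0}\stackrel{\mathbb{P}}{\to}+\infty$, while the left-hand side stays stochastically bounded; hence the rejection probability tends to $0$. Together these yield $\limsup_{T\to\infty}\mathbb{P}_{H_0}(\text{reject})\le \alpha$, with equality achieved on the boundary.

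For consistency, under $H_1$ one has $s_{d_0}>\nu$, so the inequality becomes
\[
\frac{\hat s_{d_0}-s_{d_0}}{V_{d_0,d_0}} + \underbrace{\frac{s_{d_0}-\nu}{V_{d_0,d_0}}}_{\stackrel{\mathbb{P}}{\to}+\infty} \;>\; q_{1-\alpha,d_0},
\]
and since the first summand is $O_{\mathbb{P}}(1)$ by \autoref{thmmain} while the second diverges to $+\infty$ (again via \eqref{deq19}), the rejection probability tends to one. The asymptotic level and consistency statements together prove the theorem, and the same argument automatically handles the equivalent formulation \eqref{1.5} because of the set identities above.

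The main obstacle, modest as it is, is really bookkeeping at the boundary $s_{d_0}=\nu$: one needs that the joint convergence of the numerator $\hat s_{d_0}-s_{d_0}$ and the self-normaliser $V_{d_0,d_0}$ (as supplied by \autoref{thmmain} through the continuous mapping theorem applied to the process-level statement \eqref{eq:BrlimFu2a}) is compatible with dividing through, so that the ratio's weak limit is genuinely $\mathbb{T}_{d_0}$ rather than something degenerate. This is exactly what is recorded in \eqref{deq12}, which is why the argument reduces, once the equivalence of the two hypothesis pairs is noted, to a short Slutsky-type manipulation.
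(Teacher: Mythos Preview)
Your proposal is correct and follows essentially the same approach as the paper: rewrite the rejection event as $\{(\hat s_{d_0}-s_{d_0})/V_{d_0,d_0} > q_{1-\alpha,d_0} + (\nu-s_{d_0})/V_{d_0,d_0}\}$, invoke \eqref{deq12} for the weak limit $\mathbb{T}_{d_0}$, and use \eqref{deq19} to handle the drift term in the three cases $s_{d_0}<\nu$, $s_{d_0}=\nu$, $s_{d_0}>\nu$. The paper's proof is slightly terser but structurally identical, including the remark that \eqref{1.5} and \eqref{deq14} are equivalent.
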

\purp{
%\begin{Remark}
%{\rm 
Our approach can be used to quantify the coverage of ad-hoc procedures to select a model. 
%However, without self-normalization, application of the results is difficult because the probabilities will depend on several nuisance parameters, which are generally difficult to estimate. 
To be precise,  consider  the following  ad-hoc rule: a
model from the class ${\cal P}_{d_0}$ already provides a good approximation of ${\cal P}_{d}$, that is $s_{d_0} > \nu $,   whenever $\hat s_{d_{0}}  > \nu $. Under suitable assumptions it 
follows from  \eqref{det301} 
that $ M^{1/2}\rho_{T} (\hat s_{d_{0}} -  s_{d_{0}})$ converges weakly to a centered normal distribution with variance $\sigma^2 g^2(1) >0$.  Therefore we obtain for the probability   that $d_0$ components are sufficient
\begin{eqnarray}
\label{detrep}~~~~~~~~~~~~~~~~~
  \mathbb{P}_{s_{d_0}  }
  \big ( \hat s_{d_{0}}  > \nu  \big )  
  = \mathbb{P}_{s_{d_0}  }
  \big ( M^{1/2}\rho_{T} (\hat s_{d_{0}} -  s_{d_{0}}) > 
  M^{1/2}\rho_{T} ( \nu -  s_{d_{0}}) \big )   \approx & 1-  \Phi  \Big (  {M^{1/2}\rho_{T} ( \nu -  s_{d_{0}})  \over \sigma g (1) }  \Big ) ~, 
\end{eqnarray}
where $\Phi$ is the cdf of the standard normal distribution. Thus,
if $\sigma^2$ can be estimated well, we can quantify this probability in dependence of  
the ``true" ratio 
$s_{d_0} $. 
However this variance  depends in a very complicated way on the underlying process (in particular on
%all eigenvalues of 
the time varying spectral density operator), which makes its estimation difficult. Nevertheless, from \eqref{detrep} we obtain qualitative statements of the form 
$
\mathbb{P}_{s_{d_0}  }
  \big ( \hat s_{d_{0}}  > \nu  \big ) =1$, ${1\over 2}  $ and $0$ if  $ s_{d_0} > \nu$, $ s_{d_0} = \nu$ and $ s_{d_0} < \nu$, respectively.
  Moreover,  similar arguments as given in the proof of Theorem \ref{thm5} show that the self-normalization approach also allows quantitative statements about this probability. %that is
%  $$
%  \mathbb{P}_{s_{d_0}  }
%  \big ( \hat s_{d_{0}}  > \nu  \big )
%  \approx 1-   \Phi_{\mathbb{T}_d}  \Big (  {( \nu -  s_{d_{0}})  \over V_{d_0,d_0} }  \Big ) ~,
%  $$
%  where $\Phi_{\mathbb{T}_d} $ is the distribution of the random variable 
%  $\mathbb{T}_d$ in \eqref{hx1}.
  }
%\end{Remark}
% }

\section{Finite sample performance}
\label{sec5new}
\def\theequation{ 5.\arabic{equation}}
\setcounter{equation}{0}
\purp{
To illustrate the finite sample sample performance, we provide results from a simulation study in which we focus on testing the hypotheses in \eqref{deq14} in the case of time-varying FPCA (see also \autoref{thm5}). 
In order to construct the estimator \eqref{eq:Fint}, we require specification of bandwidths $\bf$ and $N_T$, which induce an inherent trade-off between resolution in time and frequency direction. Given we are applying this to normalized cumulative local eigenvalue estimation, extra care needs to be taken. Indeed, it is well-known issue that -- even in the most simple settings of eigenvalue estimation -- the dispersion of empirical eigenvalues tends to be larger than that of the population counterparts due to the largest eigenvalues being biased upwards and the smaller ones biased downwards \citep[][]{Jolliffe}. In our context, this can lead to more sensitivity to the bandwidth choice and a data-driven bandwidth selection and bias reduction is therefore desirable, which is presented here. 
\\
First, we propose a data-adaptive procedure in spirit of the MV selection method \citep[][]{polromwol} to select the pair of  bandwidths. Second, given the optimal choice of bandwidths based on this procedure, we propose a jacknife estimator of the normalized cumulative empirical eigenvalues and show why this reduces the finite-sample bias. We explain the selection method first and then explain the jacknife in this context theoretically.\\   \textbf{Data-driven local eigenvalue estimation.} 
In order to choose the bandwidth that is appropriate for normalized eigenvalues we take a grid of block sizes and frequency bandwidths within the theoretically justifiable range under which \autoref{thmmain} holds, say, 
\[
N_1 < N_2 < \ldots < N_K \quad b_{f,1} < \ldots < b_{f,L} \tageq \label{eq:bwpairs}
\]
For each pair, we determine the estimate   $\hat{\F}^{(N_m,b_{f_l})}_{u_{i_m},\omega_j }$ in  \eqref{eq:Fint}, where we now reflect the dependence on $N_m,b_{f_l}$ in our notation, where 
$\omega_j = \frac{j\pi}{100}$ $j=1,\ldots, J$, and where $u_{i,m}$ is an element of the set $U_{{T/N_m}}$  in \eqref{eq:midset} determined by the block size $N_m$.  Denoting  the $k$th largest eigenvalue by $\lambda^{u_{i}, \omega_j}_{k,(N_m,b_{f_l})}$, we compute  for each $d =1,\ldots d_{max}$ the quantity 
\[
g_\F(l,m,d_0) = \max_{i_m,j} \frac{\sum_{k=1}^{d_0} \lambda^{u_{i_m}, \omega_j}_{k,(N_m,b_{f_l})}}{ \Tr\big(\hat{\F}_{(N_m,b_{f_l}}^{(u_{i_m},\omega_j)} \big)}
\]
and choose the pair $(N_m, b_{f_l})$ for which this is minimized over a local neighborhood of bandwidth pairs, that is,
\[
(N^\star, b^\star)= \argmin_{l,m} \, \text{se}\big\{g_\F(l+h_1,m+h_2,d)_{h_1,h_2=-5}^5\big\}.
\]
Note that here we are thus determining the optimal bandwidth pair by minimizing the variation over possible pairs of the normalized cumulative  eigenvalues \textit{after} maximizing on the time frequency grid. 
\\
To reduce the aforementioned bias we propose the following jacknife-version of \eqref{eq:pcahat}
\[
\hat{s}^{\text{Jack}}_{d}(\eta) = 
\frac{ \frac{1}{ M^\star}\sum_{u \in U_{ M^\star}} \int_a^b \sum_{k=1}^{d} 2 \hat{\lambda}_{k,(\frac{N^\star}{\sqrt{2}}, \frac{b^\star}{\sqrt{2}})}^{(u,\omega)}(\eta)  d\omega- \frac{1}{M^\star}\sum_{u \in U_{M^\star}} \int_a^b \sum_{k=1}^{d} \hat{\lambda}_{k,(N^\star, b^\star)}^{(u,\omega)}(\eta)  d\omega}{
\frac{1}{M^\star}\sum_{u \in U_{ M^\star}} \int_a^b \Tr\big(2\hat{\F}^{u,\omega}_{(\frac{N^\star}{\sqrt{2}}, \frac{b^\star}{\sqrt{2}})}(\eta) \big)d\omega -\frac{1}{M^\star}\sum_{u \in U_{ M^\star}} \int_a^b \Tr\big(\hat{\F}^{u,\omega}_{(N^\star, b^\star)}(\eta) \big)d\omega }~.
\]
This estimator has smaller bias than $\hat{s}_d(\eta)$, which follows from the following proposition,  \purp{which is proved in Section \ref{proofsec4}}.
\begin{proposition}
\label{propbias}
Define the estimator $
\hat{\F}^{\text{Jack}}_{u,\omega}(\eta)= 2\hat{\F}_{(u,\omega)}^{(N/\sqrt{2},\bf/{\sqrt{2}})}(\eta)-\hat{\F}_{(u,\omega)}^{(N,\bf)}(\eta)$
\begin{align*} \bigsnorm{\,\E\hat{\F}^{\text{Jack}}_{u,\omega}(\eta)-\F_{(u,\omega)}(\eta)}_{S_r} \le \bigsnorm{\,\E\hat{\F}_{u,\omega}(\eta)-\F_{(u,\omega)}(\eta)}_{S_r}
\end{align*}
\end{proposition}
\textbf{Simulations}. 
In all scenarios, the empirical rejection probabilities (ERP) are calculated over 500 repetitions. Because of the computational complexity of the proposed procedure, simulations were implements using $\tt{Rcpp}$ and the c++ $\tt{Eigen}$ library.  The measure in the 
self-normalizing statistic was chosen as $\zeta =\frac{1}{n-1}\sum_{i=1}^{n-1}\delta_{i/n}$, where $\delta_\eta$ denotes the Dirac measure at $\eta \in [0,1]$. Simulations reported below are conducted with $n=20$. Previous work indicate that  the behavior of self-normalized tests in other settings are robust under various choices of $n$ for which the positive mass is sufficiently bounded away from the boundaries; see specifically Remark 3.1 in \cite{vdd21}. We found the same to be true in this setting. Next, we take $J=100$ equidistant frequency points in $(0,\pi)$. The grid of bandwidth pairs \eqref{eq:bwpairs} used to find $(N^\star,b^\star)$ is chosen as $2.5T^{0.53} <N_{m} < 5T^{0.53}$ and $0 \le \frac{1}{b_{f,l}} \le 2.5T^{0.53}$.
\\
We simulate processes from time-varying functional autoregressive and moving averages via their basis representation That is, given an ONB $\{\phi_l\}$ of $\Hi$ and a sequence of bounded linear operators$\{A_{t,i}\}_{i=1}^p$, $\{B_{t,j}\}_{j=0}^q$, we can simulate such processes via the coefficients which are given by
$$\widetilde{X}_{t,T} = \sum_{i=1}^p \widetilde{A}_{t,i}\widetilde{X}_{t-i} +  \sum_{j'=0}^q \widetilde{B}_{t,j}\widetilde{\epsilon}_{t-j},$$
where $\widetilde{X}_t := \left(\langle X_t, \phi_1 \rangle, \dots, \langle X_t, \phi_{d_{max}} \rangle \right)^\top$, $\widetilde{\epsilon}_t := \left(\langle \epsilon_t, \phi_1 \rangle, \dots, \langle \epsilon_t, \phi_{d_{max}} \rangle \right)^\top$ and the $(l,l')$-th entry of $\widetilde{A}_{t,j}$, $\widetilde{B}_{t,j}$ are respectively given by $\langle A_{t,j}(\phi_l),\phi_{l'}\rangle$ and $\langle B_{t,j}(\phi_l),\phi_{l'}\rangle$.The matrix entries  are generated from $N\big (0, z_{l,l'}\big )$ with $z_{l,l'}$ specified below. Note that second-order stationarity and existence of a causal solution requires conditions on the operator norms \citep[see][]{vde16}. Empirical rejection curves 
of the test \eqref{deq18} as a function of $\nu$ at the $5\%$ level are reported in Figure \ref{fig3} for sample sizes $T=2^i$, $i=10,\ldots, 15$ for the following models:
\begin{itemize}
\item[I)] tvFunctional noise generated via Gaussian coefficients with variances 
$\text{Var}(\langle \epsilon_t, \phi_l \rangle)=\cos(1/2+\sin(\pi(t/T))+0.3\cos(\pi(t/T))) \exp(-l/d^2)$, $l=1,\ldots 6$.
\item[II)] tvFAR(1) generated via Gaussian coefficients with variances 
$\text{Var}(\langle \epsilon_t, \phi_l \rangle) \exp(-l/d^2)$, $l=1,\ldots 6$, $z_{l,l^\prime}=\exp(-i-j)  $ and time-varying norm sequence $\{\kappa_{1,t}\}_{t=1}^T$ defined by $\kappa_{1,t} = 0.3\cos(1/2(t/T))$.   
\item[III)] tvFMA(1) with $B_{t,1}=\frac{1}{2}(1-b\sin(0.5\pi(\frac{t}{T})))I_d$, $b=-0.5$, and variance process as in (II). 
\end{itemize}
It may be noted  that the overall behavior as described in the previous section holds true in all models. In fact we do observe that around the true value (vertical dotted line), the size is close to the nominal level (vertical dotted line).  For the second cumulative normalized eigenvalue is somewhat undersized for model II, and somewhat oversized for model III.  For larger values of $d_0$, where the additional variation explained of including them gets very small, the curve (as a function of $\nu$) gets very steep leading to slightly oversized  in both cases. Overall we may conclude the results are satisfying.  
Note that smaller values of $\nu$ refer to alternative in \eqref{deq14} and in this case the ERPs quickly increase. Similarly,  larger  distances between  $\nu$  and  the threshold (specified by the vertical dotted line) correspond  to cases in the interior of the null hypothesis. In these cases  ERPs are strictly smaller than the nominal level confirming our theoretical results in Theorem \ref{thm5}.
\begin{figure}[h!]
%\vspace*{-10pt}
\centering
\begin{subfigure}[b]{0.33\linewidth}
\includegraphics[width=\linewidth]{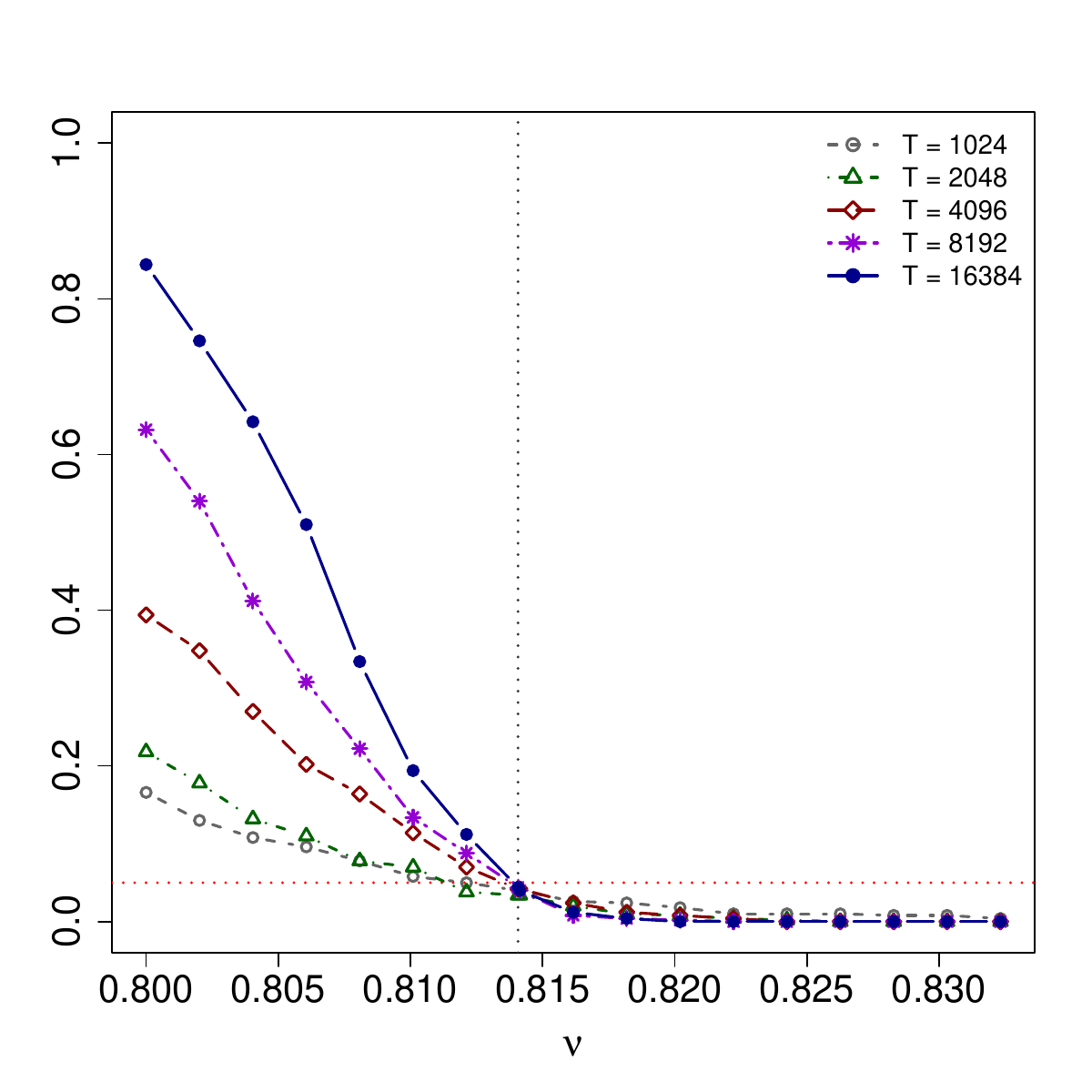}
\setlength{\abovecaptionskip}{-10pt}
\setlength{\belowcaptionskip}{-8pt} 
%\caption{}
\end{subfigure}\hfil
\begin{subfigure}[b]{0.33\linewidth}
\includegraphics[width=\linewidth]{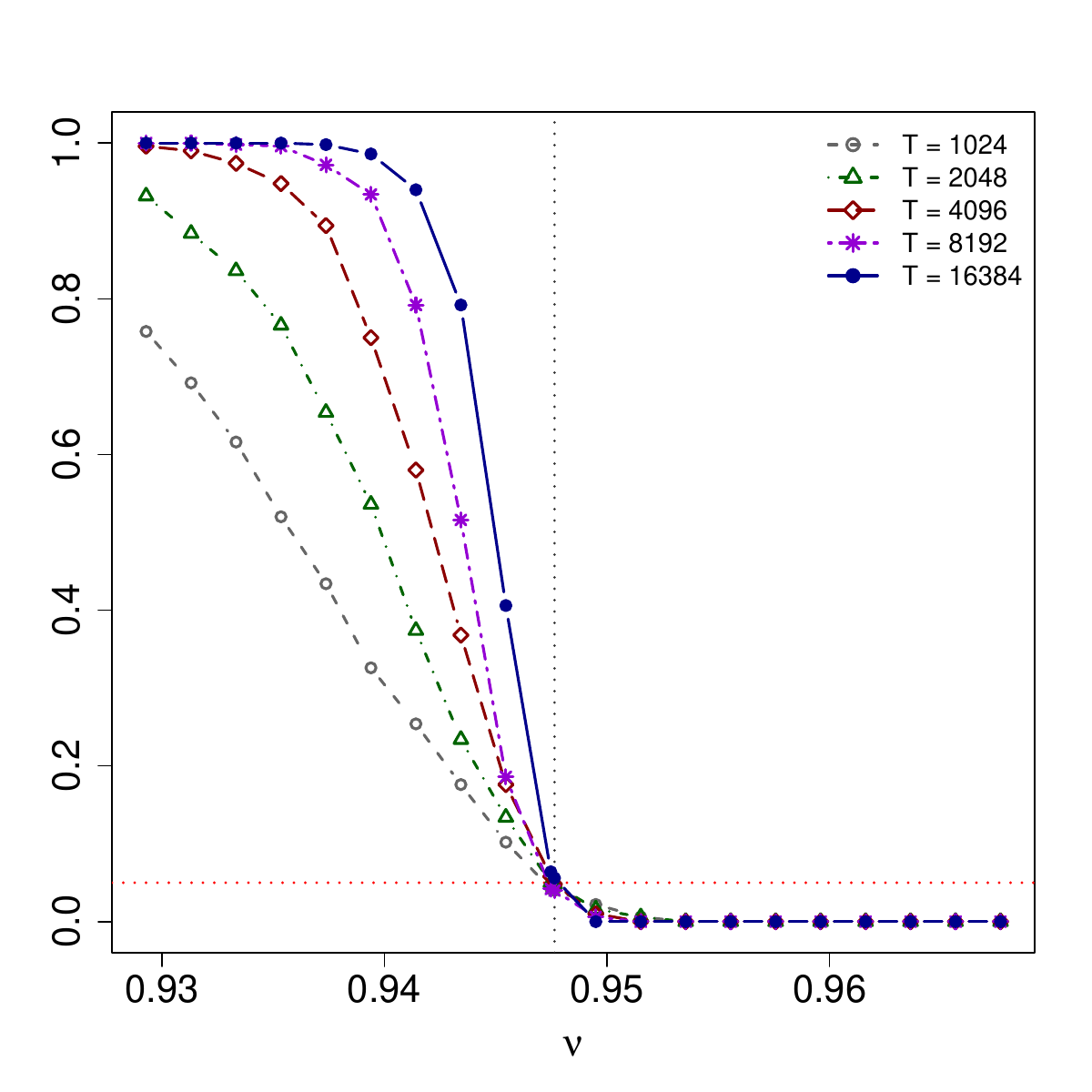}
\setlength{\abovecaptionskip}{-10pt}
\setlength{\belowcaptionskip}{-8pt} 
%\caption{}
 \end{subfigure}\hfil
\begin{subfigure}[b]{0.33\linewidth}
\includegraphics[width=\linewidth]{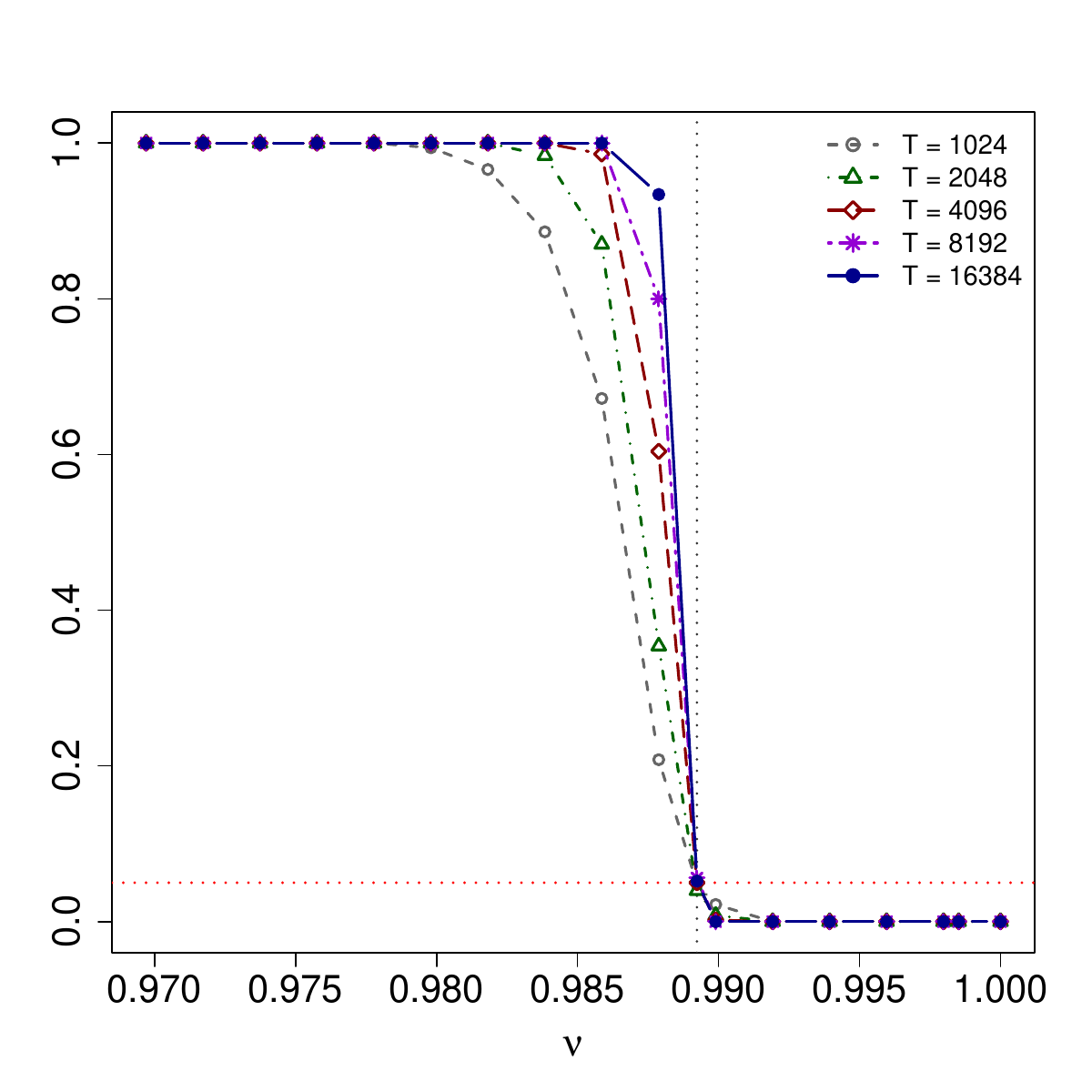}
\setlength{\abovecaptionskip}{-10pt}
\setlength{\belowcaptionskip}{-8pt} 
%\caption{}
\end{subfigure}\hfil
%\setlength{\belowcaptionskip}{-8pt} 
%\caption{\label{fig1} \it ERP for model I of the hypothesis test in \eqref{1.5} for $d=2,3,4$ (panel (a)--(c), resp.) plotted as a function of $\nu$ at the nominal level 0.05 (horizontal dotted line). The true value $s_{d^\star}$ is marked in the three panels by the vertical dotted line.
%}  
%\end{figure}
%\begin{figure}[h!]
% \vspace*{-10pt}
%\centering
\begin{subfigure}[b]{0.33\linewidth}
\includegraphics[width=\linewidth]{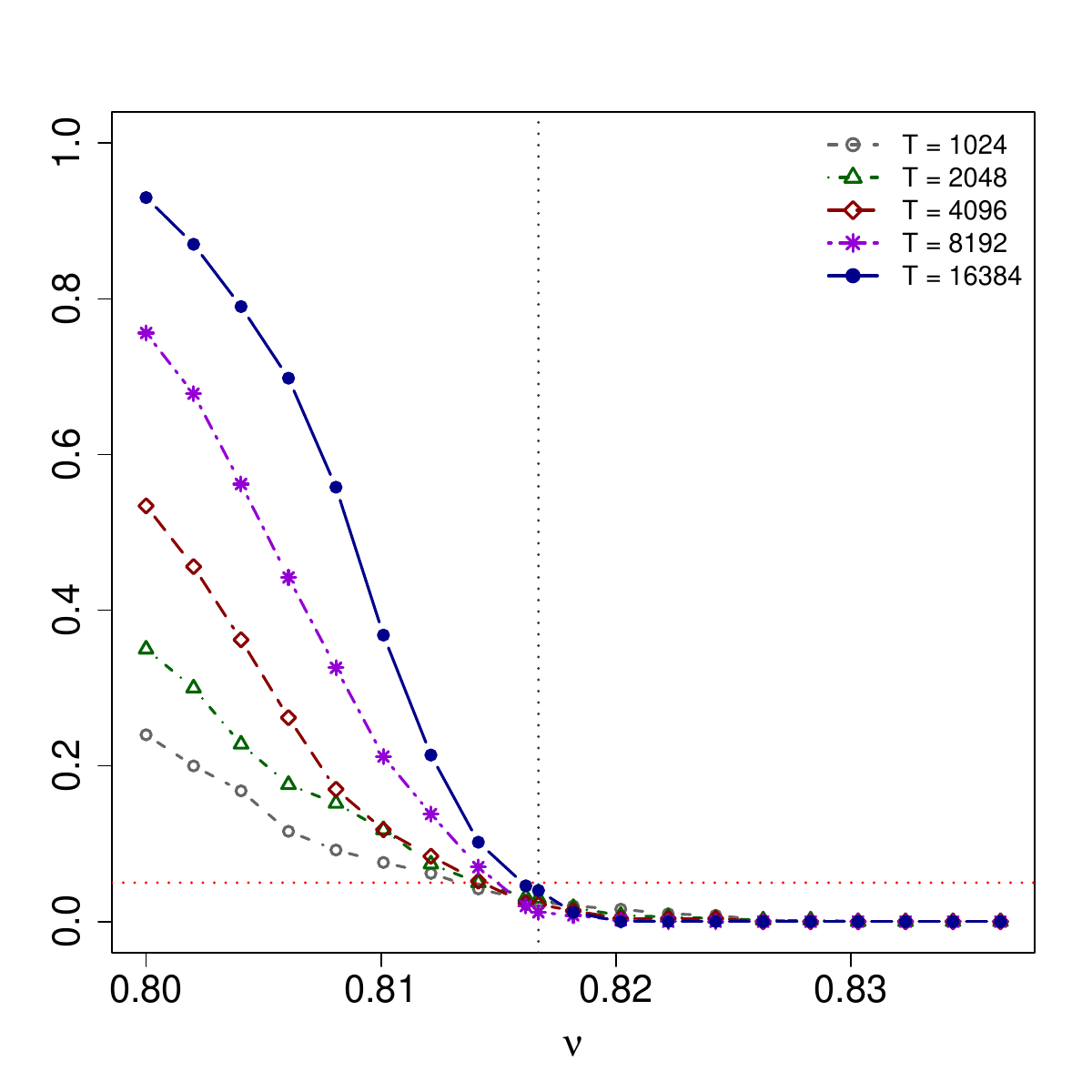}
\setlength{\abovecaptionskip}{-10pt}
\setlength{\belowcaptionskip}{-8pt} 
%\caption{}
\end{subfigure}\hfil
\begin{subfigure}[b]{0.33\linewidth}
\includegraphics[width=\linewidth]{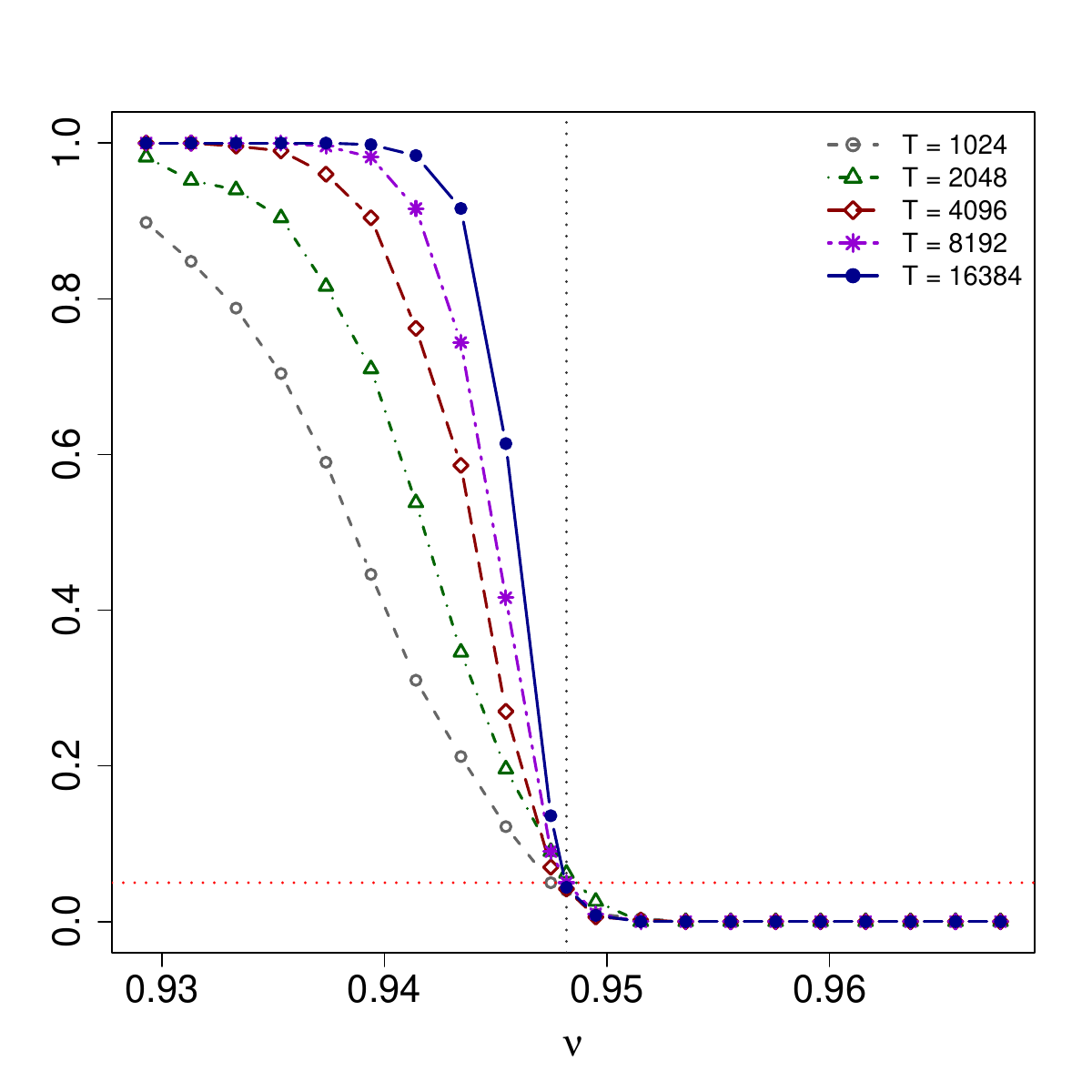}
\setlength{\abovecaptionskip}{-10pt}
\setlength{\belowcaptionskip}{-8pt} 
%\caption{}
 \end{subfigure}\hfil
\begin{subfigure}[b]{0.33\linewidth}
\includegraphics[width=\linewidth]{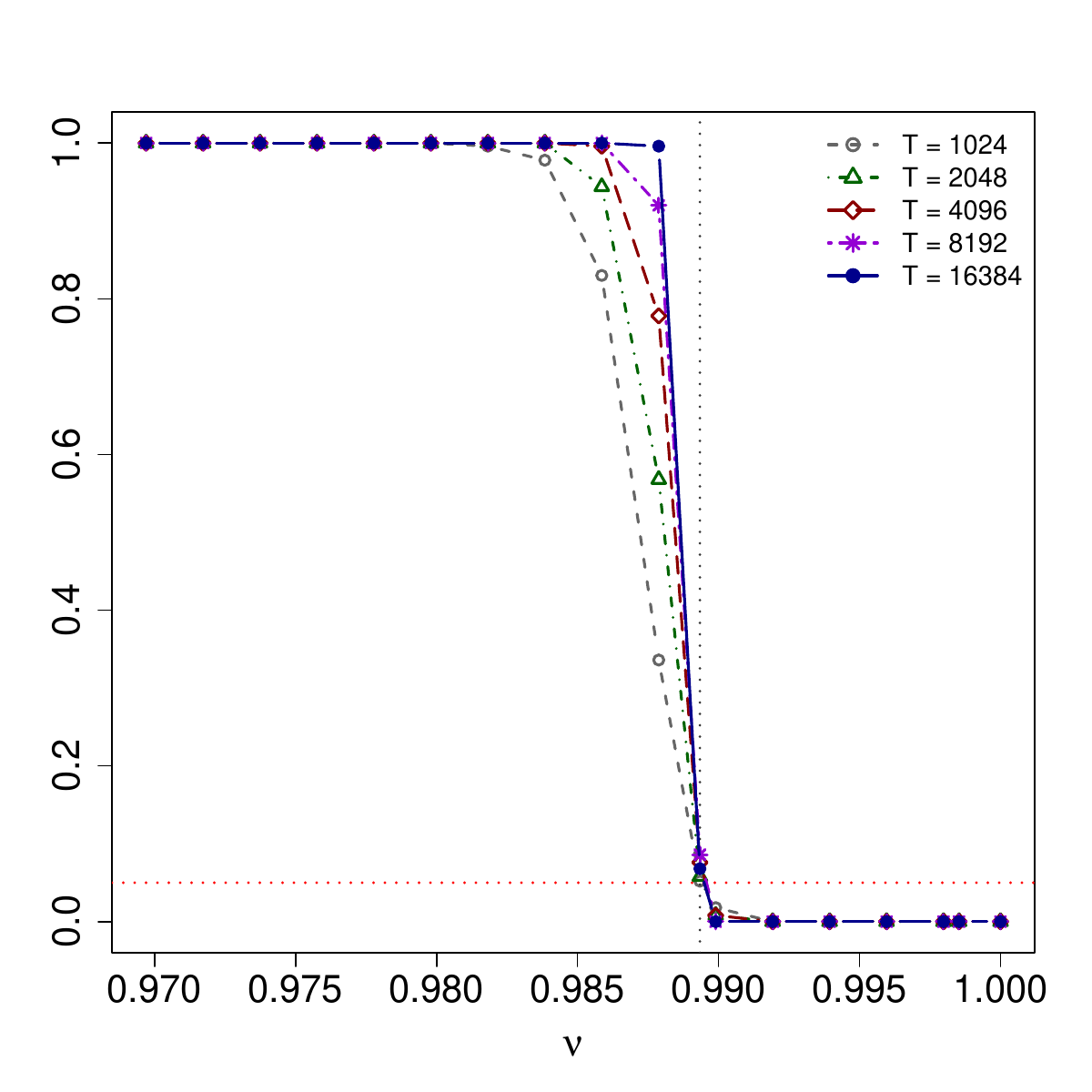}
\setlength{\abovecaptionskip}{-10pt}
\setlength{\belowcaptionskip}{-8pt} 
%\caption{}
\end{subfigure}\hfil
\setlength{\belowcaptionskip}{-8pt} 
%\caption{\label{fig2} \it ERP for model II of the hypothesis test in \eqref{1.5} for $d=2,3,4$ (panel (a)--(c), resp.) plotted as a function of $\nu$ at the nominal level 0.05 (horizontal dotted line). The true value $s_{d^\star}$ is marked in the three panels by the vertical dotted line.
%}  
%\end{figure}
%\begin{figure}[h!]
%\vspace*{-10pt}
%\centering
\begin{subfigure}[b]{0.33\linewidth}
\includegraphics[width=\linewidth]{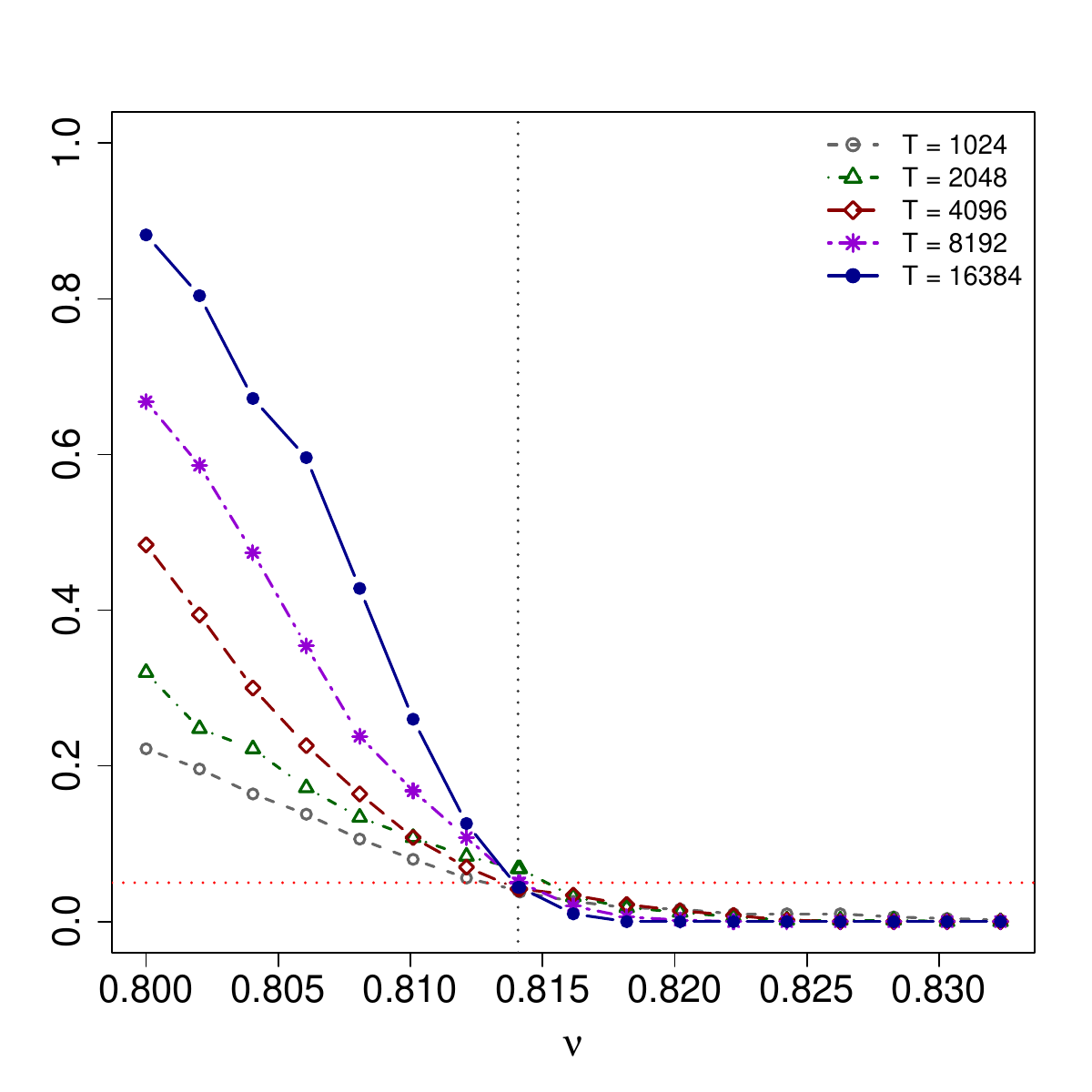}
\setlength{\abovecaptionskip}{-10pt}
\setlength{\belowcaptionskip}{-8pt} 
\caption{}
\end{subfigure}\hfil
\begin{subfigure}[b]{0.33\linewidth}
\includegraphics[width=\linewidth]{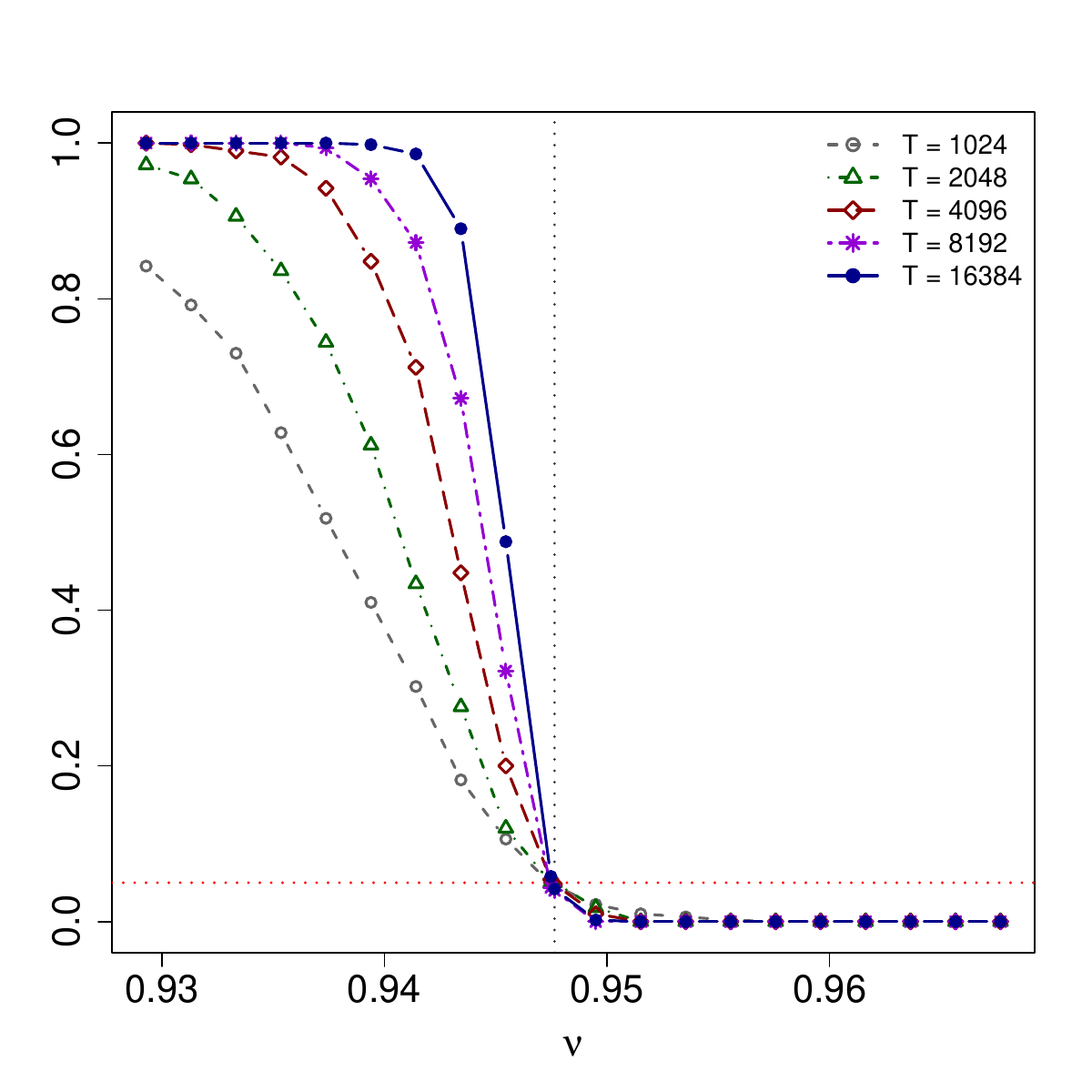}
\setlength{\abovecaptionskip}{-10pt}
\setlength{\belowcaptionskip}{-8pt} 
\caption{}
 \end{subfigure}\hfil
\begin{subfigure}[b]{0.33\linewidth}
\includegraphics[width=\linewidth]{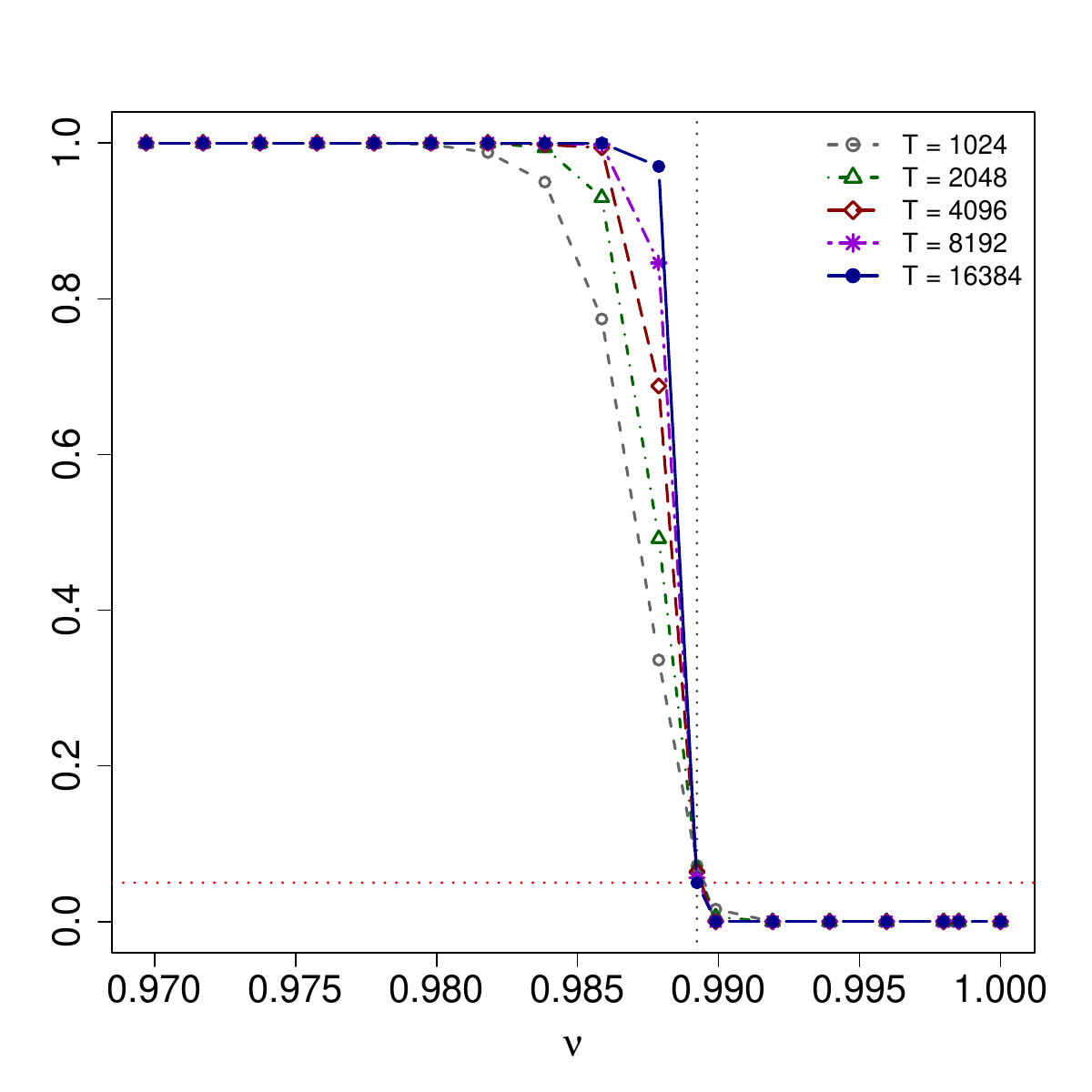}
\setlength{\abovecaptionskip}{-10pt}
\setlength{\belowcaptionskip}{-8pt} 
\caption{}
\end{subfigure}\hfil
\setlength{\belowcaptionskip}{-8pt} 
\caption{\label{fig3}\it ERP of the test \eqref{deq18} for the hypothesis  \eqref{deq14} for $d_0=2,3,4$ (column (a)--(c), resp.)  plotted as a function of $\nu$ at the nominal level 0.05 (horizontal dotted line). The true value $s_{d_0}$ is marked in the three panels by the vertical dotted line.
Model I: first row;  model II: second row; 
model III: third row.
}  
\end{figure}
}
\medskip 

\textbf{Acknowledgements.} 
This work  was partially supported by the  
 DFG Research unit 5381 {\it Mathematical Statistics in the Information Age}, project number 460867398. Anne van Delft was partially supported by NSF grant DMS-2311338. The authors would like to thank the associate editor and two unknown referees for their constructive comments on an earlier version of our paper.

\newpage

\appendix

\section{Preliminaries and inequalities for $S_1(\Hi)$-valued random variables} 
\label{secA} 
\def\theequation{A.\arabic{equation}}
\setcounter{equation}{0}
\subsection{Some properties of the space of trace-class operators} \label{prel}

We begin by providing some background results that we will make use of in order to prove the  main results. In the following, we denote the law of a  random element $X\in V$ by $\mu_X$, that is, $\mu_X=\pro \circ(X)^{-1}$. Suppose that $V$ has a Schauder basis $\{e_n\}_{n \ge 1}$. We denote $E_k$ to be the subspace of $V$ generated by $\{e_1,\ldots,e_k\}$ and we let $Q_k=\{q_1,\ldots,q_k\}$ be the subspace of $V^\pr$ generated by the coordinate functions. The natural projections associated with $\{e_n\}$ are the linear mappings $\Pi_k: V\to V$ 
defined by 
\begin{equation}
\Pi_k(v) =\sum_{j}^{k} \inprod{v}{q_j} e_{j}, \quad v \in V.
\label{eq51}  
\end{equation}
The map $\Pi_k$ has range $E_k$ and satisfies $\lim_{k\to \infty}\norm{\,\Pi_k(v) -v}_V = 0$ for all $v \in V$. Let $\Pi^\pr_{k}: V^\pr \to V^\pr$ be the operator adjoint to $\Pi_k$, that is, $\Pi^\pr_{k}(v^\pr) =\sum_{j=1}^k \inprod{e_j}{v^\pr}q_j$. Then for any $v \in V^\pr$, $\Pi^\pr_{k}(v^\pr) \to v^\pr$ in the $w^{\star}$-topology  \citep[see e.g.,][]{Day}. Furthermore, it follows from the Uniform Boundedness Principle that $C=\sup_n\|\Pi_n\|_V <\infty$ 
\citep[see e.g.,][]{Heil}, and since  $\|\Pi_n\|_V=\|\Pi^\pr_n\|_{V^\pr}$, we also have $\sup_n \|\Pi^\pr_n\|_{V^\pr}=C$. The constant $1\le C<\infty$ is known as the \textit{basis constant} and it can be shown that the coordinate functions satisfy $\sup_{n}\|q_n\|_{V^\prime} \le 2 C$.  This implies the countable set $Q=\cup_{k=1}^{\infty}Q_k$ is sequentially $w^\star$-dense in $V^\pr$ \citep[see e.g.,][]{Conway}. \\

Next, we focus on the context of the separable Banach space $(S_1(\Hi), \norm{\cdot}_{S_1})$. Using the duality pairing 
\[
\opl(\Hi) \times S_1(\Hi) \to \cnum, \quad (B, A) \mapsto \Tr(BA),
\]
and by defining $f_B: S_1(\Hi) \to \cnum$ with $f_B(A)=\Tr(BA)$, we can identify the Banach space $(\opl(\Hi), \norm{\cdot}_{S_\infty})$ with the topological dual space of $(S_1(\Hi), \norm{\cdot}_{S_1})$ through the isometric isomorphism 
$$
f: \opl(\Hi) \to \big(S_1(\Hi)\big)^{\pr}, \quad B \mapsto f_B,
$$
that is, $\opl(\Hi) \cong \big(S_1(\Hi)\big)^{\pr}$. Hence, we can represent each linear functional on $S_1(\Hi)$ as  a bounded linear operator on $\Hi$. We note a similar duality pairing exists between the subspaces of self-adjoint elements, $(S_1(\Hi)^\dagger,\norm{\cdot}_{S_1})$ and $(\opl(\Hi)^\dagger,\norm{\cdot}_{S_{\infty}})$, respectively. The space $(S_1(\Hi), \norm{\cdot}_{S_1})$ is a separable Banach space with a Schauder basis, as demonstrated in the following lemma. 
\begin{lemma} \label{lem:schauder}
Let $\{e_i\}_{i\ge 1}$ be an ONB of $\Hi$. Then $\{e_{ij}= e_i \otimes e_j\}_{i,j \ge 1}$ is a Schauder basis for  
$(S_1(\Hi), \norm{\,\cdot}_{S_1})$. Hence, for any $A \in S_1(\Hi)$ we can write $A=\sum_{i,j=1}^\infty \beta_{ij}(A) e_{ij}$ 
for the unique sequence of scalars (depending on $A$) given by $\beta_{ij}(A)=f_{e^\dagger_{ij}}(A)=\Tr(e_{ji}A)
$ where the convergence with respect to  $\norm{\,\cdot}_{S_1}$.
\end{lemma}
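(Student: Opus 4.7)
The plan is to reduce everything to the single analytic statement $\|P_N A P_N - A\|_{S_1} \to 0$ for every $A \in S_1(\Hi)$, where $P_N = \sum_{i=1}^N e_i \otimes e_i$ is the orthogonal projection onto $\mathrm{span}(e_1,\ldots,e_N)$, and then to read off both the coefficient formula and its uniqueness from the duality pairing with $\opl(\Hi) \cong (S_1(\Hi))^{\pr}$ recalled just before the lemma.

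First I would compute directly that, for any $A \in S_1(\Hi)$,
\[
P_N A P_N \;=\; \sum_{i,j=1}^{N} \inprod{Ae_j}{e_i}\, e_i \otimes e_j,
\]
so the ``square'' partial sums of the candidate expansion are exactly $P_N A P_N$. A short calculation with $(e_j \otimes e_i)(x) = \inprod{x}{e_i}e_j$ also gives $\Tr(e_{ji}A) = \sum_k \inprod{A e_k}{e_i}\inprod{e_j}{e_k} = \inprod{Ae_j}{e_i}$, which identifies the candidate coefficient $\beta_{ij}(A)$ with $f_{e^{\dagger}_{ij}}(A) = \Tr(e_{ji}A)$ as stated.

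Next I would prove the key convergence by writing the singular value decomposition $A = \sum_k \sigma_k\, u_k \otimes v_k$ with $\sum_k \sigma_k = \|A\|_{S_1} < \infty$, splitting
\[
A - P_N A P_N \;=\; (I-P_N)A \,+\, P_N A (I-P_N),
\]
and using the ideal bounds $\|BC\|_{S_1}\le \|B\|_{\opl}\|C\|_{S_1}$ and $\|CB\|_{S_1}\le \|B\|_{\opl}\|C\|_{S_1}$ together with $\|I-P_N\|_{\opl} \le 1$. Since $\norm{f\otimes g}_{S_1} = \norm{f}_\Hi\norm{g}_\Hi$ (as used already in \eqref{eq:tracecov}),
\[
\|(I-P_N)A\|_{S_1} \;\le\; \sum_{k} \sigma_k\, \|(I-P_N)u_k\|_{\Hi},
\]
and each summand tends to $0$ while being dominated by $\sigma_k$; dominated convergence (for counting measure) then forces the sum to $0$. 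A symmetric argument handles $\|A(I-P_N)\|_{S_1}$, yielding $P_N A P_N \to A$ in $\norm{\cdot}_{S_1}$. Uniqueness of the coefficients is then immediate: since $f_{e^{\dagger}_{ij}}$ is a bounded linear functional on $S_1(\Hi)$ with $\|f_{e^{\dagger}_{ij}}\|_{(S_1)^\pr} \le \|e_{ji}\|_{\opl} = 1$, applying it termwise to any convergent representation $A = \sum_{k,l}\alpha_{kl} e_{kl}$ and using $\Tr(e_{ji}e_{kl}) = \delta_{ik}\delta_{jl}$ forces $\alpha_{ij} = \beta_{ij}(A)$.

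The main obstacle is the trace-norm convergence $P_N A P_N \to A$: unlike in $S_2(\Hi)$, the $S_1$-norm is not induced by an inner product and has no Parseval identity, a point also flagged in Remark~\autoref{rem:S1vsS2}. The SVD-plus-dominated-convergence argument above circumvents this by exploiting absolute summability of the singular values, which is exactly the defining property of $S_1(\Hi)$. A minor bookkeeping subtlety is that a Schauder basis is by definition indexed by $\nnum$, whereas $\{e_{ij}\}$ is indexed by $\nnum^2$; I would fix an arbitrary bijection $\nnum \to \nnum^2$ whose initial segments eventually exhaust each $N\times N$ block (for instance the Cantor diagonal enumeration) and note that along such an enumeration the partial sums straddle $P_N A P_N$ and $P_{N+1}A P_{N+1}$, so the convergence just established transfers to the linearly-ordered partial sums.
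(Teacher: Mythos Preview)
Your argument is correct and takes a genuinely different route from the paper's. The paper reduces to the positive case $A \in S_1(\Hi)^+$ via the Jordan decomposition $A = B_+ - B_- + \im(C_+ - C_-)$ and then exploits $A^{1/2} \in S_2(\Hi)$ to control the tail through $\sum_{j>K}\|A^{1/2}e_j\|_\Hi^2$. You bypass this detour entirely by working with the singular value decomposition $A = \sum_k \sigma_k\, u_k\otimes v_k$ and applying dominated convergence to $\sum_k \sigma_k\,\|(I-P_N)u_k\|_\Hi$; this is more direct and makes the role of absolute summability of the singular values---the defining feature of $S_1(\Hi)$---completely transparent. The two uniqueness arguments are essentially the same observation: the paper's appeal to orthonormality of $\{e_{ij}\}$ is exactly your computation $\Tr(e_{ji}e_{kl})=\delta_{ik}\delta_{jl}$.

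One small gap to close at the end: the claim that linearly-ordered partial sums ``straddle'' $P_N A P_N$ and $P_{N+1}AP_{N+1}$ and therefore inherit convergence is not automatic in $S_1(\Hi)$, since a partial sub-sum of a small trace-class operator need not itself be small. For a block-by-block enumeration, the residual between an intermediate partial sum and the last complete square $P_N A P_N$ is a piece of the $(N{+}1)$-st $L$-shape; this splits into a row part $e_{N+1}\otimes v$ and a column part $w\otimes e_{N+1}$ with $\|v\|_\Hi\le\|A^{\dagger} e_{N+1}\|_\Hi$ and $\|w\|_\Hi\le\|Ae_{N+1}\|_\Hi$, and both vanish as $N\to\infty$ because $\sum_j\|Ae_j\|_\Hi^2=\|A\|_{S_2}^2\le\|A\|_{S_1}^2<\infty$. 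Adding this estimate completes your argument.
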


\begin{proof}
Note that uniqueness of the coefficients follows from the fact that $\{e_{ij}\}_{i,j \ge 1}$ is an orthonormal basis. %non-uniques would indicate the functions were not orthogonal. 
We recall that we can write an element $A \in S_1(\Hi)$ as $A=B+\im C$, where $B, C \in S_1(\Hi)^\dagger$ given by $B=\frac{1}{2}(A+A^\dagger)$ and $C=\frac{1}{2i}(A-A^\dagger)$. Furthermore, any $T \in S_1(\Hi)^\dagger$, has a Jordan decomposition, i.e., there exists unique $T_{+}, T_{-} \in S_1(\Hi)^+$ such that $T=T_{+}-T_{-}$ and $T_{-}T_{+}=T_{+}T_{-}=O$ (see e.g., \cite{Conway}). %are respectively given by 
%$T_{+}=\frac{|T|+T}{2}$, and  $T_{-}=\frac{|T|-T}{2}$. 
Thus, we may write $A \in S_1(\Hi)$ as $A=B_{+}-B_{-}+\im (C_{+}-C_{-})$, where $B_{+}, B_{-}, C_{+}, C_{-} \in  S_1(\Hi)^+$. Without  loss of generality,  assume that $A \in S_1(\Hi)^+$. We will show that  $\lim_{K \to \infty}\|\Pi_K(A) - A\|_{S_1} \to 0$, where 
\[
 \Pi_K(A)= \sum_{j=1}^{K} \sum_{i=1}^K
\Tr(e^{\dagger}_{ij}A) e_{ij}
\]
First we remark that for a $A \in S_1(\Hi)^+$, $A^{1/2} \in S_2(\Hi)$, and thus the Cauchy Schwarz inequality yields
\[
\|A-\sum_{j=1}^K \Tr(e_{jj}A) e_{jj}\|_{S_1} \le  \sum_{j>K}|\Tr(e_{jj}A)|_{S_1} \le \sum_{j >K} \|A^{1/2}(e_j)\|^2_{\Hi}  \to 0 \quad K \to \infty.
\]
But note that,
\begin{align*}
 \Pi_K(A)=\sum_{j=1}^{K} \sum_{i=1}^K
\Tr(e^{\dagger}_{ij}A) e_{ij} =  \sum_{j=1}^K \Tr(e_{jj}A) e_{jj}, 
\end{align*}
and thus $\lim_{K \to \infty}\|\Pi_K(A) - A\|_{S_1} \to 0$ for $A \in S_1(\Hi)^+$. The result now follows from the triangle inequality.
\end{proof}
We will use \autoref{lem:schauder} to make precise the sequentially $w^\star$-dense set $Q$ of $(S_1(\Hi))^\pr$.
The projection mappings $\Pi_K: S_1(\Hi) \to S_1(\Hi)$ are given by 
$
\Pi_K(A) =\sum_{ij=1}^{K} f_{e^{\dagger}_{ij}}(A) e_{ij},
$
which have range $E_k$ and satisfy $\lim_{k\to \infty}\norm{\,\Pi_K(A) -A}_{S_1} = 0$, for all $A \in S_1(\Hi)$. Let $\Pi^\pr_{K}: \big(S_1(\Hi)\big)^{\pr} \to \big(S_1(\Hi)\big)^{\pr}$ be the operator adjoint to $\Pi_K$, that is, $\Pi^\prime_{k}(B) =\sum_{ij \le K}\Tr(B^\dagger e_{ji}) e_{ji} $. The results at the beginning of this section then give the following. 
\begin{proposition}\label{prop:Qstar}
If $B \in \big(S_1(\Hi)\big)^{\pr}$, then $
\lim_{k\to \infty} \Pi^\pr_{k}(B) = B$ in the $w^{\star}$-topology. 
\end{proposition}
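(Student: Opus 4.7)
The plan is to prove the statement by unpacking the definition of the weak-$*$ topology and appealing directly to the Schauder basis property established in Lemma~\autoref{lem:schauder}. Recall that $\Pi^\prime_{k}(B) \to B$ in the $w^\star$-topology means precisely that $\Pi^\prime_{k}(B)(A) \to B(A)$ in $\cnum$ for every $A \in S_1(\Hi)$. So the whole task reduces to verifying pointwise convergence of the functionals.

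First, I would verify the duality identity $\Pi^\prime_{k}(B)(A) = B(\Pi_k(A))$ for all $A\in S_1(\Hi)$, $B\in (S_1(\Hi))^\prime$. Using the representation $\Pi^\prime_k(B)=\sum_{i,j\le k}\Tr(B^\dagger e_{ji})\,e_{ji}$ together with the identification $(S_1(\Hi))^\prime \cong S_\infty(\Hi)$ and the definition of $\Pi_k$ from \eqref{eq51} adapted to the basis $\{e_{ij}\}$, this is a direct unpacking of the traces and a short computation showing both sides equal $\sum_{i,j\le k}\Tr(B^\dagger e_{ji})\,\Tr(e_{ij}^\dagger A)$.

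Next, fix $A\in S_1(\Hi)$. By \autoref{lem:schauder}, $\Pi_k(A)\to A$ in the $S_1$-norm as $k\to\infty$. Since $B$ is a bounded (hence continuous) linear functional on $(S_1(\Hi),\|\cdot\|_{S_1})$, it follows that
\[
\Pi^\prime_k(B)(A) \;=\; B(\Pi_k(A)) \;\longrightarrow\; B(A) \qquad (k\to\infty),
\]
which is exactly the desired $w^\star$-convergence. Because $A$ was arbitrary, this concludes the proof.

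There is no real obstacle here; the proposition is essentially a structural consequence of the Schauder basis property proved in the preceding lemma, combined with the standard fact that the coordinate projections of a Schauder basis, when transposed, converge weak-$*$ to the identity on the dual. The only point requiring a line of verification is the adjoint identity $\Pi^\prime_k(B)(A)=B(\Pi_k(A))$, which is immediate from the explicit formulas for $\Pi_k$ and $\Pi^\prime_k$ together with the trace duality between $S_1(\Hi)$ and $S_\infty(\Hi)$ recalled at the beginning of Section~\ref{prel}.
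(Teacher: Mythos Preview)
Your proof is correct and takes essentially the same approach as the paper: the paper simply invokes the general Schauder basis fact (stated at the beginning of Section~\ref{prel}, with reference to Day) that adjoint projections converge in the $w^\star$-topology, and you unpack that fact explicitly via the adjoint identity $\Pi^\prime_k(B)(A)=B(\Pi_k(A))$ together with \autoref{lem:schauder}.
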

Hence, the set $Q_k$ can be identified as the subspace of $(S_1(\Hi))^{\pr}$ generated by $\{e_{ij}: i,j=1,\ldots,k\}$. It follows from Proposition \autoref{prop:Qstar} that the countable set $Q=\cup_{k=1}^{\infty} Q_n$ 
is a sequentially $w^{\star}$-dense subspace in the topological dual of $S_1(\Hi)$.

\begin{proposition}\label{prop:S1inHprod}
Let $x, y \in \Hi$. Then $\norm{x \otimes y}_{S_1} = \|x\|_{\Hi}\|y\|_{\Hi}$.
\end{proposition}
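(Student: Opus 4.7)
The plan is to reduce the computation to the singular value decomposition of the rank-one operator $T := x \otimes y$ and then evaluate the trace-class norm directly.

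First I would dispose of the trivial cases: if either $x = 0$ or $y = 0$ then $T$ is the zero operator and both sides of the claimed identity are zero. So I may assume $x, y \neq 0$ and set $\hat x := x / \|x\|_{\Hi}$, $\hat y := y / \|y\|_{\Hi}$.

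Next I would compute $T^\ast T$ explicitly from the definition $(x\otimes y) v = \inprod{v}{y}_{\Hi} \, x$ given in the Notation paragraph. The adjoint of $x\otimes y$ is $y\otimes x$, so for any $v \in \Hi$,
\begin{equation*}
T^\ast T v \;=\; (y\otimes x)\bigl(\inprod{v}{y} x\bigr) \;=\; \inprod{v}{y}\, \inprod{x}{x}\, y \;=\; \|x\|_{\Hi}^{2}\, (y\otimes y) v,
\end{equation*}
so $T^\ast T = \|x\|_{\Hi}^{2}\,(y\otimes y)$. Since $y\otimes y$ is the self-adjoint projection onto $\mathrm{span}(y)$ scaled by $\|y\|_{\Hi}^{2}$, its only nonzero eigenvalue is $\|y\|_{\Hi}^{2}$ with eigenvector $\hat y$. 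Therefore $T^\ast T$ has one nonzero eigenvalue, equal to $\|x\|_{\Hi}^{2}\|y\|_{\Hi}^{2}$, and the (unique nonzero) singular value of $T$ is $s_1(T) = \|x\|_{\Hi}\|y\|_{\Hi}$.

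Finally, by the definition of the trace-class (Schatten $1$) norm as the sum of singular values,
\begin{equation*}
\|x\otimes y\|_{S_1} \;=\; \sum_{i\ge 1} s_i(T) \;=\; \|x\|_{\Hi}\|y\|_{\Hi},
\end{equation*}
which is the claim. (Equivalently, one can simply observe that $x\otimes y = \|x\|_{\Hi}\|y\|_{\Hi}\,(\hat x \otimes \hat y)$ is already in canonical SVD form, since $\hat x\otimes \hat y$ is a partial isometry with a one-dimensional initial space spanned by $\hat y$, so the $S_1$-norm is the single nonzero singular value $\|x\|_{\Hi}\|y\|_{\Hi}$.)

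There is no real obstacle here: the statement is a direct computation once one writes out $T^\ast T$. The only thing worth being careful about is the convention for the conjugate-linear slot in the tensor product, which the paper fixes as $(h\otimes g)v = \inprod{v}{g}h$; with this convention the calculation above goes through verbatim over both $\rnum$ and $\cnum$.
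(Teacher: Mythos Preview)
Your proof is correct and follows essentially the same route as the paper: both compute $T^\ast T = \|x\|_{\Hi}^2\,(y\otimes y)$ explicitly and then read off the $S_1$-norm. The only cosmetic difference is that the paper takes the square root $|T| = \|x\|_{\Hi}\|y\|_{\Hi}\,(\hat y\otimes \hat y)$ and evaluates $\Tr(|T|)$ by choosing an ONB whose first element is $\hat y$, whereas you phrase the same computation in terms of the single nonzero singular value; these are equivalent.
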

\begin{proof}
By definition, we have 
\begin{align*}
\norm{x \otimes y}_{S_1} &
= \Tr\big( |x \otimes y|\big)
= \sum_{i=1}^{\infty} \biginprod{ \big( (x \otimes y)^{\dagger}(x\otimes y)\big)^{1/2} e_i}{e_i}
\end{align*}
for an arbitrary ONB $\{e_i\}_{i\ge 1}$ of $\Hi$. Observe that we can write $
(x \otimes y)^{\dagger}(x\otimes y) = \|x\|_{\Hi}^2 \|y\|_{\Hi}^2 (u \otimes u)
$
where $u = y/\|y\|_{\Hi}^2$. Then note that we can construct another ONB $\{u_i\}_{i \ge 1}$ of $\Hi$ with $u_1=u$. Hence, 
\begin{align*}
 \Tr\big( |x \otimes y|\big) = \Tr\big( \|x\|_{\Hi} \|y\|_{\Hi} (u \otimes u)\big)= \sum_{i=1}^{\infty} \biginprod{ \big(  \|x\|_{\Hi} \|y\|_{\Hi} (u \otimes u) u_i}{u_i} =  \|x\|_{\Hi} \|y\|_{\Hi}.
\end{align*}
\end{proof}

\subsection{Inequalities $S_1(\Hi)$-valued random variables} \label{sec:S1ineq}
\def\theequation{A.\arabic{equation}}
\setcounter{equation}{0}

\begin{lemma}\label{lem:Burkh}
Let $V$ be a separable Banach space and let $\{M_k\} \in \op^p_{V}, p>1$, be a martingale with respect to $\{\G_k\}$ with $\{D_k\}$ denoting its difference sequence, and let $\{A_k\}$ be a sequence of bounded linear operators. Then,
\begin{enumerate}
\item[i)] if $V=\Hi$, 
\[
\Bignorm{\sum_{k=1}^{n} A_k (D_k)}^q_{\Hi,p} \le K^q_{p} \sum_{k=1}^{n} \norm{A_k}_{\infty}^q \|D_k\|^q_{\Hi,p}, \quad q=\min(2,p),
\]
where $K^q_{p}= (p^\star-1)^q$ with $p^\star=\max(p,\frac{p}{p-1})$,
\item[ii)] if $V=S_1(\Hi)$ and $p>1$ a positive integer, then for an orthonormal basis $\{e_j\}$ of $\Hi$ 
\[
\Bignorm{\sum_{k=1}^{n} A_k (D_k)}^q_{S_1,p} \le  K^q_{p} \Big(\sum_j \sqrt[q]{\sum_k^n \|A_k\|^q_{\infty} \|(D_k)(e_{j})\|^q_{\Hi,p}}\Big)^q \quad q=\min(2,p).
\]
\end{enumerate}
\end{lemma}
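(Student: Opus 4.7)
For part (i), my plan is to invoke the classical Hilbert-valued Burkholder (or Burkholder--Davis--Gundy) inequality. Since the $A_k$ are deterministic, $\{A_k(D_k)\}_k$ is still an $\mathcal{H}$-valued martingale difference sequence with respect to $\{\mathcal{G}_k\}$, and since $\mathcal{H}$ is both of type $2$ and $2$-smooth, one obtains
$$
\Bignorm{\sum_{k=1}^n A_k(D_k)}_{\mathcal{H},p}^q \le (p^*-1)^q \sum_{k=1}^n \|A_k(D_k)\|_{\mathcal{H},p}^q, \qquad q=\min(2,p),
$$
with the sharp Pinelis constant -- for $p\ge 2$ this is the $q=2$ version, and for $1<p<2$ it is the type-$p$ version. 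Combined with the elementary bound $\|A_k(D_k)\|_{\mathcal{H},p} \le \|A_k\|_\infty \|D_k\|_{\mathcal{H},p}$, this yields (i).

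For part (ii), the central difficulty is that $S_1(\mathcal{H})$ is of type $1$ and cotype $2$, so no direct Hilbert-style square function estimate is available. My plan is to reduce the problem to countably many $\mathcal{H}$-valued martingale problems by first proving the pointwise duality bound
$$
\|A\|_{S_1} \; \le \; \sum_{j \ge 1} \|A e_j\|_{\mathcal{H}}, \qquad A\in S_1(\mathcal{H}),
$$
for any ONB $\{e_j\}$ of $\mathcal{H}$. To establish it I would use the duality $S_\infty(\mathcal{H})\cong (S_1(\mathcal{H}))^\prime$ from Section~\ref{prel}: for any $B\in S_\infty(\mathcal{H})$ with $\|B\|_\infty\le 1$,
$$
|\Tr(BA)|=\Big|\sum_j \inprod{A e_j}{B^* e_j}\Big| \le \sum_j \|A e_j\|_{\mathcal{H}}\|B^* e_j\|_{\mathcal{H}} \le \sum_j \|A e_j\|_{\mathcal{H}},
$$
and taking the supremum over admissible $B$ via $\|A\|_{S_1} = \sup_{\|B\|_\infty \le 1} |\Tr(BA)|$ delivers the bound.

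With this in hand, I would apply the bound to $M_n = \sum_{k=1}^n A_k(D_k)$, take $L^p$-norms, and pull the $j$-sum outside by Minkowski's inequality (valid for $p\ge 1$):
$$
\bignorm{M_n}_{S_1,p} \le \sum_j\Bignorm{\sum_{k=1}^n A_k(D_k)(e_j)}_{\mathcal{H},p}.
$$
For each fixed $j$, $\{A_k(D_k)(e_j)\}_k$ is an $\mathcal{H}$-valued martingale difference adapted to $\{\mathcal{G}_k\}$, because evaluation at $e_j$ commutes with conditional expectation and $A_k$ is deterministic. Applying part~(i) coordinate-wise, together with $\|A_k(D_k)(e_j)\|_{\mathcal{H}}\le \|A_k\|_\infty \|D_k(e_j)\|_{\mathcal{H}}$, yields
$$
\Bignorm{\sum_{k=1}^n A_k(D_k)(e_j)}_{\mathcal{H},p}^q \le K_p^q \sum_{k=1}^n \|A_k\|_\infty^q \|D_k(e_j)\|_{\mathcal{H},p}^q.
$$
Taking $q$-th roots, substituting into the Minkowski bound, and raising to the $q$-th power produces precisely the claimed inequality.

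The only non-routine ingredient is the duality bound on $\|A\|_{S_1}$, which replaces a Hilbertian square function by an outer coordinate sum; this is exactly the price of the type-$1$/cotype-$2$ geometry of $S_1(\mathcal{H})$ and is the structural reason why the stronger coordinate-wise summability condition \autoref{as:depstruc}II) enters the asymptotic theory for $S_1(\mathcal{H})$-valued estimators (see Remark~\autoref{rem:S1vsS2}). The hard part of the wider program is therefore not this inequality per se, but ensuring that the resulting outer coordinate sum converges when applied to the martingale approximation of $\hat{\F}_{u,\omega}$ introduced in Section~\ref{sec32}.
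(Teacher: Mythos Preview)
Your proof is correct and structurally identical to the paper's: establish the pointwise bound $\|B\|_{S_1}\le \sum_j\|B(e_j)\|_{\Hi}$, push the coordinate sum through the $L^p$-norm, and then apply part~(i) for each fixed $j$. The implementation differs in two places. First, the paper obtains the pointwise bound via the polar decomposition $B=U|B|$, writing $\|B\|_{S_1}=\Tr(U^\dagger B)=\sum_j\langle U^\dagger B(e_j),e_j\rangle$ and bounding by Cauchy--Schwarz together with $\|U(e_j)\|_{\Hi}\le 1$; your duality argument via $\|A\|_{S_1}=\sup_{\|B\|_\infty\le 1}|\Tr(BA)|$ reaches the same conclusion and is equally short. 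Second, to interchange the $j$-sum with the $L^p$-norm the paper expands $(\sum_j a_j)^p$ as a $p$-fold sum and applies H\"older termwise, which is precisely where the integrality assumption on $p$ in part~(ii) enters; your use of Minkowski's inequality for countable sums of nonnegative functions is both more direct and valid for all real $p\ge 1$, so it in fact removes the need for that hypothesis. In short, the approaches coincide in spirit, but your route is marginally cleaner and slightly more general.
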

\begin{proof}
We only prove the second part as the first can for example be found in \cite{vdd21}. We recall the following facts \citep[see e.g.,][]{Pedersen,Conway}. If $B \in \opl(\Hi)$ then there exists a unique partial isometry $U$ such that $B=U|B|$ where $|B|=\big(B^{\dagger}B\big)^{1/2}$.
The decomposition $B=U|B|$  is the polar decomposition of $B$. Since the $r$-th Schatten class is an ideal in $\opl(\Hi)$, we have that $|B| =U^\dagger B\in S_r(\Hi)$ if $B \in S_r(\Hi)$.  
We furthermore recall that the partial isometries $U$ and $U^\dagger$ have operator norm 1 or 0, where the latter is true only if it is the zero map. Using the definition of $\|\cdot\|_{S_1}$ and Cauchy Schwarz's inequality
\[
\|B\|_{S_1} = \Tr(U^\dagger B)=\sum_{j\in \nnum}\inprod{U^\dagger B(e_j)}{e_j} \le \sum_{j\in \nnum}\norm{B(e_j)}_{\Hi} \norm{U(e_j)}_{\Hi} \le \sum_{j\in \nnum}\norm{B(e_j)}_{\Hi} \tageq \label{eq:upbtr}
\]
Furthermore, H{\"o}lder's inequality implies in turn
\begin{align*}
\E \|B\|^p_{S_1} \le \E\Big(\sum_{j} \|B(e_j)\|_{\Hi})^p 
\le  \sum_{j_1} \cdots \sum_{j_p} 
\prod_{i=1}^p \sqrt[p]{\E \|B(e_{j_i})\|^p_{\Hi}} 
& =\Big(\sum_j \sqrt[p]{\E \|B(e_{j})\|^p_{\Hi}}\Big)^p
\\& =\Big(\sum_j \|B(e_{j})\|_{\Hi,p}\Big)^p
\\& =\Big(\sum_j \sqrt[q]{\|B(e_{j})\|^q_{\Hi,p}}\Big)^p. \tageq\label{eq:upbtr2}
\end{align*}

Taking $B=\sum_{k=1}^{n} A_k (D_k)$, and using the first part of \autoref{lem:Burkh}
\begin{align*}
\Bignorm{\sum_{k=1}^{n} A_k (D_k)}^q_{S_1,p} &\le \Big(\sum_j  \|A_k (D_k)(e_{j})\|_{\Hi,p}\Big)^q 
 \le \Big(\sum_j \sqrt[q]{K^q_p\sum_k^n \|A_k\|^q_{\infty} \|(D_k)(e_{j})\|^q_{\Hi,p}}\Big)^q,
\end{align*}
from which the statement follows.
\end{proof}

\begin{lemma} \label{lem:Burklin} 
Let $\{X_{t,T}| t=1,\ldots, T; T \in \mathbb{N}\}$ be a zero mean stochastic process in $\op^p_{\Hi}$, that satisfies  \autoref{def:locstat} and \autoref{as:depstrucnonstat} and let $\{A_n\}$ be a sequence of bounded linear operators on $\Hi$. Then, for  $p\ge 2$
\[
\sup_{T \in \mathbb{N}}\bignorm{\sum_{i=1}^{n} A_i(X_{i,T})}_{\Hi,p} \le K_p\Big(\sum_{i=1}^{n} \norm{A_i}^2_{S_\infty}\Big)^{1/2} \sum_{j=0}^{\infty}\sup_{T \in \nnum}\nu^{X_{\cdot,\cdot}}_{p}(j,T)
\]
\end{lemma}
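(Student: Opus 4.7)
The plan is to combine a martingale difference decomposition of $X_{i,T}$ with the Hilbert-space Burkholder inequality already established in \autoref{lem:Burkh}(i), followed by a Minkowski-type decoupling. First, I would introduce the projection operators $P_k(\cdot) = \E[\cdot \mid \G_k] - \E[\cdot \mid \G_{k-1}]$ and write the telescoping expansion $X_{i,T} = \sum_{k=-\infty}^{i} P_k(X_{i,T})$, which is justified by \autoref{as:depstrucnonstat} together with the mean-zero assumption and the $\G_\infty$-measurability coming from the Bernoulli shift representation \eqref{eq:bershift}. Exchanging the order of summation yields
\[
\sum_{i=1}^{n} A_i(X_{i,T}) = \sum_{k \le n} Y_k, \qquad Y_k := \sum_{i=\max(k,1)}^{n} A_i\big(P_k(X_{i,T})\big),
\]
and one checks directly that $\{Y_k, \G_k\}$ is a martingale difference sequence.

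Next, I would apply \autoref{lem:Burkh}(i) with identity operators and $q=2$ (available since $p\ge 2$) to obtain
\[
\Bignorm{\sum_{i=1}^n A_i(X_{i,T})}_{\Hi,p}^2 \le K_p^2 \sum_k \|Y_k\|_{\Hi,p}^2.
\]
Each $\|Y_k\|_{\Hi,p}$ is then controlled via the triangle inequality and the operator-norm bound $\|A_i(v)\|_{\Hi} \le \|A_i\|_\infty \|v\|_{\Hi}$, giving $\|Y_k\|_{\Hi,p} \le \sum_{i\ge \max(k,1)} \|A_i\|_\infty \|P_k(X_{i,T})\|_{\Hi,p}$. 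The crucial step is to bound $\|P_k(X_{i,T})\|_{\Hi,p}$ by the physical dependence coefficient at lag $i-k$. To this end, I would introduce the coupled variable $X^{*}_{i,T,\{k\}} = \mathfrak{G}(i,T,\mathfrak{f}_{i,\{k\}})$, observe that $X^{*}_{i,T,\{k\}}$ does not depend on $\epsilon_k$ and has the same law conditional on $\G_{k-1}$ as $X_{i,T}$, so $\E[X^{*}_{i,T,\{k\}} \mid \G_k] = \E[X_{i,T} \mid \G_{k-1}]$, and then invoke conditional Jensen to conclude $\|P_k(X_{i,T})\|_{\Hi,p} \le \nu^{X_{\cdot,\cdot}}_{\Hi,p}(i-k,T)$.

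Setting $a_i = \|A_i\|_\infty \mathrm{1}_{\{1\le i \le n\}}$ and $b_j = \sup_{T} \nu^{X_{\cdot,\cdot}}_{\Hi,p}(j,T)\mathrm{1}_{\{j\ge 0\}}$, the previous steps give $\|Y_k\|_{\Hi,p} \le \sum_{j\ge 0} a_{k+j} b_j$ for each $k$. I would then close the argument with Minkowski's inequality on $\ell^2(k)$,
\[
\Big(\sum_k \Big(\sum_j a_{k+j} b_j\Big)^2\Big)^{1/2} \le \sum_j b_j\Big(\sum_k a_{k+j}^2\Big)^{1/2} = \Big(\sum_{i=1}^n \|A_i\|_\infty^2\Big)^{1/2}\sum_{j=0}^{\infty} b_j,
\]
where the last equality uses the shift-invariance of $\sum_k a_{k+j}^2$. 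Combining with the Burkholder estimate and taking $\sup_T$ on the left-hand side yields the claimed inequality, with the supremum passing inside the $j$-sum because the right-hand side already contains $\sup_T \nu(j,T)$ term by term.

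The main obstacle is really notational: correctly indexing the martingale decomposition so that the shifted $\ell^2$ norms align under Minkowski, and cleanly verifying the coupling identity for $P_k(X_{i,T})$ in the nonstationary Bernoulli shift framework where the filtration $\G_{t,\{k\}}$ modifies an absolute coordinate rather than a lag. Once these bookkeeping issues are handled, each individual inequality is standard.
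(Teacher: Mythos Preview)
Your proposal is correct and uses the same core ingredients as the paper: the projection decomposition $X_{i,T}=\sum P_{\bullet}(X_{i,T})$, the coupling identity bounding $\|P_k(X_{i,T})\|_{\Hi,p}$ by the physical dependence coefficient, and the Hilbert-space Burkholder inequality from \autoref{lem:Burkh}(i).

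The organization differs slightly. The paper reindexes by \emph{lag} $j=i-k$ first, writing $\sum_i A_i(X_{i,T})=\sum_{j\ge 0}\sum_i A_i\big(P_{i-j}(X_{i,T})\big)$, applies the triangle inequality over $j$, and then for each fixed $j$ notes that $\{A_i(P_{i-j}(X_{i,T}))\}_i$ is itself a martingale difference sequence with respect to $\{\G_{i-j}\}_i$, so \autoref{lem:Burkh}(i) applies directly and factors out $\big(\sum_i\|A_i\|_{S_\infty}^2\big)^{1/2}\nu(j,T)$ in one step. Your route groups by absolute time $k$, applies Burkholder to the aggregated $Y_k$'s, and then needs the additional Minkowski-on-$\ell^2$ decoupling to extract the same factorization. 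Both arrive at the identical bound; the paper's ordering is a touch shorter because the lag-first indexing makes the martingale structure align with the $A_i$'s immediately, avoiding your final $\ell^2$--$\ell^1$ step.
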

\begin{proof}
Let $X_{t,T}$ satisfy  \autoref{as:depstrucnonstat} with $p\ge 1$. Then we have the representation $
X_t = \sum_{j=0}^{\infty} P_{t-j}(X_t)$.
%where, for fixed $j$, $P_{t-j}(X_{t,T})$, form martingale difference sequences with respect to $\G_{t-j}$. 
Furthermore,  
\[
P_k (X_t) = \E[X_t | \G_{k}]-  \E[X_t | \G_{k-1}] = \E\Big[X_t- \E[X_t |\G_{t,\{k\}}] \Big\vert \G_k \Big]
\]
where $\G_{t,\{k\}} = \sigma(\epsilon_t, \epsilon_{t-1}, \ldots, \epsilon_{k+1}, \epsilon^{\prime}_{k}, \epsilon_{k-1}, \ldots)$, and thus 
\[
\|P_k (X_{t,T})\|_{\Hi,p} \le \sup_{t \in \znum}\bignorm{\mathfrak{G}(t,T,\G_t)- \mathfrak{G}(t,T,\G_{t,\{k\}})}_{\Hi,p}~.
\]
From the first part of \autoref{lem:Burkh}, we find for $p\ge 2$
\begin{align*}
\Bignorm{\sum_{i=1}^{n} A_i(X_{i,T})}_{\Hi,p} 
 \le \sum_{j=0}^{\infty}\Bignorm{\sum_{i=1}^{n} A_i \Big(P_{i-j}(X_{i,T}\Big)}_{\Hi,p} 
%\\&
\le K_p \Big(\sum_{i=1}^{n} \norm{A_i}^2_{S_\infty}\Big)^{1/2} \sum_{j=0}^{\infty}\sup_{T \in \nnum}\nu^{X_{\cdot,\cdot}}_{p}(j,T).
\end{align*}
\end{proof}

\begin{lemma} \label{lem:expin}
Let $Z_1, \ldots, Z_T$ be independent zero-mean  $V=S_1(\Hi)$-valued random variables such that for some $q>2$, $\E\|Z_i\|_{S_1}^q<\infty$, $i=1,\ldots, T$. Then for $0 \le \beta \le 1$, $\delta >0$ and any $x>0$,
\begin{align*}
\mathbb{P}\Bigg( \bignorm{\sum_{i=1}^T Z_i}_{S_1} \ge x \Bigg) \le \exp\Big(-\frac{x^2}{(2+\beta)\Lambda_n} \Big)+ C \E \bignorm{\sum_{i=1}^T Z_i}^q_{S_1} x^{-q},\tageq \label{eq:expin1}
\end{align*}
where $\Lambda_n =\sup\big\{ \sum_{j=1}^n \E \big|v^\prime(Z_j)\big|^2:v^\prime \in V^\prime_1\big\}$, where $V^\prime_1$ denotes the unit ball in $V^\prime$, and $C$ is a positive constant depending on $\beta, \delta$ and $q$.
\end{lemma}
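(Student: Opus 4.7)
The plan is to establish this Fuk--Nagaev type inequality through a standard truncation-plus-concentration argument, adapted to the non-Hilbertian Banach space $S_1(\Hi)$. Write $S_n = \sum_{i=1}^T Z_i$. Fix a truncation level $u = \delta x$ (this is where the parameter $\delta$ enters) and split
\[
Z_i = Z_i^{(1)} + Z_i^{(2)}, \qquad Z_i^{(1)} := Z_i \,\mathbf{1}_{\{\|Z_i\|_{S_1} \le u\}}.
\]
Setting $\bar Z_i^{(k)} = Z_i^{(k)} - \E Z_i^{(k)}$ ($k=1,2$), the mean-zero assumption yields $\bar Z_i^{(1)} + \bar Z_i^{(2)} = Z_i$. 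I would then pick $\gamma \in (0,1)$ (depending on $\beta$) and use the union bound
\[
\mathbb{P}(\|S_n\|_{S_1} \ge x) \le \mathbb{P}\bigl(\|{\textstyle\sum}\bar Z_i^{(1)}\|_{S_1} \ge (1-\gamma)x\bigr) + \mathbb{P}\bigl(\|{\textstyle\sum}\bar Z_i^{(2)}\|_{S_1} \ge \gamma x\bigr).
\]

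For the first (bounded) piece, observe that $\|\bar Z_i^{(1)}\|_{S_1}\le 2u = 2\delta x$ a.s., and the weak variance satisfies $\sum_i\E|v'(\bar Z_i^{(1)})|^2 \le \sum_i\E|v'(Z_i)|^2 \le \Lambda_n$ for every $v'\in V_1'$. Here I rely crucially on the separability structure of $S_1(\Hi)$ developed in Appendix~A.1: by Lemma~\ref{lem:schauder} and Proposition~\ref{prop:Qstar}, the unit ball of $(S_1(\Hi))' \cong \opl(\Hi)$ admits a countable sequentially $w^*$-dense set, so $\|\cdot\|_{S_1}$ can be represented as a countable supremum of linear functionals, rendering all relevant events measurable and allowing classical Bennett/Bousquet-type machinery to be applied. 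Applying such an exponential inequality to $\|\sum\bar Z_i^{(1)}\|_{S_1}$ (Talagrand/Bousquet form with weak variance $\Lambda_n$ and boundedness $2\delta x$), one obtains a bound of the shape
\[
\mathbb{P}\bigl(\|{\textstyle\sum}\bar Z_i^{(1)}\|_{S_1} \ge (1-\gamma)x\bigr) \le \exp\!\Bigl(-\tfrac{(1-\gamma)^2 x^2}{2\Lambda_n + c\,\delta\,x\cdot (1-\gamma)x}\Bigr).
\]
By choosing $\delta$ and $\gamma$ small enough so that the extra $\delta x^2$ term in the denominator is absorbed into a factor $\beta \Lambda_n$, the exponent becomes $-x^2/((2+\beta)\Lambda_n)$, which is precisely the first term of the claimed bound.

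For the second (tail) piece, the summands $\bar Z_i^{(2)}$ vanish unless $\|Z_i\|_{S_1}>u$. Apply Markov's inequality at order $q$ and a Hoffmann--J\o{}rgensen type moment bound (valid in any separable Banach space) to get
\[
\mathbb{P}\bigl(\|{\textstyle\sum}\bar Z_i^{(2)}\|_{S_1} \ge \gamma x\bigr) \le (\gamma x)^{-q}\,\E\|{\textstyle\sum}\bar Z_i^{(2)}\|_{S_1}^q \le C_{q,\gamma}\,\E\|S_n\|_{S_1}^q\,x^{-q},
\]
where I use convexity/triangle inequality $\|\bar Z_i^{(2)}\|_{S_1}\le \|Z_i\|_{S_1}+\|\bar Z_i^{(1)}\|_{S_1}$ together with the fact that $\sum_i\E\|Z_i\|_{S_1}^q\lesssim \E\|S_n\|_{S_1}^q$ (by Hoffmann--J\o{}rgensen, since $q>2$). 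Combining the two pieces and collecting the constants (which depend on $\beta,\delta,q$) yields the claimed inequality.

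The main obstacle is obtaining the precise subgaussian constant $(2+\beta)$ in the exponential term while working in the non-type-2 space $S_1(\Hi)$. In Hilbert or type-2 spaces the Bousquet-type inequality with weak variance is classical; in $S_1(\Hi)$, one has type 1 and cotype 2, so direct transfer of Talagrand/Bousquet requires the dual-space reduction described above. The delicate point is that standard Bousquet bounds typically feature an additive $\E\|\sum \bar Z_i^{(1)}\|_{S_1}$ term inside the exponent; managing this either requires considering separately the regime $x \lesssim \E\|S_n\|_{S_1}$ (where the polynomial moment term trivially dominates) and $x \gtrsim \E\|S_n\|_{S_1}$, or else absorbing the expectation into the prefactor $C=C(\beta,\delta,q)$ by shrinking $\gamma$ appropriately. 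Once this bookkeeping is carried out, the exponent can be brought arbitrarily close to $-x^2/(2\Lambda_n)$, which is exactly what the parameter $\beta$ encodes.
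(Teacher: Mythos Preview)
Your truncation-plus-Bousquet approach is a genuine route to Fuk--Nagaev type bounds, but it is substantially different from (and harder than) the paper's proof, and has two specific gaps.

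The paper's argument is much shorter: it invokes the Einmahl--Li (2008) Fuk--Nagaev inequality for arbitrary separable Banach spaces as a black box. That result already delivers, for any $x>0$,
\[
\mathbb{P}\Bigl(\bigl\|{\textstyle\sum_i}Z_i\bigr\|_{S_1}\ge (1+\beta)\E\bigl\|{\textstyle\sum_i}Z_i\bigr\|_{S_1}+x\Bigr)\le \exp\Bigl(-\tfrac{x^2}{(2+\beta)\Lambda_n}\Bigr)+\tilde C\,\sum_i\E\|Z_i\|_{S_1}^q\,x^{-q},
\]
with the constant $(2+\beta)$ already in place. Two short steps finish the proof: (i) replace $\sum_i\E\|Z_i\|^q$ by $\E\|\sum_i Z_i\|^q$ using that $S_1(\Hi)$ has cotype~2 (Tomczak-Jaegermann) together with Kahane's inequality and randomization; (ii) remove the recentering $(1+\beta)\E\|\sum_iZ_i\|$ by a case split: if $x<(1+\beta)(\E\|\sum_iZ_i\|^q)^{1/q}$ the polynomial term alone already exceeds~$1$ once $C\ge(1+\beta)^q$, and otherwise Einmahl--Li applies directly. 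That is the whole proof.

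Your approach has two problems. First, the step ``choosing $\delta$ and $\gamma$ small enough so that the extra $\delta x^2$ term in the denominator is absorbed into $\beta\Lambda_n$'' does not work as stated: $\delta$ is fixed, while $x^2/\Lambda_n$ can be arbitrarily large, so $c\delta x^2\le\beta\Lambda_n$ cannot hold uniformly. The standard fix is precisely the case split the paper uses (small $x$ handled trivially by the moment term), not a choice of $\delta$. Second, and more important, the inequality $\sum_i\E\|Z_i\|_{S_1}^q\lesssim\E\|\sum_iZ_i\|_{S_1}^q$ is \emph{not} Hoffmann--J\o{}rgensen; Hoffmann--J\o{}rgensen goes the other direction. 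What you need is exactly the cotype-2 property of $S_1(\Hi)$ lifted to exponent $q$ via Kahane's inequality---this is the single place where the specific geometry of $S_1$ enters, and it is the main point the paper isolates. With these two corrections your scheme can be made to work, but you are then essentially re-proving Einmahl--Li rather than quoting it.
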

\begin{proof}
 It was proved in \cite{tj74} that $S_1(\Hi)$ is a Banach space of cotype 2. It can be shown using Kahane's inequality, and the randomization principle that it holds for any $2\le q <\infty$ that there exists a constant $C_{2,q}>0$ such that
\begin{align*}
\E\sum_{i=1}^T \norm{ Z_i}^q_{S_1}  \le C_{2,q}\E\bignorm{\sum_{i=1}^T Z_i}^q_{S_1} \tageq \label{eq:inraad}
\end{align*}
With this in place, let us turn to \eqref{eq:expin1}.
The case $x \ge (1+\beta)\big(\E \bignorm{\sum_{i=1}^T Z_i}^q_{S_1}\big)^{1/q}$, immediately follows from \eqref{eq:inraad} and the following Fuk-Nagaev Type inequality provided in Theorem 3.1  of  \cite{el08}.

\begin{thm} % [Theorem 3.1 of  \cite{el08}]
\label{Einm}
Let $V$ be a separable Banach space, and let $Z_1, \ldots, Z_n$ be independent $V$-valued random variables with mean zero such that for some $q>2$, $\E\|Z_i\|^q<\infty$, $1\le i\le n$. Then for $0 \le \beta \le 1$, $\delta >0$ and any $x>0$,
\begin{align*}
\mathbb{P}\Bigg(\max_{1\le i\le n} \bignorm{\sum_{i=1}^n Z_i}_{V} \ge (1+\beta)\E\bignorm{\sum_{i=1}^n Z_i}_{V}+x \Bigg) \le \exp\Big(-\frac{x^2}{(2+\beta)\Lambda_n}  \Big)+ \tilde{C} \E\sum_{i=1}^n\norm{ Z_i}^q_{V}x^{-q},
\end{align*}
where $\Lambda_n =\sup\{\sum_{j=1}^n \E |v^\prime(Z_j)|^2: v^\prime \in V^\prime_1\}$, where $\tilde{C}$ is a positive constant depending on $\beta, \delta$ and $q$.
\end{thm}
Indeed, it suffices to take $C \ge \tilde{C}C_{2,q}$.
 On the other hand,
if $x< (1+\beta)\big(\E \bignorm{\sum_{i=1}^T Z_i}^q_{S_1}\big)^{1/q}$ then \eqref{eq:expin1} is trivially true for $C\ge (1+\beta)^q$. Hence, the result holds by taking $C= \max\big((1+\beta)^q, \tilde{C}C_{2,q}\big)$.

\end{proof}

 \label{sec:thm31proof}
\def\theequation{B.\arabic{equation}}
\setcounter{equation}{0}
\section{Proof of Theorem \ref{thm:Conv}}

The proof is intricate and consists of several auxiliary statements. We first prove the  result  in the case, where $\phi (z) =z  $  and relegate the proof for general analytic functions to the end of this section. 

\subsection{ The case $\phi (z) =z  $} \label{sec821}

We start by introducing  an appropriate double indexed process $\big\{M^{-1/2}\sum_{u \in U_M}\mathcal{Y}^{(u)}_{m,N} \big\}_{m,N} $ in $ C_{S_1}$, which  will be used of an approximation of the process $M^{1/2}\rho_{T} \big(
\mathrm{L}^{a,b}_{U_M}\circ  \G_\Upsilon (\hat {\cal F} )- 
 \mathrm{L}^{a,b}\circ \G_\Upsilon ({\cal F}   ) \big ) $. This 
requires  some additional notation. Firstly, recall the $\sigma$-algebra 
$\G_\ell  = \sigma(\epsilon_\ell, \epsilon_{\ell -1} , \ldots  )$ from
Section \ref{sec21}, and furthermore denote
 for $k \leq \ell $,  
$\G^\ell_k = \sigma(\epsilon_\ell, \epsilon_{\ell -1} , \ldots,\epsilon_{k})$.

We consider $m$-dependent
 martingale difference sequences defined by 
\[
\dm{u}{\omega }{t} =  \sum_{l=0}^{\infty} P_{t} \Big({X}^{(u)}_{m,t+l}  \Big) e^{-\im \omega   l} \tageq \label{eq:dmuv}
\]
where $P_j(Y) = \E [ Y| \G_{j}] - \E[Y|\G_{j-1}], Y \in \mathcal{L}^2_{\Hi}$ and where 
$
{X}^{(u)}_{m,t}= \E \big[{X}^{(u)}_{t}  \Big\vert \G^{t}_{t-m} \big]$.
denotes an $m$-dependent version of $X^{(u)}_t$. We will have specific interest in
\begin{align*}
\dmpi{u}{\omega}{N}{t}=\dm{\tu{N}{u}+t}{\omega}{\tu{N}{u}+t} & = \sum_{l=0}^{\infty} P_{\tu{N}{u}+t} \Big({X}^{(\tu{N}{u}+t)/T}_{m,\tu{N}{u}+t+l}  \Big) e^{-\im \omega  l}\tageq \label{eq:dmpi}
\end{align*}
Under  \autoref{as:depstruc}, {a similar argument as in the proof of \autoref{lem:Burklin} yields}
$$
\|\dmpi{u}{\omega}{N}{t}\|_{\hi,2} \le \sup_{u}\|\tilde{D}^{(u,\omega)}_{0}\|_{\hi,2} \le \sum_{j=0}^{\infty}\nu_{\hi,2}(j)< \infty,
$$
where ${D}^{(u,\omega)}_{0}$ $= \sum_{l=0}^{\infty} P_0(X^{(u)}_{l})e^{-\im \omega l}$.  Next, we define 
the random elements 
\[
V^{(u,\omega)}_{m,N,t} =  \dmpi{u}{\omega}{N}{t} \otimes \sum_{s=1}^{t-1} \tilde{w}_{\bf,t,s}^{(\omega)} \dmpi{u}{\omega}{N}{s} \quad  t=2,3, \ldots, N.
\tageq \label{eq:VNTfix}
\]
For $t=1$, we set $V^{(u),\omega}_{m,N,1} = \mathrm{0}_{\Hi}$, where $\mathrm{0}_{\Hi}$ denotes the zero element of $\Hi$. 
{Since our assumptions imply that $V^{(\cdot,\cdot)}_{m,N,t} \in L^1_{S_1}([0,1]\times[0,\pi])$ the  integral 
\[
W^{(u)}_{m,N,t} 
=  \mathcal{\Phi}^{-1}_{\bf }\int^{b}_{a} \Upsilon^{(\omega)}_{u} \Big(V^{(u,\omega)}_{m,N,t} +V^{\dagger(u,\omega)}_{m,N,t} \Big) d\omega \quad t=1, 2, \ldots, k, 1\le k \le N, \tageq \label{eq:VNT}
\]
is well-defined,} 
and where we use the notation 
\[
\mathcal{\Phi}^2_{\bf}=\sum_{t,s=1}^{N} \tilde{w}^2_{\bf,t,s} \sim ( \kappa_f N)\bf^{-1}.\] 
Note that $W^{(u)}_{m,N,t}$ 
depends on the parameters $a,b$, but for the sake of simplicity this  dependence is not reflected in the notation. {Under the stated conditions, we can show (using \autoref{lem:Burkh}(ii)) that $\big\{W^{(u)}_{m,N,t}\big\}_{t}$ is a martingale difference sequence in $\op^2_{S_1}$ with respect to the filtration $\{\G_{\tu{N}{u}+t}\}_t$, for any $N \in \mathbb{N}$, uniformly in $u \in [0,1]$.} The process $\big\{M^{-1/2} \sum_{u \in U_M}\mathcal{Y}^{(u)}_{m,N} \big\}_{m,N}$ is then defined as an interpolated version of the partial sum process of  $\big\{W^{(u)}_{m,N,t}\big\}_{t}$, i.e., 
\[
\mathcal{Y}^{(u)}_{m,N}(\eta)=\sum_{t=1}^{\flo{\eta N}}W^{(u)}_{m,N,t}  + (\eta N- \flo{\eta N}) W^{(u)}_{m,N,\flo{\eta N}+1}
\quad \eta\in [0,1]. \tageq \label{eq:BMcont}
\]
We shall show that the process in \eqref{eq:BMcont} defines the distributional properties  in \autoref{thm:Conv}. More specifically, note that $(C_{S_1}, \norm{\cdot}_{C_{S_1}})$ is a separable Banach space and therefore a complete metric space. Let $A$ be a closed set of $(C_{S_1}, \norm{\cdot}_{C_{S_1}})$ and consider the set $A_{\delta} =\{x:\|x-y\|_{C_{S_1}}\le  \delta, y \in A\}$. Observe that
\begin{align*}
& \limsup_{T\to \infty} \,\mathbb{P}\Big(M^{1/2} \rho_{T} \big(
\mathrm{L}^{a,b}_{U_M}\circ  \G_\Upsilon (\hat {\cal F} )- 
 \mathrm{L}^{a,b}\circ \G_\Upsilon ({\cal F}   )  \big )  \in A\Big)
\\& \le  \lim_{m\to \infty} 
\limsup_{T\to \infty} 
\, \mathbb{P}\Big( \bignorm{M^{1/2} \rho_{T} \Big(
 \mathrm{L}^{a,b}_{U_M}\circ \G_\Upsilon (\hat {\cal F} ) -
 \mathrm{L}^{a,b}\circ \G_\Upsilon ( {\cal F} )   \Big)- \frac{1}{M^{1/2}}\sum_{u \in U_M}\mathcal{Y}^{(u)}_{m,N} }_{C_{S_1}} > \delta\Big)
 \tageq \label{eq:processtapprox}
  \\& + \lim_{m\to \infty} 
  \limsup_{T\to \infty} 
\, \mathbb{P}\Big(  \frac{1}{M^{1/2}}\sum_{u \in U_M}\mathcal{Y}^{(u)}_{m,N}  \in A_{\delta}\Big) \tageq \label{eq:avBM}
\end{align*}
For the first term \eqref{eq:processtapprox}, we have the following result, which is proved in
Section 
\ref{sec831}. The proof consists of appropriately decomposing the error and bounding each term (see \eqref{eq:approxloc}-\eqref{eq:approxmart}). 

\begin{thm} \label{thm:fullapproximation}
Under the conditions of  \autoref{thm:conv_an}(a) or (b) we have
\begin{align*}
%\label{eq1a}
\lim_{m\to \infty} \limsup_{T\to \infty}\,  \mathbb{P}\Big( \bignorm{M^{1/2}\rho_{T} \Big(
 \big (\mathrm{L}^{a,b}_{U_M}\circ  \G_\Upsilon (\hat {\cal F} )  \big )  - \big (
 \mathrm{L}^{a,b}\circ  \G_\Upsilon ({\cal F} )  \big )  \Big) - \frac{1}{M^{1/2}}\sum_{u \in U_M}\mathcal{Y}^{(u)}_{m,N} }_{C_{S_1}} > \delta\Big) =0.
\end{align*}
\end{thm}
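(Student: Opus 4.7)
The plan is to decompose the difference into several error terms, each corresponding to a distinct source of approximation, and to bound each in probability uniformly in $\eta \in I$. Writing
\[
\Delta_T(\eta) = M^{1/2}\rho_T\Big(\mathrm{L}^{a,b}_{U_M}\circ\G_\Upsilon(\hat\F)(\eta) - \mathrm{L}^{a,b}\circ\G_\Upsilon(\F)(\eta)\Big) - \frac{1}{M^{1/2}}\sum_{u\in U_M}\mathcal{Y}^{(u)}_{m,N}(\eta),
\]
I would introduce four intermediate quantities corresponding to: (i) Riemann approximation error $R_T^{(1)}(\eta) = M^{1/2}\rho_T\big(\mathrm{L}^{a,b}_{U_M} - \mathrm{L}^{a,b}\big)\circ\G_\Upsilon(\F)(\eta)$, controlled by Assumption~\ref{as:mappings}(iii) (twice Fr\'echet differentiability of $\Upsilon_{\cdot,\omega}$) together with Assumption~\ref{as:smooth} ($u \mapsto \F_{u,\omega}$ twice Fr\'echet differentiable); (ii) the bias $R_T^{(2)}(\eta)$ arising from $\E[\hat\F_{u,\omega}(\eta)] - \F_{u,\omega}$, controlled through the kernel properties in Assumption~\ref{as:Weights} (so that $w(x)-1=O(x^\iota)$), the smoothness of $\F$ in both $u$ and $\omega$ (via Assumption~\ref{as:smooth}), and the bandwidth conditions in Assumption~\ref{as:bandwidth} that balance bias and variance; (iii) the local-stationarity error $R_T^{(3)}(\eta)$ obtained by replacing $X_{t,T}$ by $X^{(t/T)}_t$, controlled by the rate $\zeta = 1$ in Definition~\ref{def:locstat}; and (iv) the $m$-dependent truncation error $R_T^{(4)}(\eta)$ from replacing $X^{(u)}_t$ by $X^{(u)}_{m,t}$, controlled through the physical-dependence summability in Assumption~\ref{as:depstruc}.

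For each $R_T^{(k)}$, the key is to obtain a moment bound in $C_{S_1}$. Here I would apply Lemma~\ref{lem:Burkh}(ii) (the adjusted Burkholder-type inequality tailored to the type~1 cotype~2 geometry of $S_1(\Hi)$), which requires and profits from the stronger summability condition in Assumption~\ref{as:depstruc}(II) on the coordinate dependence measures $\nu^{X^{\cdot}_{\cdot}(e_\ell)}_{\cnum,p}(k)$. Together with Assumption~\ref{as:mappings}(i)--(ii), which passes the operator norm of $\Upsilon$ through to integrals over $\omega$, this gives uniform-in-$(u,\omega)$ operator-norm bounds. The $C_{S_1}$-norm over $\eta$ is then handled by a maximal inequality for martingale partial sums (applied after expressing the quadratic form in terms of the $\dmpi{u}{\omega}{N}{t}$), combined with the prefactor $\rho_T M^{1/2}/M^{1/2}$ and the bandwidth conditions of Assumption~\ref{as:bandwidth}, which drive each $R_T^{(k)}$ to zero in probability uniformly in $\eta$ (after $\limsup_T$ and $\lim_m$).

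The most delicate step is the martingale approximation itself: one replaces the centered quadratic form in $X^{(u)}_{m,t}$ by $V^{(u,\omega)}_{m,N,t}+V^{\dagger(u,\omega)}_{m,N,t}$. To do this, I would substitute $X^{(u)}_{m,t}=\sum_{l\ge 0}P_{t-l}(X^{(u)}_{m,t})$ into the bilinear form, collect the terms whose projection index matches the outer summation index (which produces exactly the martingale difference $\dmpi{u}{\omega}{N}{t}$ as in \eqref{eq:dmpi} and hence $V^{(u,\omega)}_{m,N,t}$ as in \eqref{eq:VNTfix}), and show the remaining ``mismatched'' terms are lower order. The latter is non-trivial: by the structure of $P_j$, the mismatched cross terms form a second-order array whose covariance in $S_1$ telescopes, and orthogonality of the $P_j$'s lets us bound its $\|\cdot\|_{S_1,p}$-norm by iterating Lemma~\ref{lem:Burkh}(ii) and using Assumption~\ref{as:depstruc}(II) once more. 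Passing the weight $\tilde w^{(\omega)}_{\bf,s,t}$ through and noting $\mathcal{\Phi}_{\bf}^2\sim\kappa_f N/\bf$ matches the normalization $\rho_T$, so the induced factor cancels correctly.

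The main obstacle will be step (iv) and the martingale approximation: one must simultaneously control (a) the $S_1$-norm (not $S_2$-norm) of the residuals, which is why Assumption~\ref{as:depstruc}(II) rather than (I) is imposed, (b) the uniformity over $\eta\in I$, achieved by the construction of $\mathcal{Y}^{(u)}_{m,N}$ as the interpolated partial sum together with a Doob-type maximal inequality tuned to $p/2$ being an integer (as required by Lemma~\ref{lem:Burkh}(ii)), and (c) the uniformity over $u\in U_M$, made possible by the $M^{-1/2}$ averaging and the fact that $\|V^{(u,\omega)}_{m,N,t}\|_{S_1,p}$ is bounded uniformly in $u$ by the supremum dependence measure. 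Collecting the four bounds and summing, then taking $T\to\infty$ followed by $m\to\infty$, concludes that $\mathbb{P}(\|\Delta_T\|_{C_{S_1}}>\delta)\to 0$ for every $\delta>0$, which is the claim.
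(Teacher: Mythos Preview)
Your decomposition into Riemann error, bias, local-stationarity error, $m$-dependent truncation, and martingale approximation matches the paper's structure closely (see the split \eqref{eq53} and the subsequent decomposition \eqref{eq:approxloc}--\eqref{eq:approxmart} via \autoref{lem:martapproxprob}). Two points, however, deserve correction.

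First, the maximal inequality over $\eta\in I$ is not handled by a Doob-type bound. Throughout the auxiliary lemmas (\autoref{lem:mdepap}, \autoref{lem:marappr}, \autoref{lem:diag}, \autoref{lem:approxloc}, \autoref{lem:approxvar}) the paper repeatedly verifies a subadditivity condition of the form $g(b,k)+g(b+k,l)\le g(b,k+l)$ on the moment bound and invokes M\'oricz's maximal inequality. A plain Doob argument would require the intermediate quantities themselves to be martingales in $k$, which they are not (the weights $\tilde w^{(\omega)}_{\bf,s,t}$ depend on both indices).

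Second, and more substantively, your description of the martingale approximation is not the mechanism that works. You propose to expand $X^{(u)}_{m,t}=\sum_{l\ge 0}P_{t-l}(X^{(u)}_{m,t})$ and ``collect the terms whose projection index matches the outer summation index'' to recover $\dmpi{u}{\omega}{N}{t}$. But $\dmpi{u}{\omega}{N}{t}=\sum_{l\ge 0}P_t(X^{(u)}_{m,t+l})e^{-\im\omega l}$ is built from projections of \emph{future} values onto $\G_t$, not from the backward expansion of $X_t$; diagonal matching in your bilinear expansion does not produce it. The paper's \autoref{lem:marappr} instead writes $X^{(u)}_{m,s}=Q_{l,s}-e^{-\im\omega}\E[Q_{l,s+1}\mid\G_s]$ with $Q_{l,s}=\sum_{r=0}^m\E[X^{(u)}_{m,s+r}\mid\G_s]e^{-\im r\omega}$, so that $D_{m,s}=Q_{l,s}-\E[Q_{l,s}\mid\G_{s-1}]$, and then applies summation by parts to the weighted sum $\sum_s\tilde w^{(\omega)}_{\bf,s,t}(X^{(u)}_{m,s}-D_{m,s})$. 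This Abel-summation step is what converts the weight increments $w(\bf(s-t))-w(\bf(s-t-1))$ into the controllable error, and it is the key technical idea your outline is missing; the vague ``telescoping covariance'' does not substitute for it. The resulting error is then split into the three pieces $A_l,B_l,Z_l$ of \eqref{eq:Al}--\eqref{eq:Zl}, each handled by a separate $4m$-block martingale argument.
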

 For  the second  term \eqref{eq:avBM}, we proceed in several steps.  First, we 
derive the asymptotic properties  of the process 
 $\big\{\mathcal{Y}^{(u)}_{m,N} \big\}_{N \ge 1}$ 
 as $N \to \infty $ for  fixed $m$ and $u$
 (Theorem \ref{thm:BMfixu}). Then, we study  the average 
$\big\{ \frac{1}{M^{1/2}} \sum_{u\in U} \mathcal{Y}^{(u)}_{m,N}  \big\}_{N \ge 1} $ 
 for fixed $m$   (Theorem \ref{thm:bigapprox}).  
 Finally, we investigate the term \eqref{eq:avBM}.
Our  first result in this sequence of arguments --Theorem \ref{thm:BMfixu} below--
shows that, for any fixed $m \ge 1, u \in [0,1]$, the law of this process converges as $N \to \infty$ to a Wiener measure {$\mathbb{W}_{\mu^{(u)}_{\Upsilon,m}}$ on $S_1(\Hi)$ generated by a complex-valued Gaussian measure $\mu^{(u)}_{\Upsilon,m}$}. 
A proof can be found in  Section  \ref{sec832}. The statement makes use of the notation $(A \widetilde{\otimes} B)C = ACB^{\dagger}$, $A,B,C \in S_\infty(\Hi)$, for the Kronecker tensor product and  $(A \widetilde{\otimes}_{\top} B)C = (A \widetilde{\otimes} \overline{B})\overline{C}^{\dagger}$ for the transpose Kronecker tensor product, respectively. 
\begin{thm}\label{thm:BMfixu}
Suppose the conditions of \autoref{thm:conv_an}(a) or (b) hold and let $u \in [0,1]$ and $[a,b]  \subseteq [0,\pi], a\le b$ be fixed. Then there exists a zero-mean Gaussian measure $\mu^{(u)}_{\Upsilon,m}$ on $S_1(\Hi)$ with 
\[\int_{S_1} |f(x)|^2 d\mu^{(u)}_{\Upsilon,m}(x) = \Gamma^{(u)}_{\Upsilon,m}(f) \text{ and } \int_{S_1} f^2(x) d\mu^{(u)}_{\Upsilon,m}(x)=\Sigma^{(u)}_{\Upsilon,m}(f)\] for each 
 $f \in \big(S_1(\Hi)\big)^{\prime}$ and 
where $\Gamma^{(u)}_{\Upsilon,m}$ and  $\Sigma^{(u)}_{\Upsilon,m}$ are respectively given by
\begin{align*} 
\Gamma^{(u)}_{\Upsilon,m}& =8\pi^2 \int_a^b \Big\{(\Upsilon_{u,\omega} \widetilde{\otimes} \Upsilon_{u,\omega}) (\F_m^{(u,\omega)} \widetilde{\otimes} \F_m^{(u,\omega)} )
+\mathrm{1}_{\{0,\pi\}}(\omega) \Big[(\Upsilon_{u,\omega}\widetilde{\otimes} \Upsilon_{u,\omega})
({\F}^{(u, \omega)}_{m} \widetilde{\otimes}_\top {\F}^{(u, \omega)}_{m}) \Big]\Big\}d\omega, \tageq \label{eq:varfixedum}
\end{align*}
and 
\begin{align*}\Sigma^{(u)}_{\Upsilon,m}& =8\pi^2 \int_a^b \Big\{
 \mathrm{1}_{\{0,\pi\}}(\omega) \big[(\Upsilon_{u,\omega} \widetilde{\otimes} \overline{\Upsilon}_{u,\omega})({\F}^{(u, \omega)}_{m} \widetilde{\otimes} {\F}^{(u, \omega)}_{m})\big]+ 
(\Upsilon_{u,\omega} \widetilde{\otimes} \overline{\Upsilon}_{u,\omega}) (\F_m^{(u,\omega)} \widetilde{\otimes}_\top \overline{\F}_m^{(u,\omega)})\Big\}d\omega,
\tageq \label{eq:psvarfixedum}
\end{align*}
such that the sequence of random elements $\big\{\mathcal{Y}^{(u)}_{m,N} \big\}_{N \ge 1}$ in $C_{S_1}$ defined in \eqref{eq:BMcont} satisfies
\[
\mathbb{P} \circ \Big(\mathcal{Y}^{(u)}_{m,N}\big)^{-1} \Rightarrow \mathbb{W}_{\mu^{(u)}_{\Upsilon,m}}  \quad N \to \infty,
\]
where $\mathbb{W}_{\mu^{(u)}_{\Upsilon,m}}$ denotes the Wiener measure on $S_1(\Hi)$ induced by the Gaussian measure $\mu^{(u)}_{\Upsilon,m}$.
\end{thm}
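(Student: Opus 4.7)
The plan is to establish a functional central limit theorem for the $S_1(\Hi)$-valued partial sum process $\mathcal{Y}^{(u)}_{m,N}$ via the classical two-step route: (i) convergence of the finite-dimensional distributions, and (ii) tightness in $C_{S_1}$. Since $\{W^{(u)}_{m,N,t}\}_t$ is a martingale difference array with respect to the filtration $\{\G_{\te{u}+t}\}_t$ (as follows from $\dmpi{u}{\omega}{N}{t}$ being a martingale increment constructed via the projection operators $P_t$), and $\mathcal{Y}^{(u)}_{m,N}(\eta)$ is the linearly interpolated partial sum of these increments, the problem reduces to invoking a martingale invariance principle in the Banach space $S_1(\Hi)$. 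The main subtlety is that $S_1(\Hi)$ has type 1 and cotype 2, so standard Hilbert-space CLT machinery must be replaced by the de Acosta-type framework alluded to in Remark \autoref{rem:S1vsS2}.

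For the finite-dimensional distributions, I would exploit the Schauder basis $\{e_{ij} = e_i \otimes e_j\}_{i,j \ge 1}$ of $S_1(\Hi)$ from Lemma \autoref{lem:schauder} and the sequentially $w^*$-dense subset $Q$ of $(S_1(\Hi))^\prime$ provided by Proposition \autoref{prop:Qstar}. For each fixed $q \in Q$, the scalar sequence $q(W^{(u)}_{m,N,t})$ is a stationary (approximately) martingale difference sequence to which a standard real-valued martingale CLT applies; combined with the Cramér--Wold device, this yields joint convergence in $\cnum^k$ of $(q_1(\mathcal{Y}^{(u)}_{m,N}(\eta_1)), \ldots, q_k(\mathcal{Y}^{(u)}_{m,N}(\eta_\ell)))$ to a complex Gaussian vector. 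The limiting covariance and pseudo-covariance forms are obtained by direct computation: using $V^{(u,\omega)}_{m,N,t} = \dmpi{u}{\omega}{N}{t} \otimes \sum_{s<t}\tilde{w}^{(\omega)}_{\bf,t,s}\dmpi{u}{\omega}{N}{s}$, one evaluates
\[
\sum_{t=1}^{\flo{\eta N}} \E\bigl[W^{(u)}_{m,N,t} \otimes W^{(u)}_{m,N,t}\bigr] \longrightarrow \eta \, \Sigma^{(u)}_{\Upsilon,m},
\qquad
\sum_{t=1}^{\flo{\eta N}} \E\bigl[W^{(u)}_{m,N,t} \otimes \overline{W^{(u)}_{m,N,t}}\bigr] \longrightarrow \eta \, \Gamma^{(u)}_{\Upsilon,m},
\]
where the weight normalization $\mathcal{\Phi}^2_{\bf} \sim \kappa_f N \bf^{-1}$ together with the fact that $\sum_s \tilde{w}^{(\omega)}_{\bf,t,s} \tilde{w}^{(\omega')}_{\bf,t,s} \to (2\pi)^{-2} \kappa_f \bf^{-1} \mathbf{1}_{\omega = \pm\omega'}$ (plus the indicator contributions at the endpoints $\omega \in \{0,\pi\}$ arising from conjugate symmetry of the spectral density) yields precisely the expressions \eqref{eq:varfixedum} and \eqref{eq:psvarfixedum} after identifying $\E[\dmpi{u}{\omega}{N}{t} \otimes \dmpi{u}{\omega}{N}{s}]$ with $2\pi \F^{(u,\omega)}_m$ and using the definition of the Kronecker tensor products $\widetilde{\otimes}$ and $\widetilde{\otimes}_\top$. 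The crucial point is that, to conclude that the limit actually lives in $S_1(\Hi)$ (rather than only $S_2(\Hi)$), one must verify the de Acosta-type summability condition $\sum_j (\E|\inprod{D^{u,\omega}_{0}}{e_j}|^2)^{1/2} < \infty$; this is exactly where \autoref{as:depstruc}II) enters, via the bound $\sum_j \|\inprod{D^{u,\omega}_0}{e_j}\|_{\cnum,2} \le \sum_j \sum_{k \ge 0}\nu^{X^{\cdot}_{\cdot}(e_j)}_{\cnum,2}(k) < \infty$.

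For tightness in $C_{S_1}$ it suffices, by the usual Kolmogorov criterion for Banach-valued processes with linear interpolation, to produce a moment bound
\[
\bignorm{\mathcal{Y}^{(u)}_{m,N}(\eta_2) - \mathcal{Y}^{(u)}_{m,N}(\eta_1)}_{S_1,p}^p \le C \, |\eta_2 - \eta_1|^{p/2}
\]
for some $p \ge 2$ yielding an exponent strictly greater than $1$. Here I would invoke the Burkholder-type inequality for $S_1(\Hi)$-valued martingales from Lemma \autoref{lem:Burkh}(ii), which converts the $p$-th moment of the partial-sum increment over $\flo{\eta_2 N} - \flo{\eta_1 N}$ terms into $\bigl(\sum_j \sqrt{\sum_t \|W^{(u)}_{m,N,t}(e_j)\|^2_{\Hi,p}}\bigr)^p$. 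Each $W^{(u)}_{m,N,t}(e_j)$ is controlled by Lemma \autoref{lem:Burklin} applied to the inner sum in $s$, producing the $\mathcal{\Phi}^{-1}_{\bf}$ and weight factors; invoking \autoref{as:depstruc}II) once more to sum in $j$, and using $\mathcal{\Phi}^2_{\bf} \sim \kappa_f N \bf^{-1}$, gives the required $|\eta_2 - \eta_1|^{p/2}$ rate uniformly in $N$. This combined with the fidis convergence yields weak convergence in $C_{S_1}$ to the Gaussian process $\mathbb{W}_{\mu^{(u)}_{\Upsilon,m}}$.

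The hardest step will be the bookkeeping for the covariance computation: carefully tracking how the four-fold tensor structure of $\E[V^{(u,\omega_1)}_{m,N,t} \otimes V^{(u,\omega_2)}_{m,N,t}]$ collapses under the kernel weights to the Kronecker tensor products in \eqref{eq:varfixedum}--\eqref{eq:psvarfixedum}, isolating the boundary contributions at $\omega \in \{0,\pi\}$ from the symmetry $\F^{(u,-\omega)}_m = \overline{\F^{(u,\omega)}_m} = (\F^{(u,\omega)}_m)^\dagger$, and distinguishing the covariance and pseudo-covariance contributions while composing everywhere with the linear map $\Upsilon_{u,\omega} \widetilde{\otimes} \Upsilon_{u,\omega}$ and verifying the required summability in the Schauder basis so that both forms lie in $S_1(\Hi \otimes \Hi)$.
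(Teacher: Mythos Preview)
Your overall architecture (fidis via scalar martingale CLT on a $w^*$-dense set $Q$, then tightness in $C_{S_1}$) matches the paper's, and you correctly identify \autoref{as:depstruc}II) and Lemma~\autoref{lem:Burkh}(ii) as the key $S_1$-specific ingredients. However, there is a genuine gap in your fidi step. Establishing that $q(\mathcal{Y}^{(u)}_{m,N}(\eta_1),\ldots,\mathcal{Y}^{(u)}_{m,N}(\eta_k))$ converges for every $q\in Q$ (or even every $q\in (S_1)'$) does \emph{not} yield weak convergence of $(\mathcal{Y}^{(u)}_{m,N}(\eta_1),\ldots,\mathcal{Y}^{(u)}_{m,N}(\eta_k))$ in $V^k$ when $V=S_1(\Hi)$ is infinite-dimensional: you additionally need tightness of the marginals in $V$. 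Your Kolmogorov moment bound gives asymptotic equicontinuity but only \emph{boundedness} of $\sup_N\E\|\mathcal{Y}^{(u)}_{m,N}(\eta)\|_{S_1}^p$, which is not relative compactness in $S_1$. You gesture at the de~Acosta condition, but that theorem is for i.i.d.\ summands; here $\{W^{(u)}_{m,N,t}\}_t$ is a martingale difference \emph{array}, so it does not apply directly, and framing the condition as ``the limit lives in $S_1$'' rather than ``the sequence is tight in $S_1$'' elides exactly the missing step.

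The paper closes this gap explicitly: after the scalar CLT (their \autoref{lem:fidis_dis}), it proves tightness of $\{\mathcal{Y}^{(u)}_{m,N}(\eta)\}_N$ in $S_1$ by a finite-rank truncation argument, showing
\[
\lim_{K\to\infty}\sup_N \mathbb{P}\Big(\sup_{\eta}\bignorm{\mathcal{Y}^{(u)}_{m,N}(\eta)-\Pi_K\mathcal{Y}^{(u)}_{m,N}(\eta)}_{S_1}>\epsilon\Big)=0
\]
via Doob's submartingale inequality applied to $\|\mathcal{Y}-\Pi_K\mathcal{Y}\|_{S_1}$, followed by Lemma~\autoref{lem:Burkh}(ii) and the summability $\sum_r\sup_{u,\omega}\|\tilde D^{(u,\omega)}_{m,0}(e_r)\|_{\cnum,4}<\infty$ from \autoref{as:depstruc}II). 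This is the step where the Schauder-basis structure of $S_1$ is actually exploited for tightness, not merely for identifying the limit. Your proposed ingredients (Lemma~\autoref{lem:Burkh}(ii) and \autoref{as:depstruc}II)) would support exactly this argument, but you need to make it explicit. For the equicontinuity part, the paper uses a submartingale argument following Walk~(1977) rather than your Kolmogorov moment route; either should work, so that difference is inessential.
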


\noindent
The next theorem provides the distribution of the average of re-scaled midpoints, for fixed $m$ as $N \to \infty$, and is proved in 
Section \ref{sec833}. 
 
\begin{thm} \label{thm:bigapprox}
Suppose the assumptions underlying \autoref{thm:conv_an}(a) or (b) hold true.  Let $[a,b] \subseteq [0,\pi]$ and let $U_M$ be as defined in \eqref{eq:midset}. Then, for any fixed $m$,
as $N \to \infty$ (and $M \to \infty$ such that \autoref{as:bandwidth} holds)
\[
\Bigg\{ \frac{1}{M^{1/2}} \sum_{u\in U_M} \mathcal{Y}^{(u)}_{m,N}(\eta)\Bigg\}_{\eta  \in I}
\stackrel{\mathcal{D}}{\Longrightarrow} 
\Big\{\mathbb{W}_{\mu_{\Upsilon,m}}(\eta)\Big\}_{\eta \in I }   ~,
\]
where $\mathbb{W}_{\mu_{\Upsilon,m}}$ denotes Brownian motion on $S_1(\Hi)$ corresponding to a zero-mean Gaussian measure  with covariance operator $\tau_{\Upsilon_m}^2 =\int_0^1\Gamma^{(u)}_{\Upsilon,m}du$ with $\Gamma_{\Upsilon_m}^{u}${ and pseudo-covariance operator $\tilde{\tau}_{\Upsilon_m}^2 =\int_0^1\Sigma^{(u)}_{\Upsilon,m} du$ with $\Gamma^{(u)}_{\Upsilon_m}$ and $\Sigma^{(u)}_{\Upsilon_m}$  as given in \autoref{thm:BMfixu}. }
\end{thm}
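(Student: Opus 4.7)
The plan is to exploit the $m$-dependence of $\mathcal{Y}^{(u)}_{m,N}$ together with the separation of midpoints in $U_M$ so as to treat $\frac{1}{\sqrt M}\sum_{u \in U_M}\mathcal{Y}^{(u)}_{m,N}$ as a sum of (eventually exactly) independent but non-identically distributed $C_{S_1}$-valued random elements, and then combine a Banach-space CLT for the finite-dimensional distributions with a Kolmogorov tightness argument based on \autoref{lem:Burkh}(ii).

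I would first verify independence across $u \in U_M$. From \eqref{eq:dmpi} each $\dmpi{u}{\omega}{N}{t}$ is measurable with respect to the innovations $\{\epsilon_s : |s-(\tu{N}{u}+t)|\le m\}$, so through \eqref{eq:VNTfix}--\eqref{eq:BMcont} the process $\mathcal{Y}^{(u)}_{m,N}$ is determined by innovations within an interval of width at most $N+2m$ around $uT$. Under \autoref{as:bandwidth} the midpoint spacing $(T-N)/(M-1)\sim T/M$ strictly dominates this window width: the constraints $M=o(N^{1-\kappa})$ and $N=T^\alpha$ combine to $\alpha(2-\kappa)<1$, hence $M/T^{1-\alpha}\to 0$, so for all sufficiently large $T$ the collection $\{\mathcal{Y}^{(u)}_{m,N}\}_{u \in U_M}$ is mutually independent. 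For fixed $\eta_1<\cdots<\eta_k\in I$ the vector $\frac{1}{\sqrt M}\sum_u(\mathcal{Y}^{(u)}_{m,N}(\eta_j))_{j\le k}$ is therefore a triangular array of $M$ independent centered $S_1(\Hi)^k$-valued summands. \autoref{thm:BMfixu} supplies the Gaussian limit of each summand, and continuity of $u\mapsto\Gamma^{(u)}_{\Upsilon,m}$ (inherited from the smoothness of $u\mapsto\F_{u,\omega}$ and \autoref{as:mappings}(iii)) yields the Riemann-sum identification $\frac{1}{M}\sum_{u\in U_M}\Gamma^{(u)}_{\Upsilon,m}\to\tau^2_{\Upsilon,m}=\int_0^1\Gamma^{(u)}_{\Upsilon,m}\,du$, and analogously for $\tilde{\tau}^2_{\Upsilon,m}$. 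The Lindeberg-type condition in $S_1(\Hi)$ is the de~Acosta form \eqref{eq:DeAc} recalled in \autoref{rem:S1vsS2}; it holds uniformly in $u$ and $N$ because \autoref{as:depstruc}(II) forces $\sum_j \|\inprod{D^{u,\omega}_0}{e_j}\|_{\cnum,p}<\infty$ uniformly while $p\ge 6$ controls the truncated tails.

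For tightness of $\{\frac{1}{\sqrt M}\sum_u \mathcal{Y}^{(u)}_{m,N}\}$ in $C_{S_1}$, I would apply \autoref{lem:Burkh}(ii) to the pooled martingale obtained by ordering the differences $\{W^{(u)}_{m,N,t}\}$ lexicographically in $(u,t)$; independence across $u$ makes the pooled sequence a martingale difference sequence in $\op^p_{S_1}$. For integer $p/2$ with $p\ge 6$ this yields $\E\|\frac{1}{\sqrt M}\sum_u(\mathcal{Y}^{(u)}_{m,N}(\eta_2)-\mathcal{Y}^{(u)}_{m,N}(\eta_1))\|_{S_1}^p\le C|\eta_2-\eta_1|^{p/2}$ with a constant $C$ independent of $M,N$, using \autoref{as:depstruc}(II) to handle the basis sum $\sum_j$ in the bound and \autoref{as:mappings}(i)--(ii) to control the operator-norm factors $\|\Upsilon_{u,\omega}\|_\infty$. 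Kolmogorov's criterion (with $p/2>1$) then delivers tightness in $C_{S_1}$.

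The principal obstacle is the tightness step: since $S_1(\Hi)$ has only type 1 and cotype 2, the classical Burkholder inequality in $2$-smooth Banach spaces is unavailable, and the refined bound of \autoref{lem:Burkh}(ii) introduces a sum over an orthonormal basis of $\Hi$ that must be controlled uniformly in $u$, $N$, and in the increment $\eta_2-\eta_1$; it is precisely here that \autoref{as:depstruc}(II) (rather than merely (I)) is essential.
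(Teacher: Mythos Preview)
Your plan is correct and, in one respect, more direct than the paper's argument. The paper does \emph{not} argue independence of the $\mathcal{Y}^{(u)}_{m,N}$ directly; instead it sets $L=N-m$, invokes \autoref{lem:approxindep} to replace $\mathcal{Y}^{(u)}_{m,N}$ by $\mathcal{Y}^{(u)}_{m,L}$ at negligible cost, and then observes that the innovation window of $\mathcal{Y}^{(u)}_{m,L}$ has length exactly $L+m=N$, so that distinct midpoints yield independent summands. Your route bypasses this truncation lemma by noting that the midpoint spacing $(T-N)/(M-1)$ eventually dominates the $(N+m)$-width window of $\mathcal{Y}^{(u)}_{m,N}$ itself. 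This is legitimate under \autoref{as:bandwidth}, but your justification is incomplete as written: the inequality $\alpha(2-\kappa)<1$ does not follow from $M=o(N^{1-\kappa})$ and $N=T^\alpha$ alone; you must also invoke the explicit upper bounds on $\alpha$ in \autoref{as:bandwidth} (e.g.\ $\alpha<2/(4-\kappa)$ implies $\alpha<1/(2-\kappa)$, and similarly in the other regime). Once that is supplied, your shortcut stands and avoids the somewhat involved proof of \autoref{lem:approxindep}.

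On the second half, the paper is quite terse after establishing independence: it simply states that the sum of independent processes each satisfying \autoref{thm:BMfixu} yields the claimed limit, without spelling out a triangular-array CLT in $C_{S_1}$. Your outline (de Acosta-type fidi convergence with Riemann-sum identification of the covariance, plus Kolmogorov tightness via the $S_1$-Burkholder inequality \autoref{lem:Burkh}(ii)) is essentially what has to be checked, and your identification of \autoref{as:depstruc}(II) as the key ingredient controlling the basis sum in the type-1 space is exactly the point the paper emphasizes in \autoref{rem:S1vsS2}. One caution: the de Acosta result quoted in that remark is for i.i.d.\ summands, so you will need a genuine triangular-array version; the uniform-in-$u$ bounds from the proof of \autoref{thm:BMfixu} (in particular \eqref{eq:supN}) are what make this go through.
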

We are now in a position to investigate  the 
asymptotic properties of \eqref{eq:avBM}. 
In particular, this follows from \autoref{thm:bigapprox} if we can show 
\[
\lim_{m\to \infty}
 \mathbb{P}\Big(  \mathbb{W}_{\mu_{\Upsilon,m}}  \in B \Big)=\mathbb{P}\Big(  \mathbb{W}_{\mu_{\Upsilon}}  \in B\Big),
\]
This follows if, for all $u \in [0,1]$
\begin{equation}
    \label{eq1b}
\lim_{m\to\infty} \Gamma^{(u)}_{\Upsilon,m}= \Gamma^{(u)}_{\Upsilon} ~~~\text{ and  } ~~\lim_{m\to\infty} \Sigma^{(u)}_{\Upsilon,m}=\Sigma^{(u)}_\Upsilon
\end{equation}
in $S_1(\Hi \otimes \Hi)$, where 
\begin{align*} 
\Gamma_\Upsilon^{(u)}& =8\pi^2 \int_a^b \Big\{(\Upsilon_{u,\omega} \widetilde{\otimes} \Upsilon_{u,\omega}) (\F^{(u,\omega)} \widetilde{\otimes} \F^{(u,\omega)} )
+\mathrm{1}_{\{0,\pi\}}(\omega) \Big[(\Upsilon_{u,\omega}\widetilde{\otimes} \Upsilon_{u,\omega})
({\F}^{(u, \omega)} \widetilde{\otimes}_\top {\F}^{(u, \omega)}) \Big]\Big\}d\omega \tageq \label{eq:var}
\\\Sigma_\Upsilon^{(u)}& =8\pi^2 \int_a^b \Big\{
 \mathrm{1}_{\{0,\pi\}}(\omega) \big[(\Upsilon_{u,\omega} \widetilde{\otimes} \overline{\Upsilon}_{u,\omega})({\F}^{(u, \omega)} \widetilde{\otimes} {\F}^{(u, \omega)})\big]+ 
(\Upsilon_{u,\omega} \widetilde{\otimes} \overline{\Upsilon}_{u,\omega}) (\F^{(u,\omega)} \widetilde{\otimes}_\top \overline{\F}^{(u,\omega)})\Big\}d\omega.
\tageq \label{eq:psvar}
\end{align*}
 Now, $\sup_{u,\omega} \norm{\F^{(u,\omega)}_m}_{S_1} \le \sup_{u,\omega} \sum_{r \in \znum}\norm{\mathcal{C}_{u,r}}_{S_1} < \infty $ for any $m\ge 1$ and \eqref{eq1b}  follows from the dominated convergence theorem. 
 
\subsection{ A general function  $\phi$} \label{sec822}

In the previous subsection we proved that 
\[\Big\{
 M^{1/2}\rho_{T} \Big(
 \big (\mathrm{L}^{a,b}_{U_M}\circ  {\cal G}_\Upsilon (\hat \F)  \big ) (\eta) - \big (
 \mathrm{L}^{a,b}\circ {\cal G}_\Upsilon  ( \F)  \big ) (\eta)  \Big\}_{\eta \in I } \stackrel{\mathcal{D}}{\Longrightarrow} \Big\{\mathbb{W}_{\mu_{\Upsilon}}(\eta)\Big\}_{\eta \in I}~, 
 \tageq \label{eq:BrlimFu2}
 \]
 for the case where ${\cal G}_{\Upsilon ,u,\omega} ( A ) (\eta)    = \Upsilon_{u,\omega} \big( \phi(\eta{A (\eta )}_{u,\omega})\big) $ with $\phi(z)=z$. We conclude the proof considering  the situation 
where the function $\phi$ is analytic on $D$.

In this respect, note that we only have to reconsider 
the proof of \autoref{thm:fullapproximation}
for a general function $\phi$ (the proof of \autoref{thm:BMfixu} and \autoref{thm:bigapprox} remain unchanged), and that the important step 
is the investigation of  the first term of \eqref{eq53},
as the  second term is already proved for a general function $\phi$.
Let 
\[\hat{\Delta}_{u,\omega}(\eta)=\eta(\hat{\F}_{u,\omega}(\eta)- {\F}_{u,\omega})\]
 and consider the set 
$$\Omega_T= \big \{x \in \Omega: \sup_{u \in U_M} \sup_\omega \sup_{\eta} \norm{ \hat{\Delta}_{u,\omega}(\eta) }_{S_1} \le \varepsilon \big \}.
$$
Under the conditions of \autoref{thm:Conv}(ii),  \autoref{thm:maxdevcon} implies $\mathbb{P}(\Omega_T) =1$ for $T$ sufficiently large. Therefore,  by \autoref{thm:PerEx}, we obtain for the  first term in  \eqref{eq53}  
 \begin{align*}
&\frac{\rho_T}{M^{1/2}}\sum_{u \in U_M}\int_a^b  \Upsilon_{u,\omega}\Big( \phi( \eta\hat{\F}_{u,\omega}(\eta))-\phi(\eta {\F}_{u,\omega})\Big)d\omega  \\&= \frac{\rho_T}{M^{1/2}}\Big(
\Big\{\sum_{u \in U_M}\int_a^b \Upsilon_{u,\omega} \Big(\phi^\prime_{ \eta \F_{u,\omega}}\big(\hat{\Delta}_{u,\omega}(\eta)\big) +R_{u,\omega}(\eta)\Big) d\omega\Big\}{\mathrm{1}_{\Omega_T}} \\
& ~~~~
+ \Big\{ \sum_{u \in U_M}\int_a^b\Upsilon_{u,\omega}\Big(\phi( \eta\hat{\F}_{u,\omega}(\eta))-\phi(\eta {\F}_{u,\omega})\Big)d\omega\Big\}{\mathrm{1}_{\Omega^\complement_T}}\Big)
\\
& = \frac{\rho_T}{M^{1/2}}\Big\{\sum_{u \in U_M}\int_a^b\Upsilon_{u,\omega}\Big(\phi^\prime_{ \eta \F_{u,\omega}}\big(\hat{\Delta}_{u,\omega}(\eta)\big) +R_{u,\omega}(\eta) \Big)d\omega\Big\}{\mathrm{1}_{\Omega_T}} + o_p(1).
\end{align*}
Using that we can write \begin{align*}
&\Upsilon_{u,\omega}\Big(\phi^\prime_{\eta\F_{u,\omega}}\big(\hat{\Delta}_{u,\omega}(\eta)\big) \Big)+\Upsilon_{u,\omega}\Big(R_{u,\omega}(\eta) \Big)
 = \Upsilon_{u,\omega}\circ \phi^\prime_{\eta \F_{u,\omega}} \big( \hat{\Delta}_{u,\omega}(\eta)\big)+\Upsilon_{u,\omega}\Big(R_{u,\omega}(\eta) \Big)
\end{align*}
where, by \autoref{thm:PerEx} and \autoref{thm:maxPS1}, 
 $$ \sup_{\eta \in [0,1]}\bignorm{\Upsilon_{u,\omega}\big(R_{u,\omega}(\eta) \big)}_{S_1} =O_p(\sup_{\eta}\bignorm{\Upsilon_{u,\omega}\big(\eta \hat{\Delta}_{u,\omega}(\eta)\big)}^2_{S_1}) =O_p(\bf^{-1} N^{-1})=o_p(\rho^{-1}_T M^{-1/2}),
$$
Furthermore, by assumption $\phi^\prime_{\eta \F_{u,\omega}} = h(\eta) \phi^\prime_{\F_{u,\omega}}$.
We now show that the results for $\phi (z) = z$ are applicable for the first term in the decomposition, while the contribution from the second term is negligible.
Observe that $\phi^\prime_{\F_{u,\omega}} \in \mathfrak{L}(C_{S_1})$ for each  $u,\omega $, and thus $\Upsilon_{u,\omega} \circ\, \phi^\prime_{\F_{u,\omega}}\in \mathfrak{L}(C_{S_1})$ for each  $u,\omega$, since $\Upsilon_{u,\omega}  \in \mathfrak{L}(C_{S_1})$. To verify that \autoref{as:mappings}(i)--(ii) are satisfied
for $\Upsilon_{u,\omega} \circ\, \phi^\prime_{\F_{u,\omega}}$, note that 
\[
\bignorm{\Upsilon_{u,\omega}\circ\, \phi^\prime_{\F_{u,\omega}} \big( \hat{\Delta}_{u,\omega}(\cdot)\big) \Big)}_{C_{S_1}} \le \bignorm{\Upsilon_{u,\omega}\circ\, \phi^\prime_{\F_{u,\omega}}}_{\mathfrak{L}(C_{S_1})} \bignorm{ \hat{\Delta}_{u,\omega}(\cdot)}_{C_{S_1}}~,
\]
where the time-frequency integral of the $p$th power of the first term on the right-hand side 
is bounded by
\[
 \int_0^1 \int_0^\pi \norm{\Upsilon_{u,\omega}}^p_{\mathfrak{L}(C_{S_1})} \norm{ \phi^\prime_{\F_{u,\omega}}}^p_{\mathfrak{L}(C_{S_1})}  d\omega du \le \sup_{A \in \mathbb{D}_1} \norm{ \phi^\prime_{A}}^p_{\mathfrak{L}(C_{S_1})}  \int_0^1 \int_0^\pi \norm{\Upsilon_{u,\omega}}^p_{\mathfrak{L}(C_{S_1})}  d\omega du <\infty
\]
and the set $\mathbb{D}_1$ is defined in \eqref{eq6}. 
To verify that the map satisfies \autoref{as:mappings}(iii), consider the map $F=\phi^\prime: \mathbb{D}_1 \to \mathfrak{L}(S_1(\Hi)), A \mapsto \phi^\prime_A$, which assigns to each element $A \in \mathbb{D}_1$ its Fr{\'e}chet derivative $ \phi^\prime_A$ and let $f:[0,1] \to \mathbb{D}_1, f(u)=\F_{u,\omega}$. Then setting $g:u \mapsto F(f(u)) $ we have
\[
g(u)-g(v)=Dg(v)(u-v)+\frac{1}{2}D^2g(v)(u-v,u-v)+R~,
\]
where
\[Dg(v)(\cdot)
% = F^\prime_{f(v)}(f^\prime(v)(\cdot))
=\phi^{\prime \prime}_{\F_{v,\omega}}(\F^\prime_{v,\omega}(\cdot))\]
and 
\begin{align*}D^2g(v)(\cdot,\cdot)
%&= F^{\prime \prime}_{f(v)}(f^\prime(v)(\cdot),f^\prime(v)(\cdot))+F^{\prime }_{f(v)}(f^{\prime \prime}(v)(\cdot,(\cdot))\\
&=\phi^{\prime \prime \prime}_{\F_{v,\omega}}(\F^\prime_{v,\omega}(\cdot),\F^\prime_{v,\omega}(\cdot))+\phi^{ \prime \prime}_{\F_{v,\omega}}(\F^{\prime\prime}_{v,\omega}(\cdot,\cdot)) ~. 
\end{align*}
Therefore, 
\autoref{as:mappings}(iii) is satisfied since  the involved mappings, such as $\phi^{\prime \prime \prime}_{x}$ at $x=\F_{u_{i,T},\omega}$, are well-defined on the set  $\Omega_T$.
The claim  now follows because the second term in \eqref{eq53} is of order $o_p(1)$; see 
 \eqref{intapprox}.

\subsection{Proof of   \autoref{thm:fullapproximation},  \autoref{thm:BMfixu} and \autoref{thm:bigapprox}} 
\label{sec83}

\subsubsection{Proof of \autoref{thm:fullapproximation}} \label{sec831}

Note that
\begin{equation}
\label{eq53}
\begin{split}
&
M^{1/2}\bignorm{\rho_{T} \Big(
 \mathrm{L}^{a,b}_{U_M}\circ \G_\Upsilon (\hat {\cal F} )  - 
 \mathrm{L}^{a,b}\circ \G_\Upsilon ({\cal F} ) \Big) - \frac{1}{M}\sum_{u \in U_M}\mathcal{Y}^{(u)}_{m,N}(\cdot) }_{C_{S_1}}
\\
& ~~~~~~~~~~~ \le  
 M^{1/2} \bignorm{\rho_{T} \Big(
 \mathrm{L}^{a,b}_{U_M}\circ\G_\Upsilon (\hat {\cal F} ) 
 -
\mathrm{L}^{a,b}_{U_M}\circ\G_\Upsilon ( {\cal F} )  \Big)
 - \frac{1}{M}\sum_{u \in U_M}\mathcal{Y}^{(u)}_{m,N}(\cdot) }_{C_{S_1}}
 \\
 & ~~~~~~~~~~~  ~~~~~~~~~~~ 
 +   M^{1/2} \rho_T\bignorm{\Big( \mathrm{L}^{a,b}_{U_M}\circ \G_\Upsilon ({\cal F} )
- 
 \mathrm{L}^{a,b}\circ \G_\Upsilon ({\cal F} )  \Big)}_{C_{S_1}}
 \end{split}
\end{equation}

We first treat the second term. To this end, define 
$ x_{T,i} =  \frac{i}{M}$ such that 
the set $U_M$ in \eqref{eq:midset} corresponds to the midpoints of the intervals $[  x_{i-1,T} ,x_{i,T} ] $,
that is  $u_{i,T} = ( x_{i-1,T} + x_{i,T})/2 $ 
for $i=1, \ldots, M$. We focus on the analytic case, as the non-analytic case follows from a much simplified argument. 
For some constant $\varepsilon>0$, define the set
$$
E_T =  \bigcap _{i=1}^M  \Big\{\sup_{u \in [{x_{i-1,T}},{x_{i,T}] } }\sup_{\omega}\norm{\cdot({\F}_{u,\omega}-{\F}_{u_{i,T},\omega})}_{C_{S_1}}\le \varepsilon\Big\},
$$
We recall that  the mappings $u\mapsto \Upsilon_{u,\omega} $ and $u \mapsto \eta\F_{u,\omega}$ are twice continuously Fr{\'e}chet differentiable maps. Let $f_1:[0,1] \to \mathfrak{L}(S_1)$, $f_1(u)=\Upsilon_{u,\omega}$ and $f_2:[0,1] \to S_1$, $f_2(u)=\eta \F_{u,\omega}$, and consider the bilinear map  $F: \mathfrak{L}(S_1)\times {S_1} \to {S_1}$, defined by $F\big(f_1(u), f_2(u)\big)= f_1(u)\circ \phi(f_2(u))$. Then for $g_{\eta,\omega}=g:u \mapsto F\big(f_1(u), \phi(f_2(u))\big) $ 
we may write (dropping the dependency on $\eta,\omega$)
\begin{align*}
    g(u)-g(v)= 
    Dg(v)(u-v)+\frac{1}{2}D^2g(v)(u-v,u-v)+R~,
\end{align*}
where 
\[
Dg(v)(u-v)= D_1 F\big(\Upsilon^\prime_v[u-v], \phi(\F_{v})\big)+
D_2 F\big(\Upsilon_{v}, \phi^\prime_{\F_v}(\F^\prime_v[u-v])\big)
\]
 and
\begin{align*}
D^2g(v)(u-v,u-v)&= \frac{1}{2} D_{12} F\big(\Upsilon^\prime_v[u-v], \phi^\prime_{\F_{v}}(\F^\prime_v[u-v])\big)+
\frac{1}{2} D_{21} F\big(\Upsilon^\prime_v[u-v], \phi^\prime_{\F_v}(\F^\prime_v[u-v])\big)
\\&+
\frac{1}{2}D_{22} F\big(\Upsilon_v, \phi^{\prime\prime}_{\F_v}(\F^\prime_v[u-v],\F^\prime_v[u-v])+ \phi^{\prime}_{\F_v}(\F^{\prime\prime}_v[u-v,u-v])\big).
\end{align*}
%\begin{align*}
%F\Big(\Upsilon_u, \phi(\F_{u})\Big)&=F\Big(\Upsilon_v, \phi(\F_{v})\Big)+ D_1 F\Big(\Upsilon^\prime_v[u-v], \phi(\F_v)\Big)+
%D_2 F\Big(\Upsilon_v, \phi^\prime_{\F_v}(\F^\prime_v[u-v])\Big)
%\\&+\frac{1}{2}D_{11} F\Big(\Upsilon^{\prime\prime}_v[u-v,u-v], \phi(\F_v)\Big)
%\\&+ \frac{1}{2} D_{12} F\Big(\Upsilon^\prime_v[u-v], \phi^\prime_{\F_v}(\F^\prime_v[u-v])\Big)+
%\frac{1}{2} D_{21} F\Big(\Upsilon^\prime_v[u-v], %\phi^\prime_{\F_v}(\F^\prime_v[u-v])\Big)
%\\&+
%\frac{1}{2}D_{22} F\Big(\Upsilon_v, %\phi^{\prime\prime}_{\F_v}(\F^\prime_v[u-v],\F^\prime_v[u-v])+ %\phi^{\prime}_{\F_v}(\F^{\prime\prime}_v[u-v,u-v])\Big)+R
%\\&=F\Big(\Upsilon_v, \phi(\F_{v})\Big)+  (u-v)F\Big(\Upsilon^\prime_v, %\phi(\F_v)\Big)+
%(u-v) F\Big(\Upsilon_v, \phi^\prime_{\F_v}(\F^\prime_v)\Big)
%\\&+(u-v)^2\frac{1}{2} F\Big(\Upsilon^{\prime\prime}_v[u-v,u-v], %\phi(\F_v)\Big)
%\\&+ (u-v)^2\frac{1}{2} F\Big(\Upsilon^\prime_v, \phi^\prime_{\F_v}(\F^\prime_v)\Big)+
%(u-v)^2\frac{1}{2}  F\Big(\Upsilon^\prime_v, \phi^\prime_{\F_v}(\F^\prime_v)\Big)
%\\&+
%\frac{1}{2}(u-v)^2 F\Big(\Upsilon_v, \phi^{\prime\prime}_{\F_v}(\F^\prime_v,\F^\prime_v)+ \phi^{\prime}_{\F_v}(\F^{\prime\prime}_v)\Big)+R
%\end{align*}
Using bilinearity of $F$ and of the second derivative maps, and observing the definitions 
\eqref{eq:Lin} and \eqref{eq:Lin2}, Assumptions \eqref{as:smooth} and \eqref{as:mappings} and  \autoref{thm:PerEx} yield
\begin{align*}
&\bignorm{\Big( \mathrm{L}^{a,b}_{U_M}\circ \G_\Upsilon ({\cal F} )
- 
 \mathrm{L}^{a,b}\circ \G_\Upsilon ({\cal F} )  \Big)}_{C_{S_1}} 1_{E_T}
\\& \le  \int_a^b \sup_{\eta}\bignorm{\sum_{i=1}^{M} \int_{x_{i-1,T}}^{x_{i,T}}  g(u)-g(u_{i,T}) du}_{{S_1}} d\omega
\\& \le \int_a^b \sup_{\eta}\Bignorm{\sum_{i=1}^{M} \int_{x_{i-1,T}}^{x_{i,T}} (u-u_{i,T})Dg(u_{i,T})+\frac{1}{2}(u-u_{i,T})^2D^2g(u_{i,T}) du\, }_{{S_1}} d\omega
+ \norm{R}_{C_{S_1}}=O(M^{-2})~,
\end{align*}
where $O(\norm{R}_{C_{S_1}})=O(M^{-2})$, and where it was used that  $\int_{x_{i-1,T}}^{x_{i,T}} (u-u_{i,T}) du=0$ and $\int_{x_{i-1,T}}^{x_{i,T}} (u-u_{i,T})^2 du =O(M^{-3})$. %, and that the involved  Fr{\'e}chet derivatives are bounded uniformly in $u, \eta$ w.r.t. $\norm{\cdot}_{S_1}$. 
 Finally, the map $u \mapsto \eta\F_{u,\omega}$ is in particular Lipschitz uniformly in $\omega, \eta$ w.r.t. $\norm{\cdot}_{S_1}$ which implies $\mathrm{1}_{E^\complement_T}=0$ for sufficiently large $T$. Thus, 
\[
\lim_{T\to \infty}M^{1/2}\rho_T \bignorm{\Big( \mathrm{L}^{a,b}_{U_M}\circ \G_\Upsilon ({\cal F} )
- 
 \mathrm{L}^{a,b}\circ \G_\Upsilon ({\cal F} )  \Big)}_{C_{S_1}} =O\Big(\frac{N^{1/2}\bf^{1/2}}{M^{3/2}}\Big)=o(1). \tageq \label{intapprox}
\]

Next, we will show for the first term in \eqref{eq53} that 
\begin{align*}
& \lim_{m\to \infty} \limsup_{T\to \infty}\, \mathbb{P}\Big( \bignorm{\rho_{T}M^{1/2} \Big(
\mathrm{L}^{a,b}_{U_M}\circ  \G_\Upsilon (\hat {\cal F} )
 -
\mathrm{L}^{a,b}_{U_M}\circ \G_\Upsilon ( {\cal F} )  \Big)
 - \frac{1}{M^{1/2}}\sum_{u \in U_M}\mathcal{Y}^{(u)}_{m,N}(\cdot) }_{C_{S_1}} 
> \epsilon\Big)=0. \tageq \label{eq:fullbigapprox}
\end{align*}

Recall the notation \eqref{eq:VNTfix} and define
\[
\ldmi{}{u}{}{N,\flo{\eta N}} =\sum_{t=2}^{\flo{\eta N}} V^{(u,\omega)}_{m,N,t}
+\big(\eta N - \flo{\eta N}\big) V^{(u,\omega)}_{m,N,\flo{\eta N}+1}. \tageq \label{eq:Mar}
\]
Observe that we can write
\begin{equation}
 \mathcal{Y}^{(u)}_{m,N}(\eta) = \frac{1}{\mathcal{\Phi}_{\bf}} \int^b_a \Upsilon_{u,\omega}(\ldmi{}{u}{}{N,\flo{\eta N}}+ \ldmi{\dagger}{u}{}{N,\flo{\eta N}})d\omega ~.
\end{equation}
Then using the triangle inequality and Jensen's inequality, 
\begin{align*}
&\bignorm{\rho_{T}M^{1/2} \Big(
\mathrm{L}^{a,b}_{U_M}\circ  \G_\Upsilon (\hat {\cal F} )
 -
\mathrm{L}^{a,b}_{U_M}\circ \G_\Upsilon ( {\cal F} )  \Big)
 - \frac{1}{M^{1/2}}\sum_{u \in U_M}\mathcal{Y}^{(u)}_{m,N}(\cdot) }_{C_{S_1}} 
\\& \le
\frac{1}{M^{1/2}}\int_a^b \bignorm{  \rho_{T}   \sum_{u\in U_M}\Upsilon_{u,\omega}\eta \big(\hat{\F}_{u,\omega}(\eta)-\F_{u,\omega}\big) -  \sum_{u\in U_M} \Phi^{-1}_{\bf}\Upsilon_{u,\omega}\big(   \ldmi{}{u}{}{N,\flo{\eta N}}+\ldmi{\dagger}{u}{}{N,\flo{\eta N}} \big) }_{C_{S_1}}  d\omega
\\& \le \frac{1}{M^{1/2}}\int_a^b \bignorm{  \sum_{u\in U_M} \Big[ \rho_{T} \Upsilon_{u,\omega}\eta \big(\hat{\F}_{u,\omega}(\eta)-\F_{u,\omega}\big) -  
   \Phi^{-1}_{\bf} \Upsilon_{u,\omega} \flo{\eta N}\big( \hat{\F}_{u,\omega}(\eta)- \F_{u,\omega}\big) \Big]}_{C_{S_1}}  d\omega
\\& + \frac{1}{M^{1/2}}\int_a^b \bignorm{     \sum_{u\in U_M}\Big[  \Phi^{-1}_{\bf} \Upsilon_{u,\omega} \flo{\eta N}\big( \hat{\F}_{u,\omega}(\eta)- \F_{u,\omega}\big)-  \Phi^{-1}_{\bf}\Upsilon_{u,\omega}\big(   \ldmi{}{u}{}{N,\flo{\eta N}}+\ldmi{\dagger}{u}{}{N,\flo{\eta N}}\big)  \Big]}_{C_{S_1}}  d\omega
\end{align*}
For the first term, note that under \autoref{as:Weights}
\begin{align*}
\sup_{\eta \in (1/N,1]}\Big \vert \frac{1}{\flo{\eta N}}\eta \rho_T - \Phi^{-1}_{\bf}\Big \vert  
&=\sup_{\eta \in (1/N,1]}\Big \vert \frac{\eta N}{\flo{\eta N}}\frac{1}{N}\rho_T - \Phi^{-1}_{\bf}\Big \vert 
 \\ &
=O(\frac{1}{N}\frac{\rho_T}{N})+\Big \vert \frac{\rho_T}{N} - \Phi^{-1}_{\bf}\Big \vert 
%  \\ &
 = O(\bf^{1/2}  N^{-3/2}) +o(\bf^{1/2} N^{-1/2}) ,
\end{align*}
where we used that $\Phi^2_{\bf} = \frac{N \kappa_f}{\bf}(1+o(1))$ and that
\[
\sup_{\eta \in (1/N,1]}|\frac{\eta N}{\flo{\eta N}}-1 | = O(1/N). \tageq \label{eq:etaNapp}
\]
Then, 
\begin{align*}
& \frac{1}{M^{1/2}}\int_a^b \bignorm{  \sum_{u \in U_M}  \rho_{T} \Upsilon_{u,\omega} \big(\eta \hat{\F}_{u,\omega}(\eta)-\eta \F_{u,\omega}\big) -  
   \Phi^{-1}_{\bf} \flo{\eta N}\Upsilon_{u,\omega} \big( \hat{\F}_{u,\omega}(\eta)- \F_{u,\omega}\big) }_{C_{S_1}}  d\omega
\\& \le  \frac{(b-a)}{M^{1/2}}
\sup_{\eta \in (1/N,1]}\Big\vert \frac{1}{\flo{\eta N}} \rho_T - \Phi^{-1}_{\bf}\Big\vert \int_a^b
\bignorm{  \sum_{u \in U_M}   \flo{\eta N}\Upsilon_{u,\omega} \big(\hat{\F}_{u,\omega}(\eta)- \F_{u,\omega}\big) }_{C_{S_1}} d\omega\overset{p}{\to} 0  ~, 
\end{align*}
as $N \to \infty$. where the last line follows from \autoref{thm:maxPS1}. The second term converges to probability by the following lemma, the proof of which is deferred to \autoref{sec:Cbounds}.
\begin{lemma} \label{lem:martapproxprob}
Under the conditions of \autoref{thm:Conv}
\begin{align*}
\lim_{m\to \infty}
\limsup_{T \to \infty}  \frac{1}{M^{1/2}}\int_a^b \bignorm{     \sum_{u\in U_M}\Phi^{-1}_{\bf} \Upsilon_{u,\omega}\Big[   \flo{\eta N}\big( \hat{\F}_{u,\omega}(\eta)- \F_{u,\omega}\big)- \big(   \ldmi{}{u}{}{N,\flo{\eta N}}-\ldmi{\dagger}{u}{}{N,\flo{\eta N}}\big)  \Big]}_{C_{S_1}}  d\omega=0.
\end{align*}
\end{lemma}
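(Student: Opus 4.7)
The proof plan is to write the centered estimator $\flo{\eta N}(\hat{\F}_{u,\omega}(\eta)-\F_{u,\omega})$ explicitly as a double sum of tensor products with the weights $\tilde{w}^{(\omega)}_{\bf,s,t}$, and then systematically approximate the data by $m$-dependent martingale differences so that the leading term equals $\ldmi{}{u}{}{N,\flo{\eta N}}+\ldmi{\dagger}{u}{}{N,\flo{\eta N}}$. The bulk of the work is in showing that the resulting error, after multiplication by $\Phi^{-1}_{\bf}\Upsilon_{u,\omega}$, summation over $u\in U_M$, integration in $\omega$, and normalization by $M^{-1/2}$, is $o_p(1)$ in $C_{S_1}$ when the iterated limit $\limsup_T$, $\lim_m$ is taken.

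I would split the error into three pieces. First, the local-stationarity error obtained by replacing $X_{\te{u}+s,T}$ by $X^{(u)}_s$ is controlled by \autoref{def:locstat} and \autoref{as:depstrucnonstat} at rate $T^{-\zeta}$ per observation; since the double sum has $\flo{\eta N}^2$ terms carrying weight of total mass $\Phi^2_{\bf}\sim \kappa_f N\bf^{-1}$, the net contribution after scaling is $O_p(N^{1/2}\bf^{1/2} T^{-\zeta})$, which is $o_p(1)$ by \autoref{as:bandwidth}. Second, the $m$-dependent truncation error, incurred by replacing $X^{(u)}_s$ by $X^{(u)}_{m,s}=\mathbb{E}[X^{(u)}_s\mid \mathcal{G}^s_{s-m}]$, is controlled by the tail $\sum_{k>m}\nu^{X^{\cdot}_{\cdot}}_{\hi,p}(k)$, which vanishes as $m\to\infty$ by \autoref{as:depstruc}; the stability in $S_1$ is obtained via \autoref{prop:S1inHprod} and the second part of \autoref{lem:Burkh}. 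Third, after substituting the Wold-type decomposition $X^{(u)}_{m,t}=\sum_{k=t-m}^{t}P_k(X^{(u)}_{m,t})$ into both arguments of the tensor product and regrouping by the larger of the two martingale-innovation time indices, the leading terms reproduce the martingale differences $V^{(u,\omega)}_{m,N,t}+V^{\dagger(u,\omega)}_{m,N,t}$, while the ``diagonal'' remainder $P_t\otimes P_t$ (centered) is itself an $S_1(\Hi)$-valued martingale-difference sequence in $t$ whose partial-sum process can be bounded by Doob's inequality combined with \autoref{lem:Burkh}(ii).

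A key structural feature exploited in Step~3 is that the windows $\{\te{u}+1,\ldots,\te{u}+N\}$ for distinct $u\in U_M$ are pairwise disjoint for $T$ large enough (indeed, $m$-separated for fixed $m$), so after conditioning on the filtration generated by the innovations outside every window, the pieces indexed by different $u\in U_M$ form uncorrelated $S_1$-valued martingale increments. The $M^{-1/2}$ averaging then produces a variance reduction of order $M^{-1}$, and a pointwise (in $\eta$) second-moment bound of order $\bf^{-1}N^{-1}$ is obtained. To upgrade this to a uniform bound in $\eta\in I$, one applies the maximal/concentration inequality \autoref{lem:expin} to each coordinate of a Schauder basis and combines the bounds by the $S_1$-version of Burkholder in \autoref{lem:Burkh}(ii), using the summability condition \autoref{as:depstruc}II) to control the sum over basis elements. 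Finally, integration in $\omega\in[a,b]$ against $\Upsilon_{u,\omega}$ is harmless since $\int_a^b\sup_u\|\Upsilon_{u,\omega}\|_\infty^p\,d\omega<\infty$ by \autoref{as:mappings}(ii).

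The main obstacle will be the last step: deriving a uniform-in-$\eta$ bound on an $S_1(\Hi)$-valued process with only the tools available in a Banach space of type~1 and cotype~2. The standard Pinelis-type maximal inequality fails, so one cannot simply take Burkholder moments of the $C_{S_1}$-norm. Instead, \autoref{lem:Burkh}(ii) forces us to work coordinate-wise along the Schauder basis $\{e_\ell\otimes e_{\ell'}\}$ of \autoref{lem:schauder}, bound the Hilbert-space norm of each coordinate process uniformly in $\eta$ using Doob and $\mathcal{L}^p_\Hi$ techniques as in \autoref{lem:Burklin}, and then sum the resulting coordinate bounds. Uniformity of these coordinate estimates in $\ell$ and in the approximation parameters $m, N, T, M$ has to be checked simultaneously, and the telescoping from the three-level approximation above must be done while keeping the coordinate summability $\sum_\ell\sum_k\nu^{X^\cdot_\cdot(e_\ell)}_{\cnum,p}(k)<\infty$ intact. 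This careful coordinate bookkeeping, together with verifying that the martingale ``diagonal'' remainder in Step~2 obeys an analogous basis-summable bound, is the most technical point of the argument.
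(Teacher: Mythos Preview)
Your decomposition matches the paper's in broad outline (local-stationarity error, $m$-dependent truncation, martingale approximation), but three points deserve attention.

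\medskip
\textbf{Missing bias term.} After replacing $X_{\te{u}+s,T}$ by the auxiliary $X^{(u)}$ you land on $\tilde{\F}_{u,\omega}(\eta)$, but $\tilde{\F}-\F$ still carries a \emph{deterministic} bias $\E\tilde{\F}_{u,\omega}(\eta)-\F_{u,\omega}$ that your three pieces do not cover. The paper inserts this as a separate term (\autoref{lem:bias}), of order $N^{2}T^{-\zeta}\bf^{-1/2}+1+N\bf^{\iota}$, and kills it with \autoref{as:smooth} and \autoref{as:bandwidth}. Without this step your decomposition is incomplete.

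\medskip
\textbf{Uniform-in-$\eta$ bound.} The paper does \emph{not} use \autoref{lem:expin} or Doob here; that Fuk--Nagaev inequality is reserved for the concentration bound \autoref{thm:maxdev}. Instead, each error piece (Lemmas \ref{lem:approxloc}, \ref{lem:mdepap}, \ref{lem:marappr}, \ref{lem:diag}) is first bounded in $\|\cdot\|_{S_1,\gamma}$ for fixed $k=\flo{\eta N}$ by a function $g(0,k)$ satisfying $g(b,k)+g(b+k,l)\le g(b,k+l)$, and then M\'oricz's maximal inequality (Theorem~1 of \cite{Mor76}) upgrades this to $\max_{1\le k\le N}$. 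Your route through \autoref{lem:expin} needs independence across $u$-blocks, which is only available \emph{after} the $m$-truncation; so the truncation error itself would still need a maximal bound by other means, pushing you back to M\'oricz anyway. The paper also avoids the two-stage ``first $t$, then $u$'' structure you propose by vectorising $\barbelow{W}_l=W_{t_l,i_l}$ over a single index $l=1,\dots,MN$ and applying \autoref{lem:Burkh}(ii) directly to the full sum.

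\medskip
\textbf{Martingale approximation.} The passage from $\tilde{\F}_m$ to $\sum_t V^{(u,\omega)}_{m,N,t}$ is more than a regrouping by innovation index. In \autoref{lem:marappr} the paper writes $\ddot{X}^{(u)}_{m,s}=Q_{l,s}-\E[Q_{l,s+1}\mid\G_s]e^{-i\omega}$, applies summation by parts, and obtains three residual terms $A_l,B_l,Z_l$ whose smallness relies on the window increments $|w(\bf s)-w(\bf(s-1))|^2$ being $o(\bf^{-1})$---i.e.\ on \autoref{as:Weights}, not merely on a $P_t\otimes P_t$ diagonal being a martingale difference. Your Step~3 as stated does not produce this structure and would leave an unaccounted term of the wrong order.
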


\subsubsection{Proof of \autoref{thm:BMfixu}} \label{sec832}
To ease notation, we write $V=S_1(\Hi)$ in the following. First, we show convergence of the  finite-dimensional distributions. For this purpose, let $\mathcal{Y}^{(u)}_{m,N}$ be as defined in \eqref{eq:BMcont}, let $0=\eta_0< \eta_1 < \ldots < \eta_k =1$ with fixed $k \in \mathbb{N}$, and define the evaluation functionals $\pi^V_{\eta_1, \ldots, \eta_k}: C_V[0,1] \to V^k$ by
\[
\xi \to (\xi_{\eta_1}, \ldots, \xi_{\eta_k})~. 
\]
We will show that that there exists a zero-mean Gaussian measure $\mu^{(u)}_{\Upsilon,m}$ on $V$ with $\int_V |f(x)|^2 d\mu^{(u)}_{\Upsilon,m}(x) = \Gamma^{(u)}_{\Upsilon,m}(f)$ and $\int_V f^2(x) d\mu^{(u)}_{\Upsilon,m}(x)=\Sigma^{(u)}_{\Upsilon,m}(f)$ for each $f \in V^{\prime}$ such that for any $0=\eta_0< \eta_1 < \ldots < \eta_k =1$, $k \in \nnum$
\begin{align}
\label{eq50}
\mathbb{P} \circ (\pi^V_{\eta_1, \ldots, \eta_k}\circ \mathcal{Y}^{(u)}_{m,N})^{-1} \Rightarrow  \mathbb{W}_{\mu^{(u)}_{\Upsilon,m}} \circ (\pi^V_{\eta_1, \ldots \eta_k})^{-1} \quad \text{ as $N \to \infty$}
\end{align}
where $ \mathbb{W}_{\mu^{(u)}_{\Upsilon,m}} $ is the Wiener measure on $C_V[0,1]$ induced by the mean zero Gaussian measure $\mu^{(u)}_{\Upsilon,m}$. 
To do so, we make use of the following Lemma, which is proved in Section \ref{sec:supstat}.
\begin{thm} \label{lem:fidis_dis}
Let $\{\mathcal{Y}^{(u)}_{m,N}(\eta)\}_{\eta \in [0,1]}$ be as defined in \eqref{eq:BMcont} and suppose the conditions of \autoref{thm:BMfixu} hold. Then, for any $0=\eta_0< \eta_1 < \ldots < \eta_k =1$, $k \in \mathbb{N}$ and $f \in V^{\pr}$, 
\[
\mathbb{P} \circ (\pi^{\cnum}_{\eta_1, \ldots, \eta_k}\circ f(\mathcal{Y}^{(u)}_{m,N}))^{-1} \Rightarrow  \mathbb{W}^{\cnum}_{\mu^{(u)}_{\Upsilon,m}} \circ (\pi^\cnum_{\eta_1, \ldots \eta_k})^{-1} \quad \text{ as $N \to \infty$}
\]
where $ \mathbb{W}^{\cnum}_{\mu^{(u)}_{\Upsilon,m}}$ is the Wiener measure on $C_{\cnum}[0,1]$ induced by the zero-mean Gaussian measure on $\cnum$ with covariance $\Gamma^{(u)}_{\Upsilon,m}(f)$ and pseudocovariance $\Sigma^{(u)}_{\Upsilon,m}(f)$ for all 
$f \in V^{\pr}$. %,  
\end{thm}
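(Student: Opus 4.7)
The plan is to reduce the claim to a scalar complex-valued martingale central limit theorem via the Cram\'er--Wold device. For fixed $f \in V^\prime \cong S_\infty(\Hi)$, continuity and linearity of $f$ imply that $\{f(W^{(u)}_{m,N,t})\}_{t\ge 1}$ is a complex martingale difference sequence with respect to $\{\mathcal{G}_{\te{u}+t}\}$, and by \eqref{eq:BMcont}, $f(\mathcal{Y}^{(u)}_{m,N}(\eta))$ is its linearly interpolated partial sum. Joint convergence at $\eta_1<\ldots<\eta_k$ in $\cnum^k$ therefore reduces to showing that for arbitrary complex scalars $\alpha_1,\ldots,\alpha_k$,
\[
S_N \;=\; \sum_{j=1}^k \alpha_j\bigl[f(\mathcal{Y}^{(u)}_{m,N}(\eta_j))-f(\mathcal{Y}^{(u)}_{m,N}(\eta_{j-1}))\bigr]
\;=\; \sum_{t=1}^N c_{t,N}\, f(W^{(u)}_{m,N,t}) \;+\; R_N
\]
converges in distribution to a proper complex Gaussian, where $c_{t,N}=\alpha_j$ for $\flo{\eta_{j-1}N}<t\le \flo{\eta_j N}$, and $R_N = o_p(1)$ absorbs the $2k$ fractional interpolation boundary terms, each of order $O_p(N^{-1/2})$ by \autoref{lem:Burkh}(ii) applied to a single martingale difference.

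Next, I would invoke a Lindeberg--Feller martingale CLT for complex-valued triangular arrays; since the limit is a proper complex Gaussian, one must establish convergence of both the conditional quadratic variation $Q_N = \sum_t |c_{t,N}|^2\,\E[|f(W^{(u)}_{m,N,t})|^2 \mid \mathcal{G}_{\te{u}+t-1}]$ and the conditional pseudo-quadratic variation $\tilde Q_N = \sum_t c_{t,N}^2\,\E[(f(W^{(u)}_{m,N,t}))^2 \mid \mathcal{G}_{\te{u}+t-1}]$, together with a conditional Lindeberg condition. The Lindeberg condition follows from the uniform $L^p$-bound $\|f(W^{(u)}_{m,N,t})\|_{\cnum,p} \le \|f\|_{S_\infty}\|W^{(u)}_{m,N,t}\|_{S_1,p}$ for some $p>2$, which is controlled via \autoref{lem:Burkh}(ii) under \autoref{as:depstruc}, together with the normalization $\mathcal{\Phi}^{-1}_{\bf}\sim \sqrt{\bf/(\kappa_f N)}$ that makes each $\|W^{(u)}_{m,N,t}\|_{S_1,p}$ of order $N^{-1/2}$.

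The main obstacle, and the heart of the argument, is identifying the limits of $Q_N$ and $\tilde Q_N$ as $\sum_{j=1}^k |\alpha_j|^2(\eta_j-\eta_{j-1})\,\Gamma^{(u)}_{\Upsilon,m}(f)$ and $\sum_{j=1}^k \alpha_j^2(\eta_j-\eta_{j-1})\,\Sigma^{(u)}_{\Upsilon,m}(f)$ respectively, thereby matching the covariance structure in \eqref{eq:varfixedum}--\eqref{eq:psvarfixedum}. Expanding $W^{(u)}_{m,N,t}$ via \eqref{eq:VNT}--\eqref{eq:VNTfix} and pushing $f \circ \Upsilon_{u,\cdot}$ through yields, for $Q_N$, a double integral over $(\omega,\omega^\prime)\in [a,b]^2$ whose integrand involves $\E[\dmpi{u}{\omega}{N}{t}\otimes \overline{\dmpi{u}{\omega^\prime}{N}{t}}]$ and the product of weights $\tilde w^{(\omega)}_{\bf,s,t}\overline{\tilde w^{(\omega^\prime)}_{\bf,s,t}}$; for $\tilde Q_N$ one obtains the analogous double integral with the complex conjugates removed. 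Using $m$-dependence of $\dmpi{u}{\omega}{N}{t}$ together with the identity $\sum_{l\ge 0}\E[P_0(X_l^{(u)})\otimes P_0(X_l^{(u)})]\,e^{-\im(\omega-\omega^\prime) l}\to 2\pi\,\F_m^{(u,\omega)}$, localised to $\omega\equiv\omega^\prime \pmod{2\pi}$ for the variance (and to $\omega+\omega^\prime\equiv 0\pmod{2\pi}$ for the pseudo-variance), one collapses the double integral to the claimed single integral; the Riemann-sum convergence $\mathcal{\Phi}^{-2}_{\bf}\sum_{s,t}|\tilde w^{(\omega)}_{\bf,s,t}|^2 \to 1$ accounts for the $8\pi^2$ prefactor. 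The indicator $\mathrm{1}_{\{0,\pi\}}(\omega)$ in \eqref{eq:varfixedum}--\eqref{eq:psvarfixedum} arises precisely when both localisation conditions are simultaneously active at the boundary of the fundamental domain, and the Kronecker-tensor representation $(A\widetilde{\otimes} B)C = ACB^\dagger$ accounts for the final operator-valued form of $\Gamma^{(u)}_{\Upsilon,m}(f)$ and $\Sigma^{(u)}_{\Upsilon,m}(f)$.
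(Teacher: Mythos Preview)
Your approach via a complex martingale CLT with Lindeberg condition and identification of the limiting conditional (pseudo-)variance is essentially the paper's strategy, which verifies exactly these conditions citing \cite{McL74}. The paper first reduces from general $f\in V'$ to $f$ in the sequentially $w^*$-dense set $Q$ via a short density lemma, but this is a packaging difference.

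Two technical steps you gloss over are where the paper does real work, and your sketch would not compile into a proof without them. First, before any conditional-moment computation, the paper replaces $W^{(u)}_{m,N,t}$ by a truncated version $\check W^{(u)}_{m,N,t}$ in which the inner sum in \eqref{eq:VNTfix} runs only over $s\le t-4m$; the near-diagonal remainder is shown to be $o_p(\Phi_{\bf})$ via a maximal inequality. Only after this truncation is $\dmpi{u}{\omega}{N}{t}$ actually independent of both the inner sum and $\G_{\te{u}+t-m-1}$, so that the conditional second moment factors; the paper also shows separately that conditioning on $\G_{\te{u}+t-1}$ may be replaced by $\G_{\te{u}+t-m-1}$ up to $o_p(1)$. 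Second, your description of the frequency localization is not the correct mechanism: the $t$-contribution $\E[\dmpi{u}{\lambda}{N}{t}\otimes\dmpi{u}{\omega}{N}{t}]$ is a single object equal to $2\pi\F_m$ at the relevant frequency combination, not a lag sum that concentrates at $\omega=\omega'$. The collapse of the double frequency integral instead comes from the $s$-sum $\sum_{s}\tilde w^{(\lambda)}_{\bf,t,s}\overline{\tilde w^{(\omega)}_{\bf,t,s}}$ combined with the martingale-difference orthogonality of the $\dmpi{u}{\cdot}{N}{s}$, which the paper handles by a Riemann approximation in frequency and an appeal to auxiliary lemmas showing the off-diagonal contribution is $o_p(\Phi^2_{\bf})$.
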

We remark first that existence of Gaussian measures on $V$ was established in \cite{Kuelbs73}.
Note then that it  follows immediately from \autoref{lem:fidis_dis}, that
for a proof of \eqref{eq50}  it is sufficient to show that the sequence 
\[
\Big\{ \mathbb{P} \circ (\pi^V_{\eta_1, \ldots, \eta_k}\circ \mathcal{Y}^{(u)}_{m,N})^{-1} \Big\}_{N \ge 1} 
\]
is tight on $V^k$. Now, a sequence of probability measures is tight on $V^k$ if and only if each of the $k$ sequences of marginals is tight on $V$ for any fixed $\eta\in[0,1]$.  It is thus sufficient to show that 
\[
\Big\{ \mathbb{P} \circ \big(\pi^V_{\eta}\circ \mathcal{Y}^{(u)}_{m,N}\big)\Big\}_{N \ge 1} 
\]
is tight. It follows from \autoref{lem:fidis_dis} and \autoref{lem:schauder} that for any finite $K$ 
\[
\Big\{ \mathbb{P} \circ (\Pi_{K} \circ \pi^V_{\eta}\circ \mathcal{Y}^{(u)}_{m,N})^{-1} \Big\}_{N \ge 1} 
\]
is tight, where $\Pi_K$ is defined in \eqref{eq51}. 
It therefore remains to show that 
\[
\lim_{K\to \infty}\sup_N\mathbb{P}\big(\sup_{\eta \in [0,1]}\bignorm{\mathcal{Y}^{(u)}_{m,N}(\eta)-\Pi_K \big(\mathcal{Y}^{(u)}_{m,N}(\eta)\big)}_{S_1}> \epsilon \big)=0. \tageq \label{eq:tightness}\]
Recall that 
$\big\{\mathcal{Y}^{(u)}_{m,N}(\eta)\big\}_{N \ge 1}$ is a martingale in $\op^2_{S_1(\Hi)}$. 
Convexity of the norm implies therefore that $\bignorm{\mathcal{Y}^{(u)}_{m,N}(\eta)-\Pi_K \big(\mathcal{Y}^{(u)}_{m,N}(\eta)\big)}_{S_1}$ is a nonnegative submartingale in $\op^2_{\cnum}$.
Hence, by Doob's inequality and the second part of \autoref{lem:Burkh},
\begin{align*}
\mathbb{P}\big(\sup_{\eta \in [0,1]}\bignorm{\mathcal{Y}^{(u)}_{m,N}(\eta)-\Pi_K \big(\mathcal{Y}^{(u)}_{m,N}(\eta)\big)}_{S_1}> \epsilon \big) 
& \le \frac{c}{\epsilon^2}\E\bignorm{\mathcal{Y}^{(u)}_{m,N}(1)-\Pi_K \big(\mathcal{Y}^{(u)}_{m,N}(1)\big)}^2_{S_1} 
\\&  \le K_2^2 \frac{c}{\epsilon^2} \Big( \sum_{r \in \nnum}\sqrt[2]{\sum_{t=1}^N \bignorm{\big(W^{(u)}_{m,N,t}-\Pi_K( W^{(u)}_{m,N,t})\big)(e_r)}^2_{\Hi,2} }\Big)^{2}
\\& \le K_2^2\frac{c}{\epsilon^\delta}{K}(\delta)~,
\end{align*}
where we used in the last inequality that for any $\delta>0$ there exists a positive integer $K^{\star}$ such that
\[
 \Big(\sum_{r \in \nnum}\sqrt[2]{\sum_{t=1}^N \bignorm{\big(W^{(u)}_{m,N,t}-\Pi_K( W^{(u)}_{m,N,t})\big)(e_r)}^2_{\Hi,2} }\Big)^2 <\delta \quad K \ge K^\star  ~. \tageq \label{eq:unif}
\]
To see this, we write
\begin{align*}
 &\sum_{r \in \nnum}\sqrt[2]{\sum_{t=1}^N \bignorm{\big(W^{(u)}_{m,N,t}-\Pi_K( W^{(u)}_{m,N,t})\big)(e_r)}^2_{\Hi,2} } 
 \\& =\sum_{r \le K}\sqrt[2]{\sum_{t=1}^N \bignorm{\big(W^{(u)}_{m,N,t}-\Pi_K( W^{(u)}_{m,N,t})\big)(e_r)}^2_{\Hi,2} }
 + \sum_{r >K}\sqrt[2]{\sum_{t=1}^N \bignorm{\big(W^{(u)}_{m,N,t}-\Pi_K( W^{(u)}_{m,N,t})\big)(e_r)}^2_{\Hi,2} }, \tageq \label{eq:unif2}
\end{align*}
and recall that $\Pi_K \to I$ as $K \to \infty$ in the strong operator topology.  This implies for fixed $r$ that  $\norm{(W^{(u)}_{m,N,t}-\Pi_K ( W^{(u)}_{m,N,t})(e_r)}^2_{\Hi,2} \to 0$ as $K \to \infty$. Hence, for \eqref{eq:tightness} to hold it suffices to show  that \[\sup_N \big(\sum_{r \in \nnum} \sqrt[2]{\sum_{t=1}^N \E\bignorm{W^{(u)}_{m,N,t}(e_r)}^2_{\Hi} }\big)^2<\infty \tageq \label{eq:supN}\] for any fixed $m$, since then the first term in \eqref{eq:unif2} converges to zero by the dominated convergence theorem, and the second can be immediately seen to converge to zero. Using  Jensen's inequality,  Cauchy Schwarz's inequality and \autoref{lem:Burkh}
\begin{align*}
  \E\norm{W^{(u)}_{m,N,t}(e_r)}^2_{\Hi}& 
  \le C\Phi^{-2}_{\bf} \int_a^b \E \bignorm{\Upsilon_{u,\omega} (V^{(u,\omega)}_{m,N,t}+V^{\dagger(u,\omega)}_{m,N,t})(e_r)}^2_{\Hi} d\omega
% \\& \lesssim \Phi^{-2}_{\bf} \int_a^b \norm{\Upsilon_{u,\omega}}^2_\infty 2\E\bignorm{V^{(u,\omega)}_{m,N,t}(e_r)}^2_{\Hi} d\omega
\\&  \lesssim \Phi^{-2}_{\bf} \int_a^b \norm{\Upsilon_{u,\omega}}^2_\infty\|\dmpi{u}{\omega}{N}{t}\|^2_{\hi,4}
\|\sum_{s \ne t}  \tilde{w}^{(\omega)}_{\bf,t,s}\dmpi{u}{\omega}{N}{s}(e_r)\|^2_{\cnum,4} d\omega
\\& \lesssim  \Phi^{-2}_{\bf} \sum_{t,s=1}^{N} w^2_{\bf,t,s} \sup_{u\in [0,1]}\sup_{\omega\in [a,b]}\bignorm{\dmpi{u}{\omega}{}{0} }^2_{\hi,4}  \bignorm{\dmpi{u}{\omega}{}{0}(e_r) }^2_{\cnum,4}.  
\end{align*}
Thus,
\begin{align*}
\Big(\sum_{r \in \nnum}\sqrt{\sum_{t=1}^{N} \E\norm{W^{(u)}_{m,N,t}(e_r)}^2_{\Hi}}\Big)^2 & =C \sup_{u,\omega}\bignorm{\dmpi{u}{\omega}{}{0} }^2_{\hi,4} \Big(\sum_{r \in \nnum}\sqrt{  \sup_{u,\omega} \bignorm{\dmpi{u}{\omega}{}{0}(e_r) }^2_{\cnum,4}}\Big)^2  <\infty,
\end{align*}
which shows \eqref{eq:supN} for any fixed $m$. 
Therefore \eqref{eq:unif} now follows,
which implies \eqref{eq50}.  

Finally, the following lemma shows the sequence $\{ \mathcal{Y}^{(u)}_{m,N}\}_{N \ge 1}$ is tight in $C_{S_1}$. This completes the proof of Theorem 
\ref{thm:BMfixu}.
\begin{lemma}
Under the conditions of \autoref{thm:Conv}, we have for each $\epsilon >0$,
\[
\lim_{h \to 0} \limsup_{N \to \infty} \pro\Big(\sup_{|\eta_1-\eta_2|<h} \bignorm{\mathcal{Y}^{(u)}_{m,N}(\eta_1)-\mathcal{Y}^{(u)}_{m,N}(\eta_2)}_{S_1} > \epsilon\Big) = 0.
\]
\end{lemma}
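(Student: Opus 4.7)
The process $\mathcal{Y}^{(u)}_{m,N}$ is the piecewise-linear interpolation of the $S_1(\Hi)$-valued martingale partial sums $S_j^{(u)}=\sum_{t=1}^{j}W^{(u)}_{m,N,t}$ with respect to the filtration $\{\G_{\te{u}+j}\}_j$. My plan is to obtain the modulus-of-continuity estimate by a standard blocking argument, combining Doob's maximal inequality (applicable because convexity turns $\|S_j^{(u)}-S_{j_0}^{(u)}\|_{S_1}$ into a nonnegative submartingale) with the Schatten-$1$ Burkholder-type inequality \autoref{lem:Burkh}(ii) to control the individual block increments.

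Concretely, for $h\in(0,1)$ I would partition $[0,1]$ by $\eta_k=kh$, $k=0,\ldots,K=\lceil 1/h\rceil$. A triangle inequality gives
\[
\sup_{|\eta_1-\eta_2|<h}\bignorm{\mathcal{Y}^{(u)}_{m,N}(\eta_2)-\mathcal{Y}^{(u)}_{m,N}(\eta_1)}_{S_1}\;\le\;3\max_{k<K}\sup_{\eta\in[\eta_k,\eta_{k+1}]}\bignorm{\mathcal{Y}^{(u)}_{m,N}(\eta)-\mathcal{Y}^{(u)}_{m,N}(\eta_k)}_{S_1},
\]
and Doob's $\op^p_\cnum$ inequality, together with a union bound over the $K\le 1/h+1$ blocks, yields for every even integer $p\ge 2$
\[
\pro\Big(\sup_{|\eta_1-\eta_2|<h}\bignorm{\mathcal{Y}^{(u)}_{m,N}(\eta_2)-\mathcal{Y}^{(u)}_{m,N}(\eta_1)}_{S_1}>\epsilon\Big)\;\le\;\frac{C_p}{h\,\epsilon^p}\,\max_{k<K}\E\bignorm{\mathcal{Y}^{(u)}_{m,N}(\eta_{k+1})-\mathcal{Y}^{(u)}_{m,N}(\eta_k)}_{S_1}^p.
\]

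For the block increments I would apply \autoref{lem:Burkh}(ii) to the martingale differences $W^{(u)}_{m,N,t}$: with $q=2$,
\[
\E\bignorm{\mathcal{Y}^{(u)}_{m,N}(\eta_{k+1})-\mathcal{Y}^{(u)}_{m,N}(\eta_k)}_{S_1}^p\;\le\;K_p^p\Big(\sum_{r\in\nnum}\sqrt{\sum_{t=\flo{\eta_k N}+1}^{\flo{\eta_{k+1}N}+1}\|W^{(u)}_{m,N,t}(e_r)\|_{\Hi,p}^2}\Big)^{p}.
\]
Arguing as for the bound on $\E\|W^{(u)}_{m,N,t}(e_r)\|_\Hi^2$ carried out just before \eqref{eq:supN}, but invoking \autoref{lem:Burkh}(i) on the weighted inner $\cnum$-valued martingale sum $\sum_{s<t}\tilde{w}^{(\omega)}_{\bf,t,s}\dmpi{u}{\omega}{N}{s}(e_r)$ appearing in \eqref{eq:VNTfix} and H\"older on the outer tensor product, yields $\|W^{(u)}_{m,N,t}(e_r)\|_{\Hi,p}^2\le \tilde{C}_r/N$ with constants $\tilde{C}_r$ satisfying $\sum_{r}\sqrt{\tilde{C}_r}<\infty$ under \autoref{as:depstruc}~II) and \autoref{as:mappings}. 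Summing over $t$ and $r$ gives $\E\|\mathcal{Y}^{(u)}_{m,N}(\eta_{k+1})-\mathcal{Y}^{(u)}_{m,N}(\eta_k)\|_{S_1}^p\le C(h+N^{-1})^{p/2}$, where the $N^{-1}$ absorbs the interpolation remainder at the block endpoint.

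Substituting back, the full bound is $C\epsilon^{-p}h^{-1}(h+N^{-1})^{p/2}$; letting $N\to\infty$ and then $h\to 0$ drives this to zero whenever $p>2$, and the hypothesis $p\ge 6$ of \autoref{thm:Conv}(a) comfortably supplies such an even integer. The main obstacle is not the blocking scheme itself but the form of the Schatten-$1$ Burkholder bound: because $S_1(\Hi)$ is of cotype $2$ but not $2$-smooth, \autoref{lem:Burkh}(ii) carries an extra basis-sum $\sum_r\sqrt{\,\cdot\,}$ on its right-hand side, and preserving the critical $h^{p/2}$-decay on which the tightness argument rests therefore requires the convergence of $\sum_{r}\sqrt{\tilde{C}_r}$; this is precisely what the stronger coordinatewise dependence hypothesis \autoref{as:depstruc}~II) (as opposed to the weaker part~I)) is designed to provide.
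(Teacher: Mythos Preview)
Your proposal is correct and takes essentially the same route as the paper: the paper's proof merely cites the standard blocking-plus-submartingale-maximal-inequality argument of Walk (1977, Lemma~5) and Parthasarathy (1967, p.~222), combined with the $S_1$-moment bound developed just before \eqref{eq:supN}, and omits the details. What you have written is precisely the spelled-out version of that sketch, including the correct identification that the $\sum_r\sqrt{\cdot}$ structure of \autoref{lem:Burkh}(ii) forces reliance on \autoref{as:depstruc}~II).
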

\begin{proof}
The argument is standard and mimics the one provided in \cite{Walk77}, where the second part of his proof should be replaced with the the first part of this proof, and which further relies on a submartingale inequality that is also valid in our case \citep[][Lemma 5]{Walk77}, and a slightly adapted argument of \citep[][p. 222]{Part67}. Details are omitted for the sake of brevity.
\end{proof}

\subsubsection{Proof of \autoref{thm:bigapprox}} \label{sec833}
Set $L=N-m$, and observe that $\mathcal{Y}^{(u)}_{m,L}(1)$ is measurable with respect to the $\sigma$-algebra
 \[
 \sigma(\epsilon_{\flo{uT}-\flo{L/2}+1-m}, \epsilon_{\flo{uT}-\flo{L/2}+2-m}, \ldots, \epsilon_{\flo{u T}-\flo{ L/2}+L}),
\]
 and forms an  $L+m$-dependent sequence. Consequently $\mathcal{Y}^{(u_i)}_{m,L}(k)$ and $\mathcal{Y}^{(u_j)}_{m,L}(k)$, for $u_i\ne u_j \in U_M$, and any $1 \le k, k^\prime\le L$ are independent. We make use of the following auxiliary lemma, of which the proof is postponed to  Section \ref{sec:supstat}. 
 
\begin{lemma} \label{lem:approxindep}
Under the conditions of \autoref{thm:conv_an}, for any fixed $m$,
\begin{equation}
    \label{errorLM}
\lim_{T \to \infty} \mathbb{P}\Big(\bignorm{\frac{1}{M^{1/2}} \sum_{u\in U_M} \mathcal{Y}^{(u)}_{m,N}-\frac{1}{M^{1/2}}  \sum_{u\in U_M} \mathcal{Y}^{(u)}_{m,L}}_{C_{S_1}} > \epsilon\Big)=o(1).
\end{equation}
\end{lemma}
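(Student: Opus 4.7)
\textbf{Proof plan for \autoref{lem:approxindep}.} The plan is to bound the per-$u$ difference $\mathcal{Y}^{(u)}_{m,N}-\mathcal{Y}^{(u)}_{m,L}$ in $C_{S_1}$, average over $u\in U_M$ via Markov's inequality, and let $T\to\infty$. The key observation is that, setting $c=\lfloor N/2\rfloor-\lfloor L/2\rfloor=O(m)$, the identity $\tu{L}{u}=\tu{N}{u}+c$ together with the translation invariance of the weights $\tilde{w}^{(\omega)}_{\bf,t,s}$ in $t-s$ gives $\dmpi{u}{\omega}{L}{t}=\dmpi{u}{\omega}{N}{t+c}$ and
\begin{equation*}
V^{(u,\omega)}_{m,L,t}=V^{(u,\omega)}_{m,N,t+c}-\dmpi{u}{\omega}{N}{t+c}\otimes\sum_{s=1}^{c}\tilde{w}^{(\omega)}_{\bf,t+c,s}\dmpi{u}{\omega}{N}{s}.
\end{equation*}
Hence $\mathcal{Y}^{(u)}_{m,L}$ coincides, after an index shift by $c$, with a partial sum of the $V^{(u,\omega)}_{m,N,t}$'s up to (A) a boundary block of at most $O(m)$ extra or missing martingale differences $W^{(u)}_{m,N,t}$, (B) a multiplicative normalization mismatch of relative size $\Phi_{\bf}(L)/\Phi_{\bf}(N)-1=O(m/N)$, and (C) a tensor-boundary remainder consisting of at most $O(1/\bf)$ nonzero $t$-terms (because $w$ has support in $[-1,1]$ by \autoref{as:Weights}).

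For (A), I would apply Doob's submartingale inequality to the nonnegative $\|\cdot\|_{S_1}$-submartingale formed by the interpolated partial sums, combined with \autoref{lem:Burkh}(ii). The per-coordinate second-moment bound $\|W^{(u)}_{m,N,t}(e_r)\|_{\Hi,2}^2\lesssim N^{-1}c_r$ with $\sum_r\sqrt{c_r}<\infty$, already established in the computation leading to \eqref{eq:supN}, yields $\mathbb{E}\sup_{\eta}\|(A)(\eta)\|_{S_1}=O(\sqrt{m/N})$. For (B), the same Doob/Burkholder argument applied to the full partial sum gives $\mathbb{E}\|\mathcal{Y}^{(u)}_{m,L}(\cdot)\|_{C_{S_1}}=O(1)$, so (B) contributes $O(m/N)$. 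For (C), \autoref{prop:S1inHprod} reduces the $S_1$-norm of each tensor term to a product of Hilbert-norm factors; the inner sum of $c=O(m)$ weighted terms has $\op^2_\Hi$-norm $O(\sqrt{m})$ by \autoref{lem:Burkh}(i), and a final application of \autoref{lem:Burkh}(ii) to the $t$-sum (with at most $O(1/\bf)$ nonzero summands) combined with the normalization $\Phi_{\bf}^{-1}=O(\sqrt{\bf/N})$ shows that (C) is of order $O(\sqrt{m/N})$ as well.

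Combining the three pieces through the triangle and Markov inequalities, the probability in \eqref{errorLM} is dominated by $\epsilon^{-1}M^{1/2}\bigl[O(\sqrt{m/N})+O(m/N)\bigr]$, which vanishes as $T\to\infty$ for fixed $m$ under \autoref{as:bandwidth} since $M=o(N^{1-\kappa})$ with $\kappa\in(0,1)$ forces $M/N=o(1)$. The main obstacle I anticipate is the handling of (C): because the $S_1$-Burkholder inequality \autoref{lem:Burkh}(ii) carries a coordinate-wise sum $\sum_r\sqrt{\cdot}$ rather than a single $\ell^2$-norm in $r$, the boundary tensor must first be decomposed into its two Hilbert factors via \autoref{prop:S1inHprod} before the outer martingale bound can be invoked, and one must verify that the resulting cross-coordinate bookkeeping still produces a summable bound compatible with $\sum_r\sqrt{c_r}<\infty$. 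The remaining steps are routine.
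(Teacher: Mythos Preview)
Your identity for $V^{(u,\omega)}_{m,L,t}$ is incorrect because $\mathcal{Y}^{(u)}_{m,L}$ is built with bandwidth $b_L=b(L)$, not $\bf=b(N)$: the weights inside $V^{(u,\omega)}_{m,L,t}$ are $\tilde w^{(\omega)}_{b_L,t,s}$ and the normalisation is $\Phi_{b_L}$ (this is explicit in the paper's proof of \autoref{lem:approxindep}, in \autoref{lem:Vmax}, and throughout \autoref{lem:tedious}). After the index shift by $c$, one therefore picks up, in addition to your (A)--(C), a fourth term
\[
\dmpi{u}{\omega}{N}{t+c}\otimes\sum_{s=c+1}^{t+c-1}\bigl[\tilde w^{(\omega)}_{b_L,t+c,s}-\tilde w^{(\omega)}_{b_N,t+c,s}\bigr]\dmpi{u}{\omega}{N}{s},
\]
which is present for \emph{every} $t\le\lfloor\eta L\rfloor$, not just on a boundary layer. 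This is precisely the contribution isolated in \autoref{lem:tedious} via $\theta^\omega_{t,s}=\tilde w^{(\omega)}_{b_N,s,t}-\tilde w^{(\omega)}_{b_L,s,t}$; the paper controls it by showing $\sum_{h}|w(b_Nh)-w(b_Lh)|^2=o(\bf^{-1})$ from piecewise uniform continuity of $w$ (\autoref{as:Weights}), which after Burkholder and M\'oricz yields $o(M^{\gamma/2}L^{\gamma/2}\bf^{-\gamma/2})$ for the full $M$-sum, i.e.\ $o(1)$ after dividing by $M^{\gamma/2}\Phi_{\bf}^{\gamma}$.

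This missing piece also exposes a limitation of your per-$u$ strategy. For (A)--(C) the per-$u$ bounds are explicit ($O(\sqrt{m/N})$, $O(m/N)$), and multiplying by $M^{1/2}$ still vanishes since $M/N\to0$. But the per-$u$ version of the bandwidth-difference term is only $o(1)$, with a rate governed by the modulus of continuity of $w$; under the minimal piecewise-continuity assumption in \autoref{as:Weights} there is no guarantee that this rate beats $M^{-1/2}$. The paper avoids this by bounding the sum $\sum_{i=1}^{M}\sum_{t}$ jointly through the martingale structure across $l=1,\dots,MN$ (see the proofs of \autoref{lem:Vmax} and \autoref{lem:tedious}), which places the factor $M$ under the square root rather than outside it. So even once you identify the missing term, you will likely need to abandon the per-$u$ triangle-inequality route for that piece and argue as the paper does.
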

Note that the second term in the norm 
of \eqref{errorLM} is  an average of independent processes that satisfy the conditions of \autoref{thm:BMfixu}. Thus, for any closed set $B \subset C_{S_1}$,
\begin{align*}
\limsup_{T\to \infty}
 \,\mathbb{P}\Big( \frac{1}{M^{1/2}}\sum_{u \in U_M}\mathcal{Y}^{(u)}_{m,N}   \in B\Big)
& \le  
\lim_{T \to \infty} \mathbb{P}\Big(\bignorm{\frac{1}{M^{1/2}} \sum_{u\in U_M} \mathcal{Y}^{(u)}_{m,N}-\frac{1}{M^{1/2}}  \sum_{u\in U_M} \mathcal{Y}^{(u)}_{m,L}}_{C_{S_1}} > \epsilon\Big)
\\&+ 
\limsup_{T\to \infty} \,
 \mathbb{P}\Big( \frac{1}{M^{1/2}}\sum_{u \in U_M}\mathcal{Y}^{(u)}_{m,L}   \in B_{\epsilon}\Big)
 \\& 
 =
  \mathbb{P}\Big(\Big\{\mathbb{W}_{\mu_{\Upsilon_m}}(\eta)\Big\}_{\eta \in I}\in B_{\epsilon} \Big),  
\end{align*}
where $B_{\epsilon} =\{x: \|x-y\|_{C_{S_1}} \le \epsilon, y \in B_{\epsilon} \}$. The result now follows.

\subsection{Proofs of \autoref{lem:martapproxprob}, \autoref{lem:fidis_dis},
and \autoref{lem:approxindep}} \label{sec:supstat}

\begin{proof}[Proof of \autoref{lem:martapproxprob}]
To prove that
\begin{align*}
\lim_{m\to \infty} \lim_{N \to \infty} \mathbb{P}\Big(\frac{1}{M^{1/2}}\int_a^b \bignorm{     \sum_{u\in U_M}\Phi^{-1}_{\bf} \Upsilon_{u,\omega}\Big[   \flo{\eta N}\big( \hat{\F}_{u,\omega}(\eta)- \F_{u,\omega}\big)- \big(   \ldmi{}{u}{}{N,\flo{\eta N}}+\ldmi{\dagger}{u}{}{N,\flo{\eta N}}\big)  \Big]}_{C_{S_1}}  d\omega>\epsilon \Big) =0.
\end{align*}
We  decompose the term inside the norm as 
\begin{align*}
&\E\Big( \sup_{\eta \in [0,1]}\bignorm{  \sum_{u\in U_M}\Phi^{-1}_{\bf} \Upsilon_{u,\omega}\Big[   \flo{\eta N}\big( \hat{\F}_{u,\omega}(\eta)- \F_{u,\omega}\big)- \big(   \ldmi{}{u}{}{N,\flo{\eta N}}+\ldmi{\dagger}{u}{}{N,\flo{\eta N}}\big)  \Big]  }_{S_1}\Big)^q
 \tageq\label{eq:approxall} \\& \le
 2^{q-1} \E\Big( \sup_{\eta \in [0,1]} \bignorm{ \sum_{u\in U_M}\Phi^{-1}_{\bf} \Upsilon_{u,\omega}\flo{\eta N} \big(\hat{\F}_{u,\lambda}(\eta)
 -\tilde{\F}_{u,\omega}(\eta)\big)}_{S_1} \Big)^q  \tageq \label{eq:approxloc}
 \\&+
 2^{q-1} \Big( \sup_{\eta \in [0,1]}\bignorm{ \sum_{u\in U_M}\Phi^{-1}_{\bf} \Upsilon_{u,\omega}\flo{\eta N} \big(\E\tilde{\F}_{u,\omega}(\eta)- \F^{(u,\omega)}\big)}_{S_1} \Big)^q  \tageq \label{eq:approxtrue}
  \\&+
 2^{q-1}\E\Big( \sup_{\eta \in [0,1]}\bignorm{  \sum_{u\in U_M}\Phi^{-1}_{\bf} \Upsilon_{u,\omega}\Big[   \flo{\eta N}\big( \tilde{\F}_{u,\omega}(\eta)- \E\tilde{\F}_{u,\omega}\big)- \big(   \ldmi{}{u}{}{N,\flo{\eta N}}+\ldmi{\dagger}{u}{}{N,\flo{\eta N}}\big)  \Big]  }_{S_1}\Big)^q \tageq \label{eq:approxmart}
\end{align*}
where $\hat{\F}_{u,\omega}(\eta)$ is defined in \eqref{eq:Fint}, 
and $\tilde{\F}_{u,\omega}(\eta)$ is defined  as in  \eqref{eq:Fint} but where the nonstationary process is replaced with its auxiliary counterpart. 
The result then follows from Markov's inequality, Tonelli's theorem, \autoref{lem:approxloc}, \autoref{lem:bias}, and
\autoref{thm:approx}, respectively.

\end{proof}

\begin{proof}[Proof of \autoref{lem:fidis_dis}]

We start with the following result.
\begin{lemma}\label{lem:fidis_dense}
Let $X, Y$ be $V$-valued random variables over a probability space $(\Omega, \mathcal{A}, \pro)$ and let $Q \subseteq V^\pr$ be sequentially weak$^\star$-dense. Assume $\mu_{q(X)} = \mu_{q(Y)}$ for all $q \in Q$, then $\mu_{v^\pr(X)} = \mu_{v^\pr(Y))}$ for all $v^\pr \in V^\pr$.
\end{lemma}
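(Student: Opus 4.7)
The plan is to exploit the sequential weak$^\star$-density of $Q$ in $V^\pr$ together with the fact that pointwise convergence of continuous linear functionals lifts to almost sure convergence of the induced random variables, which in turn implies convergence in distribution.

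More precisely, I would fix an arbitrary $v^\pr \in V^\pr$ and, by the sequential $w^\star$-density of $Q$, choose a sequence $(q_n)_{n \ge 1} \subseteq Q$ with $q_n \stackrel{w^\star}{\longrightarrow} v^\pr$. Unwinding the definition, this means $q_n(x) \to v^\pr(x)$ for every $x \in V$. Applying this pointwise identity with $x = X(\omega)$ and $x = Y(\omega)$ respectively, we obtain $q_n(X(\omega)) \to v^\pr(X(\omega))$ and $q_n(Y(\omega)) \to v^\pr(Y(\omega))$ for every $\omega \in \Omega$. Hence $q_n(X) \to v^\pr(X)$ and $q_n(Y) \to v^\pr(Y)$ $\mathbb{P}$-almost surely, and consequently also in distribution.

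By the standing assumption we have $\mu_{q_n(X)} = \mu_{q_n(Y)}$ for each $n$. Since these two sequences of Borel probability measures on $\cnum$ coincide and each converges weakly (to $\mu_{v^\pr(X)}$ and $\mu_{v^\pr(Y)}$, respectively), uniqueness of weak limits on the Polish space $\cnum$ forces $\mu_{v^\pr(X)} = \mu_{v^\pr(Y)}$. As $v^\pr \in V^\pr$ was arbitrary, the claim follows.

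The argument is essentially bookkeeping; there is no genuine obstacle. The only subtle point worth double-checking is that weak$^\star$ convergence in $V^\pr$ really yields pointwise convergence on all of $V$ (it does, by definition), and that almost sure convergence of complex-valued random variables implies convergence in distribution (standard). Note that measurability of $v^\pr(X)$ and $v^\pr(Y)$ is automatic since $v^\pr$ is continuous and $X,Y$ are Borel measurable with values in $V$.
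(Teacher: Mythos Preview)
Your proof is correct and follows essentially the same route as the paper's: fix $v^\pr\in V^\pr$, pick a sequence $(q_n)\subseteq Q$ converging weak$^\star$ to $v^\pr$, use pointwise convergence to get $q_n(X)\to v^\pr(X)$ and $q_n(Y)\to v^\pr(Y)$ almost surely, and conclude from $\mu_{q_n(X)}=\mu_{q_n(Y)}$ via uniqueness of weak limits. If anything, you are slightly more explicit than the paper in spelling out the passage through convergence in distribution and uniqueness of limits on $\cnum$.
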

\begin{proof}
Let $v^\pr \in V^\pr$. Since the set $Q$ is sequentially weak$^\star$-dense in $V^\pr$, there exists a sequence $\{v^\pr_n\} \in Q$ such that $\lim_{n\to \infty} v^\pr_n(v) = v^\pr(v)$ for all $v \in V$, which implies for $V$-valued random variables $X$ and $Y$ that 
\[
v^\pr_n(X) \overset{a.s.}{\to} v^\pr(X) \text{ and } v^\pr_n(Y) \overset{a.s.}{\to} v^\pr(Y)  \quad \text{as $ n\to \infty$}. \tageq \label{eq:asconv}
\]
By assumption $\mu_{v^\pr_n(X)} = \mu_{v^\pr_n(Y)}$ since $v^\pr_n \in Q$  $\forall n$.  Hence, \eqref{eq:asconv}  implies that $\mu_{v^\pr(X)} = \mu_{v^\pr(Y)} \forall v^\pr \in V^\pr.$
\end{proof}
We will thus prove the following conditions for all $f\in Q$, where $Q \subseteq V^\pr$ is sequentially weak$^\star$-dense  
\begingroup
\addtolength{\jot}{-0.3em}
\begin{align*}
&\label{itm:lindeberg} \bullet~ \forall \epsilon>0,  \, \sum_{t=1}^{N} \E\Big[ \Big\vert\big(W^{(u)}_{m,N,t}\big)\Big\vert^2 \mathrm{1}_{|f(W^{(u)}_{m,N,t})|\ge \epsilon} \Big\vert |\G_{\tu{N}{u},t-1}\Big] \overset{p}{\to} 0 \quad \text{ as } N \to \infty; \hspace{80pt} \tag{Cond.1} \\
&
 \bullet~~  \label{itm:concov}  \sum_{t=1}^{\flo{\eta N}} \E\Big[ \Big\vert f\big(W^{(u)}_{m,N,t}\big)\Big \vert^2 \Big\vert |\G_{\tu{N}{u},t-1}\Big] \overset{p}{\to} \eta \Gamma^{(u)}_{\Upsilon,m}(f)\quad \text{ as } N \to \infty  \tag{Cond.2};\\&
 \bullet~~  \label{itm:concovp}  \sum_{t=1}^{\flo{\eta N}} \E\Big[ f\big(W^{(u)}_{m,N,t}\big) \overline{f\big(\overline{W}^{(u)}_{m,N,t}\big)} \Big\vert |\G_{\tu{N}{u},t-1}\Big] \overset{p}{\to} \eta \Sigma^{(u)}_{\Upsilon,m}(f)\quad \text{ as }N \to \infty \tag{Cond.3};\\&
 \bullet~~  \label{itm:mds} \sum_{t=1}^{\flo{\eta N}} \Big|\E\Big[f\big(W^{(u)}_{m,N,t}\big) \Big\vert |\G_{\tu{N}{u},t-1}\Big]\Big| \overset{p}{\to} 0\quad \text{ as } N \to \infty, \tag{Cond.4}
\end{align*}
\endgroup
where we used the notation 
$\G_{\tu{N}{u},t}= \G_{\tu{N}{u}+t}$. 
The result then follows from \autoref{lem:fidis_dense} and \cite{McL74}. Firstly, it may be verified using \autoref{lem:Burkh}(ii) that  $\Big\{W^{(u)}_{m,N,t}\Big\}_{t}$ is a martingale difference sequence in $\op^2_{S_1}$ with respect to the filtration $\{\G_{\tu{N}{u},t}\}_t$, for fixed $u \in [0,1]$ and any $N \in \mathbb{N}$. \eqref{itm:mds} is therefore immediate. Next, we verify \eqref{itm:concov} and \eqref{itm:concovp}. Firstly, for fixed $m \ge 1$, and $\gamma>2$ 
\begin{align*}
&\E\Big(\max_{1\le k\le N}\bignorm{\sum_{t=2}^{k} \dmpi{u}{\omega}{N}{t}  \otimes \sum_{s=t-4m+1 \vee 1}^{t-1} \tilde{w}_{\bf,t,s}^{(\omega)} \dmpi{u}{\omega}{N}{s}}_{S_1}\Big)^\gamma = o(\mathcal{\Phi}^\gamma_{\bf}). \tageq \label{eq:boundfirst}
\end{align*}
By \autoref{lem:Burkh}(ii)
\begin{align*}
\bignorm{\sum_{t=2}^{k} \dmpi{u}{\omega}{N}{t}  \otimes \sum_{s=t-4m+1 \vee 1}^{t-1} \tilde{w}_{\bf,t,s}^{(\omega)} \dmpi{u}{\omega}{N}{s}}_{S_1,2}^2 
& \le 
\Big(\sum_{r \in \nnum} \sqrt[2]{\sum_{t=2}^{k}\Bignorm{\Big( \dmpi{u}{\omega}{N}{t}  \otimes \sum_{s=t-4m+1 \vee 1}^{t-1} \tilde{w}_{\bf,t,s}^{(\omega)} \dmpi{u}{\omega}{N}{s}\Big)(e_r)}_{\hi,2}^2}\Big)^2.
\end{align*}
Then Hölder's inequality, \autoref{lem:Burkh}(i) and elementary calculations yield
\begin{align*}
&\sum_{t=2}^{k} \Bignorm{\Big(\dmpi{u}{\omega}{N}{t}  \otimes \sum_{s=t-4m+1 \vee 1}^{t-1} \tilde{w}_{\bf,t,s}^{(\omega)} \dmpi{u}{\omega}{N}{s}\Big)(e_r)}_{\hi,2}^2
\le 2 K^4_{4} \norm{D^{(u,\omega)}_{0}}^2_{\hi,4}  \norm{D^{(u,\omega)}_{0}(e_r)}^2_{\cnum,4}O\Big( (4m)^2  +k 4m\Big).
\end{align*}
Hence, 
\[
\bignorm{\sum_{t=2}^{k} \dmpi{u}{\omega}{N}{t}  \otimes \sum_{s=t-4m+1 \vee 1}^{t-1} \tilde{w}_{\bf,t,s}^{(\omega)} \dmpi{u}{\omega}{N}{s}}_{S_1,2}^2 \le  2 K^4_{4} \norm{D^{(u,\omega)}_{0}}^2_{\hi,4}  (\sum_{r \in \nnum}\norm{D^{(u,\omega)}_{0}(e_r)}_{\cnum,4})^2 g(1,k) ~,
\]
where $g(b,k) =(4m)^2  +\sum_{t=b+1}^{b+k} 4m$,
which can be easily seen to satisfy
$ g(b,k) + g(b+k,l) \leq g(b,k+l)
$
 for all $b \ge 0$ and $1\le k < k+l$. Therefore 
\eqref{eq:boundfirst}  follows  from Theorem 1 in \cite{Mor76}, and thus this term is negligible in the verification of \eqref{itm:lindeberg}-\eqref{itm:concovp}.  
It follows from Jensen's, Hölder's inequality and \autoref{as:mappings} that it suffices to focus on verifying the latter conditions with 
\[
\check{W}^{(u)}_{m,N,t}=
\int_a^b \Upsilon_{u,\omega}(\check{V}^{(u,\omega)}_{m,N,t} +\check{V}^{\dagger(u,\omega)}_{m,N,t} )
d \omega 
\tageq \label{eq:4mapprox}
\]
instead of  ${W}^{(u)}_{m,N,t}$, 
where $\check{V}^{(u,\omega)}_{N,t}= \dmpi{u}{\omega}{N}{t} \otimes \sum_{s=1}^{t-4m} \tilde{w}_{\bf,t,s}^{(\omega)} \dmpi{u}{\omega}{N}{s}$. 
Now,
\begin{align*}
\E \Big[\bignorm{\check{W}^{(u)}_{m,N,t}}^2_{S_1}| \G_{\tu{N}{u},t-1}\Big] =
\sum_{l=1}^{m}P_{\tu{N}{u},t-l}\Big(\bignorm{\check{W}^{(u)}_{m,N,t}}^2_{S_1}\Big)+\E \Big[\bignorm{\check{W}^{(u)}_{m,N,t}}^2_{S_1}| \G_{\tu{N}{u},t-m-1}\Big]. 
\end{align*}
Orthogonality of the projections, \autoref{lem:Burkh}(i), the contraction property of the conditional expectation
\begin{align*}
\Bignorm{\sum_{t=1+4m}^{N} \sum_{l=1}^{m}P_{\tu{N}{u},t-l}\Big(\bignorm{\check{W}^{(u)}_{m,N,t}}^2_{S_1}\Big)}^2_{\cnum,2}
&\le \Big(\sum_{l=1}^{m}   \Bignorm{ \sum_{t=1+4m}^{N} P_{\tu{N}{u},t-l}\Big(\bignorm{\check{W}^{(u)}_{m,N,t}}^2_{S_1}\Big)}_{\cnum,2} \Big)^2
\\&\le \Bigg(\sum_{l=1}^{m} \Big( \sum_{t=1+4m}^{N}   \Bignorm{ P_{\tu{N}{u},t-l}\Big(\bignorm{\check{W}^{(u)}_{m,N,t}}^2_{S_1}\Big)}^2_{\cnum,2} \Big)^{1/2}\Bigg)^2\\
 &\le \Big(\sum_{l=1}^{m}  \Big(\sum_{t=1+4m}^{N}\bignorm{\check{W}^{(u)}_{m,N,t}}^4_{S_1,4} \Big)^{1/2} \Big)^2.
 \end{align*}
By Jensen's inequality, Hölder's inequality, Tonelli's theorem, \autoref{prop:S1inHprod}, and \autoref{lem:Burkh}(i)
 \begin{align*}
 \bignorm{\check{W}^{(u)}_{m,N,t}}^4_{S_1,4}& 
 \lesssim \Phi^{-4}_{\bf}\int_a^b \bignorm{\Upsilon_{u,\omega} \big( \dmpi{u}{\omega}{N}{t}  \otimes \sum_{s=1}^{t-4m} \tilde{w}_{\bf,t,s}^{(\omega)}   \dmpi{u}{\omega}{N}{s}\big)}^4_{S_1,4}d\omega
  \\&
  \lesssim \Phi^{-4}_{\bf} \sup_{u \in [0,1]}  \sup_{\omega \in [a,b]} \| \dmpi{u}{\omega}{0}{}\|^8_{\hi,4} \Big( \max_{1+4m \le t \le N} \sum_{s}^{t-4m} |\tilde{w}_{\bf,t,s}^{(\omega)} |^2\Big)^2 \int_a^b  \norm{\Upsilon_{u,\omega} }_{\infty}^4 d\omega
 \end{align*}
where we used that for $|t-s|\ge 4m$, $\dmpi{u}{\omega}{N}{t} $ and $\dmpi{u}{\omega}{N}{s}$ are independent. Consequently, 
\begin{align*} \Bignorm{\sum_{t=1+4m}^{N} \sum_{l=1}^{m}P_{t-l}\Big(\bignorm{\check{W}^{(u)}_{m,N,t}}^2_{S_1}\Big)}^2_{\cnum,2}
& =O(\Phi^{-4}_{\bf})O(N b^{-2}_f) =O( b^2_f N^{-2} N b^{-2}_f) = o(1), \end{align*}
and thus
\begin{align*}
\sum_{t=1+4m}^{N}\E \Big[\bignorm{\check{W}^{(u)}_{m,N,t}}^2_{S_1}| \G_{\tu{N}{u},t-1}\Big] =\sum_{t=1+4m}^{N}\E \Big[\bignorm{\check{W}^{(u)}_{m,N,t}}^2_{S_1}| \G_{\tu{N}{u},t-m-1}\Big] +o_{p}(1). \tageq \label{eq:expmapprox}
\end{align*}
Next, we verify the conditional Lindeberg condition \ref{itm:lindeberg}, i.e.,  that for all $f \in Q$, 
$\eta \in [0,1]$ 
\[\forall \epsilon>0, \sum_{t=1}^{N} \E\Big[ |f\big(\check{W}^{(u)}_{m,N,t}\big)|^2 \mathrm{1}_{|f(\check{W}^{(u)}_{m,N,t})|\ge \epsilon} \Big\vert |\G_{\tu{N}{u},t-1}\Big] \overset{p}{\to} 0 \quad \text{ as } N \to \infty \tageq \label{eq:Bergercon3}\]
Using $\E\big[\|Y\|_V^2 \mathrm{1}_{\{\|Y\|_V>\epsilon\}}|\mathcal{A}^{\star} \big] \le \epsilon^{-2} \E[\|Y\|_V^4| \mathcal{A}^{\star}]$ for some $Y \in \op^4_V$ and $\mathcal{A}^{\star}$ a sub $\sigma$-algebra of $\mathcal{A}$, \eqref{eq:expmapprox}, and similar arguments as above, we find 
\begin{align*}
&\sum_{t=1+4m}^{N} \E\Big[ |f_B\big(\check{W}^{(u)}_{m,N,t}\big)|^2 \mathrm{1}_{|f_B(\check{W}^{(u)}_{m,N,t})|\ge \epsilon} \Big\vert |\G_{\tu{N}{u},t-1}\Big]  \\
&\le \epsilon^{-2}   \sum_{t=1+4m}^{N}\E\Big[ \big\vert f_B \big(\check{W}^{(u)}_{m,N,t} \big\vert^4 \Big\vert \G_{\tu{N}{u},t-m-1}\Big]  +o_p(1)
% \\& \le  \epsilon^{-2}   \Phi^{-4}_{\bf} \sum_{t=1+4m}^{N}\E\Big[ \Big\vert \Tr\big(B\Upsilon_u(\check{V}^{(u)}_{N,t})\big)+\Tr\big(B\Upsilon_u(\check{V}^{\dagger(u)}_{N,t})\big)\Big \vert^4 \Big\vert \G_{\tu{N}{u},t-m-1}\Big] +o_p(1)
 % \\& \le  \epsilon^{-2}   C \Phi^{-4}_{\bf}  \sum_{t=1+4m}^{N} \int_a^b \E\Big[  \snorm{\check{V}^{(u)}_{N,t}}^4_1  \snorm{B}^4_{\infty} \Big\vert \G_{\tu{N}{u},t-m-1}\Big]  d\lambda  
    \\& \lesssim  \epsilon^{-2}   \Phi^{-4}_{\bf} \sum_{t=1+4m}^{N}\int_a^b  \norm{B}^4_{S_{\infty}} \norm{\Upsilon_{u,\omega}}^4_{\infty}\E\Big[ \bignorm{\dmpi{u}{\omega}{N}{t} \otimes \sum_{s=1}^{t-4m}  \tilde{w}_{\bf,t,s}^{(\omega)} \dmpi{u}{\omega}{N}{s}}^4_{S_1} \Big\vert \G_{\tu{N}{u},t-m-1}\Big] d\omega +o_p(1) 
\end{align*}
for any $B \in \opl(\Hi)$. 
Consequently, taking expectations and using that $|t-s|\ge 4m$, an application of Tonelli's theorem, \autoref{prop:S1inHprod}, and of  \autoref{lem:Burkh}(i) yield
\begin{align*}
&   \le  \epsilon^{-2}   C \Phi^{-4}_{\bf} \sum_{t=1+4m}^{N}\int_a^b  \norm{\Upsilon_{u,\omega}}^4_{\infty}\E\Big[ \bignorm{\dmpi{u}{\omega}{N}{t} }^4_{\Hi} \bignorm{\sum_{s=1}^{t-4m}  \tilde{w}_{\bf,t,s}^{(\omega)} \dmpi{u}{\omega}{N}{s}}^4_{\Hi}\Big] d\omega
\\&\le   \epsilon^{-2}   C \Phi^{-4}_{\bf} \sum_{t=1+4m}^{N} \int_a^b \norm{\Upsilon_{u,\omega}}^4_{\infty}\E\bignorm{\sum_{s=1}^{t-4m}  \tilde{w}_{\bf,t,s}^{(\omega)} \dmpi{u}{\omega}{N}{s}}^4_{\Hi} \E\Big[ \bignorm{\dmpi{u}{\omega}{N}{t} }^4_{\Hi} d\omega
\\& \le
  \epsilon^{-2}  C \Phi^{-4}_{\bf}\sup_{u\in [0,1]}\sup_{\omega\in [a,b]} \bignorm{\dmpi{u}{\omega}{}{0} }^8_{\hi,4}  \sum_{t=1+4m}^{N} \Big(\sum_{s=1}^{t-4m} \big\vert\tilde{w}_{\bf,t,s}^{(\omega)} \big\vert^2 \Big)^2  \int_a^b \norm{\Upsilon_{u,\omega}}^4_{\infty} d\omega  =o(1),
\end{align*}
from which \eqref{itm:lindeberg} follows.

For \eqref{itm:concov}, it follows from \autoref{lem:fidis_dense} that it is sufficient to show that
\[ \sum_{t=1}^{\flo{\eta N}} \E\Big[ |f\big(\check{W}^{(u)}_{m,N,t}\big)|^2 \Big\vert |\G_{\tu{N}{u},t-m-1}\Big] \overset{p}{\to} \eta \Gamma^{(u)}_{\Upsilon,m}(f)\quad \text{ as } N \to \infty, \]
for all $f \in Q_K$, for some finite $K$. 
Recall that elements of the topological dual space of $S_1(\Hi)$ are given by functionals $f_B: S_1(\Hi) \to \cnum$ with $f_B(A)=\Tr(BA)$, $B \in \opl(\Hi)$ (see Section \ref{prel}). 
By continuity of the coordinate functionals
\begin{align*}
f_B(\check{W}^{(u)}_{m,N,t}) \overline{f_B(\check{W}^{(u)}_{m,N,t})} & = 
 \Phi^{-2}_{\bf}\Big[ \Tr\Big(B \Upsilon_{u}(\check{V}^{(u)}_{N,t})\Big)+\Tr\Big(B \Upsilon_{u,\omega}(\check{V}^{\dagger(u)}_{N,t})\Big)\Big] \Big[ \overline{\Tr\Big(B \Upsilon_{u}(\check{V}^{(u)}_{N,t})\Big)}+\overline{\Tr\Big(B \Upsilon_{u}(\check{V}^{\dagger(u)}_{N,t})\Big)}\Big]
%\\&
%=\Phi^{-2}_{\bf}\Big[ \Tr\Big(B\Upsilon_{u,\omega}(\check{V}^{(u)}_{N,t})\Big)
%\Tr\Big((\Upsilon_{u,\omega}(\check{V}^{(u)}_{N,t}))^\dagger B^\dagger\Big)+\Tr\Big(B\Upsilon_{u,\omega}(\check{V}^{(u)}_{N,t})\Big)\Tr\Big((\Upsilon_{u,\omega}(\check{V}^{\dagger(u)}_{N,t}))^\dagger B^\dagger\Big)
%\\& 
%+\Tr\Big(B \Upsilon_{u,\omega}(\check{V}^{\dagger(u)}_{N,t})\Big)
%\Tr\Big((\Upsilon_{u,\omega}(\check{V}^{(u)}_{N,t}))^\dagger B^\dagger\Big)+\Tr\Big(B \Upsilon_{u,\omega}(\check{V}^{\dagger(u)}_{N,t})\Big)\Tr\Big((\Upsilon_{u,\omega}(\check{V}^{\dagger(u)}_{N,t}))^\dagger B^\dagger\Big)\Big]
\end{align*}
where we use the notation 
$$
%\Upsilon_{u}(\check{V}^{(u)}_{N,t}) =
%\mathrm{L}^{a,b}\circ \Upsilon_{u,\omega}(\check{V}^{(u,\omega)}_{m,N,t} +\check{V}^{\dagger(u,\omega)}_{m,N,t} ).
\Upsilon_{u}(\check{V}^{(u)}_{N,t}) =
\int_a^b  \Upsilon_{u,\omega}(\check{V}^{(u,\omega)}_{m,N,t} +\check{V}^{\dagger(u,\omega)}_{m,N,t} )
d \omega .
$$
Observe that $\sum_{s=1}^{t-4m} {\tilde{w}}_{\bf,t,s}^{(\lambda)} \dmpi{u}{\omega}{N}{s}$ is $\G_{\tu{N}{u},t-4m}$-measurable and $\dmpi{u}{\omega}{N}{t}$ is independent of the sigma algebra generated by $\G_{\tu{N}{u},t-m-1}$. Furthermore, $\{\tilde{w}_{\bf,t,s}^{(\lambda)} \}$ satisfies Assumption 3.2 of \cite{vD19}. Fubini's theorem and a slight adjustment of Lemma B.2-B.3 of aforementioned work then yield
\begin{align*}
& \sum_{t=1+4m}^{\flo{\eta N}}\E\Big[ \Tr\Big(B\Upsilon_{u}(\check{V}^{(u)}_{N,t})\Big)
\Tr\Big((\Upsilon_{u}(\check{V}^{(u)}_{N,t}))^\dagger B^\dagger\Big)\Big\vert \G_{\tu{N}{u},t-m-1}\Big]
\\&=\sum_{t=1+4m}^{\flo{\eta N}}\E\Big[ \Tr\Big((B  \widetilde{\otimes} B)\big(\Upsilon_{u}(\check{V}^{(u)}_{N,t}) \widetilde{\otimes}\Upsilon_{u}(\check{V}^{(u)}_{N,t})\big)\Big)\Big\vert \G_{\tu{N}{u},t-m-1}\Big]
\\& = \sum_{t=1+4m}^{\flo{\eta N}}\int_a^b \int_a^b  \Tr\Big((B  \widetilde{\otimes} B)(\Upsilon_{u,\lambda}\widetilde{\otimes}  \Upsilon_{u,\omega})
\\& ~~~~~~~~~~~~~~~~~~~~~~~~~~~~~~~\times
\E \Big[ (\dmpi{u}{\lambda}{N}{t} \otimes \sum_{s=1}^{t-4m} \tilde{w}_{\bf,t,s}^{(\lambda)} \dmpi{u}{\lambda}{N}{s})
{\bigotimes} (\dmpi{u}{\omega}{N}{t} \otimes \sum_{s=1}^{t-4m} \tilde{w}_{\bf,t,s}^{(\omega)}\dmpi{u}{\omega}{N}{s} \Big \vert \G_{\tu{N}{u},t-m-1}\Big] \Big)d\lambda d\omega
\\& =\lim_{n\to \infty}\frac{(b-a)}{n}  \sum_{i=1}^{n} \sum_{t=1+4m}^{\flo{\eta N}} \sum_{s=1}^{t-4m}
w^2_{\bf,t,s}  \Tr\Big((B  \widetilde{\otimes} B)(\Upsilon_{u,\lambda_i}\widetilde{\otimes}  \Upsilon_{u,\lambda_i})
\E \big[ (\dmpi{u}{\lambda_i}{N}{t} \otimes \dmpi{u}{\lambda_i}{N}{t}\big] {\widetilde{\otimes}}\E[\dmpi{u}{\lambda_i}{N}{s}
{\otimes} \dmpi{u}{\lambda_i}{N}{s}\Big]\Big)
\\&+o_p(\Phi^2_{\bf}).
\end{align*}
where $\lambda_1, \ldots, \lambda_n$ denote the $n$ midpoints corresponding to $n$ equally spaced subintervals of  $[a,b]$. 
Note that the process $\{{X}^{(u)}_{m,t}\}_{t \in \znum}$ satisfies the conditions of Proposition  3.2 of \cite{vD19}, uniformly in $u \in [0,1]$. Therefore, 
$\Tr(\E\big[ \dmpi{u}{\lambda_i}{N}{t} \otimes \dmpi{u}{\lambda_i}{N}{t} \big]) =\Tr(2\pi {\F}_{m}^{(\tu{N}{u}+t,\lambda_i)})$, 
 where 
${\F}_{m}^{(\tu{N}{u}+t,\lambda_i)}$ 
 is the time-varying spectral density operator of $\{{X}^{(u)}_{m,t}\}_{t}$ evaluated at time $\tu{N}{u}+t$, i.e., ${\F}_{m}^{(\tu{N}{u}+t,\lambda_i)} = \sum_{|r|\le m} e^{-\im r \lambda_i} \mathcal{C}_{\tu{N}{u}+t,r}$. The given  assumptions imply, $u \mapsto {\F}_{m}^{(u,\omega)}$ is a Lipschitz-continuous map from $[0,1]$ to $S_1(\Hi)$, uniformly in $\omega$, which  also yields that
$\sup_{\omega}\vert \Tr({\F}^{(\tu{N}{u}+t, \omega)}_{m}  -{\F}^{(u, \omega)}_{m})\vert=O(N/T).$
Hence, 
\begin{align*}
& \Phi^{-2}_{\bf}
\sum_{t=1+4m}^{\flo{\eta N}}\E \Big[\Tr\Big(B\Upsilon_{u}(\check{V}^{(u)}_{N,t})\Big)
\Tr\Big((\Upsilon_{u}(\check{V}^{(u)}_{N,t}))^\dagger B^\dagger\Big)
 | \G_{\tu{N}{u},t-m-1}\Big] 
\\
&=\frac{4\pi^2\sum_{t=1+4m}^{\flo{\eta N}} \sum_{s=1}^{t-4m} w_{\bf,t,s}^2}{\sum_{t=1}^{N} \sum_{s=1}^{N}  w_{\bf,t,s}^2 } \lim_{n\to \infty} \frac{(b-a)}{n}\sum_{i=1}^n \Tr\Big((B  \widetilde{\otimes} B)(\Upsilon_{u,\lambda_i}\widetilde{\otimes}  \Upsilon_{u,\lambda_i})({\F}^{(u, \lambda_i)}_{m}{\widetilde{\otimes}} {\F}^{(u, \lambda_i)}_{m}\Big)  +o_p(1)  +o(1)
%\\&=\frac{4\pi^2\sum_{t=1+4m}^{\flo{\eta N}} \sum_{s=1}^{t-4m} w_{\bf,t,s}^2}{\sum_{t=1}^{N} \sum_{s=1}^{N}  w_{\bf,t,s}^2} \lim_{n\to \infty} \frac{(b-a)}{n}\sum_{i=1}^n \Tr\Big((B  \widetilde{\otimes} B)\big(\Upsilon_{u,\lambda_i}({\F}^{(u, \lambda_i)}_{m}) \widetilde{\otimes}  \Upsilon_{u,\lambda_i}({\F}^{(u, \lambda_i)}_{m} )\big)\Big)  +o_p(1)  
\end{align*}
where we used self-adjointness of ${\F}^{(u, \lambda_i)}_{m}$. Furthermore, almost surely,\small{
\begin{align*}
& \E\Big[ \Tr\Big(B\Upsilon_{u}(\check{V}^{(u)}_{N,t})\Big)\Tr\Big(\big(\Upsilon_{u}(\check{V}^{\dagger(u)}_{N,t})\big)^\dagger B^\dagger\Big)\Big\vert \G_{\tu{N}{u},t-m-1}\Big]
=\E\Big[ \Tr\Big((B  \widetilde{\otimes} B)(\Upsilon_{u}(\check{V}^{(u)}_{N,t}) \widetilde{\otimes} \Upsilon_{u}(\check{V}^{\dagger(u)}_{N,t})\Big)\Big\vert \G_{\tu{N}{u},t-m-1}\Big]
\\& = \int_a^b \int_a^b  \Tr\Big((B  \widetilde{\otimes} B)(\Upsilon_{u,\lambda}  \widetilde{\otimes}\Upsilon_{u,\omega})\big(
\E \Big[ (\dmpi{u}{\lambda}{N}{t} \otimes \sum_{s=1}^{t-4m} \tilde{w}_{\bf,t,s}^{(\lambda)} \dmpi{u}{\lambda}{N}{s})
{\bigotimes} ( \sum_{s=1}^{t-4m} \tilde{w}^{(\omega)}_{\bf,t,s}\dmpi{u}{\omega}{N}{s} \otimes\dmpi{u}{\omega}{N}{t} )\Big \vert \G_{\tu{N}{u},t-m-1}\Big] \Big)d\lambda d\omega
%\\& = \int_a^b \int_a^b  \Tr\Big((B  \widetilde{\otimes} B)(\Upsilon_{u,\lambda}  \widetilde{\otimes} \Upsilon_{u,\omega}))
%\E \Big[ (\dmpi{u}{\lambda}{N}{t} \otimes \dmpi{u}{-\omega}{N}{t})  \widetilde{\bigotimes}_\top (\sum_{s=1}^{t-4m} \tilde{w}^{(-\lambda)}_{\bf,t,s}\dmpi{u}{-\lambda}{N}{s} {\otimes} ( \sum_{s=1}^{t-4m} \tilde{w}^{(\omega)}_{\bf,t,s}\dmpi{u}{\omega}{N}{s} )\Big \vert \G_{\tu{N}{u},t-m-1}\Big] \Big)d\lambda d\omega
\\& = \int_a^b \int_a^b  \Tr\Big((B  \widetilde{\otimes} B)(\Upsilon_{u,\lambda}  \widetilde{\otimes} \Upsilon_{u,\omega}))
\E \Big[ (\dmpi{u}{\lambda}{N}{t} \otimes \dmpi{u}{-\omega}{N}{t})  \widetilde{\bigotimes}_\top (\sum_{s=1}^{t-4m} \tilde{w}^{(-\lambda)}_{\bf,t,s}\dmpi{u}{-\lambda}{N}{s} {\otimes} ( \sum_{s=1}^{t-4m} \tilde{w}^{(\omega)}_{\bf,t,s}\dmpi{u}{\omega}{N}{s} )\Big \vert \G_{\tu{N}{u},t-m-1}\Big] \Big)d\lambda d\omega
\end{align*}}
\normalsize{where we used that for $a,b,c,d \in \Hi$,} 
 $  (a \otimes b){\bigotimes}  (c \otimes d) 
 =(a \otimes c)\widetilde{\bigotimes}  (b \otimes d) = (a  \otimes \overline{d}) \widetilde{\bigotimes}_\top (\overline{b} \otimes c)$. By a similar argument as for the first term, we thus find
\begin{align*}
& \Phi^{-2}_{\bf}\sum_{t=1+4m}^{\flo{\eta N}}
\E \Big[\Tr\Big(B\Upsilon_{u}(\check{V}^{(u)}_{N,t})\Big)\Tr\Big(\big(\Upsilon_{u}(\check{V}^{\dagger(u)}_{N,t})\big)^\dagger B^\dagger\Big)
 | \G_{\tu{N}{u},t-m-1}\Big] 
\\&=\frac{4\pi^2\sum_{t=1+4m}^{\flo{\eta N}} \sum_{s=1}^{t-4m} w_{\bf,t,s}^2}{\sum_{t=1}^{N} \sum_{s=1}^{N}  w_{\bf,t,s}^2} \lim_{n\to \infty} \frac{(b-a)}{n}\sum_{i=1}^n \Tr\Big((B  \widetilde{\otimes} B)\mathrm{1}_{\{0,\pi\}}(\lambda_i) (\Upsilon_{u,\lambda_i}\widetilde{\otimes} \Upsilon_{u,\lambda_i})
({\F}^{(u, \lambda_i)}_{m} \widetilde{\otimes}_\top {\F}^{(u, -\lambda_i)}_{m}) \Big) +o_p(1) 
\end{align*}
For the third term, we have
\small{
\begin{align*}
& \E\Big[ \Tr\Big(B \Upsilon_{u}(\check{V}^{\dagger(u)}_{N,t})\Big)
\Tr\Big(\big(\Upsilon_{u}(\check{V}^{(u)}_{N,t})\big)^\dagger B^\dagger\Big)\Big\vert \G_{\tu{N}{u},t-m-1}\Big]
= \E\Big[ \Tr\Big((B  \widetilde{\otimes} B)\Big(\Upsilon_{u}(\check{V}^{\dagger(u)}_{N,t}) \widetilde{\otimes} \Upsilon_{u}(\check{V}^{(u)}_{N,t})\Big)\Big\vert \G_{\tu{N}{u},t-m-1}\Big]
\\
& = \int_a^b \int_a^b  \Tr\Big((B  \widetilde{\otimes} B)(\Upsilon_{u,\lambda} \widetilde{\otimes} \Upsilon_{u,\omega})
\E \Big[ ( \sum_{s=1}^{t-4m} \tilde{w}_{\bf,t,s}^{(\lambda)} \dmpi{u}{\lambda}{N}{s} \otimes \dmpi{u}{\lambda}{N}{t} )
{\bigotimes} (\dmpi{u}{\omega}{N}{t} \otimes\sum_{s=1}^{t-4m} \tilde{w}^{(\omega)}_{\bf,t,s}\dmpi{u}{\omega}{N}{s} )\Big \vert \G_{\tu{N}{u},t-m-1}\Big]\Big) d\lambda d\omega
\end{align*}}\normalsize{}
which yields, similar to the previous term
\begin{align*}
& \Phi^{-2}_{\bf}\sum_{t=1+4m}^{\flo{\eta N}}
\E \Big[\Tr\Big(B \Upsilon_{u}(\check{V}^{\dagger(u)}_{N,t})\Big)
\Tr\Big(\big(\Upsilon_{u}(\check{V}^{(u)}_{N,t})\big)^\dagger B^\dagger\Big)
 | \G_{\tu{N}{u},t-m-1}\Big] 
\\&=\frac{4\pi^2\sum_{t=1+4m}^{\flo{\eta N}} \sum_{s=1}^{t-4m} w_{\bf,t,s}^2}{\sum_{t=1}^{N} \sum_{s=1}^{N}  w_{\bf,t,s}^2} \lim_{n\to \infty} \frac{(b-a)}{n}\sum_{i=1}^n \Tr\Big((B  \widetilde{\otimes} B)\mathrm{1}_{\{0,\pi\}}(\lambda_i) (\Upsilon_{u,\lambda_i} \widetilde{\otimes} \Upsilon_{u,\lambda_i})({\F}^{(u, -\lambda_i)}_{m} \widetilde{\otimes}_\top {\F}^{(u, \lambda_i)}_{m}) \Big) +o_p(1)  
\end{align*}
Finally,\small{
\begin{align*}
& \E\Big[\Tr\Big(B \Upsilon_{u}(\check{V}^{\dagger(u)}_{N,t})\Big)\Tr\Big(\big(\Upsilon_{u}(\check{V}^{\dagger(u)}_{N,t})\big)^\dagger B^\dagger\Big)\Big\vert \G_{\tu{N}{u},t-m-1}\Big]
=\E\Big[ \Tr\Big((B  \widetilde{\otimes} B)(\Upsilon_{u,\omega}(\check{V}^{\dagger(u)}_{N,t}) \widetilde{\otimes} \Upsilon_{u,\omega}(\check{V}^{\dagger(u)}_{N,t})\Big)\Big\vert \G_{\tu{N}{u},t-m-1}\Big]
\\& = \int_a^b \int_a^b  \Tr\Big((B  \widetilde{\otimes} B)(\Upsilon_{u,\lambda} \widetilde{\otimes} \Upsilon_{u,\omega})
\E \Big[ ( \sum_{s=1}^{t-4m} \tilde{w}_{\bf,t,s}^{(\lambda)} \dmpi{u}{\lambda}{N}{s}\otimes \dmpi{u}{\lambda}{N}{t})
{\bigotimes} ( \sum_{s=1}^{t-4m} \tilde{w}^{(\omega)}_{\bf,t,s} \dmpi{u}{\omega}{N}{s} \otimes \dmpi{u}{\omega}{N}{t}) \Big \vert \G_{\tu{N}{u},t-m-1}\Big] \Big)d\lambda d\omega,
\end{align*}}\normalsize{}
almost surely, from which we find
\begin{align*}
& \Phi^{-2}_{\bf} \sum_{t=1+4m}^{\flo{\eta N}}
\E \Big[ \Tr\Big(B \Upsilon_{u}(\check{V}^{\dagger(u)}_{N,t})\Big)\Tr\Big(\big(\Upsilon_{u}(\check{V}^{\dagger(u)}_{N,t})\big)^\dagger B^\dagger\Big)
 | \G_{\tu{N}{u},t-m-1}\Big] 
\\&=\frac{4\pi^2\sum_{t=1+4m}^{\flo{\eta N}} \sum_{s=1}^{t-4m} w_{\bf,t,s}^2}{\sum_{t=1}^{N} \sum_{s=1}^{N}  w_{\bf,t,s}^2} \lim_{n\to \infty} \frac{(b-a)}{n}\sum_{i=1}^n \Tr\Big((B  \widetilde{\otimes} B)(\Upsilon_{u,\lambda_i} \widetilde{\otimes} \Upsilon_{u,\lambda_i})({\F}^{(u, \lambda_i)}_{m} \widetilde{\otimes} {\F}^{(u, \lambda_i)}_{m})  +o_p(1)  
\end{align*}
A change of variables yields $
\sum_{t=1}^{k} \sum_{s=1}^{k}   w_{\bf,t,s}^2 =\sum_{|h|< k} (k-|h|) w^2(\bf(h))$
and
\begin{align*}
\frac{\sum_{t=1+4m}^{\flo{\eta N}} \sum_{s=1}^{t-4m}  w{\bf,t,s}^2}{\sum_{t=1}^{N} \sum_{s=1}^{N} w_{\bf,t,s}^2}
& = \frac{1}{2}\frac{\frac{\sum_{|h|< \flo{\eta N}} (\flo{\eta N}-|h|)  w(\bf h )^2}{\flo{\eta N}\bf^{-1} \int |w(x)|^2 d x} \flo{\eta N}\bf^{-1} \int |w(x)|^2 d x}{\frac{\sum_{|h|< N} (N-|h|)  w(\bf h )^2}{ N \bf^{-1} \int |w(x)|^2 d x} N \bf^{-1} \int |w(x)|^2 d x} 
\to \eta/2 
\end{align*}
where we used that the sum is zero for $\eta < 1/N$ and that $\sup_{\eta \in (1/N,1]}|\frac{\flo{\eta N}}{\eta N}-1 | = O(1/N)$. From Fubini's theorem and continuity of the inner product we thus obtain
\begin{align*}
 \sum_{t=1}^{\flo{\eta N}}\E\Big[ \Big\vert f_B\big(\check{W}^{(u)}_{N,t}\big)\Big\vert^2  \Big\vert \G_{\tu{N}{u},t-1}\Big] \overset{p}{\to}  \eta&  \Tr\Big( (B \widetilde{\otimes} B) \Gamma^{(u)}_{\Upsilon,m} \Big) 
\tageq \label{eq:concov}
\end{align*}
where 
\begin{align*}
\Gamma^{(u)}_{\Upsilon,m}& =8\pi^2 \int_a^b \Big\{(\Upsilon_{u,\omega} \widetilde{\otimes} \Upsilon_{u,\omega}) (\F_m^{(u,\omega)} \widetilde{\otimes} \F_m^{(u,\omega)} )
+\mathrm{1}_{\{0,\pi\}}(\omega) \Big[(\Upsilon_{u,\omega}\widetilde{\otimes} \Upsilon_{u,\omega})
({\F}^{(u, \omega)}_{m} \widetilde{\otimes}_\top {\F}^{(u, \omega)}_{m}) \Big]\Big\}d\omega
\end{align*}
This verifies \eqref{itm:concov}. \eqref{itm:concovp} can be verified similarly. Indeed, we obtain

\begin{align*}
f_{B}(\check{W}^{(u)}_{m,N,t}) {f_{\overline{B}}(\check{W}^{(u)}_{m,N,t})} &=f_{B}
\Big(\check{W}^{(u)}_{m,N,t}\Big) \overline{f_{B}\Big(~\overline{\check{W}^{(u)}_{m,N,t}}
~\Big) }
\\& 
=\Phi^{-2}_{\bf}\Big[ \Tr\Big(B\Upsilon_{u}(\check{V}^{(u)}_{N,t}) \widetilde{\otimes} B \overline{\Upsilon}_{u}(\overline{\check{V}}^{(u)}_{N,t}) \Big)+ \Tr\Big(B\Upsilon_{u}(\check{V}^{(u)}_{N,t}) \widetilde{\otimes} B \overline{\Upsilon}_{u}(\overline{\check{V}}^{\dagger(u)}_{N,t}) \Big)
\\& \phantom{\Phi^{-2}_{\bf}\Big[}
+\Tr\Big(B \Upsilon_{u}(\check{V}^{\dagger(u)}_{N,t}) \widetilde{\otimes}  B \overline{\Upsilon}_{u} (\overline{\check{V}}^{(u)}_{N,t}) \Big)+\Tr\Big(B \Upsilon_{u}(\check{V}^{\dagger(u)}_{N,t}) \widetilde{\otimes}  B \overline{\Upsilon}_{u}(\overline{\check{V}}^{\dagger(u)}_{N,t}) \Big)\Big].
\end{align*} 

Similar calculations as for the conditional covariance then yield
\begin{align*}
 \sum_{t=1}^{\flo{\eta N}}\E\Big[  f_B\big(\check{W}^{(u)}_{N,t}\big)
f_{\overline{B}}\big(\check{W}^{(u)}_{N,t}\big)  \Big\vert \G_{\tu{N}{u},t-1}\Big] \overset{p}{\to}  \eta&  \Tr\Big( (B  \widetilde{\otimes }B) \Sigma^{(u)}_{\Upsilon,m}  \Big) 
\tageq \label{eq:concov}
\end{align*}
where 
\begin{align*}
\Sigma^{(u)}_{\Upsilon,m}& =8\pi^2 \int_a^b \Big\{
 \mathrm{1}_{\{0,\pi\}}(\omega) \big[(\Upsilon_{u,\omega} \widetilde{\otimes} \overline{\Upsilon}_{u,\omega})({\F}^{(u, \omega)}_{m} \widetilde{\otimes} {\F}^{(u, \omega)}_{m})\big]+ 
(\Upsilon_{u,\omega} \widetilde{\otimes} \overline{\Upsilon}_{u,\omega}) (\F_m^{(u,\omega)} \widetilde{\otimes}_\top \F_m^{(u,-\omega)})\Big\}d\omega.
\end{align*}

\end{proof}

\begin{proof}[Proof of \autoref{lem:approxindep}]
Let $q \ge 2$ and recall the representation of 
$\mathcal{Y}^{(u)}_{m,N}$ in \eqref{eq:BMcont}. Applying Jensen's and Minkowski's inequality yields
\begin{align*}
&\E\Big(\bignorm{\frac{1}{M^{1/2}} \sum_{u\in U_M} \mathcal{Y}^{(u)}_{m,N}-\frac{1}{M^{1/2}} \sum_{u\in U_M} \mathcal{Y}^{(u)}_{m,L}}_{C_{S_1}}\Big)^q
\\& \lesssim M^{-q/2}\int_a^b 
 \E\Big(\sup_{\eta \in [0,1]}  \bignorm{\sum_{i=1}^{M}\sum_{t=1}^{\flo{\eta N}}\mathcal{\Phi}^{-q}_{\bf } \Upsilon_{u_i,\omega}(V^{(u_i,\omega)}_{m,N,t}) -\sum_{i=1}^{M}  \sum_{t=1}^{\flo{\eta L}}\mathcal{\Phi}^{-1}_{b_{L} } \Upsilon_{u_i,\omega}(V^{(u_i,\omega)}_{m,L,t} ) }_{S_1}\Big)^q d\omega
\\& + 
 M^{-q/2} \int_a^b 
 \E\Big(\sup_{\eta \in [0,1]}  \bignorm{\sum_{i=1}^{M}\Big[\mathcal{\Phi}^{-1}_{\bf } f(\eta,N)\Upsilon_{u_i,\omega}(V^{(u_i,\omega)}_{m,N,\flo{\eta N}+1}) -\mathcal{\Phi}^{-1}_{b_L}  f(\eta,L)\Upsilon_{u_i,\omega}(V^{(u_i,\omega)}_{m,L,\flo{\eta L}+1} )\Big] }_{S_1}\Big)^q d\omega \tageq \label{eq:micm1} 
\end{align*}
 For the first term we further decompose the error as
\begin{align*}
&M^{-q/2}\mathcal{\Phi}^{-q}_{\bf }\E\Big(\sup_{\eta \in [0,1]}  \bignorm{\sum_{i=1}^{M}\sum_{t=1}^{\flo{\eta N}}\Upsilon_{u_i,\omega}(V^{(u_i,\omega)}_{m,N,t}) -\Big[1-1+\frac{\mathcal{\Phi}_{\bf }}{\mathcal{\Phi}_{b_L }}\Big]\sum_{i=1}^{M}\sum_{t=1}^{\flo{\eta L}}\Upsilon_{u_i,\omega}(V^{(u_i,\omega)}_{m,L,t})  }_{S_1}\Big)^q 
\\& ~~~~~~~~~~~~~~~~~ ~~~~~~~~ \le    2^{q-1}M^{-q/2}\mathcal{\Phi}^{-q}_{\bf }\E\Big(\sup_{\eta \in [0,1]}  \bignorm{\sum_{i=1}^{M}\sum_{t=1}^{\flo{\eta N}}\Upsilon_{u_i,\omega}(V^{(u_i,\omega)}_{m,N,t}) -\sum_{i=1}^{M}\sum_{t=1}^{\flo{\eta L}}\Upsilon_{u_i,\omega}(V^{(u_i,\omega)}_{m,L,t})  }_{S_1}\Big)^q 
\tageq \label{eq:micm2a} \\
&  ~~~~~~~~~~~~~~~~~  ~~~~~~~~~~~~~~~~~    +
 2^{q-1} M^{-q/2}\Big[1-\frac{\mathcal{\Phi}_{\bf }}{\mathcal{\Phi}_{b_L }}\Big]^q\mathcal{\Phi}^{-q}_{\bf }\E\Big(\sup_{\eta \in [0,1]}  \bignorm{\sum_{i=1}^{M} \sum_{t=1}^{\flo{\eta L}}\Upsilon_{u_i,\omega}(V^{(u_i,\omega)}_{m,L,t}  )}_{S_1}\Big)^q  \tageq \label{eq:micm2}
\end{align*}
The  term in  \eqref{eq:micm2} converges to zero,
as $N \to \infty$,  since, by \autoref{lem:Vmax} below, we have \[
\E\Big(\max_{1\le k \le L}  \bignorm{\sum_{i=1}^{M} \sum_{t=1}^{k}\Upsilon_{u_i,\omega}(V^{(u_i,\omega)}_{m,L,t}  )}_{S_1}\Big)^q =O(M^{q/2}N^{q/2} \bf^{-q/2})
\]
  and thus we get
$$ O \Big ( 
M^{-q/2}\Big[1-\frac{\mathcal{\Phi}_{\bf }}{\mathcal{\Phi}_{b_L }}\Big]^q\mathcal{\Phi}^{-q}_{\bf }
 \Big ) \cdot  O \big (M^{q/2}N^{q/2} \bf^{-q/2}  \big )
 =  O \Big ( 
 \Big[1-\frac{\mathcal{\Phi}_{\bf}}{\mathcal{\Phi}_{b_L }}\Big]^q
 \Big ) $$
\autoref{lem:tedious} below then yields that the  term in \eqref{eq:micm2a} is of order $o(1)$ as $N \to \infty$ for any fixed $m$. Finally, it follows from \autoref{lem:tedious2} that the second term of \eqref{eq:micm1} converges  to zero as $N \to \infty$ for any fixed $m$ under \autoref{as:bandwidth}. This proofs the statement. \end{proof}

\section{Concentration bounds and inequalities} 
\label{sec:Cbounds}
\def\theequation{C.\arabic{equation}}
\setcounter{equation}{0}

In this section, we make use of the following shorthand notation
\begin{equation}
\label{eqnr106}
\tilde{X}^{(u,t)}_{v;t}=X^{(\tu{N}{u}+t)/T}_{\tu{N}{v}+t}, \quad
 {X}_{u;t}= X_{\tu{N}{u}+t,T}  \quad \barbelow{\ddot{X}}^{(u)}_{l}= {X}^{(u)}_{\tu{N}{u_{i_l}}+t_l},
\end{equation}  
and we recall that  $\tilde{\F}_{u,\omega}(\eta)$
is defined  as in  \eqref{eq:Fint} but where the nonstationary process is replaced with its auxiliary counterpart. 
Furthermore,
we also make use of the non-interpolated partial sum
\[\hat{\F}_{k}^{(u,\omega)}=\frac{1}{k}\sum_{s=1}^{k}\Big(\sum_{t=1}^{k} 
\tilde{w}^{(\omega)}_{\bf,s,t}
({X}_{u;s} \otimes {X}_{u;t})\Big) \tageq \label{eq:Fhatnonstat}~,
\]
and its auxiliary stationary counterpart
\[
\tilde{\F}_{k}^{(u,\omega)}=\frac{1}{k} \sum_{s=1}^{k}\sum_{t=1}^{k}\tilde{w}^{(\omega)}_{\bf,s,t} (\tilde{X}^{(u,s)}_{u;s} \otimes \tilde{X}^{(u,t)}_{u;t}). \tageq \label{eq:Fstat}
\]

\begin{thm} \label{thm:maxPS1}
Suppose \autoref{as:depstrucnonstat}-\ref{as:depstruc} hold with $p \ge 6$ and
that \autoref{as:smooth}-\ref{as:mappings} are satisfied. Then, for all $\omega \in [a,b]$,
\begin{align*}
&\sup_{\eta\in I} \bignorm{\sum_{u \in U_M} \flo{\eta N} \Upsilon_{u,\omega} 
(\hat{\F}_{u,\omega}(\eta)-\E\tilde{\F}_{u,\omega}(\eta))}_{S_1}  =O_p((MN)^{1/2}\bf^{-1/2});
\\&\sup_{\eta\in I} \bignorm{\sum_{u \in U_M} \flo{\eta N} \Upsilon_{u,\omega} 
(\E\tilde{\F}_{u,\omega}(\eta)-\F_{u,\omega})}_{S_1}= o((MN)^{1/2}\bf^{-1/2}).
\end{align*}
Furthermore,  for any fixed $u \in (0,1)$
\begin{align*}
&\sup_{\eta\in I} \bignorm{\flo{\eta N} \Upsilon_{u,\omega} 
(\hat{\F}_{u,\omega}(\eta)-\E\tilde{\F}_{u,\omega}(\eta))}_{S_1}  =O_p(N^{1/2}\bf^{-1/2});
\\&\sup_{\eta\in I} \bignorm{ \flo{\eta N} \Upsilon_{u,\omega} 
(\E\tilde{\F}_{u,\omega}(\eta)-\F_{u,\omega})}_{S_1}= o(N^{1/2}\bf^{-1/2}).
\end{align*}
\end{thm}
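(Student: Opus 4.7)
The plan is to separate the bound into a stochastic (variance) contribution and a bias contribution by writing
$$\hat{\F}_{u,\omega}(\eta) - \E\tilde{\F}_{u,\omega}(\eta) = \bigl(\hat{\F}_{u,\omega}(\eta) - \tilde{\F}_{u,\omega}(\eta)\bigr) + \bigl(\tilde{\F}_{u,\omega}(\eta) - \E\tilde{\F}_{u,\omega}(\eta)\bigr),$$
handling the first (nonstationarity) summand by bounding $\|X_{u;t}-\tilde X^{(u,t)}_{u;t}\|_{\Hi,p}$ via \autoref{def:locstat} with $\zeta=1$ together with \autoref{as:depstrucnonstat}. After applying $\Upsilon_{u,\omega}$ (which has finite $L^p$-integrable operator norm by \autoref{as:mappings}), multiplying by $\lfloor\eta N\rfloor$ and summing over $u\in U_M$, this contribution is of order $MN^2/T$, which is $o((MN)^{1/2}\bf^{-1/2})$ under \autoref{as:bandwidth}.

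For the dominant stochastic term, I would reduce to the martingale approximation already constructed in Section \ref{sec821}. Specifically, \autoref{lem:martapproxprob} together with the $m$-dependent martingale decomposition in \eqref{eq:dmpi}--\eqref{eq:Mar} yields
$$\lfloor\eta N\rfloor\bigl(\tilde{\F}_{u,\omega}(\eta) - \E\tilde{\F}_{u,\omega}(\eta)\bigr) = \ldmi{}{u}{}{N,\lfloor\eta N\rfloor} + \ldmi{\dagger}{u}{}{N,\lfloor\eta N\rfloor} + R^{(u,\omega)}_{m,N}(\eta),$$
where the remainder is $o_p(\Phi_\bf)$ uniformly in $\eta$ after letting $m\to\infty$ following $N\to\infty$. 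Summing over $u\in U_M$ and applying $\Upsilon_{u,\omega}$, the task reduces to controlling $\sup_\eta\|\sum_{u\in U_M}\Upsilon_{u,\omega}(\ldmi{}{u}{}{N,\lfloor\eta N\rfloor}+\ldmi{\dagger}{u}{}{N,\lfloor\eta N\rfloor})\|_{S_1}$. Since $\|\cdot\|_{S_1}$ is convex, this is a nonnegative submartingale in $\lfloor\eta N\rfloor$, so Doob's $L^2$-inequality reduces the sup to the terminal second moment. Because $MN\le T$ under \autoref{as:bandwidth} the index intervals $\{\tu{N}{u}+1,\ldots,\tu{N}{u}+N\}$ for distinct $u\in U_M$ are essentially disjoint (the $m$-boundary correction being negligible), making the terminal values $\{\ldmi{}{u}{}{N,N}\}_{u\in U_M}$ independent. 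Invoking \autoref{lem:Burkh}(ii) with $p=q=2$ and $A_u=\Upsilon_{u,\omega}$ then gives
$$\E\Bignorm{\sum_{u\in U_M}\Upsilon_{u,\omega}\bigl(\ldmi{}{u}{}{N,N}\bigr)}_{S_1}^2 \le K_2^2\Big(\sum_r\sqrt{\sum_{u\in U_M}\|\Upsilon_{u,\omega}\|_\infty^2\bignorm{\ldmi{}{u}{}{N,N}(e_r)}_{\Hi,2}^2}\Big)^{\!2}\!,$$
and a Hilbert-space Burkholder bound for each inner martingale produces $\|\ldmi{}{u}{}{N,N}(e_r)\|_{\Hi,2}\lesssim\Phi_\bf\|\dmpi{u}{\omega}{N}{0}(e_r)\|_{\cnum,4}$; the outer sum is finite by \autoref{as:depstruc}II), yielding the required $O(M\Phi_\bf^2)=O(MN/\bf)$ and hence $\sup_\eta\|\cdot\|_{S_1}=O_p((MN)^{1/2}\bf^{-1/2})$.

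For the bias statement, I would write
$$\E\tilde{\F}_{u,\omega}(\eta)-\F_{u,\omega} = \frac{1}{\lfloor\eta N\rfloor}\!\!\sum_{s,t=1}^{\lfloor\eta N\rfloor}\!\!\tilde w^{(\omega)}_{\bf,s,t}\bigl(\C^{((\tu{N}{u}+t)/T)}_{s-t}-\C^{(u)}_{s-t}\bigr) + O(\bf^{\iota}) + O\bigl((\lfloor\eta N\rfloor\bf)^{-1}\bigr),$$
where the first piece is $O(N/T)$ by Lipschitz continuity of $u\mapsto\C^{(u)}_h$ (following from \autoref{as:depstruc}), the second is the standard kernel bias from \autoref{as:Weights} and the $\iota$-moment condition in \autoref{as:smooth}, and the third accounts for truncation. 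Multiplying by $\lfloor\eta N\rfloor$ and summing over $u\in U_M$ gives an overall bias of order $O(MN\bf^{\iota}+M/\bf+MN^2/T)$, which is $o((MN)^{1/2}\bf^{-1/2})$ exactly by the rate constraints in \autoref{as:bandwidth}. The fixed-$u$ variant follows by specialization with $M=1$.

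The main obstacle will be securing the Doob-type maximal inequality in $S_1(\Hi)$ with a sharp constant: because $S_1(\Hi)$ has only type $1$ and cotype $2$, the standard Burkholder estimate for $2$-smooth spaces fails, and the substitute in \autoref{lem:Burkh}(ii) is valid only because $p=2$ is an integer, which is what forces the $\sum_r\sqrt{\cdots}$ form bounded under \autoref{as:depstruc}II). A secondary but unavoidable nuisance is that the boundary interpolation terms in \eqref{eq:Fint} must be absorbed into the martingale structure without inflating the variance by a factor of $N$; since they involve only $O(1)$ additional tensor-product terms, this can be handled by the same calculation applied to an augmented increment sequence.
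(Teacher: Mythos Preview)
Your decomposition into nonstationarity, variance, and bias pieces matches the paper's, and your conclusions are correct under \autoref{as:bandwidth}. However, your route for the dominant stochastic term is genuinely different from the paper's, and a few of your intermediate rate claims are imprecise.

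\textbf{Variance term.} The paper does \emph{not} route through the martingale approximation of Section~\ref{sec821}. Instead, it proves a direct $L^\gamma$ moment bound (\autoref{lem:approxvarint}, built from \autoref{lem:approxvar} and \autoref{lem:diag}) by applying the physical-dependence projections $P_j$ directly to the bilinear form $\sum_{u,t}G_{t,u}$, invoking \autoref{lem:Burkh}(ii), and then using M\'oricz's maximal inequality to pass from fixed $k$ to $\max_{1\le k\le N}$. This handles the sum over $u\in U_M$ and the supremum over $\eta$ in a single estimate, without $m$-truncation or any appeal to block independence, and delivers the bound in $L^\gamma$ for $\gamma=p/2\ge 3$ rather than only in $L^2$. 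Your approach---martingale approximation, then Doob for the supremum and block independence plus \autoref{lem:Burkh}(ii) across $u$---is valid, but two points need care: (i) the relevant pointwise-in-$\omega$ martingale remainder is controlled by \autoref{thm:approx}, not by \autoref{lem:martapproxprob} (which is the $\omega$-integrated statement); (ii) you must verify that $k\mapsto\sum_{u}\Upsilon_{u,\omega}\bigl(V^{(u,\omega)}_{m,N,1}+\cdots+V^{(u,\omega)}_{m,N,k}\bigr)$ is a martingale with respect to a \emph{joint} filtration, which holds only after passing to the $m$-dependent version with disjoint index blocks (the mechanism behind \autoref{lem:approxindep}), and the residual from that reduction must be controlled separately for fixed $m$ via \autoref{thm:approx}. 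Your route buys modularity by recycling existing machinery; the paper's route buys sharper moments and a self-contained argument independent of the martingale construction.

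\textbf{Nonstationarity and bias.} Your claimed order $MN^2/T$ for the locally-stationary approximation is a loose but valid upper bound; the paper's \autoref{lem:approxloc} obtains the tighter $MN\,T^{-1}\bf^{-1/2}$ via \autoref{lem:Burklin} on the inner sum. Your bias decomposition is correct in spirit, but the claim that the Lipschitz piece is $O(N/T)$ would require summability in $h$ of the Lipschitz constants of $u\mapsto\C^{(u)}_h$, which is not directly implied by the stated assumptions; the paper's \autoref{lem:bias} instead bounds this piece by $O(N^2 T^{-1}\bf^{-1/2})$ using a Burkholder step inside the expectation, and it is precisely this rate---not yours---that forces the constraint $N=o(T^{2/3}M^{-1/3})$ embedded in \autoref{as:bandwidth}. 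Your truncation term $O((\lfloor\eta N\rfloor\bf)^{-1})$ also does not match the paper's $O(\lfloor\eta N\rfloor^{-1})$, though your looser bound $O(M/\bf)$ remains $o((MN)^{1/2}\bf^{-1/2})$ under \autoref{as:bandwidth}, so the final assertion survives.
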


\begin{proof}[Proof of \autoref{thm:maxPS1}]
We have
 \begin{align*}
\hat{\F}_{u,\omega}(\eta)-\tilde{\F}_{u,\omega}(\eta)
+\tilde{\F}_{u,\omega}(\eta)
-\E\tilde{\F}_{u,\omega}(\eta)
+\E\tilde{\F}_{u,\omega}(\eta)
 -\F_{u,\omega}.
\end{align*}
\autoref{lem:approxloc},  \autoref{lem:approxvarint} and \autoref{lem:bias}, yield respectively
\begin{align*}
\E \big(\sup_{\eta}\bignorm{\sum_{u \in U_M} \flo{\eta N} \Upsilon_{u,\omega}\big(\hat{\F}^{(u,\omega) }(\eta)
 -\tilde{\F}^{(u,\omega)}(\eta)\big)}_{S_1}\big)^\gamma &=O( {(MN)}^{\gamma}  T^{-\zeta \gamma}  \bf^{-\gamma/2});
 \tageq  \label{eq:statap}
\\ 
\E\Big( \sup_{\eta}\bignorm{\sum_{u \in U_M} \flo{\eta N} \Upsilon_{u,\omega}\Big(\tilde{\F}^{(u,\omega)}(\eta)
 -\E\tilde{\F}^{(u,\omega)}(\eta)\Big)}_{S_1}\Big)^{\gamma} &=O( (MN)^{\gamma/2} \bf^{-\gamma/2});  \tageq \label{eq:varer}
\\
\sup_{\eta}\bignorm{\sum_{u \in U_M}  \flo{\eta N} \Upsilon_{u,\omega}\big(\E\tilde{\F}^{(u,\omega)}(\eta)
 -{\F}_{u,\omega}\big)}_{S_1} &=O\Big(\frac{M N^{2}}{T^\zeta\bf^{1/2}}\Big)+O(M)+O(MN\bf^{2}\big).
 \tageq \label{eq:bi}
\end{align*}
Hence, 
\begin{align*}
&\sup_{\eta \in I}\bignorm{\sum_{u \in U_T} \flo{\eta N} \Upsilon_{u,\omega} 
(\hat{\F}_{u,\omega}(\eta)-\F_{u,\omega})}_{S_1} =
O_p((MN)^{1/2} \bf^{-1/2}) +O\Big(\frac{M N^{2}}{T^\zeta\bf^{1/2}}\Big)+O(M)+O(MN\bf^{2}\big),
\end{align*}
where we used that \eqref{eq:statap} is absorbed in \eqref{eq:varer}. Note that we require the last three terms to be of lower order than the first term, that  is 
\begin{align*}
&MN^2 T^{-\zeta}= o(M^{1/2}N^{1/2}) \Rightarrow \frac{M^{1/2}N^{3/2}}{T^\zeta} =o(1) \Rightarrow N=o(T^{2/3\zeta} M^{-1/3})~, \tageq \label{eq:bias12}
\\
&M =o(M^{1/2}N^{1/2}b^{-1/2}) \Rightarrow \frac{M \bf}{N} = M N^{-\kappa-1} =o(1)  \Rightarrow M=o(N^{1+\kappa})~, \tageq \label{eq:bias22}\\
&MNb^\iota= o(M^{1/2}N^{1/2}b^{-1/2}) \Rightarrow MN \bf^{2\iota}= M N^{1-(2\iota+1)\kappa} = o(1) \Rightarrow M = o(N^{(2\iota+1)\kappa-1}) ~.
\tageq \label{eq:bias32}
\end{align*}
Under \autoref{as:bandwidth}, \eqref{eq:bias22} is immediately satisfied, whereas $M \to \infty$ together with
\eqref{eq:bias32} 
implies the necessary condition $\kappa >1/(2\iota+1)$, 
and hence appears in \autoref{as:bandwidth}. Furthermore, to determine the rate of $M$ with respect to $N$, we have to distinguish cases since both $M=o(N^{1-\kappa})$ and \eqref{eq:bias32} must be satisfied.  
Observe that
$(2\iota+1)\kappa-1 \ge 1-\kappa$ if $\kappa\ge 1/(1+\iota)$ and so in this case we must ensure $M=o(N^{1-\kappa})$.
Using then that $N=T^{\alpha}$, we obtain
that 
 \eqref{eq:bias12} holds if
$\alpha< \frac{2\zeta}{4-\kappa}$
, that is
$$
N^3 M T^{-2\zeta}  = 
o( N^{4-k}T^{-2\zeta} ) = o( T^{\alpha(4-\kappa )-2\zeta}) =o(1).
$$
On the other hand, if 
$\kappa <1/(1+\iota)$, then $M = o(N^{(2\iota+1)\kappa-1})$ and \eqref{eq:bias12} holds if $\alpha <\frac{2\zeta}{(2\iota+1)\kappa-1+3 }$, that is
$$
N^3 M T^{-2\zeta}  = 
o( N^{(2\iota+1)\kappa-1+3}T^{-2\zeta}) = o(T^{\alpha((2\iota+1)\kappa-1+3)-2\zeta }) =o(1).
$$
The second part of the statement follows similarly: see also Remark  \ref{Rem:fixu}.
\end{proof}

\begin{thm}\label{thm:maxdev}
Suppose \autoref{def:locstat} with $\zeta=1$, \autoref{as:depstrucnonstat}-\autoref{as:depstruc}  with $p\ge 6$ and \autoref{as:Weights} holds true. In addition suppose that $\sum_{r\in \nnum}\sum_{j=l}^{\infty} \nu^{X_{\cdot}^{\cdot}(e_r)}_{\cnum,p}(j) =O({(l+1)}^{-\rho})$, $0<\rho<1$. Then
\begin{align*}\sup_{u \in U_M} \sup_{\omega }\max_{1\le k\le N}\bignorm{k(\hat{\F}^{(u,\omega)}_k-\E\tilde{\F}^{(u,\omega)}_k)}_{S_1}
&= O_p\Bigg(\big(\bw^\gamma T \log(N)^{2\gamma+1}+\mathcal{R}_{N,\bf,\gamma} \mathrm{1}_{\{\rho<1/2-1/\gamma\}}\big)^{1/\gamma}\Bigg) 
\\&+O_p\Big( \big(T \bw^{3\gamma/4+1/2}\log(\bw)^{2\gamma+1}\big)^{1/\gamma}\Big)+O_p\Big(\sqrt{N\bw \log{T}}\Big)
\\&+  O_p(\frac{M^{1/\gamma} N \bw^{1/2+1/\gamma} }{T}),
\end{align*}
where $\gamma=p/2$, $\tilde{b}=\flo{1/\bf}$ and 
$ \mathcal{R}_{N,\bf,\gamma}=T\bw N^{\gamma(1/4-\rho/2)-1/2} \bw^{\gamma(3/4-\rho /2)-1/2} \log_2(N/\bw)^{2\gamma+1}$.
\end{thm}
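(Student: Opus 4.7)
\textbf{Proof plan for Theorem \ref{thm:maxdev}.}
My plan is to first replace $\hat{\F}_k^{(u,\omega)}$ by its stationary analogue $\tilde{\F}_k^{(u,\omega)}$, using the approximation bound from \autoref{lem:approxloc} to absorb the nonstationarity error (this is what eventually contributes the last $O_p(M^{1/\gamma} N \bw^{1/2+1/\gamma}/T)$ term, via a union bound over $|U_M| = M$ points). After this reduction it suffices to bound
\[
\sup_{u \in U_M}\sup_{\omega}\max_{1 \le k \le N}\bignorm{k\bigl(\tilde{\F}_k^{(u,\omega)}-\E\tilde{\F}_k^{(u,\omega)}\bigr)}_{S_1}.
\]
The three suprema will be handled separately: (i) for the continuous variable $\omega$, the mapping $\omega \mapsto \tilde{\F}_k^{(u,\omega)}$ is Lipschitz in $\omega$ with respect to $\norm{\cdot}_{S_1}$ with a Lipschitz constant that is polynomial in $T$ (because the weights $\tilde w^{(\omega)}_{\bf,s,t}$ are smooth in $\omega$), so that sampling on a grid of cardinality $T^c$ costs only an extra $\log T$ factor; (ii) for $u \in U_M$ we simply union-bound; (iii) for the maximum over $k$, the standard dyadic/chaining approach on $\{1,\ldots,N\}$ converts $\max_k$ into a sum over $\log_2 N$ tails, which is the source of the $\log(N)^{2\gamma+1}$ factor.

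Once reduced to a single triple $(u,\omega,k)$, the quantity $k\bigl(\tilde{\F}_k^{(u,\omega)}-\E\tilde{\F}_k^{(u,\omega)}\bigr)$ is a centered quadratic form in the stationary sequence $\{\tilde X^{(u,s)}_{u;s}\}$. The next step is to expand it via the martingale-difference decomposition $P_j$ as in \eqref{eq:dmuv}, and then perform an $m$-dependent truncation with $m \asymp \tilde b$ (so that $m$ captures the effective support of $w(\bf \cdot)$). The truncation error is controlled by the tail $\sum_{j \geq m}\nu^{X^{\cdot}_{\cdot}}_{\cnum,p}(j) = O(m^{-\rho})$, which is precisely where the assumption \eqref{zzeq3} feeds in, and it is the source of the term $\mathcal R_{N,\bf,\gamma}$ appearing in the small-$\rho$ regime. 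After truncation, the quadratic form is decomposed into ``diagonal" blocks (which are sums of independent $m$-dependent summands, one per block of length $\tilde b$) plus cross terms; the cross terms can be written as martingale sums indexed by $t$ in which the summands $D^{(u,\omega)}_{t} \otimes \sum_{s < t} \tilde w^{(\omega)}_{\bf,s,t} D^{(u,\omega)}_{s}$ lie in $S_1(\Hi)$ with controlled moments.

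The key probabilistic tool is then \autoref{lem:expin}, the Fuk--Nagaev inequality in the cotype-$2$ Banach space $S_1(\Hi)$. Applied to each block of independent $m$-dependent summands (or to each martingale piece after a Freedman-type argument for bounded-martingale analogues), it yields a mixed tail of the form
\[
\mathbb{P}\bigl(\|\cdot\|_{S_1} \ge x\bigr) \leq \exp\Bigl(-\frac{x^2}{(2+\beta)\Lambda_n}\Bigr) + C\,\E\|\cdot\|^{\gamma}_{S_1}\,x^{-\gamma}.
\]
Choosing the threshold $x$ to balance the two tails: the Gaussian tail produces, after the union bound over the $T \cdot M$-many grid points $(u,\omega,k)$, a rate of order $\sqrt{N\bw \log T}$ (this accounts for the variance of the main martingale sum, whose operator $\Lambda_n$ is bounded by $O(N/\bw)$); the polynomial tail, after the same union bound and using the $\op^p$-moment bound $\E\|\cdot\|_{S_1}^\gamma = O((N/\bw)^{\gamma/2})$ together with the truncation error, produces the first two polynomial terms, specifically $(\tilde b^\gamma T \log(N)^{2\gamma+1})^{1/\gamma}$ from the main quadratic part and $(T\bw^{3\gamma/4+1/2}\log(\bw)^{2\gamma+1})^{1/\gamma}$ from the tail blocks controlled by the dependence condition.

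The main obstacle, I expect, will be the bookkeeping required to estimate $\E\|\cdot\|^\gamma_{S_1}$ of the martingale sums sharply. This is where the cotype-$2$ limitation of $S_1(\Hi)$ bites: unlike in $S_2$, one cannot directly apply a Burkholder-type $L^\gamma$ inequality and obtain the expected order $(N/\bw)^{\gamma/2}$; instead one must appeal to \autoref{lem:Burkh}(ii) with the ONB-indexed sum, which forces the integer $\gamma = p/2$ and introduces the constraint $p \ge 6$, and which also accounts for the mild loss reflected in the exponent $3\gamma/4+1/2$ in the second summand. Together with the interplay between $\rho$ and $\gamma$ that determines when the truncation term $\mathcal R_{N,\bf,\gamma}$ is dominant, this calibration of moment bounds in $S_1$ is the technical heart of the proof.
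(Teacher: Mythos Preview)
Your overall architecture—reduce to the stationary analogue via \autoref{lem:approxloc}, discretize in $\omega$, union-bound over $u\in U_M$, chain dyadically in $k$, and invoke the Fuk--Nagaev inequality \autoref{lem:expin}—matches the paper's. But two implementation choices diverge in ways that matter.

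\textbf{Discretization in $\omega$.} The paper does not use a generic Lipschitz bound. It exploits that $\omega\mapsto k\tilde{\F}_k^{(u,\omega)}$ is a \emph{trigonometric polynomial of degree at most $\tilde b$} and applies a Bernstein-type result (an adaptation of Zygmund, Thm~7.28, Ch.~X) to bound $\sup_\omega$ by twice the maximum over the grid $\omega_j=\pi j/(2\tilde b)$, $0\le j\le 4\tilde b$. This produces a grid of size $O(\tilde b)$, not $O(T^c)$; that is exactly what makes the union-bound factor $(4\tilde b+1)M$ combine with the $N/\tilde b$ from the dyadic chaining to yield $MN\sim T$ in the polynomial-tail term. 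A Lipschitz grid of size $T^c$ would inflate this factor.

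\textbf{Handling of dependence.} This is the more serious gap. You propose $m$-dependent truncation with $m\asymp\tilde b$ and then a ``Freedman-type'' martingale argument for the cross terms. The paper proceeds differently: it partitions $\{1,\dots,k\}$ into blocks $Q_i$ of length $\tilde b$, introduces $l$-block-dependent approximants $Q_{l,i}=\E[Q_i\mid\G^{i\tilde b}_{(i-l-1)\tilde b+1}]$, and telescopes $Q_i-\E Q_i$ over dyadic $l$ (encapsulated in the auxiliary \autoref{thm:quadtail}). At every level of the telescope the summands $H_{l,h}$ are \emph{independent} for well-separated $h$, so one splits into even/odd $h$ and applies \autoref{lem:expin} to sums of genuinely independent $S_1(\Hi)$-valued variables. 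This is what delivers the sub-Gaussian tail $\exp\bigl(-x^2/((2+\beta)N\tilde b\,C_X)\bigr)$ and hence the $\sqrt{N\tilde b\log T}$ term. Your martingale route is problematic precisely here: \autoref{lem:expin} is stated only for independent variables, and Freedman/Pinelis-type exponential inequalities for Banach-valued martingales typically require $2$-smoothness, which $S_1(\Hi)$ lacks. Without an exponential tail for the dominant cross terms you will not recover the $\sqrt{N\tilde b\log T}$ rate. The paper's $l$-block telescoping—together with the moment bound \autoref{lem:Qldepblocks}, which is where the polynomial decay rate $\rho$ and hence $\mathcal R_{N,\bf,\gamma}$ enter—is designed precisely to avoid any martingale concentration in $S_1(\Hi)$ and to reduce everything to the independent-sum setting where \autoref{lem:expin} applies.
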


\begin{proof}[Proof of \autoref{thm:maxdev}]
Without loss of generality, we focus on the case $M=T/N$. We recall the notation $\tu{N}{u}=\flo{uT}-\flo{N/2}$ and write
$
\hat{\F}^{(u,\omega)}_k=\hat{\F}^{u,\omega}_{k,+}+\hat{\F}^{u,\omega}_{k,-}$
where 
\begin{align*}
 k\hat{\F}^{u,\omega}_{k,+}&=\frac{1}{2\pi} \sum_{r=0}^{\bw \wedge k}  w(\bf(r)) \sum_{t=\tu{N}{u}+1 }^{\tu{N}{u}+k-r} e^{-\im r \omega } (X_t\otimes X_{t+r}\big) ~,
\\ k\hat{\F}^{u,\omega}_{k,-}&=\frac{1}{2\pi} \sum_{r=-(\bw \wedge k)}^{-1}  w(\bf(r)) \sum_{t=\tu{N}{u}+1-r}^{\tu{N}{u}+k} e^{-\im r \omega } (X_t\otimes X_{t+r}\big)~.
\end{align*}
Note that  
$\omega \mapsto k\hat{\F}^{(u,\omega)}_k$ and $\omega \mapsto k\tilde{\F}^{(u,\omega)}_k$ are trigonometric polynomials of at most degree $\bw$ on $S_1(\Hi)$ for fixed $u$ and $k$. A slight adaptation of Theorem 7.28, Chapter X in \cite{Zygmund} yields
\[
\sup_{\omega }\bignorm{k(\hat{\F}^{(u,\omega)}_k-\E\tilde{\F}^{(u,\omega)}_k)}_{S_1} \le 2 \max_{0 \le j \le 4 \bw } \bignorm{k(\hat{\F}^{(u,\omega_j)}_k-\E\tilde{\F}^{(u,\omega_j)}_k)}_{S_1}
\]
where $\omega_j=\frac{\pi j}{2\bw}, 0\le j\le 4 \bw$. Using De Morgan's laws and Boole's inequality 
\begin{align*}
&\mathbb{P}\Big(\sup_{u \in U_M} \max_{1\le k \le N}\sup_{\omega }\bignorm{k(\hat{\F}^{(u,\omega)}_k-\E\tilde{\F}^{(u,\omega)}_k)}_{S_1} \ge x\Big)
%\\& 
%\le (4\bw+1) \max_{j}\sum_{i=1}^M\mathbb{P}\Big(  \max_{1\le k \le N} \bignorm{k(\hat{\F}^{(u_i,\omega_j)}_k-\E\tilde{\F}^{(u_i,\omega_j)}_k)}_{S_1} \ge x/2\Big)
\\& 
\le (4\bw+1) M\max_{j} \max_i  \mathbb{P}\Big(  \max_{1\le k \le N} \bignorm{k(\hat{\F}^{(u_i,\omega_j)}_k-\tilde{\F}^{(u_i,\omega_j)}_k)}_{S_1} \ge x/4\Big)\tageq\label{eq:maxFFtilde}
\\
& + (4\bw+1) M\max_{j} \max_i \mathbb{P}\Big(  \max_{1\le k \le N} \bignorm{k(\tilde{\F}^{(u_i,\omega_j)}_k-\E\tilde{\F}^{(u_i,\omega_j)}_k)}_{S_1} \ge x/4\Big) \tageq\label{eq:maxvarFtilde}.
\end{align*}
For \eqref{eq:maxFFtilde}, we note that by Markov's inequality and \autoref{lem:approxloc} 
\begin{align*}
& (4\bw+1) M\max_{j} \max_i \mathbb{P}\Big( \max_{1\le k \le N} \bignorm{k(\hat{\F}^{u_i,\omega_j}_{k}-\tilde{\F}^{u_i,\omega_j}_{k})}_{S_1} \ge x/4 \Big)
\\& \le  (4\bw+1) M\max_{j} \max_i \frac{1}{(x/4)^\gamma} \E \big(\max_{1\le k \le N}\bignorm{ k\big(\hat{\F}_{k}^{u_i,\omega }
 -\tilde{\F}_{k}^{(u_i,\omega )})}_{S_1}\big)^\gamma
% \\&
\lesssim \frac{1}{(x/4)^\gamma} 
M N^{\gamma} \bw^{\gamma/2+1} T^{-\gamma},
\end{align*}
which implies
\[
\sup_{u \in U_M} \max_{1\le k \le N}\sup_{\omega }\bignorm{k\big (\hat{\F}^{(u,\omega)}_k-\tilde{\F}^{(u,\omega)}_k \big )}_{S_1}  = O_p \Big (\frac{M^{1/\gamma} N \bw^{1/2+1/\gamma} }{T} \Big ).
\]
To determine a bound on \eqref{eq:maxvarFtilde}, we write
\[
 \mathbb{P}\Big(  \max_{1\le k \le N} \bignorm{k(\tilde{\F}^{(u_i,\omega_j)}_k-\E\tilde{\F}^{(u_i,\omega_j)}_k)}_{S_1} \ge x/4\Big)
\le \sum_{\mathcal{r}\in \{+,-\}} \mathbb{P}\Big( \max_{1\le k \le N} \bignorm{k(\tilde{\F}^{(u_i,\omega_j)}_{k,\mathcal{r}}-\E\tilde{\F}^{(u_i,\omega_j)}_{k,\mathcal{r}})}_{S_1} \ge x/8\Big)
\]
we focus on the case $\mathcal{r}=+$ as the other case is similar. We then further decompose the partial sums $G_{i,k}=
k(\tilde{\F}^{u_i,\omega_j}_{k,+}-\E\tilde{\F}^{u_i,\omega_j}_{k,+})$ into  $W=\flo{N/\bw}$ blocks, each of length $\bw$. In the following, we use the notation
$\phi(\bf x ) =(2\pi)^{-1}w(\bf x) e^{\im \omega  x }$. Then, elementary calculations give 
\begin{align*}
k\tilde{\F}^{u_i,\omega_j}_{k,+}&=\frac{1}{2\pi} \sum_{r=0}^{(\bw \wedge k)}  w(\bf r) \sum_{t=\tu{N}{u}+1 }^{\tu{N}{u}+k-r} e^{-\im r \omega } (\Xu{t}\otimes \Xu{t+r}\big)
%\\&= \sum_{r=0}^{(\bw \wedge k)} \,\sum_{t=(i-1)N+1}^{(i-1)N+k-r} {\phi(\bf r) } (\Xu{t}\otimes \Xu{t+r}\big)
%\\&=\sum_{t=(i-1)N+1}^{(i-1)N+k} \,\sum_{s=t}^{(i-1)N+k \wedge t+ (\bw \wedge k)} \phi(\bf (s-t)) (\Xu{t}\otimes \Xu{s}\big)
\\ & =\sum_{s=(i-1)N+1}^{(i-1)N+k} \,\sum_{t=s-(\bw \wedge k) \vee (i-1)N+1}^{s} \phi(\bf (s-t)) (\Xu{t}\otimes \Xu{s}\big).
\end{align*}
Without loss of generality, we focus on the first block $i=1$, which is thus given by
\[
G_k=G_{1,k}=\sum_{s=1}^{k} \sum_{t=s-(\bw \wedge k) \vee 1}^{s} \phi(\bf(s-t)) \Big(\Xu{t}\otimes \Xu{s} -\E[\Xu{t}\otimes \Xu{s}]\Big)~.
\]
Partitioning the time stretch of length $N$ into blocks of length $\bw$ we get 
\begin{align*}
\mathbb{P}\Big( \max_{1\le k \le N} \bignorm{\tilde{\F}^{u_1,\omega_j}_{k,+}-\E\tilde{\F}^{u_1,\omega_j}_{k,+}}_{S_1} \ge x\Big)
& =\mathbb{P}\Big( \max_{1\le k \le N} \bignorm{G_{k}}_{S_1} \ge x\Big)
%\\ &\le  \mathbb{P}\Big( \max_{0\le w \le W} \max_{0 \le r \le \bw-1} \bignorm{G_{w\bw+r}}_{S_1} \ge x\Big)
\\& 
\le  \mathbb{P}\Big( \max_{0\le w \le W} \bignorm{G_{w\bw}}_{S_1} \ge x/2\Big)\tageq \label{eq:l195}
\\& +\sum_{w=0}^W \mathbb{P}\Big( \max_{0 \le r \le \bw-1} \bignorm{G_{w\bw+r}-G_{w\bw}}_{S_1} \ge x/2\Big)~. 
\tageq \label{eq:l196}
\end{align*}
To deal with these two terms, we proceed in a similar manner as \cite{ZW21}. We write the dyadic expansion of a positive integer $w \le \lceil N/\bw \rceil$ in the following form
\[
w =\sum_{j=1}^J 2^{d_j} \quad 0 \le d_{J} < \ldots < d_1 \le d~,  
\]
where $d= \lceil \log_2(N/\bw )\rceil$. Then we can write $
G_{w\bw} = \sum_{j=1}^J G_{w(j)\bw}-G_{w(j-1)\bw}$
where $w(j) = \sum_{r=1}^j 2^{d_r}$ and $w(0)=0$, and also
\begin{align*}
\sum_{j=1}^J \norm{G_{w(j)\bw}-G_{w(j-1)\bw}}_{S_1}
& 
 \le \sum_{j=1}^J \max_{1\le v \le 2^{d-d_j}} \norm{G_{2^{d_j}v \bw}-G_{2^{d_j}(v-1)\bw}}_{S_1}
\\& 
\le \sum_{i=0}^d \max_{1\le v \le 2^{d-i}}\norm{G_{2^{i}v \bw}-G_{2^{i}(v-1)\bw}}_{S_1}.
\end{align*}
Therefore, for \eqref{eq:l195} 
\begin{align*}
 \mathbb{P}\Big( \max_{0\le w \le W} \norm{G_{w\bw}}_{S_1} \ge x/2\Big)
\le \sum_{i=0}^d 2^{d-i} \max_{1\le v \le 2^{d-i}} \mathbb{P}\Big( \norm{G_{2^{i}v \bw}-G_{2^{i}(v-1)\bw}}_{S_1} \ge \epsilon_i x /2\Big)~, 
\end{align*}
where 
\[
\epsilon_i(d)=
\begin{cases} 
\frac{3}{\pi^2} \frac{1}{(i+1)^2} &\text{ if } 0\le i\le d/2\\
\frac{3}{\pi^2} \frac{1}{(d+1-i)^2} &\text{ if } d/2< i\le d, \tageq\label{epsi}
\end{cases}
\]
which satisfies $\sum_{i=0}^d \epsilon_i(d)\le 1$, and furthermore,
\begin{align*}
\sum_{k=0}^d \frac{a^{k}}{\epsilon^{c}_k(d)} 
 & = \big ( 3/\pi^2\big )^c \Big( \sum_{0\le k\le d/2} (k+1)^{2 c} a^{k} + \sum_{d/2 <k\le d} (d-k+1)^{2c} a^{k}\Big)
 \\&  =\big ( 3/\pi^2 \big )^c \Big( \sum_{0\le k\le d/2} (k+1)^{2 c} a^{k} + \sum_{0\le k\le d/2} (k+1)^{2c} a^{d-k}\Big)
 \\&=O\big( (d/2+1)^{2c+1} a^{d/2}\big),  \tageq\label{epsisum}
\end{align*}
for some constants $a, c \ge 1$. It follows from \autoref{thm:quadtail} for fixed $v$, which yields $2^i$ blocks of length $\bw$, $2^i \bw\le T$,
that 
\[
 \mathbb{P}\Big( \norm{G_{2^{i}v \bw}-G_{2^{i}(v-1)\bw}}_{S_1} \ge \epsilon_i x\Big) \le 
  \begin{cases}
C\exp\Big(-\frac{ (\epsilon_i x)^2  }{(2+\beta) 2^i \bw^2 C_X}\Big)+C_{\gamma,\rho} \frac{2^i \bw^{\gamma}}{ (\epsilon_i x)^\gamma} & \text{ if } \rho >1/2-1/\gamma\\
C\exp\Big(-\frac{(\epsilon_i x)^2   }{(2+\beta) 2^i\bw^2 C_X}\Big)+C_{\gamma,\rho}\Big( \frac{2^i  \bw^{\gamma}}{ (\epsilon_i x)^\gamma}+\frac{(2^i \bw)^{\gamma/2-\rho \gamma} \bw^{\gamma/2}}{ (\epsilon_i x)^\gamma}\Big)
& \text{ if } \rho <1/2-1/\gamma.
\end{cases}\]
Now, because $2^i \le N/\bw$ and the term in the exponent is negative for any $i$;
$$
\sum_{i=0}^d 2^{d-i} C\exp\Big(-\frac{ (\epsilon_i x)^2  }{(2+\beta) 2^i \bw^2C_X}\Big) =O\Big(2^d C\exp\big(-\frac{x^2  }{(2+\beta) (N/\bw) \bw^2 C_X}\big)\Big) = O\Big(N/\bw C\exp\big(-\frac{x^2  }{(2+\beta) N\bw C_X}\big)\Big) ,
$$
and
$$
\sum_{i=0}^d 2^{d-i}\frac{2^i \bw^{\gamma}}{ (\epsilon_i(d) x)^\gamma}=6/\pi^2 \frac{2^d\bw^{\gamma}}{x^\gamma} \sum_{i=0}^d (1+i)^{2\gamma} = O(N/\bw \bw^\gamma \log_2(N/\bw)^{2\gamma+1})=O(N \bw^{\gamma-1} \log_2(N/\bw)^{2\gamma+1}).
$$
Furthermore, using \eqref{epsisum} with $a=2^{\gamma/2-\rho \gamma-1}$ and $c=\gamma$ 
\begin{align*}
\sum_{i=0}^d 2^{d-i}\frac{(2^i \bw)^{\gamma/2-\rho \gamma} \bw^{\gamma/2}}{\epsilon_i^\gamma}
&=O\big(2^d\bw^{\gamma-\rho \gamma} (d/2+1)^{2\gamma+1} (2^{\gamma(1/2-\rho)-1})^{d/2}\big)
\\&=
O\big(N/\bw \bw^{\gamma-\rho \gamma} (\log_2(N/\bw))^{2\gamma+1} (N/\bw)^{\gamma/2(1/2-\rho)-1/2}\big).
\\
&=
O(N  N^{\gamma(1/4-\rho/2)-1/2} \bw^{\gamma(3/4-\rho/2)-1/2} \log_2(N/\bw)^{2\gamma+1}).
\end{align*}
This means that
\begin{align*}
\mathbb{P}\Big( \max_{0\le w \le W} \norm{G_{w\bw}}_{S_1} \ge \frac{x}{2} \Big)
& = O\Big(N  \Big[\log(N)^{2\gamma+1}\bw^{\gamma-1} +  N^{\gamma(1/4-\rho/2)-1/2} \bw^{\gamma(3/4-\rho/2)-1/2} \log_2(N/\bw)^{2\gamma+1}\mathrm{1}_{\{\rho<1/2-1/\gamma\}}\Big]\Big)
\\
& +O\Big(N/\bw C\exp\big(-\frac{x^2  }{(2+\beta) N\bw C_X}\big)\Big) ~. 
 \end{align*}
Similarly, let $\tilde{d}=\lceil \log_2(\tilde{b}-1)\rceil$, then 
we obtain  for \eqref{eq:l196}
\begin{align*}
&\sum_{w=0}^W \mathbb{P}\Big( \max_{0 \le r \le \bw-1} \bignorm{G_{w\bw+r}-G_{w\bw}}_{S_1} \ge x/2\Big) 
\\& \le \sum_{w=0}^W \sum_{i=0}^{\tilde{d}} 2^{\tilde{d}-i} \max_{1\le v \le 2^{\tilde{d}-i}} \mathbb{P}\Big( \norm{G_{w \bw+2^{i}v}-G_{w\bw+2^i(v-1)}}_{S_1} \ge \epsilon_i(\tilde{d}) x /2\Big),
 \end{align*} 
 where $\epsilon_i(\tilde{d})$ 
 is defined by  \eqref{epsi}.
Now, Markov's inequality and \autoref{lem:approxvar} yield 
 \begin{align*}
 &\sum_{w=0}^W \sum_{i=0}^{\tilde{d}} 2^{\tilde{d}-i} \max_{1\le v \le 2^{\tilde{d}-i}}  \mathbb{P}\Big( \norm{G_{w \bw+2^{i}v}-G_{w\bw+2^i(v-1)}}_{S_1} \ge \epsilon_i(\tilde{d}) x\Big) \\&\le \sum_{w=0}^W \sum_{i=0}^{\tilde{d}} 2^{\tilde{d}-i} \max_{1\le v \le 2^{\tilde{d}-i}} \frac{\E\norm{G_{w \bw+2^{i}v}-G_{w\bw+2^i(v-1)}}^\gamma_{S_1}}{(\epsilon_i(\tilde{d}) x)^{\gamma}}
 \\&=(W+1) \sum_{i=0}^{\tilde{d}} 2^{\tilde{d}-i}  \frac{O((2^i)^{\gamma/2}\bf^{-\gamma/2}))}{(\epsilon_i(\tilde{d}) x)^{\gamma}} = O(N\bw^{-1} \bw \bw^{\gamma/2}) O\big(\log(\bw)^{2\gamma+1} {\bw}^{(\gamma/4-1/2)}\big) =O(N \bw^{3\gamma/4-1/2}\log(\bw)^{2\gamma+1})~, 
 \end{align*}
where we used \eqref{epsisum} with $a=2^{\gamma/2-1}$ and $c=\gamma$. Altogether, we obtain 
\begin{align*}
& (4\bw+1) M \max_i \max_j \mathbb{P}\Big(\max_{1\le k \le N}\bignorm{k(\tilde{\F}^{(u_i,\omega_j)}_k-\E\tilde{\F}_{u_i,\omega_j})}_{S_1} \ge x\Big)
\\&\le
 (4\bw+1) M \max_i \max_j\sum_{\mathcal{r}\in \{+,-\}}  \mathbb{P}\Big( \max_{1\le k \le N} \bignorm{k(\tilde{\F}^{(u_i,\omega_j)}_{k,\mathcal{r}}-\E\tilde{\F}^{(u_i,\omega_j)}_{k,\mathcal{r}})}_{S_1} \ge x/4\Big)
\\& \le  \frac{1}{(x/4)^{\gamma}} C_{\gamma,\rho} \Big(\bw MN  \Big[\log(N)^{2\gamma+1}\bw^{\gamma-1} +  N^{\gamma(1/4-\rho/2)-1/2} \bw^{\gamma(3/4-\rho/2)-1/2} \log_2(N/\bw)^{2\gamma+1}\mathrm{1}_{\{\rho<1/2-1/\gamma\}}\Big]
\\&\phantom{\frac{1}{x^{q/4}} C_{\gamma,\rho} \Big(}
+\bw M N \bw^{3\gamma/4-1/2}\log(\bw)^{2\gamma+1}\Big)
+ \bw M N/\bw C\exp\big(-\frac{(x/4)^2  }{(2+\beta) N\bw C_X}\big)\Big) 
\\& \le  \frac{1}{(x/4)^{\gamma}} C_{\gamma,\rho} \Big(\bw^\gamma T \log(N)^{2\gamma+1}+T \bw^{3\gamma/4+1/2}\log(\bw)^{2\gamma+1}+ \mathcal{R}_{N,\bf,\gamma}\mathrm{1}_{\{\rho<1/2-1/\gamma\}}\Big) + T C\exp\big(-\frac{(x/4)^2  }{(2+\beta) N\bw C_X}\big)
\end{align*}
where 
\[
 \mathcal{R}_{N,\bf,\gamma}=T\bw N^{\gamma(1/4-\rho/2)-1/2} \bw^{\gamma(3/4-\rho /2)-1/2} \log_2(N/\bw)^{2\gamma+1}.
\]

\end{proof}

\begin{thm}\label{thm:maxdevcon}
Suppose that  the conditions of \autoref{thm:Conv}(b)  are satisfied and that 
$$\gamma=p/2>\begin{cases}
\frac{1}{\alpha(1-\kappa)} & \text{ if } \rho\ge 1/2-1/\gamma\\
\max\Big(\frac{1}{\alpha(1-\kappa)}, \frac{\frac{1}{\alpha}-\frac{1}{2}(1-\kappa)}{\frac{3}{4}(1-\kappa)+\frac{\rho}{2}(\kappa+1)}\Big) & \text{ if } \rho<1/2-1/\gamma 
\end{cases}~.
$$
 Then
\begin{align}
\label{de101}
&\sup_{u \in U_T} \sup_{\omega }\sup_{\eta \in I}\bignorm{\eta(\hat{\F}_{u,\omega}(\eta)-{\F}_{u,\omega})}_{S_1}
= o_p(1) 
\end{align}
as $T \to \infty$. 
\end{thm}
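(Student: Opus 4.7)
The plan is to relate the interpolated sequential estimator $\hat{\F}_{u,\omega}(\eta)$ in \eqref{eq:Fint} to the non-interpolated partial sum $\hat{\F}^{(u,\omega)}_k$ from \eqref{eq:Fhatnonstat}, and then combine the uniform concentration bound of \autoref{thm:maxdev} with the bias controls already used in the proof of \autoref{thm:maxPS1}. First I observe that at the discrete grid points $\eta = k/N$ the interpolation factor $f(\eta,N) = \eta N -\flo{\eta N}$ vanishes, so all three boundary corrections in \eqref{eq:Fint} drop out and $\hat{\F}_{u,\omega}(k/N) = \hat{\F}^{(u,\omega)}_k$. For $\eta \in [k/N,(k+1)/N]$ the boundary corrections scale linearly in $f(\eta,N)\in[0,1]$ and each is a single weighted outer product involving $X_{\te{u}+\flo{\eta N}+1}$ together with a partial sum of length at most $\bw$. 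A straightforward $S_1$-bound via \autoref{prop:S1inHprod} and \autoref{lem:Burklin} then yields
\[
\sup_{\eta \in I} \eta \big\|\hat{\F}_{u,\omega}(\eta) - \hat{\F}^{(u,\omega)}_{\flo{\eta N}}\big\|_{S_1} = O_p(\bw/N),
\]
which is $o_p(1)$ uniformly in $(u,\omega) \in U_M \times [-\pi,\pi]$ after a union bound (absorbed into the $M^{1/\gamma}$-factor below because $M \le T/N$).

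The next step is the decomposition, valid up to the $O(N^{-1})$ discrepancy between $\eta$ and $\flo{\eta N}/N$,
\[
\eta\big(\hat{\F}_{u,\omega}(\eta) - \F_{u,\omega}\big) = \tfrac{1}{N}\flo{\eta N}\big(\hat{\F}^{(u,\omega)}_{\flo{\eta N}} - \E\tilde{\F}^{(u,\omega)}_{\flo{\eta N}}\big) + \tfrac{1}{N}\flo{\eta N}\big(\E\tilde{\F}^{(u,\omega)}_{\flo{\eta N}} - \F_{u,\omega}\big) + o_p(1).
\]
For the stochastic term I apply \autoref{thm:maxdev} and divide by $N$. Under \autoref{as:bandwidth}, with $N = T^\alpha$ and $\bw \simeq N^\kappa$, the leading rate $(\bw^\gamma T \log(N)^{2\gamma+1})^{1/\gamma}/N$ equals $T^{1/\gamma - \alpha(1-\kappa)}(\log T)^{2+1/\gamma}$, which is $o(1)$ precisely under the assumed inequality $\gamma > 1/(\alpha(1-\kappa))$. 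The $\bw^{3\gamma/4+1/2}$-term, the Gaussian-type term $(N\bw \log T)^{1/2}/N$, and the stationary-approximation contribution $M^{1/\gamma} N\bw^{1/2+1/\gamma}/(TN)$ are each verified to be $o_p(1)$ by analogous polynomial bookkeeping under \autoref{as:bandwidth}; in the slower-mixing regime $\rho < 1/2 - 1/\gamma$ the remainder $\mathcal{R}_{N,\bf,\gamma}^{1/\gamma}/N$ is $o_p(1)$ precisely under the strengthened hypothesis on $\gamma$ stated in the theorem. The bias piece is controlled by the estimates already used in \autoref{thm:maxPS1}: under \autoref{as:smooth} and \autoref{as:Weights} one has $\sup_{u,\omega}\,k\,\|\E\tilde{\F}^{(u,\omega)}_k - \F_{u,\omega}\|_{S_1} = O(N^2 T^{-\zeta}\bf^{-1/2}) + O(1) + O(N\bf^{\iota})$, and dividing each summand by $N$ yields $o(1)$ for $\zeta = 1$ and $\alpha < 2/(4-\kappa)$ (the constraint appearing in \autoref{as:bandwidth}).

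The main technical obstacle is the rate bookkeeping. Because the union bound across $u \in U_M$ and across the $O(\bw)$ frequency grid points produced by the trigonometric-polynomial reduction in the proof of \autoref{thm:maxdev} contributes a multiplicative factor $\bw M$ before the Fuk--Nagaev inequality is applied, the exponent $1/\gamma = 2/p$ on the resulting power of $T$ must be small enough to beat $N$. The explicit piecewise expression for $\gamma$ given in the theorem is obtained by inverting, in each mixing regime, the tightest of these four contributions; the case $\rho < 1/2 - 1/\gamma$ is the binding one because the $\mathcal{R}_{N,\bf,\gamma}$-remainder demands a slower decay in $1/\gamma$ than the other three. Once this bookkeeping is in place, the interpolation error, the stationary-approximation error, and the deterministic bias are all routine, and \eqref{de101} follows.
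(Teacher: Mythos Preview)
Your proposal is correct and follows essentially the same route as the paper: decompose $\eta(\hat{\F}_{u,\omega}(\eta)-\F_{u,\omega})$ into a stochastic piece controlled by \autoref{thm:maxdev} (divided by $N$) and a bias piece controlled by \autoref{lem:bias}, then verify that each rate is $o(1)$ under the stated conditions on $\gamma$. The paper handles the interpolation terms with the brief remark that they ``do not change the order'' (relying implicitly on \autoref{lem:approxint}), whereas you spell this out via \autoref{prop:S1inHprod} and \autoref{lem:Burklin}; your bound $O_p(\bw/N)$ is slightly cruder than the $O_p(\bw^{1/2}/N)$ one gets from Burkholder, but either suffices. Your rate bookkeeping for the leading term and for $\mathcal{R}_{N,\bf,\gamma}$ matches the paper's exactly.
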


\begin{proof}[Proof of \autoref{thm:maxdevcon}]
From \autoref{thm:maxdev} (the interpolation term does not change the order) and \autoref{lem:bias} 
we have  
\begin{align*}
\sup_{u \in U_T} \sup_{\omega }\sup_{\eta \in I}\bignorm{\eta(\hat{\F}_{u,\omega}(\eta)-\E\tilde{\F}_{u,\omega}(\eta))}_{S_1}
&= O_p\Bigg(\frac{\big(\bw T^{1/\gamma} \log(N)^{2+1/\gamma}+\mathcal{R}^{1/\gamma}_{N,\bf,\gamma} \mathrm{1}_{\{\rho<1/2-1/\gamma\}}}{N}\Bigg) 
\\ &+O_p\Big(\sqrt{\frac{\bw \log{T}}{N}}\Big)+ O_p\Big(\frac{M^{1/\gamma}  \bw^{1/2+1/\gamma} }{T}\Big), \tageq \label{eq:maxdevvar}\\
\sup_{u \in U_T} \sup_{\omega }\sup_{\eta \in I}\bignorm{ \eta(\E\tilde{\F}_{u,\omega}(\eta)-{\F}_{u,\omega})}_{S_1}& =O\Big(\frac{\tilde{b}^{1/2} N}{T}\Big)+O\Big(\frac{1}{N}\Big)+O(\bf^{2}\big), \tageq \label{eq:maxdevbi}
\end{align*}
\[
 \mathcal{R}_{N,\bf,\gamma}=T\bw N^{\gamma(1/4-\rho/2)-1/2} \bw^{\gamma(3/4-\rho/2)-1/2} \log_2(N/\bw)^{2\gamma+1}.
\]
Under \autoref{as:bandwidth} it 
is easily seen from \eqref{eq:bias12} that the last term in \eqref{eq:maxdevvar} is of order $o_p(1)$ and that \eqref{eq:maxdevbi} is of order $o(1)$,  for any $\gamma > 2$. For the first term, we note that 
$\tilde b \sim N^\kappa$, $T =N^{1/\alpha}$. Thus
we require (note that $N=T^\alpha$, \autoref{as:bandwidth})
\[
N^{\gamma(\kappa-1) }T \log(N)^{2\gamma+1} =
N^{\gamma(\kappa-1) } N^{1/\alpha} \log(N)^{2\gamma+1} = o(1) 
\]
which holds if  ~
\[
\gamma> 
\frac{1}{\alpha(1-\kappa)}~.
\]
%where we used l'H{\^o}pital's rule, and where $\kappa$ and $\alpha$ are as in \autoref{as:bandwidth}. 
For the third term in \eqref{eq:maxdevvar}, 
we note that 
$
N^{(\kappa-1) } \log(T)=T^{\alpha(\kappa-1)} \log(T)
=  o(1)  .
$
Finally, in the  case $\rho<1/2-1/\gamma$, we have for the second term in \eqref{eq:maxdevvar}
\begin{align*} 
 \mathcal{R}_{N,\bf,\gamma} N^{-\gamma} &= N^{1/\alpha+\gamma(-3/4-\rho/2)-1/2+\kappa(\gamma(3/4-\rho /2)+1/2)} \log_2(N^{1-\kappa})^{2\gamma+1} =o(1) 
 \end{align*} 
which requires 
 $$
 ~\gamma> \frac{\frac{1}{\alpha}-\frac{1}{2}(1-\kappa)}{\frac{3}{4}(1-\kappa)+\frac{\rho}{2}(\kappa+1)}. 
 $$
Hence, altogether under \autoref{as:bandwidth}, we require
 $\gamma>1/(\alpha(1-\kappa))$ if $\rho\ge 1/2-1/\gamma$, and 
 $$
 \gamma >\max\Big(\frac{1}{\alpha(1-\kappa)}, \frac{\frac{1}{\alpha}-\frac{1}{2}(1-\kappa)}{\frac{3}{4}(1-\kappa)+\frac{\rho}{2}(\kappa+1)}\Big)
 $$
 if  $\rho> 1/2-1/\gamma$.
\end{proof}

\subsection{Moment inequalities} 

\begin{thm}\label{thm:approx}
Suppose that Assumptions \ref{as:depstruc} (with $p\ge 6$), \ref{as:Weights}, \ref{as:mappings} hold, and recall the definition of 
$\ldmi{}{u_i}{}{N,\flo{\eta N}} $ in \eqref{eq:Mar}. Then, for $\gamma=p/2$,
\begin{align*}
&\E \Big( \sup_{\eta}\bignorm{\sum_{i=1}^{M}\flo{\eta N} \Upsilon_{u_i,\omega}\big(\tilde{\F}_{u_i,\omega}(\eta)-\E \tilde{\F}_{u_i,\omega}(\eta)\big)- \sum_{i=1}^{M}\big(\ldmi{}{u_i}{}{N,\flo{\eta N}}+\ldmi{\dagger}{u_i}{}{N,\flo{\eta N}}\big) }_{S_1}\Big)^{\gamma}\\
& = O\Big( M^{\gamma/2}N^{\gamma/2} \bf^{-\gamma/2}  \big(\sum_{r \in \nnum} \Lambda_{2\gamma,m}(e_r) \big)^{\gamma}+ m^{3/2 \gamma}  M^{\gamma/2}N^{\gamma/2}\big( o(\bf^{-1}) (1+m) \big)^{\gamma/2}\Big),
\end{align*}
where  $\Lambda_{2\gamma,m}(e_r) = 2\sum_{t=0}^{\infty}\min\Big(\nu^{{X^\cdot_\cdot}(e_r)}_{\hi, 2\gamma}(t),\big(\sum_{i=m}^{\infty}(\nu^{{X^\cdot_\cdot}(e_r)}_{\hi, 2\gamma}(i))^2\big)^{1/2}\Big)$. 
\end{thm}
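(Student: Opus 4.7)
The plan is to combine three ingredients: (i) a near-independence argument across the well-separated midpoints $u_i\in U_M$, (ii) a martingale approximation for the centered quadratic form at each single midpoint, and (iii) the $S_1$-valued Burkholder inequality of \autoref{lem:Burkh}(ii). I begin by exploiting the block structure: because the $u_i$ are equidistant with spacing $1/M$ while the associated windows have length $N$, and because we work with the $m$-truncated auxiliary processes (the full-vs-truncated discrepancy being itself absorbed by $\Lambda_{2\gamma,m}(e_r)$), the summands indexed by distinct $u_i$ depend on $\epsilon_t$ at time indices that overlap only on a negligible boundary of size $m$. Combined with the cotype-$2$ structure of $S_1(\Hi)$ and a Rosenthal-type inequality in the spirit of \autoref{lem:expin}, this yields the overall factor $M^{\gamma/2}$ in the final bound and reduces the task to bounding the $\gamma$-th moment of a single midpoint contribution.

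At fixed $u=u_i$, I expand
\begin{align*}
&\flo{\eta N}\bigl(\tilde{\F}_{u,\omega}(\eta)-\E\tilde{\F}_{u,\omega}(\eta)\bigr) \\
&\qquad = \sum_{s,t=1}^{\flo{\eta N}} \tilde{w}^{(\omega)}_{\bf,s,t}\bigl(\tilde X^{(u,s)}_{u;s}\otimes \tilde X^{(u,t)}_{u;t}-\E[\tilde X^{(u,s)}_{u;s}\otimes \tilde X^{(u,t)}_{u;t}]\bigr),
\end{align*}
and replace each factor by its telescoping martingale representation $\tilde X = \sum_{l\ge 0} P_{t-l}(\tilde X)$. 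Regrouping the resulting quadruple sum by the maximum of the two projection indices and exploiting the orthogonality of the $P_k$ kills most cross terms and exposes a martingale in $k$ whose increment at time $t$, after truncating the internal series at level $m$, is precisely $V^{(u,\omega)}_{m,N,t}+V^{\dagger(u,\omega)}_{m,N,t}$; this is exactly the block that reconstructs $\ldmi{}{u}{}{N,\flo{\eta N}}+\ldmi{\dagger}{u}{}{N,\flo{\eta N}}$. The supremum over $\eta\in I$ is then handled by Doob's inequality applied to this martingale in $k$, with the linear interpolation factor $(\eta N-\flo{\eta N})$ contributing only an $O(1)$ correction absorbed into the main bound.

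Applying \autoref{lem:Burkh}(ii) to the leading martingale coordinate-by-coordinate against the ONB of \autoref{as:depstruc}II) yields the factor $\bigl(\sum_r \Lambda_{2\gamma,m}(e_r)\bigr)^\gamma$, with the $N^{\gamma/2}\bf^{-\gamma/2}$ scaling coming from $\sum_{s,t=1}^N |\tilde w^{(\omega)}_{\bf,s,t}|^2 \asymp N\bf^{-1}$; this produces the first term in the bound. The residual cross terms, which correspond to pairs $(l_1,l_2)$ of projection indices differing by less than $m$, are supported on at most $O(N(1+m))$ non-vanishing index pairs (because the weight $\tilde w^{(\omega)}_{\bf,s,t}$ restricts $|s-t|\lesssim \bf^{-1}$ and the residual further restricts $|l_1-l_2|<m$), and after one further $S_1$-valued martingale step yield the $m^{3/2\gamma}M^{\gamma/2}N^{\gamma/2}(o(\bf^{-1})(1+m))^{\gamma/2}$ contribution; the exponent $3/2$ on $m$ comes from pairing a first-order boundary factor in $m$ with a square-root factor from the Burkholder step. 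The main obstacle will be the careful book-keeping in the martingale expansion: one must identify precisely which quadruples $(s,t,l_1,l_2)$ land in the leading term versus in the residual, and exchange the basis-wise $\ell^1$-sum required by \autoref{lem:Burkh}(ii)—which, unlike its Hilbert-space counterpart, does not permit a direct square-function bound—with the moment estimate so that summability of $\sum_r\Lambda_{2\gamma,m}(e_r)$ under \autoref{as:depstruc}II) closes the argument.
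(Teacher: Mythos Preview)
Your high-level route differs from the paper's and contains two steps that would not close as written.

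The first is the extraction of $M^{\gamma/2}$. You propose to obtain it from near-independence across midpoints together with ``the cotype-$2$ structure of $S_1(\Hi)$ and a Rosenthal-type inequality in the spirit of \autoref{lem:expin}''. But $S_1(\Hi)$ has type~$1$: for independent mean-zero $Z_i$ one does \emph{not} get $\E\|\sum_i Z_i\|_{S_1}^\gamma\lesssim M^{\gamma/2}\max_i\E\|Z_i\|_{S_1}^\gamma$ from general Banach-space theory (cotype~$2$ yields the reverse inequality, and \autoref{lem:expin} is a tail bound, not a moment bound). The paper never separates over $i$ at this stage. It vectorises the double sum $\sum_{i=1}^M\sum_{t=1}^k$ into a single process of length $MN$, splits $\tilde\F_k^{(u,\omega)}$ into its diagonal and off-diagonal parts, and treats each via \autoref{lem:mdepap}, \autoref{lem:marappr}, \autoref{lem:diag} applied directly to the combined process; the factor $M^{\gamma/2}$ emerges from the coordinate-wise $S_1$-Burkholder inequality \autoref{lem:Burkh}(ii) applied to the projection decomposition $\sum_j P_j(\cdot)$ of the full $MN$-process, with the square-root saving happening \emph{inside} the $\ell^1$-sum over basis vectors $e_r$.

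The second gap is the supremum over $\eta$. You invoke Doob, but the centered quadratic form $k\mapsto k(\tilde\F^{(u,\omega)}_k-\E\tilde\F^{(u,\omega)}_k)$ is \emph{not} a martingale in $k$: its increment at step $k{+}1$ has nonzero conditional mean given $\G_{\te{u}+k}$ because $\E[X^{(u)}_{k+1}\mid\G_k]\ne 0$ in general. The paper instead bounds the $\gamma$-th moment at each fixed $k$ and upgrades to $\max_k$ via M\'oricz's maximal inequality, verifying in each auxiliary lemma that the moment bound $g(b,k)$ satisfies the superadditivity $g(b,k)+g(b{+}k,l)\le g(b,k{+}l)$. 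Your account of the second error term is also off: the $m^{3\gamma/2}$ and $o(\bf^{-1})(1{+}m)$ factors do not arise from counting projection-index pairs with $|l_1-l_2|<m$, but from a summation-by-parts step in \autoref{lem:marappr} that converts the difference between the $m$-truncated process and its martingale increment into a telescoping sum $\tilde Q_{l,s}-\tilde Q_{l,s+1}$, producing the window-increment quantity $\sum_s|w(\bf s)-w(\bf(s{-}1))|^2$ controlled through the piecewise continuity of $w$.
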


\begin{proof}[Proof of \autoref{thm:approx}] 
To ease the exposition, we focus on showing this statement for the non-interpolated component, i.e., 
\begin{align*}
&\E \Big( \max_{1 \le k \le N}\bignorm{\sum_{i=1}^{M}k \Upsilon_{u_i,\omega}\big(\tilde{\F}^{(u_i,\omega)}_{k}-\E \tilde{\F}^{(u_i,\omega)}_{k}\big)- \sum_{i=1}^{M}\sum_{t=1}^{k}\Upsilon_{u_i,\omega}\big( V^{(u_i,\omega)}_{m,N,t}+ V^{\dagger (u_i,\omega)}_{m,N,t}\big) }_{S_1}\Big)^{\gamma}\\& = O( M^{\gamma/2}N^{\gamma/2} \bf^{-\gamma/2}  \Big(\sum_{r \in \nnum} \sqrt{\Lambda^2_{2\gamma,m}(e_r)} \Big)^{\gamma}+ m^{3/2 \gamma}  M^{\gamma/2}N^{\gamma/2}\big( o(\bf^{-1}) (1+m) \big)^{\gamma/2}). \tageq\label{eq:l56}
\end{align*}
The reader can easily verify that  writing out the interpolation error term (for given $u_i, \omega$) 
\begin{align*}
\tilde{\F}_{u_i,\omega}(\eta)-\tilde{\F}^{(u_i,\omega)}_{k}-\E\Big(\tilde{\F}_{u_i,\omega}(\eta)-\tilde{\F}^{(u_i,\omega)}_{k}\Big)-V^{(u_i,\omega)}_{m,N,\flo{\eta N}+1}-V^{\dagger(u_i,\omega)}_{m,N,\flo{\eta N}+1}
\end{align*}
in detail unveils that the subsequent argument can be applied in order to show this term is of lower order than \eqref{eq:l56}. To prove  \eqref{eq:l56}, we further use the decomposition
\[
\tilde{\F}_{k}^{(u,\omega)} = \tilde{\F}_{k,t<s}^{(u,\omega)} +\tilde{\F}_{k,t=s}^{(u,\omega)}+\tilde{\F}_{k,t>s}^{(u,\omega)}
\]
and similar for $\tilde{\F}_{m,k}^{(u,\omega)}$. The statement then follows from Minkowsk's inequality,  \autoref{lem:mdepap}, \autoref{lem:marappr}, and \autoref{lem:diag}, and from noting that noting that $\Big(\tilde{\F}_{k,t<s}^{(u,\omega)}\Big)^\dagger=\tilde{\F}_{k,t>s}^{(u,\omega)}$.
\end{proof}

\subsubsection{Auxiliary lemmas for the proof of   \autoref{thm:approx}}

\begin{lemma}[$m$-dependent approximation]\label{lem:mdepap}
 Under the conditions of \autoref{thm:approx}
 \begin{align*}
& \E\Big(\max_{1\le k \le N}\bignorm{\sum_{i=1}^{M}  k \Upsilon_{u_i,\omega} \big(\tilde{\F}^{(u_i,\omega)}_{k,t>s}-\E \tilde{\F}^{(u_i,\omega)}_{k,t>s}\big)-k\Upsilon_{u_i,\omega}\big(\tilde{\F}^{(u_i,\omega)}_{m,k,t>s}-\E \tilde{\F}^{(u_i,\omega)}_{m, k,t>s}\big)}_{S_1}\Big)^\gamma
%\\ 
%& = O\Bigg( \Big[\, N \sum_{h=1}^{N-1}\big\vert {w}(\bf h) \big\vert^2 M  \Big(\sum_{r \in \nnum} \sqrt{\Lambda^2_{2\gamma,m}(e_r)} \Big)^{2}\big(\sum_{j=0}^\infty \nu^{{X}^{\cdot}_{\cdot}}_{\hi,2\gamma}(j)\big)^2\Big]^{\gamma/2}\sup_{u \in [0,1]} \norm{\Upsilon_{u,\omega}}^{\gamma}_\infty \Bigg)
\\&=O\Big ( (N\bf^{-1})^{\gamma/2} M^{\gamma/2} \big(\sum_{r \in \nnum} \Lambda_{2\gamma,m}(e_r) \big)^\gamma\Big ).
\end{align*}
\end{lemma}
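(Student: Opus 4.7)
The proof reduces the double-index bilinear expression to a martingale difference sum in the time variable, then applies the $S_1(\Hi)$-valued Burkholder inequality \autoref{lem:Burkh}(ii) coordinate-by-coordinate along an ONB of $\Hi$, with the $m$-dependent residual controlled via the projection decomposition $X^{(u)}_t-X^{(u)}_{m,t}=\sum_{j>m}P_{\tu{N}{u}+t-j}(X^{(\tu{N}{u}+t)/T}_{\tu{N}{u}+t})$.

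First I would apply the tensor-product identity $a\otimes b - a'\otimes b' = (a-a')\otimes b + a'\otimes (b-b')$ inside each summand with $a=\tilde X^{(u_i,t)}_{u_i;t}$, $b=\tilde X^{(u_i,s)}_{u_i;s}$, and $a',b'$ their $m$-dependent counterparts, so that each term now contains a factor $\Delta^{(u_i)}_\tau:=\tilde X^{(u_i,\tau)}_{u_i;\tau}-\tilde X^{(u_i,\tau)}_{m,u_i;\tau}$. Collecting the weighted pairs $(s,t)$ with $s<t\le k$ produces, for each $u_i$, a process $\sum_{t=2}^k Y^{(u_i,\omega)}_t$ adapted to $\{\G_{\tu{N}{u_i}+k}\}$. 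Under \autoref{as:bandwidth} the time blocks $\{\tu{N}{u_i}+1,\ldots,\tu{N}{u_i}+N\}$ are pairwise disjoint (since $T/M>N$), so the array $(Y^{(u_i,\omega)}_t)_{i,t}$ ordered lexicographically in $(i,t)$ is adapted to a single increasing filtration and Doob's maximal inequality transfers the $\max_k$ inside the $\gamma$-moment to the endpoint $k=N$.

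Second, I would center by $\E Y^{(u_i,\omega)}_t$, expand via $Y^{(u_i,\omega)}_t-\E Y^{(u_i,\omega)}_t=\sum_\ell P_\ell(Y^{(u_i,\omega)}_t)$ into genuine martingale differences, and invoke \autoref{lem:Burkh}(ii) with operators $A_i=\Upsilon_{u_i,\omega}$ and integer moment index $p=2\gamma$, yielding a bound of the form
\[
\Biggl(\sum_{r\in\nnum}\Bigl(\sum_{i=1}^{M}\sum_{t=2}^{N}\|\Upsilon_{u_i,\omega}\|_\infty^{2}\bignorm{(Y^{(u_i,\omega)}_t-\E Y^{(u_i,\omega)}_t)(e_r)}_{\Hi,2\gamma}^{2}\Bigr)^{1/2}\Biggr)^{\gamma}.
\]
The weight contribution is $(\sum_{s<t}|\tilde w^{(\omega)}_{\bf,s,t}|^2)^{1/2}=O(\bf^{-1/2})$, while the $\Delta$-factor admits two competing Cauchy--Schwarz bounds, namely a direct physical-dependence estimate $\sum_{j\ge0}\nu^{X^\cdot_\cdot(e_r)}_{\cnum,2\gamma}(j)$ on the one hand and an $\ell^2$-truncation bound $(\sum_{j>m}\nu^{X^\cdot_\cdot(e_r)}_{\cnum,2\gamma}(j)^2)^{1/2}$ obtained from orthogonality of projections on the other; the minimum of these two quantities is precisely the summand of $\Lambda_{2\gamma,m}(e_r)$. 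Summing over $t$ delivers a factor $N\bf^{-1}$ and summing over the disjoint blocks $i$ a factor $M$, so that the $\gamma$-th power of the outer $\ell^1$-sum yields the claimed rate $(MN\bf^{-1})^{\gamma/2}(\sum_r\Lambda_{2\gamma,m}(e_r))^\gamma$.

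The principal technical hurdle is carrying out the $S_1(\Hi)$-valued Burkholder bound rather than an $S_2$-bound. \autoref{lem:Burkh}(ii) extracts an \emph{outer} $\ell^1$-sum over the ONB index $r$, which is incompatible with the simpler $\ell^2$-type estimates one would use in $S_2(\Hi)$; pairing this with the coordinate-wise dependence measure so that $\sum_r \Lambda_{2\gamma,m}(e_r)<\infty$ forces the use of hypothesis \autoref{as:depstruc}~II). Moreover, \autoref{lem:Burkh}(ii) requires the moment order to be a positive integer, dictating the choice $\gamma\in\nnum$ and motivating the hypothesis $p\ge 6$. A subsidiary complication is that the projection decomposition of $\Delta^{(u)}_\tau$ and the lexicographic ordering of the $(i,t)$ filtration must be compatible; this is where the block-disjointness provided by \autoref{as:bandwidth} is essential.
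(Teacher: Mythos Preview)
Your overall architecture---tensor split $a\otimes b-a'\otimes b'=(a-a')\otimes b+a'\otimes(b-b')$, vectorization across the disjoint time blocks, projection expansion $Y-\E Y=\sum_j P_j(Y)$, and the coordinate-wise $S_1$ Burkholder inequality \autoref{lem:Burkh}(ii)---is the same as the paper's. The identification of the minimum bound that produces $\Lambda_{2\gamma,m}(e_r)$ is also correct in spirit. But there is a genuine gap at the maximal-inequality step, and it propagates into your Burkholder display.

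You assert that Doob's maximal inequality transfers $\max_{1\le k\le N}$ to the endpoint because the lexicographically ordered array is \emph{adapted}. Adaptedness is not sufficient: Doob requires a (sub)martingale, and the partial sums $\sum_{t=2}^k\big(Y^{(u_i,\omega)}_t-\E Y^{(u_i,\omega)}_t\big)$ are \emph{not} martingales in $k$. Each $Y^{(u_i,\omega)}_t$ is bilinear in $\tilde X^{(u_i,t)}_{u_i;t}$ and the $\G_{\tu{N}{u_i}+t-1}$-measurable inner sum $\sum_{s<t}\tilde w^{(\omega)}_{\bf,s,t}\tilde X^{(u_i,s)}_{u_i;s}$, so $\E\big[Y^{(u_i,\omega)}_{t+1}\,\big|\,\G_{\tu{N}{u_i}+t}\big]$ retains that inner sum and does not collapse to the unconditional mean. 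For the same reason your displayed Burkholder bound, with square sum indexed by $(i,t)$, does not follow from \autoref{lem:Burkh}(ii): the martingale-difference index there is the projection level $j$, not the time index, and there is no general inequality converting $\sum_j\|P_j(Y_{MN})(e_r)\|_{\Hi,\gamma}^2$ into $\sum_{i,t}\|(Y^{(u_i)}_t-\E Y^{(u_i)}_t)(e_r)\|_{\Hi,2\gamma}^2$.

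The paper circumvents both problems by first working at \emph{fixed} $k$. It bounds $\|P_j(Y_{MN})(e_r)\|_{\Hi,\gamma}$ via the coupling representation $P_j(Y)=\E[Y-Y_{\{j\}}\mid\G_j]$ together with the tensor split, which after summing over $j$ gives an estimate of the form $C\,g(0,k)$ with $g(b,k)=k\sum_{h=b+1}^{b+k}|w(\bf h)|^2$. The passage to $\max_{1\le k\le N}$ then comes not from Doob but from Theorem~1 of \cite{Mor76}, which requires only the superadditivity $g(b,k)+g(b+k,l)\le g(b,k+l)$; this is the step your outline is missing.
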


\begin{proof}[Proof of \autoref{lem:mdepap}]
Without 
loss of generality, we prove the statement for the case $M=T/N$. To ease notation let for $2 \le t \le N$
\begin{align*}
W_{t,i} & =\Upsilon_{u_i,\omega} \Big( {X}^{(u_i;t)}_{u_i;t} \otimes \sum_{s=1}^{t-1} \tilde{w}^{(\omega)}_{b_f,s,t}   {X}^{(u_i;s)}_{u_i;s}\Big)~, 
\\
W^{(m)}_{t,i} & =\Upsilon_{u_i,\omega} \Big( {X}^{(u_i;t)}_{m,u_i;t}\otimes \sum_{s=1}^{t-1} \tilde{w}^{(\omega)}_{b_f,s,t}   {X}^{(u_i;s)}_{m,u_i;s}\Big)~, 
\\
\tilde{W}^{(m)}_{t,i}& =\Upsilon_{u_i,\omega} \Big( {X}^{(u_i;t)}_{u_i;t}\otimes \sum_{s=1}^{t-1} \tilde{w}^{(\omega)}_{b_f,s,t}  {X}^{(u_i;s)}_{m,u_i;s}\Big)~, 
\end{align*}
and, for $t=1$, set $W_{t,i}=W^{(m)}_{t,i}=\tilde{W}^{(m)}_{t,i}=0$. Denote $\barbelow{W} = \text{vec}\big(\{W_{t,i}\}_{t,i}\big)$ the vectorization of the matrix $\{W_{t,i}\}_{t,i}$ by stacking its columns on top of each other. Denote $i_l = \flo{(l-1)/N}+1$ and $t_l=\mathrm{Remainder}(l-1,N)+1$ such that \[
\barbelow{W}_l=W_{t_l, i_l}.
\] 
Observe that $l = \flo{u_{i_l}T}-\flo{N/2}+t_l$, and that  $\barbelow{W}_l$ is a measurable function of $\{\ldots, \epsilon_{l-1}, \epsilon_{l}\}$. Recalling the notation in
 \eqref{eqnr106}, 
we additionally introduce  the quantities
 \[
 \tilde{X}^{u_{i_l}}_{t}=\tilde{X}^{(u_{i_l}; t)}_{u_{i_l};t}, \quad 
\barbelow{X}_{l}=\tilde{X}^{u_{i_l}}_{t_l}
 \quad \barbelow{N}_{l}=\sum_{s=1}^{t_l-1} \tilde{w}^{(\omega)}_{\bf,s,t_l}\tilde{X}^{u_{i_l}}_{s}, \quad Y_{MN}=\sum_{l=1}^{MN}\mathrm{1}_{2\le t_l \le k} \barbelow{W}_l,
\tageq\label{eq:short2}
 \]
and we equip $m$-dependent versions with an $m$ in the superscripts. 
With this in place, \autoref{lem:Burkh} yields
\begin{align}
\nonumber
&\bignorm{\sum_{i=1}^{M}
\sum_{t=2}^k  W_{t,i}-\E W_{t,i}-(W^{(m)}_{t,i}-\E W^{(m)}_{t,i})}^2_{S_1,\gamma}
=
\bignorm{\sum_{l=1}^{MN}\mathrm{1}_{2 \le t_l \le k} \Big(\barbelow{W}_l -\E \barbelow{W}_l-(\barbelow{W}^{(m)}_l -\E \barbelow{W}^{(m)}_l)\Big)}^2_{S_1,\gamma}
\\ &
\le 
2 \bignorm{\sum_{j=-\infty}^{MN}P_j(Y_{MN} -\tilde{Y}^{(m)}_{MN})}^2_{S_1,\gamma}+
2 \bignorm{\sum_{j=-\infty}^{MN}P_j( \tilde{Y}^{(m)}_{MN}-Y^{(m)}_{MN})}^2_{S_1,\gamma}\\ &
\le 
2 K^2_\gamma \Big(\sum_{r \in \nnum} \sqrt{\sum_{j=-\infty}^{MN} \bignorm{  P_j(Y_{MN} -\tilde{Y}^{(m)}_{MN})(e_r)}^2_{\Hi,\gamma}}\Big)^2+
2 K^2_\gamma \Big(\sum_{r \in \nnum} \sqrt{\sum_{j=-\infty}^{MN} \bignorm{  P_j(\tilde{Y}^{(m)}_{MN}-Y^{(m)}_{MN})(e_r)}^2_{\Hi,\gamma}}\Big)^2
\label{eqnr103}
\end{align}
Observe that \[
\E[Y_{MN} -\tilde{Y}^{(m)}_{MN}|\G_{j-1}](e_r)=
\E[Y_{MN,\{j\}}-\tilde{Y}^{(m)}_{MN,\{j\}}|\G_{j}](e_r)~,
\]
where $Y_{MN,\{j\}}$ is shorthand notation for the sum  $Y_{MN}$ with elements 
$\mathrm{1}_{2\le t_l \le k} \barbelow{W}_{l,\{j\}}=\mathrm{1}_{2\le t_l \le k}\E[\barbelow{W}_{l}|\G_{l,\{j\}}]$. The contraction property of the conditional expectation and Minkowski's inequality yield
\begin{align*}
\bignorm{P_{j}(Y_{MN} -\tilde{Y}^{(m)}_{MN})(e_r)}^2_{\Hi,\gamma}& \le  \bignorm{\Big(Y_{MN} -\tilde{Y}^{(m)}_{MN}-(Y_{MN,\{j\}} -\tilde{Y}^{(m)}_{MN,\{j\}})\Big)(e_r)}^2_{\Hi,\gamma} \leq  2( J_{1,j}^2+J_{2,j}^2)  ~, \tageq \label{eq:J1J2}
\end{align*}
where
\begin{align*}
J_{1,j} & = \bignorm{\sum_{l=1}^{NM}\mathrm{1}_{2 \le t_l\le k}\Upsilon_{u_{i_l},\omega} \Big( (\barbelow{{X}}_{l}-\barbelow{{X}}_{l,\{j\}}) \otimes (\barbelow{N}_{l}-\barbelow{N}^{(m)}_{l}\Big)(e_r)}_{\Hi,\gamma}~,
\\
J_{2,j}& = \bignorm{\sum_{l=1}^{NM}\mathrm{1}_{2 \le t_l\le k}\Upsilon_{u_{i_l},\omega} \Big(\barbelow{{X}}_{l,\{j\}} \otimes (\barbelow{N}_{l}-\barbelow{N}^{(m)}_{l}-(\barbelow{N}_{l,\{j\}}-\barbelow{N}^{(m))}_{l,\{j\}})\Big)(e_r)}_{\Hi,\gamma}.
\end{align*}
Furthermore, 
\begin{align*}
&  \sup_{u \in [0,1]} \| ( \barbelow{\ddot{X}}^{(u)}_{l}-\barbelow{\ddot{X}}^{(u)}_{l,\{j\}}- \barbelow{\ddot{X}}^{(u)}_{m,l}+\barbelow{\ddot{X}}^{(u)}_{m,l,\{j\}})(e_r) \|_{\mathbb{C},2\gamma} 
\\
& \le  \sup_{u \in [0,1]} \min \Big \{ \|(\barbelow{\ddot{X}}^{(u)}_{l}- \barbelow{\ddot{X}}^{(u)}_{m,l})
(e_r)\|_{\mathbb{C},2\gamma}+
\|( \barbelow{\ddot{X}}^{(u)}_{l,\{j\}} -\barbelow{\ddot{X}}^{(u)}_{m,l,\{j\}}) (e_r)\|_{\mathbb{C},2\gamma}, \\
&  ~~~~~~~~~
+ \|( \barbelow{\ddot{X}}^{(u)}_{l}-\barbelow{\ddot{X}}^{(u)}_{l,\{j\}}) (e_r) \|_{\mathbb{C},2\gamma}
 +\|(\barbelow{\ddot{X}}^{(u)}_{m,l}-\barbelow{\ddot{X}}^{(u)}_{m,l,\{j\}})(e_r)\|_{\mathbb{C},2\gamma} \Big \} 
 \\
 &\le 2  \sup_{u \in [0,1]}\min\Big( (\sum_{i=m+1}^{\infty} \|D^{(u)}_{l,i}\|^2_{\mathbb{C},2\gamma})^{1/2}, \nu^{X^\cdot_\cdot (e_r)}_{\mathbb{C},2\gamma}(l-j)\Big),
\end{align*}
where $D^{(u)}_{l,j}=\E[\barbelow{\ddot{X}}^{(u)}_{l}|\G_{l-j}]-\E[\barbelow{\ddot{X}}^{(u)}_{l}|\G_{l-j+1}]$, which satisfies $\sup_{u \in [0,1]}\|D^{(u)}_{l,j}\|^2_{\mathbb{C},2\gamma} \le (\nu^{X_\cdot^\cdot}_{\mathbb{C},2\gamma}(j))^2$. Then, \autoref{lem:Burklin}, \autoref{as:mappings}(i) and tedious calculations yield 
\begin{align*}
 &
 \sum_{j=-\infty}^{NM} J^2_{2,j}
 %\\&=   
 %\sum_{j=-\infty}^{NM}\Big ( \bignorm{\sum_{l=1}^{NM}\mathrm{1}_{2\le t_l \le k}\Upsilon_{u_{i_l},\omega} \Big(\sum_{s=1}^{t_l-1}
 %\tilde{X}^{(u_{i_l})}_{{t_l},\{j\}} \otimes \tilde{w}^{(\omega)}_{\bf,s,t_l} (\tilde{X}^{(u_{i_l})}_{s}- \tilde{X}^{(u_{i_{l}})}_{m,s}+\tilde{X}^{(u_{i_{l}})}_{m,s,\{j\}} -X^{(u_{i_{l}})}_{s,\{j\}})\Big) (e_r)}_{\Hi,\gamma}\Big)^2
\\
&\le
 \sum_{j=-\infty}^{NM}\Big ( \sum_{i=1}^{M}\sum_{s=1}^{k-1}\norm{\Upsilon_{u_{i},\omega}}_\infty \bignorm{\sum_{t=s+1}^{k} \tilde{w}^{(-\omega)}_{\bf,s,t} \tilde{X}^{(u_{i})}_{t,\{j\}}}_{\Hi,2\gamma} \bignorm{\Big( \tilde{X}^{(u_{i})}_{s}- \tilde{X}^{(u_{i})}_{m,s}+\tilde{X}^{(u_{i})}_{m,s,\{j\}} -\tilde{X}^{(u_{i})}_{s,\{j\}}\Big)(e_r) }_{\cnum,2\gamma}\Big)^2
%\\
%&\le \sup_u \norm{\Upsilon_{u,\omega}}^2_\infty \sum_{j=-\infty}^{NM}
%\Big  ( \sum_{\ell=1}^{MN} \mathrm{1}_{1\le s_l \le k-1} \bignorm{\sum_{t=s_\ell+1}^{k} \tilde{w}^{(-\omega)}_{\bf,s_\ell,t} \tilde{X}^{(u_{i_\ell})}_{t, \{j\}}}_{\Hi,2\gamma} \bignorm{ \big(\tilde{X}^{(u_{i_\ell})}_{s_\ell}- \tilde{X}^{(u_{i_\ell})}_{m,s_\ell}+\tilde{X}^{({u_{i_\ell}})}_{m,s_\ell,\{j\}} -\tilde{X}^{(u_{i_\ell})}_{s_\ell,\{j\}} \big)(e_r)}_{\cnum,2\gamma}\Big)^2
\\&\le \sup_u \norm{\Upsilon_{u,\omega}}^2_\infty \sum_{j=-\infty}^{NM}
\Big ( \sum_{\ell=1}^{MN}\mathrm{1}_{1\le s_\ell \le k-1} \bignorm{\sum_{t=s_\ell+1}^{k} \tilde{w}^{(-\omega)}_{\bf,s_\ell,t} \tilde{X}^{(u_{i_\ell})}_{t,\{j\}}}_{\Hi,2\gamma}\sup_{u\in[0,1]} \bignorm{\big( \barbelow{\ddot{X}}^{(u)}_{\ell}- \barbelow{\ddot{X}}^{(u)}_{m,\ell}+\barbelow{\ddot{X}}^{(u)}_{m,\ell,\{j\}} - \barbelow{\ddot{X}}^{(u)}_{\ell,\{j\}} \big)(e_r)}_{\cnum,2\gamma}\Big)^2
\\& \le \sup_u \norm{\Upsilon_{u,\omega}}^2_\infty K^2_{2\gamma}\big( \sum_{j=0}^\infty \nu^{{X}^{\cdot}_{\cdot}}_{\hi,2\gamma}(j)\big)^2   \sum_{j=-\infty}^{NM} \max_{1\le s \le k-1} \sum_{t=s+1}^{k}
\big\vert \tilde{w}_{\bf,s,t}^{(\omega)} \big\vert^2 \\
& \times \Big( \sum_{\ell=1}^{NM}\mathrm{1}_{1\le s_\ell \le k-1}  2 \sup_{u \in [0,1]}\min\big(\sum_{m^\pr=m+1}^{\infty} \|D^{(u)}_{\ell,m^\pr}(e_r)\|_{\cnum,2\gamma},\nu^{X^\cdot_\cdot(e_r)}_{\cnum,2\gamma}(\ell-j)\big)\Big)^2 \\
%& =k \max_{1\le s \le k-1}\sum_{h=s-k}^{-1} | w(\bf h)|^2  O\Big(\sup_u \norm{\Upsilon_{u,\omega}}^2_\infty M \big( \sum_{j=0}^\infty \nu^{{X}^{\cdot}_{\cdot}}_{\hi,2\gamma}(j)\big)^2 \Lambda^2_{2\gamma,m}(e_r)\Big) \\
&= k\sum_{h=1-k}^{-1} |w(\bf h)|^2 O\Big( M  \sup_{u}\|\Upsilon_{u,\omega}\|^2_\infty \big( \sum_{j=0}^\infty \nu^{{X}^{\cdot}_{\cdot}}_{\hi,2\gamma}(j)\big)^2  \Lambda^2_{2\gamma,m}(e_r)\Big)~.
 \tageq \label{eqnr104}
\end{align*}
For the first term in \eqref{eq:J1J2}, mimicking the previous argument yields
\begin{align*}
\sum_{j=-\infty}^{NM} & J^2_{1,j}=\sum_{j=-\infty}^{NM} \bignorm{\sum_{l=1}^{NM} \mathrm{1}_{2 \le t_l \le  k} \Upsilon_{u_{i_l},\omega} \Big((\barbelow{{X}}_{l}-\barbelow{{X}}_{l,\{j\}}) \otimes (\barbelow{N}_{l}-\barbelow{N}^{(m)}_{l})\Big)(e_r)}^2_{\hi,\gamma}
\\ & \le \sup_u \norm{\Upsilon_{u,\omega}}^2_\infty \sum_{j=-\infty}^{NM} \Big(\sum_{l=1}^{NM} \mathrm{1}_{2 \le t_l \le  k}  \norm{\barbelow{{X}}_{l}-\barbelow{{X}}_{l,\{j\}}}_{\Hi,2\gamma} \norm{(\barbelow{N}_{l}-\barbelow{N}^{(m)}_{l})(e_r)}_{\cnum,2\gamma}\Big)^2
%\\& \le\sup_u \norm{\Upsilon_{u,\omega}}^2_\infty  \sum_{j=-\infty}^{NM}K^2_{2\gamma}\max_{2 \le t\le k}\sum_{s=1}^{t-1}\big\vert {w}_{\bf,s,t} \big\vert^2 \big( \sum_{m^\pr=m+1}^\infty \nu^{{X}^{\cdot}_{\cdot}(e_r)}_{\cnum,2\gamma}(m^\pr)\big)^2 \Big(\sum_{l=1}^{NM} \mathrm{1}_{2 \le t_l \le  k}  \ \sup_{u \in [0,1]} \norm{\barbelow{\ddot{X}}^{(u)}_{l}-\barbelow{\ddot{X}}^{(u)}_{l,\{j\}}}_{\Hi,2\gamma} \Big)^2
\\& \lesssim  \sup_u \norm{\Upsilon_{u,\omega}}^2_\infty \max_{2 \le t\le k}\sum_{s=1}^{t-1}\big\vert {w}_{\bf,s,t} \big\vert^2 \big( \sum_{m^\pr=m+1}^\infty \nu^{{X}^{\cdot}_{\cdot}(e_r)}_{\cnum,2\gamma}(m^\pr)\big)^2 \sum_{l=1}^{NM} \mathrm{1}_{2 \le t_l \le  k}    \sum_{j=-\infty}^{NM} \nu^{X^\cdot_\cdot}_{\hi,2\gamma}(l-j) \sum_{k=1}^{NM} \nu^{X^\cdot_\cdot}_{\hi,2\gamma}(k-j) 
\\& \lesssim\max_{2 \le t \le k}\sum_{s=1}^{t-1}\big\vert {w}_{\bf}(s-t) \big\vert^2 O\Big(Mk \sup_u \norm{\Upsilon_{u,\omega}}^2_\infty \big( \sum_{m^\pr=m+1}^\infty \nu^{{X}^{\cdot}_{\cdot}(e_r)}_{\cnum,2\gamma}(m^\pr)\big)^2  \big(\sum_{j=0}^\infty \nu^{{X}^{\cdot}_{\cdot}}_{\hi,2\gamma}(j)\big)^2 \Big)
\\& =k \sum_{h=1}^{k-1}\big\vert w(\bf h) \big\vert^2 
O \Big (M 
\sup_{u}\|\Upsilon_{u,\omega}\|^2_\infty \Lambda^2_{2\gamma,m}(e_r)\big(\sum_{j=0}^\infty \nu^{{X}^{\cdot}_{\cdot}}_{\hi,2\gamma}(j)\big)^2 \Big )~.
 \tageq \label{eqnr105}
\end{align*}
Combining  \eqref{eqnr104} and \eqref{eqnr105}, we obtain
\begin{align*}
& \Big(\sum_{r \in \nnum} \sqrt{\sum_{j=-\infty}^{MN} \bignorm{  P_j(Y_{MN} -\tilde{Y}^{(m)}_{MN})(e_r)}^2_{\Hi,\gamma}}\Big)^2
\\& \lesssim \Bigg(\sum_{r \in \nnum} \sqrt{k \sum_{h=1}^{k-1}\big\vert w(\bf h) \big\vert^2
M \sup_{u}\|\Upsilon_{u,\omega}\|^2_\infty \Lambda^2_{2\gamma,m}(e_r)\big(\sum_{j=0}^\infty \nu^{{X}^{\cdot}_{\cdot}}_{\hi,2\gamma}(j)\big)^2 }~~\Bigg)^2
\\&= k \sum_{h=1}^{k-1}\big\vert w(\bf h) \big\vert^2 
O \Big (M \sup_{u}\|\Upsilon_{u,\omega}\|^2_\infty \big(\sum_{j=0}^\infty \nu^{{X}^{\cdot}_{\cdot}}_{\hi,2\gamma}(j)\big)^2 \Big(\sum_{r \in \nnum} \sqrt{\Lambda^2_{2\gamma,m}(e_r)} \Big)^2~. 
\end{align*}
Then, if we let $g(b,k)= k \sum_{h=b+1}^{b+k}\big\vert w(\bf h) \big\vert^2 $ we observe that for all $b \ge 0$ and $1\le k < k+l$,
\[ 
g(b,k) + g(b+k,l) \leq g(b,k+l).
 \tageq \label{eq:mor7}
\]
It then follows from Theorem 1 in \cite{Mor76}, and from the fact that the second term in \eqref{eqnr103} can be treated similarly, that
\begin{align*}
& \E\Big(\max_{1\le k \le N}\bignorm{\sum_{i=1}^{M}  k\Upsilon_{u_i,\omega} \Big(\big(\tilde{\F}^{(u_i,\omega)}_{k,t>s}-\E \tilde{\F}^{(u_i,\omega)}_{k,t>s}\big)-k\Upsilon_{u_i,\omega} \Big(\big(\tilde{\F}^{(u_i,\omega)}_{m,k,t>s}-\E \tilde{\F}^{(u_i,\omega)}_{m, k,t>s}\big)}_{S_1}\Big)^\gamma
\\ & = O\Bigg( \Big(\sup_{u}\|\Upsilon_{u,\omega}\|_\infty N \sum_{h=1}^{N-1}\big\vert w(\bf h) \big\vert^2 M \big(\sum_{r \in \nnum} \Lambda_{2\gamma,m}(e_r) \big)^2\big(\sum_{j=0}^\infty \nu^{{X}^{\cdot}_{\cdot}}_{\hi,2\gamma}(j)\big)^2\Big)^{\gamma/2}\Bigg)
\\&= O\Big((N\bf^{-1})^{\gamma/2} M^{\gamma/2} \big(\sum_{r \in \nnum} \Lambda_{2\gamma,m}(e_r) \big)^{\gamma}\Big).
\end{align*}

\end{proof}

\begin{lemma}[martingale approximation to the $m$-dependent process]\label{lem:marappr}
 Under the conditions of \autoref{thm:approx} we have, for some positive constant $C_\gamma$,
\begin{align*}
 \E\Big(\max_{1\le k \le N}&\bignorm{\sum_{i=1}^{M} k \Upsilon_{u_{i},\omega} \big(\tilde{\F}^{(u_i,\omega)}_{m,k,t>s}-\E \tilde{\F}^{(u_i,\omega)}_{m, k,t>s}\big)- \sum_{i=1}^{M}\sum_{t=2}^{k} \Upsilon_{u_{i},\omega} \big(V^{(u_i,\omega)}_{m,N,t}\big)}_{S_1}\Big)^\gamma
\\& \le C^\gamma_{\gamma} m^{\frac{3}{2}\gamma} {(M N)}^{\gamma/2} (\sup_{u\in [0,1]} \|X^{(u)}_0\|_{\hi,2\gamma} \sup_{u\in [0,1]}\sum_{r\in \nnum}\|X^{(u)}_0(e_r)\|_{\cnum,2\gamma})^{\gamma} 
\\& \times( \sup_{u \in [0,1]}\|\Upsilon_{u,\omega}\|_{\infty})^\gamma
 (2 \max_{t}|w( t)|^2+ 2m\sum_{s=1}^{N} |w(\bf( s))-w(\bf( s-1))|^2 )^{\gamma/2}.
\end{align*}
\end{lemma}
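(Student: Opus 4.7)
The plan is to expand both $k(\tilde{\F}^{(u_i,\omega)}_{m,k,t>s}-\E[\cdot])$ and the target martingale $\sum_{t=2}^k V^{(u_i,\omega)}_{m,N,t}$ in the martingale projections $\{P_j\}$ and to identify the difference as a residual $\G_{\tau_t}$-martingale whose quadratic variation has the two-part structure $2\max_t|w(t)|^2+2m\sum_{s=1}^{N}|w(\bf s)-w(\bf(s-1))|^2$. Concretely, writing $\tau_t=\tu{N}{u_i}+t$, I would use the telescoping $X^{(u_i)}_{m,\tau}=\sum_{j=\tau-m}^{\tau}P_j(X^{(u_i)}_{m,\tau})$, finite by $m$-dependence, to expand $X^{(u_i)}_{m,\tau_s}\otimes X^{(u_i)}_{m,\tau_t}-\E[\cdot]$ as a double sum over projection times $(j_1,j_2)$. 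Re-indexing by the later time $u=\max(j_1,j_2)$ yields a $\G_u$-martingale difference sequence; the contribution from $u=j_2>j_1$, via the identity $\dmpi{u_i}{\omega}{N}{t}=\sum_{l=0}^{m}e^{-\im\omega l}P_{\tau_t}(X^{(u_i)}_{m,\tau_t+l})$, collapses exactly to $\sum_{t=2}^k V^{(u_i,\omega)}_{m,N,t}$, while the symmetric case $j_1>j_2$ delivers the $V^{\dagger(u_i,\omega)}_{m,N,t}$ piece.

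The residual $R_k$ contains (i) boundary terms at $t\in\{1,\ldots,m\}\cup\{k-m+1,\ldots,k\}$ where the $m$-dependent window truncates the inner projection range, and (ii) cross-terms carrying a mismatch between the weight $w(\bf h)$ in $\tilde\F$ (with $h=t-s$) and the weight structure induced by the martingale re-organization. Part (ii) would be handled by Abel summation in $h$: rewriting $w(\bf h)$ as a telescoping sum of $\Delta_{h'}w=w(\bf h')-w(\bf(h'-1))$ transfers the weight onto the inner running sum; combined with the $O(m)$ length of the boundary coupling, this produces the $2m\sum_{s=1}^{N}|w(\bf s)-w(\bf(s-1))|^2$ term, while part (i) yields the complementary $2\max_t|w(t)|^2$ term. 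The $L^\gamma$-norm of $\|\sum_i\Upsilon_{u_i,\omega}(R_k^{(i)})\|_{S_1}$ is then controlled via the $S_1$-Burkholder inequality of \autoref{lem:Burkh}(ii) (valid as $\gamma=p/2\in\mathbb{N}$), which produces the $\sum_{r\in\nnum}$ coordinate structure visible in the bound; combined with \autoref{prop:S1inHprod} and \autoref{as:mappings}(i), each rank-one summand in $R_t^{(i)}$ contributes the product $\sup_u\|\Upsilon_{u,\omega}\|_\infty\cdot\sup_u\|X^{(u)}_0\|_{\hi,2\gamma}\cdot\sup_u\sum_r\|X^{(u)}_0(e_r)\|_{\cnum,2\gamma}$. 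The prefactor $m^{3\gamma/2}$ arises from $m^2$ via Minkowski over the spectral lag pair $(l,l')\in\{0,\ldots,m\}^2$ in each residual summand and an additional $\sqrt m$ from the $m$-wide boundary quadratic variation, all raised to the $\gamma$th power. Finally, the maximum over $k\le N$ would be handled via Moricz's Theorem~1 (as in the proof of \autoref{lem:mdepap}, cf.\ \eqref{eq:mor7}) applied to the superadditive majorant $g(b,k)=k\bigl(2\max_t|w(t)|^2+2m\sum_{s=b+1}^{b+k}|w(\bf s)-w(\bf(s-1))|^2\bigr)$, which is easily checked to satisfy $g(b,k)+g(b+k,l)\le g(b,k+l)$.

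The main obstacle is the careful bookkeeping of the martingale re-indexing combined with the Abel-summation rearrangement at the $m$-dependent boundaries, to ensure simultaneously that (a) the residual splits cleanly into the two claimed weight functionals while retaining the superadditivity required by Moricz, and (b) the combinatorics of the lag window $\{0,\ldots,m\}^2$ and the boundary window of width $m$ compound correctly into the stated $m^{3\gamma/2}$ prefactor rather than a larger power of $m$.
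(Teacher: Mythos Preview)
Your closing tools are the right ones---the $S_1$-Burkholder inequality of \autoref{lem:Burkh}(ii), Abel summation, and M\'oricz's maximal inequality with the superadditive majorant $g(b,k)=k\bigl(\max_t|w(t)|^2+m\sum_{s=b+1}^{b+k}|w(\bf s)-w(\bf(s-1))|^2\bigr)$ are exactly what the paper uses. The gap is in the decomposition you propose to feed into them.

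First, the lemma compares $\tilde\F_{m,k,t>s}$ with $\sum_t V_t$ only (no $V^\dagger$), so your claimed splitting into a ``$j_2>j_1$ gives $V$'' and ``$j_1>j_2$ gives $V^\dagger$'' piece cannot be right. More importantly, no clean collapse occurs: in $V_t=\tilde D_{m,N,t}\otimes\sum_{s<t}\tilde w_{t,s}\tilde D_{m,N,s}$ each factor is a projection at the \emph{single} time $\tau_t$ (resp.\ $\tau_s$), whereas your expansion spreads $X_{m,\tau_t}$ over projection times $j_1\in[\tau_t-m,\tau_t]$; the weights sit at data times $t-s$ in $\tilde\F$ but at indices $t'-s'$ in the target. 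The residual is therefore not just boundary plus weight-mismatch.

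The paper does not expand into $P_j$'s. It writes the difference per index $l$ as $A_l+(B_l-\E B_l)+(Z_l-\E Z_l)$ by the algebraic identity $X_t\otimes\sum w_s X_s-D_t\otimes\sum w_s D_s=X_t\otimes\sum w_s(X_s-D_s)+(X_t-D_t)\otimes\sum w_s D_s$, splitting the first by $s\le t_l-4m$ versus $s>t_l-4m$. The crucial step for $A_l$ is the representation
\[
\ddot X^{(u)}_{m,u_{i_l},s}=Q_{l,s}-\E[Q_{l,s+1}\mid\G_{\tu{N}{u_{i_l}},s}]e^{-\im\omega},\qquad
Q_{l,s}=\sum_{r=0}^m\E[\ddot X^{(u)}_{m,u_{i_l},s+r}\mid\G_{\tu{N}{u_{i_l}},s}]e^{-\im r\omega},
\]
from which $\ddot D^{(u,\omega)}_{m,u_{i_l},s}=Q_{l,s}-\E[Q_{l,s}\mid\G_{\tu{N}{u_{i_l}},s-1}]$ and hence $\ddot X_{m,s}-\ddot D_{m,s}=\tilde Q_{l,s}-\tilde Q_{l,s+1}$ \emph{telescopes in $s$}. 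Abel summation applied to $\sum_s w(\bf(s-t_l))(\tilde Q_{l,s}-\tilde Q_{l,s+1})$ is what produces the two-part weight structure; without this telescoping, the Abel step has nothing clean to act on.

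Finally, your accounting for $m^{3/2}$ (``$m^2$ from the lag pair times $\sqrt m$ from the boundary'') would give $m^{5/2}$. In the paper, one factor $m$ comes from $\|Q_{l,s}\|_{\Hi,\gamma}\le 2m\sup_u\|X_0^{(u)}\|_{\Hi,\gamma}$, and the remaining $\sqrt m$ arises because $\{A^{(u)}_{l+4mq}\}_q$ is a martingale difference sequence (the $4m$-gap makes $X_{m,\tau_{t_l}}$ independent of the inner sum), so Minkowski over the $4m$ starting residues combined with the $\sqrt{\#q}$ from Burkholder yields $\sqrt{4m}\cdot\sqrt{Mk}$. The extra $\sqrt m$ inside the parenthetical comes separately from the $m$ projections of $\tilde Q_{l,s}$ in the increment term.
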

\begin{proof}[Proof of \autoref{lem:marappr}]
 Define
\begin{align*}
\ddot{D}^{(v,\omega)}_{m,u,t}  = D_{m,\tu{N}{u}+t}^{(v,\omega)}=\sum_{p=0}^{m} P_{\tu{N}{u}+t}\Big({X}^{(v)}_{m,\tu{N}{u}+t+p}  \Big) e^{-\im \omega p}
\end{align*} 
and set 
$
\ddot{X}^{(v)}_{m,u,t}= {X}^{(v)}_{m,\tu{N}{u}+t}$ and  $\tilde{X}^{(v,t)}_{m,u,t}={X}^{((\tu{N}{v}+t)/T)}_{m,\tu{N}{u}+t}$.
 Using the notation introduced
in  the proof of  \autoref{lem:mdepap}, we have
\[
\bignorm{\sum_{i=1}^{M}\sum_{t=2}^{k}(W^{(m)}_{t,i}-\E W^{(m)}_{t,i})- \sum_{i=1}^{M}\sum_{t=1}^{k} 
\Upsilon_{u_{i},\omega} \big (  V^{(u_i,\omega)}_{m,N,t}\big)}_{S_1,\gamma}
=\bignorm{\sum_{l=1}^{M N}\mathrm{1}_{2\le t_l \le k}(\barbelow{W}^{(m)}_l-\E\barbelow{W}^{(m)}_l-   \Upsilon_{u_{i_l},\omega}  ( V^{(u_{i_l},\omega)}_{m,N,t_l} )\big)}_{S_1,\gamma}
~, 
\]
and we write
$$
\barbelow{W}^{(m)}_l-\E\barbelow{W}^{(m)}_l-  
 \Upsilon_{u_{i_l},\omega} (V^{(u_{i_l},\omega)}_{m,N,t_l}  ) = 
\Upsilon_{u_{i_l},\omega}(A_l  +B_l  -\E B_l +Z_l  -\E Z_l ),
$$
where
\begin{align*}
A_l &=  \tilde{X}^{(u_{i_l},t_l)}_{m,u_{i_l},t_l} \otimes \sum_{s=1}^{t_l-4m}  \tilde{w}^{(\omega)}_{\bf,s,t_l}  ( \tilde{X}^{(u_{i_l},s)}_{m,u_{i_l},s}- \dmpi{u_{i_l}}{\omega}{N}{s}) 
~,
\tageq \label{eq:Al}
\\ 
B_l &=  \tilde{X}^{(u_{i_l},t_l)}_{m,u_{i_l},t_l} \otimes \sum_{s=t_l-4m+1 \vee 1}^{t_l-1}  \tilde{w}^{(\omega)}_{\bf,s,t_l}  (\tilde{X}^{(u_{i_l},s)}_{m,u_{i_l},s}-\dmpi{u_{i_l}}{\omega}{N}{s})
~,\tageq \label{eq:Bl}
\\
Z_l &= (\tilde{X}^{(u_{i_l},t_l)}_{m,u_{i_l},t_l}-\dmpi{u_{i_l}}{\omega}{N}{t_l}) \otimes \sum_{s=1}^{t_l-1}  \tilde{w}^{(\omega)}_{\bf,s,t_l}  \dmpi{u_{i_l}}{\omega}{N}{s}
~. \tageq \label{eq:Zl}
\end{align*}
We treat the terms \eqref{eq:Al}, \eqref{eq:Bl}, and \eqref{eq:Zl} separately  making  use of counterparts with the re-scaled time parameter of the auxiliary process independent of $l$, that is,
\begin{align*}
A^{(u)}_l &= \ddot{X}^{(u)}_{m,u_{i_l},t_l} \otimes \sum_{s=1}^{t_l-4m}   \tilde{w}^{(\omega)}_{\bf,s,t_l}(
\ddot{X}^{(u)}_{m,u_{i_l},s}  
- \ddot{D}^{(u,\omega)}_{m,u_{i_l},s})~,
\\ 
B^{(u)}_l &= \ddot{X}^{(u)}_{m,u_{i_l},t_l} \otimes \sum_{s=t_l-4m+1 \vee 1}^{t_l-1} \tilde{w}^{(\omega)}_{\bf,s,t_l}  (
\ddot{X}^{(u)}_{m,u_{i_l},s} 
-\ddot{D}^{(u,\omega)}_{m,u_{i_l},s})~,
\\
Z^{(u)}_l &= \ddot{X}^{(u)}_{m,u_{i_l},t_l}-\ddot{D}^{(u,\omega)}_{m,u_{i_l},t_l}\otimes \sum_{s=1}^{t_l-1}  \tilde{w}^{(\omega)}_{\bf,s,t_l} \ddot{D}^{(u,\omega)}_{m,u_{i_l},s} ~.
\end{align*}
 We start with $A_l $
 noting that we can write $\ddot{X}^{(u)}_{m,u_{i_l},s}= Q_{l,s}- \E[Q_{l,s+1}|\G_{\tu{N}{u_{i_l}},s}]e^{-i\omega}$,  where $$
 Q_{l,s}= \sum_{r=0}^{m} \E[ \ddot{X}^{(u)}_{m,u_{i_l},s+r} |\G_{\tu{N}{u_{i_l}},s}] e^{-\im r\omega}.
 $$
Observe that $\ddot{D}^{(u,\omega)}_{m,u_{i_l},s}= Q_{l,s}-\E[Q_{l,s}|\G_{\tu{N}{u_{i_l}},s-1}]$, then

\begin{align*}
& \bignorm{ \sum_{s=1}^{t_l-4m}  \tilde{w}^{(\omega)}_{\bf,s,t_l}(\ddot{X}^{(u)}_{m,u_{i_l},s}- \ddot{D}^{(u,\omega)}_{m,u_{i_l},s}) }_{\Hi,\gamma}\\
& ~~~~~~~
=  \bignorm{   \sum_{s=1}^{t_l-4m} (2\pi)^{-1} w(\bf(s-t_l))  \Big(e^{-\im (s-t_l)\omega} \E[Q_{l,s}| \G_{\tu{N}{u_{i_l}},s-1}] -e^{\im (t_l-(s+1))\omega} \E[Q_{l,s+1}| \G_{\tu{N}{u_{i_l}},s}]\Big)}_{\hi,\gamma}. 
\end{align*}
Set $\tilde{Q}_{l,s} = e^{\im (t_l-s)\omega} \E\big[Q_{l,s}| \G_{\tu{N}{u_{i_l}},s-1}\big] $. Summation by parts, H\"older's inequality and \autoref{lem:Burkh} yield
\begin{align*}
&\bignorm{\sum_{s=1}^{t_l-4m}  w(\bf(s-t_l)) \Big(\tilde{Q}_{l,s} -\tilde{Q}_{l,s+1} \Big)}_{\hi,\gamma}
\\& \le \bignorm{w(\bf(4m)) \tilde{Q}_{l,t-4m} }_{\hi,\gamma}+\bignorm{\sum_{s=1}^{t_l-4m} (w(\bf(s-t_l))-w(\bf(s-t_l-1)))\tilde{Q}_{l,s}}_{\hi,\gamma} 
\\& \le \max_{t}|w(t)| \|\tilde{Q}_{l,t-4m})\|_{\hi,\gamma}  + \sqrt{\bignorm{\sum_{q=1}^{m}\sum_{s=1}^{t_l-4m} (
w(\bf(s-t_l))-w(\bf(s-t_l-1)) ) P_{\tu{N}{u_{i_l}},s-q}\big(\tilde{Q}_{l,s}\big)}^2_{\hi,\gamma}}
\\& \le  2m \sup_{u \in [0,1]}\|X^{(u)}_0\|_{\hi,\gamma}\Big(  \max_{t}|w(t)|+C \big(\sum_{s=1}^{t_l-4m}|w(\bf(s-t_l))-w(\bf(s-t_l-1))|^2 m \big)^{1/2} \Big)~,
\end{align*}
where it was used that $ (  \sum_{q=1}^{m}\|P_{\tu{N}{u_{i_l}},s-q}(\tilde{Q}_{l,s})\|^2_{\hi,\gamma}  )^{1/2} \le 2{m^{3/2}} \sup_{u \in [0,1]}\|{X}^{(u)}_0\|_{\hi,\gamma} $
since $\|\tilde{Q}_{l,s}\|_{\hi,\gamma}  \le 2m \sup_{u \in [0,1]}\|{X}^{(u)}_0\|_{\hi,\gamma} $.
Thus we obtain, for some finite constant $C_\gamma$, 
\begin{align*}
\|\Upsilon_{u_{i_l},\omega} A^{(u)}_l\|_{\hi,\gamma} &\le  C_\gamma m
\sup_{u \in [0,1]}\|{X}^{(u)}_0\|^2_{\hi,\gamma} \sup_{u \in [0,1]}\|\Upsilon_{u,\omega}\|_{\infty} \\& \times \Bigg(  \max_{t}|w(t)| +
 m^{1/2} \Big (\sum_{s=1}^{t_l-4m} |w(\bf(s-t_l))-w(\bf(s-t_l-1))|^2 \Big )^{1/2}\Bigg)
\end{align*}
  It may verified that $\big\{A^{(u)}_{l+4mq}\big\}_{q \in \mathbb{N}}$ is a martingale difference sequence in $\op^2_{S_1}$ with respect to $
\{\G_{l+4mq}\}_{q \in \nnum}$. Hence, the above and  \autoref{lem:Burkh} yield
\begin{align*}
&\bignorm{\sum_{l=1}^{T}\mathrm{1}_{2 \le t_l \le k}\Upsilon_{u_{i_l},\omega} (A_l) }_{S_1,\gamma} 
%\le \sup_{u \in [0,1]} \bignorm{\sum_{l=1}^{T}\mathrm{1}_{2 \le t_l \le k}\Upsilon_{u_{i_l},\omega} (A^{(u)}_l) }_{S_1,\gamma}
\\& \le \sup_{u \in [0,1]} \sum_{s=1}^{4m} \bignorm{\sum_{q=0}^{\lfloor (T-s)/(4m) \rfloor} \mathrm{1}_{2\le t_{s+4mq}\le k} \Upsilon_{u_{i_{s+4m q}},\omega} (A^{(u)}_{s+4m q}) }_{S_1,\gamma} 
\\
&  \lesssim \sup_{u \in [0,1]} \sum_{s=1}^{4m}
\sum_{r \in \nnum} \sqrt{ \sum_{q=0}^{\lfloor (T-s)/(4m) \rfloor} \mathrm{1}_{2\le t_{s+4mq}\le k} \norm{\Upsilon_{u_{i_{s+4mq}},\omega} }^2_\infty  \norm{A^{(u)}_{s+4mq}(e_r)}^2_{\Hi,\gamma} } 
\\
%& \le 4 m^{3/2} {(Mk)}^{1/2}C  \sup_{u \in [0,1]}\|\Upsilon_{u,\omega}\|_{\infty}\sup_{u \in [0,1]}\|{X}^{(u)}_0\|_{\hi,\gamma}\sum_{r \in \nnum}\|{X}^{(u)}_0(e_r)\|_{\cnum,\gamma}
%\\& \times \big(  \max_{t}|w(t )|+m^{1/2}(\sum_{s=1}^{k} |w(\bf(s))-w(\bf(s-1))|^2 )^{1/2}\big)
\\& = O\Big(m^{3/2}  \sup_{u \in [0,1]}\|\Upsilon_{u,\omega}\|_{\infty}\sup_{u \in [0,1]}\|{X}^{(u)}_0\|_{\hi,\gamma} \sum_{r \in \nnum}\|{X}^{(u)}_0(e_r)\|_{\cnum,\gamma}\big)
\\&~~\times {(Mk)}^{1/2}  \big(  \max_{t}|w(t)|+m^{1/2}(\sum_{s=1}^{k} |w(\bf(s))-w(\bf(s-1))|^2 )^{1/2}\big).
\end{align*}
For $B_{l}$, we note that
$$\sum_{s=t_l-4m+1 \vee 1}^{t_l-1}  \tilde{w}^{(\omega)}_{\bf,s,t_l}  (\tilde{X}^{(u_{i_l},s)}_{m,u_{i_l},s}-\dmpi{u_{i_l}}{\omega}{N}{s})  $$ is $\G_{\tu{N}{u_{i_l}},t_l-1}$ measurable and that it is $5m$-dependent. Therefore, Minkowski's inequality, the Cauchy-Schwarz inequality and a similar argument as above yields
\begin{align*}
\bignorm{\sum_l \mathrm{1}_{2 \le t_l \le k} \Upsilon_{u_{i_l},\omega} (B_{l} - \E B_l) }_{S_1,\gamma}
& \lesssim   m^{3/2} {(Mk)}^{1/2}  \sup_{u \in [0,1]}\|{X}^{(u)}_0\|_{\hi,2\gamma}\sum_{r \in \nnum}\|{X}^{(u)}_0(e_r)\|_{\cnum,2\gamma} \sup_{u \in [0,1]}\|\Upsilon_{u,\omega}\|_{\infty}
\\&  ~~~\times \big(  \max_{t}|w(t)|+m^{1/2}(\sum_{s=1}^{k} |w(\bf(s))-w(\bf(s-1))|^2 )^{1/2}\big).\end{align*}
A similar derivation yields a similar upper bound for $\|\sum_l \mathrm{1}_{2\le t_l\le k} \Upsilon_{u_{i_l},\omega} (Z_{l} - \E Z_l) \|_{S_1,\gamma}$. Finally, let
 \[
 g(b,k) = k\Big( \max_{t}|w(t)|^2+m\sum_{s=b+1}^{b+k} |w(\bf(s))-w(\bf(s-1))|^2\Big)
 \] Then observe that for any bounded function $f$, and all $b \ge 0$ and $1\le k < k+l$,  \[k \sum_{s=b+1}^{b+k}(1+|f(s)|^2) +l\sum_{s=b+k+1}^{b+k+l}(1+|f(s)|^2) \le (k+l)\sum_{s=b+1}^{b+k+l}(1+|f(s)|^2) \tageq \label{eq:morsum}\] from which we find $g(b,k) + g(b+k, l) \le  g(b, k+l)$. The result now follows from
 Theorem 1 in \cite{Mor76}.
\end{proof}
 
\begin{lemma}\label{lem:diag}
Suppose that Assumptions \ref{as:depstruc} (with $p\ge 6$), \ref{as:Weights}, \ref{as:mappings} hold, and let $\gamma=p/2$. Then for any  fixed  $\omega \in \rnum $
\begin{align*}
\E\Big(\max_{1\le k \le N}\bignorm{\sum_{i=1}^M \Upsilon_{u_i,\omega}\big(\tilde{\F}_{k,t=s}^{(u_i,\omega)}-\E\tilde{\F}_{k,t=s}^{(u_i,\omega)}\big)}_{S_1}\big)^{\gamma} =O\big((MN)^{\gamma/2}\Big).
\end{align*}
\end{lemma}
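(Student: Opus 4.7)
The plan is to reduce the claim to a maximal moment bound on a partial-sum process in $S_1(\Hi)$ and then handle it via a carefully chosen martingale decomposition followed by Moricz's inequality. Since $\tilde{w}^{(\omega)}_{\bf,s,s}=(2\pi)^{-1}w(0)=(2\pi)^{-1}$, the diagonal piece of the estimator is simply $k\,\tilde{\F}^{(u_i,\omega)}_{k,t=s}=(2\pi)^{-1}\sum_{s=1}^{k}\tilde{X}^{(u_i,s)}_{u_i;s}\otimes\tilde{X}^{(u_i,s)}_{u_i;s}$. Multiplying the quantity of \autoref{lem:diag} by the factor $k$ used to normalise its sibling \autoref{lem:mdepap}, and re-indexing globally as in that proof by $l=(i-1)N+s$, $A_l:=\Upsilon_{u_{i_l},\omega}$, $\barbelow{X}_l:=\tilde{X}^{(u_{i_l},t_l)}_{u_{i_l};t_l}$ and $Y_l:=\barbelow{X}_l\otimes\barbelow{X}_l-\E[\barbelow{X}_l\otimes\barbelow{X}_l]\in S_1(\Hi)$, the task becomes
\begin{equation*}
\E\,\max_{1\le k\le N}\bignorm{\sum_{l=1}^{MN}\mathrm{1}_{1\le t_l\le k}\,A_l(Y_l)}^{\gamma}_{S_1}=O\bigl((MN)^{\gamma/2}\bigr).
\end{equation*}

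Because $S_1(\Hi)$ has only cotype $2$, the natural decomposition $Y_l=\sum_{j\le l}P_j(Y_l)$ handled by Minkowski in $j$ and \autoref{lem:Burkh}(ii) in $l$ returns a bound that scales like $Mk$ rather than the needed $\sqrt{Mk}$; the extra $\bf^{-1}$-factor that rescued the off-diagonal term in \autoref{lem:mdepap} is unavailable here, since the diagonal weight is constant. The remedy will be the \emph{rotated} decomposition $Y_l=\sum_{m\ge 0}D^{(m)}_l$ with $D^{(m)}_l:=P_{l-m}(Y_l)$: for each fixed $m$, the sequence $\{D^{(m)}_l\}_l$ is an $S_1(\Hi)$-valued martingale difference sequence with respect to $\{\G_{l-m}\}_l$, since $D^{(m)}_l$ is $\G_{l-m}$-measurable with $\E[D^{(m)}_l|\G_{l-m-1}]=0$. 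Applying Minkowski in $m$ followed by \autoref{lem:Burkh}(ii) in $l$ for each fixed $m$ then yields
\begin{equation*}
\Bignorm{\sum_{l:\,t_l\in[b+1,b+k]}A_l(Y_l)}_{S_1,\gamma}\le K_\gamma\sup_{u}\|\Upsilon_{u,\omega}\|_\infty\sum_{m\ge 0}\sum_{r\in\nnum}\sqrt{\sum_{l:\,t_l\in[b+1,b+k]}\bignorm{D^{(m)}_l(e_r)}_{\Hi,\gamma}^{2}}.
\end{equation*}

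The projection norm will be controlled via the product identity $\barbelow{X}_l\otimes\barbelow{X}_l-\barbelow{X}_{l,\{l-m\}}\otimes\barbelow{X}_{l,\{l-m\}}=(\barbelow{X}_l-\barbelow{X}_{l,\{l-m\}})\otimes\barbelow{X}_{l,\{l-m\}}+\barbelow{X}_l\otimes(\barbelow{X}_l-\barbelow{X}_{l,\{l-m\}})$ (the two centering terms cancel because $\barbelow{X}_{l,\{l-m\}}$ has the same law as $\barbelow{X}_l$), followed by evaluation at $e_r$ and Cauchy--Schwarz; the latter is legitimate because $p\ge 6$ supplies finite $2\gamma$-th moments. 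This yields
\begin{equation*}
\|D^{(m)}_l(e_r)\|_{\Hi,\gamma}\le\beta_r(m):=C\sup_{u}\|X^{(u)}_0\|_{\Hi,2\gamma}\nu^{X^\cdot_\cdot(e_r)}_{\cnum,2\gamma}(m)+C\sup_{u}\|X^{(u)}_0(e_r)\|_{\cnum,2\gamma}\nu^{X^\cdot_\cdot}_{\Hi,2\gamma}(m),
\end{equation*}
and the summability $\sum_{r}\sum_{m}\beta_r(m)<\infty$ will follow from \autoref{as:depstruc}II) together with the elementary bound $\|X^{(u)}_0(e_r)\|_{\cnum,2\gamma}\le\sum_{m\ge 0}\nu^{X^\cdot_\cdot(e_r)}_{\cnum,2\gamma}(m)$. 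Since the indicator sum contributes at most $Mk$ non-zero terms, the previous display collapses to the fixed-interval bound $\bignorm{\sum_{l:\,t_l\in[b+1,b+k]}A_l(Y_l)}_{S_1,\gamma}\le C\sqrt{Mk}$, i.e.\ $\E\bignorm{\cdot}_{S_1}^{\gamma}\le C(Mk)^{\gamma/2}=:g(b,k)$.

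Finally, $\gamma=p/2\ge 3\ge 2$ guarantees the convexity inequality $k^{\gamma/2}+l^{\gamma/2}\le(k+l)^{\gamma/2}$, so $g$ satisfies the superadditivity hypothesis $g(b,k)+g(b+k,l)\le g(b,k+l)$ of Theorem~1 in \cite{Mor76}; the latter, already invoked in the proofs of \autoref{lem:mdepap} and \autoref{lem:marappr}, upgrades the fixed-interval estimate to $\E\max_{1\le k\le N}\bignorm{\cdot}^{\gamma}_{S_1}\le C\,g(0,N)=O((MN)^{\gamma/2})$, which is the claim. I expect the main obstacle to be precisely this observation: in the absence of type-$2$, Burkholder in $S_1(\Hi)$ can only deliver the correct $\sqrt{\text{sample size}}$-rate when applied along the time direction, and this in turn \emph{forces} the rotated decomposition $D^{(m)}_l=P_{l-m}(Y_l)$; without the rotation, the summable right-hand side of \autoref{lem:Burkh}(ii) returns a factor of $Mk$ and the Moricz step no longer produces the desired order.
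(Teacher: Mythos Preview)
Your proof is correct, but the route differs from the paper's. The paper writes $\sum_l(\mathcal{V}_l-\E\mathcal{V}_l)=\sum_jP_j(\sum_l\mathcal{V}_l)$ and applies \autoref{lem:Burkh}(ii) \emph{once}, with $A_j=I$ and martingale differences indexed by the projection index $j$; after the coupling bound and a triangle inequality over $l$, the remaining double sum is controlled by a Young-convolution estimate $\sum_j(\sum_{l}\nu(l-j))^2=O(Mk(\sum_m\nu(m))^2)$, and Moricz is invoked with $g(b,k)=k$. You instead first apply Minkowski over the lag $m$ and then \autoref{lem:Burkh}(ii) over the time index $l$ for each fixed $m$, exploiting the uniform bound $\|D^{(m)}_l(e_r)\|_{\Hi,\gamma}\le\beta_r(m)$ and the summability $\sum_{r,m}\beta_r(m)<\infty$. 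Your ``rotated'' decomposition is a legitimate and clean alternative, but your commentary that the natural decomposition necessarily fails is a bit misleading: the ordering you dismiss (Minkowski in $j$, then Burkholder in $l$) indeed fails, but the paper's ordering (Burkholder in $j$ directly, no Minkowski) is equally natural and delivers the $\sqrt{Mk}$ rate without any rotation. The two approaches are essentially dual reorganisations of the same double sum $\sum_j\sum_{l\ge j}P_j(Y_l)$; the paper groups by $j$ and sums norms via convolution, you group by $m=l-j$ and sum uniform bounds.
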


\begin{proof}[Proof of \autoref{lem:diag}]
 For $1\le t_l \le N$, let $\mathcal{V}_l=\mathrm{1}_{1 \le t_l \le k} 
\Upsilon_{u_{i_l},\omega} \big(\barbelow{X}_l \otimes  \barbelow{X}_l \big)$. 
\autoref{lem:Burkh} yields
\[
\bignorm{\sum_{i=1}^M \Upsilon_{u_i,\omega}\big(\tilde{\F}_{k,t=s}^{(u_i,\omega)}-\E\tilde{\F}_{k,t=s}^{(u_i,\omega)}\big)}^2_{S_1,\gamma} =\bignorm{\sum_{l=1}^{NM}\mathcal{V}_l-\E\mathcal{V}_l}^2_{S_1,\gamma}
\lesssim \Big(\sum_{r \in \nnum} \sqrt{\sum_{j=-\infty}^{NM}\bignorm{P_j\big(\sum_{l=1}^{NM}\mathcal{V}_l(e_r)\big)}^2_{\Hi,\gamma}} \Big)^2~, 
\]
where 
\begin{align*}
\sum_{j=-\infty}^{NM}
\bignorm{P_j\big(\sum_{l=1}^{NM}\mathcal{V}_l(e_r)\big)}^2_{\Hi,\gamma}
&\le  2 \sum_{j=-\infty}^{NM}\bignorm{\sum_{l=1}^{NM} \mathrm{1}_{1 \le t_l \le k}  \Upsilon_{u_{i_l},\omega}\big(\barbelow{X}_l \otimes (\barbelow{X}_l-\barbelow{X}_{l,\{j\}})\big)(e_r)}^2_{\Hi,\gamma}
\\& + 2 \sum_{j=-\infty}^{NM}  \bignorm{\sum_{l=1}^{NM} \mathrm{1}_{1 \le t_l \le k}\Upsilon_{u_{i_l},\omega}\big((\barbelow{X}_l-\barbelow{X}_{l,\{j\}}) \otimes \barbelow{X}_{l,\{j\}}\big)(e_r)}^2_{\Hi,\gamma}. \tageq\label{diag2}
\end{align*}
For the first term in \eqref{diag2}, we obtain
\begin{align*}
& \sum_{j=-\infty}^{NM}\bignorm{\sum_{l=1}^{NM} \mathrm{1}_{1 \le t_l \le k}
\Upsilon_{u_{i_l},\omega}\big(\barbelow{X}_l \otimes (\barbelow{X}_l-\barbelow{X}_{l,\{j\}})\big)(e_r)}^2_{\Hi,\gamma}
%\\
%& \le \sup_{u}\norm{ \Upsilon_{u,\omega}}^2_{\infty}\sup_{u \in [0,1]} \sum_{j=-\infty}^{NM}\Big(\sum_{l=1}^{NM}\mathrm{1}_{1 \le t_l \le k}\norm{\barbelow{\ddot{X}}^{(u)}_l}_{\Hi,2\gamma} \bignorm{(\barbelow{\ddot{X}}^{(u)}_l-\barbelow{\ddot{X}}^{(u)}_{l,\{j\}})(e_r)}_{\cnum,2\gamma}\Big)^2
\\
& \le \sup_{u}\norm{ \Upsilon_{u,\omega}}^2_{\infty}\sup_{u \in [0,1]} \norm{X^{(u)}_0}^2_{\Hi,2\gamma}\sum_{j=-\infty}^{NM}\Big(\sum_{l=1}^{NM}\mathrm{1}_{1 \le t_l \le k} 
 \nu^{{X}^{\cdot}_{\cdot}(e_r)}_{\hi,2\gamma}(l-j)\Big)^2
\\
&=O\Big(k M \sup_{u}\norm{ \Upsilon_{u,\omega}}^2_{\infty} \sup_{u \in [0,1]} \norm{X^{(u)}_0}^2_{\Hi,2\gamma} \big(\sum_{j=0}^\infty \nu^{{X}^{\cdot}_{\cdot}(e_r)}_{\hi,2\gamma}(j)\big)^2\Big)  ~. 
\end{align*}
A similar argument can be applied to the second term in \eqref{diag2}, so that we find 
\begin{align*}
&\max_{1\le k \le N}\Big(\sum_{r \in \nnum} \sqrt{\sum_{j=-\infty}^{NM}\bignorm{P_j\big(\sum_{l=1}^{NM}\mathcal{V}_l(e_r)\big)}^2_{\Hi,\gamma}}\Big)^2 \\& \le
O\Big(\max_{1\le k\le N}k M \sup_{u}\norm{ \Upsilon_{u,\omega}}^2_{\infty} \sup_{u \in [0,1]} \norm{X^{(u)}_0}^2_{\Hi,2\gamma}\Big( \sum_{r \in \nnum}\sum_{j=0}^\infty \nu^{{X}^{\cdot}_{\cdot}(e_r)}_{\hi,2\gamma}(j)\Big)^2\Big)  
\\& +  O\Big(\max_{1\le k\le N}k M \sup_{u}\norm{ \Upsilon_{u,\omega}}^2_{\infty} \sup_{u \in [0,1]}(\sum_{r \in \nnum} \norm{X^{(u)}_0(e_r)}_{\Hi,2\gamma})^2  \big(\sum_{j=0}^\infty \nu^{{X}^{\cdot}_{\cdot}}_{\hi,2\gamma}(j)\big)^2\Big).  
\end{align*}
It is straightforward (see e.g., \eqref{eq:morsum}) that condition (1.1) in Theorem 1 of \cite{Mor76} is satisfied for the function $g(b,k) = \sum_{i=b+1}^{b+k} 1 = k$, which proves the statement. 
\end{proof}

\subsection{Auxiliary  lemmas  for the  proof of  \autoref{thm:maxPS1}}

\begin{Remark}\label{Rem:fixu}
{\rm 
We remark that all statements in this section hold for fixed $u \in (0,1)$ as well, in which case the respective right-hand sides do not depend on $M$.
}
\end{Remark}

\begin{lemma}\label{lem:approxloc}
Assume the conditions of \autoref{thm:Conv}  with $p> 4$ and let $\gamma=p/2$. Then, for  fixed  $\omega \in \rnum$,
\begin{align*}
\E \Big(\sup_{\eta \in I}\bignorm{\sum_{i=1}^M \flo{\eta N}\Upsilon_{u_i,\omega}\big(\hat{\F}_{u_i,\omega}(\eta)
 -\tilde{\F}_{u_i,\omega}(\eta))}_{S_1}\Big)^\gamma =O((MN)^{\gamma}T^{-\zeta\gamma } \bf^{-\gamma/2}).
\end{align*}
\end{lemma}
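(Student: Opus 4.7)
\textbf{Proof proposal for Lemma \ref{lem:approxloc}.} The strategy is to reduce the bound to a tensor-factorized form for which local stationarity can be leveraged, and then promote the pointwise-in-$k$ estimate to a uniform one via a Morel-type maximal inequality. Observing the definition of $\hat\F_{u,\omega}(\eta)$ in \eqref{eq:Fint} and of $\tilde\F_{u,\omega}(\eta)$, the quantity inside the $S_1$-norm equals
\[
\sum_{i=1}^M \Upsilon_{u_i,\omega}\Big( S_{k,i} \Big) + R_{k,i,\eta},\qquad k=\flo{\eta N},
\]
where $S_{k,i}=\sum_{s,t=1}^{k} \tilde w^{(\omega)}_{b_f,s,t}\big( X_{u_i;s}\otimes X_{u_i;t} - \tilde X^{(u_i,s)}_{u_i;s}\otimes \tilde X^{(u_i,t)}_{u_i;t}\big)$ and $R_{k,i,\eta}$ collects the four interpolation corrections involving $f(\eta,N)$ and index $k+1$. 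The elementary identity $A\otimes B-A'\otimes B'=(A-A')\otimes B+A'\otimes(B-B')$ splits $S_{k,i}$ into two structurally identical pieces; I will only treat the first, $T_{k,i}:=\sum_{s=1}^{k}Y_s^{(i)}\otimes A_s^{(k,i)}$, where $Y_s^{(i)}=X_{u_i;s}-\tilde X^{(u_i,s)}_{u_i;s}$ and $A_s^{(k,i)}=\sum_{t=1}^{k}\tilde w^{(\omega)}_{b_f,s,t}X_{u_i;t}$. The interpolation remainders $R_{k,i,\eta}$ are sums of at most $O(b_f^{-1})$ weighted terms, each a tensor of a ``small'' factor of $L^{2\gamma}$-order $T^{-\zeta}$ with a factor of order $1$, and are therefore of order $O(T^{-\zeta}b_f^{-1})$ in $\mathcal{L}^\gamma_{S_1}$; after summation over $i$ and raising to the power $\gamma$ they contribute $O(M^\gamma T^{-\zeta\gamma}b_f^{-\gamma})$, which is $o\big((MN)^\gamma T^{-\zeta\gamma}b_f^{-\gamma/2}\big)$ under Assumption \ref{as:bandwidth}.

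For the main term $T_{k,i}$, Proposition \ref{prop:S1inHprod} together with the triangle and Cauchy--Schwarz inequalities yields $\|T_{k,i}\|_{S_1}\le\big(\sum_{s=1}^{k}\|Y_s^{(i)}\|_{\Hi}^2\big)^{1/2}\big(\sum_{s=1}^{k}\|A_s^{(k,i)}\|_{\Hi}^2\big)^{1/2}$. A second Cauchy--Schwarz/Minkowski step in $\omega$-probability gives
\[
\|T_{k,i}\|_{S_1,\gamma}\le \Big(\sum_{s=1}^{k}\|Y_s^{(i)}\|_{\Hi,2\gamma}^2\Big)^{1/2}\Big(\sum_{s=1}^{k}\|A_s^{(k,i)}\|_{\Hi,2\gamma}^2\Big)^{1/2}.
\]
Definition \ref{def:locstat} with $p=2\gamma\ge 4$ gives $\|Y_s^{(i)}\|_{\Hi,2\gamma}=O(T^{-\zeta})$ uniformly in $s,i$, so the first factor is $O(k^{1/2}T^{-\zeta})$. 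For the second factor I invoke Lemma \ref{lem:Burklin} with scalar operators $A_t=\tilde w^{(\omega)}_{b_f,s,t}\mathrm{Id}_{\Hi}$: under Assumption \ref{as:depstrucnonstat},
\[
\|A_s^{(k,i)}\|_{\Hi,2\gamma}\le K_{2\gamma}\Big(\sum_{t=1}^{k}|\tilde w^{(\omega)}_{b_f,s,t}|^2\Big)^{1/2}\sum_{j\ge 0}\sup_T\nu^{X_{\cdot,\cdot}}_{\Hi,2\gamma}(j,T)=O(b_f^{-1/2})
\]
uniformly in $s,k,i$, because $\sum_t w(b_f(s-t))^2\sim b_f^{-1}\int w^2$. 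Hence $\|T_{k,i}\|_{S_1,\gamma}\le C\,k\,T^{-\zeta}b_f^{-1/2}$, and combining with Assumption \ref{as:mappings}(i) the analogous bound holds for $\|\Upsilon_{u_i,\omega}(T_{k,i})\|_{S_1,\gamma}$ with constant $\sup_u\|\Upsilon_{u,\omega}\|_{\infty}$.

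To pass from a pointwise bound in $k$ to the supremum over $\eta\in I$, note that $\sup_{\eta\in I}$ reduces, up to the interpolation corrections already handled, to $\max_{1\le k\le N}$. For fixed $i$ and $b<k$, the increment $T_{k,i}-T_{b,i}$ naturally splits into a ``band'' $\sum_{s=b+1}^{k}Y_s^{(i)}\otimes A_s^{(k,i)}$ of the same form whose bound is $O((k-b)T^{-\zeta}b_f^{-1/2})$, plus a ``corner'' piece supported on $s\le b<t\le k$ whose effective range in $s$ is of width $O(b_f^{-1})$ by the compact support of $w$; the corner satisfies $\|\cdot\|_{\gamma}=O(T^{-\zeta}b_f^{-3/2})$, which under Assumption \ref{as:bandwidth} ($b_f\sim N^{-\kappa}$, $\kappa<1$) is absorbed in $O((k-b)T^{-\zeta}b_f^{-1/2})$ for $k-b\ge 1$. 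The superadditivity condition $g(b,k)+g(b+k,l)\le g(b,k+l)$ is trivially satisfied by $g(b,k)=C\,k^\gamma T^{-\zeta\gamma}b_f^{-\gamma/2}$ for $\gamma\ge 1$, so Theorem~1 of \cite{Mor76} produces
\[
\E\max_{1\le k\le N}\|\Upsilon_{u_i,\omega}(T_{k,i})\|_{S_1}^{\gamma}=O\big(N^\gamma T^{-\zeta\gamma}b_f^{-\gamma/2}\big),
\]
uniformly in $i$. Applying Minkowski's inequality in $\mathcal{L}^\gamma_{S_1}$ across $i=1,\dots,M$ delivers the desired rate $(MN)^\gamma T^{-\zeta\gamma}b_f^{-\gamma/2}$. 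The main obstacle I anticipate is bookkeeping the ``corner'' contribution to $T_{k,i}-T_{b,i}$ so that it does not spoil the superadditivity needed for Morel's inequality; the saving comes from the compact support of $w$, which localizes the corner to a strip of width $O(b_f^{-1})$ independent of $k-b$.
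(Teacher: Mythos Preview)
Your overall strategy coincides with the paper's: the tensor identity $A\otimes B-A'\otimes B'=(A-A')\otimes B+A'\otimes(B-B')$, Definition~\ref{def:locstat} for the $T^{-\zeta}$ factor, Lemma~\ref{lem:Burklin} for the weighted sum, and Theorem~1 of \cite{Mor76} for the supremum in $\eta$. The gap is in your verification of the Moricz hypothesis. Your claim that the corner contribution $O(T^{-\zeta}b_f^{-3/2})$ ``is absorbed in $O((k-b)T^{-\zeta}b_f^{-1/2})$ for $k-b\ge 1$'' is incorrect: this inequality is equivalent to $k-b\gtrsim b_f^{-1}\sim N^{\kappa}$, which certainly fails for small increments. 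Consequently the naive choice $g(b,k)=C\,k^{\gamma}\,T^{-\zeta\gamma}b_f^{-\gamma/2}$ does \emph{not} dominate the $\gamma$-th moment of the increment whenever $k-b<b_f^{-1}$, and the superadditivity condition in \cite{Mor76} is not verified as stated. (You anticipated precisely this difficulty in your closing sentence, but the resolution you propose does not work.)

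The paper avoids an explicit band/corner comparison by building the weight structure into $g$. At the level of the squared $S_1$-$\gamma$-norm the pointwise bound reads $C\,M^2T^{-2\zeta}\,k\sum_{s,t=1}^{k}|w(b_f(s-t))|^2\le 4C\,M^2T^{-2\zeta}\,k^2\sum_{h=0}^{k-1}|w(b_f h)|^2$, and one takes $g(b,k)$ proportional to $k^2\sum_{h}|w(b_f h)|^2$ with the lag range shifted by $b$, exactly as in the proof of Lemma~\ref{lem:mdepap}. Superadditivity then comes from the additivity of the lag sum together with the monotonicity of $k\mapsto k^2$, rather than from a direct size comparison of band versus corner. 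Recasting your Moricz step with this $g$ repairs the argument and keeps everything else in your write-up intact.
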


\begin{proof}[Proof of \autoref{lem:approxloc}.]
 As in the proof of \autoref{thm:approx}, we again focus on the non-interpolated component as the interpolation error term can be treated similarly, and can be shown to be of lower order. Elementary calculations give
\begin{align}
& \Bignorm{\sum_{i=1}^{M}k \Upsilon_{u_i,\omega} ( {\hat{\F}}_{k}^{(u_i,\omega)}-\tilde{\F}_{k}^{(u_i,\omega)})}^2_{S_1,\gamma} 
 \label{eq:FNk} \\
  \nonumber 
&  ~~~~~~~~~~~ ~~~~~~~~~~~
\le  2 
\sup_{u}\norm{ \Upsilon_{u,\omega}}^2_{\infty}\Big( \sum_{i=1}^{M}\sum_{s=1}^{k} \Bignorm{\sum_{t=1}^{k} \tilde{w}_{\bf,s,t}^{(\omega)} {X}_{u_i;t}}_{\hi,2\gamma} \Bignorm{  X_{u_i;s}-  \tilde{X}^{(u_i,s)}_{u_i;s}}_{\hi,2\gamma}\Big)^2 \\
 & \nonumber  
  ~~~~~~~~~~~ ~~~~~~~~~~~
 +2 \sup_{u}\norm{ \Upsilon_{u,\omega}}^2_{\infty}\Big(\sum_{i=1}^{M} \sum_{t=1}^{k}  \Bignorm{  X_{u_i;t}-  \tilde{X}^{(u_i,t)}_{u_i;t} }_{\hi,2\gamma} \Bignorm{\sum_{s=1}^{k} \tilde{w}_{b,t,s}^{(\omega)}  \tilde{X}^{(u_i,s)}_{u_i;s}}_{\hi,2\gamma}\Big)^2~.
\end{align}
\autoref{lem:Burklin} and \autoref{as:depstrucnonstat} yield
\begin{align*}
\Bignorm{\sum_{t=1}^{k} \tilde{w}_{\bf,s,t}^{(\omega)}  X_{u_i;t}}^2_{\hi,2\gamma} 
\lesssim  \sum_{t=1}^{k} \big\vert \tilde{w}_{\bf,s,t}^{(\omega)} \big\vert^2  \Big(\sum_{j=0}^\infty  \sup_{T \in \nnum} \nu^{X_{\cdot,\cdot}}_{\hi,2\gamma}(j,T)\Big)^{2} ~, 
\end{align*}
and similarly, using \autoref{as:depstruc}, we obtain 
\begin{align*}
\Bignorm{\sum_{s=1}^{k} \tilde{w}_{b,t,s}^{(\omega)}  \tilde{X}^{(u_i,s)}_{u_i;s}}_{\hi,2\gamma}^2 \lesssim \sum_{t=1}^{k} \big\vert \tilde{w}_{\bf,s,t}^{(\omega)} \big\vert^2 \Big( \sum_{j=0}^\infty \nu^{{X}^{\cdot}_{\cdot}}_{\hi,2\gamma}(j)\Big)^{2} ~.
\end{align*}
Observing that, by  \autoref{def:locstat},  we get 
$ \norm{ X_{u;t}-\tilde{X}^{(u,t)}_{u,t}}_{\hi,2\gamma}  = O(T^{-\zeta}).
$  We thus find that  \eqref{eq:FNk} satisfies
\begin{align*}
& \lesssim  T^{-2 \zeta } M (Mk)^{2-1}  \sum_{s=1}^{k}\sum_{t=1}^{k} \big\vert \tilde{w}_{\bf,s,t}^{(\omega)} \big\vert^2 \Big(  \big( \sum_{l=0}^\infty\nu^{{X}^{\cdot}_{\cdot}}_{\hi,2\gamma}(l)\big)^{2} + \big(  \sum_{j=0}^\infty\sup_{T \in \nnum} \nu^{X^{\cdot}_{\cdot,\cdot}}_{\hi,2\gamma}(j,T)\big)^{2} \Big)
\end{align*}
Note that the  symmetry of the window function  implies $k \sum_{t=1}^{k} \sum_{s=1}^{k}|w(\bf(t-s))|^2 %= \sum_{|h|<k} (k-|h|) |w(\bf(h))|^2
 \le 4k^2 \sum_{h=0}^{k-1} |w(\bf(h))|^2 $.  Therefore, setting 
 $g(b,k)= 4k^2 \sum_{h=b-1}^{k-1} |w(\bf h )|^2 $ and using a similar argument as in the proof of \autoref{lem:mdepap} together with
bandwidth conditions and Theorem 1 in \cite{Mor76}
yields
\[
\E \big(\max_{1\le k \le N}\bignorm{\sum_{i=1}^M k\Upsilon_{u_i,\omega}\big(\hat{\F}_{k}^{(u_i,\omega )}
 -\tilde{\F}_{k}^{(u_i,\omega)})}_{S_1}\big)^\gamma
= \Big[O\Big(\frac{(MN)^{2}}{\bf T^{2\zeta}}\Big)\Big]^{\gamma/2} = O\Big(\frac{(MN)^{\gamma}}{\bf^{\gamma/2}T^{\gamma\zeta}}\Big),
\]
which completes the proof.
\end{proof}

\begin{lemma}\label{lem:approxvarint}
Suppose that Assumptions \ref{as:depstruc} (with $p\ge 6$), \ref{as:Weights}, \ref{as:mappings} hold, and let $\gamma=p/2$. Then for any  fixed  $\omega \in \rnum $
\begin{align*}
\E\Big(\sup_{\eta} \bignorm{\sum_{i=1}^M \flo{\eta N}\Upsilon_{u_i,\omega}\big(\tilde{\F}_{u_i,\omega}(\eta)
 -\E \tilde{\F}_{u_i,\omega}(\eta)\big)}_{S_1}\Big)^\gamma  =O((MN)^{\gamma/2} \bf^{-\gamma/2}).
 \end{align*}
\end{lemma}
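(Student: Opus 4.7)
The plan is to mirror the strategy used in the proof of \autoref{lem:mdepap}, but applied directly to the centered auxiliary estimator rather than to its $m$-dependent truncation, and then to handle the supremum in $\eta$ by M\'oricz's maximal inequality. First, I would reduce to the non-interpolated partial sum
\[
T_N(k) = \sum_{i=1}^{M} k\,\Upsilon_{u_i,\omega}\bigl(\tilde{\F}_{k}^{(u_i,\omega)} - \E\tilde{\F}_{k}^{(u_i,\omega)}\bigr), \qquad 1\le k\le N,
\]
since the interpolation boundary terms are straightforwardly shown to be of lower order exactly as in the proof of \autoref{thm:approx}. Decomposing $\tilde{\F}_{k}^{(u_i,\omega)} = \tilde{\F}_{k,t<s}^{(u_i,\omega)} + \tilde{\F}_{k,t=s}^{(u_i,\omega)} + \tilde{\F}_{k,t>s}^{(u_i,\omega)}$, the diagonal piece is controlled by the argument of \autoref{lem:diag} at order $(MN)^{\gamma/2}$, which is absorbed into the asserted bound since $\bf \le 1$; as the $\{t<s\}$-piece is the adjoint of the $\{t>s\}$-piece, only the latter requires treatment.

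For this off-diagonal term I would vectorize the double index $(t,i)$ by setting $i_l=\lfloor (l-1)/N\rfloor+1$ and $t_l=\mathrm{Remainder}(l-1,N)+1$, and introduce, following the notation of \eqref{eq:short2},
\[
\barbelow{W}_l \;=\; \mathrm{1}_{\{2\le t_l\le k\}}\,\Upsilon_{u_{i_l},\omega}\bigl(\barbelow{X}_l \otimes \barbelow{N}_l\bigr).
\]
The centered sum then equals $\sum_{l=1}^{MN}(\barbelow{W}_l - \E\barbelow{W}_l)$, and inserting the Wu martingale decomposition $\mathrm{Id} = \sum_{j\in\znum}P_j$ followed by \autoref{lem:Burkh}(ii) gives
\[
\bignorm{\sum_{l=1}^{MN}(\barbelow{W}_l - \E\barbelow{W}_l)}_{S_1,\gamma}^{2} \lesssim \Bigl(\sum_{r\in\nnum}\sqrt{\sum_{j\in\znum}\bignorm{P_j\Bigl(\sum_{l=1}^{MN}\barbelow{W}_l\Bigr)(e_r)}_{\Hi,\gamma}^{2}}\Bigr)^{2}.
\]
Each inner term is bounded via the $\{j\}$-coupling identity, splitting into two pieces in which the coupling acts on the outer factor $\barbelow{X}_l$ or the inner weighted factor $\barbelow{N}_l$; bounding each with H\"older's inequality and \autoref{lem:Burklin}, and telescoping $\sum_{j\in\znum}\nu^{X^\cdot_\cdot}_{\Hi,2\gamma}(l-j)\,\nu^{X^\cdot_\cdot}_{\Hi,2\gamma}(l^\prime-j)\lesssim (\sum_{h\ge 0}\nu^{X^\cdot_\cdot}_{\Hi,2\gamma}(h))^{2}$ before summing over $l,l^\prime$, yields the pointwise-in-$k$ bound of order $k M \sum_{h=0}^{k-1}|w(\bf h)|^{2}$ multiplied by a constant depending on $\sum_{r}\sup_u\|X^{(u)}_0(e_r)\|_{\cnum,2\gamma}$ (finite by \autoref{as:depstruc}(II)) and $\sup_u\|\Upsilon_{u,\omega}\|_\infty$.

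The passage from pointwise to maximal bounds is obtained by applying Theorem~1 of \cite{Mor76} with
\[
g(b,k) = kM\,C\sum_{h=b+1}^{b+k}|w(\bf h)|^{2},
\]
which satisfies the required subadditivity $g(b,k)+g(b+k,l)\le g(b,k+l)$ by nonnegativity of $|w(\bf h)|^{2}$. Taking $k=N$ produces
\[
\E\Bigl(\max_{1\le k\le N}\|T_N(k)\|_{S_1}\Bigr)^{\gamma}
= O\bigl(g(0,N)^{\gamma/2}\bigr)
= O\bigl((MN\bf^{-1})^{\gamma/2}\bigr),
\]
using $\sum_{h=0}^{N-1}|w(\bf h)|^{2} \sim \bf^{-1}\int_{-1}^{1}w^{2}$.

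The main obstacle is to keep the $M$-dependence linear: a naive triangle inequality over the $M$ midpoints $u_i$ would produce an $M^{\gamma}$ factor rather than $M^{\gamma/2}$. The crucial gain comes precisely from performing the single-index vectorization first and only then applying the $S_1$-valued Burkholder inequality \autoref{lem:Burkh}(ii); orthogonality of the projections $\{P_j\}_{j\in\znum}$ converts the outer $(j,l)$-double sum into a sum over $r$ with a square root in $j$, and the mild overlap between adjacent time windows (through $\sum_{j}\nu^{X^\cdot_\cdot}_{\Hi,2\gamma}(l-j)<\infty$) is precisely what keeps it linear in $M$. This is also the point where \autoref{as:depstruc}(II) is used in an essential way, since the Schauder basis $\{e_r\otimes e_{r^\prime}\}$ of $S_1(\Hi)$ forces the bound through the coordinate-level dependence measures $\nu^{X^\cdot_\cdot(e_r)}_{\cnum,2\gamma}$ rather than the weaker Hilbertian measure of \autoref{as:depstruc}(I).
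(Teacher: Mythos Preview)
Your proposal is correct and follows essentially the same approach as the paper. The paper's proof of this lemma is very terse: it splits off the interpolation remainder and then invokes \autoref{lem:approxvar} (for the off-diagonal pieces) and \autoref{lem:diag} (for the diagonal piece), whose proofs are precisely the vectorization $+$ $S_1$-Burkholder (\autoref{lem:Burkh}(ii)) $+$ $\{j\}$-coupling $+$ M\'oricz argument you describe; in particular your choice of $g(b,k)$ and the observation that the linear $M$-dependence comes from vectorizing before applying Burkholder match the paper's treatment exactly.
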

\begin{proof}[Proof of \autoref{lem:approxvarint}]
Write $\tilde{\F}_{u_i,\omega}(\eta)=\tilde{\F}^{u_i,\omega}_k+\big(\tilde{\F}_{u_i,\omega}(\eta)-\tilde{\F}^{u_i,\omega}_k\big)$. Then the bound for the 
non-interpolated term follows directly from \autoref{lem:approxvar} and \autoref{lem:diag}. The reader may verify that the latter two lemmas then can be readily applied to show that  the interpolation term is of lower order. Details are omitted for the sake of brevity.
\end{proof}

\begin{lemma}\label{lem:approxvar}
Suppose that Assumptions \ref{as:depstruc} (with $p\ge 6$), \ref{as:Weights}, \ref{as:mappings} hold, and let $\gamma=p/2$. Then for any  fixed  $\omega \in \rnum $
\begin{align*}
\E\Big(\max_{1\le  k \le N} \bignorm{\sum_{i=1}^M k\Upsilon_{u_i,\omega}\big(\tilde{\F}_{k, t> s}^{u_i,\omega}
 -\E \tilde{\F}_{k, t>  s}^{(u_i,\omega)})}_{S_1} \Big)^\gamma =O((MN)^{\gamma/2} \bf^{-\gamma/2}).
\end{align*}
Furthermore, for fixed $u \in (0,1) $
\begin{align*}
\E\Big(\max_{1\le  k \le N} \bignorm{ k\Upsilon_{u_i,\omega}\big(\tilde{\F}_{k, t> s}^{u_i,\omega}
 -\E \tilde{\F}_{k, t>  s}^{(u_i,\omega)})}_{S_1} \Big)^\gamma =O(N^{\gamma/2} \bf^{-\gamma/2}).
\end{align*}
\end{lemma}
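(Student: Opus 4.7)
The plan is to mirror the template used in the proofs of \autoref{lem:mdepap} and \autoref{lem:diag}: rewrite the double-indexed sum in the vectorised form from \eqref{eq:short2}, apply the $S_1$-adapted Burkholder inequality \autoref{lem:Burkh}(ii) via the telescoping projection decomposition $Y - \E Y = \sum_j P_j(Y)$, bound each projection using the physical dependence measures under \autoref{as:depstruc}II), and finally pass to the maximum over $1\le k\le N$ through Theorem~1 of \cite{Mor76}. Precisely, with the shorthand $\barbelow{X}_l = \tilde X^{u_{i_l}}_{t_l}$ and $\barbelow{N}_l = \sum_{s=1}^{t_l-1}\tilde w^{(\omega)}_{\bf,s,t_l}\tilde X^{u_{i_l}}_s$ used in \eqref{eq:short2}, the random element of interest is
\[
Y_k \;=\; \sum_{l=1}^{MN} \mathrm{1}_{\{2\le t_l\le k\}}\,\Upsilon_{u_{i_l},\omega}\bigl(\barbelow{X}_l \otimes \barbelow{N}_l\bigr),
\]
so that $\sum_{i=1}^M k\,\Upsilon_{u_i,\omega}(\tilde{\F}^{(u_i,\omega)}_{k,t>s} - \E\tilde{\F}^{(u_i,\omega)}_{k,t>s}) = Y_k - \E Y_k$.

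The first step is to apply \autoref{lem:Burkh}(ii) to a fixed ONB $\{e_r\}$, yielding
\[
\|Y_k - \E Y_k\|^2_{S_1,\gamma} \;\lesssim\; \Bigl(\sum_{r\in\nnum} \sqrt{\sum_{j=-\infty}^{MN} \|P_j(Y_k)(e_r)\|^2_{\Hi,\gamma}}\Bigr)^2.
\]
As in \eqref{eq:J1J2}, the replacement $\barbelow{X}_l \mapsto \barbelow{X}_{l,\{j\}}$ splits $P_j(\Upsilon_{u_{i_l},\omega}(\barbelow{X}_l\otimes\barbelow{N}_l))(e_r)$ into two contributions: one in which the leftmost factor is replaced (controlled by $\nu^{X_{\cdot}^{\cdot}}_{\Hi,2\gamma}(l-j)$ on the outer factor and by \autoref{lem:Burklin} applied to $\barbelow{N}_l(e_r)$) and the symmetric one in which the inner sum $\barbelow{N}_l$ is replaced (controlled by $\nu^{X^{\cdot}_{\cdot}(e_r)}_{\cnum,2\gamma}(l-j)$ on each summand). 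For each fixed $r$ this yields, exactly as in \eqref{eqnr104}--\eqref{eqnr105},
\[
\sum_{j}\|P_j(Y_k)(e_r)\|^2_{\Hi,\gamma} \;\lesssim\; k\sum_{h=1-k}^{k-1}|w(\bf h)|^2 \cdot M \cdot \sup_u\|\Upsilon_{u,\omega}\|_\infty^2 \cdot \Bigl(\sum_{j=0}^\infty \nu^{X^{\cdot}_{\cdot}}_{\Hi,2\gamma}(j)\Bigr)^2 \bigl(\nu^{X^{\cdot}_{\cdot}(e_r)}_{\cnum,2\gamma}(0) + \|X^{(u)}_0(e_r)\|_{\cnum,2\gamma}\bigr)^2,
\]
and the summability of $\sum_r \|X^{(u)}_0(e_r)\|_{\cnum,2\gamma}$ provided by \autoref{as:depstruc}II) gives
\[
\|Y_k - \E Y_k\|^2_{S_1,\gamma} \;=\; O\Bigl(k M \,\bf^{-1}\Bigr),
\]
since $\sum_{|h|<k}|w(\bf h)|^2 = O(\bf^{-1})$ under \autoref{as:Weights}.

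To convert this fixed-$k$ fluctuation bound into the maximal bound, I set $g(b,k) = k \sum_{h=b-k+1}^{b+k-1}|w(\bf h)|^2$, which is nonnegative and satisfies the super-additivity condition $g(b,k)+g(b+k,l) \le g(b,k+l)$ required by Theorem~1 of \cite{Mor76} (the verification being identical to \eqref{eq:mor7}, since $|w|^2\ge 0$ and the lengths stack). The theorem then upgrades the pointwise estimate to
\[
\E\Bigl(\max_{1\le k\le N}\|Y_k - \E Y_k\|_{S_1}\Bigr)^\gamma \;=\; O\bigl((MN \bf^{-1})^{\gamma/2}\bigr),
\]
which is the claim. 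For fixed $u\in(0,1)$, the same argument applies with the sum over $l$ restricted to a single block, removing the factor $M$.

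The main obstacle is ensuring that the $\bf^{-\gamma/2}$ rate (rather than the naive $\bf^{-\gamma}$) is preserved throughout: this forces the use of the cotype-2 inequality \autoref{lem:Burkh}(ii) instead of a direct Hilbert-space Burkholder bound on $\|\cdot\|_{S_2}$, because the trace-norm estimate \eqref{eq:upbtr} requires summing the coordinate contributions $\|\cdot(e_r)\|_{\Hi,\gamma}$, and it is exactly the square-root-sum structure appearing in \autoref{lem:Burkh}(ii) combined with the summability condition \autoref{as:depstruc}II) that makes this sum finite without losing a factor of $\bf^{-1/2}$.
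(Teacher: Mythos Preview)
Your proposal is correct and follows essentially the same route as the paper: vectorise the double sum, apply \autoref{lem:Burkh}(ii) together with the projection decomposition $Y-\E Y=\sum_j P_j(Y)$, split each $P_j$ via the coupling $X\mapsto X_{\{j\}}$ into two terms controlled by $\nu^{X^{\cdot}_{\cdot}(e_r)}_{\cnum,2\gamma}$ and $\nu^{X^{\cdot}_{\cdot}}_{\Hi,2\gamma}$ respectively (using \autoref{lem:Burklin} for the weighted inner sums), and then invoke M\'oricz's maximal inequality with $g(b,k)=k\sum_h|w(\bf h)|^2$. The only cosmetic point is that your displayed $e_r$-dependent constant should read $\bigl(\sum_{j\ge 0}\nu^{X^{\cdot}_{\cdot}(e_r)}_{\cnum,2\gamma}(j)\bigr)^2$ rather than $\bigl(\nu^{X^{\cdot}_{\cdot}(e_r)}_{\cnum,2\gamma}(0)+\|X^{(u)}_0(e_r)\|_{\cnum,2\gamma}\bigr)^2$, but this does not affect the argument.
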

\begin{proof}[Proof of \autoref{lem:approxvar}]
 Define for $2 \le t\le N$ 
\[
{G}_{t,i}=\Upsilon_{u_i,\omega}\big( \tilde{X}^{(u_i,t)}_{u_i;t} \otimes \sum_{s=1}^{t-1} \tilde{w}^{(\omega)}_{b_f,s,t} \tilde{X}^{(u_i,s)}_{u_i;s}\big)
\]
and, for $t=1$, set ${G}_{t,i}=0$. Denote by $\barbelow{G} = \text{vec}\big(\{G_{t,i}\}_{t,i}\big)$ the vectorization of the matrix $\{G_{t,i}\}_{t,i}$ by stacking its columns on top of each other. Then using the same argument as in \autoref{lem:mdepap}, we can write $\barbelow{G}_l=G_{t_l, i_l}$ and  $\barbelow{G}_l$ is a measurable function of $\{\ldots, \epsilon_{l-1}, \epsilon_{l}\}$. 
We have $k \Upsilon_{u}^{\lambda} \tilde{\F}_{k,t>s }^{(u_i,\lambda)}=\sum_{i=1}^M\sum_{t=1}^k \tilde{G}_{t,i}$, and thus by the second part of \autoref{lem:Burkh}
\begin{align*}
\bignorm{\sum_{i=1}^M\sum_{t=1}^k{G}_{t,i} -\E{G}_{t,i}}^2_{S_1,\gamma} 
%&=\bignorm{\sum_{l=0}^{MN}  \mathrm{1}_{2\le t_l \le k}({G}_{t_l,i_l} -\E{G}_{t_l,i_l})}^2_{S_1,\gamma} 
\lesssim \Big(\sum_{r \in \nnum}\sqrt{ \sum_{j=-\infty}^{NM} \bignorm{
\sum_{l=0}^{MN}
P_{j}\big(\mathrm{1}_{2\le t_l \le k} {G}_{t_l,i_l}(e_r)\big)}_{\Hi,\gamma}^2}\Big)^2~.
\end{align*}
Then,
\begin{align*}
&\sum_{j=-\infty}^{NM} \bignorm{ \sum_{l=0}^{MN}\mathrm{1}_{2\le t_l \le k}
P_{j}\big({G}_{t_l,i_l}(e_r)\big)}_{\Hi,\gamma}^2\\
%&\le \sum_{j=-\infty}^{NM} \bignorm{\sum_{i=1}^M \sum_{t=2}^k \Big( \Upsilon_{u_i,\omega}\sum_{s=1}^{t-1}\tilde{w}^{(-\omega)}_{\bf,t,s}  (X^{(u_{i},t)}_{u_{i},t}\otimes  X^{(u_{i},s)}_{u_{i},s}  -X^{(u_{i},t)}_{u_{i},t,\{j\}}  \otimes  X^{(u_{i},s)}_{u_{i},s,\{j\}}) \Big)(e_r)}^2_{\Hi,\gamma}  \\
&   \le\sum_{j=-\infty}^{NM} \Big \{  \sum_{i=1}^M \sum_{s=1}^{k-1} \norm{\Upsilon_{u_i,\omega}}_\infty
 \big( \bignorm{ \sum_{t=s+1}^{k} \tilde{w}^{(-\omega)}_{\bf,t,s} X^{(u_{i},t)}_{u_{i},t}}^2_{\hi, 2\gamma} \bignorm{\big(X^{(u_{i},s)}_{u_{i},s} - X^{(u_{i},s)}_{u_{i},s,\{j\}}\big)(e_r)}^2_{\cnum,2\gamma} \big)^{1/2}
 \\
&\phantom{\sum_{j=-\infty}^{NM}~~~~}
+\sum_{i=1}^M\sum_{t=2}^k\norm{\Upsilon_{u_i,\omega}}_\infty \big(\bignorm{\big(X^{(u_{i},t)}_{u_{i},t}- X^{(u_{i},t)}_{u_{i},t,\{j\}}\big)(e_r) }^2_{\cnum,2\gamma} \bignorm{ \sum_{s=1}^{t-1}
\tilde{w}^{(\omega)}_{\bf,t,s} 
X^{(u_{i},s)}_{u_{i},s,\{j\}}}^2_{\hi,2\gamma}\big)^{1/2}\Big \} ^2\\
%&   \le\sum_{j=-\infty}^{NM}\Big \{  \sum_{l =1}^{MN}
%\mathrm{1}_{1\le s_l \le k-1}\norm{\Upsilon_{u_{i_l},\omega}}_{\infty} 
% \big( \bignorm{ \sum_{t=s_l+1}^{k} \tilde{w}^{(-\omega)}_{\bf,t,s_l} X^{(u_{i_l},t)}_{u_{i_l},t}}^2_{\hi,2\gamma} \bignorm{\big(X^{(u_{i_l},s_l)}_{u_{i_l},s_l} - X^{(u_{i_l},s_l)}_{u_{i_l},s_l,\{j\}}\big)(e_r)}^2_{\hi,2\gamma} \big)^{1/2}
%\\&
%\phantom{\sum_{j=-\infty}^{NM}}+
%\sum_{l=1}^{NM} \mathrm{1}_{2\le t_l \le k}\norm{\Upsilon_{u_{i_l},\omega}}^2_{\infty}\big(\bignorm{\big(X^{(u_{i_l},t_l)}_{u_{i_l},t_l}- X^{(u_{i_l},t_l)}_{u_{i_l},t_l,\{j\}} \big)(e_r)}^2_{\cnum,2\gamma} \bignorm{ \sum_{s=1}^{t_l-1}\tilde{w}^{(\omega)}_{\bf,s,t} X^{(u_{i_l},s)}_{u_{i_l},s,\{j\}}}^2_{\hi,2\gamma}\big)^{1/2}\Big \} ^2\\
&
 \lesssim    \sum_{j=-\infty}^{NM}\Big \{ \sum_{\ell=1}^{MN}\mathrm{1}_{1\le s_\ell \le k-1}\norm{\Upsilon_{u_{i_\ell},\omega}}_{\infty} 
 \nu^{{X}^{\cdot}_{\cdot}(e_r)}_{\cnum,2\gamma}(\ell-j)\Big(\sum_{t=s_\ell+1}^{k}  \vert \tilde{w}_{\bf,s_\ell,t}^{(-\omega)} \vert^2 \big( \sum_{t=0}^{\infty}\nu^{{X}^{\cdot}_{\cdot}}_{\hi,2\gamma}(t)\big)^2 \Big)^{1/2}\Big \} ^{2}\\
 & + \sum_{j=-\infty}^{NM}\Big \{ \sum_{l=1}^{MN}\mathrm{1}_{2\le t_l \le k} \norm{\Upsilon_{u_{i_l},\omega}}_{\infty} 
\nu^{{X}^{\cdot}_{\cdot}(e_r)}_{\mathbb{C},2\gamma}(l-j)
 \Big( \sum_{s=1}^{t_l-1}  \vert \tilde{w}_{\bf,s,t_l}^{(\omega)} \vert^2  \big(\sum_{t=0}^{\infty} \nu^{{X}^{\cdot}_{\cdot}}_{\hi,2\gamma}(t)\big)^2\Big)^{1/2} \Big \}^2\\
&=   k\sum_{h=1}^{k-1} \big\vert {w}(\bf h) \big\vert^2 O\Big(\sup_{u \in [0,1]}\norm{\Upsilon_{u,\omega}}_\infty^{2}
  M \big(\sum_{j=0}^{\infty}\nu^{{X}^{\cdot}_{\cdot}}_{\hi,2\gamma }(j)\big)^{2} \big(\sum_{t=0}^{\infty} \nu^{{X}^{\cdot}_{\cdot}}_{\hi,2\gamma }(t)\big)^{2}\Big)~.
\end{align*}
A similar argument as given in the  proof of \autoref{lem:mdepap}  
completes the proof.
%, that 
%\begin{align*}
%\E(\max_{1\le k\le N }\bignorm{\sum_{i=1}^M\sum_{t=1}^k G_{t,i} -\E G_{t,i}}_{S_1}\Big)^{\gamma}= O \big ((MN)^{\gamma/2}\bf^{-\gamma/2} \big ).
%\end{align*}
\end{proof}

\begin{lemma}\label{lem:approxint}
Suppose the conditions of \autoref{thm:Conv} hold with $p\ge 2 $ and let $\gamma=p/2$. Then, for any fixed $\omega$
\begin{align*}
  \sup_{u \in [0,1]}
\E\Big( \sup_{\eta \in [0,1]}  \bignorm{\sum_{i=1}^M \flo{\eta N}\Upsilon_{u_i,\omega}\big(\hat{\F}_{u_i,\omega}(\eta) - \hat{\F}_{\flo{\eta N}}^{u_i,\omega}}_{S_1} \Big)^\gamma  =O(M^{\gamma} \bf^{-\gamma/2})
.
\end{align*}
\end{lemma}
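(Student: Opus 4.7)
The plan is to start from the explicit definition \eqref{eq:Fint} of $\hat{\F}_{u,\omega}(\eta)$ and compute that, writing $K=\flo{\eta N}$ and $f=\eta N-K\in[0,1)$, the bulk double sum $\sum_{s,t=1}^{K}\tilde{w}^{(\omega)}_{\bf,s,t} X_{\te{u}+s}\otimes X_{\te{u}+t}$ cancels against $\flo{\eta N}\hat{\F}^{(u,\omega)}_{\flo{\eta N}}$, so that only three rank-one interpolation contributions of the form
\[
\mathrm{I}_K = f\,Z_K\otimes X_{\te{u}+K+1},\qquad \mathrm{II}_K = f\,X_{\te{u}+K+1}\otimes \tilde Z_K,\qquad \mathrm{III}_K = f\,\tilde{w}^{(\omega)}_{\bf,K+1,K+1}\, X_{\te{u}+K+1}\otimes X_{\te{u}+K+1}
\]
remain, where $Z_K=\sum_{s=1}^{K}\tilde{w}^{(\omega)}_{\bf,s,K+1}X_{\te{u}+s}$ and $\tilde Z_K$ is its conjugate analogue. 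Because of the compact support of $w$ (\autoref{as:Weights}), each such boundary weighted sum involves at most $\tilde b\sim \bf^{-1}$ nonzero terms. I would then invoke \autoref{prop:S1inHprod} to reduce every $S_1$-norm to a product of two $\Hi$-norms, and use Minkowski's inequality together with \autoref{as:mappings}(i) to pull the sum $\sum_{i=1}^M$ and the operator norms $\|\Upsilon_{u_i,\omega}\|_\infty$ outside the $L^\gamma$-norm. This reduces the problem to establishing, uniformly in $u\in[0,1]$,
\[
\bigg\|\sup_{0\le K\le N}\|Z_K\|_{\Hi}\,\|X_{\te{u}+K+1}\|_{\Hi}\bigg\|_{L^\gamma}=O(\bf^{-1/2}),
\]
with analogous estimates for $\mathrm{II}_K$ and $\mathrm{III}_K$.

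For the single-$u$ estimate I would first obtain the pointwise bound $\|Z_K\|_{\Hi,2\gamma}=O(\bf^{-1/2})$ from \autoref{lem:Burklin}, since $\sum_s|\tilde{w}^{(\omega)}_{\bf,s,K+1}|^2=O(\bf^{-1})$ and the process is short-range dependent by \autoref{as:depstrucnonstat}; the factor $\|X_{\te{u}+K+1}\|_{\Hi,2\gamma}=O(1)$ is immediate, and Cauchy--Schwarz in $L^{2\gamma}$ combines them. The main technical obstacle is passing from this pointwise-in-$K$ control to a bound on the $\sup_K$: a crude union bound over the $N+1$ values of $K$ costs a factor $N^{1/\gamma}$ that is absent from the stated rate. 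To absorb it I would exploit the fact that $Z_K$ changes by at most one weighted summand as $K$ increments by one and carry out a dyadic blocking-chaining argument in the spirit of the proof of \autoref{thm:maxdev}: partition $\{0,\ldots,N\}$ into $O(N/\tilde b)$ consecutive blocks of length $\tilde b$, inside each block apply the martingale maximal inequality \autoref{lem:Burkh} to the projector decomposition of $Z_K$, and combine the block maxima by a dyadic chain with the weights $\varepsilon_i$ in \eqref{epsi}. The superadditivity identity \eqref{eq:mor7} for $g(b,k)=k\sum_{h=b+1}^{b+k}|w(\bf h)|^2$, together with Theorem~1 of \cite{Mor76} and the summability bound \eqref{epsisum}, ensures that the $N^{1/\gamma}$ factor is absorbed up to harmless polylogarithmic constants. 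Raising to the $\gamma$-th power and summing over the $M$ midpoints then produces the claimed $O(M^\gamma \bf^{-\gamma/2})$ rate; the interpolation residue handled here is precisely the one announced as ``lower order'' in the remarks following \autoref{lem:approxvarint} and \autoref{thm:approx}.
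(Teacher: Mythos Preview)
Your decomposition into the three boundary pieces $\mathrm{I}_K,\mathrm{II}_K,\mathrm{III}_K$, the reduction of the $S_1$-norm via \autoref{prop:S1inHprod} and Cauchy--Schwarz, the pointwise bound $\|Z_K\|_{\Hi,2\gamma}=O(\bf^{-1/2})$ from \autoref{lem:Burklin}, and the crude extraction of the factor $M^\gamma$ through Minkowski all match the paper exactly: the paper groups your $\mathrm{I}_K$ and $\mathrm{II}_K$ into \eqref{eq:C71} and treats $\mathrm{III}_K$ as \eqref{eq:C72}, with the same conclusion.

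The only divergence is in how the $\sup_K$ is handled. You propose to import the full dyadic blocking-and-chaining machinery from the proof of \autoref{thm:maxdev}, with its exponential tail estimates and the weights $\varepsilon_i$ of \eqref{epsi}. The paper takes a much shorter route: it simply records that $g(b,k)=\sum_{h=b+1}^{b+k}|w(\bf h)|^2$ is superadditive and invokes Theorem~1 of M\'oricz as a black-box moment maximal inequality, in the same pattern as the proofs of \autoref{lem:mdepap}, \autoref{lem:marappr} and \autoref{lem:approxvar}. Your approach would certainly work, but the exponential tails and the uniformity in $(u,\omega)$ that \autoref{thm:maxdev} is engineered to deliver are not needed here---the outer $\sup_u$ sits outside the expectation and $\omega$ is fixed---so the plain M\'oricz inequality is the right level of technology. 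Your caution about the non-partial-sum structure of $X_{\te{u}+K+1}\otimes Z_K$ is not misplaced, but the paper absorbs this into the same M\'oricz framework rather than rebuilding the chaining from scratch.
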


\begin{proof}[Proof of \autoref{lem:approxint}]
We have
\begin{align*}
& \bignorm{\sum_{i=1}^M \flo{\eta N}\Upsilon_{u_i,\omega}\big(\hat{\F}_{u_i,\omega}(\eta) - \hat{\F}_{\flo{\eta N}}^{u_i,\omega}
 \big)}^2_{S_1,\gamma}  
\\& ~~~~~~~~~~~~~~~~~~~~ \le
M^2\sup_{u \in [0,1]}\norm{\Upsilon_{u,\omega}}^2_\infty
 \sup_{u \in [0,1]} \Big \|   f(\eta,N)X_{u;\flo{\eta N}+1} \otimes  \sum_{s=1}^{\flo{\eta N}}\big(\tilde{w}^{(\omega)}_{\bf,\flo{\eta N}+1,s}\ X_{u;s} \big)\Big\|_{S_1,\gamma}^2 \tageq\label{eq:C71}
\\& ~~~~~~~~~~~~~~~~~~~~+
 M^2 \sup_{u \in [0,1]}\norm{\Upsilon_{u,\omega}}^2_\infty  \sup_{u \in [0,1]}  \Big \|  f(\eta,N)X_{u;\flo{\eta N}+1} \otimes X_{u;\flo{\eta N}+1}\Big\|^2_{S_1,\gamma},\tageq\label{eq:C72}
 \end{align*}
where we recall that $\sup_{\eta}|f(\eta,N)| \le 1$. For \eqref{eq:C71}, Cauchy Schwarz's inequality, \autoref{lem:Burklin} and \autoref{as:depstrucnonstat} yield
\begin{align*}
\sup_{u \in [0,1]} \Bignorm{ X_{u;k+1} \otimes \sum_{s=1}^{k} \tilde{w}^{(\omega)}_{\bf,k+1,s} X_{u;s}}^2_{S_1,\gamma}
&\le \sup_{u \in [0,1]} \Bignorm{\sum_{s=1}^{k} \tilde{w}^{(\omega)}_{\bf,k+1,s} X_{u;s}}^2_{\hi,2\gamma}\sup_{T\in \mathbb{N}}\sup_{t\in \znum}\bignorm{ X_{t,T} }^2_{\hi,2\gamma}
\\& \lesssim \sum_{s=1}^{k} |w(\bf(k+1-s))|^2   \big( \sum_{j=0}^\infty\sup_{T \in \nnum} \nu_{\hi,2\gamma}(j,T)\big)^2\sup_{T \in \nnum, t\in \znum}\bignorm{ X_{t,T} }^2_{\hi,2\gamma}. 
\end{align*}
Let  
$g(b,k)=\sum_{s=b+1}^{b+k} |w(\bf(k+1-s))|^2 = \sum_{h=b+1}^{b+k} |w(\bf(h))|^2$
for which it is clear that $
g(b,k)+g(b+k,l) \le g(b,k+l)$.
Therefore, by Theorem 1 of \cite{Mor76}
it follows that 
\begin{align*}
\E \Big(\max_{1\le k \le N} \Bignorm{\sum_{i=1}^M \Upsilon_{u_i,\omega}\big(X_{u_i,k+1} \otimes \sum_{s=1}^{k} \tilde{w}^{(\omega)}_{\bf,s,k+1} X_{u_i;s}\big)}_{S_1}\Big)^{\gamma} &\le  C \Big(M^2 \sup_{u \in [0,1]}\norm{\Upsilon_{u,\omega}}_\infty \sum_{s=1}^{N} |w(\bf(N+1-s))|^2 \Big)^{\gamma/2}
\\&=O(M^{\gamma }\bf^{-\gamma/2}).
\end{align*}
For \eqref{eq:C72}, we note that $
\norm{ X_{u;k+1}\otimes X_{u;k+1}}^2_{S_1,\gamma} 
\le  \sup_{T\in \mathbb{N}}\sup_{t\in \znum}\norm{ X_{t,T} }^2_{\hi,2\gamma} = O(1)$.
The result now follows.
\end{proof}
\begin{lemma}\label{lem:bias}
Assume Assumptions \ref{as:depstruc}-\ref{as:smooth} hold  with $p\ge 2$.  Then
\[
\max_{u \in U_M} \sup_{\omega} \sup_{\eta \in [0,1]}\bignorm{ \flo{\eta N}\big(\E \tilde{\F}^{(u,\omega)}(\eta) -{\F}_{u,\omega}\big)}_{S_1}  =O\Big(\frac{N^{2}}{T^{\zeta}\bf^{1/2}}\Big)+O(1)+O(N \bf^2).
\]
\end{lemma}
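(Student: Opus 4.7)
The plan is to split the bias into three pieces:
\[
\flo{\eta N}\bigl(\E\tilde{\F}_{u,\omega}(\eta)-\F_{u,\omega}\bigr) = I(\eta)+II(\eta)+III(\eta),
\]
where $I(\eta)$ comes from replacing the locally stationary samples $X^{(u_s)}_s$ (with rescaled time $u_s=(\mathfrak{t}(u)+s)/T$ differing from $u$ by $O(N/T)$) by the stationary counterpart $X^{(u)}_s$; $II(\eta)$ is the smoothed-periodogram bias of the stationary target at $u$; and $III(\eta)$ collects the boundary/interpolation corrections in the definition \eqref{eq:Fint} of $\tilde{\F}_{u,\omega}(\eta)$. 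Uniformity in $u\in U_M$, $\omega\in[-\pi,\pi]$, and $\eta\in[0,1]$ will be automatic since all constants in the invoked assumptions are $(u,\omega)$-independent, and $\eta\mapsto\flo{\eta N}$ attains only finitely many values.

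For $II(\eta)$, I would use stationarity to rewrite the double sum as $\sum_{|h|<\flo{\eta N}}(\flo{\eta N}-|h|)(2\pi)^{-1}w(b_f h)e^{\im\omega h}\C^{(u)}_h$, and then subtract $\flo{\eta N}\F_{u,\omega}$. The resulting difference splits into three standard pieces: (a) a weight-approximation piece bounded by $\flo{\eta N}\cdot b_f^\iota \sum_h |h|^\iota\|\C^{(u)}_h\|_{S_1}=O(Nb_f^2)$ using \autoref{as:Weights} and the $|h|^\iota$-summability inherited from \autoref{as:smooth}; (b) a counting-correction piece $\sum_h |h|\,|w(b_f h)|\,\|\C^{(u)}_h\|_{S_1}=O(1)$; and (c) a tail $\flo{\eta N}\sum_{|h|\ge\flo{\eta N}}\|\C^{(u)}_h\|_{S_1}=O(\flo{\eta N}^{2-\iota})=O(1)$ for $\iota\ge 2$. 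These contribute the $O(Nb_f^2)+O(1)$ terms of the stated bound.

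For $I(\eta)$, I would invoke the tensor identity
\[
\E[X^{(u_s)}_s\otimes X^{(u_t)}_t]-\E[X^{(u)}_s\otimes X^{(u)}_t]=\E[(X^{(u_s)}_s-X^{(u)}_s)\otimes X^{(u_t)}_t]+\E[X^{(u)}_s\otimes(X^{(u_t)}_t-X^{(u)}_t)]
\]
and \autoref{prop:S1inHprod} to reduce the $S_1$-bound to estimating $\|\sum_s \tilde{w}^{(\omega)}_{b_f,s,t}Y_s\|_{\Hi,2}$ uniformly in $t$, where $Y_s:=X^{(u_s)}_s-X^{(u)}_s$. Two ingredients combine: the uniform smallness $\|Y_s\|_{\Hi,2}\le C(N/T)^\zeta$ from \autoref{def:locstat}, and the fact that $\{Y_s\}$ inherits the dependence structure of the underlying Bernoulli shift, so $\nu^Y\le 2\nu^{{X}^{\cdot}_{\cdot}}_{\Hi,2}$. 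Writing $\E\langle Y_s,Y_{s'}\rangle=\E\langle Y_s-\E[Y_s\mid \sigma(\epsilon_j:j>s')],Y_{s'}\rangle$ (since $Y_{s'}$ is measurable with respect to $\sigma(\epsilon_j:j\le s')$ and mean zero) and bounding via the triangle inequality yields the covariance-decay estimate
\[
|\E\langle Y_s,Y_{s'}\rangle|\lesssim (N/T)^\zeta \,\tilde\gamma(|s-s'|),\qquad \tilde\gamma(k):=\sum_{k'\ge k}\nu^{{X}^{\cdot}_{\cdot}}_{\Hi,2}(k'),
\]
with $\sum_k\tilde\gamma(k)=\sum_k k\,\nu^{{X}^{\cdot}_{\cdot}}_{\Hi,2}(k)<\infty$ under \autoref{as:smooth}. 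Expanding $\E\|\sum_s\tilde{w}^{(\omega)}_{b_f,s,t}Y_s\|_{\Hi}^2$, isolating the diagonal via $|\E\langle Y_s,Y_s\rangle|\le (N/T)^{2\zeta}$, and applying the AM--GM trick $|\tilde{w}_{s,t}\tilde{w}_{s',t}|\le \tfrac12(|\tilde{w}_{s,t}|^2+|\tilde{w}_{s',t}|^2)$ to the off-diagonal together with $\sum_s|\tilde{w}^{(\omega)}_{b_f,s,t}|^2\lesssim b_f^{-1}$ gives $\|\sum_s\tilde{w}^{(\omega)}_{b_f,s,t}Y_s\|_{\Hi,2}=O((N/T)^{\zeta}b_f^{-1/2})$. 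Summing over $t$ yields $\|I(\eta)\|_{S_1}=O(N^{1+\zeta}/(T^\zeta b_f^{1/2}))$, which for $\zeta=1$ is the stated $O(N^2/(T^\zeta b_f^{1/2}))$.

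Part $III(\eta)$ consists of at most two extra tensor terms $f(\eta,N)X_{\mathfrak{t}(u)+\flo{\eta N}+1}\otimes\bigl(\sum_s\tilde{w}^{(\omega)}_{b_f,\flo{\eta N}+1,s}X_s\bigr)$ with $|f(\eta,N)|\le 1$; \autoref{prop:S1inHprod} together with Lemma~\ref{lem:Burklin} applied to the inner sum gives an $S_1$-contribution of order $b_f^{-1/2}$, absorbed in the $O(1)$ term. The main obstacle is $I(\eta)$: a naive termwise bound using only $\|Y_s\|_{\Hi,2}\le C(N/T)^\zeta$ together with $\sum_{s,t}|\tilde{w}^{(\omega)}_{b_f,s,t}|\lesssim N/b_f$ produces $O(N^{1+\zeta}/(T^\zeta b_f))$, losing a factor of $b_f^{1/2}$ relative to the target. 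Recovering the $b_f^{-1/2}$ scaling is the key subtlety and requires exploiting the short-range covariance decay of the centered difference process $Y_s$ inherited from $\nu^{{X}^{\cdot}_{\cdot}}_{\Hi,2}$, rather than bounding each summand in isolation.
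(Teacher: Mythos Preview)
Your treatment of $II(\eta)$ mirrors the paper's decomposition of the stationary smoothing bias and is fine. The two substantive gaps are in $I(\eta)$ and $III(\eta)$.

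\textbf{Part $I(\eta)$: wrong grouping and a square-root slip.} Your covariance estimate $|\E\langle Y_s,Y_{s'}\rangle|\lesssim (N/T)^\zeta\tilde\gamma(|s-s'|)$ is valid, but plugging it into the expansion of $\E\|\sum_s\tilde w^{(\omega)}_{b_f,s,t}Y_s\|_{\Hi}^2$ gives an off-diagonal contribution of order $(N/T)^\zeta b_f^{-1}$, not $(N/T)^{2\zeta}b_f^{-1}$: you cannot get both the $(N/T)^\zeta$ smallness and the $\tilde\gamma$-summability on each factor simultaneously, since the assumptions do not imply $\nu^{Y}(k)\lesssim(N/T)^\zeta\nu^{X^{\cdot}_{\cdot}}(k)$. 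After the square root this yields only $\|\sum_s\tilde w^{(\omega)}_{b_f,s,t}Y_s\|_{\Hi,2}=O\bigl((N/T)^{\zeta/2}b_f^{-1/2}\bigr)$, and summing over $t$ produces $O\bigl(N^{3/2}T^{-1/2}b_f^{-1/2}\bigr)$ for $\zeta=1$, strictly larger than the claimed $O\bigl(N^{2}T^{-1}b_f^{-1/2}\bigr)$. The paper's route is much simpler and avoids the issue altogether: group the double sum as $\sum_t\E\bigl[(X^{(u_t)}_t-X^{(u)}_t)\otimes\sum_s\tilde w^{(\omega)}_{b_f,s,t}X^{(u_s)}_s\bigr]$, so that the small difference sits \emph{outside} the weighted sum and the weighted sum runs over the \emph{full} process. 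Then $\|X^{(u_t)}_t-X^{(u)}_t\|_{\Hi,2}\lesssim(N/T)^\zeta$ directly, and \autoref{lem:Burklin} applied to $\sum_s\tilde w^{(\omega)}_{b_f,s,t}X^{(u_s)}_s$ gives $O(b_f^{-1/2})$; summing the $N$ terms in $t$ yields $N(N/T)^\zeta b_f^{-1/2}$. No second-moment expansion of the difference process is needed, and your ``key subtlety'' disappears.

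\textbf{Part $III(\eta)$: $b_f^{-1/2}$ is not $O(1)$.} Applying \autoref{prop:S1inHprod} and \autoref{lem:Burklin} to the interpolation term gives only $\|X_{k+1}\|_{\Hi,2}\cdot\|\sum_s\tilde w^{(\omega)}_{b_f,s,k+1}X_s\|_{\Hi,2}=O(b_f^{-1/2})$, which is not absorbed by any of the stated terms. The paper instead uses that this object is a \emph{deterministic} covariance: writing $\E[X_a\otimes X_b]=\sum_j\E[P_jX_a\otimes P_jX_b]$ via orthogonality of the projections and bounding each summand by $\nu^{X^{\cdot}_{\cdot}}_{\Hi,2}(a-j)\,\nu^{X^{\cdot}_{\cdot}}_{\Hi,2}(b-j)$ yields $\sum_t\sum_j\nu(k{+}1{-}j)\nu(t{-}j)\le\bigl(\sum_j\nu(j)\bigr)^2<\infty$, i.e.\ $O(1)$ uniformly in $k$.
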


\begin{proof}
We first consider the 
non-interpolated part and  show that
\[
\max_{u \in U_M} \sup_{\omega} \max_{1\le k\le N}\bignorm{ k\big(\E \tilde{\F}_{k}^{(u,\omega)} -{\F}_{u,\omega}\big)}_{S_1}  =O\Big(\frac{N^{2}}{T^\zeta\bf^{1/2}}\Big)+O(1)+O(N \bf^2).\tageq \label{eq:bidif}
\]
Using a change of variables 
and the fact that $w(\cdot)$ has compact support, we have
 \begin{align*} 
& \sum_{s=1}^{k} \sum_{t=1}^{k} 
\tilde{w}^{(\omega)}_{\bf,s,t}
\E (\Xu{\tu{N}{u}+s}\otimes \Xu{\tu{N}{u}+t}) -k \F_{u,\omega}
%\\&=\frac{1}{2\pi} \sum_{|r| < k}  w(\bf r) \sum_{t=\tu{N}{u}+\max(1,1-r)}^{\tu{N}{u}+\min(k,k-r)} e^{-\im r \omega} \big(\E(\Xu{t}\otimes \Xu{t+r}\big) -k\F_{u,\omega}
\\&=\frac{1}{2\pi} \sum_{|r| < \min(k,1/\bf)}  w(\bf r) \sum_{t=\tu{N}{u}+\max(1,1-r)}^{\tu{N}{u}+\min(k,k-r)} e^{-\im r \omega} \big(\E(\Xu{t}\otimes \Xu{t+r}\big) -k\F_{u,\omega}
\intertext{set $n =\min(k,1/\bf)$}
& =\frac{1}{2\pi} \sum_{|r| < n}  w(\bf r ) e^{-\im r \omega}\sum_{t=\tu{N}{u}+\max(1,1-r)}^{\tu{N}{u}+\min(k,k-r)}  \big(\E(\Xu{t}\otimes \Xu{t+r})-\E({X}^{(u)}_t \otimes {X}^{(u)}_{t+r})\big)+\frac{1}{2\pi} \sum_{|r| < n} |r|e^{-\im r \omega} \C^{(u)}_{r}
\\
&  +\frac{1}{2\pi}\sum_{|r|<n}
(k - |r|)  [w(\bf r)-1] e^{-\im r \omega} \C^{(u)}_{r} -\frac{1}{2\pi}k \sum_{|r|\ge n}e^{-\im r \omega} \C^{(u)}_{r}
\\
&= I+II+III+IV~, 
\tageq \label{de1}
 \end{align*}
 where the last line defines the terms
 $ I, II, III, IV $ in an obvious manner.
For $I$, we write
\begin{align*}
&\sum_{|r| < k}  w(\bf  r ) e^{-\im r \omega}\sum_{t=\tu{N}{u}+\max(1,1-r)}^{\tu{N}{u}+\min(k,k-r)}  \big(\E(\Xu{t}\otimes \Xu{t+r})-\E(X^{(u)}_t \otimes X^{(u)}_{t+r})\big)
\\&= \sum_{|r| < k}  w(\bf  r ) e^{-\im r \omega}\sum_{t=\tu{N}{u}+\max(1,1-r)}^{\tu{N}{u}+\min(k,k-r)}  \big(\E( (\Xu{t}-X^{(u)}_t) \otimes \Xu{t+r})\big)
 +  \big(\E( X^{(u)}_t \otimes (\Xu{t+r}-X^{(u)}_{t+r})\big).\tageq \label{eq:C81}
\end{align*}
Jensen's inequality, Cauchy Schwarz's inequality and  \autoref{lem:Burklin} then yield
\begin{align*}
& \max_{1\le k \le N}\bignorm{\sum_{r=0}^{k-1}  w(\bf  r ) e^{-\im r \omega}\sum_{t=\tu{N}{u}+1}^{\tu{N}{u}+{k-r}}  \E\big( (\Xu{t}-X^{(u)}_t) \otimes \Xu{t+r}\big)}_{S_1}
%\\&\le \max_{1\le k \le N}\big(\E\bignorm{ \sum_{t=\flo{uT}-\flo{N/2}+1}^{\flo{uT}-\flo{N/2}+k} (\Xu{t}-X^{(u)}_t) \otimes \sum_{r=0}^{\flo{uT}-\flo{N/2}+k-t}w(\bf  r ) e^{-\im r \omega} \Xu{t+r}}_{S_1}\big)
\\&\le\max_{1\le k \le N} \big(\E\bignorm{ \sum_{t=\flo{uT}-\flo{N/2}+1}^{\flo{uT}-\flo{N/2}+k}  (\Xu{t}-X^{(u)}_t) \otimes \sum_{s=t}^{\flo{uT}-\flo{N/2}+k} w(\bf(s-t)) e^{-\im (t-s) \omega} \Xu{s}}_{S_1} \big)
%\\&\le  \max_{1\le k \le N}\Big(\sum_{t=\flo{uT}-\flo{N/2}+1}^{\flo{uT}-\flo{N/2}+k}  \bignorm{\Xu{t}-X^{(u)}_t}_{\Hi,2} \sqrt{\bignorm{ \sum_{s=t}^{\flo{uT}-\flo{N/2}+k} 
%w(\bf(s-t)) e^{-\im (t-s) \omega} 
%\Xu{s}}^2_{\Hi,2}}\Big)
\\
&\lesssim  \max_{1\le k \le N}  \sum_{t=\flo{uT}-\flo{N/2}+1}^{\flo{uT}-\flo{N/2}+k}  \bignorm{ \Xu{t}-X^{(u)}_t}_{\Hi,2} \sqrt{\sum_{s=t}^{\flo{uT}-\flo{N/2}+k}|{w}(\bf(s-t))|^2\ 
\Big(\sum_{j=0}^\infty\sup_{T \in \nnum}\nu^{X^{\cdot}_{\cdot}}_{2}(j,T)\Big)^2}
\\& \lesssim  \max_{1\le k \le N} N/T^\zeta \sqrt{\big  (\sum_{j=0}^\infty \sup_{T \in \nnum} \nu^{X^{\cdot}_{\cdot}}_{2}(j,T) \big )^2} \sum_{t=\flo{uT}-\flo{N/2}+1}^{\flo{uT}-\flo{N/2}+k}  \sqrt{\sum_{s=t}^{\flo{uT}-\flo{N/2}+k}|{w}(\bf(s-t))|^2} 
.
\end{align*}
Hence, as the second sum inside the square root has at most $1/\bf$ terms, we have 
\begin{align*}
& \max_{1\le k \le N}\bignorm{\sum_{|r|<\min(1/\bf,k)}  w(\bf  r ) e^{-\im r \omega}\sum_{t=\tu{N}{u}+\max(1,1-r)}^{\tu{N}{u}+\min(k,k-r)}  \E\big( (\Xu{t}-X^{(u)}_t) \otimes \Xu{t+r}\big)}_{S_1}
\\& \le 2 \max_{1\le k \le N}\bignorm{\sum_{r=0}^{\min(1/\bf,k)-1}  w(\bf  r ) e^{-\im r \omega}\sum_{t=\tu{N}{u}+\max(1,1-r)}^{\tu{N}{u}+\min(k,k-r)}  \E\big( (\Xu{t}-X^{(u)}_t) \otimes \Xu{t+r}\big)}_{S_1} =O\Big(\frac{N^{2}}{T^\zeta} \frac{1}{\bf^{1/2}}\Big).
\end{align*}
The same order is obtained for the second term in \eqref{eq:C81}
For $II$ in \eqref{de1},  \autoref{as:smooth} yields
\begin{align*} \max_{1\le k \le N}\bignorm{\sum_{|r|<\min(k,1/\bf)} |r| e^{-\im r \omega} \C^{(u)}_{r}}_{S_1}
\le \max_{1\le k \le N} \sum_{|r|<\min(k,1/\bf)}|r| \norm{\C^{(u)}_{r}}_{S_1} 
=O(1).
\end{align*}
 Finally, for  $III$ and $IV$, we
 use the assumptions imply that $\sup_{u \in [0,1]} \sum_{r \in \znum} |r|^{\iota} \norm{ \mathcal{C}^{(u)}_{r}}_{S_1}<\infty$ and  that $w(x)-1 =O(|x|^\iota)$ as $x\to 0$, from which we find 
\begin{align*}
& \max_{1\le k \le N}\bignorm{\frac{1}{2\pi}\sum_{|r| <n } (k - |r|)
[w(\bf r)-1] e^{-\im r \omega} \C^{(u)}_{r}
- \frac{1}{2\pi}k \sum_{|r|\ge n} e^{-\im r \omega} \C^{(u)}_{r}}_{S_1}
\\&\le \max_{1\le k \le N} \Big(k \bf^\iota \sum_{|r| < \min(k,1/\bf)} O(r^\iota)   \norm{\C^{(u)}_{r}}_{S_1}+k/(\min(k,1/\bf))^{\iota} \sum_{|r| \ge \min(k,1/\bf)} |r|^\iota  \norm{ \C^{(u)}_{r} }_{S_1} \Big)
\\&=O(N \bf^\iota).
\end{align*}
This proves \eqref{eq:bidif}. 
The result then follows from the triangle  inequality and the fact that
\begin{align}
\label{de2}
\max_{u \in U_M}\sup_{\omega}\sup_{\eta}\norm{\flo{\eta N}\E\tilde{\F}^{(u,\omega)}(\eta)-\E\tilde{\F}_{\flo{\eta N}}^{(u,\omega)}}_{S_1}=O(1). 
\end{align}
To prove the latter, note that under \autoref{as:depstruc} and \ref{as:Weights}, orthogonality of the projections, Jensen's inequality and stationarity, and a change of variables yield
\begin{align*}
&  \max_{1\le k \le N}  \bignorm{\sum_{t=1}^{k} 
\tilde{w}^{(\omega)}_{\bf,t,k+1}
\E (\Xu{\tu{N}{u}+k+1}\otimes \Xu{\tu{N}{u}+t}) }_{S_1}
\\
%&\le  \sup_x|w(x)| \max_{1\le k \le N} \E \sum_{t=1}^{k}\sum_{j=-\infty}^{\tu{N}{u}+t} \sup_{v\in[0,1]}\bignorm{
%P_{j}(X^{(v)}_{\tu{N}{u}+k+1})\otimes  P_{j}({X}^{(v)}_{\tu{N}{u}+t}) }_{S_1}
%\\
&\le  \sup_x|w(x)| \max_{1\le k \le N} \E \sum_{t=1}^{k}\sum_{j=-\infty}^{\tu{N}{u}+t} \sup_{v\in[0,1]}\bignorm{
P_{0}(X^{(v)}_{\tu{N}{u}+k+1-j})}_{\Hi} \bignorm{P_{0}({X}^{(v)}_{\tu{N}{u}+t-j}) }_{\Hi}
%\\& \le \sup_x|w(x)|\max_{1\le k \le N}  \sum_{t=1}^{k} \sum_{j=-\infty}^{\tu{N}{u}+t} \nu^{X_{\cdot}^\cdot}_{\Hi,2}(\tu{N}{u}+t-j) \nu^{X_{\cdot}^\cdot}_{\Hi,2}(\tu{N}{u}+k+1-j)
\\& \le \sup_x|w(x)|\max_{1\le k \le N}  \sum_{j=-\infty}^{\tu{N}{u}+k}  \sum_{t=1\vee j-\tu{N}{u}}^{k}\nu^{X_{\cdot}^\cdot}_{\Hi,2}(\tu{N}{u}+t-j) \nu^{X_{\cdot}^\cdot}_{\Hi,2}(\tu{N}{u}+k+1-j)
%\\& \le \sup_x|w(x)|\max_{1\le k \le N}\max_{j}\sum_{t=1\vee j-\tu{N}{u}}^{k}\nu^{X_{\cdot}^\cdot}_{\Hi,2}(\tu{N}{u}+t-j)  \sum_{j=-\infty}^{\tu{N}{u}+k}   \nu^{X_{\cdot}^\cdot}_{\Hi,2}(\tu{N}{u}+k+1-j)
\\&\le\sup_x|w(x)|\Big(\sum_{j=0}^\infty \nu^{X_{\cdot}^\cdot}_{\Hi,2}(j)\Big)^2 <\infty.
\end{align*}
Since the other terms that make up the difference $\E\tilde{\F}^{(u,\omega)}(\eta) -\E\tilde{\F}_{\flo{\eta N}}^{(u,\omega)}$
in \eqref{de2} can be estimated similarly, the result follows from the triangle inequality and from the fact that $\sup_\eta |f(\eta,N)|\le 1$. 
\end{proof} 

\subsection{Auxiliary results that complete the proof of  
\autoref{thm:maxdev}}

\begin{thm}\label{thm:quadtail}
Assume the conditions of \autoref{thm:maxdev} hold, and let \begin{align*}
Q_{i} =\sum_{t=(i-1)\bw+1}^{i \bw \wedge T} \sum_{s=t-\bw \vee 1}^{t}\phi(\bf(t-s)) \Xu{t} \otimes \Xu{s}, \quad i =1, \ldots, K.
\end{align*}
where $K=\flo{\tilde{T}/\bw}$ and $\bw \le \tilde{T} \le T$. Then for  $0<\beta<1 $, there exist constants $C_{\gamma,\rho}, C_X, C$
\begin{align*}
&\Pr\Big( \bignorm{\sum_{i=1}^K(Q_{i}-\E Q_{i})}_{S_1} \ge \epsilon\Big) 
 \le \begin{cases}
C\exp\Big(-\frac{\epsilon^2  \bf}{(2+\beta) \tilde{T} C_X  }\Big)+C_{\gamma,\rho} \frac{\tilde{T}  \bw^{\gamma-1}}{\epsilon^\gamma} & \text{ if } \rho >1/2-1/\gamma\\
C\exp\Big(-\frac{\epsilon^2  \bf}{(2+\beta) \tilde{T}  C_X  }\Big)+C_{\gamma,\rho}\Big( \frac{\tilde{T}  \bw^{\gamma-1}}{\epsilon^\gamma}+\frac{\tilde{T}^{\gamma/2-\rho \gamma} \bw^{\gamma/2}}{\epsilon^\gamma}\Big)
& \text{ if } \rho <1/2-1/\gamma
\end{cases}
~. 
\end{align*}
\end{thm}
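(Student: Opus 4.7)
The plan is to reduce the claim to a sum of independent $S_1(\Hi)$-valued centered random variables and then apply the Fuk-Nagaev inequality, \autoref{lem:expin}. The first step is to pass to an $m$-dependent approximation: replace $X_{t,T}$ by $X^{(m)}_{t}=\E[X_{t,T}\mid\G_{t-m}^{t}]$ with $m=\bw$ and form the corresponding block sums $Q_i^{(m)}$. Because $Q_i^{(m)}$ only depends on the innovations indexed in a window of length $\lesssim 2\bw$ around block $i$, the family $\{Q_i^{(m)}\}$ is $(2\bw)$-dependent, hence the odd-indexed and even-indexed subsequences are each \emph{independent}. The decomposition
\[
\sum_{i=1}^{K}(Q_i-\E Q_i)\;=\;\sum_{i=1}^{K}\bigl(Q_i^{(m)}-\E Q_i^{(m)}\bigr)+\sum_{i=1}^{K}\bigl(Q_i-Q_i^{(m)}-\E(Q_i-Q_i^{(m)})\bigr)\;=:\;S_1^{*}+S_2^{*}
\]
splits the problem into a ``main'' independent part $S_1^{*}$ and an ``approximation'' remainder $S_2^{*}$.

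For $S_1^{*}$ I would apply \autoref{lem:expin} to the two independent subsequences separately. Two ingredients are needed: the weak variance
$\Lambda_K=\sup_{v'\in V'_1}\sum_i\E|v'(Q_i^{(m)})|^{2}$, and a moment bound on $\E\|\sum_iQ_i^{(m)}\|_{S_1}^{\gamma}$. Standard second-order spectral estimation calculations (together with a quick quadratic-form argument using \autoref{lem:Burklin} on the inner sum $\sum_s\phi(\bf(t-s))X_s^{(m)}$) give $\E\|Q_i^{(m)}\|_{S_1}^{\gamma}=O(\bw^{\gamma})$ and a per-block variance of order $\bw^{2}$, so $\Lambda_K=O(K\bw^{2})=O(\tilde T/\bf)$ and $\sum_i\E\|Q_i^{(m)}\|_{S_1}^{\gamma}=O(\tilde T\bw^{\gamma-1})$. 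Plugging these into \autoref{lem:expin} with a fixed $0<\beta<1$ produces the sub-Gaussian term $\exp(-\epsilon^{2}\bf/((2+\beta)\tilde T C_X))$ and the polynomial term $C_{\gamma,\rho}\tilde T\bw^{\gamma-1}/\epsilon^{\gamma}$, explaining the two terms that appear in \emph{every} case of the statement.

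For $S_2^{*}$ I would use Markov's inequality together with a Burkholder-type bound for $S_1(\Hi)$-valued martingales (\autoref{lem:Burkh}(ii)) combined with the physical dependence identity $\nu^{X}_{p}(k)$ arguments from the proof of \autoref{lem:mdepap}. The only place where the rate $\rho$ enters is through the tail $\sum_{j\ge m}\nu^{X^{\cdot}_{\cdot}(e_r)}_{\cnum,p}(j)=O(m^{-\rho})$. A careful tracking --- analogous to the $\Lambda_{2\gamma,m}$ term in \autoref{lem:mdepap}, but summed over the $K=\tilde T/\bw$ blocks --- gives
\[
\E\|S_2^{*}\|_{S_1}^{\gamma}\;\le\;C\,(\tilde T\bw)^{\gamma/2}\,m^{-\rho\gamma}.
\]
With the choice $m=\bw$, this contributes a term of size $\tilde T^{\gamma/2}\bw^{\gamma/2-\rho\gamma}/\epsilon^{\gamma}$. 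When $\rho\ge1/2-1/\gamma$, simple algebra shows this is dominated by $\tilde T\bw^{\gamma-1}/\epsilon^{\gamma}$ and is therefore absorbed. When $\rho<1/2-1/\gamma$, it is the new dominant contribution and produces the extra term $C_{\gamma,\rho}\tilde T^{\gamma/2-\rho\gamma}\bw^{\gamma/2}/\epsilon^{\gamma}$ in the statement. The tail estimate is then obtained by a union bound over $\{\|S_1^{*}\|_{S_1}\ge\epsilon/2\}$ and $\{\|S_2^{*}\|_{S_1}\ge\epsilon/2\}$.

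The main obstacle I expect is the sharp moment bound for $S_1^{*}$ in the cotype-$2$ Banach space $S_1(\Hi)$: classical Rosenthal/Burkholder machinery is unavailable, so one must work through the coordinate decomposition of $Q_i^{(m)}$ with respect to the Schauder basis $\{e_i\otimes e_j\}$ of $S_1(\Hi)$ using \autoref{lem:Burkh}(ii), and verify that the resulting $\sum_r\sqrt{\sum_i\E\|Q_i^{(m)}(e_r)\|_{\Hi,\gamma}^{2}}$ bounds behave as advertised. A secondary nuisance, already present in \autoref{lem:mdepap}, is that $Q_i$ is a quadratic form $X_t\otimes X_s$, so both the ``outer'' variable $X_t$ and the ``inner'' sum $\sum_s\phi(\bf(t-s))X_s$ must be perturbed separately in the telescoping $\E[\,\cdot\,\mid\G_{t-1}]-\E[\,\cdot\,\mid\G_{t}]$ decomposition. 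Once these two estimates are in place, the remainder of the argument is a bookkeeping exercise tracking powers of $\bw$, $\bf$, and $\tilde T$.
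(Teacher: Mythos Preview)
Your outline has a genuine quantitative gap in the treatment of the remainder $S_2^{*}$, and the single-scale truncation $m=\bw$ is not sharp enough to yield the stated bound.

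First, note the internal inconsistency: with $m=\bw$ your own estimate $\E\|S_2^{*}\|_{S_1}^{\gamma}\le C(\tilde T\bw)^{\gamma/2}m^{-\rho\gamma}$ gives $\tilde T^{\gamma/2}\bw^{\gamma/2-\rho\gamma}$, not $\tilde T^{\gamma/2-\rho\gamma}\bw^{\gamma/2}$. These differ by the factor $(\tilde T/\bw)^{\rho\gamma}$, which is large whenever $\tilde T\gg\bw$ (e.g.\ $\tilde T=N$, $\bw=N^{\kappa}$, $\kappa<1$). Your claim that the remainder is absorbed by $\tilde T\bw^{\gamma-1}$ when $\rho>1/2-1/\gamma$ is therefore false: the inequality $\tilde T^{\gamma/2}\bw^{\gamma/2-\rho\gamma}\lesssim\tilde T\bw^{\gamma-1}$ is equivalent to $(\tilde T/\bw)^{\gamma/2-1}\lesssim\bw^{\rho\gamma}$, which fails for many admissible $(\kappa,\rho,\gamma)$ (take $\gamma=3$, $\kappa=1/3$, $\rho\in(1/6,1/3)$). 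The single truncation at $m=\bw$ only exploits the decay of $\nu(j)$ up to lag $\bw$, whereas the correct bound requires decay up to lag $\tilde T$.

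The paper closes this gap with a \emph{dyadic multi-scale} decomposition. Instead of one truncation, it defines $l$-block conditioned versions $Q_{l,i}=\E[Q_i\mid\G_{(i-l-1)\bw+1}^{i\bw}]$ and telescopes
\[
\sum_i(Q_i-\E Q_i)=\sum_i(\bar Q_i-\bar Q_{K,i})+\sum_{l=2}^{\lfloor\log_2 K\rfloor}\sum_i(\bar Q_{d_l,i}-\bar Q_{d_{l-1},i})+\sum_i\bar Q_{2,i},\qquad d_l=2^l.
\]
At each level $l$, the increments $\bar Q_{d_l,i}-\bar Q_{d_{l-1},i}$ are $d_l$-block dependent, so after grouping into blocks of size $d_l$ and splitting odd/even one gets independent summands; now \autoref{lem:expin} applies with weak variance $\Lambda_{K,l}=C_X\tilde T\,d_l^{-2\rho}\bw^{1-2\rho}$ and $q$-th moment $O(\tilde T\,d_l^{q/2-\rho q-1}\bw^{q-\rho q-1})$ (via \autoref{lem:Qldepblocks}). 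The point is that the decay factor is $(d_l\bw)^{-\rho}$, not $\bw^{-\rho}$. Summing the resulting bounds over $l$ with weights $\ddot\epsilon_l\propto l^{-2}$ produces exactly the dichotomy in the statement: if $q/2-1-\rho q<0$ the geometric sum is $O(1)$ and the $\sum_i\bar Q_{2,i}$ term dominates, giving $\tilde T\bw^{q-1}$; otherwise the top level $l\sim\log_2 K$ dominates and one picks up $K^{q/2-1-\rho q}=(\tilde T/\bw)^{q/2-1-\rho q}$, which is precisely what converts your $\bw^{-\rho q}$ into the required $\tilde T^{-\rho q}$. The tail $\sum_i(\bar Q_i-\bar Q_{K,i})$ is handled directly by summing \autoref{lem:Qldepblocks} over $l>K$, giving $O(\tilde T^{1/2-\rho}\bw^{1/2})$. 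Your Fuk--Nagaev step for the $2$-dependent part $\sum_i\bar Q_{2,i}$ is correct and matches the paper; what is missing is the telescoping over all intermediate scales.
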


\begin{proof}
Consider $l$-block dependent versions of $Q_i$;
$$Q_{l,i}=\E[Q_{i} |\G_{(i-l-1)\bw+1}^{i\bw}]\quad l \ge 0.$$
In the following, denote $d_l=2^l$ for $1 \le l \le L-1$ where $L =\flo{log_2(K)}$. We use the following decomposition.
\begin{align*} 
\sum_{i=1}^K Q_{i}-\E Q_{i} &=  \sum_{i=1}^{K} (Q_{i}-Q_{K,i} + Q_{K,i}-Q_{2,i}+Q_{2,i})-\sum_{i=1}^{K} \E(Q_{i}-Q_{K,i} + Q_{K,i}-Q_{2,i}+Q_{2,i})
%\\& =  \sum_{i=1}^{K}( Q_{i}-Q_{K,i}) + \sum_{i=1}^{K}  \sum_{l=2}^{\flo{log_2(K)}} (Q_{d_l,i}-Q_{d_{l-1},i})+ \sum_{i=1}^{K} (Q_{2,i}-\E Q_{2,i})  \tageq \label{eq:Q123}
%\\& -\sum_{i=1}^{K}\E( Q_{i}-Q_{K,i}) - \sum_{i=1}^{K}  \sum_{l=2}^{\flo{log_2(K)}}\E (Q_{d_l,i}-Q_{d_{l-1},i})
\\& =   \sum_{i=1}^{K}( Q_{i}-Q_{K,i})-\sum_{i=1}^{K}\E( Q_{i}-Q_{K,i}) +  \sum_{l=2}^{\flo{log_2(K)}} \sum_{i=1}^{K} (\bar{Q}_{d_l,i}-\bar{Q}_{d_{l-1},i})+ \sum_{i=1}^{K} (\bar{Q}_{2,i}) \tageq \label{eq:Q123}
\end{align*}
where $\bar{\cdot}=(\cdot)-\E[\cdot]$. To treat the first term and second term in \eqref{eq:Q123}, we write 
\begin{align*}
\sum_{i=1}^{K} (Q_{i}-Q_{K,i})= \sum_{i=1}^{K} \sum_{l=K+1}^{\infty} (Q_{l,i}-Q_{l-1,i})~. 
\end{align*}
Then it follows from \autoref{lem:Qldepblocks} that
\begin{align*}
\bignorm{\sum_{l=K+1}^{\infty} \sum_{i=1}^K (Q_{l,i}-Q_{l-1,i})}_{S_1,\gamma} 
& \le \sum_{l=K+1}^{\infty} \bignorm{ \sum_{i =1}^K (Q_{l,i}-Q_{l-1,i})}_{S_1,\gamma}  
\\&
 \le \sum_{l=K+1}^{\infty} K^{1/2} \bw O\Bigg(\sum_{\ell \in \nnum}\sum_{j=(l-2)\bw+1}^{(l+1)\bw}  \nu^{X^{\cdot}_{\cdot}(e_\ell)}_{\cnum,2\gamma}(j)+ \sum_{j=(l-1)\bw+1}^{(l+1)\bw} \nu^{X^{\cdot}_{\cdot}}_{\Hi,2\gamma}(j)\Big)
 \\&
 \le  O\Bigg(K^{1/2} \bw\Big[\sum_{\ell \in \nnum}\sum_{l=K+1}^{\infty} \sum_{j=(l-2)\bw+1}^{(l+1)\bw}  \nu^{X^{\cdot}_{\cdot}(e_\ell)}_{\cnum,2\gamma}(j)+\sum_{l=K+1}^{\infty}  \sum_{j=(l-1)\bw+1}^{(l+1)\bw} \nu^{X^{\cdot}_{\cdot}}_{\Hi,2\gamma}(j)\Big]\Big).
\end{align*}
Therefore if the process has a polynomial moment decay, then the sums will be of order $O((K-1)\bw)^{-\rho})=O(\tilde{T}^{-\rho})$ and we find that the term is of order
$O\Big(\tilde{T}^{1/2-\rho} \bw^{1/2}\Big)$.
The same order holds for the second term in 
\eqref{eq:Q123}. For the third term, for fixed $l$, we recall that $\bar{Q}_{d_l,i}$ is $d_l$-block dependent. Hence, it is independent of $\bar{Q}_{d_l, i^\prime}$ for $|i-i^\prime|>d_l $. We then decompose the sum over $i$ further by considering
\begin{align*}
H_{l, h} = \sum_{i=d_l(h-1)+1}^{d_l h \wedge K}  (\bar{Q}_{d_l,i}-\bar{Q}_{d_{l-1},i})
\end{align*}
where $1\le h \le \lceil K/d_l \rceil$ so that for $|h-h^\prime|>1$, $H_{l, h}$ and $H_{l, h^\prime}$ are independent. We now use this to construct two sums of independent zero-mean random elements, namely 
\begin{align*}
S^e_{K,l}=\sum_{\substack{ 1\le h \le \lceil K/d_l \rceil\\ h  \in 2 \mathbb{N}}}H_{l, h}  \quad \text{ and }
\quad  
S^o_{K,l}=\sum_{\substack{ 1\le h \le \lceil K/d_l \rceil\\ h  \in 2 \mathbb{N}+1}}  H_{l, h}.
\end{align*}
From  \autoref{lem:expin} we obtain 
\[
\mathbb{P}\Bigg( \norm{S^e_{K,l}}_{S_1} \ge x \Bigg) \le \exp\Big(-\frac{x^2}{(2+\beta)\Lambda_{K,l}} \Big)+ C \E \bignorm{\sum_{\substack{ 1\le h \le \lceil K/d_l \rceil\\ h  \in 2 \mathbb{N}}}H_{l, h}}^q_{S_1} x^{-q},
\]
where $\Lambda_{K,l}=\sup\{ \sum_{h \text{ even}}\E |v^\prime(H_{l,h})|^2: v^\prime \in V^\prime_1\}= C_X \tilde{T}  d_l^{-2\rho } \bw^{1-2\rho } $, and where $C_X$ denotes a constant that depends on the process $X$. This follows similarly to the following argument. From  \autoref{lem:Qldepblocks}  we have 
\[
 \bignorm{\sum_{i=d_l(h-1)+1}^{d_l h \wedge K}  (\bar{Q}_{d_l,i}-\bar{Q}_{d_{l-1},i})(e_r)}^q_{\Hi,q} =O\Bigg(({d_l}\bw^2)^{q/2}\Big( \sum_{s=(d_{l-1}-2)\bw+1}^{(d_l+1)\bw}  \nu^{X^{\cdot}_{\cdot}(e_r)}_{\cnum,2\gamma}(s)+ \sum_{s=(d_{l-1}-1)\bw+1}^{(d_l+1)\bw} \nu^{X^{\cdot}_{\cdot}}_{\Hi,2\gamma}(s)\Big)^q\Bigg)~.
\]
Therefore, using \eqref{eq:upbtr2} and independence 
\begin{align*}
 \E \bignorm{\sum_{\substack{ 1\le h \le \lceil K/d_l \rceil\\ h  \in 2 \mathbb{N}}}H_{l, h}}^q_{S_1}
 &=   \bignorm{\sum_{\substack{ 1\le h \le \lceil K/d_l \rceil\\ h  \in 2 \mathbb{N}}} \sum_{i=d_l(h-1)+1}^{d_l h \wedge K}  (\bar{Q}_{d_l,i}-\bar{Q}_{d_{l-1},i})}^q_{S_1,q}
%\\  & \le \Big( \sum_{r \in \nnum} \sqrt[q]{ \E \bignorm{\sum_{\substack{ 1\le h \le \lceil K/d_l \rceil\\ h  \in 2 \mathbb{N}}} \sum_{i=d_l(h-1)+1}^{d_l h \wedge K}  (\bar{Q}_{d_l,i}-\bar{Q}_{d_{l-1},i})(e_r)}^q_{\Hi}}\Big)^q
%\\  & \le \Big( \sum_{r \in \nnum} \sqrt[q]{\sum_{\substack{ 1\le h \le \lceil K/d_l \rceil\\ h  \in 2 \mathbb{N}}}  \E \bignorm{\Big(\sum_{i=d_l(h-1)+1}^{d_l h \wedge K}  (\bar{Q}_{d_l,i}-\bar{Q}_{d_{l-1},i})\Big)(e_r)}^q_{\Hi}}\Big)^q
\\  & \le \Big( \sum_{r \in \nnum} \sqrt[q]{\sum_{\substack{ 1\le h \le \lceil K/d_l \rceil\\ h  \in 2 \mathbb{N}}}   \bignorm{\sum_{i=d_l(h-1)+1}^{d_l h \wedge K}  (\bar{Q}_{d_l,i}-\bar{Q}_{d_{l-1},i})(e_r)}^q_{\Hi,q}}\Big)^q
%\\& \le \Big( \sum_{r \in \nnum} \sqrt[q]{\sum_{\substack{ 1\le h \le \lceil K/d_l \rceil\\ h  \in 2 \mathbb{N}}}   ({d_l}\bw^2)^{q/2}O\Bigg(\Big( \sum_{s=(d_{l-1}-2)\bw+1}^{(d_l+1)\bw}  \nu^{X^{\cdot}_{\cdot}(e_r)}_{\cnum,2\gamma}(s)+ \sum_{s=(d_{l-1}-1)\bw+1}^{(d_l+1)\bw} \nu^{X^{\cdot}_{\cdot}}_{\Hi,2\gamma}(s)\Big)^q\Bigg)}\Big)^q
%\\& =  O\Bigg(\lceil K/d_l \rceil ({d_l}\bw^2)^{q/2}\Big(\sum_{r \in \nnum} \sum_{s=(d_{l-1}-2)\bw+1}^{(d_l+1)\bw}  \nu^{X^{\cdot}_{\cdot}(e_r)}_{\cnum,2\gamma}(s)+ \sum_{s=(d_{l-1}-1)\bw+1}^{(d_l+1)\bw} \nu^{X^{\cdot}_{\cdot}}_{\Hi,2\gamma}(s)\Big)^q\Bigg)
%\\& =O\Big( \lceil K/d_l \rceil ({d_l}\bw^2)^{q/2} ((d_{l}-2) \bw)^{-\rho q}\Big)
 \\&=O\Big(\tilde{T} d_l^{q/2-\rho q-1} \bw^{q-\rho q-1} \Big).
\end{align*}
Define
\[
\ddot{\epsilon}_l=
\begin{cases} 
\frac{3}{\pi^2} \frac{1}{(l-1)^2} &\text{ if } 2\le l\le L/2\\
\frac{3}{\pi^2} \frac{1}{(L+1-l)^2} &\text{ if } L/2< l\le L,\\
\end{cases}
\]
then
\begin{align*}
\mathbb{P}\Big(\bignorm{\sum_{l=2}^{L} \sum_{i=1}^{K} (\bar{Q}_{d_l,i}-\bar{Q}_{d_{l-1},i})}_{S_1} \ge x\Big) &\le \sum_{l=2}^{L} \mathbb{P}\Big(\norm{S^e_{K,l}}_{S_1} \ge \ddot{\epsilon}_l x\Big)+\sum_{l=2}^{L} \mathbb{P}\Big(\norm{S^o_{K,l}}_{S_1} \ge \ddot{\epsilon}_l x\Big)
\\&
\le C\sum_{l=2}^{L}\exp\Big(-\frac{\ddot{\epsilon}^2_l x^2  d_l^{2\rho } \bw^{2\rho -1}}{(2+\beta) \tilde{T}  C_X  } \Big)+C_{q,\rho} \frac{\tilde{T}  \bw^{q-\rho q-1}}{x^q}\sum_{l=2}^{L} \frac{ d_l^{q/2-1-\rho q}}{\ddot{\epsilon}^q_l} . 
\end{align*}
Similarly to \eqref{epsisum},
\begin{align*}
\sum_{l=2}^{L} \frac{ d_l^{q/2-1-\rho q}}{\ddot{\epsilon}^q_l}& 
%=(\frac{3}{\pi^2})^{-q}\sum_{l=2}^{L/2} (l-1)^{2q} 2^{l(q/2-1-\rho q)} \\&
= O\Big( (\log_2(K)^{2q+1} K^{(q/2-1-\rho q)/2}\Big) \\&\le R_{K,q,\rho}= \begin{cases}
 c & \text{ if } q/2-1-\rho q<0\\
 c K^{q/2-1-\rho q} & \text{ if } q/2-1-\rho q >0
\end{cases}~.
\end{align*}
Furthermore, it is easily seen that there exists a constant $C>0$ such that
\[
 \sum_{l=2}^{L}\exp\Big(-\frac{\ddot{\epsilon}^2_l d_l^{2\rho } x^2  \bw^{\rho 2-1}}{(2+\beta) \tilde{T}  C_X  } \Big) \le C \exp\Big(-\frac{x^2 \bw^{\rho 2-1}}{(2+\beta) \tilde{T}  C_X  } \Big).
\]
Therefore, we find
\[
\mathbb{P}\Big(\bignorm{\sum_{l=2}^{L} \sum_{i=1}^{K} (\bar{Q}_{d_l,i}-\bar{Q}_{d_{l-1},i})}_{S_1} \ge x\Big) \le C \exp\Big(-\frac{x^2}{(2+\beta) \tilde{T}  C_X  \bw }  \Big)+C_{q,\rho}R_{K,q,\rho}\frac{\tilde{T}  \bw^{q-\rho q-1}}{x^q} .
\]
The final term in \eqref{eq:Q123} can be dealt with similarly after noticing that 
$\bar{Q}_{2,i}$ is independent of $\bar{Q}_{2,i^\prime}$ for $|i-i^\prime|>2$. Indeed we obtain
\begin{align*}
\mathbb{P}\Big(\bignorm{\sum_{i=1}^{K} (\bar{Q}_{2,i})}_{S_1} \ge x\Big) &\le 3 \exp\Big(-\frac{x^2}{(2+\beta)\tilde{\Lambda}_{K}} \Big)+ 3 C \E \bignorm{\sum_{i=1}^{K} (\bar{Q}_{2,i})}^q_{S_1} x^{-q},
\\&
\le 3 \exp\Big(-\frac{x^2}{(2+\beta)\bw \tilde{T}  C_X  } \Big)+ O\Big(\frac{\tilde{T}  \bw^{q-1}}{x^q} \Big)~, 
\end{align*}
where $\tilde{\Lambda}_{K}=\sup\{\sum_{i=1}^{K}\E |v^\prime(\bar{Q}_{2,i})|^2: v^\prime\in V^\prime_1\}=C_X\tilde{T}  \bw$. All in all, we obtain
\begin{align*}
&\Pr\Big( \bignorm{\sum_{i=1}^K(Q_{i}-\E Q_{i})}_{S_1} \ge \epsilon\Big) 
\\& =\epsilon^{-q} O\Big( \tilde{T} ^{q/2-\rho q} \bw^{q/2}+  \tilde{T}  \bw^{q-1} +\tilde{T}  \bw^{q-\rho q-1} R_{K,q,\rho} \Big)+ C\exp\Big(-\frac{\epsilon^2  }{(2+\beta)\bw \tilde{T}  C_X  }\Big)
\end{align*}
if $\rho >1/2-1/q$ this reduces to 
\[
\epsilon^{-q} O\Big( \tilde{T}  \bw^{q-1} \Big)+ C\exp\Big(-\frac{\epsilon^2  }{(2+\beta)\bw \tilde{T}  C_X  }\Big)
\]
whereas if $\rho <1/2-1/q$ then we find
\[
\epsilon^{-q} O\Big(\tilde{T}\bw^{q-1}+\tilde{T}^{q/2-\rho q} \bw^{q/2} \Big)+C\exp\Big(-\frac{\epsilon^2 }{(2+\beta) \tilde{T} \bw C_X  }\Big).
\]
\end{proof}

\begin{lemma} \label{lem:Qldepblocks}
Under the conditions of \autoref{thm:quadtail} we have 
\begin{align*} 
\bignorm{ \sum_{i =1}^K (Q_{l,i}-Q_{l-1,i})}^2_{S_1,\gamma}& =O\Big( K \bw^2 (\sum_{j=0}^\infty  \nu^{X_{\cdot}^{\cdot}}_{\hi,2\gamma}(j,T) \big )^2 \Bigg(\sum_{j \in \nnum}\sqrt{ \Big(\sum_{q=(l-2)\bw+1}^{(l+1)\bw}  \nu^{X^{\cdot}_{\cdot}(e_j)}_{\cnum,2\gamma}(q)\Big)^2}\Bigg)^2\Big)\\&+O\Big( K \bw^2 \Big(\sum_{q=(l-1)\bw+1}^{(l+1)\bw} \nu^{X^{\cdot}_{\cdot}}_{\Hi,2\gamma}(q) \Big)^2  \Big(\sum_{\ell \in \nnum}\sum_{j=0}^\infty  \nu^{X_{\cdot}^{\cdot}(e_\ell)}_{\cnum,2\gamma}(j,T) \Big )^2\Big)
%\\& =O\Bigg(K \bw^2 \Big(\sum_{\ell \in \nnum}\sum_{q=(l-2)\bw+1}^{(l+1)\bw}  \nu^{X^{\cdot}_{\cdot}(e_\ell)}_{\cnum,2\gamma}(q)\Big)^2+ \Big(\sum_{q=(l-1)\bw+1}^{(l+1)\bw} \nu^{X^{\cdot}_{\cdot}}_{\Hi,2\gamma}(q) \Big)^2\Bigg)
.
\end{align*}
\end{lemma}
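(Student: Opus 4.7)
My plan follows the template already used for \autoref{lem:mdepap}: apply \autoref{lem:Burkh}(ii) to a Wu-type martingale-difference decomposition of $Y := \sum_{i=1}^K (Q_{l,i} - Q_{l-1,i})$, and control each projection through the coupling interpretation of the conditional expectations $Q_{l,i} = \E[Q_i | \G^{i\bw}_{(i-l-1)\bw+1}]$. The key structural feature is that the outer block of innovations introduced in passing from $Q_{l-1,i}$ to $Q_{l,i}$ is $J_{l,i} := [(i-l-1)\bw+1,(i-l)\bw]$, which lies at distance roughly $l\bw$ from the time indices $(t,s)$ appearing in $Q_i$; this will produce the physical-dependence sums over lags around $l\bw$ in the statement.

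Since $\E Y = 0$, I write $Y = \sum_{j\in\znum} P_j Y$ for $P_j Z = \E[Z|\G_j] - \E[Z|\G_{j-1}]$, and apply \autoref{lem:Burkh}(ii) with $A_k = I$ to obtain
\[
\|Y\|^2_{S_1,\gamma} \le K_\gamma^2 \Big(\sum_{r\in\nnum} \sqrt{\sum_{j\in\znum} \|P_j Y(e_r)\|^2_{\Hi,\gamma}}\Big)^2.
\]
Using the coupling identity $P_j Z = \E[Z - Z_{\{j\}}\,|\,\G_j]$ (with $\epsilon_j$ replaced by an iid copy $\epsilon_j'$) and conditional-expectation contraction, I would reduce $\|P_j(Q_{l,i}-Q_{l-1,i})(e_r)\|_{\Hi,\gamma}$ to a quantity that is nonzero only when $j \in [(i-l-1)\bw+1, i\bw]$, and for $j$ in the outer block $J_{l,i}$ is controlled by $\|(Q_i - Q_{i,\{j\}})(e_r)\|_{\Hi,\gamma}$. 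Expanding via the product rule
\[
(Q_i - Q_{i,\{j\}})(e_r) = \sum_{t,s}\phi(\bf(t-s))\big[\inprod{e_r}{X_s}(X_t - X_{t,\{j\}}) + \inprod{e_r}{X_s-X_{s,\{j\}}}X_{t,\{j\}}\big],
\]
I would handle the two pieces separately: the first yields basis-level dependence $\nu^{X^\cdot_\cdot(e_r)}_{\cnum,2\gamma}(t-j)$ at lag $t - j \in [(l-1)\bw+1,(l+1)\bw]$ paired with a bulk Hilbert factor $\sum_q \nu^{X^\cdot_\cdot}_{\Hi,2\gamma}(q)$; the second yields Hilbert-level dependence $\nu^{X^\cdot_\cdot}_{\Hi,2\gamma}(s-j)$ at lag $s - j = (t-j) - (t-s) \in [(l-2)\bw+1,(l+1)\bw]$ paired with a bulk basis factor $\sum_{\ell,q}\nu^{X^\cdot_\cdot(e_\ell)}_{\cnum,2\gamma}(q)$. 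These are precisely the two asymmetric terms of the stated bound.

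Finally, summing over $j$ and $i$ and swapping order, using that each $j$ belongs to $J_{l,i}$ for only $O(1)$ values of $i$, produces $O(K)$ times a range-restricted dependence sum; combined with the $\bw^2$-factor from the two weight-squared sums $\sum_{s}|\phi(\bf(t-s))|^2 = O(\bw)$ applied in succession and Cauchy--Schwarz over $\sum_r$, this delivers the stated $K\bw^2$ prefactor. The main technical obstacle is the delicate treatment of indices $j$ in the \emph{inner} range $[(i-l)\bw+1, i\bw]$ rather than the outer block $J_{l,i}$: here the crude tower bound $\|(Q_i - Q_{i,\{j\}})(e_r)\|_{\Hi,\gamma}$ involves unboundedly small lags $t-j$ and would swamp the stated bound. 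Overcoming this requires exploiting the partial cancellation between the two nested conditional expectations $Q_{l,i}$ and $Q_{l-1,i}$, which share all inner-range innovations: their $\epsilon_j$-derivatives differ only through the extra integration of $Q_i$ over outer-block innovations in $Q_{l,i}$, so the difference is effectively controlled by the same outer-block physical dependence at lags of order $l\bw$. Once this cancellation is made rigorous (in the spirit of the $m$-dependent decomposition used in \autoref{lem:mdepap}), the remainder is a routine assembly.
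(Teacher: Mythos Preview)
Your approach differs from the paper's in a way that creates the very obstacle you flag. You decompose $Y=\sum_{j\in\znum}P_jY$ with the \emph{forward} projections $P_j=\E[\cdot|\G_j]-\E[\cdot|\G_{j-1}]$, which forces you to deal with every $j$ in the inner range $[(i-l)\bw+1,i\bw]$ and then argue by an unspecified cancellation that $P_j(Q_{l,i}-Q_{l-1,i})$ is small there. The paper sidesteps this entirely by two structural observations:
\begin{itemize}
\item[(1)] $\{Q_{l,i}-Q_{l-1,i}\}_{i}$ is itself a \emph{backward} martingale difference sequence with respect to $\big(\G_{(i-l)\bw+1}^{i\bw}\big)_i$, since $\E[Q_{l,i}-Q_{l-1,i}\mid\G_{(i-l)\bw+1}^{i\bw}]=0$;
\item[(2)] for fixed $i$, the difference telescopes \emph{exactly} into outer-block backward projections,
\[
Q_{l,i}-Q_{l-1,i}=\sum_{k=1}^{\bw}\Big(\E[Q_i\mid\G^{i\bw}_{(i-l-1)\bw+k}]-\E[Q_i\mid\G^{i\bw}_{(i-l-1)\bw+k+1}]\Big)=:\sum_{k=1}^{\bw}P^{(i-l-1)\bw+k}(Q_i).
\]
\end{itemize}
Because the outer blocks $[(i-l-1)\bw+1,(i-l)\bw]$ are disjoint across $i$, a single application of \autoref{lem:Burkh}(ii) to this combined backward martingale array gives
\[
\|Y\|^2_{S_1,\gamma}\le K_\gamma^2\Big(\sum_{\ell}\sqrt{\sum_{i=1}^K\sum_{r=1}^{\bw}\|P^{(i-l-1)\bw+r}(Q_i)(e_\ell)\|^2_{\Hi,\gamma}}\Big)^2,
\]
and from here only outer-block lags $t-j\in[(l-1)\bw+1,(l+1)\bw]$ and $s-j\in[(l-2)\bw+1,(l+1)\bw]$ ever appear. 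The $J_1,J_2$ split you describe then goes through exactly as you outline, with no inner-range term to worry about.

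Your hand-wave at the end (``their $\epsilon_j$-derivatives differ only through the extra integration over outer-block innovations'') is morally correct but is precisely what observation (2) makes rigorous: the forward projection $P_j(Q_{l,i}-Q_{l-1,i})$ for inner-range $j$ does not vanish, and bounding it directly would require a further iteration of the tower property that ultimately reconstructs the backward telescope. So the gap is real but narrow: replace your global forward decomposition by the backward martingale structure in $i$ together with the exact outer-block telescope, and the rest of your argument (product rule, Cauchy--Schwarz, \autoref{lem:Burklin} on the paired sums) is the same as the paper's.
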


\begin{proof}
We make the following two observations:
\begin{enumerate}
\item[(1)] $\E[Q_{l,i}-Q_{l-1,i}|\G^{i\bw}_{(i-l)\bw+1}] = \E[Q_{i} |\G_{(i-l)\bw+1}^{i\bw}]-\E[Q_{i} |\G_{(i-l)\bw+1}^{i\bw}]=0$
and thus, for fixed $l$, the differences $Q_{l,i}-Q_{l-1,i}$ form a backward martingale difference with respect to $(\G_{(i-l)\bw+1}^{i\bw})_{1\le i\le K}$.
\item[(2)]  Furthermore, we can write
\begin{align*}
Q_{l,i}-Q_{l-1,i}&=\sum_{k=1}^{\bw} \E[Q_{i} |\G_{(i-l-1)\bw+k}^{i\bw}]-\E[Q_{i} |\G_{(i-l-1)\bw+k+1}^{i\bw}]
=\sum_{k=1}^{\bw} P^{(i-l-1)\bw+k}(Q_{i})
\end{align*}
where, for fixed $i$, $P^{(i-l-1)\bw+k}(Q_{i})$, $1\le k \le \bw$ are backward martingale differences with respect to $\{\G_{(i-l-1)\bw+k}^{i\bw}\}_{1\le k\le \bw}$.
\end{enumerate}
By \autoref{lem:Burkh} it follows that  
\begin{align*}
\bignorm{\sum_{i=1}^K[Q_{l,i}-Q_{l-1,i}]}^2_{S_1,\gamma}
%=\bignorm{\sum_{i=1}^K\sum_{r=1}^{\bw} P^{(i-l-1)\bw+r}(Q_{i})}^2_{S_1,\gamma}& 
\le
K^2_\gamma \Big(\sum_{\ell \in \nnum} \sqrt[2]{\sum_{i=1}^K \sum_{r=1}^{\bw} \norm{P^{(i-l-1)\bw+r}(Q_{i})(e_\ell)}^2_{\Hi,\gamma}}\Big)^2~. 
\end{align*}
The proof is now  similar to the proof of \autoref{lem:mdepap}. We can write 
\begin{align*}
& \norm{P^{(i-l-1)\bw+r}(Q_{i})(e_\ell)}^2_{\Hi,\gamma} 
\\& \le 2 \bignorm{\Big(\sum_{t=(i-1)\bw+1}^{i \bw \wedge T} (\Xu{t} -X^{(\frac{t}{T})}_{t, \{(i-l-1)\bw+r\}}) \otimes  \sum_{s=(t-\bw) \vee 1}^{t}\phi(\bf(t-s)) \Xu{s}\Big)(e_\ell)}^2_{\Hi,\gamma}
\\& +2\bignorm{\Big(\sum_{s=((i-1)\bw+1-\bw) \vee 1}^{i\bw \wedge T} \sum_{t=s}^{(s+\bw) \vee  T}\overline{\phi}(\bf(t-s))X^{(\frac{t}{T})}_{t, \{(i-l-1)\bw+r\}} \otimes \big(\Xu{s} - X^{(\frac{s}{T})}_{s,\{(i-l-1)\bw+r\}}\big)(e_\ell)\Big)}^2_{\Hi,\gamma}=:2(J_1^2+J_2^2)~,
\end{align*}
where the last equality  defines $J_1$ and $J_2$ in an obvious manner.
Then
\begin{align*}
J^2_1& \lesssim \Bigg(\sum_{t=(i-1)\bw+1}^{i\bw \wedge T} \nu^{X^{\cdot}_{\cdot}}_{\Hi,2\gamma}(t-(i-l-1)\bw-r) \sqrt{\sum_{s=t-\bw \vee 1}^{t}|\phi(\bf(t-s))|^2  (\sum_{j=0}^\infty  \nu^{X_{\cdot}^{\cdot}(e_{\ell})}_{\cnum,2\gamma}(j,T) \big )^2}~~\Bigg)^2
\end{align*}
and thus
\begin{align*}
\Big(\sum_{\ell \in \nnum}\sqrt{\sum_{i=1}^K\sum_{r=1}^{\bw} J^2_1}\Big)^2& \lesssim
\Bigg(\sum_{\ell \in \nnum}\sqrt{\sum_{i=1}^K\sum_{r=1}^{\bw}  \Big(\sum_{q=l \bw-r+1}^{(l+1)\bw-r} \nu^{X^{\cdot}_{\cdot}}_{\Hi,2\gamma}(q)\Big)^2  
 \max_{t} \sum_{s=t-\bw \vee 1}^{t}|\phi(\bf(t-s))|^2   (\sum_{j=0}^\infty  \nu^{X_{\cdot}^{\cdot}(e_{\ell})}_{\cnum,2\gamma}(j,T) \big )^2}~~\Bigg)^2 
% \\& \lesssim \bw \sum_{i \in I} \sum_{r=1}^{\bw} \Big(\sum_{q=l \bw-r+1}^{(l+1)\bw-r} \nu^{X^{\cdot}_{\cdot}}_{\Hi,2\gamma}(q) \Big)^2  \Big(\sum_{\ell \in \nnum}\sqrt{ (\sum_{j=0}^\infty  \nu^{X_{\cdot}^{\cdot}(e_{\ell})}_{\cnum,2\gamma}(j,T) \big )^2}\Big)^2
  \\& \lesssim K \bw^2 \Big(\sum_{q=(l-1)\bw+1}^{(l+1)\bw} \nu^{X^{\cdot}_{\cdot}}_{\Hi,2\gamma}(q) \Big)^2  \Big(\sum_{\ell \in \nnum}\sum_{j=0}^\infty  \nu^{X_{\cdot}^{\cdot}(e_{\ell})}_{\cnum,2\gamma}(j,T)\Big)^2.
\end{align*}
Similarly,
\begin{align*}
J^2_2 &\lesssim \Big(\sum_{s=(i-2)\bw+1 \vee 1}^{i\bw \wedge T}  \sqrt{\sum_{t=s}^{(s+\bw) \vee T}|\phi(\bf(t-s))|^2 (\sum_{j=0}^\infty  \nu^{X_{\cdot}^{\cdot}}_{\hi,2\gamma}(j,T) \big )^2}\nu^{X^{\cdot}_{\cdot}(e_\ell)}_{\cnum,2\gamma}(s-(i-l-1)\bw-r) \Big)^2~, 
\end{align*}
which in turn yields
\begin{align*}
\Big(\sum_{\ell \in \nnum}\sqrt{\sum_{i=1}^K\sum_{r=1}^{\bw} J^2_2}\Big)^2& \lesssim
\Bigg(\sum_{\ell \in \nnum}\sqrt{\sum_{i=1}^K \sum_{r=1}^{\bw} \Big(  \sum_{q=l \bw-\bw+r+1}^{(l+1)\bw-r}   \nu^{X^{\cdot}_{\cdot}(e_\ell)}_{\cnum,2\gamma}(q)\Big)^2
 \max_{s}\sum_{t=s}^{(s+\bw) \vee T}|\phi(\bf(t-s))|^2(\sum_{j=0}^\infty  \nu^{X_{\cdot}^{\cdot}}_{\hi,2\gamma}(j,T) \big )^2}~~\Bigg)^2 
 \\& \lesssim 1/\bf (\sum_{j=0}^\infty  \nu^{X_{\cdot}^{\cdot}}_{\hi,2\gamma}(j,T) \big )^2 \Bigg(\sum_{\ell \in \nnum}\sqrt{ \sum_{i=1}^K\sum_{r=1}^{\bw}\Big(\sum_{q=l \bw-\bw-r+1}^{(l+1)\bw-r}  \nu^{X^{\cdot}_{\cdot}(e_\ell)}_{\cnum,2\gamma}(q)\Big)^2}~~\Bigg)^2
% \\& \lesssim  K \bw^2 (\sum_{j=0}^\infty  \nu^{X_{\cdot}^{\cdot}}_{\hi,2\gamma}(j,T) \big )^2 \Bigg(\sum_{\ell \in \nnum}\sqrt{ \Big(\sum_{q=(l-1)\bw-\bw+1}^{(l+1)\bw}  \nu^{X^{\cdot}_{\cdot}(e_\ell)}_{\cnum,2\gamma}(q)\Big)^2}\Bigg)^2
    \\& \lesssim K\bw^2 (\sum_{j=0}^\infty  \nu^{X_{\cdot}^{\cdot}}_{\hi,2\gamma}(j,T) \big )^2\Big(\sum_{\ell\in \nnum} \sum_{q=(l-2)\bw+1}^{(l+1)\bw}  \nu^{X^{\cdot}_{\cdot}(e_\ell)}_{\cnum,2\gamma}(q)\Big)^2.
\end{align*}

\end{proof}

\subsection{Auxiliary lemmas for the statements in \autoref{sec:thm31proof}}
\begin{lemma}\label{lem:Vmax}
Let $V^{(u,\lambda)}_{m,N,t}$ be as in \eqref{eq:VNTfix}. Then for $\gamma> 2$
\[
\E\Big(\max_{1\le k \le N}  \bignorm{\sum_{i=1}^{M} \sum_{t=1}^{k}\Upsilon_{u_i,\omega}(V^{(u_i,\omega)}_{m,L,t}  )}_{S_1}\Big)^\gamma =O(M^{\gamma/2}N^{\gamma/2} b_L^{-\gamma/2}), \quad 1\le L \le N.
\]
\end{lemma}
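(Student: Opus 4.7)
The key observation is that for each fixed $u_i$ and $\omega$, the sequence $\{V^{(u_i,\omega)}_{m,L,t}\}_{t \ge 1}$ is a martingale difference sequence in $\mathcal{L}^2_{S_1}$ with respect to the filtration $\{\G_{\tu{L}{u_i}+t}\}_t$: the outer factor $\dmpi{u_i}{\omega}{L}{t}$ is itself a martingale difference by construction, while the inner sum $\sum_{s=1}^{t-1}\tilde{w}^{(\omega)}_{b_L,t,s}\dmpi{u_i}{\omega}{L}{s}$ is $\G_{\tu{L}{u_i}+t-1}$-measurable.  My plan is to apply the vectorization strategy from the proof of \autoref{lem:mdepap} to reindex the double sum over $(i,t)$ as a single sum over an index $\ell$, so that the resulting summands inherit the martingale difference structure with respect to the natural filtration $\{\G_\ell\}$.

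Next, I would bound the $\gamma$-th moment of the increment
\[
S_{a+k}-S_a = \sum_{i=1}^M \sum_{t=a+1}^{a+k}\Upsilon_{u_i,\omega}(V^{(u_i,\omega)}_{m,L,t}).
\]
Using \autoref{lem:Burkh}(ii) applied coordinate-wise in the Schauder basis $\{e_r\}$ provided by \autoref{lem:schauder}, together with \autoref{as:mappings}(i) and the Cauchy--Schwarz splitting
\[
\bignorm{V^{(u_i,\omega)}_{m,L,t}(e_r)}_{\Hi,\gamma} \le \norm{\dmpi{u_i}{\omega}{L}{t}}_{\Hi,2\gamma}\Bignorm{\sum_{s=1}^{t-1}\tilde{w}^{(\omega)}_{b_L,t,s}\dmpi{u_i}{\omega}{L}{s}(e_r)}_{\cnum,2\gamma},
\]
one controls the scalar inner factor via \autoref{lem:Burklin} applied to the martingale differences $\{\dmpi{u_i}{\omega}{L}{s}(e_r)\}_s$ together with the elementary observation $\sum_{s=1}^{t-1}|\tilde{w}^{(\omega)}_{b_L,t,s}|^2 = O(b_L^{-1})$.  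Summing over $i$ and $t$ and using \autoref{as:depstruc}(II) to ensure $\sum_{r \in \nnum}\sum_{j}\sup_{u}\nu^{X^{\cdot}_{\cdot}(e_r)}_{\cnum,2\gamma}(j) < \infty$, one arrives at the increment bound
\[
\bignorm{S_{a+k}-S_a}_{S_1,\gamma}^2 \lesssim \frac{M k}{b_L}.
\]

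Finally I would invoke Moricz's maximal inequality (Theorem 1 in \cite{Mor76}) with the additive, hence superadditive, function $g(a,k) = C M k/b_L$ satisfying $g(a,k)+g(a+k,\ell) = g(a,k+\ell)$, to conclude
\[
\E\Big(\max_{1\le k \le N}\bignorm{S_k}_{S_1}\Big)^\gamma \le C\,g(0,N)^{\gamma/2} = O\big(M^{\gamma/2} N^{\gamma/2} b_L^{-\gamma/2}\big),
\]
which is the desired bound.  The main obstacle I anticipate is the handling of the maximum over $k$ in the presence of the $M$ shifted filtrations $\{\G_{\tu{L}{u_i}+\cdot}\}_{i=1,\ldots,M}$: the partial sums $S_k$ are not themselves a martingale with respect to any single filtration that is natural in $k$, so one cannot directly apply Doob's inequality.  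The vectorization step combined with the additive increment bound circumvents this by reducing everything to a Moricz-type argument in the same spirit as the maximal bounds derived in \autoref{lem:mdepap} and \autoref{lem:approxvar}.
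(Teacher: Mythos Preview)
Your proposal is correct and follows essentially the same route as the paper: vectorize the double sum as in \autoref{lem:mdepap}, apply \autoref{lem:Burkh}(ii) in the $S_1$ setting, split the tensor product via Cauchy--Schwarz, control the scalar inner sum by the martingale-difference bound (the paper invokes \autoref{lem:Burkh}(i) directly rather than \autoref{lem:Burklin}, but the content is the same), and finish with M\'oricz's maximal inequality. Your identification of the filtration issue and its resolution via the M\'oricz argument matches the paper's reasoning exactly.
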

\begin{proof}
We use a similar argument as in the proof of  \autoref{lem:mdepap}. Recall the notation \eqref{eq:dmpi}.  Then using \eqref{eq:upbtr2}, and \autoref{lem:Burkh} 
yields 
\begin{align*}
&\bignorm{\sum_{i=1}^{M} \sum_{t=1}^{k}\Upsilon_{u_i,\omega}(V^{(u_i,\omega)}_{m,L,t}  )}_{S_1,\gamma} 
\\
&\le  \sum_{r \in \nnum}\sqrt{ \bignorm{\sum_{l=1}^{M N} \mathrm{1}_{2 \le t_l\le k} \Upsilon_{u_{i_l},\omega}\Big(\dmpi{u_{i_l}}{\omega}{L}{t_l} \otimes \sum_{s=1}^{t_l-1} \tilde{w}_{b_L,t,s}^{(\omega)} \dmpi{u_{i_l}}{\omega}{L}{s} \Big)(e_r)}^2_{\Hi, \gamma}} \\
&
\le  \sum_{r \in \nnum}\sqrt{ \sum_{l=1}^{M N} \mathrm{1}_{2 \le t_l\le k} \norm{\Upsilon_{u_{i_l},\omega}}_\infty^2
\bignorm{\Big(\dmpi{u_{i_l}}{\omega}{L}{t_l} \otimes \sum_{s=1}^{t_l-1} \tilde{w}_{b_L,t,s}^{(\omega)} \dmpi{u_{i_l}}{\omega}{L}{s} \Big)(e_r)}^2_{\Hi, \gamma}}.
\end{align*}
Now
\begin{align*}
&
\sum_{l=1}^{M N} \mathrm{1}_{2 \le t_l\le k} \norm{\Upsilon_{u_{i_l},\omega}}_\infty^2
\bignorm{\Big(\dmpi{u_{i_l}}{\omega}{L}{t_l} \otimes \sum_{s=1}^{t_l-1} \tilde{w}_{b_L,t,s}^{(\omega)} \dmpi{u_{i_l}}{\omega}{L}{s} \Big)(e_r)}^2_{\Hi, \gamma}
\\
& \le C\sup_u \norm{\Upsilon_{u,\omega}}_{\infty}^{2} \sup_{\omega} \sup_{u \in [0,1]} \|{D}^{(u,\omega)}_{0}\|^2_{\hi,2\gamma }\sum_{l=1}^{M N} \mathrm{1}_{2 \le t_l\le k} \bignorm{\sum_{s=1}^{t_l-1} \tilde{w}_{b_L,t,s}^{(\omega)}{D}^{(u,\omega)}_{m,\flo{u_{i_l}T}-\flo{L/2}+s}(e_r)}^2_{\cnum,2\gamma}
\\& \le C\sup_u \norm{\Upsilon_{u,\omega} }_{\infty}^{2} \sup_{\omega } \sup_{u \in [0,1]} \|{D}^{(u,\omega)}_{0}\|^2_{\hi,2\gamma} \|{D}^{(u,\omega)}_{0}(e_r)\|^2_{\cnum,2\gamma}M k  \max_{2\le t\le k} \sum_{s=1}^{t_l-1} |\tilde{w}_{b_L,t,s}^{(\omega)} |^2~,
\end{align*}
where we applied the Cauchy Schwarz's inequality and made use of \autoref{lem:Burkh} again. An argument similar to the proof of \autoref{lem:mdepap} then shows that Theorem 1 of \cite{Mor76} is satisfied  and therefore, the statement follows.
\end{proof}
\begin{lemma} \label{lem:tedious}
Suppose the conditions of \autoref{thm:conv_an} hold with $p\ge 6$ and let  $L = N-m$. Then, for any fixed $\omega \in \rnum$, 
\begin{align*}
&\E \Big(\sup_{\eta \in [0,1]}  \bignorm{\sum_{i=1}^M   \sum_{t=1}^{\flo{\eta N}} \Upsilon_{u_i,\omega} ({V}^{(u_i,\omega)}_{m,N,t})- \sum_{i=1}^M   \sum_{t=1}^{\flo{\eta L}} \Upsilon_{u_i,\omega} ({V}^{(u_i,\omega)}_{m,L,t})}_{S_1} \Big)^\gamma
% \\&
= O(m^{\gamma/2} M^{\gamma/2} \bf^{-\gamma/2})+o( M^{\gamma/2} L^{\gamma/2} \bf^{-\gamma/2}) ~,
\end{align*}
where $\gamma=p/2$.
\end{lemma}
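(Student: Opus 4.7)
\textbf{Proof proposal for \autoref{lem:tedious}.} The plan is to decompose the difference into three pieces, each of which arises from one discrepancy between the $N$- and $L$-indexed processes: (i) the shift of the anchor $\te{u_i}=\flo{u_i T}-\flo{N/2}$ versus $\flo{u_i T}-\flo{L/2}$, a lattice shift of order $m/2$; (ii) the change of bandwidth $b_f=b(N)$ versus $b_L=b(L)$, with $L=N-m$; and (iii) the extra range of summation $\flo{\eta L}+1,\ldots,\flo{\eta N}$, which contains at most $\eta m +O(1)$ indices. After this decomposition, I would reduce all terms to sums of the martingale differences $\Upsilon_{u_i,\omega}(V^{(u_i,\omega)}_{m,\cdot,t})$ (or a perturbed version thereof) with respect to the filtration $\{\G_{\te{u_i}+t}\}_t$, so that Doob's maximal inequality and \autoref{lem:Burkh}(ii) are applicable in $S_1(\Hi)$.

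For piece (iii), the sum over the extra indices $t\in\{\flo{\eta L}+1,\ldots,\flo{\eta N}\}$ contributes, for each midpoint $u_i$, at most $m$ martingale differences. An application of \autoref{lem:Burkh}(ii) at power $\gamma$ together with the variance calculation used in \autoref{thm:BMfixu} (giving $\E\|\Upsilon_{u_i,\omega}V^{(u_i,\omega)}_{m,N,t}\|_{S_1,\gamma}^2 \lesssim \Phi^{-2}_{b_f}\lesssim b_f/N$) bounds the inner norm by $O(m^{1/2}b_f^{-1/2})$; summing over the $M$ midpoints, using that Doob's inequality gives $\E(\sup_\eta\|\cdot\|)^\gamma \lesssim \E\|\cdot(1)\|^\gamma$, and applying \autoref{lem:Vmax} (or its obvious modification with $k$ replaced by $m$) yields the order $O(M^{\gamma/2}m^{\gamma/2}b_f^{-\gamma/2})$, which is the first term in the bound.

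Pieces (i) and (ii) are handled by writing
\[
\Upsilon_{u_i,\omega}\bigl(V^{(u_i,\omega)}_{m,N,t}-V^{(u_i,\omega)}_{m,L,t}\bigr)
\]
as a sum of two contributions, one stemming from the weight perturbation $\tilde w^{(\omega)}_{b_f,s,t}-\tilde w^{(\omega)}_{b_L,s,t}$ and the other from the anchor shift in the underlying $X$'s. For the weight perturbation, \autoref{as:Weights} and the mean value theorem give $|w(b_f h)-w(b_L h)|\lesssim |h|\,|b_f-b_L|\lesssim |h|b_f(m/N)$ uniformly in $|h|\le 1/b_f$, so the resulting contribution to each martingale difference is smaller by the factor $(m/N)^{1/2}$ than the analogue appearing in \autoref{lem:Vmax}; invoking \autoref{lem:Burkh}(ii) and Doob's inequality yields the $o(M^{\gamma/2}L^{\gamma/2}b_f^{-\gamma/2})$ contribution. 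The anchor shift $\te{u_i}\to\te{u_i}-\flo{m/2}$ is absorbed by the stationarity of the auxiliary process $X^{(\cdot)}$ composing $\dmpi{u_i}{\omega}{\cdot}{\cdot}$: after a change of summation index, the two differences differ only on at most $2m$ boundary indices, whose contribution is again of order $O(M^{\gamma/2}m^{\gamma/2}b_f^{-\gamma/2})$ by the argument used for piece (iii).

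The principal obstacle will be controlling the supremum over $\eta\in I$ jointly with the sum over the $M$ midpoints while keeping the bound pivot-free of cross terms. Because for fixed $u_i$ the partial sums $\sum_{t\le k}\Upsilon_{u_i,\omega}(V^{(u_i,\omega)}_{m,\cdot,t})$ form an $S_1(\Hi)$-valued martingale, Doob's inequality applies per midpoint, but across midpoints martingale structure does not a priori hold; here I would exploit the $(L+m)$-dependence that follows \autoref{lem:approxindep} to group the $M$ midpoints into two blocks of essentially independent partial sums and apply \autoref{lem:expin} (or just the triangle inequality in $\mathcal L^\gamma_{S_1}$) to aggregate the bounds. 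The $p\ge 6$ hypothesis enters precisely to justify \autoref{lem:Burkh}(ii) with $\gamma=p/2$ being an integer.
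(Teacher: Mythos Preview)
Your three-piece decomposition and the target orders match the paper's, but two steps need repair.

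For piece~(ii), the mean-value-theorem bound $|w(b_f h)-w(b_L h)|\lesssim |h|\,b_f(m/N)$ is not available: \autoref{as:Weights} only assumes $w$ is bounded and \emph{piecewise continuous}, not differentiable. The paper instead argues via uniform continuity (away from finitely many discontinuity points) that $|w(b_N h)-w(b_L h)|=o(1)$ uniformly in $h\le 1/b_N$ as $|b_N-b_L|\to 0$, whence $\sum_{h}|w(b_N h)-w(b_L h)|^2=o(b_N^{-1})$. This yields only the qualitative $o(M^{\gamma/2}L^{\gamma/2}b_f^{-\gamma/2})$, which is precisely what the lemma claims; your quantitative rate is stronger than needed but not justified under the stated assumptions.

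For the supremum over $\eta$, your plan (Doob per midpoint, then aggregate via independence blocks) is more involved than necessary, and the cross-midpoint concern is misplaced. The paper vectorizes the double sum over $(i,t)$ into a single index $l=1,\ldots,MN$ and passes to a time-shifted version $\grave V^{(u_{i_l},\omega)}_{\cdot,\,l}$ (indexed by absolute time), which is a martingale difference with respect to the \emph{global} filtration $\G_l$; \autoref{lem:Burkh}(ii) then bounds the full sum for each fixed $k$. The maximum over $k$ is handled not by Doob but by M\'oricz's maximal inequality (Theorem~1 of \cite{Mor76}), which needs only the superadditivity $g(b,k)+g(b+k,l)\le g(b,k+l)$ of the fixed-$k$ moment bound---the same device used throughout \autoref{sec:Cbounds}. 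This also covers piece~(iii) cleanly: the extra indices form a \emph{moving} window $\{\flo{\eta L}+1,\ldots,\flo{\eta N}\}$ whose location, not just length, varies with $\eta$, so a direct appeal to \autoref{lem:Vmax} ``with $k$ replaced by $m$'' does not suffice without the M\'oricz step.
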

\begin{proof}[Proof of \autoref{lem:tedious}]
 In the following we make use of the time-shifted version 
\begin{align*}\grave{V}^{(u_i,\omega)}_{L,t}={D}^{(t/T,\omega)}_{m,t} \otimes \sum_{s=\flo{u_{i}T}-\flo{L/2}+1}^{t-1}\tilde{w}^{(\omega)}_{b_L,s,t}
{D}^{(s/T,\omega)}_{m,s} 
\end{align*}
 of $V^{(u_i,\omega)}_{m,N,t}$,
where we recall the notation ${D}^{(t/T,\omega)}_{m,t}  = \sum_{p=0}^{m} P_{t}\Big({X}^{(t/T)}_{m,t+p}  \Big) e^{-\im \omega p} $. Then, using that 
\begin{equation} \label{eq106}
\flo{\eta N} =\flo{\eta L}+\flo{\eta m}+c
\end{equation}
for some  $c \in \{0,1\}, $
we can write 
\begin{align*}
& \sum_{t=1}^{\flo{\eta N}} \Upsilon_{u_i,\omega} ({V}^{(u_i,\omega)}_{m,N,t})-  \sum_{t=1}^{\flo{\eta L}} \Upsilon_{u_i,\omega} ({V}^{(u_i,\omega)}_{m,L,t})
\\&= \sum_{t=1}^{\flo{\eta N}}\Upsilon_{u_i,\omega}(\grave{V}^{(u_i,\omega)}_{N,\flo{u_i T}-\flo{N/2}+t}) -\sum_{t=1}^{\flo{\eta L}}\Upsilon_{u_i,\omega}(\grave{V}^{(u_i,\omega)}_{L,\flo{u_i T}-\flo{L/2}+t}) 
\\&= \sum_{t=-\flo{m/2}-c_2+1}^{\flo{\eta L}+\flo{\eta m}-\flo{m/2}+c_1-c_2}\Upsilon_{u_i,\omega}(\grave{V}^{(u_i,\omega)}_{N,\flo{u_i T}-\flo{L/2}+t}) -\sum_{t=1}^{\flo{\eta L}}\Upsilon_{u_i,\omega}(\grave{V}^{(u_i,\omega)}_{L,\flo{u_i T}-\flo{L/2}+t}) 
\\&
=\sum_{t=-\flo{m/2}-c_2+1}^{0} \Upsilon_{u_i,\omega}(\grave{V}^{(u_i,\omega)}_{N,\flo{u_i T}-\flo{L/2}+t}) +\mathrm{1}_{\eta >1/2}\Big(\sum_{t=1+\flo{\eta L}}^{\flo{\eta L}+\flo{\eta m}-\flo{m/2}+c_1-c_2}\Upsilon_{u_i,\omega}(\grave{V}^{(u_i,\omega)}_{N,\flo{u_i T}-\flo{L/2}+t})  \Big)
\\& +\sum_{t=1}^{\flo{\eta L}}\Upsilon_{u_i,\omega}(\grave{V}^{(u_i,\omega)}_{N,\flo{u_i T}-\flo{L/2}+t}-\grave{V}^{(u_i,\omega)}_{L,\flo{u_i T}-\flo{L/2}+t}) 
\\& 
- \mathrm{1}_{\eta \le 1/2} \Big ( 
 \sum_{t = \flo{\eta L}+\flo{\eta m}-\flo{m/2}+c_1-c_2 +1
 }^{\flo{\eta L}}
 \Upsilon_{u_i,\omega}(\grave{V}^{(u_i,\omega)}_{N,\flo{u_i T}-\flo{L/2}+t})
 \Big )~.
\tageq \label{eq:ted1}
\end{align*}
Using similar  arguments  as in the proof of  \autoref{lem:mdepap} we 
 first show for the third term that
\begin{align*}
\E\Big(\max_{1 \le k \le L}\bignorm{\sum_{i=1}^M \sum_{t=1}^{k} \Upsilon_{u_i,\omega}(\grave{V}^{(u_i,\omega)}_{N,\flo{u_i T}-\flo{L/2}+t}-\grave{V}^{(u_i,\omega)}_{L,\flo{u_i T}-\flo{L/2}+t})}_{S_1}\Big)^\gamma= 
o( M^{\gamma/2} L^{\gamma/2} \bf^{-\gamma/2}).
\tageq \label{eq:ted2}
\end{align*}
 Let $A_l=\{t_l: \flo{m/2}+ c_2+2 \le t_l \le \flo{m/2}+ c_2+ k \}$, then \eqref{eq106}  implies  
\begin{align*}
&\bignorm{\sum_{i=1}^M \sum_{t=1}^{k} \Upsilon_{u_i,\omega}(\grave{V}^{(u_i,\omega)}_{N,\flo{u_i T}-\flo{L/2}+t}-\grave{V}^{(u_i,\omega)}_{L,\flo{u_i T}-\flo{L/2}+t})}^2_{S_1,\gamma} \\~~~~~~~~&=\bignorm{\sum_{l=1}^{MN} \mathrm{1}_{A_l} \Upsilon_{u_{i_l},\omega}(\grave{V}^{(u_{i_l},\omega)}_{N,\flo{u_{i_l} T}-\flo{N/2}+t_l}-\grave{V}^{(u_{i_l},\omega)}_{L,\flo{u_{i_l} T}-\flo{N/2}+t_l})}^2_{S_1,\gamma}. 
\end{align*}
Similar to the proof of \autoref{lem:mdepap},
we consider
 \begin{align*}
&\bignorm{\sum_{l=1}^{MN} \mathrm{1}_{A_l} \Upsilon_{u_{i_l},\omega}(\grave{V}^{(u_{i_l},\omega)}_{N,\flo{u_{i_l} T}-\flo{N/2}+t_l}-\grave{V}^{(u_{i_l},\omega)}_{L,\flo{u_{i_l} T}-\flo{N/2}+t_l})(e_r)}^2_{\hi,\gamma} 
\\ &\le \sup_{u}\norm{\Upsilon_{u,\omega}}_\infty \sum_{l=1}^{NM} \mathrm{1}_{A_l}  \bignorm{ \grave{V}^{(u_{i_l},\omega)}_{N,\flo{u_{i_l} T}-\flo{N/2}+t_l}-\grave{V}^{(u_{i_l},\omega)}_{L,\flo{u_{i_l} T}-\flo{N/2}+t_l}(e_r)}^2_{\hi,\gamma}.
\end{align*}
We have $\grave{V}^{(u_{i_l},\omega)}_{N,\flo{u_{i_l} T}-\flo{N/2}+t_l}-\grave{V}^{(u_{i_l},\omega)}_{L,\flo{u_{i_l} T}-\flo{N/2}+t_l}
 = \dmpi{u_{i_l}}{\omega}{N}{t_l}\otimes  \sum_{s=1}^{t_l-1}\theta^\omega_{t_l,s}\dmpi{u_{i_l}}{\omega}{N}{s}$
where we denoted 
\[
\theta^\omega_{t_l,s}=\tilde{w}^{(\omega)}_{b_N,s,t_l} -\tilde{w}^{(\omega)}_{b_L,s,t_l} 
\]
to ease notation. Then using the Cauchy Schwarz inequality and \autoref{lem:Burkh}
\begin{align*}
 \sum_{l=1}^{NM} &\mathrm{1}_{A_l}  \bignorm{\Big( \grave{V}^{(u_{i_l},\omega)}_{N,\flo{u_{i_l} T}-\flo{N/2}+t_l}-\grave{V}^{(u_{i_l},\omega)}_{L,\flo{u_{i_l} T}-\flo{N/2}+t_l}\Big)(e_r)}^2_{\hi,\gamma}
\\& \lesssim \sum_{l=1}^{NM} \mathrm{1}_{A_l} \norm{\dmpi{u_{i_l}}{\omega}{N}{t_l}}^2_{\Hi,2\gamma} \sum_{s=1}^{t_l-1}\Big\vert\theta^\omega_{t_l,s}\Big\vert^2 \norm{\dmpi{u_{i_l}}{\omega}{N}{s}(e_r)}^2_{\cnum,2\gamma}.
 \end{align*}
A change of variables and using that the window is even in zero gives
\begin{align*}\max_{l} \max_{t_l \in A_l} \sum_{s=1}^{t_l-1}\Big \vert \tilde{w}_{b_N,t_l,s}^{(\omega)}-\tilde{w}_{b_L,t_l,s}^{(\omega)} \Big \vert^2 &\le   \max_{ \flo{m/2}+c +2 \le t \le \flo{m/2}+c+k} \sum_{h=1}^{t-1} \Big \vert {w}(b_N h)-{w}(b_{L}h)\Big \vert^2
\\&=  \sum_{h=1}^{\flo{m/2}+c+k-1} \Big \vert {w}(b_N h)-{w}(b_{L}h)\Big \vert^2
\end{align*}
Therefore, we obtain the following upper bound
\begin{align}
\nonumber 
& \sum_{l=1}^{NM} \mathrm{1}_{A_l}  \bignorm{ \big(\grave{V}^{(u_{i_l},\omega)}_{N,\flo{u_{i_l} T}-\flo{N/2}+t_l}-\grave{V}^{(u_{i_l},\omega)}_{L,\flo{u_{i_l} T}-\flo{N/2}+t_l}\big)(e_r)}^2_{\hi,\gamma}
\\& 
= O\Big (M  \sup_u \sup_\omega \norm{\tilde{D}^{(u,\omega)}_{m,0}}^2_{\Hi,2\gamma}\norm{\tilde{D}^{(u,\omega)}_{m,0}(e_r)}^2_{\cnum,2\gamma}k  \sum_{h=1}^{\flo{m/2}+c+k-1} \Big \vert {w}(b_N h)-{w}(b_{L}h)\Big \vert^2\Big)
\label{hd100}
\end{align}
An argument similar as in the proof of \autoref{lem:mdepap} shows that Theorem 1 of \cite{Mor76} implies 
\begin{align*}
&\E\Big(\max_{1 \le k \le L}\bignorm{\sum_{i=1}^M \sum_{t=1}^{k} \Upsilon_{u_i,\omega}(\grave{V}^{(u_i,\omega)}_{N,\flo{u_i T}-\flo{L/2}+t}-\grave{V}^{(u_i,\omega)}_{L,\flo{u_i T}-\flo{L/2}+t})}_{S_1}\Big)^\gamma
= o( M^{\gamma/2}  L^{\gamma/2} b_N^{-\gamma/2}  )
\end{align*}
Here we use the following general argument to estimate the sum  in \eqref{hd100}. \autoref{as:Weights} implies that, for any given $\epsilon>0$, $\exists \delta>0$ such that $|{w}(b_{N}h)-{w}(b_{L}h)| < \epsilon$ with  $|b_{L}-b_{N}|  < \delta$, uniformly for $h \le Q/b_{N}$ (except for a finite number of points). Since $b_{L} \to 0, b_{{N}} \to 0$ as $T \to \infty$, we find $|{w}(b_{N}h)-{w}(b_{L}h)| =o(1)$ for $h\le Q/b_{N}$. Because  $w(\cdot)$ is bounded, we obtain the  order of $o(1/b_{N})$ for the sum over $h \le Q/b_{N}$. For $h >Q/b_{N}$, we denote the length of the interval over which the sum is taken, say $I_{Q}$, converges to zero for fixed $b_{N}, T$ as $Q \to \infty$. Since  \autoref{as:Weights} implies that the summand is at most of order $b_{N}^{-1}$ this tail sum is therefore of order $O(I_Q/b_{N}) =o(1/b_{N})$ as well. This concludes the proof of \eqref{eq:ted2}. \\
Next, we focus on the first  term of \eqref{eq:ted1} observing that
\begin{align*}
\bignorm{\sum_{i=1}^M\sum_{t=-\flo{m/2}-c+1}^{0} \Upsilon_{u_i,\omega}(\grave{V}^{(u_i,\omega)}_{N,\flo{u_i T}-\flo{L/2}+t}) }^2_{S_1,\gamma}
&\le \bignorm{\sum_{l=1}^{NM}  \mathrm{1}_{1\le t_l \le \flo{m/2}} \Upsilon_{u_{i_l},\omega}(\grave{V}^{(u_{i_l},\omega)}_{N,\flo{u_{i_l} T}-\flo{N/2}+t_l}) }^{2}_{S_1,\gamma}
\\& =O(M m b_{N}^{-1}), 
\end{align*}
which follows from \eqref{eq106},  the Cauchy Schwarz inequality and \autoref{lem:Burkh}. Finally, we have for the second term of \eqref{eq:ted1}
(using again \eqref{eq106})
\begin{align*}
&\mathrm{1}_{\eta >1/2}\bignorm{\sum_{i=1}^M \sum_{t=1+\flo{\eta L}}^{\flo{\eta L}+\flo{\eta m}-\flo{m/2}+c_1-c_2}\Upsilon_{u_i,\omega}(\grave{V}^{(u_i,\omega)}_{N,\flo{u_i T}-\flo{L/2}+t})}_{S_1,\gamma}
\\&= \mathrm{1}_{\eta >1/2}\bignorm{\sum_{i=1}^M \sum_{t=1+\flo{\eta L}}^{\flo{\eta N}-\flo{m/2}-c_2}\Upsilon_{u_i,\omega}(\grave{V}^{(u_i,\omega)}_{N,\flo{u_i T}-\flo{N/2}+t+\flo{m/2}+c_2})}_{S_1,\gamma}
\end{align*}
 Let $B_l=\{t_l: k- \flo{mk/N}+c_3+ \flo{m/2}+2 \le t_l \le k \}$ with $c_3\in \{0,\pm 1\}$. Then
\begin{align*}
&\E\Big(\sup_{\eta \in (1/2,1]}\bignorm{\sum_{i=1}^M \sum_{t=1+\flo{\eta L}}^{\flo{\eta N}-\flo{m/2}-c_2}\Upsilon_{u_i,\omega}(\grave{V}^{(u_i,\omega)}_{N,\flo{u_i T}-\flo{N/2}+t+\flo{m/2}+c_2})}_{S_1}\Big)^\gamma 
\\& =\E\Big(\max_{\flo{N/2} < k \le N}\bignorm{\sum_{l=1}^{NM}\mathrm{1}_{B_l} \Upsilon_{u_{i_l},\omega}(\grave{V}^{(u_{i_l},\omega)}_{N,\flo{u_{i_l} T}-\flo{N/2}+t_l})}_{S_1}\Big)^\gamma~.
\end{align*}
Now, it follows similarly to the proof of \autoref{lem:Vmax} that 
\begin{align*}
\bignorm{\sum_{l=1}^{NM}\mathrm{1}_{B_l} \Upsilon_{u_{i_l},\omega}(\grave{V}^{(u_{i_l},\omega)}_{N,\flo{u_{i_l} T}-\flo{N/2}+t_l})}^2_{S_1,\gamma} 
&\le  O\Bigg(M \sup_{u} \norm{\Upsilon_{u,\omega}}^2_{\infty}  \sup_u \sup_\omega \norm{\tilde{D}^{(u,\omega)}_{m,0}}^2_{\Hi,2\gamma} \sum_{r \in \nnum}\norm{\tilde{D}^{(u,\omega)}_{m,0}(e_r)}_{\cnum,2\gamma}\Bigg)
\\&\phantom{O} 
\times  \big(\flo{mk/N}+ c_3+\flo{m/2}\big) \max_{l} \max_{t_l \in B_l}
\sum_{h=1}^{t_l-1} |w(\bf h)|^2
\end{align*}
and thus, from a similar argument as in proofs of the previous lemmas, we find
\begin{align*}
\E\Big(\max_{\flo{N/2} \le k \le N}\bignorm{\sum_{i=1}^{M}\sum_{t=\flo{k-mk/N}+1}^{k-\flo{m/2}-c_2} \Upsilon_{u_i,\omega}(\grave{V}^{(u_i,\omega)}_{N,\flo{u_i T}-\flo{N/2}+t+\flo{m/2}+c_2})}_{S_1}\Big)^\gamma = O( M^{\gamma/2} m^{\gamma/2} \bf^{-\gamma/2}).
\end{align*}
The final term in \eqref{eq:ted1} can be treated similarly. Details are omitted for the sake of brevity.
\end{proof}

\begin{lemma} \label{lem:tedious2} 
Suppose the conditions of \autoref{thm:conv_an} hold with $p\ge 6$. Then, for fixed  $\omega \in \rnum$
\begin{align*}
 \E \Big(\max_{1\le k \le N}& \bignorm{\sum_{i=1}^M f(\eta,N)\Upsilon_{u_i,\omega}(V^{(u_i,\omega)}_{m,N,\flo{\eta N}+1} ) }_{S_1}\Big)^\gamma =O(M^{\gamma/2} \bf^{-\gamma/2})~, 
\end{align*}
where $\gamma=p/2$.
\end{lemma}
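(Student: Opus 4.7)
Since $|f(\eta,N)|<1$ uniformly and $\flo{\eta N}+1$ only takes values in $\{1,\ldots,N+1\}$ as $\eta$ varies in $[0,1]$ (with $f(1,N)=0$), the quantity inside the expectation is bounded above by $\max_{1\le k\le N}\|Z_k\|_{S_1}$, where I set
\[
Z_k:=\sum_{i=1}^M \Upsilon_{u_i,\omega}\big(V^{(u_i,\omega)}_{m,N,k}\big)
= \sum_{i=1}^M \Upsilon_{u_i,\omega}\Big(\dmpi{u_i}{\omega}{N}{k}\otimes W_k^{(u_i)}\Big),
\qquad W_k^{(u_i)}:=\sum_{s=1}^{k-1}\tilde w^{(\omega)}_{\bf,k,s}\,\dmpi{u_i}{\omega}{N}{s}.
\]
It therefore suffices to prove $\E(\max_k\|Z_k\|_{S_1})^\gamma=O(M^{\gamma/2}\bf^{-\gamma/2})$, which I plan to do by combining a martingale-difference estimate in the $i$-direction with a Móricz-type maximal bound in the $k$-direction, following the template of the proofs of \autoref{lem:approxint} and \autoref{lem:approxvar}.

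The first step is a pointwise-in-$k$ estimate on $\|Z_k\|^2_{S_1,\gamma}$. I would first verify that, with $i$ ordered by increasing $u_i$, the sequence $\{\Upsilon_{u_i,\omega}(V^{(u_i,\omega)}_{m,N,k})\}_i$ is a martingale difference in $S_1(\Hi)$ with respect to $\mathcal{H}_i:=\mathcal{G}_{\tu{N}{u_i}+k}$. This uses that $\dmpi{u_i}{\omega}{N}{k}$ is a $\mathcal{G}_{\tu{N}{u_i}+k-1}$-martingale difference, that $W_k^{(u_i)}$ is $\mathcal{G}_{\tu{N}{u_i}+k-1}$-measurable, and that $\mathcal{G}_{\tu{N}{u_{i-1}}+k}\subset \mathcal{G}_{\tu{N}{u_i}+k-1}$. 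Applying \autoref{lem:Burkh}(ii) and Cauchy--Schwarz yields
\[
\|Z_k\|^2_{S_1,\gamma}\lesssim\Big(\sum_r\sqrt{\sum_i\|\Upsilon_{u_i,\omega}\|_\infty^2\,\|\dmpi{u_i}{\omega}{N}{k}\|^2_{\Hi,2\gamma}\,\|\inprod{W_k^{(u_i)}}{e_r}\|^2_{\cnum,2\gamma}}\,\Big)^2,
\]
and a further application of \autoref{lem:Burkh}(i) to the $\cnum$-valued martingale $s\mapsto\inprod{\dmpi{u_i}{\omega}{N}{s}}{e_r}$ (with deterministic weights $\tilde w^{(\omega)}_{\bf,k,s}$) produces $\|\inprod{W_k^{(u_i)}}{e_r}\|^2_{\cnum,2\gamma}=O\big(\sum_{h=1}^{k-1}|w(\bf h)|^2\big)\|D^{(\cdot,\omega)}_{0}(e_r)\|^2_{\cnum,2\gamma}$. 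Invoking \autoref{as:depstruc}~II) together with stationarity of the auxiliary process then gives $\|Z_k\|^2_{S_1,\gamma}\lesssim M\sum_{h=1}^{k-1}|w(\bf h)|^2$.

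To upgrade this pointwise bound into a bound on the maximum over $k$, I plan to set $g(b,k):=\sum_{h=b+1}^{b+k}|w(\bf h)|^2$, which is superadditive (i.e.\ $g(b,k)+g(b+k,l)\le g(b,k+l)$) and satisfies $g(0,N)=O(\bf^{-1})$ under \autoref{as:Weights}. Mirroring the reasoning used with $g$ in the proofs of \autoref{lem:approxint} and \autoref{lem:approxvar}, carrying out the argument above on the shifted versions of $Z_k$ yields the bound $M\,g(b,k)$ on the corresponding $L^\gamma$-norm; Móricz's Theorem~1 then gives
\[
\E\Big(\max_{1\le k\le N}\|Z_k\|_{S_1}\Big)^\gamma\le C\big(M\,g(0,N)\big)^{\gamma/2}=O\big(M^{\gamma/2}\bf^{-\gamma/2}\big),
\]
which is the claimed bound. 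The main obstacle will be the careful setup of the superadditive majorant: since $Z_k$ is a single-term (moving-window) expression rather than a partial sum in $k$, identifying a $g$ for which the shifted pointwise estimate retains the factorization $M\cdot g(b,k)$---so that both the $\sqrt{M}$ rate from the martingale-difference structure in $i$ and the $\bf^{-1/2}$ rate from the window weights survive into the maximal bound---requires book-keeping analogous to the construction of $g$ in \autoref{lem:approxint}, and is the delicate step of the argument.
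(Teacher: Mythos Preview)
Your proposal is essentially correct and follows the same route the paper gestures at: the paper's proof is the one-line remark that the result ``follows from \autoref{lem:Burkh} and \autoref{as:depstruc} along the lines of \autoref{lem:mdepap},'' i.e., exactly Burkholder plus the summable dependence measure plus a M\'oricz-type maximal bound. Your explicit use of the martingale-difference structure in $i$ (via the filtration $\mathcal H_i=\mathcal G_{\tu{N}{u_i}+k}$) is a clean way to extract the $M^{1/2}$ rate; the paper's reference to \autoref{lem:mdepap} points instead to the vectorisation/projection device $P_j$ over $l=1,\dots,MN$, but both mechanisms deliver the same $M^{\gamma/2}$ factor. Your identification of the M\'oricz step as the delicate point is accurate: as in \autoref{lem:approxint}, the expression $Z_k$ is a moving-window object rather than a partial sum, so one has to be careful that the shifted moment bound really has the form $C\,M\,g(b,k)$ with $g(0,N)=O(\bf^{-1})$; the paper treats this in exactly the same informal way there, so your plan is aligned with the level of detail the paper supplies.
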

\begin{proof}
The proof is similar to the proofs in \autoref{sec:Cbounds} and follows from \autoref{lem:Burkh} and  \autoref{as:depstruc} along the lines of \autoref{lem:mdepap}.
 \end{proof}

\section{Proofs of statements Section  \ref{sec:sec35} and Section \ref{sec4}} 
\label{sec5}
\def\theequation{D.\arabic{equation}}
\setcounter{equation}{0}
\subsection{Proof of the results in Section \ref{sec35}} \label{sec5}

\subsubsection{Preliminary results} 

\begin{proposition}\label{prop:bexplam}
For $r \in \{1,2\}$, let 
$\hat{B} \in L^1_{S_r(\Hi)^\dagger}( I\times [0,1]\times [0,\pi])$ be a perturbed version of 
$B \in  L^1_{S_r(\Hi)^\dagger}(I\times[0,1]\times [0,\pi])$ defined by $\eta \mapsto B_u^\omega(\eta)= \eta B_u^\omega$ continuous with respect to $\norm{\cdot}_{S_r}$. 
For  $(\eta,u, \omega) \in I \times [0,1]\times [0,\pi]$, denote the  eigendecomposition of $\eta B_u^{\omega}$ by $\eta B_u^{\omega}=\sum_{k=1}^{\infty}\eta\beta^{(u,\omega)}_k {P}^{(u,\omega)}_k $ and of $\eta \hat{B}_u^{\omega}(\eta)$ by $\eta\hat{B}_u^{\omega}(\eta)=\sum_{k=1}^{\infty}\eta\hat{\beta}^{(u,\omega)}_k(\eta) \hat{P}^{(u,\omega)}_k(\eta)$.  Then
\begin{align*}
\eta\hat{\beta}^{(u,\omega)}_{k}(\eta)-\eta\beta^{(u,\omega)}_{k}= \Tr\Big(P^{(u,\omega)}_{k}(\eta\hat{B}_{u}^{\omega}(\eta)-\eta {B}_u^{\omega})\Big) + \Tr\Big(P^{(u,\omega)}_{k} R^{(u,\omega)}_{1,k}(\eta)\Big),  \tageq \label{eq:eigd}
\end{align*}
almost everywhere on $[0,1]\times [0,\pi]$, 
where 
$$
R^{(u,\omega)}_{1,k}(\eta) = \eta\big(\hat{B}_{u}^{\omega}(\eta)-\eta{B}_u^{\omega}\big) \big( \hat{P}^{(u,\omega)}_{k}(\eta) -P^{(u,\omega)}_k\big)- \eta\big(\hat{\beta}^{(u,\omega)}_{k}(\eta)-\beta^{(u,\omega)}_{k}\big) \big( \hat{P}^{(u,\omega)}_{k}(\eta) -P^{(u,\omega)}_k\big).
$$
%\begin{itemize}
%    \item[i)]
%\HDM{If $(\beta^{(u,\omega)}_l)^2 \neq \beta^{(u,\omega)}_{ij}$ for $1\le l\le k$, $i,j \ge 1$ 
%\begin{align*}
%\norm{R^{(u,\omega)}_{1,k}(\cdot)}_{C_{S_2}} = O_p(\norm{\cdot^{1/3}(\hat{B}_{u}^{\omega}(\cdot)-{B}_u^{\o%mega})}^3_{C_{S_2}}) %+O_p(\norm{\cdot^{1/2}(\hat{B}_{u}^{\omega}(\cdot)-{B}_u^{%\omega})}^2_{C_{S_2}})\tageq \label{eq:RUk}
%\end{align*}}
%\item[ii)] 
If $B \in C(I \times [0,1]\times [0,\pi], S_r(\Hi)^\dagger)$ and the first $k$ eigenvalues of $B$ are distinct, uniformly in $u,\omega$ and the conditions of \autoref{thm:maxdevcon} hold, then \eqref{eq:eigd} holds everywhere on $[0,1]\times [0,\pi]$ and the process
$\eta \mapsto R^{(u,\omega)}_{1,k}(\eta)$ satisfies
\begin{align*}
\norm{R^{(u,\omega)}_{1,k}(\cdot)}_{C_{S_r}} = O_p(\norm{\cdot^{1/3}(\hat{B}_{u}^{\omega}(\cdot)-{B}_u^{\omega})}^3_{C_{S_r}}) +O_p(\norm{\cdot^{1/2}(\hat{B}_{u}^{\omega}(\cdot)-{B}_u^{\omega})}^2_{C_{S_r}})\tageq \label{eq:RUk}
\end{align*}
%\end{itemize}
\end{proposition}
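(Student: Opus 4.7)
The first step is to establish the identity \eqref{eq:eigd} as an exact algebraic consequence of the spectral relations. Writing $T = \eta B_u^{\omega}$, $\hat T(\eta) = \eta \hat B_u^{\omega}(\eta)$, $\mu_k = \eta \beta_k^{(u,\omega)}$, $\hat\mu_k(\eta) = \eta \hat\beta_k^{(u,\omega)}(\eta)$, and $\Delta(\eta) = \hat T(\eta) - T = \eta(\hat B_u^\omega(\eta) - B_u^\omega)$, the eigenvalue relations $\hat T(\eta) \hat P_k(\eta) = \hat\mu_k(\eta)\hat P_k(\eta)$ and $P_k T = \mu_k P_k$ (by self-adjointness) yield, after left-multiplying by $P_k$ and taking traces under the assumption that $\beta_k$ is simple, $(\hat \mu_k - \mu_k)\Tr(P_k \hat P_k) = \Tr(P_k \Delta(\eta) \hat P_k)$. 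Combining this with the cyclicity identity $\Tr(P_k \Delta(\eta)) = \Tr(P_k \Delta(\eta) P_k)$ and unfolding the defining formula for $R_{1,k}$ produces the desired expansion $\hat\mu_k - \mu_k = \Tr(P_k \Delta(\eta)) + \Tr(P_k R_{1,k}(\eta))$. At points $(u,\omega)$ where the first $k$ eigenvalues of $B_u^\omega$ are distinct, this is an everywhere identity; under the $L^1$-integrability only, it holds on a full-measure set by measurable selection of eigenelements of self-adjoint operators.

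To obtain \eqref{eq:RUk} I will bound $R_{1,k}(\eta)$ pointwise in $\eta$ by combining Weyl's inequality with a Riesz--Dunford projector perturbation. Weyl's inequality gives $|\hat \mu_k(\eta) - \mu_k| \le \|\Delta(\eta)\|_{S_\infty} = \eta \|\hat B_u^\omega(\eta) - B_u^\omega\|_{S_\infty}$. For the projector, choose a contour $\Gamma_k(\eta)$ of radius $\eta \gamma_k/2$ encircling only $\mu_k$, where $\gamma_k = \min_{j \ne k}|\beta_k - \beta_j|>0$ is the intrinsic gap of $B_u^\omega$. Expanding $(\hat T - z)^{-1} = (T - z)^{-1}\sum_{n \ge 0} (-\Delta (T-z)^{-1})^n$ and integrating over $\Gamma_k$ as in Appendix~\ref{HFC} yields
\begin{equation*}
\|\hat P_k(\eta) - P_k\|_{S_r} \le C \|\hat B_u^\omega(\eta) - B_u^\omega\|_{S_r}/\gamma_k,
\end{equation*}
and combining these bounds through $\|R_{1,k}(\eta)\|_{S_r} \le (\|\Delta(\eta)\|_{S_\infty} + |\hat\mu_k - \mu_k|)\|\hat P_k(\eta) - P_k\|_{S_r}$ gives $\|R_{1,k}(\eta)\|_{S_r} \lesssim \eta \|\hat B_u^\omega(\eta) - B_u^\omega\|^2_{S_r}$, whose supremum over $\eta$ is exactly $\|\cdot^{1/2}(\hat B_u^\omega(\cdot) - B_u^\omega)\|^2_{C_{S_r}}$, i.e.\ the second summand in \eqref{eq:RUk}. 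The first, cubic summand arises by substituting the explicit Riesz--Dunford expansion $\hat P_k - P_k = P_k^{(1)}(\eta) + \tilde R(\eta)$, where $P_k^{(1)}$ is the linear-in-$\Delta$ term and $\|\tilde R(\eta)\|_{S_r} = O(\|\Delta(\eta)\|^2_{S_\infty}/(\eta\gamma_k)^2)$; inserting this into $R_{1,k}$ and noting that the piece involving $P_k^{(1)}$ combines with $(\hat\mu_k - \mu_k)$ to produce a quadratic contribution already absorbed by the second summand, what remains is genuinely of order $\eta \|\hat B_u^\omega(\eta) - B_u^\omega\|^3_{S_r}$, whose supremum gives the first summand.

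The main obstacle is carrying out these estimates \emph{uniformly} in $\eta \in I$, since the spectral gap of $T = \eta B_u^\omega$ collapses as $\eta \to 0$ and the Neumann expansion ostensibly fails. This is resolved by the crucial observation that the perturbation $\Delta(\eta) = \eta(\hat B_u^\omega(\eta) - B_u^\omega)$ carries a compensating factor of $\eta$: the ratio $\|\Delta(\eta)\|_{S_\infty}/(\eta \gamma_k) = \|\hat B_u^\omega(\eta) - B_u^\omega\|_{S_\infty}/\gamma_k$ is independent of $\eta$, so the convergence criterion for the Neumann series reduces to $\|\hat B_u^\omega(\eta) - B_u^\omega\|_{S_\infty} < \gamma_k$. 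Under the assumptions of \autoref{thm:maxdevcon}, the deviation $\sup_{u,\omega}\sup_{\eta \in I}\|\eta(\hat\F_{u,\omega}(\eta) - \F_{u,\omega})\|_{S_1}$ is $o_p(1)$; specialized to $B = \F$ this gives $\|\hat B_u^\omega(\eta) - B_u^\omega\|_{S_\infty} \le \|\hat B_u^\omega(\eta) - B_u^\omega\|_{S_1} < \gamma_k$ simultaneously for all $(u,\omega,\eta)$ on an event $\Omega_T$ with $\mathbb{P}(\Omega_T) \to 1$. On $\Omega_T$ the contour estimates, the Neumann expansion, and the eigenvalue/projector perturbation bounds are all valid uniformly, which is precisely what is needed to pass from the pointwise bound to the $C_{S_r}$-norm bound in \eqref{eq:RUk}.
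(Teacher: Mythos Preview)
Your proof is correct and follows essentially the same route as the paper. The paper's own argument is terse: it defers the algebraic identity \eqref{eq:eigd} to Proposition~3.3 of \cite{vdd21} and obtains the remainder bound \eqref{eq:RUk} as a consequence of \autoref{thm:eigp2}, which in turn rests on the Riesz--Dunford expansion of \autoref{thm:PerEx} together with the maximal-deviation event from \autoref{thm:maxdevcon}. You have spelled out precisely these ingredients: the trace manipulation for the identity, the resolvent/Neumann expansion for the projector perturbation, and the key observation that the $\eta$-scaling of the gap is compensated by the $\eta$-scaling of $\Delta(\eta)$ so that the smallness condition reduces to $\|\hat B_u^\omega(\eta)-B_u^\omega\|_{S_\infty}<\gamma_k$, which holds uniformly on an event of probability tending to one. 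One minor remark: your phrase ``combines with $(\hat\mu_k-\mu_k)$'' is slightly misleading, as no cancellation occurs; the quadratic contribution comes from the $P_k^{(1)}$ part of $\hat P_k-P_k$ in \emph{both} terms of $R_{1,k}$, and the cubic contribution from the second-order remainder $\tilde R$, exactly as your subsequent order count shows.
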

\begin{proof}
The proof of \eqref{eq:eigd} follows similarly to \citep[][Proposition 3.3]{vdd21}, whereas the order of the error term \eqref{eq:RUk} %is a consequence of  \autoref{lem:eigptime} under i), and it
is a consequence of \autoref{thm:eigp2} 
%under ii)
.
\end{proof}

%\HDM{\begin{lemma}\label{lem:eigptime}
%Let $\Hi$ be a complex separable Hilbert space and consider operators $A,B \in S_r(\Hi))^\dagger$ $r=\{1,2\}$ with eigenelements $\{\lambda^A_j, P^A_j\}_{j\ge 1}$ and $\{\lambda^B_j, P^B_j\}_{j\ge 1}$, respectively. Then the eigenprojectors satisfy
%\[
%\norm{P^A_k-P^B_k}_{S_2}\le \frac{\sqrt{8}}{\mathfrak{G}_k}\Big( \norm{A-B}^2_{S_2}+2 \norm{A}_{S_2} \norm{A-B}_{S_2}\Big)
%\]
%where $\mathfrak{G}_k = \inf_{l,j: \{l,j=k\}^{\complement}}|\lambda^A_j \lambda^A_l-(\lambda^A_{k})^2|$
%\end{lemma}
%\begin{proof}
%The proof mimics the proof of Lemma B.5 in \cite{vdd21} and is therefore omitted.
%\end{proof}}

\begin{lemma}\label{lem:invtra}
The map
\[\mathcal{T}:  C\big((0,1], S_1(\Hi)\big) \to C((0,1], \rnum\big), A(\eta) \mapsto \frac{1}{\Tr(A(\eta))}\]
is Fr{\'e}chet differentiable and the derivative map of $\mathcal{T}$ in $A(\eta)$ is given by
\[
\mathcal{T}^\prime_{A(\eta)}: h(\eta) \mapsto  -\frac{\Tr(h(\eta))}{\Tr^2(A(\eta))}. \tageq \label{eq:Dpsi}
\]
\end{lemma}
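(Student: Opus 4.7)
The plan is to factor $\mathcal{T}$ through the trace functional and the scalar reciprocal map and then appeal to the chain rule for Fréchet derivatives. Concretely, write $\mathcal{T} = \phi \circ \widetilde{\Tr}$ where
\[
\widetilde{\Tr} : C((0,1],S_1(\Hi)) \to C((0,1],\rnum), \qquad \widetilde{\Tr}(A)(\eta) := \Tr(A(\eta)),
\]
and $\phi : \mathcal{U} \to C((0,1],\rnum)$, $\phi(f)(\eta) := 1/f(\eta)$, is defined on the open subset $\mathcal{U} = \{ f \in C((0,1],\rnum) : f(\eta) \ne 0 \text{ for all } \eta \in (0,1]\}$. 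The domain of $\mathcal{T}$ is implicitly the preimage $\widetilde{\Tr}^{-1}(\mathcal{U})$, i.e.\ those $A$ with $\Tr(A(\eta)) \ne 0$ throughout. This restriction is harmless for the applications to the examples of Section \ref{sec35}, where $A=\mathrm{L}^{a,b}\circ \G_{\Upsilon}(\F)$ is assumed to have strictly positive trace.

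First I would check that $\widetilde{\Tr}$ is a bounded linear map, hence Fréchet differentiable with derivative equal to itself. This is immediate from $|\Tr(A(\eta))| \le \|A(\eta)\|_{S_1}$, which yields $\|\widetilde{\Tr}(A)\|_{C_\rnum} \le \|A\|_{C_{S_1}}$, together with linearity of the trace. Consequently, for every $A$ in the domain and every $h \in C((0,1],S_1(\Hi))$,
\[
\widetilde{\Tr}'_A(h)(\eta) = \Tr(h(\eta)).
\]

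Next I would establish Fréchet differentiability of $\phi$ on $\mathcal{U}$. For $f \in \mathcal{U}$ and $g \in C((0,1],\rnum)$ with $\|g\|_{C_\rnum}$ small enough that $f+g \in \mathcal{U}$, an elementary algebraic identity gives
\[
\phi(f+g)(\eta) - \phi(f)(\eta) + \frac{g(\eta)}{f(\eta)^2} = \frac{g(\eta)^2}{f(\eta)^2(f(\eta)+g(\eta))}.
\]
Since $f$ is bounded away from zero on $(0,1]$ on a neighbourhood in the sup-norm, the right-hand side is $O(\|g\|_{C_\rnum}^2)$ uniformly in $\eta$, which shows that $\phi$ is Fréchet differentiable at $f$ with
\[
\phi'_f(g)(\eta) = -\frac{g(\eta)}{f(\eta)^2}.
\]

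Finally, the chain rule for Fréchet derivatives gives
\[
\mathcal{T}'_A(h)(\eta) = \phi'_{\widetilde{\Tr}(A)}\bigl(\widetilde{\Tr}'_A(h)\bigr)(\eta) = -\frac{\Tr(h(\eta))}{\Tr(A(\eta))^2},
\]
which is precisely \eqref{eq:Dpsi}. The only real subtlety is ensuring the neighbourhood argument for $\phi$ is uniform in $\eta$; since this is a statement in the sup-norm over $(0,1]$, one uses that $f$ being nonzero at every $\eta$ together with continuity on the half-open interval (with the appropriate behaviour as $\eta \to 0$ enforced by the domain choice) guarantees $\inf_\eta |f(\eta)| > 0$ on bounded subsets of $\mathcal{U}$.
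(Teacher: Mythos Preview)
Your proposal is correct and essentially coincides with the paper's argument. The paper computes the remainder $\mathcal{T}(A+h)-\mathcal{T}(A)-\mathcal{T}'_A(h)$ directly, obtaining $\Tr^2(h(\eta))/\bigl(\Tr(A(\eta)+h(\eta))\Tr^2(A(\eta))\bigr)$ and bounding it by $O(\|h\|_{C_{S_1}}^2)$; your chain-rule factorisation $\mathcal{T}=\phi\circ\widetilde{\Tr}$ reproduces the very same algebraic identity at the level of $\phi$, so the two proofs differ only in organisation, not in substance.
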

\begin{proof}
Observe that for $h \in  C\big((0,1], S_1(\Hi)\big)$, we have
\begin{align*}
\mathcal{T}(A(\eta)+h(\eta))-\mathcal{T}(A(\eta)) = %\frac{1}{\Tr(A(\eta)+h(\eta))}-\frac{1}{\Tr(A(\eta))}=
- \frac{\Tr(h(\eta)}{\Tr(A(\eta)+h(\eta))\Tr(A(\eta)},
\end{align*}
and thus 
\begin{align*}
\mathcal{T}(A(\eta)+h(\eta))-\mathcal{T}(A(\eta)) -\mathcal{T}^\prime_{A(\eta)}(h(\eta))&=\frac{\Tr(h(\eta))}{\Tr^2(A(\eta))} - \frac{\Tr(h(\eta)}{\Tr(A(\eta)+h(\eta))\Tr(A(\eta)}
%\\&=\frac{\Tr(h(\eta)) \Tr(A(\eta)+h(\eta))}{\Tr(A(\eta)+h(\eta))\Tr^2(A(\eta))} - \frac{\Tr(h(\eta)\Tr(A(\eta))}{\Tr(A(\eta)+h(\eta))\Tr^2(A(\eta)}
\\&
=\frac{\Tr^2(h(\eta))}{\Tr(A(\eta)+h(\eta))\Tr^2(A(\eta))},
\end{align*}
{where we note that
\begin{align*}
\sup_{\eta \in (0,1]}\Big|\frac{\Tr^2(h(\eta))}{\Tr(A(\eta)+h(\eta))\Tr^2(A(\eta))}\Big|\le \sup_{\eta \in (0,1]}\Big|\Tr^2(h(\eta))\Big| \sup_{\eta \in (0,1]}\Big|\frac{1}{\Tr(A(\eta)+h(\eta))\Tr^2(A(\eta))}\Big|.
\end{align*}
The second term on the right-hand side is uniformly bounded in $\eta \in (0,1]$, whereas for the first term one notes that the trace map is Lipschitz continuous w.r.t. to $\norm{\cdot}_{S_1}$ and hence, 
\begin{align*}
\sup_{\eta \in (0,1]}\big|\Tr^2(h(\eta))| \le \big(\sup_{\norm{A}_{S_1}=1}|\Tr(A)| \big)^2\sup_{\eta}\norm{h(\eta)}_{S_1}^2=O(\|h\|^2_{C_{S_1}}).
\end{align*} 
%where we used that the image $\Tr(A)$ is compact. 
}
\end{proof}

\subsubsection{Proof of \autoref{thm:ldFPCA} and  \autoref{thm:lPSCA}}
\label{secd2} 
As both results  follow by  similar arguments, we 
restrict ourselves to the proof of   \autoref{thm:lPSCA}, which is technically more demanding.

In order to define a sequential estimator for the measure of total variation explained by its optimal degree $d$-separable approximation, note that $\big(\delta^{(u,\omega)}_{j}\big)^2$ is the $j$th largest eigenvalue of the operator 
\[
G_{u,\omega}= {F}_{u,\omega}{F}^{\dagger}_{u,\omega} =\sum_{j=1}^{\infty} (\delta^{(u,\omega)}_{j})^2 A^{(u,\omega)}_j {\otimes} A^{(u,\omega)}_j
\]
where we recall that ${F}_{u,\omega}=\sum_{j=1}^{\infty} \delta^{(u,\omega)}_{j} A^{(u,\omega)}_{j} \otimes B^{(u,\omega)}_{j} \in S_2(\Hi_1) {\otimes} S_2(\Hi_2)$ (see Section \ref{sec223}), which implies $G_{u,\omega} \in S_1( \Hi_1 \otimes \Hi_1)^+$. In the following, we use the notation 
$$\eta \tilde{\hat{G}} \eta)=\mathrm{L}^{a,b}_{U_M}(\eta \hat{G}_{\cdot,\cdot}(\eta)),~~
\eta\tilde{G}=\mathrm{L}^{a,b}(\eta{G}_{\cdot,\cdot}),
$$
and $A_{kk}^{(u,\omega)}=A^{(u,\omega)}_k {\otimes} A^{(u,\omega)}_k$.
 Then \autoref{prop:bexplam} implies
\begin{align*}
\frac{\eta (\hat \delta^{(u,\omega)}_{k} (\eta )
)^2
 }{\Tr\big(\eta\tilde{\hat{G}}(\eta)\big)}-
\frac{\eta( \delta^{(u,\omega)}_{k})^2}{\Tr\big(\eta{\tilde{G}}\big)} = \biginprod{\frac{\eta\hat{G}_{u,\omega}(\eta)}{\Tr\big(\eta\tilde{\hat{G}}(\eta)\big)}-\frac{\eta{G}_{u,\omega}}{\Tr\big(\eta{\tilde{G}}\big)} }{A^{(u,\omega)}_{kk}}+ \biginprod{R^{(u,\omega)}_{1,k}(\eta)}{A^{(u,\omega)}_{kk}}, \tageq \label{eq:error1}
\end{align*}
where  
\begin{align*}
R^{(u,\omega)}_{1,k}(\eta) &= \Big(\frac{\eta\hat{G}_{u,\omega}(\eta)}{\Tr\big(\eta\tilde{\hat{G}}(\eta)\big)}-\frac{\eta{G}_{u,\omega}}{\Tr\big(\eta \tilde{G}\big)} \Big) \big( \hat{A}^{(u,\omega)}_{kk}-A^{(u,\omega)}_{kk}\big)
 - \Big(\frac{\eta 
 ( \hat \delta^{(u,\omega)}_{k} (\eta ))^2  }{\Tr\big(\eta \tilde{\hat{G}}(\eta)\big)}-
\frac{\eta   ( \delta^{(u,\omega)}_{k})^2}{\Tr\big(\eta \tilde{G} \big)} \Big) \big(\hat{A}^{(u,\omega)}_{kk}-A^{(u,\omega)}_{kk}\big).
\end{align*}
Elementary calculations now yield
for \eqref{zeq31}
\begin{align*}
\eta^3(\hat{s}_d(\eta) - s_d) = 
& \eta^3
\frac{\sum_{k=1}^d {1 \over M} \sum_{u \in U_M} \int_a^b \eta(\hat \delta^{(u,\omega)}_{k} (\eta ) )^2 d\omega}{\Tr\big(\eta \tilde{\hat{G}}(\eta)\big)}- \eta^3
\frac{\sum_{k=1}^d {1 \over M} \sum_{u \in U_M} \int_a^b \eta ( \delta^{(u,\omega)}_{k})^2  d\omega}{\Tr\big({\eta \tilde{G}}\big)}
\\& +
\eta^3 \frac{\sum_{k=1}^d  {1 \over M} \sum_{u \in U_M} \int_a^b ( \delta^{(u,\omega)}_{k})^2 d\omega}{\Tr\big( \tilde{G}\big)}-
\eta^3
\frac{\sum_{k=1}^d   \int_0^1 \int_a^b ( \delta^{(u,\omega)}_{k})^2  d\omega du}{\Tr\big(\tilde{G}\big)} 
\\&=
\eta^2
\frac{1}{\Tr(\eta \tilde{G})}  {1 \over M} \sum_{u \in U_M} \int_a^b \Tr\Big( \sum_{k=1}^d A^{(u,\omega)}_{kk}\big(\eta^2\hat{G}_{u,\omega}(\eta)-\eta^2 {G}_{u,\omega}\big)\Big)d\omega 
\tageq \label{dynpcas1}
\\& +
\eta^2 \big(\frac{\eta}{\Tr\big(\eta^2 \tilde{\hat{G}}(\eta)\big)}-\frac{\eta}{\Tr\big(\eta^2 \tilde{G}\big)}\big)   {1 \over M} \sum_{u \in U_M} \int_a^b \Tr\Big(\sum_{k=1}^dA^{(u,\omega)}_{kk}\big(\eta^2 {G}_{u,\omega} \big)\Big)d\omega \tageq \label{dynpcas2}
\\&+\eta^3  {1 \over M} \sum_{u \in U_M} \int_a^b\Tr\Big( \sum_{k=1}^d A^{(u,\omega)}_{kk}\big(R^{u,\omega}_{1,k}(\eta)+R^{u,\omega}_{2, k}(\eta)\big)\Big) d\omega +\eta^3  R_{d}(\eta)\tageq \label{dynpcas}
\end{align*}
where \[
R^{(u,\omega)}_{2,k}(\eta)=\big(\frac{1}{\Tr\big(\eta^2\tilde{\hat{G}}(\eta)\big)}-\frac{1}{\Tr\big(\eta^2 \tilde{G}\big)}\big)  
\big(\eta^2\hat{G}_{u,\omega}(\eta)-\eta^2{G}_{u,\omega}\big)
\]
and where 
\[
 R_{d}(\eta)= \frac{\sum_{k=1}^d {1 \over M} \sum_{u \in U_M} \int_a^b(\eta( \delta^{(u,\omega)}_{k})^2d\omega- \int_0^1 \int_a^b(\eta( \delta^{(u,\omega)}_{k})^2 )d\omega du}{\Tr\big(\eta \tilde{G} \big)}\tageq \label{dynpcas3}
\]
In the following, we show that the terms \eqref{dynpcas1} and \eqref{dynpcas2}
are dominating, while \eqref{dynpcas}  and 
\eqref{dynpcas3}
are asymptotically negligible. 
We first focus on the term  \eqref{dynpcas1}, for which we note that
\begin{align*}
\eta^2(\hat{G}_{u,\omega}(\eta)-{G}_{u,\omega })
& =\eta\big( \hat{F}_{u,\omega}(\eta)-{F}_{u,\omega}\big) \eta\big(\hat{F}^{\dagger}_{u,\omega}(\eta)-{F}^{\dagger}_{u,\omega}\big)
\\&+\eta\big( \hat{F}_{u,\omega}(\eta)-{F}_{u,\omega}\big) \eta{F}^{\dagger}_{u,\omega}+\eta{F}_{u,\omega}\eta\big(\hat{F}^{\dagger}_{u,\omega}(\eta) -{F}^{\dagger}_{u,\omega} \big). \tageq \label{eq:decG}
\end{align*}
In order to treat these three terms, note that we have an isometric isomorphism 
\begin{align*}
\mathfrak{T}:
\begin{cases}
 C([0,1]\times [0,\pi]\times I, S_2(\Hi_1 \otimes \Hi_2))
  \to 
 C([0,1]\times [0,\pi]\times I,  S_2(\Hi_1) \otimes S_2(\Hi_2) )
   \\ 
  \sum_{i,j,k,l \ge 1}\alpha_{ijkl}^{u,\omega}(\eta) h_i \otimes g_j \otimes h_k \otimes g_l
   \mapsto 
      \sum_{i,j,k,l \ge 1}\alpha_{ijkl}^{u,\omega}(\eta)h_i \otimes h_k\otimes g_j \otimes g_l 
\end{cases} \tageq \label{eq:isom}
\end{align*}
where $\{ h_i \otimes g_j \otimes h_k \otimes g_l\}_{i,j,k,l \ge 1}$ is a tensor basis of 
$\Hi_1 \otimes \Hi_2 \otimes \Hi_1 \otimes \Hi_2$ and $\alpha_{ijkl}^{u,\omega}(\eta) \in \cnum$. 
Therefore, by definition of the the Hilbert-Schmidt norm, the cyclic property of the trace functional and the fact that $A^{(u,\omega)}_{kk}$ is a self-adjoint  operator, we obtain for \eqref{dynpcas1}
\begin{align*}
&\frac{1}{\Tr( \tilde{G})} \frac{1}{M} \sum_{u \in U_M} \int_a^b\sum_{k=1}^d \Tr\Big(A^{(u,\omega)}_{kk}\big( \eta[\hat{F}_{u,\omega}(\eta)-{F}_{u,\omega}] \eta^2{F}^{\dagger}_{u,\omega}\big)\Big) d\omega
\\& =\frac{\eta^2}{\Tr(\tilde{G})}\frac{1}{M} \sum_{u \in U_M} \int_a^b \biginprod{\eta( \hat{F}_{u,\omega}(\eta)-{F}_{u,\omega})}{ \sum_{k=1}^d A^{(u,\omega)}_{kk} {F}_{u,\omega}}_{S_2(\Hi_1) \otimes S_2(\Hi_2)} d\omega
\\& =\frac{\eta^2}{\Tr(\tilde{G})}\frac{1}{M} \sum_{u \in U_M} \int_a^b\biginprod{\eta(\hat{\F}_{u,\omega}(\eta)-{{\F}}_{u,\omega})}{ \mathfrak{T}
\Big(\sum_{k=1}^d A^{(u,\omega)}_{kk}F_{u,\omega}\Big)}_{S_2(\Hi_1 \otimes \Hi_2)}d\omega
\\& =\frac{\eta^2}{\Tr(\tilde{G} )}\frac{1}{M} \sum_{u \in U_M} \int_a^b\Tr\Big(\big(\mathfrak{T}
( \sum_{k=1}^d A^{(u,\omega)}_{kk}F_{u,\omega})\big)^\dagger \eta( \hat{\F}_{u,\omega}(\eta)-{{\F}}_{u,\omega})\Big) d\omega. \tageq \label{eq:mickeymouse}
\end{align*}
We define therefore the mapping  in \autoref{as:mappings} by  
\[
\Upsilon^{u,\omega}_{1}=\frac{1}{\Tr(\tilde{G} )}\mathfrak{T}
\big(\sum_{k=1}^d A^{(u,\omega)}_{kk}F_{u,\omega}\big)^\dagger
=\frac{1}{\Tr(\tilde{G} )}\sum_{k=1}^d {\delta^{(u,\omega )}_{k}} A^{(u,\omega)}_{k} \widetilde{\otimes} B^{(u,\omega)}_{k} %=\frac{1}{\Tr(\tilde{G} )}
%\sum_{k=1}^d (A^{(u,\omega)}_{k} \widetilde{\otimes} B^{(u,\omega)}_{k} )\F_{u,\omega}
\]
and the mapping $   \G_{\Upsilon_1}$ with $x=3$
 and $\phi (z) =z$  in \eqref{eq7} by 
$$
\G_{\Upsilon_1, u,\omega}(\eta) = \Upsilon^{u,\omega}_1( \eta^2\mathrm{id}(\eta \F_{u,\omega}))~. 
$$
To show that $\Upsilon_1$  satisfies \autoref{as:mappings}, we note that the first and second are obvious since  $\Upsilon^{u,\omega}_1$ is a (scaled) rank d approximation of $\F_{u,\omega}$. To verify the third property, we first note that $\mathfrak{T}$ preserves the smoothness properties of an operator in the direction of $u \in [0,1]$. Indeed the properties of the $\mathfrak{T}$ imply for example that we have
\begin{align*}
F_{u,\omega} = \mathfrak{T}(\F_{u,\omega})=\mathfrak{T}(\F_{v,\omega})+\sum_{i=1}^2 (v-u)^i \mathfrak{T}\Big(\frac{\partial^i \F_{x,\omega}}{\partial x^i}\vert_{x=v}\Big)+\mathfrak{T}(R)~, 
\end{align*}
where the involved operators are bounded uniformly in $u$ for almost every $\omega$ with respect to $\norm{\cdot}_{S_2(\Hi_1)\otimes S_2(\Hi_2)}$ under \autoref{as:smooth}. The composition $G_{u,\omega}=F_{u,\omega}F^\dagger_{u,\omega}$ is then also twice-differentiable in $u$. Finally, we observe that $A_k^{u,\omega}$ is the eigenprojector belonging to the $k$th largest eigenvalue of $G_{u,\omega}$ and thus it follows from \autoref{thm:eigp2} that on the set $\mathcal{O}_T=\cap_{k=1}^d \mathcal{O}_{k,T}$, with $\mathcal{O}_{k,T}$ as defined in \eqref{eq:Okt}, $1_{\mathcal{O}_T}=1$ for sufficiently large $T$.  Consequently, on this set $ \Upsilon^{u,\omega}_{1}$ satisfies \autoref{as:mappings}(iii) when evaluated at the points $u_{i,T}, i=1,\ldots, M$.  

Then, by \autoref{thm:Conv} (with $x=3$), 
we have
\[\Big\{
 \rho_{T} M^{1/2}\Big(
 \big (\mathrm{L}^{a,b}_{U_M}\circ  {\cal G}_{\Upsilon_1} (\hat \F)  \big ) (\eta) - \big (
 \mathrm{L}^{a,b}\circ {\cal G}_{\Upsilon_1} ( \F)  \big ) (\eta)  \Big\}_{\eta \in I } \stackrel{\mathcal{D}}{\Longrightarrow} \Big\{\eta^{2}\mathbb{W}_{\mu_{\Upsilon_1}}(\eta)\Big\}_{\eta \in I}~.
 \tageq \label{de50}
 \]
We note that the third term in \eqref{eq:decG} is the conjugate of the second term, so that their sum simply corresponds to taking the real part of the left- and right-hand side in \eqref{de50}. For the first term, observe  that $\sum_{k=1}^d A^{(u,\omega)}_{kk}$ is a positive operator and its square root is therefore well-defined. The  isometric isomorphism \eqref{eq:isom}, \autoref{thm:maxPS1} and  \eqref{eq:etaNapp} then imply for the summand as given in the first term of  \eqref{eq:decG} that for fixed $u,\omega$
\begin{align*}
&\sup_\eta \Big\vert\Tr\Big( \sum_{k=1}^d \eta A^{(u,\omega)}_{kk}\big(\eta (\hat{F}_{u,\omega}(\eta)-{F}_{u,\omega})) (\eta \hat{F}^{\dagger}_{u,\omega}(\eta)-{F}^{\dagger}_{u,\omega})\big)\Big) \Big\vert 
%\\&\le  \bignorm{\big(\sum_{k=1}^d A^{(u,\omega)}_{kk}\big)^{1/2}}^2_{\infty} \bignorm{ \eta ( \hat{F}_{u,\omega}(\eta)-{F}_{u,\omega}) }^2_{C_{S_2}}
\\&  \le  \bignorm{\big(\sum_{k=1}^d A^{(u,\omega)}_{kk}\big)^{1/2}}^2_{S_\infty} \bignorm{\eta( \hat{\F}_{u,\omega}(\eta)- {\F}_{u,\omega})}^2_{C_{S_1}} = o_p(\rho^{-1}_T M^{-1/2}).\tageq \label{eq:errb1}
\end{align*}
For  the term \eqref{dynpcas2},  \eqref{eq:isom} implies
\[
\Tr(\eta^2 \tilde{G} ) =  \Tr\big(\int_0^1 \int_a^b\eta^2 (\F_{u,\omega})^2 d\omega du\big)=\Tr\big( \mathrm{L}^{a,b} \circ \G_{\Upsilon_2}(\F)(\eta)\big)
\]
where $\G_{\Upsilon_2, u,\omega} = I_{\Hi_1 \otimes \Hi_2} \widetilde{\otimes} I_{\Hi_1 \otimes \Hi_2}\big( \phi(\eta \F_{u,\omega})\big)$ with $\phi: D \to \cnum, z \mapsto z^2$. 
Note that \autoref{thm:Conv}(b) applies with $h(\eta) = \eta$ and $x=1$, that is
\[
\Big\{M^{1/2}\rho_T \Big( L_{M}^{a,b} \circ \G_{\Upsilon_2}(\hat{\F})(\eta)-\mathrm{L}^{a,b} \circ \G_{\Upsilon_2}({\F})(\eta)\Big\}_{\eta \in I} \stackrel{\mathcal{D}}{\Longrightarrow} \Big\{\eta \mathbb{W}_{\mu_{\Upsilon_2}}(\eta)\Big\}_{\eta \in I}~.
\]
Observe that for this component \autoref{cor:wcFdif2} 
with $\Psi_2 (\cdot) = 1/\Tr(\cdot)$ and \autoref{lem:invtra} imply that
\begin{align*}
\Big\{\rho_T M^{1/2} \eta^5\Big(  \Psi_2\Big(\mathrm{L}^{a,b}_{U_M} \circ \G_{\Upsilon_2}(\hat{\F})(\eta)\Big)- 
 \Psi_2 \Big( \mathrm{L}^{a,b} \circ \G_{\Upsilon_2}({\F})(\eta)\Big)\Big)\Big\}_{\eta \in I}  &\stackrel{\mathcal{D}}{\Longrightarrow} 
 \Bigg\{\eta^5 \frac{\Tr\big(\eta \mathbb{W}_{\mu_{\Upsilon_2}}(\eta)\big)}{\Tr^2\big(\mathrm{L}^{a,b}(\eta^2 \F^2)\big)}\Bigg\}_{\eta \in I} 
 \\
&
 \tageq \label{eq:psi3}
~~~~=  
\Bigg\{\eta^2 \frac{\Tr\big( \mathbb{W}_{\mu_{\Upsilon_2}}(\eta)\big)}{\Tr^2\big(\mathrm{L}^{a,b}( \F^2)\big)}\Bigg\}_{\eta \in I} ~.
\end{align*}
Furthermore, a similar argument yields that the first and second term of 
\eqref{dynpcas} are of the same order as \eqref{eq:errb1}. Finally, the term defined by 
\eqref{dynpcas3} is also of order $o_p(\rho^{-1}_T M^{-1/2})$, which follows from the observation that 
$(\delta^{u,\omega}_{j})^2=\Tr(A_j^{u,\omega} G_{u,\omega})$, that the trace functional is linear, and the previously verified smoothness properties of the eigenprojectors and $G_{u,\omega}$ on the set $\mathcal{O}_T$. 
and mimicking the first few lines of the proof of \autoref{thm:fullapproximation}.
Altogether, setting $\Psi_1= \Tr(\cdot)$ and using linearity of the trace functional it follows  that \autoref{cor:wcFdif2} is applicable
with $k=2$.
Moreover, 
\eqref{eq14} holds  with $g_{ij}(\eta)= \eta^2, i,j =1,2$,
and the continuous mapping theorem yields
\begin{align*}
& \begin{Bmatrix}
M^{1/2} \rho_T \eta^3( \hat{s}_d(\eta) -s_d)
 \end{Bmatrix}_{\eta \in I }\\&=
 \begin{Bmatrix}M^{1/2}\rho_T 
\begin{pmatrix}
2   & c  \eta^5 \\
\end{pmatrix} \begin{pmatrix} 
\Psi_1\Big(\Re\big(\mathrm{L}^{a,b}_{U_M} \circ  \G_{\Upsilon_1}(\hat{\F})\big)\Big)(\eta)-\Psi_1\Big(\Re\big( \mathrm{L}^{a,b} \circ \G_{\Upsilon_1}({\F})\big)\Big)(\eta) 
%\\
% \Psi_2\Big(\mathrm{L}^{a,b}_{U_M} \circ \G^\dagger_{\Upsilon_1}(\hat{\F})(\eta)\Big)-\Psi^\dagger_2\Big( \mathrm{L}^{a,b} \circ \G_{\Upsilon_1}({\F})(\eta)\Big) 
 \\
  \Psi_2\Big(\mathrm{L}^{a,b}_{U_M} \circ \G_{\Upsilon_2}(\hat{\F})\Big)(\eta)-  \Psi_2\Big( \mathrm{L}^{a,b} \circ \G_{\Upsilon_2}({\F})\Big) (\eta)
 \end{pmatrix}
 \end{Bmatrix}_{\eta \in I } 
  + o_P(1)  
 \\& \stackrel{\mathcal{D}}{\Longrightarrow}  \begin{Bmatrix} 2\eta^2 \Tr\big(\Re(\mathbb{W}_{\mu_{\Upsilon_1}})(\eta) \big) + \eta^2 %\Tr\big(\mathbb{W}_{\mu_{\Upsilon^\dagger_1}}(\eta) \big) +
 c \eta^2 \Tr\big(\mathbb{W}_{\mu_{\Upsilon_2}}(\eta) \big) \end{Bmatrix}_{\eta \in I }~, \tageq \label{eq:SPCwc1}
\end{align*}
where $c=-\Tr\big(\mathrm{L}^{a,b}\big( \sum_{k=1}^d A_{kk}({G})\big)\big)/(\Tr^2\big( \mathrm{L}^{a,b}( \F^2)\big))$, and where $\Re(\cdot)$ denotes the real part of $(\cdot)$. 
Note that $\mathbb{W}=(\mathbb{W}_{\mu_{\Upsilon_1}}, \mathbb{W}_{\mu_{\Upsilon_2}})^\top$ is
a bivariate vector of $S_1(\Hi)$-valued Brownian motions corresponding to a real-valued Gaussian measure with positive definite covariance structure. 
Recall that for any $B \in S_{\infty}(\Hi)$, functionals $f_B: S_{1} \to \cnum, A \mapsto \Tr(A B)$ are elements of the topological dual space of $S_1(\Hi)$, which proves that
\begin{align*}
\Bigg\{
M^{1/2}\rho_T \eta^3(\hat{s}_d(\eta) - s_d)\Bigg\}_{\eta \in I } 
\stackrel{\mathcal{D}}{\Longrightarrow} \begin{Bmatrix}\eta^2 \sigma\mathbb{B}(\eta) \end{Bmatrix}_{\eta \in I } 
\end{align*}
for some $\sigma \geq 0$ and a standard real-valued Brownian motion $\mathbb{B}$.

\subsubsection{Proof of \autoref{thm:lFC}}
\label{secd3}

Recall from  Section \ref{sec224} that $P^{\Hi}_i, i=1,2$ denotes the orthogonal projection of $\Hi=\Hi_1 \oplus \Hi_2$ onto $\Hi_i$ such that
\[
 \F^{ij}=  P^{\Hi}_i {\F}P^{\Hi}_j \text{ and } \hat{\F}^{ij}=  P^{\Hi}_i  \hat{\F}P^{\Hi}_j 
\]
denotes the restriction of $ {\F}$, respectively $\hat{\F}$, to $\Hi_i $ and $\Hi_j$, $j=1,2$. By \autoref{thm:Conv} and the continuous mapping theorem 
\[\Big\{
 \rho_{T} \Big(
 \big (\mathrm{L}^{a,b}_{U_M}\circ  {\cal G}_\Upsilon (\hat{\F}^{ij})  \big ) (\eta) - \big (
 \mathrm{L}^{a,b}\circ {\cal G}_\Upsilon  ( \F^{ij})  \big ) (\eta)  \Big\}_{\eta \in I } \stackrel{\mathcal{D}}{\Longrightarrow} \Big\{\mathbb{W}^{ij}_{\mu_\Upsilon}(\eta)\Big\}_{\eta \in I}
 \]
 where 
 $ \mathbb{W}^{ij}_{\mu_{\Upsilon}}(\eta)=P_i^\Hi  \mathbb{W}_{\mu_{\Upsilon}}(\eta) P_j^\Hi$. 
Consider then the composite operators
 \[
 \mathfrak{F}^{ij}_{u,\omega}=\F^{ij}_{u,\omega}  \F^{ji}_{u,\omega}~~ \text{ and } ~ \hat{\mathfrak{F}}^{ij}_{u,\omega}(\eta)=\hat{\F}^{ij}_{u,\omega} (\eta)\hat{\F}^{ji}_{u,\omega}(\eta)~, 
 \]
which have respective eigendecompositions given by
 \[
  \mathfrak{F}^{ij}_{u,\omega}=\sum_{k=1}^{\infty} \big({\nu}_k^{u,\omega}\big)^2 \Pi^{u,\omega}_{\mathfrak{F}_{ij}, k} ~~\text{ and }  ~~ \hat{\mathfrak{F}}^{ij}_{u,\omega}(\eta)=\sum_{k=1}^{\infty} \big(\hat{\nu}_k^{u,\omega}(\eta)\big)^2 \hat{\Pi}^{u,\omega}_{\mathfrak{F}_{ij}, k}
  ~.
  \tageq \label{eq:mathfrakF}
 \] 
 Furthermore, recall that  
  $$
{\mathcal{R}}^{u,\omega}_{d}= \frac{{\nu}_d^{u,\omega}}{\sqrt{{\lambda}^{(u,\omega)}_{11,d} {\lambda}^{(u,\omega)}_{22,d}}}  
%=\frac{\Tr^{1/2} \Big(\Pi^{u,\omega}_{{\mathfrak{F}}_{12}, d} {\mathfrak{F}}^{12}_{u,\omega}(\eta)\Big) }{\Tr^{1/2}\Big({\Pi}^{11}_{u,\omega, d} {\F}^{11}_{u,\omega}\Big)\Tr^{1/2}\Big({\Pi}^{22}_{u,\omega, d} {\F}^{22}_{u,\omega} \Big)}
%$$
 %$$
 \text{~~ and ~~}  
\hat{\mathcal{R}}^{u,\omega}_{d}(\eta)= \frac{\hat{\nu}_d^{u,\omega}(\eta)}{\sqrt{\hat{\lambda}^{(u,\omega)}_{11,d}(\eta) \hat{\lambda}^{(u,\omega)}_{22,d}(\eta)}}  
~. 
$$
Then, letting $x={\nu}_d^{u,\omega}$, $y=\sqrt{{\lambda}^{(u,\omega)}_{11,d}}$ and $z=\sqrt{ {\lambda}^{(u,\omega)}_{22,d}}$ and 
using the notation $\hat x, \hat y, \hat  z$ for the empirical counterparts, we can write
\begin{align*}
(\hat{\mathcal{R}}^{u,\omega}_{d}(\eta)-{\mathcal{R}}^{u,\omega}_{d})=
\frac{\hat{x}}{\hat{y} \hat{z}}-\frac{x}{yz}&= \frac{x}{z}\Big(\frac{1}{\hat{y}}-\frac{1}{y}\Big)+ \frac{x}{y}\Big(\frac{1}{\hat{z}}-\frac{1}{z}\Big)+ \frac{1}{yz}\big(\hat{x}-x\big)
\\& + \hat{x} \Big(\frac{1}{\hat{y}}-\frac{1}{y}\Big) \Big(\frac{1}{\hat{z}}-\frac{1}{z}\Big)+ \frac{1}{z}\Big(\frac{1}{\hat{y}}-\frac{1}{y}\Big)\big(\hat{x}-x\big)+ \frac{1}{y}\Big(\frac{1}{\hat{z}}-\frac{1}{z}\Big)\big(\hat{x}-x\big). \tageq \label{eq:fracxyz}
\end{align*}
 Observe that after scaling with $\eta^x$ for the first two terms, a Taylor expansion  yields for $i, j \in \{1,2\}$,
\begin{align*}
&  \frac{\eta^x{\nu}_d^{u,\omega}}{\sqrt{ {\lambda}^{(u,\omega)}_{ii,d}}}\Big(\big(
\hat{\lambda}^{(u,\omega)}_{jj,d}(\eta)\big)^{-1/2}-\big({\lambda}^{(u,\omega)}_{jj,d}\big)^{-1/2}\Big)
 \\&=-\frac{\eta^x{\nu}_d^{u,\omega}}{\sqrt{{\lambda}^{(u,\omega)}_{ii,d}}}\Big[\Big(\hat{\lambda}^{(u,\omega)}_{jj,d}(\eta)-{\lambda}^{(u,\omega)}_{jj,d}\Big) \frac{1}{2}\big({\lambda}^{(u,\omega)}_{jj,d}\big)^{-3/2}\Big]
 +O_p\Big(\Big\vert\eta^x \Big(\hat{\lambda}^{(u,\omega)}_{jj,d}(\eta)-{\lambda}^{(u,\omega)}_{jj,d}\Big)^2 \Big\vert\Big)~.
\end{align*}
Hence, by \autoref{prop:bexplam},
\begin{align*} 
&=-\frac{\eta^x{\nu}_d^{u,\omega}}{\sqrt{{\lambda}^{(u,\omega)}_{ii,d}}}\Big(\hat{\lambda}^{(u,\omega)}_{jj,d}(\eta)-{\lambda}^{(u,\omega)}_{jj,d}\Big) \frac{1}{2}\big({\lambda}^{(u,\omega)}_{jj,d}\big)^{-3/2}\Big]
+O_p\Big(\bignorm{\eta^{x/2}(\hat{\F}_{u,\omega}(\eta)-\F_{u,\omega} )}^2_{C_{S_1}}\Big)
\\& 
=-\frac{{\nu}_d^{u,\omega}}{2 \big({\lambda}^{(u,\omega)}_{ii,d}({\lambda}^{(u,\omega)}_{jj,d})^3\big)^{1/2}} \Big[ \Tr \Big(\Pi^{jj}_{u,\omega,d}\eta^x\big( \hat{\F}^{jj}_{u,\omega}(\eta)-\F^{jj}_{u,\omega}(\eta)\big)\Big) 
+o_p(M^{-1/2}\rho^{-1}_T)+o_p(M^{-1/2}\rho^{-1}_T)~,
\end{align*}
where the error term holds for all $x\ge 3$, which follows from \autoref{prop:bexplam},  \autoref{thm:maxPS1}, 
and \eqref{eq:etaNapp}.
Therefore, we define 
\[
\Upsilon^{u,\omega}_1 = -\frac{{\nu}_d^{u,\omega}}{2 \big({\lambda}^{(u,\omega)}_{22,d}({\lambda}^{(u,\omega)}_{11,d})^3\big)^{1/2}}\Pi^{11}_{u,\omega,d},  \quad \Upsilon^{u,\omega}_2 = -\frac{{\nu}_d^{u,\omega}}{2 \big({\lambda}^{(u,\omega)}_{11,d}({\lambda}^{(u,\omega)}_{22,d})^3\big)^{1/2}}\Pi^{22}_{u,\omega,d}~. 
\]
Note that for sufficiently small perturbations in the direction of $u$ of $\F$, both mappings  are products of analytic functions of $\F$, and hence a similar argument as in the proof of \autoref{thm:lPSCA} allows to verify that \autoref{as:mappings} is satisfied. Hence, letting $\phi=\mathrm{id}$ and $\Psi: C_{S_1} \to C_\rnum, A(\eta) \mapsto \Tr(A(\eta))$, $x =3$, we obtain
\[
 \frac{\eta^x{\nu}_d^{u,\omega}}{\sqrt{ {\lambda}^{(u,\omega)}_{ii,d}}}\Big(\big(
 \hat{\lambda}^{(u,\omega)}_{jj,d}(\eta)\big)^{-1/2}-\big({\lambda}^{(u,\omega)}_{jj,d}\big)^{-1/2}\Big)
=
\big( \Psi  \big ( 
\mathrm{L}^{a,b}_{U_M}\circ \G_{\Upsilon_j} (\hat \F^{jj} \big ) (\eta)
-  \Psi  \big ( 
\mathrm{L}^{a,b}\circ  \G_{\Upsilon_j} (\F^{jj} \big ) (\eta)
\big) +o_p(M^{-1/2}\rho^{-1}_T).
\]
For the third term of \eqref{eq:fracxyz}, a Taylor expansion and \autoref{thm:maxPS1} yield
\begin{align*}
& \eta^x\Big({\lambda}^{(u,\omega)}_{11,d}{\lambda}^{(u,\omega)}_{22,d}\Big)^{-1/2}\Big(
\big( \hat{\nu}_d^{u,\omega}(\eta))^{2}\big)^{1/2}-\big( ( {\nu}_d^{u,\omega})^{2}\big)^{1/2}
\Big) 
\\&=  \eta^x\Big({\lambda}^{(u,\omega)}_{11,d}{\lambda}^{(u,\omega)}_{22,d}\Big)^{-1/2}\Big[\Big(
(\hat{\nu}_d^{u,\omega}(\eta))^{2}- ( {\nu}_d^{u,\omega})^{2}
\Big) \frac{1}{2}\big( ({\nu}_d^{u,\omega})^{2}\big)^{-1/2}+ O_p\big(\sup_{\eta}\bignorm{\eta^{\ell/2}\big( \hat{\mathfrak{F}}^{12}_{u,\omega}(\eta)- {\mathfrak{F}}^{12}_{u,\omega}\big)}^2_{{S_1}}\big)\Big].
\end{align*}
Then, using again \autoref{prop:bexplam},  \autoref{thm:maxPS1} and \eqref{eq:etaNapp}, and recalling the definition of ${\Pi}^{u,\omega}_{\mathfrak{F}_{ij}, k}$ and ${\Pi}^{u,\omega}_{\hat{\mathfrak{F}}_{ij}, k}$ in  \eqref{eq:mathfrakF}, we obtain for $x \ge 4$
\begin{align*}
& =  \frac{1}{2}\big(  {\nu}_d^{u,\omega}\big)^{-1} \Big({\lambda}^{(u,\omega)}_{11,d}{\lambda}^{(u,\omega)}_{22,d}\Big)^{-1/2}\Tr\Big({\Pi}^{u,\omega}_{\mathfrak{F}_{12}, k}\eta^x\big( \hat{\mathfrak{F}}^{12}_{u,\omega}(\eta)- \mathfrak{F}^{12}_{u,\omega}\big)\Big) +o_p(M^{-1/2}\rho^{-1}_T)
\\&= \frac{1}{2 }\Big(\big( {\nu}_d^{u,\omega}\big)^2{\lambda}^{(u,\omega)}_{11,d}{\lambda}^{(u,\omega)}_{22,d}\Big)^{-1/2}
\Big(\Tr\Big({\Pi}^{u,\omega}_{\mathfrak{F}_{12}, k}\eta^x\big( \hat{\F}^{12}_{u,\omega}(\eta)- {\F}^{12}_{u,\omega}\big) {\F}^{21}_{u,\omega}\Big)+\Tr\Big({\Pi}^{u,\omega}_{\mathfrak{F}_{12}, k}{\F}^{12}_{u,\omega}\eta^x \big( \hat{\F}^{21}_{u,\omega}(\eta)- {\F}^{21}_{u,\omega}\big)  \Big)\\& +o_p(M^{-1/2}\rho^{-1}_T).%\big( 
\end{align*}

Define
$\Upsilon^{u,\omega}_3 =  \frac{1}{2}\Big(\big( {\nu}_d^{u,\omega}\big)^2{\lambda}^{(u,\omega)}_{11,d}{\lambda}^{(u,\omega)}_{22,d}\Big)^{-1/2}
{\Pi}^{u,\omega}_{\mathfrak{F}_{12}, k}{\F}^{12}_{u,\omega}$. 
A similar argument as in the case of $\Upsilon_{1,u,\omega}$ and $\Upsilon_{2,u,\omega}$ shows that this map satisfies  \autoref{as:mappings}. A similar line of proof yields that all three terms in the second line of \eqref{eq:fracxyz} are of order $o_p(M^{-1/2}\rho^{-1}_T)$ for $x \ge 4$. Altogether, the continuous mapping theorem and \autoref{cor:wcFdif2} therefore yield 
\begin{align*}
& \Big\{M^{1/2}\rho_T \Big(\int_0^1 \int_a^b \eta^4\hat{\mathcal{R}}^{u,\omega}_{d}(\eta) d\omega du-\int_0^1 \int_a^b\eta^4 {\mathcal{R}}^{u,\omega}_{d} d\omega du\Big)\Big\}_{\eta \in I} =
%&\Big({\lambda}^{(u,\omega)}_{11,d}\eta{\lambda}^{(u,\omega)}_{22,d}\Big)^{-1/2}\eta
%\big( \hat{\mathfrak{F}}^{ij}_{u,\omega}(\eta)- \mathfrak{F}^{ij}_{u,\omega}\big) 
%\\&=
\\&
 \begin{pmatrix}
1& 1& 2 \\
\end{pmatrix} M^{1/2}\rho_{T}
\begin{Bmatrix}
 \Psi  \Big ( 
\mathrm{L}^{a,b}_{U_M}\circ \G_{\Upsilon_1}(\hat{\F}^{11})\Big) (\eta)
-  \Psi  \Big ( 
\mathrm{L}^{a,b} \circ  \G_{\Upsilon_1} (\F^{11} ) \Big)(\eta)
\\
\Psi  \Big ( 
\mathrm{L}^{a,b}_{U_M}\circ \G_{\Upsilon_2}(\hat{\F}^{22})\Big) (\eta)
-  \Psi  \Big ( 
\mathrm{L}^{a,b}\circ  \G_{\Upsilon_2} (\F^{22} ) \Big)(\eta)\\
 \Psi  \Big (\Re\big( 
\mathrm{L}^{a,b}_{U_M}\circ \G_{\Upsilon_3}(\hat{\F}^{21})\big)\Big) (\eta)
-  \Psi  \Big (\Re\big( 
\mathrm{L}^{a,b}\circ  \G_{\Upsilon_3} (\F^{21} )\big)\Big) (\eta)
\end{Bmatrix}_{\eta \in I}+ o_p(1) \stackrel{\mathcal{D}}{\Rightarrow} \Big\{ \eta^3 \sigma \mathbb{B}(\eta) \Big\}_{\eta \in I}~.
\end{align*}

\subsubsection{Proof of \autoref{optimalprocr}
and \autoref{thm:procconv}}\label{secsqrmet}

\begin{proof}[Proof of \autoref{optimalprocr}]
The argument follows straightforwardly from the minimum distance principle \citep[see][section 2.3]{vDCD18} and the fact that the involved operators are elements of $L^1_{S_1(\Hi)^+}\big([0,1]\times [0,\pi]\big)$. 
\end{proof}

\begin{proof}[Proof of \autoref{thm:procconv}]
Recalling the definition of the operators $\hat{\ddot{\F}}_{\omega,d}$  and ${\ddot{\F}}^{1/2}_{\omega,d}$ in \eqref{de32}  we  have
\begin{align*}
&M^{1/2} \rho_T {\eta} (\hat{r}_{d}(\eta)-r_{d}(\eta) )
\\&=M^{1/2} {\eta} \rho_T  \Big\{\frac{1}{M}\sum_{u \in U_M}\int_{0}^\pi d^2_{R}(\hat{\ddot{\F}}_{\omega,d}(\eta), \eta\hat{\F}_{u,\omega, d}(\eta))  d\omega 
-\int_{0}^\pi \int_{0}^1 d^2_{R}(\ddot{\F}_{\omega,d}, \eta\F_{u,\omega, d}) du d\omega\Big\} ~.
\tageq \label{eq:Pr2}
\end{align*}
Using \autoref{optimalprocr} we can rewrite this as
\begin{align*}
 M^{1/2}\rho_T  \Big\{ &\int_0^\pi \frac{1}{M}\sum_{u \in U_M} \Tr\big(\eta^2\hat{\F}_{u,\omega, d}(\eta)\big)d\omega- \int_0^\pi \int_0^1\Tr\big(\eta^2{\F}_{u,\omega, d}\big) du d\omega \tageq \label{eq:er1}
\\& -\Big(  \int_0^\pi\Tr\big({\eta}\hat{\ddot{\F}}_{\omega,d}(\eta)\big)d\omega-  \int_0^\pi\Tr\big({\eta}{\ddot{\F}}_{\omega,d}\big) d\omega \Big)\Big\}_{\eta \in I }.
\tageq \label{eq:er2}
\end{align*}
We treat these two terms in turn. For \eqref{eq:er1}, we observe that this fits 
\autoref{cor:wcFdif2} with $\Psi_1:C(I, S_1(\Hi)) \to C(I, \rnum), A(\eta) \mapsto  \Tr(A(\eta))$, and where $\G_{\Upsilon_1}$ is defined by \eqref{eq4a} with $\Upsilon^{u,\omega}_1=I \widetilde{\otimes} I$ for all $u \in [0,1]$, $\omega \in [0,\pi]$, $\phi = \mathrm{id}$ and $x=2$. More precisely, we have 
\begin{align*}
& M^{1/2}\rho_T  \Big\{  \int_0^\pi \frac{1}{M}\sum_{u \in U_M} \Tr\big(\eta^2\hat{\F}_{u,\omega, d}(\eta)\big)d\omega- \int_0^\pi \int_0^1\Tr\big(\eta^2{\F}_{u,\omega, d}\big) du d\omega\Big\}_{\eta \in I }  
\\& =M^{1/2}\rho_{T}\Big\{ \big( \Psi_1  \big ( 
\mathrm{L}^{a,b}_{U_M}\circ \G_{\Upsilon_1} (\hat \F) \big ) (\eta)
-  \Psi_1  \big ( 
\mathrm{L}^{a,b}\circ  \G_{\Upsilon_1} (\F )\big ) (\eta)
  \Big\}_{\eta \in I }   
\end{align*}
The term in \eqref{eq:er2} may be written as
\begin{align*}
&={\eta}  \int_0^\pi \Tr\Big( \Big[\frac{1}{M}\sum_{u \in U_M}  (\eta \hat{\F}_{u,\omega, d}(\eta))^{1/2}\Big]^2\Big)d\omega- \eta
\int_0^\pi \Tr\Big( \Big[\int_0^1 (\eta\F_{u,\omega, d})^{1/2}du\Big]^2 \Big)d\omega
\\
& ={\eta} \int_0^\pi \bignorm{ \frac{1}{M}\sum_{u \in U_M} \phi(\eta \hat{{\F}}_{u,\omega, d}(\eta))- \int_0^1  \phi( \eta\F_{u,\omega, d})du  }^2_{S_2} d\omega \tageq \label{eq:Prterm1}
\\
&+
\int_0^\pi {\eta} \biginprod{\frac{1}{M}\sum_{u \in U_M} \phi(\eta \hat{{\F}}_{u,\omega,d}(\eta))- \int_0^1 \phi( \eta\F_{u,\omega, d})du }{{\ddot{\F}}^{1/2}_{u,\omega,d}(\eta)}  d\omega
\tageq  \label{eq:Prterm2a}
\\
&+
\int_0^\pi {\eta} \biginprod{{\ddot{\F}}^{1/2}_{u,\omega,d}(\eta)}{\frac{1}{M}\sum_{u \in U_M} \phi(\eta \hat{{\F}}_{u,\omega,d}(\eta))- \int_0^1 \phi( \eta\F_{u,\omega, d})du }  d\omega
~,
\tageq \label{eq:Prterm2}
\end{align*}
where $\phi: D \to \cnum, z \mapsto z^{1/2}$, and where $D \subset \cnum$ is an open bounded region excluding elements of the negative ray $\rnum_{-} = (-\infty,0]$. Note that $\phi$ is analytic on $D$ for $d<\infty$. 
\eqref{eq:Prterm2} can be written as
\begin{align*}
& \int_0^\pi \biginprod{\frac{1}{M}\sum_{u \in U_M} \eta\phi(\eta \hat{{\F}}_{u,\omega,d}(\eta))- \int_0^1 \eta\phi( \eta\F_{u,\omega, d})du }{\ddot{\F}^{1/2}_{\omega,d}(\eta)}  d\omega 
\\&=\int_0^\pi \Tr\Bigg(\ddot{\F}^{1/2}_{\omega,d}(\eta) \widetilde{\otimes} I \Big(\frac{1}{M}\sum_{u \in U_M} \eta\phi(\eta \hat{{\F}}_{u,\omega,d}(\eta))- \int_0^1 \eta\phi( \eta\F_{u,\omega, d})du \Big)\Bigg)  d\omega 
\\&=\Tr\Bigg(L^{0,\pi}_{U_M} \Big(\Upsilon_{2,\omega} \eta^{3/2}\phi(\eta \hat{{\F}}_{u,\omega,d}(\eta))\Big)- L^{0,\pi}\Big(\Upsilon_{2,\omega} \eta^{3/2}\phi( \eta\F_{u,\omega, d})\Big)\Bigg)  
\end{align*}
since $\ddot{\F}^{1/2}_{\omega,d}(\eta)=\eta^{1/2}\ddot{\F}^{1/2}_{\omega,d} $, and 
where we used the notation $\Upsilon_{2,\omega} =
\ddot{\F}^{1/2}_{\omega,d} \widetilde{\otimes} I$. Hence, set $\Psi_2:C(I, S_1(\Hi)) \to C(I, \rnum), A(\eta) \mapsto \Tr(A(\eta))$ and $x=5/2$.  Then we can rewrite  \eqref{eq:Prterm2}  as 
\begin{align*}
 \Big\{M^{1/2}\rho_{T}\big( \Psi_2  \big ( 
\mathrm{L}^{0,\pi}_{U_M}\circ \G_{\Upsilon_2} (\hat \F \big ) (\eta)
-  \Psi_2  \big ( 
\mathrm{L}^{0,\pi}\circ  \G_{\Upsilon_2} (\F \big ) (\eta)
 \Big\}~,
%_{\eta \in I }   
\end{align*}
Using \autoref{thm:Conv}(b) with $h(\eta) = \eta^{-1/2}$  we observe that this fits as a component term in the sense of
\autoref{cor:wcFdif2}  and 
the term \eqref{eq:Prterm2a} corresponds to its adjoint.
\\
On the other hand,  
the   term  in  
\eqref{eq:Prterm1} is of order $o_p(\rho^{-1}_T M^{-1/2})$ uniformly in $\eta \in I$. To see this, we write
\begin{align*}
&{\eta} \int_0^\pi \bignorm{ \frac{1}{M}\sum_{u \in U_M} \phi(\eta \hat{{\F}}_{u,\omega, d}(\eta))- \int_0^1  \phi( \eta\F_{u,\omega, d})du  }^2_{S_2} d\omega
\\
& 
{\eta} \int_0^\pi \bignorm{ \frac{1}{M}\sum_{u \in U_M}
\phi(\eta \hat{{\F}}_{u,\omega,d}(\eta))-\frac{1}{M}\sum_{u \in U_M} \phi(\eta \F_{u,\omega, d})+\frac{1}{M}\sum_{u \in U_M} \phi(\eta \F_{u,\omega, d})- \int_0^1  \phi( \eta\F_{u,\omega, d})du  }^2_{S_2} d\omega
\\& \le 2\pi
 \sup_{\eta,\omega} \bignorm{\frac{1}{M} \sum_{u\in U_M} \Big[\phi(\eta \hat{\F}_{u,\omega, d}(\eta))-\phi( \eta\F_{u,\omega, d})   \Big] }^2_{S_2} \tageq \label{eq:sqrtderror1}
\\& +2\pi
\sup_{\eta,\omega} \bignorm{\sum_{i = 1}^{M} \int_{x_{i-1,T}}^{x_{i,T}}\big(\phi(\eta {\F}_{u_{i,T},\omega, d}(\eta))-\phi(\eta \F_{u,\omega, d})\big) du}^2_{S_2}~, \tageq \label{eq:sqrtderror2}
\end{align*}
where we used the same notation as in the proof of \autoref{thm:fullapproximation}. Now, the terms  \eqref{eq:sqrtderror1} and \eqref{eq:sqrtderror2} can be shown to be of order $o_p(\rho^{-1}_T M^{-1/2})$. 
For this purpose, denote $\hat{\Delta}_T(\eta)=\eta (\hat{\F}_{u\omega,d}(\eta)-\F_{u,\omega,d})$ and define the sets 
$$\Omega_T= \big \{x \in \Omega: \sup_{\omega,u,\eta}  \norm{\hat{\Delta}_T(\eta) }_{S_1}\le \varepsilon \big \} \quad \text{ and } E_T =\bigcap_{i=1}^M \Big\{\sup_{u \in [x_{i-1,T}, x_{i,T}]}\sup_{\omega}  \norm{\F_{u_{i,T},\omega,d}-\F_{u,\omega,d} }_{S_1} \le \epsilon \Big \}
$$
for some $\epsilon, \varepsilon>0$. 
By \autoref{thm:PerEx} and \autoref{thm:maxdevcon},  for sufficiently large $T$, 
\begin{align*}
M^{1/2}\rho_{T} & \sup_{\eta}\bignorm{\big(\phi(\eta \hat{\F}_{u,\omega,d}(\eta))-\phi( \eta\F_{u,\omega,d})\Big)}^{2}_{S_1} 
\\ &= M^{1/2}\rho_{T} \sup_{\eta}
\bignorm{\phi^\prime_{\eta{\F}_{u,\omega,d}} (\hat{\Delta}_T(\eta) +R_{u,\omega,d}(\eta))}^2_{S_1}\mathrm{1}_{\Omega_T}  +o_p(1)
\\&  =M^{1/2}\rho_{T} \sup_{\eta}
\bignorm{\phi^\prime_{{\F}_{u,\omega,d}} (\hat{\Delta}_T(\eta) +R_{u,\omega,d}(\eta))}^2_{S_1}\mathrm{1}_{\Omega_T}  +o_p(1)
\\&  \le M^{1/2}\rho_{T} \sup_{\eta}
\bignorm{\phi^\prime_{{\F}_{u,\omega,d}}}_{\mathfrak{L}(S_1)} \Big(\norm{\hat{\Delta}_T(\eta) }^2_{S_1}+\norm{\hat{\Delta}_T(\eta) }^4_{S_1}\Big)\mathrm{1}_{\Omega_T}  +o_p(1)=o_p(1), \end{align*}
for all $u \in [0,1]$ and $\omega \in [a,b]$, where the error follows from \autoref{thm:maxPS1}. Finally,  a similar argument as in first few lines the proof of \autoref{thm:fullapproximation} provides the result for \eqref{eq:sqrtderror2}. 
Hence, we obtain  by  \autoref{cor:wcFdif2} for \eqref{eq:Pr2}
\begin{align*}
&\big \{ M^{1/2}\eta\rho_T(\hat{r}_{d}(\eta)-r_{d}(\eta) )
\big \}_{\eta \in I}
\\&~~~~~~~~~ =
\begin{Bmatrix}M^{1/2}\rho_T 
\begin{pmatrix}
1& 2 \\
\end{pmatrix}
 \begin{pmatrix}  \Psi_1  \Big ( 
\mathrm{L}^{0,\pi}_{U_M}\circ \G_{\Upsilon_1} (\hat \F) \big ) (\eta)
-  \Psi_1  \big ( 
\mathrm{L}^{0,\pi}\circ  \G_{\Upsilon_1} (\F) \Big ) (\eta)
 \\
\Psi_2  \Big( \Re\big( 
\mathrm{L}^{0,\pi}_{U_M}\circ \G_{\Upsilon_2} (\hat \F )\big)\Big ) (\eta)
-  \Psi_2  \Big ( \Re\big( 
\mathrm{L}^{0,\pi}\circ  \G_{\Upsilon_2} (\F )\big) \Big) (\eta)
% \\
%\Psi_2  \big ( 
%\mathrm{L}^{0,\pi}_{U_M}\circ \G_{\Upsilon^\dagger_2} (\hat \F \big ) (\eta)
%-  \Psi_2  \big ( 
%\mathrm{L}^{0,\pi}\circ  \G_{\Upsilon^\dagger_2} (\F \big ) (\eta)
 \end{pmatrix} \end{Bmatrix}_{\eta \in I} +o_p(1)
\\&~~~~~~~~~
 \stackrel{\mathcal{D}}{\Longrightarrow}  \begin{Bmatrix} \Tr\big(\eta\mathbb{W}_{\mu_{\Upsilon_1}}(\eta) \big) +2\eta^{3/2-1/2}\Tr\big(\Re(\mathbb{W}_{\mu_{\Upsilon_2}})(\eta) \big)%+\eta^{3/2-1/2}\Tr\big(\mathbb{W}_{\mu_{\Upsilon^\dagger_2}}(\eta) \big) 
 \end{Bmatrix}_{\eta \in I}~.
\end{align*}
The result now follows by similar arguments as given in the proof of  \autoref{thm:lPSCA}.
\end{proof}

\subsection{Proof of the results in Section \ref{sec4} and \ref{sec5} }\label{proofsec4}

\begin{proof}[Proof of Theorem \ref{thm1}]
Recalling the definition of $\hat I_T$ in \eqref{deq13}, we obtain by an elementary calculation that
(note that the distribution of the random variable $\mathbb{T}$ is symmetric, which implies $q_\alpha/2 = -  q_{1-\alpha/2}$)
	$$
	\lim_{T \to \infty}	\mathbb{P}  \big  \{ r \in \hat I_T  \big \}
	=
			\lim_{T \to \infty} \mathbb{P}    \big \{ q_{\alpha/2} { { V}}   \le  \hat r
			-
			r \le q_{1-\alpha/2} { { V}}  \big  \}
			=
		 \mathbb{P}   \big  \{ q_{\alpha/2} \le \mathbb{T}  \le q_{1-\alpha/2}   \big  \}=1 - \alpha .
	$$
	\end{proof}

\begin{proof}[Proof of Theorem \ref{thm3}] 
 Note that by \eqref{hx2}
$$
\mathbb{P} \big (\hat{d} < d^* \big  )  = \mathbb{P} \Big( \bigcup^{d^*-1}_{d=1}  \big \{ \hat{\mathbb{T}}_d > q_\alpha  \big  \}\Big  ) \leq  \sum^{d^*-1}_{d=1} \mathbb{P} \big(   \big \{ \hat{\mathbb{T}}_d > q_\alpha  \big  \}\big  )
 \longrightarrow 0.
$$
Similarly, if $d>d^*$ and $s_{d^*} >  \nu$, we have
$$
\mathbb{P}\big (\hat{d} > d^* \big ) =  \mathbb{P} \Big( \bigcap^{d^*}_{d=1}  \big  \{ \hat{\mathbb{T}}_d \leq q_\alpha   \big  \}\Big  )\leq \mathbb{P} \big  (T_{d^*} \leq q_\alpha \big )~,
$$
where by \eqref{hx1} and \eqref{hx2} the right-hand side converges to
$0$ or  $\alpha$   if  $s_{d^*}> \nu$ or $s^*_{d} = \nu$, respectively. Finally,
 the  remaining  assertion follows from the fact that the limiting distribution of $\mathbb{T}_{d^*}$
 is supported on the real line.
\end{proof}

\begin{proof}[Proof of Theorem \ref{thm4}]  If the null hypothesis holds, we have
$$
\mathbb{P}_{H_0} \big (\hat{d} > d_0 \big ) = \mathbb{P}_{d^* \leq d_0} \big( \hat{\mathbb{T}}_d \leq q_\alpha \ ; \ d=1,\ldots,d_0 \big) \leq \mathbb{P}_{d^* \leq d_0} \big( T_{d^*} \leq q_\alpha \big )~,
$$
where by \eqref{hx1} and \eqref{hx2} the probability on the right-hand side converges to $\alpha$ if $s_{d^*}=\nu$ and to $0$ if $s^*_d > \nu$. 
Consequently, the test which rejects $H_0$ if  $\hat{d} > d_0$,
defines an asymptotic level $\alpha $ test.
Similarly, under the alternative 
it follows that
$$
\mathbb{P}_{H_1} (\hat{d} > d_0) = \mathbb{P}_{d^* > d_0} \big( \hat{\mathbb{T}}_d \leq q_\alpha \ ; \ d=1,\ldots,d_0 \big) \rightarrow 1
$$
because, by \eqref{hx2}, $\hat{\mathbb{T}}_d {\stackrel{\mathbb{P}}{\longrightarrow}}- \infty$ for $d=1,\ldots,d_0 < d^*.$  This proves consistency.
\end{proof}

 \begin{proof}[Proof of Theorem \ref{thm5}]  
For the probability of rejection we have
$$
\mathbb{P} \big (  \hat{s}_{d_0} > \nu  + q_{1-\alpha,d_0}  V_{{d_0},{d_0}} \big ) =
\mathbb{P} \Big (  \frac{\hat{s}_{d_0}  -s_{d_0}}{V_{{d_0},{d_0}}}
>  \frac{ \nu -s_{d_0}}{V_{{d_0},{d_0}}} + q_{1-\alpha,d_0}   \Big )~.
$$
By \eqref{deq12}, the statistic $\frac{\hat{s}_{d_0}  -s_{d_0}}{V_{{d_0},{d_0}}} $ converges weakly to $\mathbb{T}_{d_0}$. Note that \eqref{deq14} is equivalent to \eqref{1.5}.
Therefore, under the null hypothesis we have $s_{d_0} \leq \nu$, and, observing \eqref{deq19}, it follows that 
$\mathbb{P} \big (  \hat{s}_{d_0} > \nu  + q_{1-\alpha,d_0}  V_{{d_0},{d_0}} \big ) $ converges to $ 0 $ if $s_{d_0} < \nu$
and to $\alpha$ if $s_{d_0}= \nu$. By the same argument we obtain that the probability of rejection converges to $1$ under the alternative, i.e.,  $s_{d_0} > \nu$.

      \end{proof}

\purp{\begin{proof}[Proof of Proposition \ref{propbias}] 
We make use of results derived in \cite{vde16} for estimation of time-varying spectral density operators based on localized tapered periodogram operators. With a bit of effort, we can indeed represent the (non-interpolated) local estimator \eqref{eq:Fint} as
\[
\hat \F_{u,\omega}(\eta)=\frac{1}{ b_f}\int  K_f \Big (\frac{\omega-\alpha}{\bf} \Big) I_{u,\alpha}(\eta) d\alpha ~,
\]
where $K_f (\cdot )$ is a symmetric  kernel,
$I_{u,\alpha}(\eta)$ is a tapered local periodogram operator and where the taper relates to a smoothing kernel in time direction by $
K_{t,\eta}(x) = \frac{1}{H_N}  h(x+\frac{1}{2})^2$, $x \in [-1/2,\eta-1/2]$,  
with $H_{N}= \sum_{s=0}^{N-1} h(\frac{s}{N})^2 e^{-\im \omega s}$. With this representation, a Taylor expansion about the point $x=(u_o,\omega_o)$ yields
\begin{align*}
\E &(\hat \F_{u_o,\omega_o}(\eta))=\F_{u_o,\omega_o}(\eta) + \sum_{i=1}^{2} \frac{1}{i! }b_t^i \int v^i K_{t,\eta} (v) dv \, \int_{\Pi}  K_f(\alpha) d\alpha \frac{\partial^i\F_{u,\omega}(\eta)}{\partial u^i} \Big|_{(u,\omega)=x}  \\&+ 
\sum_{i=1}^{2} \frac{1}{i!} b_f^{i} \int_{\Pi}\alpha^i K_f(\alpha) d\alpha  \int K_{t,\eta}(v)dv  \frac{\partial^i\F_{u\,\omega}(\eta)}{\partial \omega^i}\Big|_{(u,\omega)=x}\\&+
\frac{1}{2} b_t b_f \int v K_{t,\eta} (v) dv \, \int_{\Pi} \alpha K_f(\alpha) d\alpha  \,\Big[\frac{\partial^2\F_{u,\omega}(\eta)}{\partial u \partial{\omega}} + \frac{\partial^2\F_{u,\omega}(\eta)}{\partial {\omega} \partial{u}}\Big]_{(u,\omega)=x}
+ R_{T,p}. \tageq \label{eq:taylbias}
\end{align*}
If $\eta=1$ the symmetry of $K_{t,1}(\cdot)$ and $K_f(\cdot)$ kernels about $0$ implies the bias reduces to 
\begin{align*}
\E \big [ \hat\F_{u_o,\omega_o} \HDB{(1)} \big  ] &=\F_{u_o,\omega_o} +\frac{1}{2} b_t^2  \int v^2 K_{t,\eta}(v)dv \frac{\partial^2}{\partial u^2} \F_{u,\omega}\Big|_{(u,\omega)=x} \\
& + 
\frac{1}{2} b_f^{2} \int_{\Pi}\alpha^2 K_f(\alpha) d\alpha  \frac{\partial^2}{\partial\omega^2}\F_{u\,,\omega}\Big|_{(u,\omega)=x}\\
& +o(b_t^2) + o(b_f^2)+O\big(\frac{\log(b_t\,T)}{b_t \,T}\big). 
\end{align*}
Therefore, 
$$
\hat{\F}^{\text{Jack}}_{u_0,\omega_0}(1)=\F_{u_o,\omega_o}(1)+o(b_t^2) + o(b_f^2)+O\Big ( \frac{\log(b_t\,T)}{b_t \,T}\Big )~,
$$ 
which shows the statement is true for $\eta=1$. If $\eta \in (0,1)$ the kernel in time direction is truncated which results in a slightly larger bias. More specifically, note that in our case the taper is simply $h(x)=1$ for all $x \in [0,1]$ and therefore \eqref{eq:taylbias} becomes 
\begin{align*}
\E  \big [ \hat\F_{u_o,\omega_o}^{(N,\bf)}(\eta)\big  ] &=\F_{u_o,\omega_o}(\eta) +\sum_{i=1}^2 \Big (\delta_{i,1}\frac{N}{T}\frac{(\eta-1/2)^2-\frac{1}{4}}{2 N}+\delta_{i,2}\frac{1}{2} \frac{N^2}{T^2}\frac{(\eta-1/2)^3-\frac{1}{8}}{3N} \Big)\\
& ~~~~~~~~~~~~~~~~~~~~~~~~~~~~~~~~~~~~~~~~~~~~~~~~~~~~~~~~~~~~~~~~~~~~~~~~~~
\times 
\int_{\Pi}  K_f(\alpha) d\alpha \frac{\partial^i\F_{u,\omega}(\eta)}{\partial u^i} \Big|_{(u,\omega)=x} \\&+ 
\frac{1}{2} b_f^{2} \int_{\Pi}  \alpha^2 K_f(\alpha) d\alpha \frac{\partial^2}{\partial\omega^2}\F_{u\,,\omega}\Big|_{(u,\omega)=x}+o(b_t^2) + o(b_f^2)+O\Big (\frac{\log(b_t\,T)}{b_t \,T} \Big ).
\end{align*}
Thus, for $\eta \in (0,1)$
\begin{align*}
\E \big  [ \hat{\F}^{\text{Jack}}_{u_0,\omega_0}(\eta) 
\big ] 
&=\F_{u_0,\omega_o}(\eta)+\sum_{i=1}^2
\Big (\delta_{i,1}\frac{N}{T}\frac{(\eta-\frac{1}{2})^2-\frac{1}{4}}{2 N} + \delta_{i,2}(\frac{1}{\sqrt{2}}-\frac{1}{2})\frac{N^2}{T^2}\frac{(\eta-\frac{1}{2})^3-\frac{1}{8}}{3N}\Big )
\\
& ~~~~~~~~~~~~~~~~~~~~~~~~~~~~~~~~~~~~~~~~~~~~~~~~~~~~~~~~~~~~~~~~~~~~~~~~~~
\times \int_{\Pi}  K_f(\alpha) d\alpha \frac{\partial^i\F_{u,\omega}(\eta)}{\partial u^i} \Big|_{(u,\omega)=x} \\
&+o(b_t^2) + o(b_f^2)+O\big(\frac{\log(b_t\,T)}{b_t \,T}\big).
\end{align*}
Since $1/{\sqrt{2}}-1/2 < 1/2$, the result follows.
\end{proof}}

\section{Holomorphic functional Calculus} 
\label{HFC} 
\def\theequation{E.\arabic{equation}}
\setcounter{equation}{0}

Let $\mathcal{L}(V)$ denote the set of bounded linear operators on a separable Banach space $V$. For $A \in \mathcal{L}(V)$, we recall that the spectrum is given by
\[
\sigma(A) =\{ \lambda \in \mathbb{C}: \lambda I -A \text{ is not invertible}\},
\]
where $I$ denotes the identity element of $\mathcal{L}(V)$. The mapping 
\[
R(\cdot, A): \rho(A) \to \mathcal{L}(V), \quad
 R(z,A) = (z I- A)^{-1}, \quad z \in \rho(A)
\]
is the resolvent of $A$, which is an analytic operator-valued function of $\lambda$ on the resolvent set $\rho(A)=(\sigma(A))^\complement$, which is an open subset of $\mathbb{C}$  \citep[see e.g.,][thm 1.2]{HisSig}. We can therefore define the following analogue to the Cauchy's integral formula.
\begin{definition}[Dunford-Riesz/Dunford-Taylor integral] \label{def:DTintegral}
Let $\phi(z)$ be holomorphic in an open neighborhood $D \in \mathbb{C}$ containing $\sigma(A)$. Let $\gamma \subset D$ be a closed curve counter clockwise enclosing $\sigma(A)$ in its interior. Then the bounded linear operator 
\begin{align}\label{eq:DTint}
\phi(A) = \frac{1}{2\pi \im} \oint_\gamma \phi(\zeta) R(\zeta, A) d\zeta 
\end{align}
is well-defined.  
\end{definition}
The correspondence $\phi(\zeta) \to \phi(A)$ is a homomorphism of the algebra of holomorphic functions on $D$ into $\mathcal{L}(\mathcal{H})$ (see \citep[][p. 44-45]{Kato66} or \citep[][thm 5, section 17.2]{Lax02}. 

\begin{definition}\label{adcon}
Let $A \in \mathcal{L}(V)$ and $\lambda \in \sigma(A)$ be an isolated point of the spectrum of $A$. A simple closed contour $\gamma_\lambda$ around $\lambda$ is an \textit{admissible} contour for $\lambda$ and $A$ if the closure of the region bounded by $\gamma_\lambda$ and containing $\lambda$ intersects $\sigma(A)$ only at $\lambda$.
\end{definition}

\begin{definition}\label{RProj}
Let $A \in \mathcal{L}(V)$ be self-adjoint and let $\lambda_0$ be an isolated point of $\sigma(A)$. For an admissible contour $\gamma_{\lambda_0}$, the Riesz integral for $A$ and $\lambda_0$, given by
\[
\Pi_{\lambda_0}=\frac{1}{2\pi \mathrm{i}} \oint_{\gamma_{\lambda_0}}  R(\zeta, A) d\zeta 
\]
is an orthogonal projection onto ker$(A-\lambda_0)$.
\end{definition} 
\citep[see e.g.,][Proposition 6.3]{HisSig}. Next we obtain the following result, which generalizes the result of \cite{GHJR09,RMS16}. 

\begin{thm}\label{thm:PerEx}
Let $A \in \mathcal{L}(V)$, where $V$ is a complex separable Banach space. Assume that $\phi: D \to \cnum$ is holomorphic and that $\sigma(A) \subset \bar{\Omega} \subset D$ for $\Omega$  a bounded open region in $\cnum$ with smooth boundary $\partial \Omega$. Consider the  perturbed operator $A_{\Delta}=A+\Delta$ with $\norm{\Delta}_{\mathcal{L}(V)} \le c\text{dist}(\partial \Omega, \sigma(A) )/K$ for constants $K=|\partial \Omega| \sup_{z \in \partial \Omega}\norm{R(z,A) }_{\mathcal{L}(V)}<\infty$ and $0 < c< 1$ and where $|\partial \Omega|$ denotes the length of $\partial \Omega$. Then $\phi$ is twice Fr{\'e}chet differentiable at $A$, tangentially to $\mathcal{L}(V)$. More specifically we have 
\[
\phi(A+\Delta) = \phi(A) +\phi^\prime_{A}(\Delta)+\phi^{\prime\prime}_{A}(\Delta,\Delta)+R_{\phi,A}(\Delta) 
\]
where $\norm{R_{\phi,A}(\Delta)}_{\mathcal{L}(V)}=O(\norm{\Delta}^3_{\mathcal{L}(V)})$, and where the mappings $\phi^\prime_{A}: \mathcal{L}(V)\to \mathcal{L}(V)$ and $\phi^{\prime\prime}_{A}: \mathcal{L}(V) \to \mathcal{L}(V)$, given by
\begin{align*}
\phi^\prime_{A}(\Delta) &=\frac{1}{2\pi \mathrm{i}}  \oint_{\partial \Omega} \phi(z)  R(z,A) \Delta R(z,A)  dz, \tageq\label{eq:PerEx}
\\ \phi^{\prime\prime}_{A}(\Delta, \Delta) &=\frac{1}{2\pi \mathrm{i}} \oint_{\partial \Omega} \phi(z)   R(z,A) \big(\Delta R(z,A)\big)^2  dz, \tageq\label{eq:PerEx2}
\end{align*}
are the first and second derivatives, respectively, 
 which satisfy $\norm{\phi^\prime_{A}(\Delta)}_{ \mathcal{L}(V)}<\infty$ and $\norm{\phi^{\prime\prime}_{A}(\Delta,\Delta)}_{ \mathcal{L}(V)}<\infty$. Furthermore, if $A \in  \mathbb{G}=C(\mathcal{I},S_p(\Hi))$ where $\mathcal{I}$ is a compact set in $\rnum^d$ and $\norm{\Delta}_{\mathbb{G}} \le c\text{dist}(\partial \Omega, \sigma(A) )/K$, then  $\norm{\phi^\prime_{A}(\Delta)}_{\mathbb{G}} <\infty$ and $\norm{\phi^{\prime\prime}_{A}(\Delta, \Delta)}_{\mathbb{G}} <\infty$. Finally, if $A \in S_p(\Hi)^\dagger$, $p <\infty$, we may write 
\[
\phi^\prime_{A}(\Delta) = \sum_{j=1}^\infty \phi^\prime(\lambda_j) P_j \Delta P_j + \sum_{i \ne j=1}^\infty \frac{\phi(\lambda_i)-\phi(\lambda_j)}{\lambda_i -\lambda_j} P_i \Delta P_j  \tageq \label{eq:Frderivherm}
\]
where $\lambda_1 > \lambda_2 > \ldots  \downarrow 0$ are the distinct eigenvalues in descending order and $P_i$ are the orthogonal projections onto the corresponding finite-dimensional eigenspaces.
\end{thm}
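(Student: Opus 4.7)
The plan is to combine the Dunford--Riesz representation \eqref{eq:DTint} with a perturbative expansion of the resolvent, then read off the Fr\'echet derivatives from the expansion and estimate the remainder. First I would verify the contour is still admissible for $A_\Delta$. Under the hypothesis $\norm{\Delta}_{\mathcal{L}(V)}\le c\,\mathrm{dist}(\partial\Omega,\sigma(A))/K$ with $K=|\partial\Omega|\sup_{z\in\partial\Omega}\norm{R(z,A)}_{\mathcal{L}(V)}$, I can show $\sup_{z\in\partial\Omega}\norm{\Delta R(z,A)}_{\mathcal{L}(V)}\le c/|\partial\Omega|<1$. This guarantees $z\in\rho(A_\Delta)$ for every $z\in\partial\Omega$ (so $\sigma(A_\Delta)\subset\Omega$) and justifies writing $\phi(A_\Delta)=\frac{1}{2\pi\mathrm{i}}\oint_{\partial\Omega}\phi(z)R(z,A_\Delta)\,dz$.

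Next I would iterate the second resolvent identity $R(z,A_\Delta)=R(z,A)+R(z,A)\Delta R(z,A_\Delta)$ to obtain the Neumann-type expansion
\[
R(z,A_\Delta)=\sum_{k=0}^{\infty}R(z,A)\bigl(\Delta R(z,A)\bigr)^{k},
\]
convergent in $\norm{\cdot}_{\mathcal{L}(V)}$ uniformly in $z\in\partial\Omega$ because $\norm{\Delta R(z,A)}_{\mathcal{L}(V)}<1$. Substituting into the Dunford--Riesz integral and exchanging sum and integral (justified by uniform convergence on the compact contour) yields
\[
\phi(A_\Delta)=\phi(A)+\phi'_{A}(\Delta)+\phi''_{A}(\Delta,\Delta)+R_{\phi,A}(\Delta),
\]
with $\phi'_A,\phi''_A$ as in \eqref{eq:PerEx}--\eqref{eq:PerEx2} and the remainder
\[
R_{\phi,A}(\Delta)=\frac{1}{2\pi\mathrm{i}}\oint_{\partial\Omega}\phi(z)\sum_{k\ge 3}R(z,A)\bigl(\Delta R(z,A)\bigr)^{k}dz.
\]
The estimate $\norm{R_{\phi,A}(\Delta)}_{\mathcal{L}(V)}=O(\norm{\Delta}^3_{\mathcal{L}(V)})$ then follows by bounding the geometric tail and pulling $\norm{\Delta}^3_{\mathcal{L}(V)}$ out, using $\sup_{z\in\partial\Omega}|\phi(z)|<\infty$ (since $\phi$ is holomorphic on $D\supset\bar\Omega$) and the same resolvent bound. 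Boundedness of $\phi'_A(\Delta)$ and $\phi''_A(\Delta,\Delta)$ follows from the standard ML-inequality for contour integrals. The extension to $A\in\mathbb{G}=C(\mathcal{I},S_p(\Hi))$ is obtained by noting that $\mathcal{I}$ is compact, so $\inf_{\eta\in\mathcal{I}}\mathrm{dist}(\partial\Omega,\sigma(A(\eta)))>0$ and $\sup_{\eta\in\mathcal{I}}\sup_{z\in\partial\Omega}\norm{R(z,A(\eta))}_{\mathcal{L}(V)}<\infty$; the expansion above then holds pointwise in $\eta$ with norms controlled uniformly in $\eta$, giving the stated $\mathbb{G}$-bounds.

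Finally, for $A\in S_p(\Hi)^\dagger$ I would plug the spectral resolution $R(z,A)=\sum_{j}(z-\lambda_j)^{-1}P_j$ into \eqref{eq:PerEx} to get
\[
\phi'_A(\Delta)=\sum_{i,j}\Bigl(\frac{1}{2\pi\mathrm{i}}\oint_{\partial\Omega}\frac{\phi(z)}{(z-\lambda_i)(z-\lambda_j)}\,dz\Bigr)P_i\Delta P_j,
\]
and evaluate the inner integral by residues: when $i\ne j$ the two simple poles give $(\phi(\lambda_i)-\phi(\lambda_j))/(\lambda_i-\lambda_j)$, while for $i=j$ the double pole gives $\phi'(\lambda_j)$, yielding \eqref{eq:Frderivherm}. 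The interchange of summation and contour integration needs justification: I would argue it by truncating to the finite sum over $\{j:|\lambda_j|\ge\varepsilon\}$, using $\sum_j\norm{P_j}_{\mathcal{L}(V)}$-type bounds from $A\in S_p(\Hi)^\dagger$ and compactness of $A$ to push $\varepsilon\downarrow 0$.

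\textbf{Main obstacle.} The genuinely delicate point is the last step: the termwise evaluation of \eqref{eq:PerEx} into \eqref{eq:Frderivherm} requires an honest convergence argument in $\mathcal{L}(V)$ for the double series $\sum_{i,j}c_{ij}P_i\Delta P_j$, which is non-trivial because $P_i\Delta P_j$ need not be mutually orthogonal in any operator-ideal norm and the coefficients $c_{ij}$ are only bounded (not summable). I would address this by using the spectral projector $\Pi_{[\varepsilon,\infty)}=\sum_{\lambda_j\ge\varepsilon}P_j$, splitting $\Delta=\Pi_{[\varepsilon,\infty)}\Delta\Pi_{[\varepsilon,\infty)}+\text{rest}$, deriving the finite-rank identity on the first piece via residues, and controlling the remainder with the contour bound of Step 1; the limit $\varepsilon\downarrow 0$ then gives \eqref{eq:Frderivherm}. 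The $\mathbb{G}$-version of the remainder estimate is conceptually routine once the uniform-in-$\eta$ resolvent bounds are secured, but it is where notational bookkeeping is heaviest.
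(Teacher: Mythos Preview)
Your proposal is correct and follows essentially the same route as the paper: the paper's proof invokes the resolvent bound from \cite{DS} (Thm.\ VII.6.10) together with a Neumann series expansion for $R(z,A_\Delta)$ (referring to \cite{GHJR09,RMS16} for details), then uses the spectral resolution of the resolvent, partial fractions and Cauchy's integral to obtain \eqref{eq:Frderivherm}, and finally bounds the $\mathbb{G}$-norm by taking the supremum over $\mathcal{I}$. Your write-up is in fact more detailed than the paper's sketch, and the convergence issue you flag for the double series in \eqref{eq:Frderivherm} is not explicitly addressed there either.
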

We remark that \eqref{eq:PerEx} is a symmetric bilinear form and hence is characterized by the corresponding diagonal form. 
\begin{proof}
We refer to \cite{GHJR09,RMS16} for the detailed derivations of the expression of the first and second derivatives, respectively, which follow both from the inequality in \cite{DS}[thm VII.6.10], which yields $\sup_{x \in \mathcal{I}}\norm{R(z,A(x))}_{\infty}\le K/\mathrm{dist}(\partial\Omega,\sigma(A))$, and a Neumann series expansion to obtain an  expression for the resolvent of $A_\Delta$ in terms of $\Delta$ and the resolvent of $A$.
if $A \in S_p(\Hi)$ is compact hermitian, the resolvent is given by
\[
R(z,A) =\sum_{i=1}^{\infty}  \frac{1}{z-\lambda_i} P_i
\]
and we may write 
$\phi(A) =\sum_{i=1}^{\infty}\phi( \lambda_i) P_i $ where $\phi: D \subset \sigma(A) \to \rnum$. Then  \eqref{eq:PerEx} can be rewritten  using partial fractions and Cauchy's integral to obtain \eqref{eq:Frderivherm}.
Now, if $A \in  \mathbb{G}=C(\mathcal{I},S_p(\Hi))$, we remark specifically that we obtain
\[
\norm{\phi^\prime_{A}(\Delta)}_{\mathbb{G}}=\sup_{x\in \mathcal{I}}\norm{\phi^\prime_{A(x)}(\Delta(x))}_{S_p} \le \frac{1}{2\pi}|\partial\Omega| \sup_{z \in \partial \Omega}|\phi(z)| \frac{K^2}{\mathrm{dist}^2(\partial\Omega,\sigma(A))} \sup_{x\in \mathcal{I}}\norm{\Delta_{(x)}}_{S_p} <\infty,
\]
and similarly $\norm{\phi^{\prime\prime}_{A}(\Delta,\Delta)}_{\mathbb{G}} =O(\norm{\Delta}^2_{\mathbb{G}})<\infty $.
\end{proof}

\begin{thm}\label{thm:eigp}
Let $\lambda_k$ be an eigenvalue of an operator $A \in S_{p}(\Hi)^\dagger$ of algebraic multiplicity 1, and denote the  corresponding  eigenprojector by $P_k$.
 Furthermore, let $A_{\Delta} = A+\Delta \in S_{p}(\Hi)^\dagger$ be a perturbed version of $A$. 
Then for $\Delta$ sufficiently small (i.e., $\|\Delta\|_{S_p}\le c \text{dist}(\Omega_k, \Omega \setminus \Omega_k)>0$), $A_{\Delta}$ has an eigenvalue $\lambda_{\Delta,k}$ of algebraic multiplicity 1, and the eigenprojector satisfies 
\[
 P_{\Delta,k}= P_{k}+ P_k\Delta Q_k+Q_k \Delta P_k + O(\norm{\Delta}^2_{S_p})
\] 
where $Q_k \in S_p(\Hi)$ defined in \eqref{eq:resid}.
\end{thm}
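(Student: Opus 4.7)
The plan is to combine the Riesz projector representation with a Neumann expansion of the perturbed resolvent, in the same spirit as the proof of \autoref{thm:PerEx}. Let $\gamma_k$ be an admissible contour (Definition \ref{adcon}) enclosing $\lambda_k$ but no other point of $\sigma(A)$, and set $K_k=|\gamma_k|\sup_{z\in\gamma_k}\|R(z,A)\|_{S_\infty}<\infty$. Under the smallness assumption on $\Delta$, the resolvent bound already used in \autoref{thm:PerEx} guarantees that $z\mapsto R(z,A_\Delta)$ is analytic on $\gamma_k$ and admits the convergent Neumann expansion $R(z,A_\Delta)=\sum_{n\ge 0}(R(z,A)\Delta)^n R(z,A)$. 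The perturbed Riesz projector $P_{\Delta,k}=\tfrac{1}{2\pi\mathrm{i}}\oint_{\gamma_k}R(z,A_\Delta)\,dz$ is therefore well-defined, and a standard continuity-of-idempotents argument applied to $t\mapsto P_{t\Delta,k}$ shows that $P_{\Delta,k}$ is connected to $P_k$ by a norm-continuous path of idempotents of uniformly bounded rank, so $\mathrm{rank}(P_{\Delta,k})=\mathrm{rank}(P_k)=1$. This establishes that $A_\Delta$ has exactly one eigenvalue inside $\gamma_k$ of algebraic multiplicity one.

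Next I would evaluate the first-order term via residue calculus. Substituting the Neumann series gives $P_{\Delta,k}=P_k+T_1+R_2$, where $T_1=\tfrac{1}{2\pi\mathrm{i}}\oint_{\gamma_k}R(z,A)\Delta R(z,A)\,dz$ and $R_2$ collects the contributions with $n\ge 2$. Decompose the resolvent as $R(z,A)=P_k/(z-\lambda_k)+\tilde R(z)$ with $\tilde R(z)=\sum_{j\ne k}P_j/(z-\lambda_j)$ holomorphic in a neighbourhood of the interior of $\gamma_k$. Expanding $R(z,A)\Delta R(z,A)$ into four pieces and applying Cauchy's formula: the doubly-singular term $P_k\Delta P_k/(z-\lambda_k)^2$ integrates to zero, the regular term $\tilde R(z)\Delta\tilde R(z)$ integrates to zero, and the two mixed terms yield $P_k\Delta\tilde R(\lambda_k)+\tilde R(\lambda_k)\Delta P_k$. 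Since $\tilde R(\lambda_k)=\sum_{j\ne k}P_j/(\lambda_k-\lambda_j)=Q_k$ by \eqref{eq:resid}, this reproduces exactly the first-order expression $P_k\Delta Q_k+Q_k\Delta P_k$ claimed in the theorem.

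The main obstacle will be bounding $R_2$ in the $S_p$-norm rather than in operator norm. The natural route is the ideal inequality $\|ABC\|_{S_p}\le \|A\|_{S_\infty}\|B\|_{S_p}\|C\|_{S_\infty}$: placing a single factor of $\Delta$ in the $S_p$ slot and the remaining factors in the operator-norm slot, each $n$-th summand of $R_2$ is bounded by a constant multiple of $\sup_{z\in\gamma_k}\|R(z,A)\|_{S_\infty}^{n+1}\|\Delta\|_{S_\infty}^{n-1}\|\Delta\|_{S_p}$. Using $\|\Delta\|_{S_\infty}\le\|\Delta\|_{S_p}$ and summing the resulting geometric series, whose ratio is strictly less than one by the smallness hypothesis on $\|\Delta\|_{S_p}$, yields $\|R_2\|_{S_p}=O(\|\Delta\|_{S_p}^2)$ with a constant depending only on $\gamma_k$ and $A$. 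Combined with the explicit computation of $T_1$, this delivers the stated expansion. No essentially new ingredient is required beyond \autoref{thm:PerEx} and the $S_p$-ideal property; the care needed lies only in choosing the smallness constant on $\|\Delta\|_{S_p}$ so that both the Neumann series on $\gamma_k$ and the geometric remainder sum converge uniformly.
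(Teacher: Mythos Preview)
Your proposal is correct and follows essentially the same route as the paper. The paper observes that $P_k=\phi_k(A)$ for the indicator $\phi_k$ of $\Omega_k$, invokes \autoref{thm:PerEx} (which is itself the Neumann-series argument you unpack) to obtain the first-order term $\tfrac{1}{2\pi\mathrm{i}}\oint_{\partial\Omega_k}R(z,A)\Delta R(z,A)\,dz$, and then uses the resolvent decomposition \eqref{eq:resid} together with Cauchy's formula to identify this as $P_k\Delta Q_k+Q_k\Delta P_k$; your direct residue computation and explicit $S_p$-ideal remainder bound supply precisely the details the paper defers to \cite{GHJR09}.
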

\begin{proof}
We can choose a region $\Omega \supset \sigma(A)$ s.t. it has a connected component $\Omega_k$ that satisfies $\Omega_k \cap \sigma(A) = \lambda_k$ and $\text{dist}(\Omega_k, \Omega \setminus \Omega_k)>0$ where $\Omega_o=\Omega \setminus \Omega_k$
Then, define the analytic function 
\[
\phi_k(z)=1, z \in \Omega_k, \quad \text{ and } \quad \phi_k(z), z \in \Omega_o
\]
and observe that $\phi_k(A) =P_k$. In addition, we can write
$A=\lambda_k P_k + A_o$ 
where $A_o \in S_{p}(\Hi)^\dagger$ with spectrum $\sigma(A_o) \subset \Omega_o$. 
The spectral theorem (e.g., \cite{Hall13,DS})) implies that there exists a resolution of the identity $E(\lambda)$ such that $
A_o=\int_{\sigma(A_o)} \lambda dE(\lambda)$.
Furthermore, $P_k E(\lambda) = E(\lambda) P_k =O$ for all $\lambda \in \sigma(A_o)$, and the resolvent of $A$ is given by
\[
R(z) = \frac{1}{z-\lambda_k}P_k +\underbrace{\int_{\sigma(A_o)} \frac{1}{z-
\lambda} d E(\lambda)}_{=Q_k} \tageq \label{eq:resid}
\]
The result now easily follows from \autoref{thm:PerEx} and properties of Cauchy integrals (see for example, \cite{GHJR09} for details).
\end{proof}
\begin{thm}\label{thm:eigp2}
 Assume that, uniformly in $u,\omega$, the $k$th eigenvalue $\lambda^{(u,\omega)}_k$ of $\F_{u,\omega}$ has algebraic multiplicity 1. Then
\begin{enumerate}[leftmargin=*,label=\roman*.]
\item Let $\Delta = \eta(\hat{\F}_{u,\omega}(\eta)- \F_{u,\omega})$. Under the conditions of  \autoref{thm:maxdevcon}
\[
\sup_{u,\omega} \norm{\hat{\Pi}^{(u,\omega)}_k-\Pi^{(u,\omega)}_k}_{S_p}  =O_p( \sup_{\eta,u,\omega}\norm{\Delta}_{S_p}+\sup_{\eta,u,\omega}\norm{\hat{R}^{u,\omega}(\eta)}^2_{S_p}) 
\]
where $\sup_{\eta,u,\omega}\norm{\hat{R}^{u,\omega}(\eta)}^2_{S_p} \mathrm{1}_{E_{k,T}}=O_p(\sup_{\eta,u,\omega}\norm{\Delta}^2_{S_p}) $, and  where the set $E_{k,T}$ is defined in \eqref{eq:Ekt} and satisfies  $\mathbb{P}(E_{k,T})=1$ for sufficiently large $T$.
\item 
Let $ x_{T,i} = i \frac{1}{M}$ such that 
the set $U_M$ in \eqref{eq:midset} defines the midpoints of the intervals $[  x_{i-1,T} ,x_{i,T} ] $,
that is  $u_{i,T} = ( x_{i-1,T} + x_{i,T})/2 $.
for $i=1, \ldots, M$. Then, for $u \in [  x_{i-1,T} ,x_{i,T} ]$,
$$\Pi^{(u_{i,T},\omega)}_k-\Pi^{(u,\omega)}_k= \phi^\prime_{\F_{u_{i,T},\omega}}(\F_{u,\omega}-\F_{u_{i,T},\omega})+\phi^{\prime\prime}_{\F_{u_{i,T},\omega}}(\F_{u,\omega}-\F_{u_{i,T},\omega},\F_{u,\omega}-\F_{u_{i,T},\omega}) +R_{\phi,\F_{u_{i,T},\omega}}$$ where $ \norm{R_{\phi,\F_{u_{i,T},\omega}}}^2_{S_p} \mathrm{1}_{\mathcal{O}_{k,T}}=O(\norm{\F^{u_{i,T},\omega}-\F^{u,\omega}}^2_{S_p}) $, and where the set  ${\mathcal{O}_{k,T}}$ is defined in equation \eqref{eq:Okt} below 
  and satisfies $\mathrm{1}_{\mathcal{O}_{k,T}}=1 $ for sufficiently large $T$.
%\item \HDM{Let $ x_{T,i} = i \frac{1}{M}$ such that 
%the set $U_M$ in \eqref{eq:midset} defines the midpoints of the intervals $[  x_{i-1,T} ,x_{i,T} ] $,
%that is  $u_{i,T} = ( x_{i-1,T} + x_{i,T})/2 $.
%for $i=1, \ldots, M$. Then \begin{align*}
%\max_i\sup_{\omega} \sup_{u \in [{x_{i-1,T}},{x_{i,T}}] } \norm{\Pi^{(u_{i,T},\omega)}_k-\Pi^{(u,\omega)}_k}_{S_p}  &=O(\max_i\sup_{\omega} \sup_{u \in [{x_{i-1,T}},{x_{i,T}}] } \norm{\F^{u_{i,T},\omega}-\F^{u,\omega}}_{S_p})\\&+O(\max_i\sup_{\omega} \sup_{u \in [{x_{i-1,T}},{x_{i,T}}] }\norm{R^{u_{i,T},u,\omega}}^2_{S_p}) 
%\end{align*}
%where $ \norm{{R}^{u_{i,T},u,\omega}}^2_{S_p} \mathrm{1}_{\mathcal{O}_{k,T}}=O(\norm{\F^{u_{i,T},\omega}-\F^{u,\omega}}^2_{S_p}) $, and where the set  ${\mathcal{O}_{k,T}}$ is defined by \eqref{eq:Okt}  and satisfies $\mathrm{1}_{\mathcal{O}_{k,T}}=1 $ for sufficiently large $T$.}
\end{enumerate}
\end{thm}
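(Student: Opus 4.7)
The plan is to derive both parts from the holomorphic functional calculus of \autoref{thm:PerEx}, applied to the characteristic function $\phi_k$ that extracts the $k$th Riesz projector, in the spirit of the proof of \autoref{thm:eigp}. The key preparatory step, common to both parts, is to fix a single contour that serves uniformly in $(u,\omega) \in [0,1] \times [0,\pi]$. Using the uniform multiplicity-one hypothesis together with the continuity of the map $(u,\omega) \mapsto \F_{u,\omega}$ in $\|\cdot\|_{S_1}$ (which, by the classical resolvent identity and compactness of $[0,1]\times[0,\pi]$, transfers to continuity of the isolated eigenvalues $\lambda_j^{(u,\omega)}$), I would extract a uniform spectral gap
\[
\delta := \tfrac{1}{2}\inf_{u,\omega}\min\bigl(\lambda^{(u,\omega)}_{k-1}-\lambda^{(u,\omega)}_k,\ \lambda^{(u,\omega)}_{k}-\lambda^{(u,\omega)}_{k+1}\bigr) > 0,
\]
and choose a bounded open region $\Omega = \Omega_k \cup \Omega_o$ with smooth boundary such that $\Omega_k \cap \sigma(\F_{u,\omega}) = \{\lambda_k^{(u,\omega)}\}$, $\Omega_o \supset \sigma(\F_{u,\omega})\setminus\{\lambda_k^{(u,\omega)}\}$ for every $(u,\omega)$, and $\mathrm{dist}(\partial\Omega,\sigma(\F_{u,\omega})) \ge \delta$ uniformly. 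The holomorphic $\phi_k \equiv 1$ on $\Omega_k$, $\phi_k \equiv 0$ on $\Omega_o$ then satisfies $\phi_k(\F_{u,\omega}) = \Pi_k^{(u,\omega)}$ by \autoref{def:DTintegral} and \autoref{RProj}, while the constant $K := |\partial\Omega|\sup_{z\in\partial\Omega, u,\omega}\|R(z,\F_{u,\omega})\|_{\infty}$ is finite by the uniform bound $\|R(z,A)\|_\infty \le 1/\mathrm{dist}(z,\sigma(A))$.

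For part (i), I would define the random event
\[
E_{k,T} = \Bigl\{\sup_{\eta,u,\omega}\bigl\|\eta(\hat\F_{u,\omega}(\eta) - \F_{u,\omega})\bigr\|_{S_p} \le c\delta/K\Bigr\}
\]
for a fixed $c \in (0,1)$. By \autoref{thm:maxdevcon} and the trivial inequality $\|\cdot\|_{S_p} \le \|\cdot\|_{S_1}$ for $p\ge 1$, we have $\mathbb{P}(E_{k,T}) \to 1$. On $E_{k,T}$, the perturbation bound required by \autoref{thm:PerEx} holds simultaneously for every $(u,\omega)$ with respect to the common contour $\partial\Omega$, so the first-order Fréchet expansion
\[
\hat\Pi_k^{(u,\omega)}-\Pi_k^{(u,\omega)} = (\phi_k)'_{\F_{u,\omega}}\bigl(\eta(\hat\F_{u,\omega}(\eta)-\F_{u,\omega})\bigr) + \hat R^{u,\omega}(\eta)
\]
applies, with remainder controlled by $O(\|\eta(\hat\F_{u,\omega}(\eta)-\F_{u,\omega})\|_{S_p}^2)$ uniformly in $(u,\omega,\eta)$. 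The first-order term is bounded by a constant multiple of $\sup\|\Delta\|_{S_p}$ via the explicit contour representation \eqref{eq:PerEx}, and taking suprema yields the stated bound.

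Part (ii) is the deterministic analogue. Setting $\tilde\Delta_{u,\omega,i} = \F_{u,\omega}-\F_{u_{i,T},\omega}$, \autoref{as:smooth} guarantees that $u \mapsto \F_{u,\omega}$ is twice Fréchet differentiable with derivatives bounded uniformly in $\omega$, so $\sup_{i,\omega, u\in[x_{i-1,T},x_{i,T}]}\|\tilde\Delta_{u,\omega,i}\|_{S_1} = O(1/M)$. Defining
\[
\mathcal{O}_{k,T} = \Bigl\{\sup_{i,\omega, u\in[x_{i-1,T},x_{i,T}]}\|\tilde\Delta_{u,\omega,i}\|_{S_p} \le c\delta/K\Bigr\},
\]
we have $\mathbf{1}_{\mathcal{O}_{k,T}} = 1$ for all $T$ large enough, again as a deterministic statement. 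On this set, the full second-order expansion in \autoref{thm:PerEx} applies to the same $\phi_k$, yielding the two-term Taylor formula with remainder $R_{\phi,\F_{u_{i,T},\omega}}$ of order $O(\|\tilde\Delta\|^2_{S_p})$. The main obstacle I anticipate is precisely the uniformization in $(u,\omega)$: one must argue that the spectral gap is bounded away from zero uniformly on the compact time-frequency rectangle so that a single contour $\partial\Omega$ and a single constant $K$ may be used throughout; once that is in hand, both statements reduce to bookkeeping on top of \autoref{thm:PerEx}.
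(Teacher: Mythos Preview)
Your approach is essentially the same as the paper's: both reduce to the perturbation expansion for the Riesz projector via \autoref{thm:PerEx}/\autoref{thm:eigp}, define $E_{k,T}$ and $\mathcal{O}_{k,T}$ as the events on which the perturbation is uniformly small relative to the spectral gap, and invoke \autoref{thm:maxdevcon} respectively Lipschitz continuity of $u\mapsto\F_{u,\omega}$ to control these sets. Your write-up is somewhat more explicit than the paper's in justifying a single common contour over the compact time--frequency rectangle and in appealing to the full second-order expansion from \autoref{thm:PerEx} for part~(ii), but the argument is the same.
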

\begin{proof}
 Under the stated assumption, we can, uniformly in $u,\omega$, choose a region $\Omega \supset \sigma(\F_{u,\omega})$ s.t. it has a connected component $\Omega^{u,\omega}_k$ that satisfies $\Omega^{u,\omega}_k \cap \sigma(\F_{u,\omega}) = \lambda^{u,\omega}_k$ and $\text{dist}(\Omega^{u,\omega}_k, \Omega^{u,\omega}\setminus \Omega^{u,\omega}_k)>0$. Then, under the conditions of \autoref{thm:maxdevcon}, the set $E_{k,T}$ defined by
\[E_{k,T}=\Big\{\sup_{\eta,u,\omega}\bignorm{\eta(\hat{\F}_{u,\omega}(\eta)-\F_{u,\omega})}_{S_p} \le \inf_{u,\omega}\text{dist}(\Omega^{u,\omega}_k, \Omega^{u,\omega}\setminus \Omega^{u,\omega}_k)\Big\} \tageq \label{eq:Ekt}\]
satisfies $\mathbb{P}(E_{k,T})= 1$ for sufficiently large $T$. Similarly, 
define  the set 
\[
\mathcal{O}_{k,T}=  \bigcap _{i=1}^M  \Big\{\sup_{u \in [{x_{i-1,T}},{x_{i,T}] } }\sup_{\lambda}\norm{\cdot({\F}_{u,\lambda}-{\F}_{u_{i,T},\lambda})}_{C_{S_p}}\le \inf_{\omega}\text{dist}(\Omega^{u,\omega}_k, \Omega^{u,\omega}\setminus \Omega^{u,\omega}_k)\Big\}  \tageq \label{eq:Okt}~.
\]
Then,
Lipschitz continuity of the map $u \mapsto \eta\F_{u,\lambda}$  uniformly in $\omega, \eta$ w.r.t. $\norm{\cdot}_{S_1}$ implies $\mathrm{1}_{\mathcal{O}^\complement_{k,T}}=0$ for sufficiently large $T$. The statement therefore follows from the expansion in \autoref{thm:eigp}.
\end{proof}

\end{document}